\numberwithin{equation}{section}
\newtheorem{maintheorem}{Theorem}  
\newtheorem{maincoro}[maintheorem]{Corollary}
\newtheorem{theorem}{Theorem}[section]
\newtheorem*{theorem*}{Theorem}
\newtheorem{lemma}[theorem]{Lemma}
\newtheorem{claim}[theorem]{Claim}
\newtheorem{proposition}[theorem]{Proposition}
\newtheorem{observation}[theorem]{Observation}
\newtheorem{fact}[theorem]{Fact}
\newtheorem{corollary}[theorem]{Corollary}
\newtheorem{remark}[theorem]{Remark}
\newtheorem*{remark*}{Remark}
\theoremstyle{definition}{
\newtheorem{example}[theorem]{Example}
\newtheorem{definition}[theorem]{Definition}
\newtheorem*{definition*}{Definition}

\newtheorem*{question*}{Question}
\newtheorem*{example*}{Example}
\newtheorem*{examples*}{Examples}
}
\newcommand{\C}{\mathbb C}
\newcommand{\E}{\mathbb E}
\renewcommand{\P}{\mathbb P}
\newcommand{\R}{\mathbb R}
\newcommand{\T}{\mathbb T}
\newcommand{\Z}{\mathbb Z}
\newcommand{\cC}{{\mathcal C}}
\newcommand{\cE}{{\mathcal E}}
\newcommand{\cF}{{\mathcal F}}
\newcommand{\cH}{{\mathcal H}}
\newcommand{\cP}{{\mathcal P}}
\newcommand{\cQ}{{\mathcal Q}}
\newcommand{\cS}{{\mathcal S}}
\newcommand{\cV}{{\mathcal V}}
\newcommand{\cZ}{{\mathcal Z}}
\newcommand{\fa}{\mathfrak a}
\newcommand{\fb}{\mathfrak b}
\newcommand{\fc}{\mathfrak c}
\newcommand{\fd}{\mathfrak d}
\newcommand{\ff}{\mathfrak f}
\newcommand{\fg}{\mathfrak g}
\newcommand{\fm}{\mathfrak m}
\newcommand{\fs}{\mathfrak s}
\newcommand{\fB}{\mathfrak B}
\newcommand{\fP}{\mathfrak P}
\newcommand{\fX}{\mathfrak X}
\newcommand{\fL}{\mathfrak L}
\renewcommand{\d}{\mathrm{d}}
\newcommand{\sw}{\mathsf w}
\newcommand{\sB}{\mathsf B}
\newcommand{\sL}{\mathsf L}
\newcommand{\sM}{\mathsf M}
\newcommand{\sQ}{\mathsf Q}
\newcommand{\sV}{\mathsf V}
\newcommand{\Dim}{\textsc{d} }
\newcommand{\reg}{\mathsf{reg}}
\newcommand{\Cov}{\operatorname{Cov}}
\newcommand{\Var}{\operatorname{Var}}
\newcommand{\diam}{\operatorname{diam}}
\newcommand{\dist}{\operatorname{dist}}
\newcommand{\one}{\mathbbm{1}}
\newcommand{\Zsos}{\cZ^\textsc{sos}}
\newcommand{\Zsosbar}{\bar{\cZ}^\textsc{sos}}
\newcommand{\Ztile}{\cZ^\textsc{t}}
\newcommand{\xor}{\vartriangle}
\newcommand{\gr}{\textup{\texttt{g}}}
\newcommand{\northeast}{\textup{\textsf{NE}}\xspace}
\newcommand{\east}{\textup{\textsf{E}}\xspace}
\newcommand{\southeast}{\textup{\textsf{SE}}\xspace}
\newcommand{\southwest}{\textup{\textsf{SW}}\xspace}
\newcommand{\west}{\textup{\textsf{W}}\xspace}
\newcommand{\northwest}{\textup{\textsf{NW}}\xspace}
\newcommand{\north}{\textup{\textsf{N}}\xspace}
\newcommand{\GFF}{\textsc{GFF}\xspace}
\newcommand{\SOS}{\textsc{SOS}\xspace}
\renewcommand{\restriction}{\mathord{\upharpoonright}}
\renewcommand{\epsilon}{\varepsilon}
\newcommand{\cupdot}{\mathbin{\mathaccent\cdot\cup}}
\DeclareMathOperator{\Int}{Int}
\crefname{maintheorem}{Theorem}{Theorems}
\crefname{step}{Step}{Steps}
\crefname{part}{Part}{Parts}
\crefname{case}{Case}{Cases}
\crefname{claim}{Claim}{Claims}
  \let\Cref\crtCref
  \let\cref\crtcref
\title[Tilted Solid-On-Solid is liquid]{Tilted Solid-On-Solid is liquid: \\ scaling limit of SOS with a potential on a slope
\vspace*{-0.1in}}
\author{Beno\^{\i}t Laslier}
\address{B.\ Laslier\hfill\break
Universit\'e Paris Cit\'e and Sorbonne Universit\'e\\ CNRS\\ Laboratoire de Probabilit\'es, Statistique et Mod\'elisation\\
b\^{a}timent Sophie Germain,
 8 place Aur\'elie Nemour\\
  75205 Paris CEDEX 13, France\\
 \textit{and} \'Ecole normale sup\'erieure\\ Université PSL\\ CNRS\\ 75005 Paris\\ France.}
\email{laslier@lpsm.paris}
\author{Eyal Lubetzky}
\address{E.\ Lubetzky\hfill\break
Courant Institute\\ New York University\\
251 Mercer Street\\ New York, NY 10012, USA.}
\email{eyal@courant.nyu.edu}
\begin{document}

\begin{abstract}
\vspace{-0.1in}
The $(2+1)$\Dim Solid-On-Solid (SOS) model famously exhibits a roughening transition: on an $N\times N$ torus with the height at the origin rooted at $0$, the variance of $h(x)$, the height at~$x$, is $O(1)$ at large inverse-temperature $\beta$, vs.\ ~$\asymp \log |x|$ at small $\beta$ (as in the Gaussian free field (\GFF)).
The former---rigidity at large $\beta$---is known for a wide class of $|\nabla\phi|^p$ models ($p=1$ being~SOS) yet is believed to fail once the surface is on a slope (tilted boundary conditions). 
It is conjectured that the slope would destabilize the rigidity and induce the \GFF-type behavior of the surface at small $\beta$.
The only rigorous result on this is by Sheffield (2005): for these models of integer height functions, if the slope $\theta$ is irrational, then $\Var(h(x))\to\infty$ with $|x|$ (with no known quantitative bound).

We study a family of SOS surfaces at a large enough fixed $\beta$, on an $N\times N$ torus with a nonzero boundary condition slope $\theta$, perturbed by a potential $\sV$ on an $\epsilon_\beta$-fraction of sites (arbitrarily small). Our main result is (a) the measure on the height gradients $\nabla h$ has a weak limit $\mu_\infty$ as $N\to\infty$; and (b) the scaling limit of a sample from $\mu_\infty$ converges to a full plane \GFF. In particular, we recover the asymptotics $\Var(h(x))\sim c\log|x|$.
To our knowledge, this is the first example of a tilted $|\nabla\phi|^p$ model, or a perturbation thereof, where the limit is recovered at large $\beta$.
The proof looks at random monotone surfaces that approximate the SOS surface, and shows that (i) these form a weakly interacting dimer model, and (ii) the renormalization framework of Giuliani, Mastropietro and Toninelli (2017) leads to the \GFF limit. New ingredients are needed in both parts, including a nontrivial extension of [GMT17] from finite interactions to ones with exponential decay in the~radius.
\end{abstract}

{\mbox{}\vspace{-0.5cm}
\maketitle
}
\vspace{-1cm}

\section{Introduction}\label{sec:intro}

The Solid-On-Solid (SOS) model on a finite graph with vertices $\cV$ and edges $\cE$ is a distribution on height functions $h:\cV\to\Z$ rooted at a marked vertex $o$ (the origin) to $h(o)=0$. The probability assigned to each $h$ penalizes it for having large (in absolute value) gradients along the edges $e\in\cE$:
\[ \P_\beta(h) \propto \exp\Big[-\beta\sum_{e\in \cE}\left|\nabla h(e)\right| \Big] \,,\]
where $\nabla h(e):= h(y)-h(x)$ for $e=(x,y)\in\cE$  and the parameter $\beta>0$ is the inverse-temperature. 

The $(2+1)$\Dim SOS model takes the graph to be the $N\times N$ torus in $\Z^2$, denoted here $\Lambda_N$, where the heights are assigned to the $N^2$ unit-squares. Note that $\P_\beta$ is then translation-invariant (as the torus is vertex transitive, and rooting $h(o)=0$ has no effect when we only look at the gradients~$\nabla h$).
One associates to $h$ the surface in $\R^3$ consisting of a horizontal face of $\Z^3$ at height $h(x)$ for each $x\in\cV(\Lambda_N)$ and a minimum completion of vertical faces to make them simply connected (i.e., $|\nabla h(e)|$ faces for each $e\in\cE(\Lambda_N)$). Viewing $h$ as this set of faces, $|h|= N^2+\sum_{e\in\cE}|\nabla h(e)|$, thus
\begin{equation}\label{eq:SOS-model} \P_\beta(h) \propto \exp\big(-\beta |h|\big)\,.\end{equation}

The study of this model in statistical physics goes back to the early 1950's (\cite{BCF51,Temperley52}), pertaining to crystal formation at low temperature (large $\beta$). It further serves as an approximation of plus/minus interfaces in the 3\Dim Ising model, which resemble height functions as overhangs are microscopic in that regime. In line with predictions for 3\Dim Ising (given in the 1970's and yet unproved: cf.~\cite{Abraham86,FPS81}), the $(2+1)$\Dim SOS model undergoes a \emph{roughening transition} from being delocalized, or rough (formally defined below) at small~$\beta$ to localized, or rigid at large~$\beta$. It is widely believed that for 3\Dim Ising and its approximations, e.g., $(2+1)$\Dim SOS and more general $|\nabla\phi|^p$ models, when the interface is on a slope, the rigidity at large $\beta$ is destabilized, mirroring the small $\beta$ behavior (see \cref{fig:tilted-3D,fig:tilted-3D-ctr}).

In this work we give a first rigorous proof of the scaling limit of such a model at large $\beta$ on~a~slope: we show that the $(2+1)$\Dim SOS model perturbed by a potential on  $\epsilon_\beta N^2$ sites converges to a \GFF.
As we further explain below, it was unclear whether the scaling limit of tilted Ising-type interfaces
would in fact match the zero temperature \GFF picture, especially if the nonzero slope is \emph{rational}.

There is a vast body of works on low temperature 3\Dim Ising interfaces and $(2+1)$\Dim SOS surfaces, intrinsic to the study of crystals. In the physics literature there are numerous studies, experimental (e.g.,
\cite{ABKLLS80} comparing roughening in $^4\mathrm{He}$ crystals to 3\Dim Ising) and theoretical (e.g., \cite{WGL73} drawing evidence for the existence of a roughening transition in 3\Dim Ising); see  \cite{HasenbuschPinn1997,GilmerBennema72,Swendsen77,ChuiWeeks76,ElwenspoekvdEerden87,JST83,NHB84,MWLB88,MLS90} as well as \cite{Weeks1980} and the references therein, for a sample of such studies going back to the early 1950's.
% With less space constraints, could have included {Abraham80,BCF51} in the long list.
The mathematical physics literature, aside from the celebrated work of Fr\"ohlich and Spencer \cite{FrohlichSpencer81b} that we expand on below, has been mostly confined to the rigid regime of these models---starting from the pioneering work of Dobrushin~\cite{Dobrushin72a} in the early 1970's and proceeding to delicate aspects such as entropic repulsion, layering and wetting; see, e.g., \cite{BEF86,KDM11,Lacoin18,Lacoin20,CLMST14,CLMST16,DinaburgMazel94,CesiMartinelli96a,CesiMartinelli96b} for but a small sample, as well as some recent works~\cite{CKL_SOSlevelline,GheissariLubetzky21,GheissariLubetzky22,GheissariLubetzky23a,GheissariLubetzky23b,GheissariLubetzky_SOSLayering} and the survey \cite{IoffeVelenik18} and references therein.
% With less space constraints, could have included {FrohlichSpencer81a} together with {_81b} and {CLMST12} in the second list.

\begin{figure}
\vspace{-0.2in}
    \begin{tikzpicture}
   \node (fig1) at (10.5,0) {
    	\includegraphics[width=0.5\textwidth]{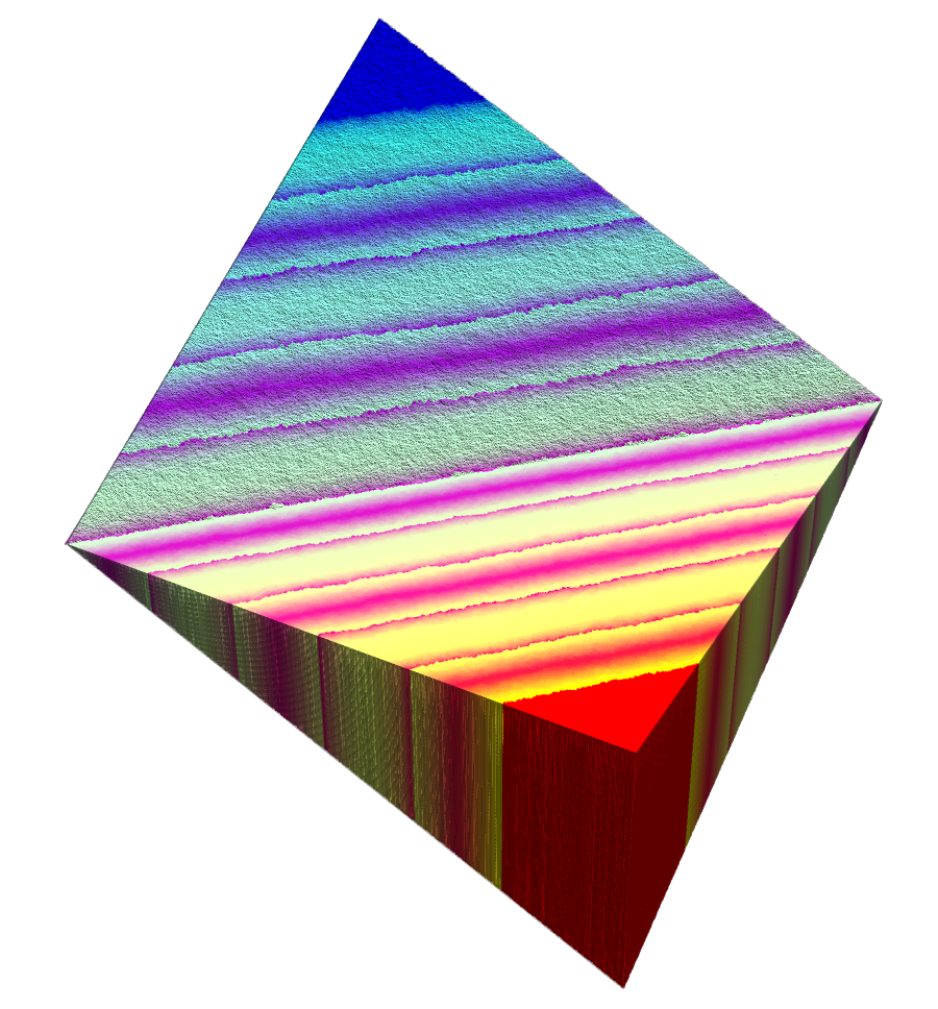}};
    \draw[color=black] (4.4,-0.25)--(6.77,-0.25);
    %\draw[fill=gray!50] (2.25,-0.75) circle (3.2);
    \draw[fill=gray!50,opacity=0.35] (7.6,-0.2) circle (0.82);
    \node (fig1z) at (2.5,-0.75) {
   	\includegraphics[width=0.35\textwidth]{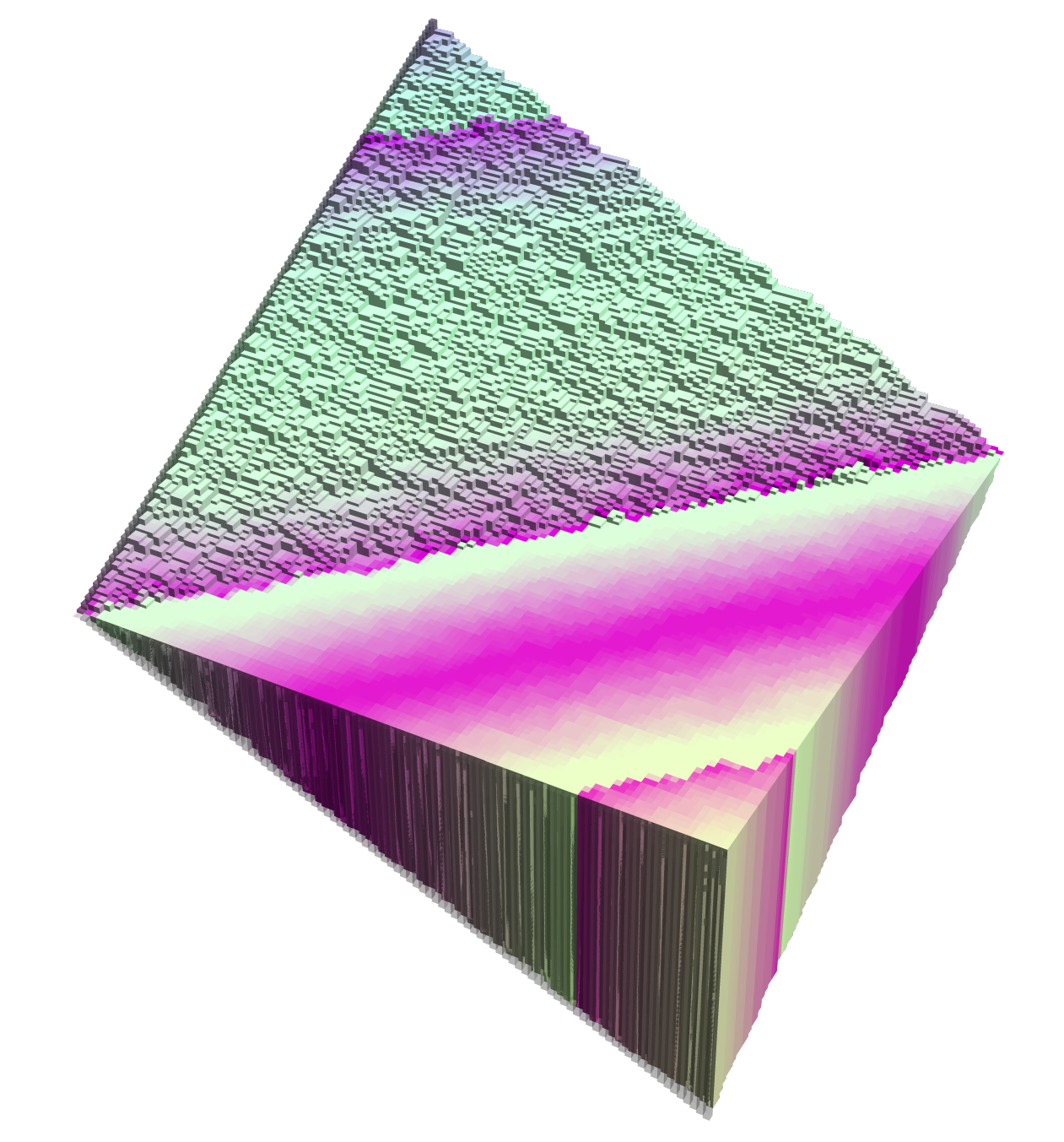}};
    \end{tikzpicture}
\vspace{-0.28in}
    \caption{Simulation of a $(2+1)$\Dim SOS model on a $500 \times 500$ box with slope $\theta=(1,1)$ at inverse-temperature $\beta=3$, with a marked $100\times100$ region zoomed in on the left.}
    \label{fig:tilted-3D}
\end{figure}
\begin{figure}
\vspace{-0.3in}
    \begin{tikzpicture}
   \node (fig1) at (7,0) {
   	\includegraphics[width=0.63\textwidth]{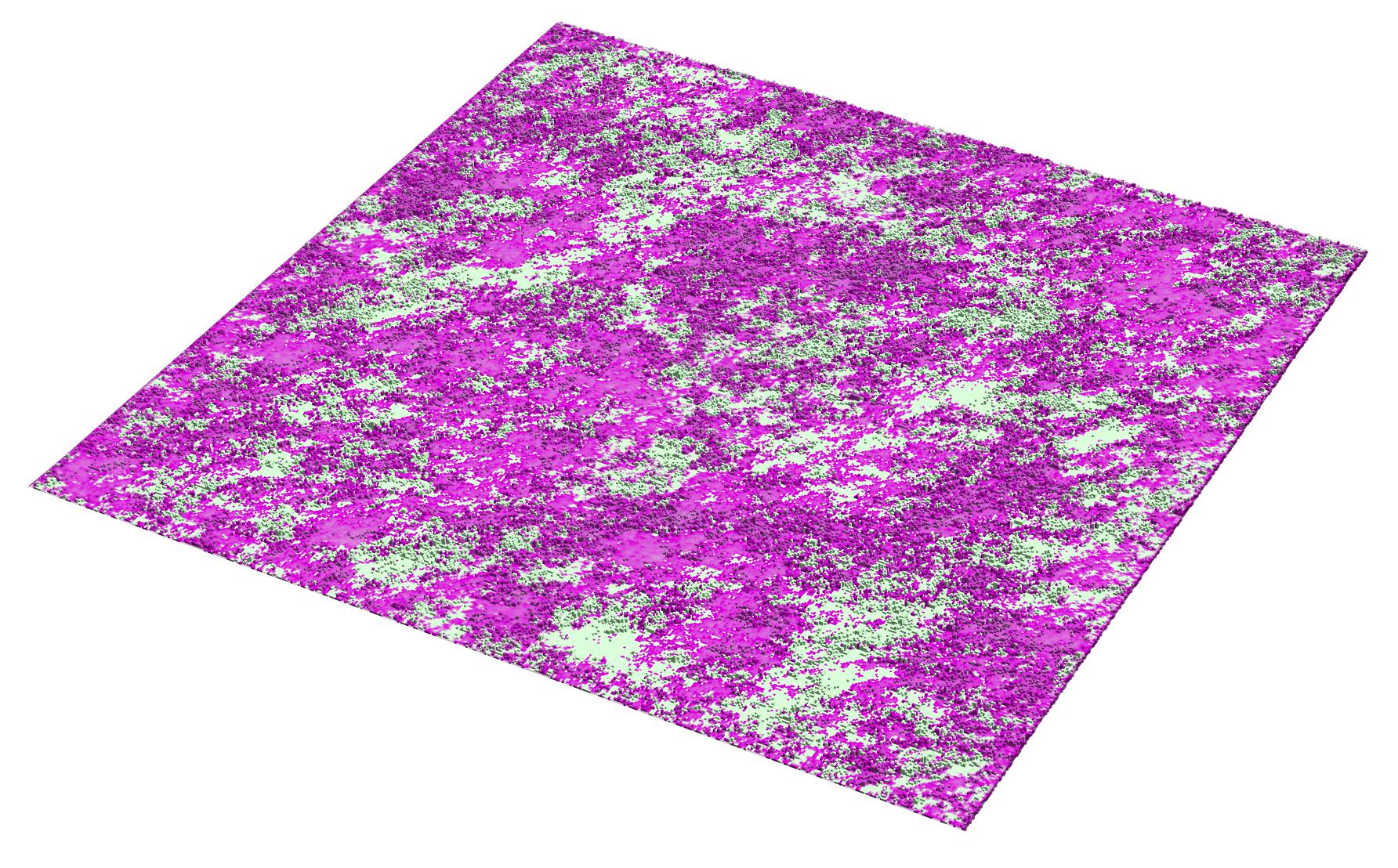}};
    \draw[color=black] (2.45,0.5)--(1.6,1.25);
    %\draw[fill=gray!50] (0.25,2.2) circle (2.);
    \draw[fill=gray!50,opacity=0.35] (2.7,-0.3) circle (0.85);
    \node (fig1z) at (0.2,1.5) {
   	\includegraphics[width=0.4\textwidth]{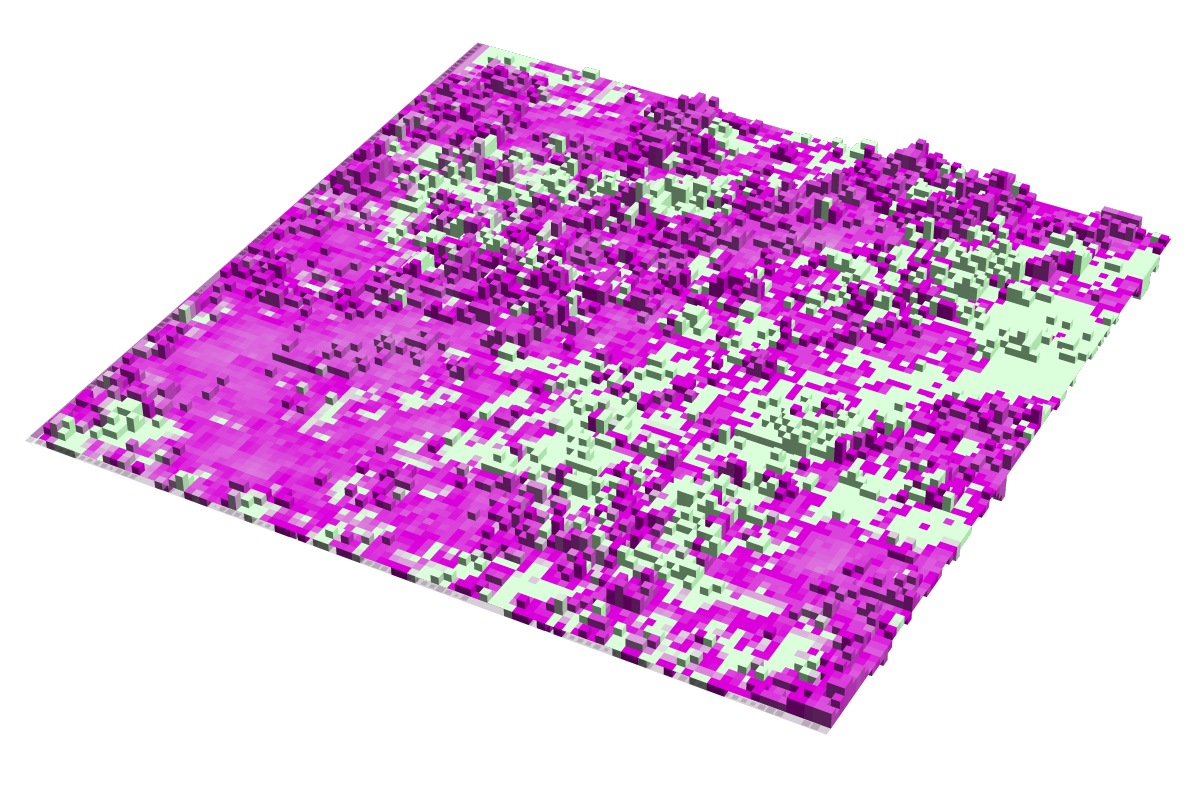}};    
    \end{tikzpicture}
\vspace{-0.3in}
    \caption{The $(2+1)$\Dim SOS surface from \cref{fig:tilted-3D} after centering it about the mean heights.}
    \label{fig:tilted-3D-ctr}
%    \vspace{-0.15in}
\end{figure}

That the surface in the rough phase should have logarithmic fluctuations/correlations appears in many of these works; as for its continuum scaling limit, Fr\"ohlich, Pfister and Spencer~\cite{FPS81} wrote in 1981 (see p.~186 there) that it should be ``\emph{given by massless Gaussian measures}.'' 

Following the landmark result of Kenyon~\cite{Kenyon00} that for 3\Dim Ising  at zero temperature ($\beta=\infty$) under tilted boundary conditions, the scaling limit of the interface is the Gaussian free field (\GFF) (see also \cite[Theorem 15]{Kenyon97}, \cite[Sec.~3]{Kenyon09} and \cite{KOS06} which covers any planar graph), various works asked if this should be the scaling limit also at finite $\beta$; see, e.g., the discussions in~\cite{BGV01,GiulianiToninelli19} and the following excerpt, pertaining to a slope $\theta=(1,1)$, in the recent work of Giuliani, Renzi and Toninelli~\cite{GRT23}:
``\emph{It is very likely that the \GFF behavior survives the presence of a small but positive temperature; however, the techniques underlying the proof at zero temperature, based on the exact solvability of the planar dimer problem, break down.}''

The roughness of the interface in the tilted $(2+1)$\Dim SOS model, at least for an irrational slope $\theta$, is rigorously known (unlike 3\Dim Ising at low temperature), yet this model remains highly nontrivial; e.g., in 2001, Bodineau, Giacomin and Velenik~\cite{BGV01} considered interfaces that have a slope $\theta=(\theta_1,0)$,
stating ``\emph{...the most natural effective model should have
been the \emph{SOS} model. However, only few results have been obtained about the fluctuations of this model, because the singularity of the interaction does not allow to use the techniques based on strict convexity of the potential.}''  
Velenik~\cite{Velenik06}, in his comprehensive survey from 2006 (that also covers $\R$-valued (continuous) models, where there is no parameter $\beta$; see, e.g., the recent work~\cite{ArmstrongWu23}) wrote that, for the SOS model with any nonzero slope, it is expected that 
``\emph{the large-scale behavior of these random interfaces is identical to that of their continuous counterpart. In particular they should have Gaussian asymptotics. This turns out to be quite delicate... I am not aware of a single rigorous proof for finite $\beta$.}''

\begin{figure}
\vspace{-0.1in}
\begin{tikzpicture}
    \node (fig1) at (0,0) {
    \includegraphics[width=0.32\textwidth]{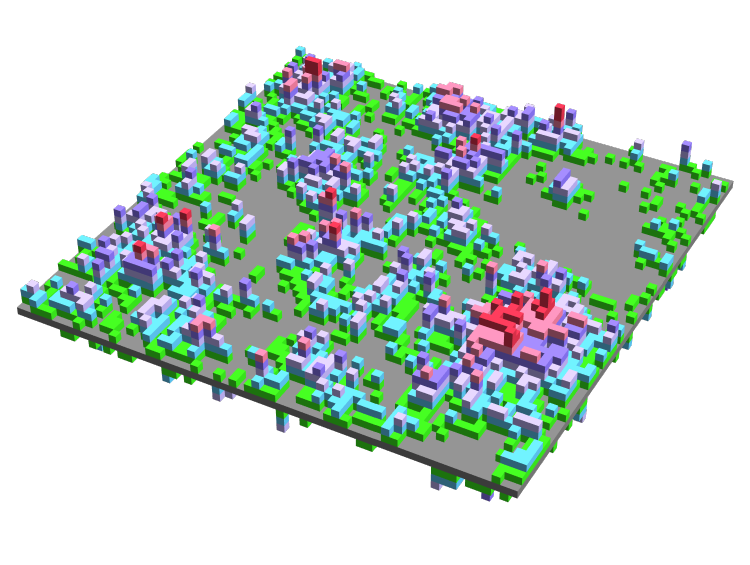}};
    \node [font=\scriptsize] at (-1,-1.5) {$\beta=0.2$}; 
    \node (fig2) at (5,-0.2) {
    \includegraphics[width=0.32\textwidth]{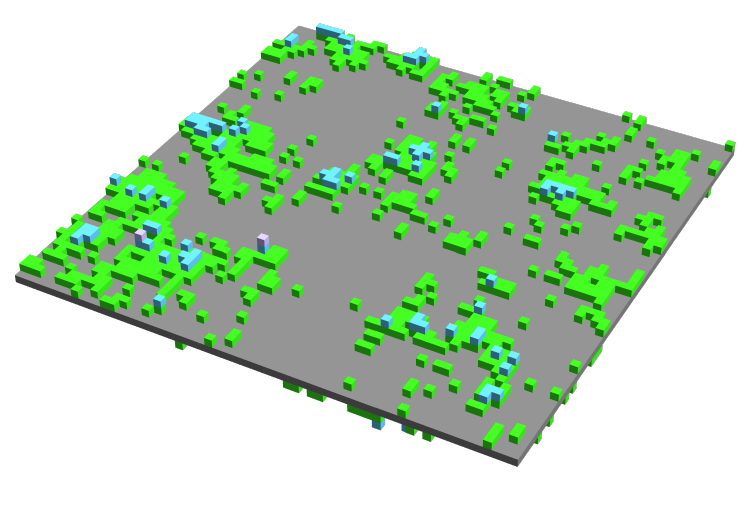}};
    \node [font=\scriptsize] at (4,-1.5)
    {$\beta=0.6$}; 
    \node (fig3) at (10,-0.3) {
    \includegraphics[width=0.32\textwidth]{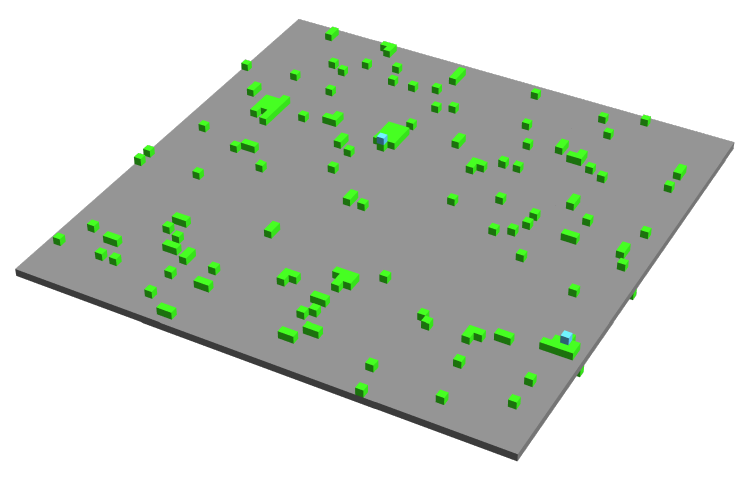}};
    \node [font=\scriptsize] at (9,-1.5) {$\beta=1.0$}; 
\end{tikzpicture}
\vspace{-0.15in}
    \caption{Roughening transition: $(2+1)$\Dim Solid-On-Solid on a $64\times 64$ box at different~$\beta$ with flat boundary conditions; sampled via $10^4$ steps of Glauber dynamics started at all-$0$.}
    \label{fig:roughening}
    \vspace{-0.1in}
\end{figure}

Let us now formally describe what is known on roughening in the SOS model (depicted in \cref{fig:roughening}). 
The roughening transition can be observed in the behavior of $\Var(h(x))$ as $|x|\to\infty$, where~$|x|$ is the Euclidean distance of the site $x$ from the origin $o$ (at which the surface is rooted at $0$). In 1981, Fr\"ohlich and Spencer~\cite{FrohlichSpencer81b} famously proved that $\Var(h(x))\asymp \log |x|$ at small $\beta$, as in the \GFF, the conjectured scaling limit in this regime. 
Conversely, a Peierls-type argument (see Brandenberger and Wayne~\cite{BrandenbergerWayne82}) shows $\Var(h(x))=O(1)$ at large $\beta$. (A recent breakthrough result by Lammers~\cite{Lammers22} proved sharpness of this transition: for some~$\beta_{\textsc r}$, the former holds for $\beta\leq \beta_{\textsc r}$, the latter for $\beta>\beta_{\textsc r}$.)

The $|\nabla\phi|^p$-models ($p\geq 1$) generalize SOS (the case $p=1$) into $\P_\beta(h)\propto \exp[-\beta\sum_{e\in\cE}|\nabla h(e)|^p]$. 
All these models are expected to demonstrate similar behavior, including the roughening transition; and while the Fr\"ohlich--Spencer argument only applies to $p=1,2$, the Peierls argument of \cite{BrandenbergerWayne82} (cf.~\cite{GMM73}) applies to all $p\geq 1$. (See \cite{LMS16} for more on similarities between these models at large~$\beta$.) However, this Peierls argument, albeit quite robust, only addresses contractible level line loops, and thus ceases to imply rigidity at large $\beta$ \emph{when the surface is positioned on a slope}.

\begin{figure}
\vspace{-0.15in}
\begin{tikzpicture}
    \node (fig1) at (0,0.1) {
    \includegraphics[width=0.225\textwidth]{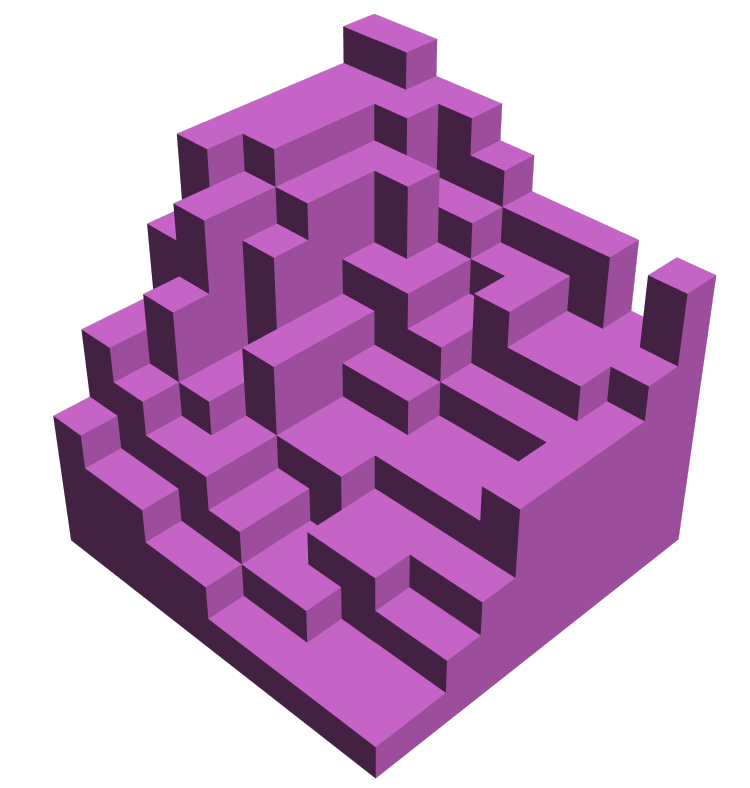}};
    \node (fig2) at (4.7,0) {
    \includegraphics[width=0.25\textwidth]{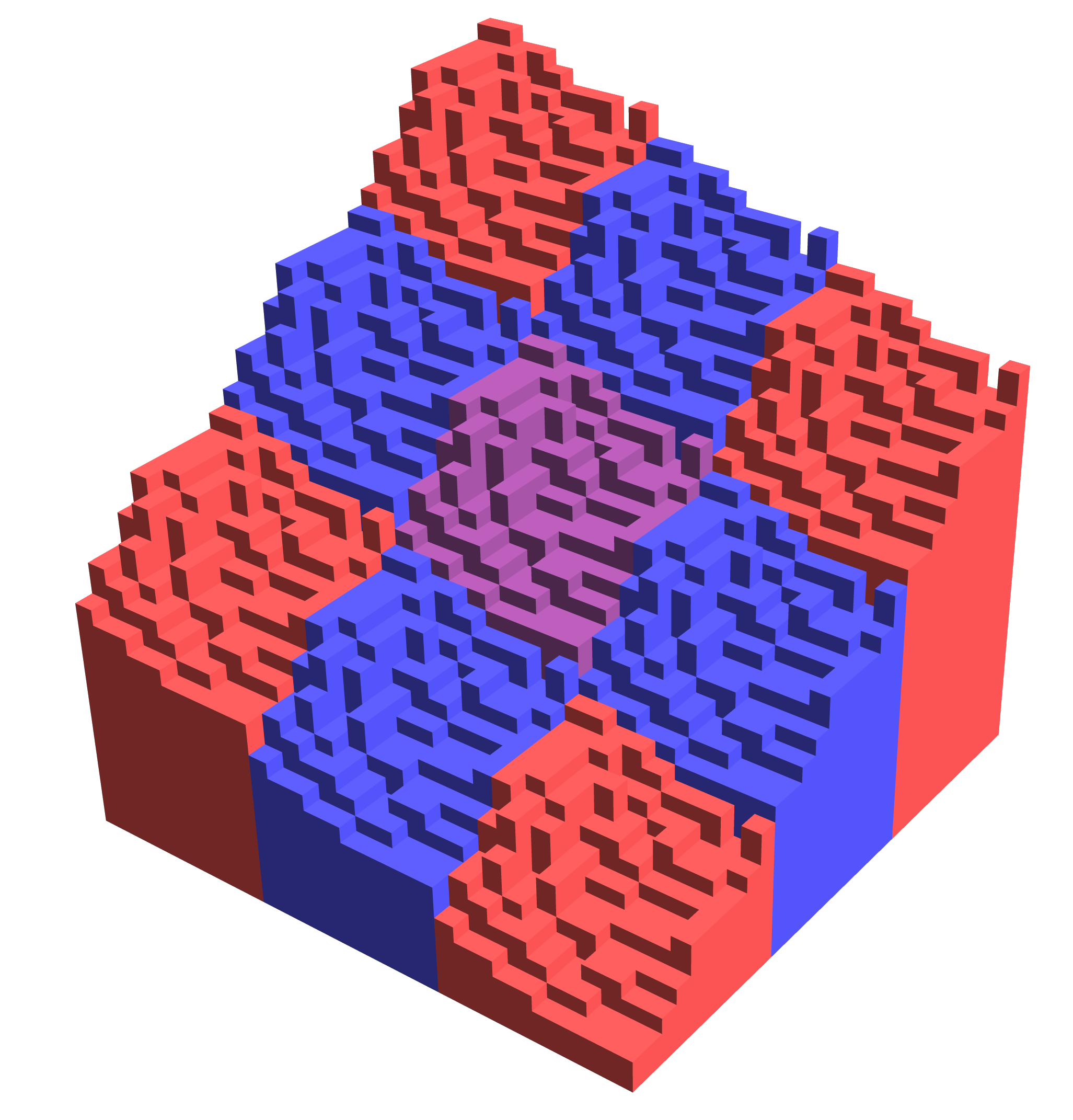}};
    \node (fig3) at (10,-0.2) {
    \includegraphics[width=0.3\textwidth]{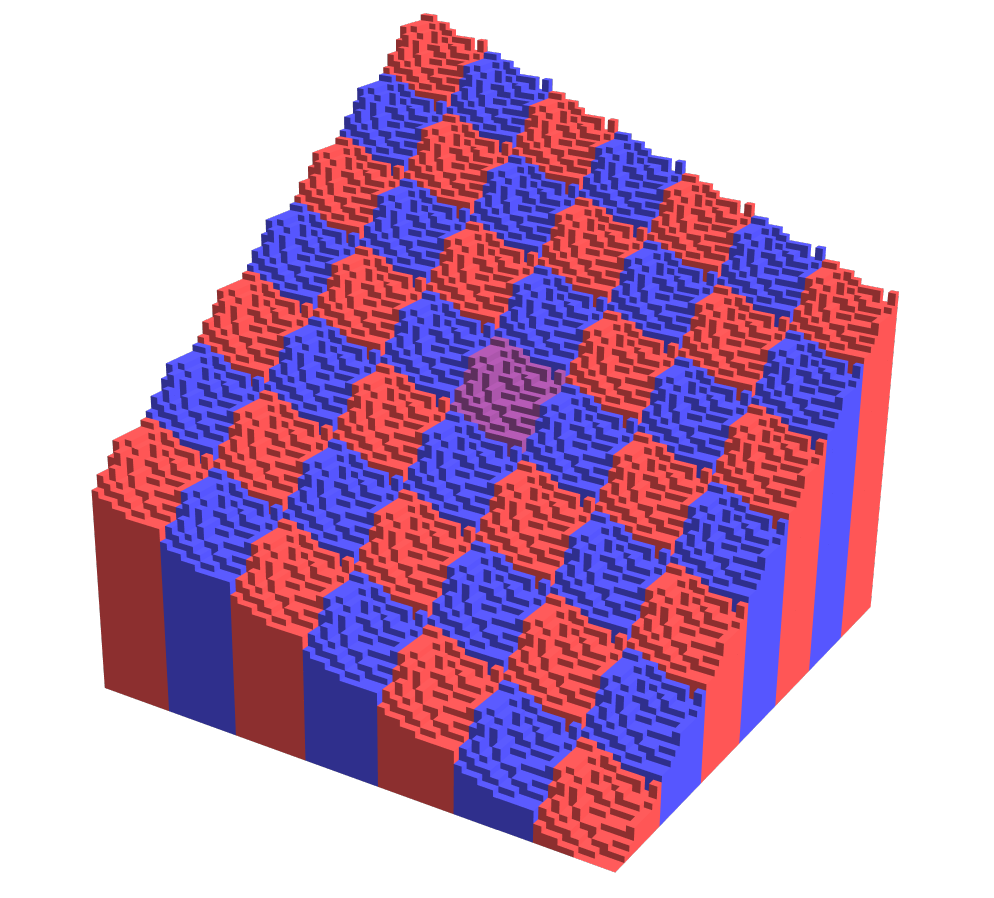}};
    % \includegraphics[width=0.275\textwidth]{fig_sos_tilt1a.pdf}};
    % \node (fig2) at (4.7,0) {
    % \includegraphics[width=0.3\textwidth]{fig_sos_tilt2.pdf}};
    % \node (fig3) at (10,-0.2) {
    % \includegraphics[width=0.35\textwidth]{fig_sos_tilt3.pdf}};
\end{tikzpicture}
\vspace{-0.25in}
\caption{SOS configuration on a $10\times 10$ torus, extended periodically with slope $\theta=(1,1)$.}
\label{fig:periodic-bc}
\vspace{-0.15in}
\end{figure}

We now define the precise setup of periodic boundary conditions for $(2+1)$\Dim SOS with slope~$\theta$.
The $(2+1)$\Dim SOS model on the $N\times N$ torus $\Lambda_N$ with slope $\theta=(\theta_1,\theta_2)$, which without loss of generality has $\theta_1,\theta_2\geq 0$, is formally obtained by extending the function $h$ to $\Z^2$ periodically, decreasing it by $\lfloor \theta_1 N\rfloor$ along the $(1,0)$-direction and by $\lfloor \theta_2 N\rfloor$ in the $(0,1)$-direction (see 
\cref{fig:periodic-bc}). That is, one views $h$ as a full plane height function, where for every face $x=(x_1,x_2)$ of $\Z^2$,
\begin{equation}\label{eq:h-periodic-bc}
 h(x_1+N,x_2) = h(x_1,x_2) -\lfloor \theta_1 N\rfloor\quad\mbox{and}\quad h(x_1,x_2+N) = h(x_1,x_2)-\lfloor \theta_2 N\rfloor\,.
\end{equation}
Equivalently, one restricts the height functions $h$ on the torus to those where $\sum \nabla h(\vec e_i) =-\lfloor \theta_1 N\rfloor $ for every non-contractible loop of directed edges $(\vec e_i)$ in the $(1,0)$-direction, and $\sum \nabla h(\vec e_i) =-\lfloor\theta_2 N\rfloor$ for non-contractible loops in the $(0,1)$-direction (and $\sum\nabla h(\vec e_i)=0$ for every contractible loop). (Formally, in this formulation one only considers the $1$-form/vector-field $\nabla h$ as opposed to $h$.)
From~this definition, we see that the SOS measure on the torus with slope $\theta$ is translation-invariant. 

The slope $\theta=(\theta_1,\theta_2)$ deterministically induces $(\theta_1+\theta_2)N$ macroscopic level line loops in the SOS surface (see \cref{subsec:setup} for a formal definition), each one a non-contractible loop, thus unaddressed by the rigidity Peierls argument (which is only applicable to contractible level line loops).
Indeed, these interacting macroscopic level lines (which cannot cross one another) break the surface rigidity and are conjectured to yield a \GFF scaling limit (as also conjectured for the flat setup at small $\beta$).

A beautiful result of Sheffield~\cite{Sheffield05} from 2005 shows that the SOS surface with slope $\theta$ on a torus is rough when $\theta=(\theta_1,\theta_2)$ is \emph{irrational} in one of its coordinates. Precisely, via a highly nontrivial application of the Ergodic Theorem, Sheffield proved that there does not exist an ergodic and translation-invariant Gibbs measure $\mu_\theta$ for the height function $h$ with an irrational slope $\theta$ (where $\theta_i(\mu) := \E_\mu \nabla h(\vec e_i)$ for $\vec e_1=(o,o+(1,0))$, $\vec e_2= (o,o+(0,1))$; both are integrable by assumption). It thus follows that $\Var(h(x))\to\infty$, since otherwise having $\Var(h(x))=O(1)$ would have implied (through tightness and routine reasoning) the existence of an ergodic and translation-invariant subsequential local limit. 
Sheffield's argument is remarkably general: it is applicable to a wide family of nearest-neighbor translation-invariant models of integer height functions on $\Z^2$, including, e.g., all $|\nabla\phi|^p$ ($p\geq 1$) models; thus, the random surfaces in all these models are rough when the slope $\theta$ is irrational. However, this argument gives no bound on the rate of $\Var(h(x))$ as  $|x|\to\infty$.

In recent years, significant progress was made in the study of $(2+1)$\Dim $|\nabla\phi|^p$ models in the flat setup $(\theta=0)$ (cf., the aforementioned work of Lammers~\cite{Lammers22} on the phase transition in SOS; and the recent breakthrough of Bauerschmidt et al.~\cite{BPR22a,BPR22b} on the convergence of the Discrete Gaussian model (the case $p=2$) to a \GFF at high enough temperatures). Lammers and Ott~\cite{LammersOtt24} recently showed that at high enough temperature, Sheffield's result extends to any slope (i.e., including rational ones). More recently, Ott and Schweiger~\cite{OttSchweiger25} showed that (for $|\nabla\phi|^p$ models for $0< p\leq 2$) the height of the origin diverges logarithmically in $N$ at small $\beta$.
However, at low temperature and with a nonzero slope $\theta$, the only known result remains the work of Sheffield~\cite{Sheffield05} described above.

In this work we study the $(2+1)$\Dim SOS model of \cref{eq:SOS-model} on a torus with slope $\theta=(\theta_1,\theta_2)$, for any $\theta_1,\theta_2>0$, with a pinning-type potential $\sV$ (out of a given family of potentials defined below):
\begin{equation}\label{eq:tilted-sos} \P_{\beta,\lambda}(h) =  \frac1{\Zsos_{N,\beta,\lambda}} 
\exp \left[ -\beta|h|- \lambda \sV(h) \right]\,.
\end{equation}
We stress that the potential strength
$\lambda>0$ can be taken as $\epsilon_\beta$, arbitrarily small for large enough~$\beta$ (see \cref{rk:dependency_beta_lambda} for a discussion of the allowed rate, which is inverse-polynomial in $\beta$), and that the potential $\sV(\cdot)$ can be taken to be zero on all but at most an $\epsilon_\beta$-fraction of the sites;
thus, one expects the surface behavior to be governed by its energy $\beta|h|$ and entropy, as in~$\lambda=0$.

The $(2+1)$\Dim SOS model with flat boundary conditions ($\theta=0$) was studied in detail under a pinning potential (see the survey~\cite{IoffeVelenik18} and the recent works by Lacoin~\cite{Lacoin18,Lacoin20}) that rewards every face of $h$ intersecting a preset plane, typically the slab at height $0$---the ground state of the model.
The SOS model with a nonzero slope $\theta$, on the other hand, has exponentially many ground states (monotone surfaces, as we explain next); accordingly, our family of pinning potentials will reward the overlap of $h$ not with a predetermined ground state, but rather with its ``closest'' ground state. 

We say that an SOS height function $h$ is a \emph{monotone surface} if $h(x_1,x_2)\geq h(y_1,y_2)$ whenever $y_1\geq x_1$ and $y_2\geq x_2$. We will often denote a monotone surface by $\varphi$ or $\psi$, and refer to it as a \emph{tiling}, owing to the well-known bijection between monotone surfaces in $\Z^2$ and lozenge tilings of the triangular lattice $\T$ (as well as with dimers in the hexagonal lattice; see below for more details). We impose on the tiling the same periodic conditions with slope $\theta$ as in \cref{eq:h-periodic-bc}. It is easy to see that, out of all periodic SOS surfaces $h$ with slope $\theta$, tilings minimize $|h|$, the number of faces (recalling $|h|=N^2+\sum|\nabla h(e)|$); that is, tilings are the ground states for the model at $\lambda=0$. 

The potentials $\sV$ studied here will pin $h$ to $\psi_0$, the closest tiling to it in terms of common faces. (We do not root $\psi_0$ to be $0$ at the origin, as it will intersect $h$ which is already rooted.) 
We give two concrete examples of such a $\sV$ before describing the general family of potentials considered.
\begin{example}[pinning to a closest tiling] \label{def:pinning-V1}
Let $\psi_0$ be a tiling that minimizes $|\psi_0\setminus h|$ (arbitrarily chosen if multiple exist) out of all those that satisfy \cref{eq:h-periodic-bc}. (Equivalently, $\psi_0$ maximizes $|h\cap \psi_0|$). Let $\{S_i\}$ be the connected components of faces of $\psi_0\setminus h$, and set
\begin{equation}\label{eq:pinning-V}\sV_1(h) = \sum |S_i|^2\,.\end{equation}
% Consider all tilings $\psi_0$ satisfying \cref{eq:h-periodic-bc}, and set
% \begin{equation}\label{eq:pinning-V}\sV_1(h) = \min_{\psi_0} |\psi_0 \setminus h|\,.\end{equation}
% (This is equivalent to taking $\sV_1(h) = -\max_{\psi_0} |h\cap\psi_0|$; every face in $h\cap\psi_0$ collects a reward of $\lambda$.)
\end{example}
%Note that when $\theta=0$ (no slope), the only tiling $\psi_0$ compatible with the boundary conditions is the flat one, whence $\sV_1$ coincides with the classical pinning potential (rewarding faces at height $0$). 
\begin{example}[truncated pinning to a tiling]\label{def:pinning-V2}
Let $\psi_0$ be a minimizer of $|\psi_0\setminus h|$ (arbitrarily chosen if multiple exist), and let $\{S_i\}$ be the connected components of faces of $\psi_0 \setminus h$. Set
\begin{equation}\label{eq:trunc-pinning-V}\sV_2(h) = \sum \one_{\{|S_i|\geq 1000\}} |S_i|\log|S_i| \,.\end{equation}
\end{example}
\begin{definition}[family of pinning potentials]\label{def:pinning-V}
Consider a tiling $\psi_0$ minimizing $|\psi_0\setminus h|$ (a canonical way to break ties will be given in \cref{prop:minimizers}). The potential $\sV$ can be any function $\ff$  of a (maximal) connected component $S$ of $\psi_0\setminus h$, which grows faster than linearly in $|S|$. Precisely, 
\begin{equation}\label{eq:gen-potential}\sV(h)=\sum \ff(S_i)\quad\mbox{for some function $\ff\geq 0$ satisfying }\quad \lim_{|S|\to\infty}\frac{\ff(S)}{|S|}=\infty\,,\end{equation}
where the sum goes over all the connected components $S_i$ of $\psi_0\setminus h$, and writing $\lim_{|S|\to\infty} \frac{\ff(S)}{|S|}=\infty$ is short for stating that $\ff(S_k)/|S_k|\to\infty$ holds for any sequence of subsets $(S_k)$ with $|S_k|\to\infty$. 
\end{definition}
(In \cref{def:pinning-V1,def:pinning-V2} we took $\ff(S)=|S|^2$ and $\ff(S)=\one_{\{|S|\geq 1000\}}|S|\log |S|$ resp., though one can take $\sV$ where $\ff$ depends on the geometry of $S$ beyond simply $|S|$, e.g., $\ff(S)=\diam(S)^3$.)
%(\Cref{def:pinning-V1,def:pinning-V2} had $\ff$ be a function only of $|S|$ (namely $|S|^2$ and $\one_{\{|S|\geq 1000\}}|S|\log|S|$), but one can also let the geometry of the component $S$ have an effect on $\ff$.)
Observe that~$\sV$ may ignore any component $S_i$ of $\psi_0\setminus h$ with fewer than $\sM_0$ faces for some absolute constant~$\sM_0$; i.e., it may penalize $h$ only for large regions missed by the best approximating ground state $\psi_0$.
Such a potential $\sV$ adds just enough noise to the model---even at $\lambda=\epsilon_\beta$---to treat close ground states with atypically large ``frozen'' regions, foiling the analysis of interacting tilings.

\begin{figure}
\vspace{-0.1in}
\begin{tikzpicture}
    \node (fig1) at (0,0) {
    \includegraphics[width=0.28\textwidth]{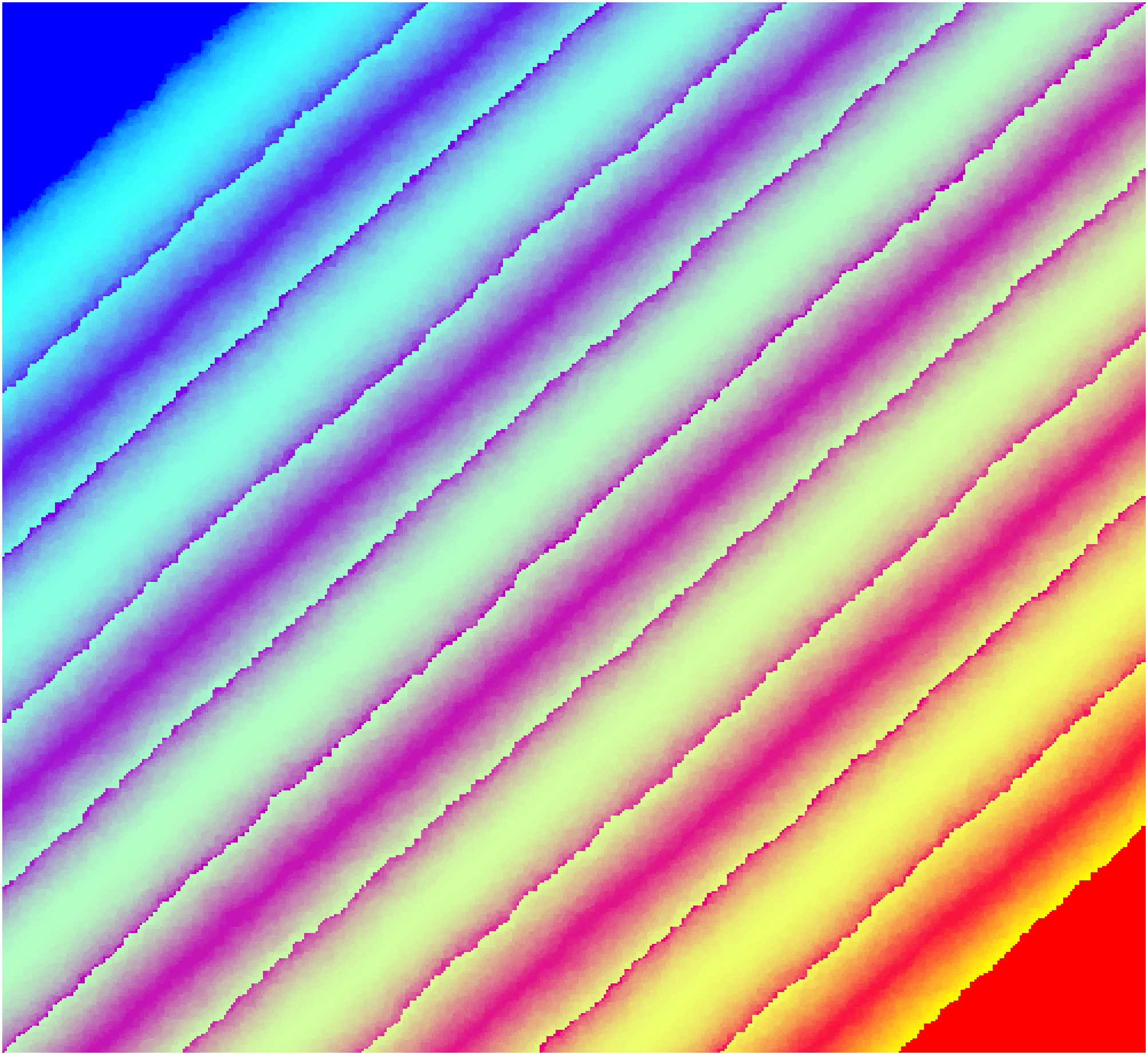}};
    \node (fig2) at (6,0) {
    \includegraphics[width=0.28\textwidth]{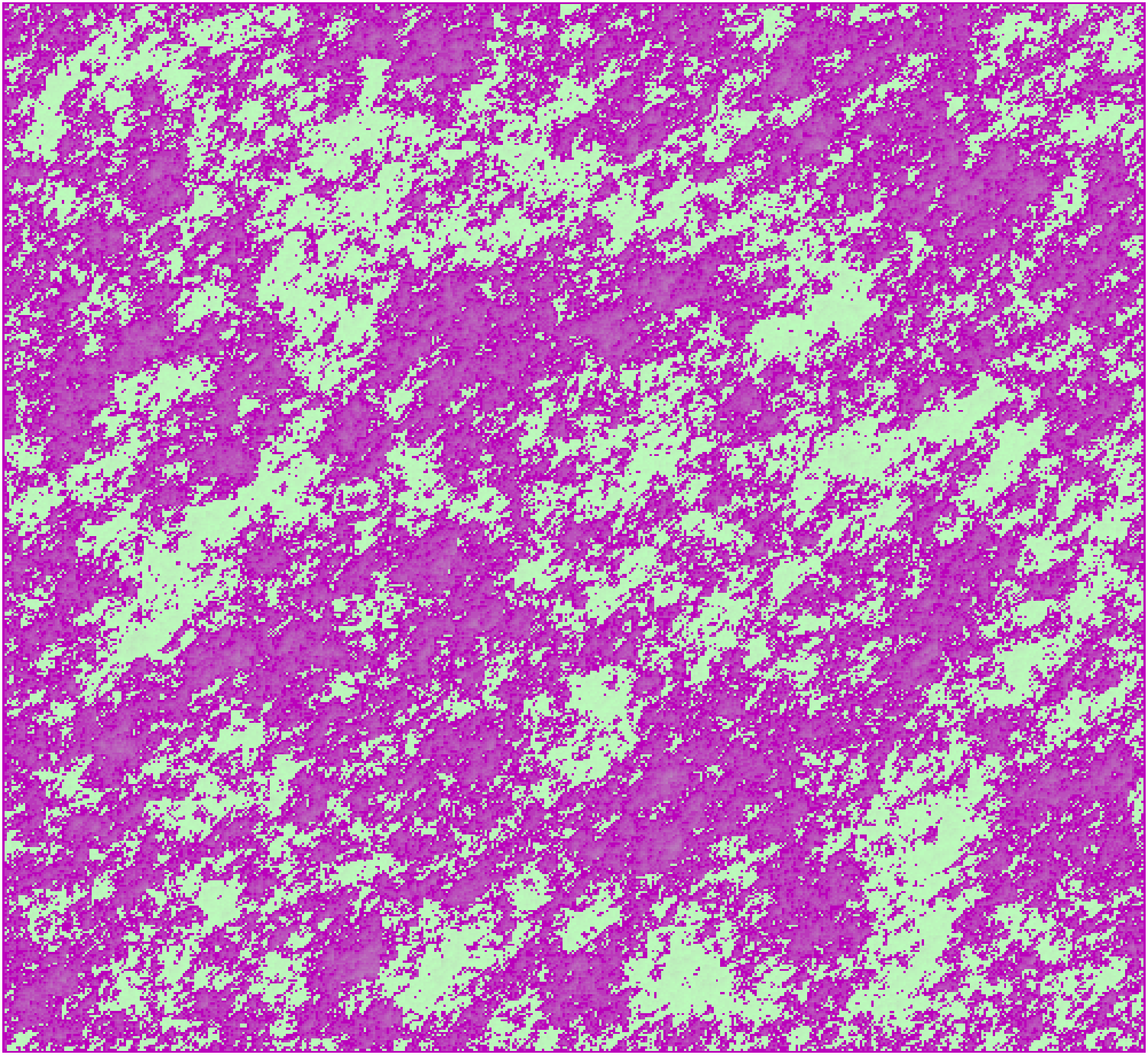}};
\end{tikzpicture}
\vspace{-0.15in}
\caption{Level line comparison of the SOS surface vs.\ its centering from \cref{fig:tilted-3D,fig:tilted-3D-ctr}.}
\label{fig:level-lines}
\vspace{-0.1in}
\end{figure}

\subsection{Main results}
Our motivation was to recover the asymptotic rate of $\Var(h(o))$ in the SOS model of \cref{eq:tilted-sos} at large $\beta$ and a nonzero slope $\theta$ (which, to our knowledge, is not known for any 
$|\nabla\phi|^p$ model with any added potential (of strength $\epsilon_\beta$, so as not to compete with energy/entropy)).
Our main result obtains this, and moreover establishes that (a) the law  on $\nabla h$ around the origin $o$ converges to a full plane limit $\mu_\infty$; and (b) the scaling limit of $\mu_\infty$ converges to a full plane \GFF. 
%
% Recall that the \GFF on a bounded domain $\cD\subset\R^2$ is a random function of the form $\sum_{i=1}^\infty\xi_i u_i$ where $\{u_i\}$ is an orthonormal basis for a Hilbert space $H^1_0(\cD)$ with Dirichlet inner product, and $\{\xi_i\}$ are i.i.d.\ standard Gaussian variables. A sequence of random functions $h_n$ on $\cD$ with $\E[h_n]=0$ is said to converge to the \GFF if for every fixed $k$, the vector $(\left<h_n,f_i\right>)_{i=1}^k$ converges in law to the appropriate multidimensional Gaussian distribution for every compactly-supported $C^\infty$ functions $f_1,\ldots,f_k$. The full plane \GFF is derived as a local limit of the \GFF on domains $\cD$ as above.
Recall that the full plane \GFF is defined as a centered Gaussian stochastic process indexed by smooth compactly supported $\C\to\R$ functions with $0$ integral and covariance
\[
\Cov\big( \GFF(f_1), \GFF(f_2)\big) := -\frac{1}{2\pi}\int f_1(u) f_2(v) \log |u-v| \d u \d v\,,
\]
where $\GFF(f)$ is the value of the process at the function $f$, as in the expectation for a distribution\footnote{Informally, one can think of the \GFF as the Gaussian process on $\C$ with $\Cov(\GFF(u), \GFF(v) ) = -\frac{1}{2\pi}\log|u-v|$ but the fact that $\log$ diverges at $0$ means it does not make sense pointwise, hence the definition using test functions.}.
\begin{maintheorem}\label{thm:GFF-convergence}
For every $\lambda>0$ and $\theta_1,\theta_2>0$ there exists $\beta_0$ so the following holds for every $\beta>\beta_0$. Consider the $(2+1)$\Dim \SOS model of \cref{eq:tilted-sos}, for any potential function $\sV$ as per \cref{def:pinning-V}, on $\Lambda_N$, an $N\times N$ torus with slope $\theta=(\theta_1,\theta_2)$ as defined in \cref{eq:h-periodic-bc}. Then
\begin{enumerate}[(a)]
\item \label{it:weak-limit} The law on the \SOS gradients $\nabla h$ converges weakly, as $N\to\infty$, to a full plane limit $\mu_\infty^{\beta,\lambda}$.
\item \label{it:scaling-limit}
Sample $h\sim \mu_\infty^{\beta,\lambda}$.  There exist $\sigma>0$ and a linear map $\fL$ on $\R^2$ (nonrandom) such that
\[ h(n\, \cdot)-\E [h(n\,\cdot)]\xrightarrow[]{\;\mathrm{d}\;} \sigma \GFF \circ \fL\quad\mbox{as $n\to\infty$}\] 
(viewing both sides as stochastic processes indexed by test functions with $0$ integral).
\end{enumerate}
\end{maintheorem}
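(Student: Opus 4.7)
The plan is to reduce the tilted \SOS measure of \cref{eq:tilted-sos} to a weakly interacting dimer model on the hexagonal lattice and then invoke (a suitable strengthening of) the fermionic renormalization-group analysis of Giuliani--Mastropietro--Toninelli to identify the \GFF as its scaling limit. The starting observation is that each height function $h$ can be decomposed as the pair $(\psi_0(h),\eta)$, where $\psi_0(h)$ is the canonical tiling minimizer of \cref{def:pinning-V} and $\eta$ records the faces of the symmetric difference $h\xor\psi_0$. First I would show, via a Peierls-type estimate combined with the potential $\sV$, that at large $\beta$ the connected components of $\eta$ have exponential tails in size: the usual obstruction to Peierls---the $(\theta_1+\theta_2)N$ macroscopic non-contractible level-line loops forced by the slope---is absent here because the slope has been absorbed into $\psi_0$, so each remaining component of $\eta$ is contractible and can be flipped at energy cost at least $c\beta\cdot(\mathrm{perimeter})$. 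The role of $\sV$ is precisely to rule out the exceptional configurations where a large frozen region of $\psi_0\setminus h$ would be energetically favorable.

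Second, I would sum over $\eta$ at fixed $\psi_0$ via a polymer/cluster expansion to produce an effective measure
\[
\nu(\psi_0)\;\propto\; \exp\!\bigl[-\beta|\psi_0|+ U(\psi_0)\bigr],\qquad U(\psi_0)=\sum_X \Phi_X(\psi_0),
\]
where $X$ ranges over connected subsets of $\Lambda_N$ and the polymer activity satisfies a bound of the form $|\Phi_X|\lesssim e^{-c\beta\,\diam(X)}$; thus $\nu$ is a weak but long-range multiplicative perturbation of the uniform lozenge-tiling measure with slope $\theta$. Under the standard bijection between monotone surfaces and dimer covers of the hexagonal lattice, $\nu$ becomes a weakly interacting dimer model with summable many-body interactions.

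Third, the uniform dimer measure of slope $\theta$ is a determinantal (free-fermion) point process, and by Kenyon--Okounkov--Sheffield its height scaling limit is a \GFF composed with a slope-dependent linear map $\fL$. I would then deploy a multiscale fermionic RG \`a la \cite{GMT17}: the bare interaction $U$ is marginal, the flow of the running couplings is contractive except for a single marginal coupling that only renormalizes the variance $\sigma^2$, and the resulting two-point functions match those of $\sigma\,\GFF\circ\fL$. Part~\eqref{it:weak-limit} would then follow from uniqueness of the infinite-volume Gibbs state for this perturbed dimer model together with the locality of the $(\psi_0,\eta)$ decomposition; part~\eqref{it:scaling-limit} would follow from the fact that $\eta$ has $O(1)$ variance with exponentially decaying correlations, and is therefore invisible on the $\log|x|$ scale of $\psi_0$.

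The main obstacle, as announced in the abstract, is that the polymer activities $\Phi_X$ are not of bounded range: although they decay exponentially in $\diam(X)$, any finite-range truncation would destroy the integrating-out performed in the second step. The \cite{GMT17} RG machinery was developed under a bounded-range assumption, and extending it to arbitrary summable (exponentially decaying) many-body interactions will be the hardest part: one must reprove the tree-graph bounds and the closure of the RG flow in a weighted norm that tracks the spatial spread of each interaction term through every scale, and verify that the single marginal coupling keeps its dimer-model fixed-point structure under such long-range perturbations. A secondary subtlety is the canonical tie-breaking for $\psi_0$ in \cref{def:pinning-V} when minimizers are non-unique, which has to be arranged so that $h\mapsto(\psi_0,\eta)$ is local enough for the cluster expansion of Step~2 to converge uniformly in $N$.
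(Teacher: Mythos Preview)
Your outline captures the high-level architecture correctly (reduce to a weakly interacting tiling, extend GMT to long-range interactions, argue the correction is invisible at the $\log$ scale), but the specific mechanism you propose for Step~2 has a genuine gap, and what you flag as a ``secondary subtlety'' is in fact the central obstruction.

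The map $h\mapsto\psi_0(h)$ is \emph{globally} defined: it is the solution of a minimization problem over all tilings, and a local change in $h$ can shift $\psi_0$ over arbitrarily large regions (the paper's \cref{fig:bubble-grp-non-monotone} already illustrates that deleting a single defect can create new faces in $\psi^\sqcup\setminus\psi^\sqcap$ far away). Consequently, the constraint ``$\psi_0(h)=\psi_0$'' couples the components of $\eta$ to one another in a way that is neither local nor weak, and a polymer/cluster expansion summing over $\eta$ at fixed $\psi_0$ is not expected to converge. The paper addresses this directly (see the beginning of \cref{sec:sketch-step-1}): even after replacing the deterministic $\psi_0$ by a \emph{random} approximating tiling $\varphi$ (with $\P(\varphi\mid h)\propto e^{\alpha|h\cap\varphi|}$, which already sidesteps the hard constraint that $\varphi$ be a minimizer), cluster expansion still fails for the joint measure because of the exponentially many ground states and the resulting long-range interactions. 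Relatedly, your Peierls claim that flipping a component of $\eta$ costs $c\beta\cdot(\text{perimeter})$ is not correct as stated: the energy is $\beta|h|$, and \cref{prop:alg} shows there exist configurations with $|h|\approx|\varphi|$ whose best tiling approximation still has tiny overlap---so neither $H_h$ nor $G^\gr$ alone controls the component size, which is precisely why the potential $\sV$ is needed and why its role is more than ruling out ``exceptional'' configurations.

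What the paper does instead: it works with the random $\varphi$, then replaces cluster expansion by a repeated \emph{free energy expansion} $\log Z_\beta=\log Z_\infty+\int_\beta^\infty\E_{\hat\beta}[H]\,\d\hat\beta$ (\cref{lem:grimmett}), which expresses $\P(\varphi)$ in terms of three auxiliary measures $\mu,\nu,\pi$ (\cref{prop:P(phi)-via-3-measures}). Locality of these measures is then established not by combinatorial expansion but by \emph{Markov chain contraction} arguments (Propositions~\ref{prop:mu-contraction}, \ref{prop:nu-contraction}, \ref{prop:pi-contraction}): one shows that Glauber-type dynamics adding/deleting ``bubble groups'' mixes faster than information propagates. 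The hardest part is $\pi=\P(h\mid\varphi)$, which requires first a delicate geometric analysis of minimizers (\cref{sec:geometry-of-minimizers}) to define bubble groups that are simultaneously local, monotone under deletion, and entropy-controlled, then the approximation algorithm of \cref{sec:alg}, and finally a multi-scale dynamics (\cref{sec:pi}) where moves occur at all scales to escape bottlenecks created by dense regions. Your identification of the GMT extension to summable long-range interactions as the other main difficulty is accurate, and the paper's resolution (\cref{thm:gmt-refinement}, \cref{sec:gmt-refine}) is close in spirit to what you describe.
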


As the \GFF is a distribution, \cref{thm:GFF-convergence} does not provide asymptotics of finite moments of the \SOS height function $h$. However, building on its proof, one can derive those, as demonstrated next.
\begin{maincoro}\label{cor:variance}
In the setting of \cref{thm:GFF-convergence}, there exists $c>0$ such that 
\[ \Var(h(x))=(c+o(1))\log|x|\quad\mbox{as $|x|\to\infty$}\,.\]
\end{maincoro}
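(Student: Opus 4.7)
The strategy is to bootstrap from the distributional \GFF convergence of \cref{thm:GFF-convergence}\ref{it:scaling-limit} to a pointwise two-point asymptotic. Since $h(o)=0$, we have $\Var(h(x))=\Var(h(x)-h(o))$, which depends only on gradients. By the weak convergence of the gradient law to $\mu_\infty^{\beta,\lambda}$ (part~\ref{it:weak-limit}) and uniform exponential-tail estimates on SOS gradients at large $\beta$ (which yield uniform integrability of the squared gradient sum along any lattice path from $o$ to $x$),
\[
\Var(h(x)) = \Var_{\mu_\infty^{\beta,\lambda}}\!\bigl(h(x)-h(o)\bigr) + o(1) \qquad (N\to\infty).
\]

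The key input is a sharp pointwise asymptotic for the height two-point function under $\mu_\infty^{\beta,\lambda}$. The renormalization-group framework that drives \cref{thm:GFF-convergence}---the long-range extension of the [GMT17] scheme developed in this paper---yields, beyond the weak scaling limit, an expansion
\[
\Var_{\mu_\infty^{\beta,\lambda}}\!\bigl(h(x)-h(y)\bigr) = c\log|x-y| + O(1), \qquad |x-y|\to\infty,
\]
for an explicit $c=c(\beta,\lambda,\theta)>0$ determined by the parameters $(\sigma,\fL)$ of the \GFF limit. This matches the formal \GFF identity $\Var\bigl(\sigma\,\GFF\circ\fL(x)-\sigma\,\GFF\circ\fL(y)\bigr)\sim\tfrac{2\sigma^2}{\pi^2}\log|\fL(x-y)|$ up to a bounded discrete-to-continuum correction. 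Specializing to $y=o$ gives $\Var(h(x))=c\log|x|+O(1)$, hence the asserted $(c+o(1))\log|x|$.

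The principal obstacle is establishing the pointwise two-point asymptotic. The distributional convergence in part~\ref{it:scaling-limit} only controls integrals of $h$ against smooth zero-mean test functions, which is too weak to pin down the coefficient of $\log|x-y|$. An alternative route through gradient covariances $\Cov_{\mu_\infty^{\beta,\lambda}}(\nabla h(e),\nabla h(e'))\sim Q(\widehat e,\widehat{e'})/(e'-e)^2$ and a double sum along a path $\gamma_x$ is tempting, but the naive bound on the $O(|e'-e|^{-2-\eta})$ remainder from such an expansion only gives an $O(|x|)$ error, which is too crude against the target $\log|x|$. Extracting the sharp $c\log|x-y|+O(1)$ directly at the level of the height difference requires running the same renormalization-group flow used in the proof of part~\ref{it:scaling-limit} with two height-difference operator insertions and following the dominant, relevant, and marginal operators through the flow to read off $c$: this is the standard route by which sharp two-point functions are obtained in [GMT17]-type analyses of interacting dimer models, and the essential work has already been the setup of the renormalization scheme under the long-range summable interactions inherited from the approximating tiling ensemble. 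The identification of $c$ from $(\sigma,\fL)$, and the independence of $c$ from the chosen path, are then routine from the curl-free identity $\sum_{e\in\partial F}\nabla h(e)=0$.
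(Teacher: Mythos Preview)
Your proposal identifies the right high-level architecture---the distributional convergence is too weak, and one must extract a pointwise two-point asymptotic from the renormalization analysis---but it glosses over the two structural layers that the paper actually has to work through, and in doing so it misattributes the output of the RG.

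First, the renormalization framework of \cite{GMT17,GMT20} (and its long-range extension in \cref{thm:gmt-refinement}) is applied not to $h$ but to the approximating tiling $\varphi$, and the pointwise variance estimate it produces is for $\varphi_{111}$, the height in the \emph{tiling} ($\cP_{111}$) convention. There is no direct RG statement of the form $\Var_{\mu_\infty}(h(x)-h(y))=c\log|x-y|+O(1)$. The paper instead writes
\[
h-\E[h]=\bigl(\varphi-\E[\varphi]\bigr)+\bigl(h-\E[h\mid\varphi]\bigr)+\bigl(\E[h\mid\varphi]-\varphi\bigr),
\]
and uses the analysis of $\pi_{\varphi,\beta}$ from \cref{sec:pi} to show (Lemmas in \cref{sec:concluding-thm-1}) that the last two pieces have uniformly bounded pointwise variance and decaying covariances, hence contribute $O(1)$. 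Your proposal never separates $h$ from $\varphi$, so this reduction is missing.

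Second, even after reducing to $\varphi$, the SOS height is $\varphi_{001}$, whereas the [GMT17] two-point asymptotic is for $\varphi_{111}$. The map between the two conventions is \emph{nonlinear and random}: reading $\varphi_{001}(u)-\E[\varphi_{001}(u)]$ from $\varphi_{111}$ requires locating the crossing level of a monotone sequence $k\mapsto\varphi_{111}(\bar\fL(u)+k e_\uparrow)-k$, which lands near $(\varphi_{111}(v_0)-\E[\varphi_{111}(v_0)])/p_\fa$ only on a good event controlled by higher-cumulant bounds. The paper carries this out via a good/bad event decomposition (sets $G$ and $B_{k,\ell}$) and obtains $\Var(h(u)-h(o))=\tfrac{\sigma^2}{p_\fa^2}\log\|u\|+o(\log\|u\|)$, with the factor $p_\fa^{-2}$ arising precisely from this convention change. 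Your identification of $c$ purely from $(\sigma,\fL)$ misses this mechanism entirely.
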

The new results readily yield a law of large numbers (LLN) for $h(x)$, having $\sqrt{\log|x|}$ fluctuations about its mean $\theta \cdot x$ (see the level lines in \cref{fig:level-lines} depicting the LLN). It is interesting to compare these results with the LLN for the shape of the Wulff crystal in 3\Dim Ising near the corner of a box at zero temperature ($\beta=\infty$), due to Cerf and Kenyon~\cite{CerfKenyon01}, where lozenge tilings naturally appear. See \cref{sec:open-prob} for more on this.

The intuition behind the \GFF scaling limit of the centered height function is that, at large $\beta$, the SOS surface $h$ should behave like a randomly perturbed %(with exponentially decaying sizes, akin to the trivial level line loops discussed above)
``almost uniformly'' chosen ground state  (which, we recall, is a lozenge tiling of the triangular lattice $\T$). As mentioned above, a seminal result of Kenyon~\cite{Kenyon00} established that the scaling limit of a uniform (domino) tiling is the \GFF, whence one would expect that the SOS surface $h$ should inherit the same scaling limit.

Our proof strategy is to increase the probability space via a random tiling $\varphi$ \emph{conditional on~$h$}:
\[ \P(\varphi \mid h) \propto \exp(+\alpha |h\cap\varphi|)\]
for some $\alpha>0$ (e.g., $\alpha=\beta$), rewarding $\varphi$ for faces in $h\cap\varphi$ (see \cref{eq:def-phi-given-h}), and establish that:
\begin{enumerate}[(i)]
    \item\label[part]{pt:weakly-interacting} The marginal on $\varphi$ is a \emph{weakly interacting tiling}, that is to say, it is a tilting of the uniform distribution of the following form:  $\P(\varphi)\propto \exp\big[-\sum_{x}\sum_r \fg_r(\varphi\restriction_{B(x,r)})\big]$, where $B(x,r)$ is the radius-$r$ ball around $x$ and $\fg_r$ are functions whose $L^\infty$-norm decays exponentially with $r$.
\item\label[part]{pt:gmt} Consequently, via the powerful renormalization group machinery of Giuliani, Mastropietro and Toninelli~\cite{GMT17,GMT20}, the random tiling $\varphi$ has a scaling limit given by a \GFF.
\item\label[part]{pt:from-phi-to-h} The SOS surface $h$ is a local perturbation of $\varphi$, thus has the same limit.
\end{enumerate}
The aforementioned renormalization result of \cite{GMT17,GMT20} may be summarized as follows\footnote{%The convergence to the free field is explicitly stated only in \cite{GMT17} but the extension to arbitrary $\fa, \fb, \fc$ is from \cite{GMT20}. 
The setting there is $\Z^2$ with arbitrary weights; the hexagonal lattice is recovered by setting one weight to $0$.}:
\begin{theorem}[\cite{GMT17,GMT20}]\label{thm:gmt}
Fix $\fa,\fb,\fc>0$ to be three sides of a non-degenerate triangle and $R>0$. There exists $\delta>0$ (depending on $\fa,\fb,\fc, R$) so that the following holds for every
function $\fg$ on lozenge tilings of a ball of radius $R$ (with free boundary condition) in the triangular lattice $\T$ satisfying
\begin{equation}\label{eq:single-g} \| \fg \|_\infty\leq \delta\,. \end{equation}
Let $\mu_N$ be distributions over tilings $\varphi$ of the torus $\T_N$ given by
\[
\mu_N( \varphi) \propto \fa^{n_\fa(\varphi)}\fb^{n_\fb(\varphi)} \fc^{n_\fc(\varphi)} 
\exp\Big[\sum _{x} \fg(\varphi \restriction_{B(x,R)})\Big]\quad\mbox{for all $N$}\,,
%% %%
\vspace{-3pt}
\]
where $n_{\fs}(\varphi)$ counts the number of lozenges of type $\fs\in\{\fa,\fb,\fc\}$ in $\varphi$. 
Then:
\begin{enumerate}[(a)]
    \item \label{it:mu_N_limit} 
$\mu_N \to \mu_\infty$ locally for the gradients $\nabla\varphi$ as $N\to\infty$; and
\item \label{it:phi_GFF_limit} if $\varphi$ is sampled from $\mu_\infty$ and viewed as a height function projected on the plane $\cP_{111}$, then
\[
\varphi( n\, \cdot ) - \E [\varphi( n\, \cdot )] \xrightarrow[]{\;\mathrm{d}\;}  \sigma \GFF \circ \mathfrak L\,,
\]
where $\sigma > 0$, $\mathfrak L$ is an invertible linear map and $\GFF$ is a %standard 
full plane Gaussian free field.
\end{enumerate}
\end{theorem}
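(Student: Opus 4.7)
My plan is to prove \cref{thm:gmt} by combining the exact solvability of the non-interacting dimer model on the hexagonal lattice with a rigorous Wilsonian renormalization group (RG) analysis of the local perturbation $\fg$.

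\textbf{Step 1: Kasteleyn--Grassmann representation.} Via the standard bijection, lozenge tilings of $\T$ with weights $\fa,\fb,\fc$ correspond to dimer configurations on the hexagonal lattice with the same weights, and by Kasteleyn's theorem the non-interacting partition function is a Pfaffian of an antisymmetric matrix $K$. I will rewrite $\mu_N$ at $\fg\equiv 0$ as a Gaussian Grassmann integral over a pair of conjugate Grassmann fields $\psi_x,\bar\psi_x$ with propagator $K^{-1}$. Since $(\fa,\fb,\fc)$ form a non-degenerate triangle, $K$ has two simple zeros on the Brillouin zone, so $K^{-1}(x,y)$ decays as $|x-y|^{-1}$ with an oscillating phase: this is the free massless two-dimensional Dirac propagator. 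The indicator of each lozenge type at a face is a Grassmann bilinear $\psi\bar\psi$, so the perturbing factor $\exp[\sum_x \fg(\varphi\restriction_{B(x,R)})]$ can be expanded into a finite sum of monomials in these bilinears and rewritten as $\exp[\sV(\psi,\bar\psi)]$ where $\sV$ is a local polynomial in the Grassmann fields, uniformly bounded in norm by $O(\delta)$ by the hypothesis $\|\fg\|_\infty\le\delta$.

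\textbf{Step 2: Multiscale RG flow.} The propagator is decomposed dyadically, $K^{-1}=\sum_{h\le 0} g^{(h)}$, where $g^{(h)}$ lives on momentum scale $2^h$ around each Fermi point and satisfies $|g^{(h)}(x,y)|\lesssim 2^{h}e^{-c 2^{h}|x-y|}$. I will integrate out the fields scale by scale, producing an effective potential $\sV^{(h)}$ at each scale. Power counting shows that only the chemical-potential bilinears $\psi\bar\psi$ are relevant (dimension $+1$) and only the density--density quartic $(\psi\bar\psi)(\psi\bar\psi)$ is marginal (dimension $0$); all other terms are irrelevant. The relevant couplings must be cancelled by tuning counterterms, which in this lattice setting amounts to an automatic adjustment of the Fermi points: by lattice symmetries one shows that the quadratic part of $\sV^{(h)}$ stays at the original Fermi points up to a finite dressing. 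The marginal coupling $\lambda^{(h)}$ satisfies a beta-function identity that, following the Benfatto--Mastropietro asymptotic-freedom/vanishing-beta-function result for chiral fermions, has no $\lambda^2$ contribution; together with the tree expansion bounds of Gallavotti--Nicol\`o one gets $|\lambda^{(h)}-\lambda^{(-\infty)}|\lesssim \delta \, 2^{\vartheta h}$ for all $h\le 0$ and some $\vartheta>0$, provided $\delta$ is small enough. Writing the perturbative series as a tree expansion and using determinant/Gram bounds on the Grassmann integrals, the flow is shown to converge uniformly in $N$.

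\textbf{Step 3: Limit and GFF identification.} Convergence of the RG flow yields uniform-in-$N$ bounds on truncated correlation functions of lozenge densities that decay as $|x-y|^{-2-\eta}$ (or precisely $|x-y|^{-2}$ for the leading term), implying tightness and a local limit $\mu_\infty$ for $\nabla\varphi$; this gives (a). For (b), the height function $\varphi(x)-\varphi(y)$ can be written as a sum along a dual path of centered lozenge occupation variables. Its variance is $\iint C(u,v)\,du\,dv$ where $C$ is the density--density correlator; the RG analysis shows $C(u,v)$ equals the free-fermion expression times a non-perturbative multiplicative constant, giving $\Var(\varphi(x)-\varphi(y))\sim \frac{c}{\pi^2}\log|x-y|$. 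Higher cumulants are controlled by the same expansion and vanish after rescaling thanks to the Gaussian dominance of the free measure plus Ward identities: this identifies the limit of $\varphi(n\cdot)-\E\varphi(n\cdot)$ with $\sigma\,\GFF\circ\fL$, where $\fL$ is the linear map diagonalizing the quadratic form determined by the renormalized Fermi velocities and $\sigma$ is the anomalous wave-function/charge renormalization.

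\textbf{Main obstacle.} The hardest point is the control of the marginal quartic coupling along the RG flow: one needs (i) the vanishing of the beta function for chiral fermions so that $\lambda^{(h)}$ does not blow up logarithmically, and (ii) Ward-identity-based cancellations (with the associated chiral anomaly) to compute the exact renormalization of the height two-point function and to show that the non-universal constants $\sigma,\fL$ depend analytically on the perturbation. Everything else---tree bounds, Gram determinants, tightness---is by now a standard part of constructive fermionic RG.
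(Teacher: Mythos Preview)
Your outline is correct and captures essentially the same approach as \cite{GMT17,GMT20}; note that \cref{thm:gmt} is not proved in this paper but cited as a known result, and the machinery you describe (Kasteleyn--Grassmann representation, multiscale decomposition of the propagator around the two Fermi points, tree expansion with Gram--Hadamard bounds, control of the marginal flow via vanishing beta function and Ward identities) is exactly what the paper summarizes in \cref{sec:gmt-refine} before extending it to prove the refinement \cref{thm:gmt-refinement}.
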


The proof of \cref{thm:gmt} given in \cite{GMT17,GMT20} easily extends to the case where, instead of a single $\fg$ as per \cref{eq:single-g}, there is a sequence of functions $(\fg_r)$ on balls of growing radius $r\to\infty$, as long as\footnote{The setting of \cite{GMT17,GMT20} is slightly different: they consider an interaction carried by \emph{patterns}---sets $P$ of lozenges that occur in $\varphi$---that can be disconnected. Their proof extends as long as the weight of a pattern $P$ decays like $e^{-C |P| - \kappa \operatorname{treedist}(P)^\alpha }$ for some $\alpha,\kappa>0$, where $\operatorname{treedist}$ is the size of the minimal connected set (not necessarily a tree inside $P$) that contains $P$. Applied to our case, where the patterns are balls, this corresponds to an $e^{-C r^2}$ decay. Note that the setting of \cite{GMT17,GMT20} includes examples which are not covered by our extension in \cref{thm:gmt-refinement} (which is not geared to handle disconnected patterns): e.g., two body interactions with a stretched exponential decay. }
\[ \|\fg_r\|_\infty \leq \exp(-C r^2)\qquad \mbox{for all $r$, with a large enough constant }C>0\,.\]

Unfortunately, that assumption is much stronger than what our model affords: we can only hope for $\|\fg_r\|_\infty \leq O(e^{-C r})$, i.e., an exponential decay \emph{in the radius of the ball rather than in its volume}.
Several new ingredients were needed to boost \cref{thm:gmt} to this relaxed assumption (\cref{eq:many-g}).
\begin{maintheorem}\label{thm:gmt-refinement}
Fix $\fa,\fb,\fc>0$ to be three sides of a non-degenerate triangle, and let $\pi p_\fa, \pi p_\fb, \pi p_\fc$ denote its angles.
For every $c>0$ there exists $C>0$ (depending on $\fa,\fb,\fc,c$) such that the following holds for every sequence of functions $(\fg_r)_{r=1}^\infty$ on lozenge tilings of balls of radius $r$ in $\T$ satisfying
\begin{equation}\label{eq:many-g}
   \| \fg_r \|_\infty \leq  \exp(-C r)\qquad\mbox{for all $r$}\,.
%% %%
\vspace{-2pt}   
\end{equation}
Take any sequences of integers $n_{\fs,N}$ such that $n_{\fs,N}/N^2 \to p_\fs$ as $N \to \infty$ for $\fs\in\{\fa,\fb,\fc\}$. Let $\mu_N$ be distributions over tilings $\varphi$ of the torus $\T_N$ with lozenge counts $n_\fs(\varphi) = n_{\fs,N}$, satisfying
\[ \Big| \mu_N(\varphi) / \Big( Z_{N}^{-1} \exp\Big[\sum_{x}\sum_{r\leq N} \fg_r(\varphi \restriction_{B(x,r)})\Big] \Big) - 1\Big| \leq c^{-1} e^{-N^c}\quad\mbox{for all $N$}\,,
%% %%
\vspace{-2pt}
\]
where $Z_N$ is the partition function of $\exp[\sum_{x,r} \fg_r(\cdot)]$. 
Under these assumptions, both conclusions of \cref{thm:gmt} (existence of the limit $\mu_\infty$ and the \GFF scaling limit for it) hold true.
\end{maintheorem}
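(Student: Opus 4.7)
My plan is to rerun the fermionic multiscale renormalization group of \cite{GMT17, GMT20}, but with kernel norms replaced by a weighted seminorm controlled by the summability hypothesis $\sum_r \|\fg_r\|_\infty \leq \delta$ rather than by the $L^\infty$ norm of a single finite-range function. I first cast $\mu_N$ into its Grassmann/Kasteleyn representation, writing the reference dimer measure weighted by $\fa^{n_\fa}\fb^{n_\fb}\fc^{n_\fc}$ as the partition function of a free Grassmann field whose propagator decays like $|x-y|^{-1}$. The tilting factor $\exp[\sum_{x, r\leq N} \fg_r(\varphi\restriction_{B(x,r)})]$ becomes $\exp(\cV)$ with $\cV = \sum_r \cV_r$, where each $\cV_r$ is a polynomial in the Grassmann fields whose monomials have spatial diameter at most $2r$ and whose coefficients are bounded by $\|\fg_r\|_\infty$ (up to a multiplicative constant from expanding the ball restriction in the Grassmann algebra). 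The natural object is the weighted norm
\[ \|\cV\|_w := \sup_{x_0} \sum_r w(r)\,\|\fg_r\|_\infty \]
for a weight $w(r)$ growing subexponentially; the hypothesis supplies $\|\cV\|_w \leq C\delta$ for suitable $w$.

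Next, I run the scale-by-scale integration of the Grassmann field exactly as in \cite{GMT17}, producing an effective potential $\cV^{(h)}$ on dyadic length scale $2^{-h}$ through the Brydges--Battle--Federbush tree expansion. The key new lemma, replacing the finite-range tree bound of \cite{GMT17}, asserts that $\|\cV^{(h)}\|_{w_h}$ stays uniformly bounded (with contractive or marginal scaling factors per vertex, according to the usual fermionic power counting) once $\|\cV^{(0)}\|_w \leq C\delta$, for a suitable scale-adjusted weight $w_h$. The proof combines (i) the Gram determinant inequality for fermionic contractions, (ii) single-scale propagator decay, and (iii) at each tree vertex, first summing the endpoint positions within the ball of radius $2r_v$ (producing a volume factor $O(r_v^2)$), then summing the range $r_v$ against $w(r_v)$; these two summations together absorb both the tree combinatorics and the long-range spread while preserving smallness.

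With this tree bound in hand, the flow of the relevant and marginal couplings (densities, currents, and the quartic coupling driving the Luttinger-liquid behavior) closes as in \cite{GMT17, GMT20}: the marginal coupling converges to a line of nontrivial fixed points parametrized by the renormalized interaction, all irrelevant couplings contract, and the lozenge densities are locked to $p_\fs$ by the counterterms enforcing $n_\fs(\varphi) = n_{\fs, N}$. Passing to the infinite-volume limit yields \cref{it:mu_N_limit}, and the centered height function then converges in the scaling limit to $\sigma\,\GFF \circ \fL$ with $\sigma$ and $\fL$ read off from the fixed-point couplings, giving \cref{it:phi_GFF_limit}. The super-polynomial normalization error $c^{-1}e^{-N^c}$ is absorbed harmlessly, being negligible on every scale of the expansion.

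The genuine new difficulty is step (iii) above: the long-range tree bound. In \cite{GMT17} bounded range forces a constant spatial factor per vertex, whereas here the summability hypothesis allows $\|\fg_r\|_\infty$ to decay as slowly as $1/r^{1+\varepsilon}$, so $w$ must be tuned to absorb tree combinatorics and the quadratic volume growth of balls while still summing against single-scale propagator decay at each scale. Balancing these constraints is the technical heart of the extension; once done, the rest of the GMT machinery applies essentially verbatim.
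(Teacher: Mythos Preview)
Your proposal misidentifies the central obstruction. The difficulty is not that each $\fg_r$ has support of diameter $2r$ and contributes a polynomial ``volume factor $O(r_v^2)$'' at a tree vertex; it is that the Grassmann representation of $\fg_r(\varphi\restriction_{B(x,r)})$ involves a sum over all tiling patterns of a region of area $\asymp r^2$, and there are $C_*^{r^2}$ such patterns. In the notation of the paper, the weight $\sw(\fD)$ of a marked pattern set carries a factor $\Ztile(D)$, the number of tilings of $D$, which grows like $\exp(c|D|)$. Your weighted-norm framework with a subexponentially growing $w(r)$ and coefficients ``bounded by $\|\fg_r\|_\infty$'' cannot absorb this: the hypothesis $\sum_r\|\fg_r\|_\infty\leq\delta$ gives no decay in the \emph{area}, only summability in the radius, so after expanding the pattern indicator into Grassmann monomials you face an $\exp(C_* s)$ entropy against which the standard Gram--Hadamard bound (which is merely $\leq C^s$, not $\leq 1$) offers no help.

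The paper resolves this by two moves that have no analogue in your outline. First, a change of variables $\tilde\psi_b^{(i)}=\sum_{w\in\tilde I_i}K(b,w)\psi_w$ on each block $I_i$ produces a Jacobian $\det(K\restriction_{\tilde I_i})^{-1}$, and since this determinant \emph{is} (up to sign) the partition function of tilings of $\tilde I_i$, it cancels $\Ztile$ exactly---this is the combinatorial interpretation of the Kasteleyn minor being used to kill the entropy. Second, in the new variables the scale-0 propagator $g^{(0)}(\tilde\psi_b,\psi_{b'})$ becomes essentially the inverse Fourier transform of $1-\chi^{(-1)}$, so its Gram vectors have $L^2$-norm $1-\epsilon_\chi<1$ rather than merely $O(1)$; Gram--Hadamard then yields a genuine contractive factor $(1-\epsilon_\chi)^{\sum|I_i|}$, which beats the remaining combinatorics (boundary terms, tree sums). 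Without these two ingredients the first integration step diverges, and no choice of weight $w(r)$ in your seminorm repairs it. Your treatment of the microcanonical constraint via ``counterterms'' is also too vague---the paper instead uses a discrete Fourier inversion over complex phases $(\vartheta_1,\vartheta_2)$ in the Kasteleyn weights---but that is a secondary issue compared to the entropy gap above.
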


(For concreteness, in the two theorems above, the notation $\varphi\restriction_{B(x,r)}$ denotes the collection of every lozenge tile of $\varphi$ that intersects a triangle of $B(x,r)$, the ball of radius $r$ about $x$, in~$\T$.) As~a~basic application, if $E_r(\varphi,x)$ is the event that $x$ is connected to the boundary of $B(x,r)$ via a path of lozenges of the same type, then
\Cref{thm:gmt} can be applied to a uniform lozenge tiling~$\varphi$ tilted by $\exp[\delta \sum_x \one_{E_2(\varphi,x)}]$ (or any fixed $r$), but cannot treat a tilt involving $E_r$ for all $r$ unless these come with a prefactor of $\delta_r < e^{-C r^2}$. On the other hand, \cref{thm:gmt-refinement} can be applied to a uniform tiling $\varphi$ tilted by $\exp[\sum_x\sum_r \one_{E_r(\varphi,x)} e^{-C r}]$ (\cref{ex:gmt-appl,ex:gmt-appl-refine}).  (One can similarly tilt by the maximum number of disjoint paths of length $r$ of lozenges of the same type from $x$, etc.) 

\begin{remark}\label{rem:microcanonical}
Whereas the main contribution in \cref{thm:gmt-refinement} is that it allows for interactions that are long-range and decay slowly in the pattern radius (vs.\ \cref{thm:gmt}, dealing with finite interactions), 
another aspect where \cref{thm:gmt-refinement} refines \cref{thm:gmt} is the micro-canonical vs.\ canonical ensemble: our slope $\theta=(\theta_1,\theta_2)$ is equivalent to fixing the lozenge counts in $\varphi$ to 
\begin{equation}
    n_\fa = N^2\,,\quad n_\fb = N \lfloor \theta_1 N\rfloor\,,\quad n_\fc = N\lfloor \theta_2 N\rfloor\,;
\end{equation}
thus, we require a version restricted to tilings with deterministic $n_\fa,n_\fb,n_\fc$ (micro-canonical ensemble) as opposed to an average over all tilings weighted by $\fa^{n_\fa} \fb^{n_\fb} \fc^{n_\fc}$ which was the setting of \cite{GMT17,GMT20}.
\end{remark}

\begin{remark}\label{rem:phi-proj}
A notable difference between the \GFF scaling limit for $\varphi$, as given in \cref{it:phi_GFF_limit} of \cref{thm:gmt,thm:gmt-refinement}, and the one for $h$, as per \cref{it:scaling-limit} in \cref{thm:GFF-convergence}, is that the latter treats the \SOS surface $h$ as a height function projected on the plane $z=0$, denoted by $\cP_{001}$, while the former considers $\varphi$ as a height function projected on the plane $x+y+z=0$, denoted by $\cP_{111}$. Towards establishing the scaling limit of $h$, we extend this result of \cref{thm:gmt-refinement} and recover the scaling limit also when regarding $\varphi$ as a height function projected on the plane $\cP_{001}$ (see \cref{lem:GGF_SOS_convention}).
\end{remark}
\begin{remark}\label{rem:sigma-L-analytic}
As an output of the renormalization group analysis, we find that $\sigma$ and $\mathfrak L$ in \cref{it:phi_GFF_limit} of the theorems above (the scaling limit of $\varphi$) are analytic functions of  $\fg_r$ from \cref{eq:single-g,eq:many-g}.
We further have asymptotic formulas for all cumulants of the variables $\one_{\{e \in \varphi\}}$ for edges $e$ of the hexagonal lattice (viewing $\varphi$ as a dimer configuration).
\end{remark}
In particular, translating \cref{rem:sigma-L-analytic} to the setting of the SOS surface $h$, one has that $\sigma$ and $\fL$ in the scaling limit of $h$ (\cref{it:scaling-limit} in \cref{thm:GFF-convergence}) are analytic functions of $e^{-\beta}$. As our proof is only applicable to $\beta>\beta_0$, it raises an intriguing open problem whether, for instance, $\sigma$ is analytic for all $\beta$, or if there is a transition, e.g., near the roughening point $\beta_{\textsc r}$. (Note the conjectured \GFF scaling limit arises due to different reasons above and below $\beta_{\textsc r}$: for $\beta<\beta_{\textsc r}$, it is driven by the disorder akin to a discrete \GFF, whereas at $\beta>\beta_{\textsc r}$ it is governed by the law of the ground states.)

\begin{figure}
    \vspace{-0.2in}
    \begin{tikzpicture}
    \node (fig1) at (0,0) {
\includegraphics[width=0.19\textwidth]{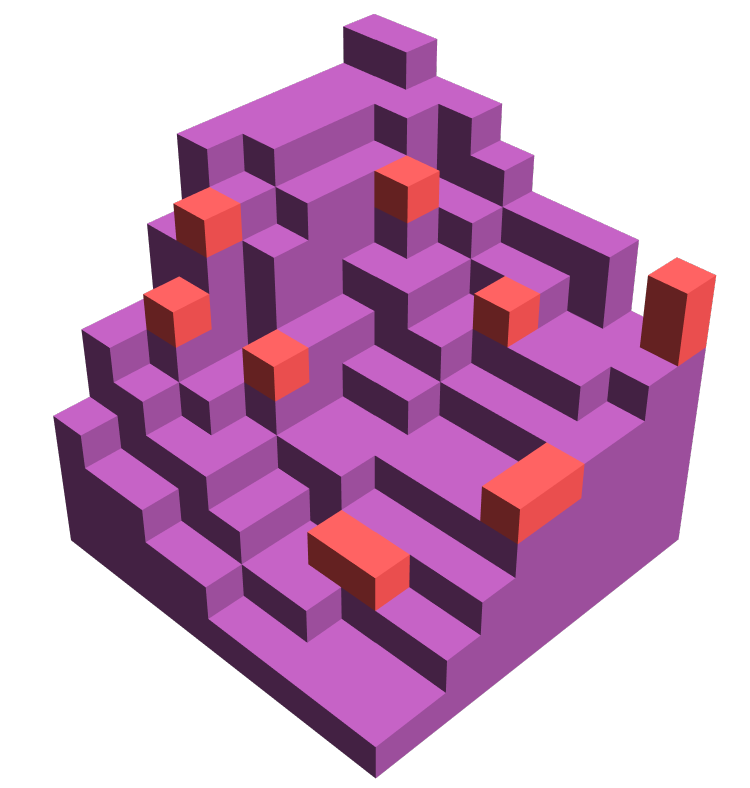}};
    \node (fig2) at (5.5,0) {    \includegraphics[width=0.19\textwidth]{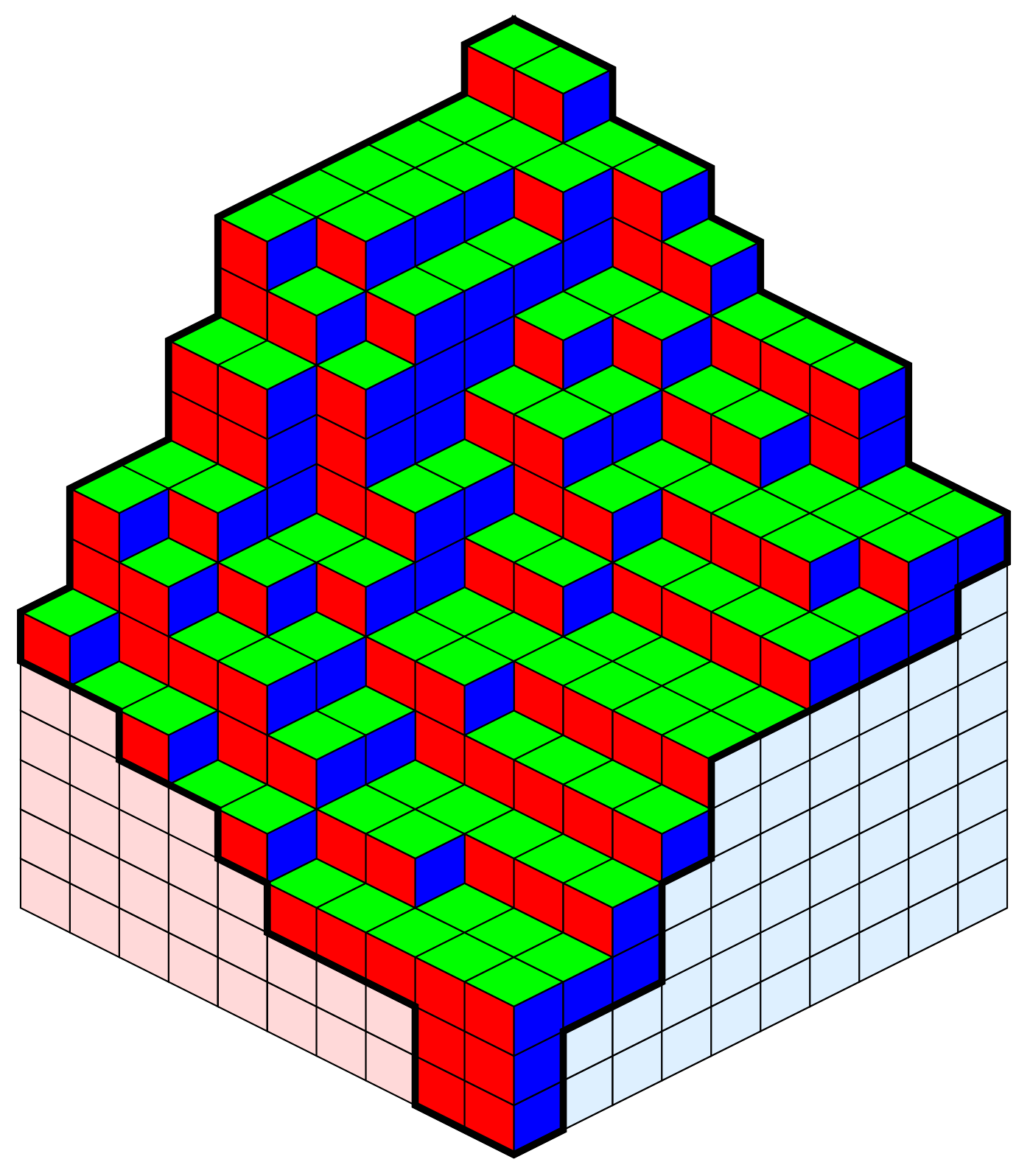}};
    \node (fig3) at (11,0) {    \includegraphics[width=0.19\textwidth]{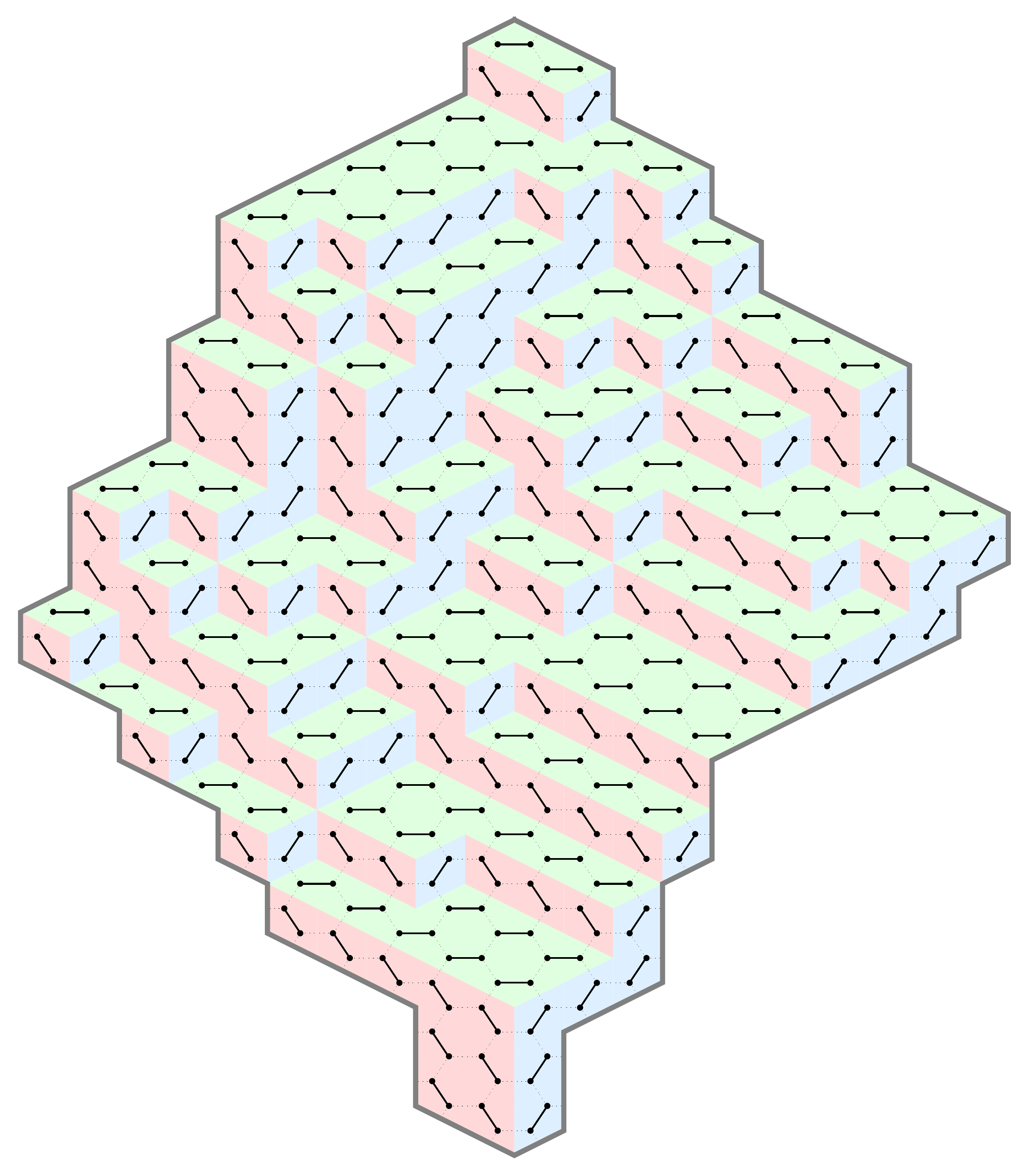}};
    \end{tikzpicture}
    \vspace{-0.24in}
    \caption{An SOS height function $h$ (non-monotone as marked) and a tiling $\varphi$ (a monotone surface) that approximates it, along with its corresponding periodic dimer configuration.}
    \label{fig:sos-approx}
    \vspace{-0.15in}
\end{figure}

\subsection{Proof ideas}
As mentioned above, our approach will be to superimpose a \emph{random tiling}~$\varphi$ approximating the given SOS surface $h$, and prove that $\varphi$ has a \GFF scaling limit. 
Formally, we fix a new parameter~$\alpha > 0$ and, conditional on $h$, sample a tiling~$\varphi$ on the torus (as in \cref{eq:h-periodic-bc})~via:
\begin{equation}\label{eq:def-phi-given-h} \P_{\alpha,\beta,\lambda}(\varphi \mid h) = \frac1{\Ztile_{N,\alpha}(h)} \exp\left(\alpha \left|h \cap \varphi\right|\right)\,,\end{equation}
where $\Ztile_{N,\alpha}(h) = \sum_\psi \exp\left(\alpha \left|h \cap \psi\right|\right)$ sums over tilings $\psi$ of the torus $\Lambda_N$ that  intersect $h$ (see~\cref{fig:sos-approx}).

%(Recall that by a tiling $\psi$ of the torus $\Lambda_N$ we refer to a monotone surface compatible with the periodic boundary conditions with slope $\theta$ as per \cref{eq:h-periodic-bc}.)
For the sake of proving \cref{thm:GFF-convergence} one can choose $\alpha = \beta$, but the proofs only need $\alpha$ to be large enough, and keeping it as a free parameter will help identify its effect. This yields the joint law
\begin{equation}\label{eq:big-measure}
\P_{\alpha,\beta,\lambda}(h,\varphi) = \frac{\exp\left[-\beta|h|-\lambda \sV(h)+\alpha|h\cap\varphi|\right]}{\Zsos_{N,\beta,\lambda}\, \Ztile_{N,\alpha}(h)}\,.
\end{equation}
Our main goal is to analyze $\P_{\alpha,\beta,\lambda}(\varphi)$, the marginal probability on the approximating tiling $\varphi$, and show that it is weakly interacting (``nearly uniform''), in that $\P_{\alpha,\beta,\lambda}(\varphi)$ is a tilt of the uniform measure by $\exp[\sum_x \sum_r \fg_r(\varphi\restriction_{B(x,r)})]$  
 per the hypothesis of \cref{thm:gmt-refinement}, yielding the required result.
 
 \subsubsection{Outline of \cref{pt:weakly-interacting}: proving that $\varphi$ is weakly interacting.} (Formally stated in \cref{thm:phi-weakly-interacting}.) This part of the proof will be obtained by the following program, which we believe will be applicable (after some adapting) to various other interface models.
 We first give a brief outline of the program, then expand on each step in \cref{sec:sketch-step-1,sec:sketch-step-2,sec:sketch-step-3}.
 \begin{enumerate}[label=\textbf{Step~\arabic*}:, ref=\arabic*, wide=0pt, itemsep=1ex]
    \item \label[step]{st:1-expand}
 Free energy expansion to flip the conditioning, studying $h$ given~$\varphi$ instead of $\varphi$ given $h$:  
 Simpler routes such as cluster expansion are not applicable here, and this step, done in \cref{sec:expand}, unfortunately turns the tiling $\varphi$ into a fixed environment, inducing complex long-range interactions on~$h$. 
Following this step, the problem is reduced to showing that three measures $\mu,\nu,\pi$---in
 which $\pi$ is nothing but  $\P_{\alpha,\beta,\lambda}(h \mid \varphi)$, and is significantly more challenging to analyze than~$\mu,\nu$---are ``local'' in the sense that we can approximate certain observables for them by functions $\fg_r$ as above.
 \item \label[step]{st:mcmc-mu-nu} Markov chain analysis of Metropolis for sampling $\mu$ and Glauber dynamics for sampling~$\nu$: In both cases we show in \cref{sec:2-out-of-3} that the dynamics for the corresponding measure is \emph{contracting}, i.e., it mixes faster than the time it takes disagreements to propagate, yielding the required locality. Both $\mu$ and $\nu$ are measures on a random tiling $\psi$, conditional on $\varphi$, and are fairly tractable (even though $\nu$ has long-range interactions), in that one can prove contraction for the natural dynamics where each move selects a ``bubble''---a connected component $\sB$ of faces of $\varphi\xor\psi$---to add/delete. 
 \item \label[step]{st:pi} Analysis of $\pi= \P( h \mid \varphi)$, a long-range interacting measure on SOS surfaces $h$ given $\varphi$, whose energy involves the overlap of other tilings $\psi$ with $h\setminus \varphi$. 
 This is one of the main challenges of the paper and covers \cref{sec:geometry-of-minimizers,sec:alg,sec:pi}. 
 The main ingredients in this step are: 
 \begin{enumerate}[(a)]
    \item Studying the set of local minimizers of the energy of $h$ given an arbitrary $\varphi$ (done in \cref{sec:geometry-of-minimizers}), and finding a local operation under which this set remains closed. One can then \emph{group} together bubbles that impact one another, so that the resulting ``bubble groups'' are independent. Adding/deleting an entire bubble group $\fB$ will be the basis for a contracting dynamics for $\pi$.
 \item An algorithm to find an approximation of $h$ by a tiling $\psi$ in locations where $h$ differs from~$\varphi$: The idea here is that, aside from a negligible number of ``frozen'' configurations, either the surface $h$ contains too many faces near said location, or it can be approximated by a tiling. The algorithm in \cref{sec:alg} establishes this, and is key to the definition of bubble groups, implying their energy outweighs their entropy (thus they are amenable to a Peierls argument).
 \item Markov chain analysis of Glauber dynamics for $\pi$  which adds/deletes bubble groups (\cref{sec:pi}): this is one of the most technically challenging parts of the proof, as it aims to control a dynamics where moves (changing a bubble group) occur at all scales, and the long-range interactions are non-explicit (they are given in terms of an expectation of a global variable w.r.t.\ a measure~$\mu_t$, akin to the measure $\mu$ from above, but now depending on the current state $h_t$ of the dynamics). It is here that the potential $\sV$ plays a role, as bubbles in \emph{frozen regions} of $\varphi$ might not contract.
 \end{enumerate}
\end{enumerate}
As a byproduct of our analysis of $\pi$, which we recall is $\P_{\alpha,\beta,\lambda}(h\mid\varphi)$, we find that $h$ is a perturbation (via bubble groups with exponentially decaying sizes) of~$\varphi$, and hence has the same limit (\cref{sec:concluding-thm-1}).

\subsubsection{Sketch of \cref{st:1-expand}}\label{sec:sketch-step-1} To simplify the notation in this part, take $\alpha=\beta$.
A~natural approach for the problem would have been to study $\log \P_{\beta,\lambda}(h,\varphi)$
 via cluster expansion techniques. Unfortunately, these fail for 
 the measure in \cref{eq:big-measure}, due to its long-range interactions and exponentially many ground states. 
 Instead, our first step is to perform a free energy expansion whereby, for a probability distribution of the form $\P_\beta(\sigma) = Z_\beta^{-1} \sum_\sigma \exp[-\beta H(\sigma)]$, one has $
 \log Z_\beta = \log Z_\infty + \int_\beta^\infty \E_{\hat \beta}[H] \d\hat\beta$, 
 under a mild condition on the Hamiltonian $H$ (cf.~\cref{lem:grimmett}). Recall that $\Ztile_{\beta}(h)$ from \cref{eq:def-phi-given-h}, which then appears in the denominator of \cref{eq:big-measure}, is $\sum_\psi \exp(\beta|h\cap\psi|)$. We could apply the free energy expansion to $\log\Ztile_\beta$, but it would be better to shift it by $\exp(\beta|h\cap\varphi|)$: for given $h,\varphi$, define
 \[ G(\psi) := |h\cap\varphi|-|h\cap\psi|\qquad,\qquad \overline G := G - \min_\psi G\,,\]
and 
$Z_\beta := \sum_\psi \exp[-\beta \overline G(\psi)]$. 
Expanding $\log Z_\beta$, one can then show (see \cref{sec:deriving-pi-mu-nu}) that
 \[ \P_\beta (h,\varphi) \propto \exp\bigg[-\beta |h| + \beta \big(\min_\psi G\big) -\lambda \sV-\log Z_\infty -\int_\beta^\infty\E_{\mu_{\hat\beta}}[\overline G]\d\hat\beta\bigg]\,,\]
where $Z_\infty = \#\{\psi : \overline G = 0\}$. The intuition behind the terms in this expansion is as follows:
\begin{itemize}
    \item The term $-\beta |h|$ is negative (hence always in our favor), penalizing SOS surfaces $h$ that are wasteful compared to the optimal (minimum) number of faces achieved by tilings.
    \item The term $\beta \big(\min_\psi G\big)$ is again non-positive (hence in our favor: $\min_\psi G(\psi) \leq G(\varphi) = 0$), penalizing SOS surfaces $h$ that can be better approximated by some tiling $\psi$ compared to~$\varphi$, making the latter less likely to be sampled given $h$.
    \item The term $-\log Z_\infty$ points at the near uniform measure on $\varphi$: when many tilings are equally good approximations of $h$, the choice between them ought to be uniform, i.e., $1/Z_\infty$.
     \item The potential $-\lambda \sV$ will help control $h$ in situations when the environment $\varphi$ is frozen.
    \item The final term $\int_\beta^\infty \E_{\mu_{\hat\beta}}[\overline G]\d\hat\beta$ is the culprit in the long-range non-explicit interactions, the main hurdle for the analysis. E.g., changing $h$ will affect $\overline G$, thereby $\mu_{\hat\beta}$ and the interactions...
\end{itemize}
To study the marginal $\P_\beta(\varphi)$, we must sum the right-hand side above over all $h$, and so we apply the free energy expansion for the second time to rewrite $\log(\sum_h \exp[\cdot])$ as $\int_\beta^\infty\E_{\pi_{\hat\beta}}[\cdot]\d\hat\beta + \log Z'_\infty$
for another partition function term $Z'_\infty$. 
Unfortunately, this new $\log Z'_\infty$ term is highly nontrivial; we resort to a third and final application of the free energy expansion for $\log Z'_\infty$, giving rise to the final measure $\nu$ mentioned above (and a corresponding $\int_\beta^\infty \E_{\nu_{\hat\beta}}[\cdot]\d\hat\beta$ term). Overall, we find that:
\[ \P_\beta(\varphi) \propto \exp\bigg[\int_\beta^\infty \Big(\E_{\pi_{\hat\beta}}|h|+ \frac12 \E_{\nu_{\hat\beta}}|\varphi\xor\psi| 
-
\frac12\E_{\mu_{\hat\beta}}|\varphi\xor\psi|
\Big)\d\hat\beta\bigg]\]
(see \cref{prop:P(phi)-via-3-measures}), where both $\nu$ and $\pi$ incorporate a long-range interaction through a $\int \E[\cdot]\d\hat\beta$ term (though the one in $\nu$ does not involve $h$, and hence is much more tractable). It is only in the analysis of $\pi$ where the need for the potential $\sV$ arises.

\subsubsection{Sketch of \cref{st:mcmc-mu-nu}}\label{sec:sketch-step-2}
The goal here (\cref{sec:2-out-of-3}) is to show that $\int_\beta^\infty \E_{\mu_{\hat\beta}}[\cdot]\d\hat\beta$ and $\int_\beta^\infty \E_{\nu_{\hat\beta}}[\cdot]\d\hat\beta$, the first two integrals in the expression for $\P_\beta(\varphi)$ in the last display, are local functions, in that each can be expressed as $\sum_x \sum_r\fg_r(\varphi\restriction_{B(x,r)})$ for $\fg_r$ supported on a ball of radius $r$ with $\|\fg_r\|_\infty\leq \delta e^{-c r}$.

Establishing this for $\mu$ illuminates the basic approach, which will then also be applicable to $\nu$ after taking into account its long-range interactions that involve $\mu$ (whereas the much more difficult task of proving this for $\pi$ is done in \cref{st:pi}). Given $\varphi$, the measure $\mu$ over tilings $\psi$ is defined as 
\[\mu_{\beta}(\psi)\propto \exp\big[-\tfrac12 \beta|\varphi\xor\psi|\big]\,.\]
We consider Metropolis dynamics for $\mu$ that moves by adding or deleting a $(\varphi,\psi)$-bubble---i.e., a connected component of faces of $\varphi\xor\psi$---and equip the space of tilings $\psi$ with a metric $\dist_\sB(\cdot,\cdot)$ via shortest-paths in the graph of moves of the dynamics.  
One can show this dynamics is contracting: two instances of it $(\psi_t)_{t\geq 0},(\psi'_t)_{t\geq 0}$ can be coupled so that $\E \dist_\sB(\psi_t,\psi'_t) \leq e^{-t/2} \dist_\sB(\psi_0,\psi'_0)$. The sought locality of $\int \E_{\mu_{\hat\beta}}[\cdot]\d\hat\beta$ is now obtained by (i)~letting $\mu_r$ be the restriction of $\mu$ which identifies the tiling outside of the ball $B(o,r)$ with $\varphi$; and (ii)~defining $\fg_r$, for $r=2^k$, as the residual contribution of $\mu_{r}$ to the integral compared to $\mu_{r/2}$. Running two coupled instances of the Metropolis dynamics---$\psi_t$ for $\mu_{r/2}$ in $B(o,r/2)$ and $\psi'_t$ for $\mu_r$ in $B(o,r)$, initially agreeing on $B(o,r/2)$---we look at time $T=c \hat\beta r$: each instance will be close to equilibrium, thus the difference in probabilities of observing a bubble $\sB$ in $\mu_{r/2}$ vs.\ $\mu_{r}$ can be reduced to $|\P(\sB\in\psi_T) - \P(\sB\in\psi'_T)|$. The last term is controlled by quantifying the rate at which disagreements between $\psi_t,\psi'_t$ propagate from $\partial B(o,r/2)$ to its interior by time $T$, using that, even though the sizes of bubbles $\sB$ are unbounded, modifying such a $\sB$ of size $s$ in one of the copies, but not in the other, is exponentially unlikely.

The analysis of $\nu$ is similar, but  the interactions between bubbles are no longer nearest-neighbor:
\[ \nu_{\beta}(\psi)\propto 
\exp\bigg[-\frac12 \beta |\eta\xor\varphi| - \frac12\int_{\beta}^\infty \E_{\mu_{\hat\beta}}|\eta\xor\psi|\d\hat\beta
 \bigg]\,,
 \]
and the $\int\E_{\mu_{\tilde\beta}}[\cdot]\d\tilde\beta$ term causes the probability of witnessing a bubble $\sB$ to be affected by distant bubbles (long-range infections). The locality of $\mu$ (already obtained, as above) now assists in the analysis of the Glauber dynamics to sample~$\nu$ and the speed by which it propagates disagreements. 

\subsubsection{Sketch of \cref{st:pi}}\label{sec:sketch-step-3}

Using the formula for $\P_\beta( h, \varphi)$ and the discussion of the effect of its various terms in \cref{sec:sketch-step-1}, we can discuss more concretely the analysis of $\pi_\beta = \P( h  \mid \varphi)$. 
\begin{enumerate}[(a), ref=(\theenumi\alph*)]
    \item Local minimization (\cref{sec:geometry-of-minimizers}): $\min_\psi G$ and $\log Z_\infty$ are a-priori complex non-local functions of the configuration $h$. The remedy is to identify a local operation under which the energy remains minimized: this allows one to \emph{group} together bubbles that impact one another, so that $\min_\psi G$ and $\log Z_\infty$ can be computed separately over these new ``bubble groups.'' Adding/deleting a full bubble group $\fB$ will be the basis for a contracting dynamics for~$\pi$.
    \item \label[step]{st:sketch-3b} Algorithm (\cref{sec:alg}): Next, we need a bound on the energy cost of one bubble group. One could hope from the discussion above that the $-\beta |h|$ and $\beta \big(\min_\psi G\big)$ terms, combined, would be sufficient. This is actually false, as there exist ``counterexamples'' with an almost optimal number of faces where the best tiling approximation still has only a tiny overlap. We still bound the entropy of such counterexamples by an arbitrary constant via an explicit approximation algorithm. Typically, this is enough for a Peierls type estimate because, on a counterexample, the next term $-\log \Ztile_\infty$ is close to the entropy of tiling, beating the small entropy of counterexamples. However, if additionally $\varphi$ is \emph{locally frozen} then this term disappears and it is to handle these cases that the potential $\sV$ is crucial.
    \item \label[step]{st:sketch-3c} Markov chain (\cref{sec:pi}): The analysis of a Glauber dynamics for $\pi$, which adds/deletes bubble groups, is one of the most technically challenging parts of the proof due to the long-range non-explicit interactions from the integral term. The issue is that when one tries to delete a bubble group, if it lies in the middle of a large region containing many other bubbles, then the integral might dominate the other terms, leaving us with very poor bounds. If we were only crafting a Peierls map, the solution to this issue would be obvious: delete the whole region, reducing the energy even more. Making this idea rigorous, however, is extremely delicate, and involves a global Glauber dynamics where moves (modifying bubble groups) occur at all scales, while it is imperative to control the speed of propagating information. 
    (Updating connected regions of every size at rate $1$ helps the dynamics avoid bottlenecks; however, proving contraction and inferring locality of $\int\E_{\pi}[\cdot]\d\hat\beta$ then become much harder.)
\end{enumerate}

We note that the above strategy for \cref{pt:weakly-interacting} appears fairly robust: The ground states of many tilted models %(e.g., $|\nabla\phi|^p$ for $p>1$) 
can be described using lozenge tilings and, in the analysis, the main parts where we exploited details that are specific to \SOS are \cref{st:sketch-3b,st:sketch-3c}, whose extension seems plausible.

\subsubsection{Outline of \cref{pt:gmt}: Extending the GMT  renormalization analysis to long-range interactions}

As per \cite{GMT17,GMT20},
a prototypical application of \cref{thm:gmt} is when the probability of a tiling is tilted by $e^\delta$ for each pair of adjacent lozenges of the same type (so $\fg$ is supported on a ball of radius $1$). The refined \cref{thm:gmt-refinement} amplifies the framework of \cite{GMT17,GMT20} from looking at a finite neighborhood of every tile $x$ to patterns at \emph{all scales}, as long as the effect of a pattern, per $x$, is exponentially small in its diameter.
We next briefly (and informally) explain the crux of obtaining this improvement.

The strategy in \cite{GMT17,GMT20} to understand the interacting dimer model is to compute its generating function before writing correlations as derivatives of it. Hence, for the sake of this outline, we focus on the base partition function: $\sum_\varphi \exp[ \sum_{x, r} \fg_r( \varphi \restriction_{B(x, r)})]$. The idea is to write the functions $\fg_r$ as sums over finite patterns $\fP_i$ and then expand the exponential to get an expression of the form 
\[
\sum_{ \{ \fP_i \} } \prod_i \sw(\fP_i) \# \{ \varphi \, : \; \fP_i \subset \varphi \mbox{ for all $i$}\}\,.
\]
This can be rewritten using Kasteleyn theory as a large sum over many minors of a fixed matrix~$K$ (not quite in the case of a torus but let us ignore that for now). Said sum is then analyzed using various formulae, first relating minors of $K$ to minors of $K^{-1}$, then, for minors of related matrices, representing some form of restrictions of $K^{-1}$ over a fixed scale. The proof is carried by an induction over scales where one must justify at each step the convergence of several series of determinants. 

In \cite{GMT17,GMT20}, for the first few steps of the induction, the authors verify the convergence somewhat effortlessly because they have the freedom to set their parameters small enough to compensate for relatively rough bounds on the  determinants. After these steps, a contracting property of the induction process (which is hard to prove, hence the difficulty of \cite{GMT17,GMT20}) emerges and eventually guarantees convergence at all scales. In our context, the later contraction will also hold and in fact the setting in \cite{GMT17,GMT20} is general enough that it will apply directly. In our context, however, we cannot afford the same bounds on the determinants. Indeed, the bounds are analogous to a cluster expansion and a decay of $e^{- C r}$ for the weight of a ball of size $r$ is not sufficient to handle associated entropy terms (which are roughly of order $(1+e^{-C})^{r^2}\approx e^{\epsilon r^2}$). Our improvement will be to capitalize on the Markov property of the base model with no weights, en route to rewriting the interaction using only the boundaries of the balls (see \cref{sec:grassmann_formulation}, where the starting point is a different Grassmann formulation compared to that of~\cite{GMT17,GMT20}, and in particular the proof of \cref{prop:grassmann_formulation}). This is the key behind the novel aspects of the Grassmann representation in this part of the proof; see \cref{rem:boundary-trick} for more information.
By doing so, we can bring the problem back to a setting with weights $e^{-C |S|}$ on connected sets $S$, as in \cite{GMT17,GMT20}. In effect, we are using the combinatorial interpretation of the determinants and the Markov property of lozenge tilings to find highly nontrivial exact identities reducing $r^2\times r^2$ blocks to $r\times r$ ones. A remarkable feature of this rewriting is also that the a-priori-difficult-to-handle enumeration over all possible tiling of a given region disappears naturally. 

Unfortunately, since \cite{GMT17,GMT20} use the formalism of Grassmann integrals to encode efficiently the series of determinants mentioned above, the proof requires background on both Kasteleyn theory and Grassmann integrals, even if ultimately the key argument in \cref{prop:grassmann_formulation} is fairly short. Moreover, as mentioned in \cref{rem:microcanonical}, in \cite{GMT17,GMT20}, the authors consider tilings of the torus where the number of tiles of each type is allowed to fluctuate (canonical setting) but for us it is important to fix these numbers as per the slope $\theta$ (micro-canonical setting), which requires an additional step after the renormalization argument (see \cref{sec:micro2}). %Since that step comes after the renormalization argument, we give a succinct overview of it

\subsection{Open problems and future directions}\label{sec:open-prob}

The first natural open problem is to extend our results to the case $\lambda = 0$ (no  extra potential~$\sV$). As mentioned in \cref{sec:sketch-step-3}, the main place where  $\sV$ enters the analysis is in deriving a uniform upper bound on the probability of a bubble group. There are three main difficulties  in allowing $\lambda=0$: (a) the law of $h$ given $\varphi$ will have bubble groups ``stick'' to a ``locally frozen'' region of $\varphi$. As such a region could in turn have a significant influence though the long-range interaction, this makes the measure far less tractable. We believe that a Markov chain analysis can still be applicable to that case, but it would be significantly more complicated, and there is no hope to get an $L^\infty$ bound on the functions $\fg_r$ as we have when~$\lambda>0$; (b) consequently, one would need to further weaken the assumption in \cref{thm:gmt-refinement}, perhaps replacing the $L^\infty$ bound on $\fg_r$ by an $L^p$ bound under the uniform tiling measure; and (c) one would need to bound, under the uniform tiling measure, the probability that a large ball contains a ``locally frozen region'' (appropriately defined as per the previous two steps). %This might go from a triviality to a very challenging problem depending on the required notion of ``locally frozen region'' but the one used in our current understanding of what the Markov chain could be appear to be highly nontrivial.

Another very natural open problem would be to generalize our argument to other random surface models: first and foremost, 3\Dim Ising interfaces, but also random height functions such as the Discrete Gaussian ($|\nabla\phi|^2$) or restricted SOS (gradients are $0$ or $\pm1$) models. The results of Cerf and Kenyon~\cite{CerfKenyon01} mentioned above support the idea that our approach could work for 3\Dim Ising interfaces since it implies that, at least at the surface tension level, 3\Dim Ising can indeed be seen as a perturbation of uniform lozenge tilings. Of course, there is still a very long way from  approximating the surface tension to the result aimed at here. On the whole, our method seems applicable for a model which can be approximated by a random ground state where an analog of \cref{thm:gmt-refinement} can be established (as in the above three models). The technical difficulties, as we mentioned at the end of \cref{sec:sketch-step-3}, are in the analysis of the energy optimization problem at a deterministic level, both for the definition of bubble groups and for the approximation algorithm.

A question surprisingly related to both previous points is the case of slopes with one nonzero coordinate, studied (for a different Hamiltonian) in \cite{BGV01}. We do not expect our approach to be applicable to that case because it features both the conceptual issue of the previous paragraph and the worst of the technical difficulties from the one before. Indeed, for say $\theta_1 > 0$, $\theta_2 = 0$, the set of ground states (on the torus) is given by ``stair-like'' configurations using only two lozenge types. The uniform law on them has of course no fluctuations in the direction ``parallel to the steps'' but in the orthogonal one it becomes a random walk bridge with $\sqrt{N}$ fluctuations. It is unclear whether this behavior survives at positive temperature, and if it does not then the approach is somewhat doomed. The second difficulty is that ``locally stair-like'' regions are actually the ``counterexample'' from \cref{st:sketch-3b}, so in the case $\theta_1 > 0$, $\theta_2 = 0$ the whole tiling $\varphi$ could be a macroscopic counterexample to the algorithm. Overall, it is unclear whether one should expect \GFF type or degenerate Brownian bridge fluctuations; even the order of $\Var(h(o))$ remains open.

Finally, one could ask about moving from a model on the torus to a model on a box with fixed boundary conditions. This should not create any issue for \cref{pt:weakly-interacting,pt:from-phi-to-h} of our general strategy, i.e., the proof that $\varphi$ is a weakly interacting tiling and that $h$ is a small perturbation of $\varphi$. In fact, many of our statements are for fixed boundary conditions, so the corresponding proofs might become slightly easier in that setting. However, the renormalization argument in \cref{pt:gmt} heavily relies on being in the torus, first because it starts with an explicit diagonalization of (a variant of) the adjacency matrix in Fourier space, and second because the action of the renormalization operation on boundary terms is difficult to handle. %It is in fact open problem 4 in \cite{...}. 
Even for non-interacting lozenge tilings/dimers, understanding fluctuations with ``generic'' boundary conditions remains a major open problem.  

\section{Setup and energies of tilings that approximate SOS}\label{sec:expand}
In this section, following a brief account of preliminaries and setup, we carry out \cref{st:1-expand} of \cref{pt:weakly-interacting} of the proof program outlined above. Recall that our goal 
in that part is to establish that $\varphi$---the random tiling which approximates our SOS surface as per \cref{eq:def-phi-given-h}---is weakly interacting (so as to fulfill the hypothesis of \cref{thm:gmt-refinement}). This result is formalized as follows, where we recall
from the introduction that $\varphi\restriction_{B(x,r)}$ denotes the set of lozenges of $\varphi$ intersecting the ball $B(x,r)$ in $\T$.

\begin{theorem}\label{thm:phi-weakly-interacting}
Let $h$ be a $(2+1)$\Dim \SOS surface as per \cref{eq:tilted-sos} for a potential $\sV$ as in \cref{def:pinning-V}. For every $C^\star>0$ there exists $C>0$ so the following holds. 
Let $\varphi$ be a tiling given $h$ as per \cref{eq:def-phi-given-h}.
If $0<\lambda< \frac{1}{C}$ and $\alpha \wedge \beta \geq C\lambda^{-20}$, then there exist functions $\fg_r$, for $r=2^k$ ($k=0,1,\ldots$), satisfying
\[ \|\fg_r\|_\infty \leq C e^{-(\alpha \wedge \beta)/2-C^\star r}\quad\mbox{for all $r$}\,,\]
and a normalizer $Z_{N}$, such that for every $N$,
\[ \bigg|\frac{\P_{\alpha,\beta,\lambda}(\varphi)}{Z_{N}^{-1} \exp\left[ \sum_{x\in\T_N}\sum_{0\leq r< N/2} \fg_{r}(\varphi\restriction_{B(x,r)})\right]} - 1 \bigg| \leq C e^{-(\alpha \wedge \beta)/2-C^\star N}
\,.
\]
In addition, for a potential $\sV$ satisfying the relaxed condition 
\begin{equation}\label{eq:gen-potential-weaker}\sV(h)=\sum \ff(S_i)\quad\mbox{for some function $\ff$ with }\quad \ff(S)\geq |S|\one_{\{|S|\geq \sM_0\}}\end{equation}
(the sum is over the connected components $S_i$ of $\psi_0\setminus h$; cf.~\cref{eq:gen-potential}) for an arbitrarily large $\sM_0>0$, there exists an absolute constant $C>0$ such that, if $0<\lambda<\frac1{C}$ and $\alpha\wedge\beta\geq C\lambda^{-20}+\sM_0$ then the  conclusion holds true with the weaker bounds where $C^\star$ is replaced by $\lambda^2/(C\sM_0)$.
\end{theorem}
(Notice that, in the above, the radius of the ball $B(x,r)$ at the final largest scale is $\frac{N}4 \leq r < \frac{N}2$.)
The analysis in this section will express $\P_{\alpha,\beta,\lambda}(\varphi)$ in terms of measures $\mu,\nu,\pi$ (see \cref{prop:P(phi)-via-3-measures}). This will reduce \cref{thm:phi-weakly-interacting} into proving these measures are local (in increasing order of difficulty):
\Cref{thm:mu-nu} establishes this for $\mu,\nu$, whereas \cref{thm:pi} gives the analogous statement for $\pi$. Combining these two theorems with \cref{prop:P(phi)-via-3-measures} will thus imply the above result on $\P_{\alpha,\beta,\lambda}(\varphi)$.

\subsection{Preliminaries and setup}\label{subsec:setup}
We now import some background on the relation between height functions on the square lattice $\Z^2$ and lozenge tilings of the triangular lattice $\T$, as well as the \GFF, adding context to the results in \cref{thm:GFF-convergence,thm:gmt-refinement} (e.g., the mode of convergence to the \GFF and the notion of viewing lozenge tilings of $\T$ as height functions projected on $\cP_{001}$ as opposed to $\cP_{111}$). 

\begin{figure}
\vspace{-0.1in}
\begin{tikzpicture}
   \node (fig0) at (0,0) {
   	\includegraphics[width=0.28\textwidth]{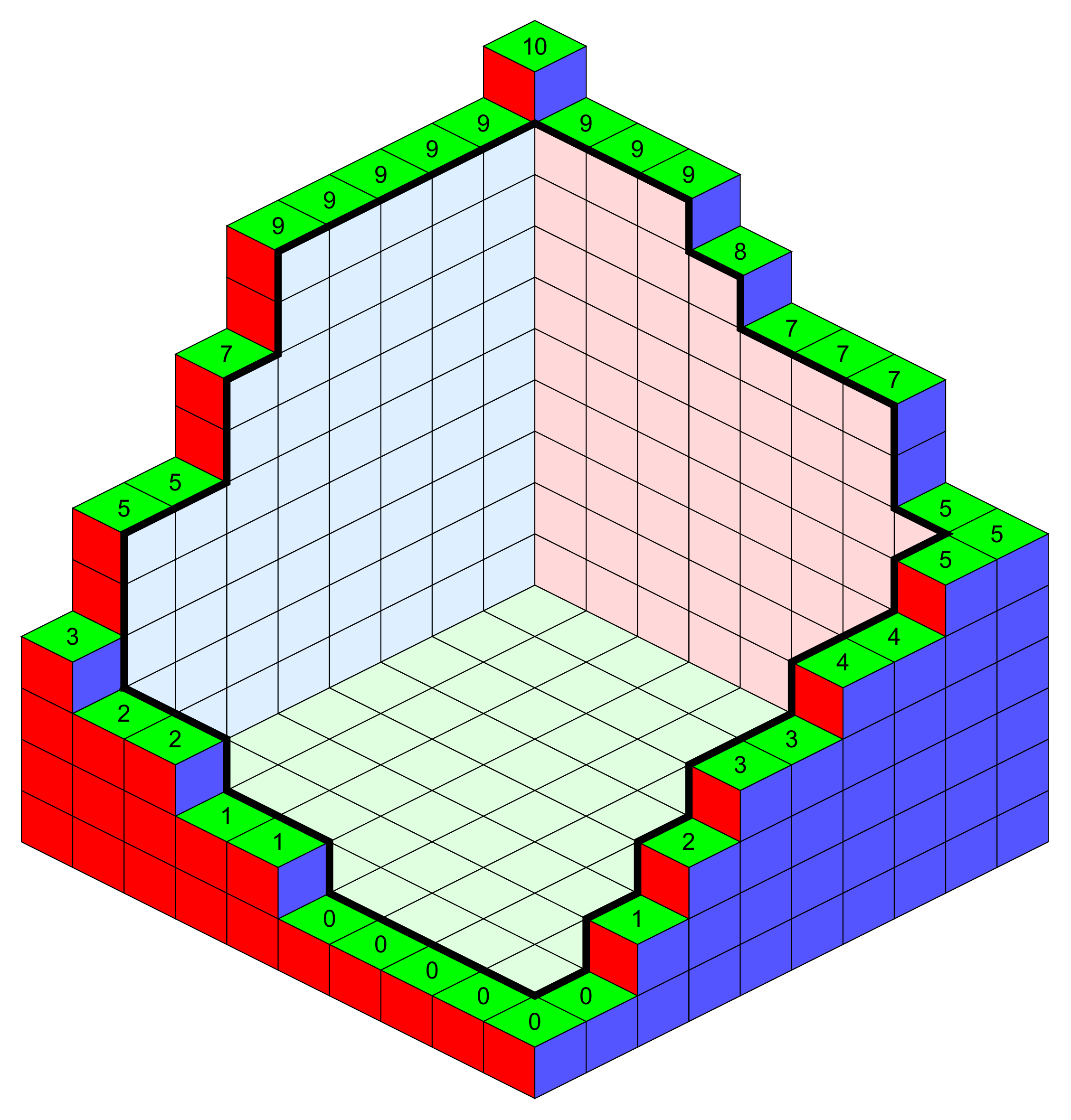}};
    \node (fig1) at (5,0) {
    \includegraphics[width=0.28\textwidth]{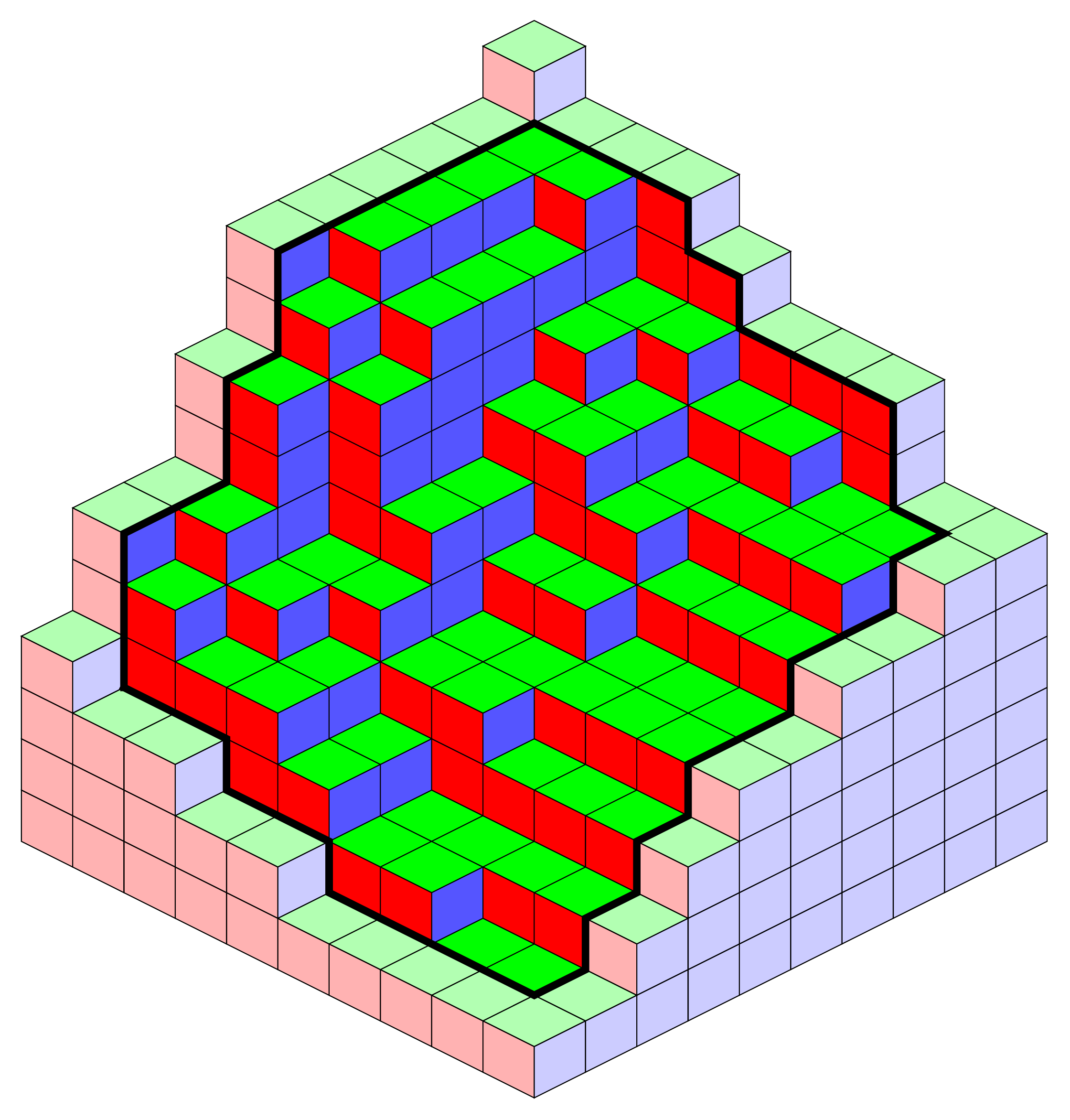}
    };
    \node (fig2) at (10,0) {
   	\includegraphics[width=0.28\textwidth]{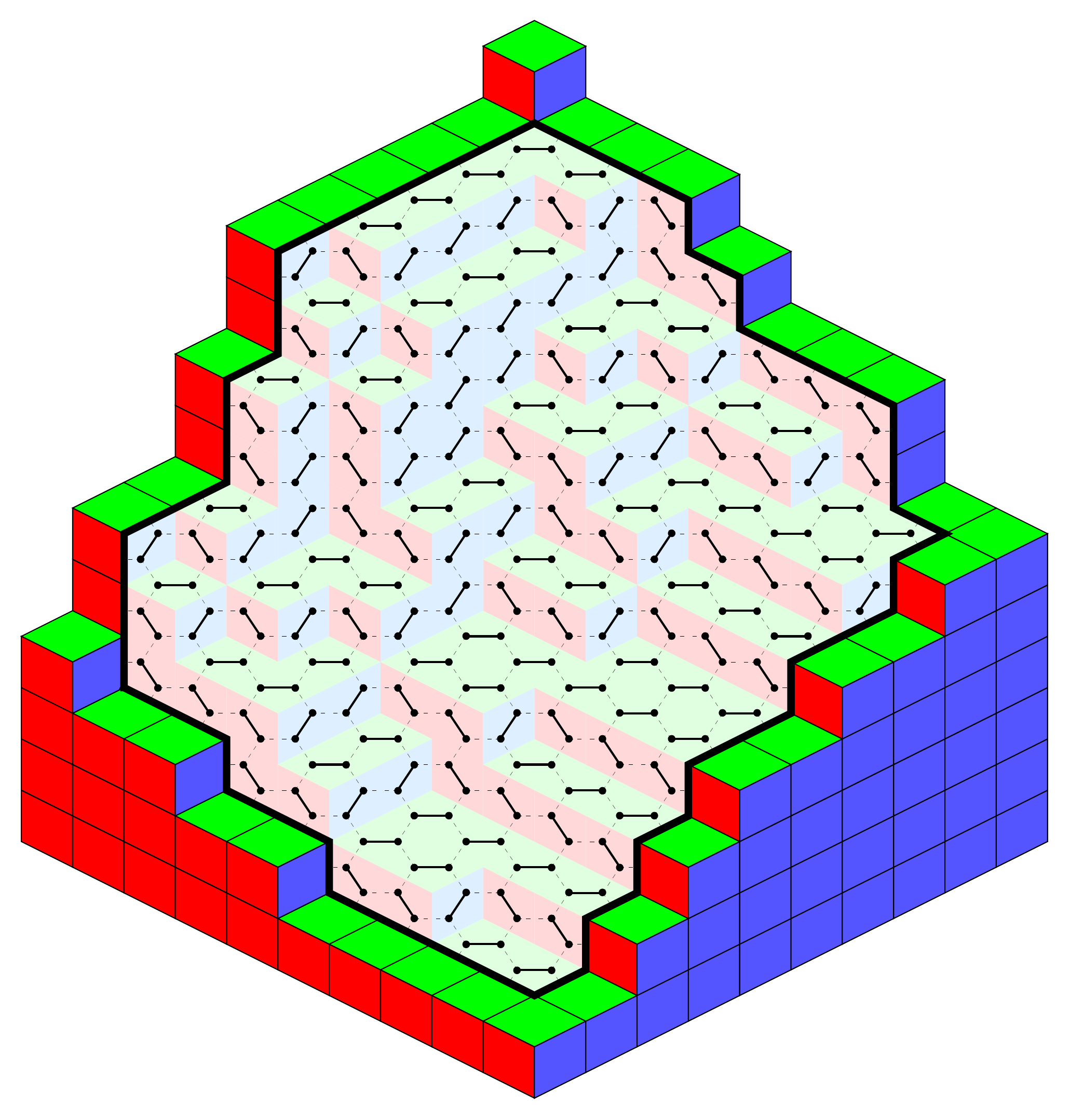}};
    \end{tikzpicture}
    \vspace{-0.2in}
    \caption{
    Monotone surface in $\Z^2$ vs.\ lozenge tiling of $\T$ (dimers in the hexagonal lattice). The fixed boundary heights (on left) determine the boundary of the tiled region (on right).
    (Unlike \cref{fig:sos-approx}, the lozenge tiling here is not a periodic tiling of the plane.)
    }
    \label{fig:surface-bc-tiling-dimers}
\end{figure}

\subsubsection{Surfaces and projections}
A plaquette, or face, in $\Z^3$ is a unit square that is either horizontal (with opposing corners $x,x+(1,1,0)$) or vertical (opposing corners $x,x+(1,0,1)$ or $x,x+(0,1,1)$). The SOS height function $h$ assigns a horizontal face at height $h(x)$ to each of the horizontal faces $x=(x_1,x_2)$ of the $N\times N$ square grid. It is viewed as a surface via a minimum completion of vertical faces to make it simply connected in $\R^3$, i.e., $|h(x)-h(y)|$ vertical faces between two neighboring faces $x,y$.
We will routinely move between viewing $h$ as a height function and viewing it as the set of faces comprising its interface, a subset of the set of all possible plaquettes in $\Z^3$. As there are always exactly $N^2$ horizontal faces in $h$, the leading term $\beta |h|$ in the SOS Hamiltonian aims to minimize the number of vertical faces. As such, under any fixed boundary conditions that are monotone decreasing along the $(x_1,x_2)$ coordinates, the ground state of the SOS model would be a monotone (decreasing) surface. The three types of faces then correspond to a lozenge tiling of the triangular lattice $\T$ (see \cref{fig:surface-bc-tiling-dimers}, where  vertical faces with opposing corners $x,x+(1,0,1)$ are in blue, vertical faces with opposing corners $x,x+(0,1,1)$ are in red, and horizontal faces in green). 

When the boundary condition is periodic with slope $\theta$, as per \cref{eq:h-periodic-bc}, the number of blue vertical faces per row $(x_1,\cdot)$ and the number of red vertical faces per given column $(\cdot,x_2)$ are each predetermined $(\lfloor \theta_2 N\rfloor$ and $\lfloor \theta_1 N\rfloor$, respectively), in which case a full plane periodic monotone surface in $\Z^2$ corresponds to a periodic lozenge tiling of $\T$; see \cref{fig:sos_tiling_periodic_bc}.

%As such, the SOS model equivalently assigns each height function $h$ the probability 
%\[ \P_\beta(h)\propto \exp\left(-\beta |h|\right)\,.\]
%A height function $\varphi:\Lambda_N\to\Z$ is a tiling if it is  monotone in the sense that $\varphi(x)\geq \varphi(y) $ for all $x=(x_1,x_2)$ and $y=(y_1,y_2)$ such that $y_1 \geq x_1$ or $y_2 \leq x_2$. It is easy to see that tilings are the height function with minimal size (for a given set of boundary conditions) and that they can be represented as lozenge tilings.

\begin{figure}
%\vspace{-0.2in}
\begin{tikzpicture}
   \node (fig1) at (0,0) {
    \includegraphics[width=0.6\textwidth]{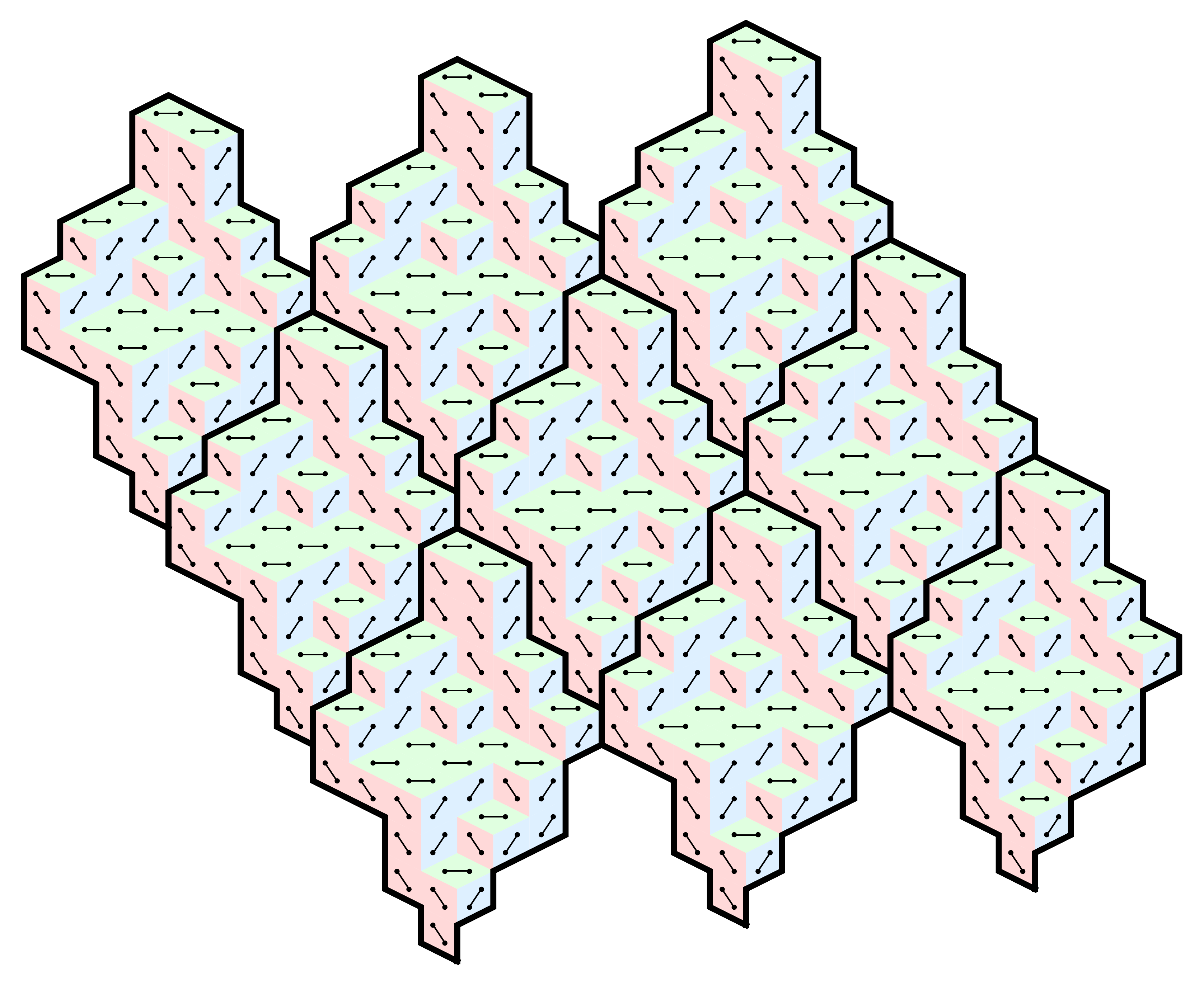}
    };
    \node (fig2) at (7,1.) {
   	\includegraphics[width=0.4\textwidth]{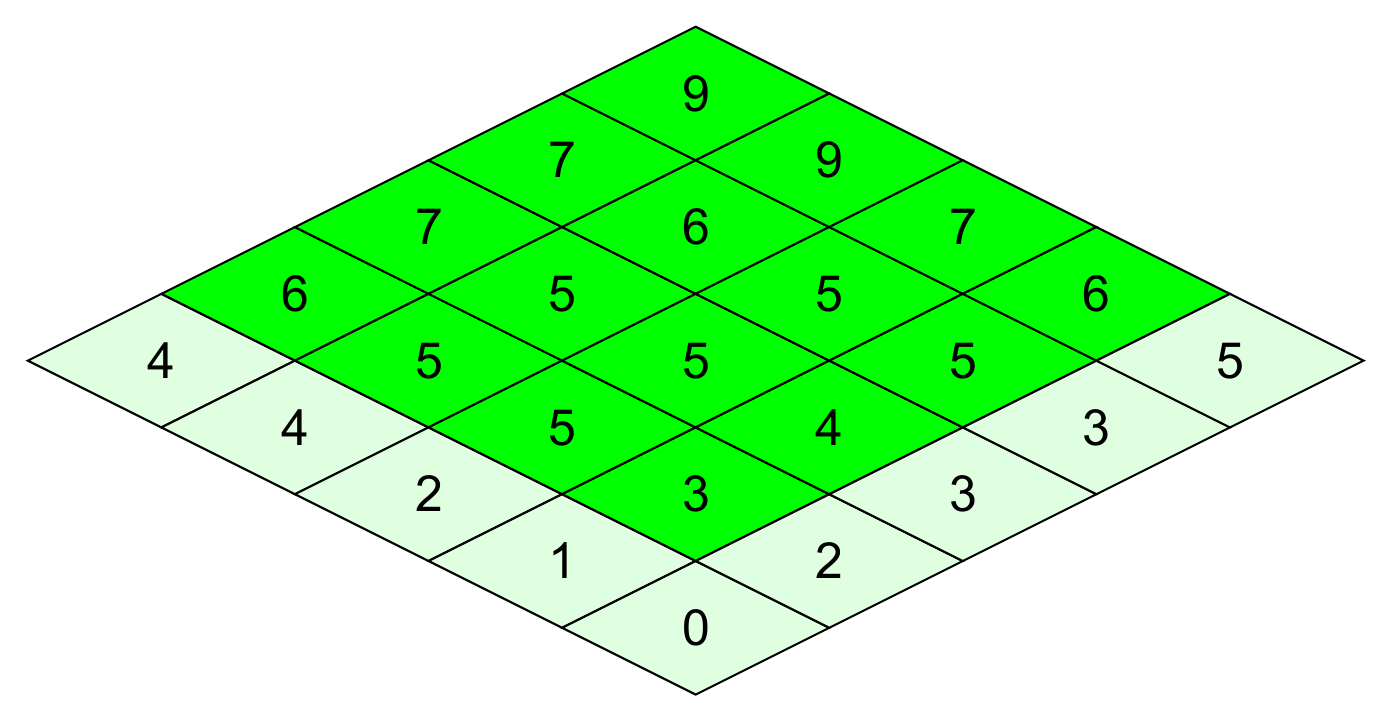}};
    \end{tikzpicture}
    \vspace{-0.25in}
    \caption{
    Monotone height function on a $4\times 4$ torus with $\sum \nabla h(\vec e_i) =-5$ for loops along the $(1,0)$-direction and $-4$ along the $(0,1)$-direction, and the periodic tiling of $\T$ it induces.
    }
\label{fig:sos_tiling_periodic_bc}
%\vspace{-0.1in}
\end{figure}

In the above notion of height functions (describing SOS configurations $h$ as well as tilings $\varphi$, viewed as their special case of monotone surfaces), the height of faces was measured via a projection onto the $x_3=0$ plane. We now discuss the relation between this notion and the one where the heights are projected onto the $x_1+x_2+x_3=0$ plane, as is common in the study of dimers. 
At the discrete level, in the full plane, the link between the two descriptions is as follows:

\begin{definition}
Let $\varphi$ be a discrete monotone surface, seen as a union of plaquettes of $\Z^3$ (with a chosen root). Let $\cP_{001}$ denote the plane with the equation $x_3 = 0$, let $\cP_{111}$ be the plane with the equation $x_1 + x_2+x_3 = 0$ and let $\Upsilon_{001}$ and $\Upsilon_{111}$ denote the orthogonal projections on these planes. Note that $\Upsilon_{001} (\Z^3)$ is the square lattice $\Z^2$ while $\Upsilon_{111}(\Z^3)$ is the triangular lattice $\T$.
    \begin{enumerate}[(i)]
        \item The $\cP_{001}$ height function, or height with SOS convention, assigns heights to  the faces of~$\Z^2$. % = \Upsilon_{001}(\Z^3)$. 
        Given a face $u$ of $\Z^2$, there exists a unique plaquette $f$ of $\Z^3$ with $f \in \varphi$ and $\Upsilon_{001}(f) = u$, and $\varphi_{001}(u)$ is defined as the third coordinate of $f$ (well defined since $f$ is parallel to $\cP_{001}$).
        \item The $\cP_{111}$ height function, or height with tiling convention, assigns heights to the vertices of~$\T$ (or equivalently, the faces of the hexagonal lattice). Given a vertex $v$ of $\T$, there exists a unique vertex $x$ with $\Upsilon_{111}(x) = v$ and $x \in \varphi$. We let $\varphi_{111}(v)$ be the third coordinate of $x$.
    \end{enumerate}
\end{definition}

See \cref{fig:P111-P001} for an illustration of the $\cP_{001}$ and $\cP_{111}$ height functions.
On the torus, one defines these simply by first mapping the configuration from the torus to the full plane in the natural way. 
Similarly, the notions of $\cP_{001},\cP_{111}$ height functions extend to continuous surfaces (in $\R^3$).

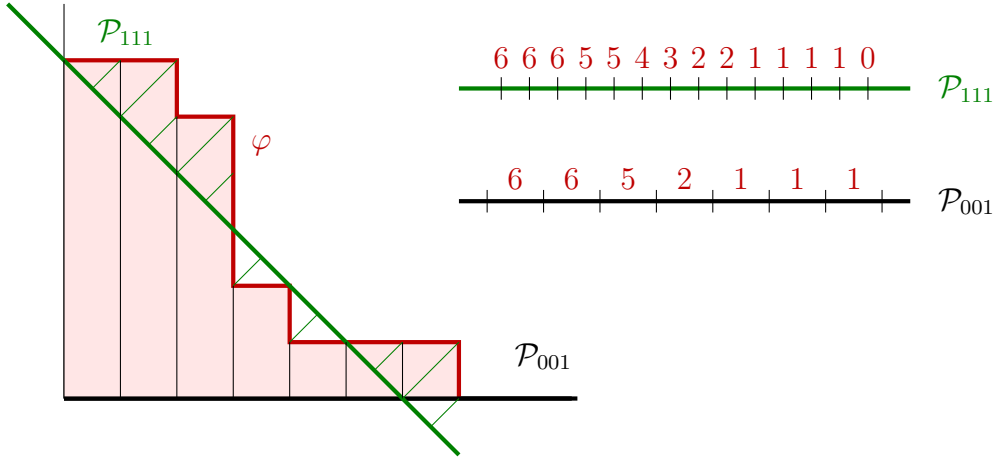
\begin{figure}
%\vspace{-0.1in}
      \begin{tikzpicture}
    \begin{scope}[scale=0.75]
        \draw (0,0)--(0,7);
        \draw[ultra thick] (0,0)--(9.1,0);
        \filldraw[draw=none,red!10] (0,7)--(0,6)--(2,6)--(2,5)--(3,5)--(3,2)--(4,2)--(4,1)--(7,1)--(7,0)--(0,0)--cycle;
        \draw[ultra thick,red!75!black] (0,6)--(2,6)--(2,5)--(3,5)--(3,2)--(4,2)--(4,1)--(7,1)--(7,0);
        \node[font=\large,red!75!black] at (3.5,4.5) {$\varphi$};
        
        \draw[ultra thick,black] (0,0)--(9,0);
        \node[font=\large] at (8.5,0.75) {$\mathcal{P}_{001}$};

        \draw[ultra thick,green!50!black] (-1,7)--(7,-1);
        \node[font=\large,green!50!black] at (1.1,6.5) {$\mathcal{P}_{111}$};

        \coordinate (p001) at (7,3.5);
        \draw[ultra thick, black] (p001)--($(p001)+(8,0)$);
        \node[font=\large] at ($(p001)+(9,0)$) {$\mathcal{P}_{001}$};
        
        \draw ($(p001)+(.5,-0.2)$)--($(p001)+(.5,0.2)$);
        
        \foreach \x/\y in {0/6,1/6,2/5,3/2,4/1,5/1,6/1} { \draw (\x,0)--(\x,\y);
          \draw ($(p001)+(\x+1.5,-0.2)$)--($(p001)+(\x+1.5,0.2)$);
          \node[font=\large,red!75!black] at ($(p001)+(\x+1,0.4)$) {$\y$};
          };

        \coordinate (p111) at (7,5.5);
        \draw[ultra thick, green!50!black] (p111)--($(p111)+(8,0)$);
        \node[font=\large,green!50!black] at ($(p111)+(9,0)$) {$\mathcal{P}_{111}$};

        \foreach \x/\y in {0/6,1/6,2/6,3/5,4/5,5/4,6/3,7/2,8/2,9/1,10/1,11/1,12/1,13/0} { 
          \draw ($(p111)+(\x/2+0.75,-0.2)$)--($(p111)+(\x/2+0.75,0.2)$);
          \node[font=\large,red!75!black] at ($(p111)+(\x/2+0.75,0.55)$) {$\y$};
          };

        \foreach \x/\y/\z in {1/6/5.5,2/6/5,2/5/4.5,3/5/4,3/4/3.5,3/2/2.5,4/1/1.5,6/1/0.5,7/1/0,7/0/-.5} {
          \draw[green!50!black] (\x,\y)--(6-\z,\z);
          };

    \end{scope}
\end{tikzpicture}
%\vspace{-0.15in}
    \caption{A schematic 2D representation of the two height function conventions. Left: A discrete monotone function $\varphi$ in red with the projections on $\cP_{001}$ and $\cP_{111}$ represented by black or green fine lines. Right: The corresponding height functions, top with tiling convention and bottom with SOS convention. The non-linearity of the transformation from one convention is apparent, e.g., in the drop from $5$ to $2$ in the $\cP_{001}$ convention, which corresponds to the interval $(5,4,3,2)$ in the $\cP_{111}$ convention.}
    \label{fig:P111-P001}
%\vspace{-0.15in}
\end{figure}

Let us next discuss the pinning conventions. The joint law on $(h,\varphi)$ as per \cref{eq:tilted-sos,eq:def-phi-given-h}, regardless of the pinning $h_{001}(o)=0$ that we specified in the torus (where $o$ is the origin face of $\Z^2$), is a law on $(\nabla h_{001},h_{001}-\varphi_{001})$ which is invariant under translations in the plane $\cP_{001}$. Note that, equivalently, it can also be seen as a joint law on $(\nabla \varphi_{001},h_{001}-\varphi_{001})$, which will be more convenient in our analysis after we invert the order of the conditioning, focusing first on the marginal on $\varphi$ and then viewing $h$ as a perturbation given $\varphi$. This already introduces a complication because, from that point of view, pinning $h(o)$ to $0$ is no longer natural (said pinning, as seen by $\varphi$, would be carried out onto a random hidden SOS configuration, as opposed to a deterministic pinning). We will thus need to change our pinning convention in that context.

Another complication is that we will need results on the dimer model which require translation invariance with respect to $\cP_{111}$ (the usual setting in the dimer literature). However, if we pin $\varphi_{001}(o)$ to $0$, we break this translation invariance since this amounts to forcing the origin to be covered by a green tile. (For this reason we did not, for instance, specify that $h_{001}(o)=\varphi_{001}(o)=0$ when introducing $\varphi$ in \cref{eq:def-phi-given-h}, and instead just asked that $\varphi \cap h \neq \emptyset$. We could have asked for the former, if we were to then re-root a uniformly chosen face of $\Lambda_N$ to be at height $0$ instead of $o$.)

To address these two issues, we instead pin $\varphi_{111}(o)$ to $0$, where $o$ is now the origin of the triangular lattice $\T$, or equivalently in terms of monotone surfaces, require that $(0,0,0) \in \varphi$ (i.e., it is a corner of a plaquette in $\varphi$). It can then be checked (e.g., by considering the $\sigma$-finite measure on $(h, \varphi)$ obtained by giving measure $1$ to every possible height shift, which is invariant under all $\Z^3$ translations) that the resulting law of $\varphi$ is indeed invariant under $\cP_{111}$ translations as needed. See \cref{fig:projection_lozenge} and its caption for details on how to read both height functions ($\cP_{001},\cP_{111}$) from a tiling.

\begin{figure}
%\vspace{-0.23in}
\begin{tikzpicture}
    \node (fig) at (0,0) {
    \includegraphics[width=.35\textwidth]{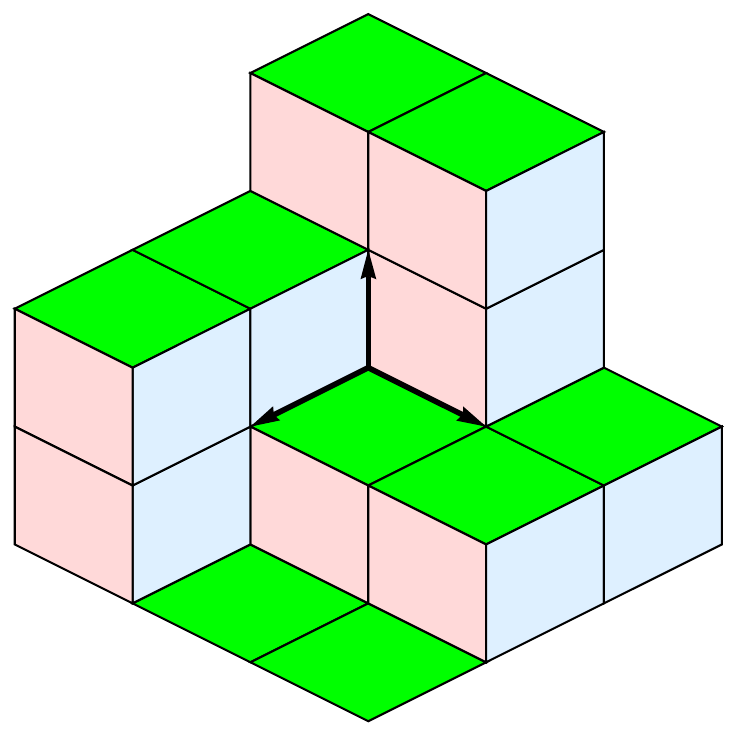}
    };
    \node[font=\scriptsize] at (-1,0.95) {$(\frac12,-\frac12)$};
    \node[font=\scriptsize] at (0,-.42) {$(\frac12,\frac12)$};
    \node[font=\scriptsize] at (0.9,1.86) {$(-\frac12,\frac12)$};
    \node[font=\scriptsize] at (-0.0,2.33) {$(-\frac12,-\frac12)$};
    \node[font=\large] at (.25,.35) {$e_\uparrow$};
\end{tikzpicture}
%\vspace{-0.27in}
    \caption{The effect of the $\Upsilon_{001}$ projection on a  tiling. The root $o$ is the origin of the vector~$e_{\uparrow}$. Each green horizontal lozenge corresponds to a face in $\cP_{001}$ and is labeled with the first two coordinates of its center. Successive green lozenges along the $e_{\uparrow}$ direction correspond to the faces along the main diagonal of $\cP_{001}$ and, for $u$ on that diagonal, $\varphi_{111}(u)$ is the number of vertical edges not covered by a lozenge between the root $o$ and corresponding tile. In other words, for $k , \ell \geq 0$, one has $\varphi_{111}(o + k e_{\uparrow}) \leq \ell$ when $\varphi_{001}( - (k-\ell) + \frac{1}{2}, - (k-\ell) + \frac{1}{2} ) \leq \ell$ and $\varphi_{111}(o + k e_{\uparrow}) \geq \ell$ when $\varphi_{001}( - (k-\ell) - \frac{1}{2}, - (k-\ell) - \frac{1}{2} ) \geq \ell$.}
    \label{fig:projection_lozenge}
%    \vspace{-0.15in}
\end{figure}

Note that while the $\cP_{001}$ height function is natural for an SOS discrete surface, the $\cP_{111}$ height function is far less so: for instance, a ``spike'' in $h_{001}$ (an isolated column, e.g., $h_{001}(x)=2$ and $h_{001}(y)=0$ for all $y\sim x$) becomes an overhang from the $\cP_{111}$ point of view (no longer well-defined).
%in an ``exposed column'' both the ``front'' and ``back'' parts can have the same projection on $\cP_{111}$.

It will be convenient to view our discrete surfaces ($h$ and $\varphi$) as continuous surfaces in $\R^3$, whereby the notions of height functions $\varphi_{111}(x)$, $h_{001}(y)$ will make sense for any $x \in \cP_{111}$ and $y$ in $\cP_{001}$ minus the edges of $\Z^3$. In the following, when integrating a discrete height function, we will always consider it as extended as above and we will use $\langle \cdot ,  \cdot \rangle$ to denote the $L^2$ inner product.

\subsubsection{The Gaussian free field}
The \GFF can be viewed as a natural extension of Brownian motion (or a Brownian bridge) to a parameter space more general than $\R_+$. There is an extensive literature on it, well beyond the scope of this paper; here we will only discuss the basic definition of the \GFF on $\R^2$ and simply connected domains of $\R^2$, and the meaning of the convergence in \cref{thm:GFF-convergence}.

In analogy to the case of Brownian motion, the simplest definition of the \GFF on a bounded open domain $D$ (say with a smooth $\partial D$) should be as a centered Gaussian process indexed by points of $D$ where one only needs to fix the covariance matrix, a natural choice being to take the Green function with Dirichlet boundary conditions (the choice of normalization of the Green function is unfortunately not completely canonical here). Unfortunately, this does not make sense directly because the Green function diverges on the diagonal. The solution is to see the \GFF as a stochastic process indexed by (regular enough) test functions $f$; in what follows we denote its value at $f$ by $\langle \GFF, f\rangle$ instead of $\GFF(f)$ to emphasize that it is viewed as a Schwartz distribution. The natural choice of covariance is then
\[
\Cov( \langle \GFF_D, f\rangle, \langle \GFF_D, g\rangle ) := \int_{D} \int_D f(u) g(v) G_D(u,v)\, \d u \d v
\]
where $G_D$ is the Green function of the domain $D$ with Dirichlet boundary conditions.

We would like to just take $D = \R^2$ but again this does not work quite directly since the full plane Green function does not define a valid covariance. In fact, this issue is already present in the 1\Dim case when one wants to define a full plane Brownian motion: the law of all increments is perfectly well defined (and has the symmetries of $\R$) but one needs to pin the process at a point to turn these increments into an actual process. In the \GFF case, since the value at a point is undefined, one could similarly pin the value of $\langle \GFF, f\rangle$ for some fixed $f$, e.g., the indicator of a ball, but an arbitrary convention of such a sort complicates the analysis. It is more elegant to stick with only the law of all increments, which in the generalized function point of view is equivalent to restricting the set of test functions to mean $0$ ones. This leads to a definition as given above \cref{thm:GFF-convergence}:
\begin{definition}\label{def:gff-stoch-process}
    The full plane \GFF is the centered Gaussian process indexed by smooth test functions from $\R^2$ to $\R$ with $0$ mean and covariance
    \[
    \Cov(  \langle \GFF_{\R^2}, f\rangle, \langle \GFF_{\R^2}, g\rangle ) := -\frac{1}{2\pi}\int_{\R^2} \int_{\R^2} f(u) g(v) \log |u-v|\, \d u \d v .
    \]
\end{definition}
 The convention for the constant in front of the $\log$ is not completely standard, we use the one from \cite{BerestyckiPowell25}. With this convention, the limit of the height function for uniform dimers is $\sqrt{\frac{2}{\pi}} \GFF$.
 
In the Brownian motion case, the next step after the definition as a stochastic process is to prove the existence of continuous versions, that is, to establish regularity estimates. The analog for the 2\Dim \GFF is to say that it can be realized in a ``concrete'' space of distribution. The full plane case (as opposed to a bounded domain $D$) is somewhat awkward here since one would need to use a weighted space (cf., e.g.,~\cite[\S6.4]{BerestyckiPowell25}), but for a bounded domain one has the following:
\begin{proposition}[{\cite[Thm.~1.45]{BerestyckiPowell25}}]
    For any $\epsilon>0$, for any simply connected bounded open set $D$, there exists a version of $\GFF_D$ which is a random variable in the Sobolev space $H^{-\epsilon}(D)$.
\end{proposition}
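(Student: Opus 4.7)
The plan is to construct an explicit realization of $\GFF_D$ via the spectral decomposition of the Dirichlet Laplacian on $D$, and then to read off its $H^{-\epsilon}(D)$ regularity from Weyl's law for the eigenvalues. First, I would take $(e_n)_{n\geq 1}$ an $L^2(D)$-orthonormal basis of eigenfunctions of $-\Delta$ with Dirichlet boundary conditions and eigenvalues $0<\lambda_1\leq\lambda_2\leq\cdots$, recall Weyl's asymptotic $\lambda_n = (c_D+o(1))\,n$ in dimension two, and on some probability space let $(X_n)_{n\geq 1}$ be i.i.d.\ standard Gaussians. Then I would set, for a normalization constant $\kappa>0$ chosen so that the short-range singularity of $\kappa^2 G_D$ matches $-\pi^{-2}\log|u-v|$,
\[
\GFF_D := \kappa \sum_{n\geq 1} \frac{X_n}{\sqrt{\lambda_n}}\, e_n,
\]
interpreted for the moment as a formal series. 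Pairing this formally with smooth test functions $f,g$ yields a centered Gaussian with covariance $\kappa^2\sum_n \lambda_n^{-1}\langle e_n,f\rangle\langle e_n,g\rangle = \kappa^2\iint f(u)g(v)\, G_D(u,v)\,\mathrm du\,\mathrm dv$, where $G_D=\sum_n \lambda_n^{-1} e_n(\cdot)e_n(\cdot)$ is the Dirichlet Green function; this recovers the covariance structure stipulated for the bounded-domain \GFF earlier in the introduction and identifies the object with $\GFF_D$, provided the series can be shown to converge in a suitable space.

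Next, I would leverage the spectral characterization of the Dirichlet Sobolev norms, $\|f\|_{H^s(D)}^2 = \sum_{n\geq 1}\lambda_n^s |\langle f,e_n\rangle|^2$, to control the partial sums: for $s=-\epsilon$,
\[
\E\bigg\|\kappa\sum_{n=M}^N \frac{X_n}{\sqrt{\lambda_n}} e_n\bigg\|_{H^{-\epsilon}(D)}^2 = \kappa^2\sum_{n=M}^N \lambda_n^{-1-\epsilon}.
\]
Weyl's law gives $\lambda_n^{-1-\epsilon} = \Theta(n^{-1-\epsilon})$, which is summable for every $\epsilon>0$, so the partial sums form a Cauchy sequence in $L^2(\Omega;H^{-\epsilon}(D))$ and converge to an $H^{-\epsilon}(D)$-valued random variable. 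Gaussian concentration of $\|\cdot\|_{H^{-\epsilon}(D)}$ combined with a Borel--Cantelli argument along a geometrically growing subsequence would upgrade this to almost sure convergence, producing the desired ``version''. One could equivalently bypass the spectral construction and start from the process of \cref{def:gff-stoch-process} (restricted to test functions supported in $D$, with the convention appropriate for a bounded domain): mollify, verify tightness in $H^{-\epsilon}(D)$ of the mollifications through the same eigenfunction expansion, and extract a limit point; the content is identical.

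The main obstacle is essentially bookkeeping about conventions rather than hard analysis. One needs to verify that the spectral definition $\|f\|_{H^s(D)}^2=\sum \lambda_n^s|\langle f,e_n\rangle|^2$ agrees with the standard Sobolev space of negative order on a simply connected bounded open $D$ (classical for smooth $\partial D$, and extendable with mild care to rougher boundaries), and that the short-range singularity of $G_D$ differs from $-\pi^{-2}\log|u-v|$ only by a smooth harmonic correction absorbing the Dirichlet boundary condition. The analytic heart of the argument---summability of $n^{-1-\epsilon}$---is immediate, so all the real work is in ensuring that the object produced genuinely realizes $\GFF_D$ with the precise normalization used throughout the paper.
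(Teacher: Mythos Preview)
The paper does not give its own proof of this proposition; it is quoted from \cite{BerestyckiPowell} and the paper only adds a remark recording the explicit series representation $\GFF_D=\sum_n X_n f_n^D$, with $f_n^D$ the Dirichlet Laplacian eigenfunctions normalized to unit $H^1$-norm (which is exactly your $e_n/\sqrt{\lambda_n}$, so the two series coincide up to the constant $\kappa$). The paper then comments that the almost sure convergence in $H^{-\epsilon}$ is ``part of the proposition and non-obvious,'' while in $H^{-1-\epsilon}$ absolute convergence is easy. So there is nothing in the paper to compare your argument against beyond this sketch; your proposal is the standard spectral/Weyl proof and is correct.

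One simplification you may want: once you have computed
\[
\Big\|\sum_{M<n\leq N}\frac{X_n}{\sqrt{\lambda_n}}e_n\Big\|_{H^{-\epsilon}}^2=\sum_{M<n\leq N}\frac{X_n^2}{\lambda_n^{1+\epsilon}},
\]
almost sure convergence is immediate, since $\sum_{n\geq 1}X_n^2\lambda_n^{-1-\epsilon}$ is a sum of nonnegative terms with finite expectation (by Weyl), hence a.s.\ finite, so the partial sums are a.s.\ Cauchy in $H^{-\epsilon}$. You do not need Gaussian concentration plus Borel--Cantelli along a subsequence. Your caveat about the identification of the spectral scale $\|f\|_{H^s}^2=\sum_n\lambda_n^s|\langle f,e_n\rangle|^2$ with the ``usual'' Sobolev space on a merely bounded open $D$ is well placed; in practice one simply takes the spectral scale as the definition, which is also what the cited reference does.
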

In fact, this version of the \GFF can be written explicitly as the almost surely convergent series $\GFF_D = \sum_{n} X_n f^{D}_n$ where $\{X_n\}$ are i.i.d.\ standard normal variables and $\{f^{D}_n\}$ are the eigenvectors for the Laplacian in $D$ with Dirichlet boundary condition, normalized to have unit $H^1$-norm. Note that the almost sure convergence in $H^{-\epsilon}$ of the series is part of the proposition and non-obvious but in $H^{-1-\epsilon}$ it is not hard to check that the series becomes absolutely convergent almost surely.

When discussing convergence to the \GFF of some discrete random function $h_n$, the simplest approach is to view the \GFF as a stochastic process (as in \cref{def:gff-stoch-process}), i.e., to show that for every smooth $f$ (possibly with $0$ mean in the full plane case), $\langle h_n, f \rangle$ converges to a centered Gaussian with variance $\iint f(u)f(v) G(u,v)\d u \d v$. Indeed, this is the notion of convergence in \cref{thm:GFF-convergence}. For a concrete example of a property of the \SOS function $h$ that one can read from the \GFF limit via this mode of convergence, take any $x,y\in \R^2$, and set
\[ f(u) = \delta_\epsilon(u-x) - \delta_\epsilon(u-y)\,, \]
where $\delta_\epsilon(\cdot)$ is a smooth approximation of the Dirac delta function in $\R^2$ supported on $B(o,\epsilon)$. \Cref{thm:GFF-convergence} shows that this height statistic for $h$, comparing the local heights near $\lfloor n x\rfloor $ vs.\ $\lfloor n y\rfloor $, converges as $N\to\infty$ followed by $n\to\infty$ to a centered Gaussian with covariance $\sigma^2_\epsilon \log |x-y|$ where $\sigma_\epsilon\to\sigma$ is the standard deviation of the limit field.
% c = (1/\sqrt{3}) det(\bar{L}) (1/\sqrt{2\pi}) (1+\epsilon_\beta) C(\delta_\epsilon) , where 1/\sqrt{3} comes from projecting Z^3 on P_{111} (nonstandard scaling); \bar{L} is the map P_{001} \to P_{abc} \to P_{111}, so det(\bar L) depends on the slope and is the only one depending on it; 1/\sqrt{2\pi} comes from the non-interacting dimers; (1+\epsilon_\beta) from the interaction where \epsilon_\beta is something like e^{-\beta}; and C(\delta_\epsilon) because of the regulerization, which blows up like 1/poly(epsilon) as \epsilon\downto 0.  The \sigma will have everything except C(\delta_\epsilon).

Regarding other notions of convergence, at the discrete level $h_n$ is an actual function and, in many cases (including in the proofs of \cref{thm:gmt} and its refinement \cref{thm:gmt-refinement} derived here), it is natural along the way to compute pointwise correlations (or higher moments of $h_n$), such as proving that (in a domain~$D$)
\[
\Cov( h_n(u), h_n(v) ) \to G_D(u,v)
\]
(in fact, in Kenyon's paper~\cite{Kenyon00}, this is the targeted notion of convergence) and similar estimates with more points. These pointwise bounds are actually conceptually stronger than the convergence as a stochastic process since, in a sense, this mode of convergence controls the joint law of the values of the \GFF at different points even though this law has no formal existence. Indeed, in the proof of \cite[Thm.~5.1]{BLR20}, it is shown that the convergence of all $n$-point functions with fixed distinct points coupled with even a fairly rough bound on the divergence as $u -v \to 0$ is enough to prove not only convergence in $H^{-1-\epsilon}$ but even to control all moments of the $H^{-1-\epsilon}$-norm of the field. For us, a similar statement would also hold, up to some small extra difficulties involving weighted spaces to accommodate the full plane picture.%\textcolor{red}{Do we want to be as assertive ? The set of full plane distribution up to constant is a bit nasty to do functional analysis in ...}

\subsection{Deriving the three measures 
\texorpdfstring{$\mu,\nu,\pi$}
{\textmu,\textnu,\textpi}
}\label{sec:deriving-pi-mu-nu}
We will decompose $\P_{\alpha,\beta,\lambda}(\varphi)$ as follows:
\begin{proposition}
    \label{prop:P(phi)-via-3-measures}
    In the setting of \cref{thm:GFF-convergence}, the measure $\P_{\alpha,\beta,\lambda}$ from \cref{eq:def-phi-given-h} satisfies
    \begin{equation}
\P_{\alpha,\beta,\lambda}(\varphi) \propto \exp\bigg[ -\int_{\alpha}^\infty \mu_{\varphi, \hat \alpha}(\tfrac12|\varphi\xor\psi|)\d\hat\alpha
+\int_{\alpha}^\infty\nu_{\varphi,\bar\alpha}(\tfrac12|\eta\xor\varphi|)\d\bar\alpha
+\int_{\beta}^{\infty}\pi_{\varphi,\hat\beta}(H_h) \d\hat\beta \bigg] \,,
\label{eq:phi-3-measures}\end{equation}
for measures $\mu_{\varphi,\hat\alpha}$, $\nu_{\varphi,\bar\alpha}$ and $\pi_{\varphi,\hat\beta}$ defined below in \cref{eq:mu-def,eq:nu-def,eq:pi-def} respectively.
\end{proposition}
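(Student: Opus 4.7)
The tool is the free-energy (thermodynamic) identity
\[
\log Z_\gamma \;=\; \log Z_\infty \;+\; \int_\gamma^\infty \E_{\hat\gamma}[\overline H]\,\d\hat\gamma,\qquad Z_\gamma := \sum_\sigma e^{-\gamma \overline H(\sigma)},\ \overline H\geq 0,
\]
which follows from $\tfrac{d}{d\gamma}\log Z_\gamma = -\E_\gamma[\overline H]$ together with $\log Z_\infty = \log\#\{\sigma:\overline H(\sigma)=0\}$. The plan is to apply this identity three times to successive partition functions arising from the conditional structure of $\P_{\alpha,\beta,\lambda}$, each application producing exactly one of the three integrals in \cref{eq:phi-3-measures}.

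\emph{First application, producing $\mu_{\varphi,\hat\alpha}$.} I would address the inner partition function $\Ztile_{N,\alpha}(h)$ appearing in the denominator of \cref{eq:big-measure}. To use the identity, I first recenter by $\varphi$: set $G(\psi):=|h\cap\varphi|-|h\cap\psi|$ and $\overline G:=G-\min_{\psi'}G\ge 0$, so that
\[
\Ztile_{N,\alpha}(h) \;=\; e^{\alpha|h\cap\varphi|-\alpha\min_{\psi'}G}\cdot \overline Z_\alpha(h,\varphi),\qquad \overline Z_\alpha = \sum_\psi e^{-\alpha\overline G(\psi)}.
\]
The recentering is essential so that the $\alpha\to\infty$ limit is finite. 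Applying the identity to $\log \overline Z_\alpha$ in the variable $\alpha$ introduces $\mu_{\varphi,\hat\alpha}(\psi)\propto e^{-\hat\alpha\overline G(\psi)}$ and an integrand equal to $\tfrac12|\varphi\xor\psi|$ up to terms independent of $\psi$ (since $\varphi$ and $\psi$ are tilings of the same slope), matching the first integral of \cref{eq:phi-3-measures}.

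\emph{Second application, producing $\pi_{\varphi,\hat\beta}$.} I then sum the rewritten joint density over $h$ to obtain the marginal $\P_{\alpha,\beta,\lambda}(\varphi)$. Collecting the $\beta$- and $h$-dependence into a single Hamiltonian $H_h(h;\varphi)\geq 0$ (incorporating $|h|$, $\sV(h)$, $\min_{\psi'}G$, and the $h$-dependence of the $\mu$-integral), the sum takes the form $\sum_h e^{-\beta H_h}$. Applying the identity in $\beta$ produces $\pi_{\varphi,\hat\beta}(h)\propto e^{-\hat\beta H_h(h;\varphi)}$ and the third integral of \cref{eq:phi-3-measures}.

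\emph{Third application, producing $\nu_{\varphi,\bar\alpha}$ — the main obstacle.} The second step leaves behind the log of a residual reference partition function $\widetilde Z_\infty(\varphi)=\#\{h:H_h(h;\varphi)=0\}$, which is a highly nontrivial function of $\varphi$ and is not a constant that can be absorbed into $\propto$. The strategy is to represent $\widetilde Z_\infty(\varphi)$ as the $\bar\alpha\to\infty$ ground-state count of an auxiliary tiling model on $\eta$ whose Hamiltonian is (up to an absolute constant) $\tfrac12|\eta\xor\varphi|$, and then apply the identity a third time in $\bar\alpha$. The delicate point is engineering this auxiliary model so that its $\bar\alpha=\infty$ normalization is $\varphi$-independent (hence absorbed by $\propto$), its integrand is exactly $\tfrac12|\eta\xor\varphi|$, and its integration lower endpoint is $\alpha$ as in the statement — this is what pins down the precise form of $\nu_{\varphi,\bar\alpha}$. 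Assembling the three integrals and collecting the constants into the normalization then yields \cref{eq:phi-3-measures}.
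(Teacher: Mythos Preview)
Your three-application strategy with the free-energy identity is the paper's approach, and the ordering ($\mu$ then $\pi$ then $\nu$) is right, but the bookkeeping of what each application produces is wrong in a way that prevents the argument from closing.

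The first expansion, applied to $\Ztile_{N,\alpha}(h)$, does not yield the $\mu_{\varphi,\hat\alpha}$ integral in \cref{eq:phi-3-measures}. It yields $\int_\alpha^\infty\mu_{h,\hat\alpha}(\overline G_h)\,\d\hat\alpha$, which depends on $h$: the integrand $\overline G_h(\psi)=\max_{\psi'}|h\cap\psi'|-|h\cap\psi|$ equals $\tfrac12|\varphi\xor\psi|$ only when $h$ is itself a tiling, so this term cannot be factored out of the sum over $h$. In the paper it is absorbed into the Hamiltonian defining $\pi_{\varphi,\hat\beta}$ (see \cref{eq:pi-def}). Relatedly, in your second step only $H_h=|h|-|\varphi|$ carries the temperature $\hat\beta$; the terms $\alpha G^\gr_{h,\varphi}$, $\log Z_\mu^\infty(h)$, $\lambda\sV(h)$ and the $\mu$-integral keep their own fixed coefficients. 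Hence the residual $Z_\pi^\infty(\varphi)$ is not a ground-state \emph{count} but the weighted sum over tilings $\eta$ in \cref{eq:Z-phi-gr-def}--\cref{eq:Z-phi-gr'}, still carrying the $\mu$-integral evaluated at $h=\eta$.

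The third expansion then applies to this weighted sum with $F(\eta)=\tfrac12|\eta\xor\varphi|$. Its unique ground state is $\eta=\varphi$, and the leftover $\log Z_\nu^\infty(\varphi)$ is \emph{not} $\varphi$-independent: substituting $\eta=\varphi$ into \cref{eq:Z-phi-gr'} gives precisely $-\int_\alpha^\infty\mu_{\varphi,\hat\alpha}(\tfrac12|\varphi\xor\psi|)\,\d\hat\alpha$, which is the first integral of \cref{eq:phi-3-measures}. So the $\mu_{\varphi,\hat\alpha}$ term arises only at the very end, as the ground-state contribution of the $\nu$ expansion; your attempt to engineer a $\varphi$-independent $\bar\alpha\to\infty$ normalization cannot succeed, because that $\varphi$-dependent residual is exactly the term you were missing.
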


\begin{proof}
We begin with the free energy expansion identity that was briefly described in \cref{sec:sketch-step-1}---a folklore approach of expressing the log-partition function as a certain integral over the temperature (e.g., see \cite[Lemma 7.90]{Grimmett_RC} for a version of it specialized to the random cluster model); we include its short proof here for completeness.
\begin{lemma}\label{lem:grimmett}
Let $Z_\beta = \sum_{x\in\fX} e^{-\beta H(x)}$ for $\beta>0$ and a function $H:\fX\to\R$ on a finite set $\fX$.
Let $F:\fX\to\R$ be a function  such that $\min_{x\in X}F(x) = 0$, and for any $\hat\beta\geq 0$, set
\[  Z_{\beta}^{\hat\beta} = \sum_{x\in \fX} \exp\left[-\beta H(x) - \hat\beta F(x)\right]\,.\]
With $\left<\cdot\right>_{\beta,\hat\beta}$ denoting expectation w.r.t.\ $\P_{\beta}^{\hat\beta}(x):=
 (Z_{\beta}^{\hat\beta})^{-1}\exp[-\beta H(x) - \hat\beta F(x)]$, one has
\[ \log Z_\beta = \int_0^\infty \left<F\right>_{\beta,\hat\beta} \d \hat\beta + \log Z_{\beta}^{\infty}\,.\]
In the special case where $H=F$ we have that 
\[ \log Z_\beta = \int_\beta^\infty  \left<H\right>_{\hat\beta}\d \hat\beta + \log Z_{\infty}\,,\]
where 
$\left<\cdot\right>_{\hat\beta}$ denotes expectation w.r.t.\ $\P_{\hat\beta}(x):=
 (Z_{\hat\beta})^{-1} \exp\big[-\hat\beta H(x)\big]$.
\end{lemma}
\begin{proof}
The conclusion follows from the elementary fact  $\frac{\d}{\d\hat\beta}\log Z_{\beta, \hat \beta}^{\hat\beta} = \left<-F\right>_{\hat\beta}$, using that $\lim_{\hat\beta\to\infty} Z_{\beta}^{\hat\beta} $ exists and is given by $\sum_{x\in\fX} \exp\big[-\beta H(x)\big]\one_{\{F(x)=0\}}>0 $ as $F(x)\geq 0$ and $F$ attains its minimum $0$ on $\fX$. The special case follows by a change of variables, since $\P_{\beta}^{\hat\beta} = \P_{\beta+\hat\beta}$ in that case, and $F$ does not depend on $\beta,\hat\beta$.
\end{proof}
Define
\begin{equation}\label{eq:G-def}
G_{h,\varphi}(\psi) =  |h \cap \varphi| - |h\cap\psi| \,,\end{equation}
so that \cref{eq:big-measure} gives
\begin{align}\label{eq:towards-P(phi)-via-G}
\P_{\alpha,\beta,\lambda}(\varphi) 
&= \frac1{\Zsos_{N,\beta,\lambda} } \sum_h \frac{e^{-\beta |h| - \lambda \sV(h)}}{\sum_\psi e^{-\alpha G_{h,\varphi}(\psi)}}\,.
%\E_{\alpha,\beta,\lambda} \left[\tfrac1{\Ztile_{N,\alpha}(h)}e^{\alpha|h\cap\varphi|-\lambda \sV(h)}\right]
%= \frac1{\Zsos_{N,\beta,\lambda}} \sum_h \frac{e^{-\beta |h|+\alpha|h\cap\varphi|-\lambda \sV(h)}}{\sum_\psi %e^{\alpha |h\cap \psi|}} \nonumber \\ &=
%\frac1{\Zsos_{N,\beta,\lambda}} \sum_h \frac{e^{-\beta |h|}}{\sum_\psi e^{-\alpha G_{h,\varphi}(\psi)-\lambda \sV(h)}}\,.
\end{align}

%Using this notation, the tilting from \cref{eq:tilted-sos} can be expressed as
%\begin{equation}\label{eq:gamma-tilt-H-G}
%\max_{\psi_0} |h\cap \psi_0|= -G^\gr_{h,\varphi}  + |h \cap \varphi| =  -G^\gr_{h,\varphi}  - | \varphi \setminus h| + |\varphi| \,.
%\end{equation}
%(Note that the set of $\psi$'s minimizing \cref{eq:G-def} are determined by $h$, independently of~$\varphi$.)

%If $\Zsos$ is the partition function for the SOS measure $\P_{\beta,\lambda}$ from \cref{eq:tilted-sos}, then substituting \cref{eq:G-def,eq:G-g-def,eq:gamma-tilt-H-G} in \cref{eq:big-measure} and noticing that $|\varphi|$ is a constant gives

We also define
\begin{equation}\label{eq:G-g-def} G^\gr_{h,\varphi} = \min_\psi G_{h,\varphi}(\psi) \, ,
\end{equation}
 and, observing that $G^\gr_{h,\varphi}\leq G_{h,\varphi}(\varphi) = 0$, further set
\begin{equation}\label{eq:G-bar-def}
\overline{G}_{h}(\psi) = G_{h, \varphi}(\psi) - G^\gr_{h, \varphi}\,.
\end{equation}
Note that we dropped $\varphi$ in $\overline{G}_h$ because (unlike $G_{h,\varphi}^\gr$) it does not actually depend on $\varphi$: indeed, with $|h\cap\varphi|$ canceling out, we are left with
$ \overline G_{h}(\psi) = - |h\cap\psi| + \max_{\psi_0} |h\cap\psi_0|$.

We may now apply the (special case $H=F$ of) \cref{lem:grimmett} onto 
\[ Z_{h}^{\alpha}= \sum_\psi e^{-\alpha \overline{G}_{h}(\psi)}\] 
for $F(\psi)=\overline{G}_{h}(\psi)$, noting that $F\geq 0$ and $F(\varphi)=0$ as mentioned above. Since we will need to apply \cref{lem:grimmett} multiple times and keep track of the different measures, let us introduce the following notations for the quantities involved in that lemma. Define the probability measure $\mu_{h,\hat\alpha}$ on tilings $\psi$ by
\begin{equation}
    \label{eq:mu-def}
    \mu_{h,\hat\alpha}(\psi) = \frac1{Z_\mu^{\hat\alpha}(h)} \exp\left[-\hat\alpha \overline G_{h}(\psi)\right]\,,
\end{equation}
where the partition function $Z_\mu^{\hat\alpha}(h)$ is the normalizing constant. 
Note that
\begin{equation}\label{eq:Z-mu-g-def}
 Z^{\infty}_{\mu}(h)=\#\left\{\psi \,:\; G_{h,\varphi}(\psi)= G^\gr_{h,\varphi}\right\}\,,
\end{equation}
and let $\mu_{h,\hat\alpha}(F)=\sum_\psi  F(\psi) \mu_{h,\hat\alpha}(\psi)$ be the expectation of a function $F$ on tilings $\psi$.
%; that is,
%\begin{equation}\label{eq:mu(F)-def} %\mu_{h,\hat\alpha}(F) := \frac1{Z_\mu^{\hat\alpha}(h)} %\sum_\psi  F(\psi) \exp\big[-\hat\alpha \overline G_{h}(\psi)\big]\,.
%\mu_{h,\hat\alpha}(F) :=  \sum_\psi  F(\psi) \mu_{h,\hat\alpha}(\psi)\,.
%\end{equation}

%
%Clearly $Z_{ h, \varphi}^{\infty}$ is nothing but the number of minimizers of $G_{h, \varphi}$ so let us denote it by \begin{equation}\label{eq:Z-g-def}
%Z^\gr_{h, \varphi} = Z_{h,\varphi}^{\infty} = \#\left\{\psi : G_{h,\varphi}(\psi)= G^\gr_{h,\varphi}\right\}\,.
%\end{equation}
With these notations, we can rewrite $\P_{\alpha,\beta,\lambda}(\varphi)$ from \cref{eq:towards-P(phi)-via-G} as
\begin{align*}\P_{\alpha, \beta, \lambda}(\varphi) &= \frac1{\Zsos_{N,\beta,\lambda}} \sum_h \exp\Big[ -\beta |h|  + \alpha G^\gr_{h,\varphi} - \log Z_\mu^{\alpha}(h) -
\lambda \sV(h) \Big]\,, \end{align*}
and decomposing $\log Z_\mu^{\alpha}$ as per \cref{lem:grimmett} (with \cref{eq:Z-mu-g-def} in mind) yields 
\begin{align*}\P_{\alpha, \beta, \lambda}(\varphi) &= \frac1{\Zsos_{N,\beta,\lambda}} \sum_h \exp\Big[ -\beta |h|+ \alpha G^\gr_{h, \varphi} - \int_\alpha^\infty \mu_{h,\hat\alpha}(\overline{G}_{h}) \d \hat{\alpha}  - \log Z^\infty_{\mu}(h) -\lambda \sV(h) \Big]\,.
\end{align*}
It will be further useful to define
\begin{equation}\label{eq:H-def}
 H_{h} = |h|-|\varphi| = |h\setminus \varphi| - |\varphi \setminus h|\,,\end{equation}
and $\Zsosbar := e^{ \beta |\varphi|} \Zsos$ (note that $|\varphi|$ is in fact independent of $\varphi$ given the boundary conditions), using which we can rewrite the above expression as
\begin{align} \P_{\alpha,\beta,\lambda}(\varphi) = \frac1{\Zsosbar_{N,\beta,\lambda}}  \sum_h \exp\Big[ &-\beta  H_{h}+ \alpha G^\gr_{h, \varphi} - \int_\alpha^\infty \mu_{h,\hat\alpha}(\overline{G}_{h}) \d \hat{\alpha}  - \log Z^\infty_\mu(h) -\lambda \sV(h)\Big]\,.
\label{eq:phi-expression-gr1}
\end{align}
\begin{remark}
    As mentioned in the introduction, the term $-\beta H_h$ penalizes configurations with too many faces while the term $+\alpha G^\gr_{h,\varphi}$ (which is non-positive) penalizes configurations which can be well approximated by a tiling albeit by one different from $\varphi$ (because given that $h$, it makes $\varphi$ less likely to be chosen). One might expect that these two terms would be sufficient to treat all deviations from $\varphi$ but that is actually not the case: see \cref{prop:alg} and the discussion there for more details. The term $-\log Z^\infty_\mu(h)$ also has a relatively straightforward interpretation: when there are many tilings which are all equally good approximations of $h$, we need to choose uniformly over them and therefore the probability of a fixed $\varphi$ is of order $1/Z^\infty_\mu$. It is the term $\int \mu_{h,\hat\alpha}(\overline G_h)\d\hat\alpha$ that embodies the complicated long-range interactions of this distribution. 
\end{remark}

To treat the summation over $h$ in \cref{eq:phi-expression-gr1}, note that $H_{h}\geq 0$ for every $h$ since tilings minimize the number of faces by definition. Since $H=0$ is attained at $h=\varphi$ (note that $H=0$ can occur not just for $h=\varphi$), we may apply \cref{lem:grimmett} for the second time, this time onto $F(h) = H_{h}$ and 
\[
Z_{\varphi}^{\hat\beta} =\sum_h \exp\left[-\hat\beta H_h + \alpha G_{h,\varphi}^\gr-\log Z_{\mu}^\infty(h) -\int_\alpha^\infty \mu_{h,\hat\alpha}(\overline{G}_{h}) \d \hat{\alpha}- \lambda \sV(h)\right]\,,
\]
translating \cref{eq:phi-expression-gr1} into %(after factoring out the constant $e^{-\beta|\varphi|}$ independent of $\varphi$) 
\begin{equation}\label{eq:phi-expression-gr2}
\P_{\alpha,\beta,\lambda}(\varphi) = \frac1{\Zsosbar_{N,\beta,\lambda}}\exp\bigg[ \int_{\beta}^{\infty}\pi_{\varphi,\hat\beta}(H_h) \d\hat\beta +\log Z_{\pi}^\infty(\varphi) \bigg]\,,
\end{equation}
where, analogously to $\mu$ from \cref{eq:mu-def}, if $F$ is a function on SOS height functions $h$, we let
\begin{equation}\label{eq:pi-def} 
\pi_{\varphi,\hat\beta}(F) := \frac1{Z_{\pi}^{\hat\beta}(\varphi)} \sum_h  F(h) \exp\Big[-\hat\beta H_h+ \alpha G_{h,\varphi}^\gr-\log Z_\mu^\infty(h) -\int_\alpha^\infty \mu_{h,\hat\alpha}(\overline{G}_{h}) \d \hat{\alpha} - \lambda \sV(h)\Big]\,.
\end{equation}
(The normalizer $Z_\pi^{\hat\beta}(\varphi)$ is exactly $Z_{\varphi}^{\hat\beta}$ from above, and we include $\pi$ in the notation to help recall the measure that it is associated to.)
\begin{remark}\label{rk:h_conditionnal}
Notice that, in the special case $\hat\beta=\beta$, the measure $\pi_{\varphi,\beta}$ is nothing but the conditional law of $h$ given $\varphi$:
   \[ \P_{\alpha,\beta,\lambda}(h\mid\varphi) = \pi_{\varphi,\beta}(h)\,.\]
Our analysis of $\pi_{\varphi,\hat\beta}(h)$ will therefore serve both the goal of establishing the required properties on $\varphi$ (towards showing a convergence to a \GFF), and the characterization of $h$ as a perturbation of~$\varphi$.
\end{remark}
It remains to expand the nontrivial term $Z_\pi^\infty(\varphi)$ in \cref{eq:phi-expression-gr2}---which is the following sum over tilings~$\eta$:
\begin{equation}\label{eq:Z-phi-gr-def}
Z_{\pi}^\infty(\varphi) := \sum_\eta \exp\Big[\alpha G_{\eta,\varphi}^\gr - \log Z_{\mu}^\infty(\eta) - \int_\alpha^\infty \mu_{\eta,\hat\alpha}(\overline G_{\eta})\d \hat\alpha - \lambda \sV(\eta)\Big]\,.
\end{equation}
Observe that $Z_\mu^\infty(\eta)=1$ since only $\psi=\eta$ minimizes $G_{\eta,\varphi}(\psi)=|\eta\cap\varphi|-|\eta\cap\psi|$ (whereby its value is $G_{\eta,\varphi}^\gr = |\eta\cap\varphi|-|\eta|= -\frac12 |\eta\xor\varphi|$).
It will be convenient to further observe that
\begin{equation}\label{eq:Gbar-simplification} \overline G_{\eta}(\psi) = |\eta\cap\varphi|-|\eta\cap \psi| - (|\eta\cap\varphi|-|\eta|) = |\eta\setminus \psi| = \tfrac12 |\eta\xor\psi|\,,\end{equation}
and that $\sV(\eta)= 0$ for every tiling $\eta$ (the potential $\sV(h)$ in \cref{eq:gen-potential} sums $\ff$ over the connected components of $\psi_0\setminus h$; when $h$ is itself 
a tiling $\eta$, one has $\psi_0=h$).
Combined, \cref{eq:Z-phi-gr-def} becomes
\begin{equation}\label{eq:Z-phi-gr'}
 Z_{\pi}^\infty(\varphi) = \sum_\eta \exp\Big[-\tfrac12 \alpha |\eta\xor\varphi| -   \int_\alpha^\infty \mu_{\eta,\hat\alpha}(\tfrac12 |\eta\xor\psi|)\d \hat\alpha\Big]\,.
\end{equation}
We now apply \cref{lem:grimmett} for the third time, to $Z_{\pi}^\infty(\varphi)$ from \cref{eq:Z-phi-gr'} and $F(\eta) = \frac12|\eta\xor\varphi|\geq 0$ (noting that $F(\varphi)=0$ and that in this application the only ground state is $\eta=\varphi$), and obtain that
\begin{equation}\label{eq:Z-phi-gr''}
\log Z_{\pi}^\infty(\varphi) = \int_{\alpha}^\infty \nu_{\varphi,\bar\alpha}(\tfrac12 |\eta\xor\varphi|) \d \bar\alpha - \int_\alpha^\infty \mu_{\varphi,\hat\alpha}(\tfrac12 |\varphi\xor\psi|)\d \hat\alpha\,,
\end{equation}
where, 
\begin{equation}\label{eq:nu-def} \nu_{\varphi,\bar\alpha}(\eta) := \frac1{Z_{\nu}^{\bar\alpha}(\varphi)} \exp\Big[-\tfrac12 \bar\alpha |\eta\xor\varphi| -\int_\alpha^\infty \mu_{\eta,\hat\alpha}(\tfrac12|\eta\xor\psi|) \d \hat{\alpha}\Big]\,,\end{equation}
with, as before, $Z_{\nu}^{\bar\alpha}(\varphi)$ the normalizer of this distribution, and $\nu_{\varphi,\bar\alpha}(F) = \sum_\eta F(\eta)\nu_{\varphi,\bar\alpha}(\eta)$.

Plugging \cref{eq:Z-phi-gr''} into \cref{eq:phi-expression-gr2} yields \cref{eq:phi-3-measures}, concluding the proof of \cref{prop:P(phi)-via-3-measures}.
\end{proof}
% \begin{align}
% \P_{\alpha,\beta,\lambda}(\varphi) =\frac1{\Zsosbar_{N,\beta,\lambda}}\exp\bigg[ &-\int_{\beta}^{\infty}\pi_{\varphi,\hat\beta}(H_h) \d\hat\beta -\int_{\alpha}^\infty\nu_{\varphi,\bar\alpha}(\tfrac12|\eta\xor\varphi|)\d\bar\alpha \nonumber\\ 
% &+\int_{\alpha}^\infty \mu_{\varphi, \hat \alpha}(\tfrac12|\varphi\xor\psi|)\d\hat\alpha  \bigg]\,.
% \end{align}
\section{Local decomposition of \texorpdfstring{$\mu_{\varphi, \hat \alpha}$}{\textmu} and \texorpdfstring{$\nu_{\varphi, \bar \alpha}$}{\textnu}}
\label{sec:2-out-of-3}
Our goal in this section is to decompose $\int \mu_{\varphi,\hat\alpha}(\cdot)\d\hat \alpha$ and $\int \nu_{\varphi,\bar\alpha}(\cdot)\d\bar\alpha$ as follows.
\begin{theorem}\label{thm:mu-nu}
There is an absolute constant $C$ such that if $\alpha\wedge \beta>C$, then there exist functions $\fg^\mu_r,\fg^\nu_r$ for $r=2^k$ with $k=0,1,\ldots$, defined on lozenge tilings of $B(o,r)\subset \T$, such that for every $N$,
\begin{align}\label{eq:orig-prop1}
\sup_\varphi \bigg|\int_{\alpha}^\infty \mu_{\varphi, \hat \alpha}(\tfrac12|\varphi\xor\psi|)\d \hat\alpha - \sum_{x\in\T_N} \sum_{\substack{0\leq r < N/2 \\ r=2^k}} \fg^\mu_r (\varphi\restriction_{B(x,r)}) \bigg| &\leq C e^{-\alpha -\alpha N/C} \,,\\
 \label{eq:orig-prop2}
\sup_\varphi \bigg|\int_{\alpha}^\infty \nu_{\varphi,\bar\alpha}(\tfrac12|\eta\xor\varphi|)\d \bar\alpha 
- \sum_{x\in\T_N} \sum_{\substack{0\leq r < N/2 \\ r=2^k}}
\fg^\nu_r(\varphi\restriction_{B(x,r)}) \bigg|&\leq C e^{-\alpha -\alpha N/C}\,,
\end{align}
and for every integer $r=2^k$ ($k\geq 0$) one has $\|\fg_r^\mu \|_\infty \vee \|\fg_r^\nu \|_\infty \leq C e^{- \alpha - \alpha r/C } $.
\end{theorem}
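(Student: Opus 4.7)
The plan is to prove the two bounds separately, handling $\mu$ first and bootstrapping its locality into the analysis of $\nu$. The common engine is a contracting continuous-time Markov chain whose moves add or delete whole bubbles. For $\mu$, I first note that \cref{eq:Gbar-simplification} applied at $\eta=\varphi$ gives $\mu_{\varphi,\hat\alpha}(\psi)\propto\exp[-\tfrac12\hat\alpha|\varphi\xor\psi|]$, so I would run a Metropolis dynamics in which at rate $1$ a face $x\in\T_N$ is sampled and, if it lies in a connected component $\sB$ of $\varphi\xor\psi$, the chain proposes to toggle $\sB$ with acceptance ratio $1\wedge e^{\pm\tfrac12\hat\alpha|\sB|}$. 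A single-bubble path-coupling estimate then yields contraction of rate $\geq 1/2$ for $\alpha$ large: the number of bubbles of size $s$ through a given face is at most $e^{C_0 s}$ for an absolute $C_0$, and this entropy is dominated by the Boltzmann weight $e^{-\hat\alpha s/2}$ as soon as $\alpha>2C_0$, so that $\E\dist_\sB(\psi_t,\psi_t')\leq e^{-t/2}\dist_\sB(\psi_0,\psi_0')$ in the shortest-path metric generated by the moves.

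Turning contraction into locality, for each dyadic scale $r=2^k$ and each basepoint $x\in\T_N$ I would introduce the restricted measure $\mu^{(x,r)}_{\varphi,\hat\alpha}$ obtained by conditioning on $\psi\equiv\varphi$ outside $B(x,r)$ (equivalently, all bubbles of $\varphi\xor\psi$ are contained in $B(x,r)$), and choose a translation-equivariant partition of unity $\{F_x\}$ of $\tfrac12|\varphi\xor\psi|$ that assigns each bubble to a canonical representative face. I then define $\fg^\mu_r(\varphi\restriction_{B(x,r)}):=\int_\alpha^\infty[\mu^{(x,r)}_{\varphi,\hat\alpha}(F_x)-\mu^{(x,r/2)}_{\varphi,\hat\alpha}(F_x)]\d\hat\alpha$; telescoping in $k$ up to the maximal scale $r\sim N/4$ gives \cref{eq:orig-prop1}, with the residual error coming from bubbles that do not fit inside any ball of radius $<N/2$, which a Peierls count bounds by $Ce^{-\alpha N/C}$. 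To bound $\|\fg^\mu_r\|_\infty$, I would couple the two chains for $\mu^{(x,r)}$ and $\mu^{(x,r/2)}$ up to time $T=c\hat\alpha r$: by contraction each is within $e^{-T/2}$ of equilibrium in $F_x$, and the coupling error at the center of $B(x,r/2)$ is governed by the probability that a disagreement travels across the annulus $B(x,r)\setminus B(x,r/2)$, which forces a bubble of size at least $r/2$ and therefore costs $e^{-\hat\alpha r/C}$. Integrating in $\hat\alpha$ yields $\|\fg^\mu_r\|_\infty\leq Ce^{-\alpha r/C}$.

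For $\nu$, the novelty in \cref{eq:nu-def} is the long-range term $\int_\alpha^\infty\mu_{\eta,\hat\alpha}(\tfrac12|\eta\xor\psi|)\d\hat\alpha$, a functional of $\eta$. Applying the decomposition of $\mu$ just proved (now with reference $\eta$ in place of $\varphi$) rewrites this functional as $\sum_{x,r}\fg^\mu_r(\eta\restriction_{B(x,r)})$ up to an $e^{-\alpha N/C}$ error, so the $\nu$-Hamiltonian becomes $\tfrac12\bar\alpha|\eta\xor\varphi|+\sum_{x,r}\fg^\mu_r(\eta\restriction_{B(x,r)})$. I would then run the analogous bubble dynamics for $\nu$ on tilings $\eta$; the extra $\fg^\mu_r$ terms perturb each acceptance ratio by a multiplicative factor of size $\exp(O(\sum_r\|\fg^\mu_r\|_\infty))=\exp(O(e^{-\alpha/C}))$, small enough that the single-bubble coupling still contracts at positive rate. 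The same truncation-and-coupling scheme then produces $\fg^\nu_r$ obeying $\|\fg^\nu_r\|_\infty\leq Ce^{-\alpha r/C}$; the only new feature is that disagreements now propagate also through the long-range interactions $\fg^\mu_r$, but the summability $\sum_r\|\fg^\mu_r\|_\infty\leq Ce^{-\alpha/C}$ ensures that the aggregate contribution of such long-range paths across $\partial B(x,r)\to x$ is again exponentially damped in $r$.

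The main obstacle is the quantitative control of information velocity in these couplings: because bubbles can be arbitrarily large and non-convex, a single Metropolis move could in principle transport a disagreement across a macroscopic distance, so the argument must carefully pit the Boltzmann suppression $e^{-\hat\alpha s/2}$ of a size-$s$ bubble against the entropy of bubble chains of total size at least $r/2$ linking $\partial B(x,r)$ to the center of $B(x,r/2)$. For $\nu$, the same delicate balance has to survive the insertion of the potentials $\fg^\mu_r$, which are not uniformly bounded in summed form but only summable; verifying that both Peierls counting and path-coupling contraction still close up under this perturbation is the technical heart of the second part of the proof.
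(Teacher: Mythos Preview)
Your approach is essentially that of the paper: bubble-level Metropolis/Glauber dynamics, path-coupling contraction, dyadic telescoping with a coupling of the restricted chains at scales $r$ and $2r$, and bootstrapping the $\mu$-locality into the $\nu$-analysis. The paper's propagation bound for $\Xi_3$ does exactly the chain-of-bubbles union bound you flag as the main obstacle (so your earlier sentence ``forces a bubble of size at least $r/2$'' is a slip --- it is the \emph{total} size of an intersecting chain that must exceed $r/2$, and one must also carry the time-ordering factor $T^m/m!$).

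One genuine point you should fix: for $\nu$, defining the restricted measure by conditioning on $\eta\equiv\varphi$ outside $B(x,r)$ does \emph{not} give a $\varphi\restriction_{B(x,r)}$-measurable object. The long-range term $\sum_{y,r'}\fg^\mu_{r'}(\eta\restriction_{B(y,r')})$ evaluates $\fg^\mu_{r'}$ on balls $B(y,r')$ that straddle $\partial B(x,r)$ and thus sees $\varphi$ outside. The paper's remedy (its \cref{rem:mu-r-def-vs-mu} and the definition of $\nu_r$) is to replace the full $\mu_\eta$ in the interaction by the correspondingly restricted $\mu_{r,\eta}$ on $\Lambda_r^\varphi$; only then is $\fg^\nu_r$ a bona fide function of $\varphi\restriction_{B(x,r)}$. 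Relatedly, the coupling time for $\nu$ must be $T=\epsilon\alpha r$ rather than $\epsilon\bar\alpha r$, since the long-range infections through $\fg^\mu_r$ decay only at rate $\alpha$; running longer would make the entropy factor $e^{cT}$ in the $\Xi_3$ bound overwhelm the $e^{-\alpha r/C}$ damping.
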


The theorem above will be established via a dynamical analysis, comparing the speed of propagating information along Metropolis/Glauber dynamics, vs.\ their mixing time. At the center of these Markov chains is the following object.
\begin{definition}[Bubble]
Given a tiling $\varphi$ and a height function $h$, 
a \emph{bubble} $\sB$ is a connected component of faces of $\varphi\xor h$ (where faces are adjacent if they share an edge). We further color each face blue if it is part of $\varphi$ and red otherwise.
\end{definition}
(NB.\ the colors cannot simply be read from the geometry of $\sB$ alone, even when $h$ is \emph{not} a tiling.)
To each bubble $\sB$, one may associate the portion of the energy $H_h$ it accounts for:
\begin{equation}\label{eq:bubble-H}
H (\sB) = |\sB| - 2 | \sB \cap \varphi|\,,
\end{equation}
noting that there is no actual dependence on $\varphi$ since $|\sB\cap\varphi|$ can be read from $\sB$.

\subsection{Weak locality of the integral over \texorpdfstring{$\mu$}{\textmu}}

Recall from \cref{eq:mu-def,eq:Gbar-simplification} that
\begin{equation}\label{eq:mu-phi-phi-simplified}
\mu_{\varphi,\hat\alpha}(\psi) \propto e^{-\frac12 \hat \alpha|\varphi\xor\psi|}= e^{-\frac12\hat\alpha \sum_\sB |\sB|}\,,
\end{equation}
where the summation on the right-hand side is over all $(\varphi,\psi)$-bubbles $\sB$.

Our strategy for proving \cref{thm:mu-nu}, \cref{eq:orig-prop1} is the following. We construct a Gibbs sampler for $\mu_{\varphi,\hat\alpha}$, namely Metropolis on $(\varphi,\psi)$-bubbles $\sB$, and show that it is a contraction w.r.t.\ the natural distance on configurations defined via moves of that dynamics (\cref{prop:mu-contraction} below). This shows that, in a ball of radius $r$, after running the sampler for a time $c r$ the remaining error (say in total variation) is at most $e^{-c' r}$ for small constants $c, c'$. However, information only propagates at a linear speed in a local dynamics such as this one, so at time $cr$, the configuration in the center is yet unaffected by the initial boundary values at distance $r$ or $2r$.

\begin{remark}
    \label{rem:mu-dynamical-approach}
The measure $\mu$ is sufficiently straightforward so that simpler approaches could have been used to derive \cref{thm:mu-nu}, \cref{eq:orig-prop1}. E.g., a standard approach would have been to apply cluster expansion (which would have given a more explicit conclusion). However, such methods do not extend to the more complicated measure $\pi$. We chose to use the same approach we will eventually use for $\pi$ (which is still fairly simple for $\mu$), so as to (a) have a uniform approach, and (b) allow the reader to see this argument in the simpler setting of $\mu$, and then somewhat more complex setting of $\nu$ (with non-local interactions), before the final (quite technical) analysis of $\pi$.
See also \cref{rem:bubble-nu-no-ce,rem:bubble-groups-no-ce} for more on this.
\end{remark}

\subsubsection*{Metropolis dynamics on bubbles for $\mu$}

Given a fixed reference tiling $\varphi$ of $\T_N$, define the following dynamics $(\psi_t)$ on tilings of $\T_N$:
\begin{enumerate}[(i)]
    \item \label{it:metropolis-clocks-mu} Attach a rate-1 Poisson clock to every pair $(\sB,f)$ where $\sB$ is a candidate for a $(\varphi,\psi)$-bubble and $f$ is a marked face of $\sB$.
    \item If $\sB$ is a $(\varphi,\psi_t)$-bubble, erase it from $\psi_t$.
    \item \label{it:metropolis-rand-mu} If $\sB$ can be added to $\psi_t$ as a new bubble (that is, it neither intersects nor is  adjacent to any existing $(\varphi,\psi_t)$-bubble) then do so with probability $\exp(-\frac12\hat\alpha|\sB|)$.
\end{enumerate}
Note that $\psi_t$ is irreducible and reversible w.r.t.\ $\mu_{\varphi,\hat\alpha}$ as per \cref{eq:mu-phi-phi-simplified}.
Define $\dist_{\sB}(\psi,\psi')$ to be the length of the geodesic in the graph defined by legal moves of the Metropolis dynamics---namely, the minimum number of moves (each one adding or erasing a bubble) needed to reach $\psi'$ from $\psi$.

\begin{proposition}\label{prop:mu-contraction}
The dynamics $(\psi_t)$ is contracting w.r.t.\ $\dist_\sB$; that is, if $\hat\alpha$ is large enough (independently of $\varphi$), then, under the coupling where we synchronize the Poisson clocks from \cref{it:metropolis-clocks-mu} and the $\mathrm{Uniform}([0,1])$ variables used for \cref{it:metropolis-rand-mu}, we have, for all $t>0$, \[\E \dist_\sB(\psi_t,\psi'_t) \leq e^{-t/2}\dist_\sB(\psi_0,\psi'_0)\,.\]
\end{proposition}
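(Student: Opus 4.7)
The plan is to apply a continuous-time path-coupling argument of Bubley--Dyer type: since $\dist_\sB$ is a graph metric, it will suffice to establish the pointwise generator bound $L\dist_\sB(\psi_0,\psi_0') \le -\tfrac12 \dist_\sB(\psi_0,\psi_0')$ on neighboring pairs at distance~$1$ and then sum along a geodesic to obtain the claim for general pairs. So I would fix a pair $\psi_0,\psi_0'$ with $\dist_\sB(\psi_0,\psi_0')=1$, which by definition differs by a single $(\varphi,\psi_0)$-bubble $\sB^*$, i.e., $\psi_0'=\psi_0\xor\sB^*$, and bound the infinitesimal drift at $t=0$ under the coupling that synchronizes the Poisson clocks of item~(i) and the acceptance uniforms of item~(iii).

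The analysis will then classify each clock ring by how the candidate bubble $\sB$ relates to $\sB^*$. For $\sB$ geometrically disjoint from and non-adjacent to $\sB^*$, the predicate ``$\sB$ is currently a bubble'' and the compatibility check for adding $\sB$ are both insensitive to whether $\sB^*$ is present, so the synchronized coupling performs identical moves in both copies and contributes zero drift. The ``good'' case is $\sB=\sB^*$ itself: in $\psi_0$ the shape is absent and the move is an attempted addition, accepted with probability $e^{-\frac12\hat\alpha|\sB^*|}$, whereas in $\psi_0'$ it is a deterministic deletion. Under the synchronized uniform, with probability $1-e^{-\frac12\hat\alpha|\sB^*|}$ the addition is rejected and the deletion is performed, so the two copies become equal and the distance drops from~$1$ to~$0$, while with the complementary probability they merely swap roles and the distance stays at~$1$. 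Summing the rate of these clocks (one per face of $\sB^*$) gives a downward drift of at least $|\sB^*|(1-e^{-\hat\alpha/2})\ge\tfrac12|\sB^*|$ for $\hat\alpha$ large.

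The ``bad'' clocks are those for bubbles $\sB\ne\sB^*$ that share a face with or are adjacent to $\sB^*$. For these, the presence of $\sB^*$ may block the addition of $\sB$ in one copy but not the other, or force $\sB$ to merge with $\sB^*$ into a larger component (so $\sB$ is a valid bubble in one copy but not the other). Under the synchronized coupling such a clock can push the distance from $1$ up to at most $2$, so it contributes an upward drift of at most its clock rate. The cumulative bad rate I intend to bound by
\[
\sum_{\sB\,:\,\sB\text{ touches }\sB^*} |\sB|\,e^{-\frac12\hat\alpha|\sB|}
\;\le\; C\,|\sB^*|\sum_{s\ge 1} s\, K^s\, e^{-\frac12\hat\alpha s}
\;\le\; C'\,|\sB^*|\,e^{-c\hat\alpha},
\]
using a standard Peierls-type enumeration for the number of connected rhombus animals of size~$s$ rooted at a face of $\sB^*$ (bounded by $K^s$ for a universal $K$). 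Combining the two estimates, $L\dist_\sB(\psi_0,\psi_0')\le -(|\sB^*|/2)+C'|\sB^*|e^{-c\hat\alpha}\le -\tfrac12$ for $\hat\alpha$ sufficiently large (using $|\sB^*|\ge 1$), and the path-coupling argument then concludes $\E\dist_\sB(\psi_t,\psi_t')\le e^{-t/2}\dist_\sB(\psi_0,\psi_0')$.

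The hard part will be the case analysis for the bad clocks: I will need to verify carefully that no single bad move increases $\dist_\sB$ by more than~$1$, even in the non-trivial scenario where $\sB$ merges with $\sB^*$ into a single larger component in one of the two copies, which requires a short geometric argument showing that the post-move pair differs by at most two bubble flips in the resulting bubble decomposition. A secondary technical point will be to make the Peierls enumeration uniform in the reference tiling $\varphi$, since both the notion of bubble and the transition rates depend on $\varphi$.
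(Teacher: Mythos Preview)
Your proposal is correct and follows the same path-coupling argument as the paper's proof: reduce to neighboring pairs, split clock rings into healing/neutral/infection, and bound the infection rate by a Peierls sum over bubbles touching $\sB^*$. Your two anticipated hard points are in fact immediate: a bad move is simply \emph{blocked} in the copy containing $\sB^*$ (no merging occurs under the dynamics' non-adjacency rule, so the distance goes from $1$ to at most $2$), and the count of size-$s$ bubbles rooted near $\sB^*$ is bounded by lattice-animal growth independent of $\varphi$.
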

\begin{proof}
By the triangle inequality, it suffices to consider $\psi_0,\psi'_0$ that differ on a single bubble $\sB_0$ and show that $\frac{\d}{\d t}\E \dist_\sB(\psi_t,\psi'_t)\big|_{t=0} \leq -\frac12$ under the aforementioned coupling. Assume by symmetry that $\psi_0'$ contains $\sB_0$ and $\psi_0$ does not, and observe that, since the only condition for being allowed to add a bubble is that it cannot intersect a previous one, only the following scenarios are possible:
\begin{enumerate}
    \item {}[\emph{healing}] At rate $|\sB_0|$ we select $(\sB_0,f)$ for some $f$. In that case we always remove $\sB_0$ from $\psi'$. If we do not add $\sB_0$ to $\psi$, which happens with probability $1-\exp(-\frac12\hat\alpha|\sB_0|)$, we get $\dist_\sB(\psi,\psi')=0$, otherwise we keep $\dist_\sB(\psi,\psi')=1$.
    \item {}[\emph{neutral}] We select some $(\sB, f)$ with $\sB \cap \sB_0 = \emptyset$. This will leave the distance unchanged.
    \item {}[\emph{infection}] We select $(\sB, f)$ with $\sB \cap \sB_0 \neq \emptyset$. In that case the distance increases by 1 when we add $\sB$ to $\psi$ which happens with probability $\exp( - \tfrac12 \hat \alpha |\sB|)$ and otherwise it is unchanged.
\end{enumerate}
Overall, this gives
\[
\frac{\d}{\d t}\E[ \dist_\sB(\psi_t,\psi'_t) ]\big|_{t=0} \leq -(1-e^{-\frac12\hat\alpha|\sB_0|}) |\sB_0| + \sum_{\sB \cap \sB_0 \neq \emptyset } |\sB| e^{- \frac12 \hat \alpha |\sB|}.
\]
Since the number of bubbles of size $s$ only grows exponentially in $s$ with a constant which can be made uniform over $\varphi$, for $\hat \alpha$ large enough the right-hand side of the equation is bounded from above by $-\frac12$, which concludes the proof.
\end{proof}

\begin{proof}[Proof of \cref{thm:mu-nu}, \cref{eq:orig-prop1}]
Let $r=2^k$ with $k\geq 0$, and let $\Lambda_r^\varphi$ denote $\Upsilon(\varphi\restriction_{B(x,r)})$, i.e., the tiles of $\varphi$ whose projection to $\T_N$ intersects $B(x,r)$. Denote by $\mu_r$ the measure defined as $\mu_{\varphi,\hat\alpha}$ but on tilings of $\Lambda_r^\varphi$, that is
\[
 \mu_r(\psi)\propto \exp(-\tfrac12\hat\alpha |\varphi\xor\psi|)\,.
\]
In case $r \geq N/2$, we replace $B(x,r)$ in the definition of $\mu_r$ by the full torus $\T_N$, i.e., $\mu_r = \mu_{\varphi,\hat\alpha}$. (With this definition, every $B(x,r)$ that is strictly contained in $\T_N$ is also simply connected.)
\begin{remark}\label{rem:mu-r-def-vs-mu}
Since the interactions in $\mu_{\varphi,\hat\alpha}$ are nearest-neighbor between bubbles, we could have also defined the above via the original measure on $\T_N$ conditional on $\psi$ identifying with $\varphi$ on tiles which are outside $B(x,r)$. However, the above definition will be the correct one for the sake of \cref{eq:orig-prop2}, in which the long-range interactions will distinguish it from the alternative one.
\end{remark}

\subsubsection*{Constructing the local function}
With the above definition, denote by $\{\sB\in\psi\}$ for some bubble $\sB$ the event that $\sB$ appears in $\psi$ as a (complete) bubble, and let
\begin{align*}
    f_{1,\sB}^\mu(\varphi\restriction_{B(x,1)}) &:= \int_\alpha^\infty \mu_1(\sB\in\psi)\d\hat\alpha\,,\\
f^\mu_{2r, \sB}(\varphi\restriction_{B(x,2r)}) &:= \int_{\alpha}^\infty \big[\mu_{2r}(\sB\in\psi) - \mu_r(\sB\in\psi)\big]\d\hat\alpha\qquad \mbox{for $r=2^k$, $k\geq 0$}\,. 
\end{align*}
We claim that, by definition,
\begin{equation}\label{eq:mu-integral-with-f} \int_\alpha^\infty \mu_{\varphi,\hat\alpha}(\tfrac12 |\varphi\xor\psi|)\d \hat\alpha = \frac14\sum_{x\in\T_N} \sum_{\substack{\text{$(\varphi,\psi)$-bubble $\sB$} \\ \Upsilon(\sB)\ni x}} \sum_{\substack{r=2^k \\ \text{for $k\geq 0$}}} f_{r,\sB}^\mu (\varphi\restriction_{B(x,r)}) \,.\end{equation}
Indeed, the telescopic sum over $\mu_r$ for $r=2^k$ ($k\geq 1$) induced by the sum over $f_{r,\sB}^\mu (\varphi\restriction_{B(x,r)})$ will leave only the last term $\int_\alpha^\infty \mu_{\varphi,\hat\alpha}(\sB\in\psi)\d\hat\alpha$. (Recall that we are in a finite domain, hence have a finite set of possible $\sB$'s, and there are no issues when exchanging the integral and sum.) The right-hand side of \cref{eq:mu-integral-with-f} is therefore 
\[ \frac14\sum_{x\in\T_N} \sum_{\sB:\,\Upsilon(\sB)\ni x} \int_\alpha^\infty \mu_{\varphi,\hat\alpha}(\sB\in\psi)\d\hat\alpha\,.\]
Looking at the left hand of \cref{eq:mu-integral-with-f}, we recall that each lozenge $f\in\sB$ (of which there are $|\sB|/2$ in $\varphi$ and $|\sB|/2$ in $\psi$, as both $\varphi,\psi$ are tilings) contains two triangular faces $x\in\Upsilon(f)\subset \T_N$. Thus,
\[ \frac12 |\varphi\xor\psi| = \frac12\sum_{\sB} |\sB|\one_{\{\sB\in\psi\}} = 
\frac14 \sum_{x\in\T_N}\sum_{\sB:\,\Upsilon(\sB)\ni x} \one_{\{\sB\in\psi\}}\,,
\]
which, after taking expectation under $\mu_{\varphi,\hat\alpha}$ and integrating over $\hat\alpha$ results in the preceding equation, thereby
establishes \cref{eq:mu-integral-with-f}.

We will now argue that for every $\sB$ with $\Upsilon(\sB)\ni o$, for $\alpha$ large enough one has \begin{equation}\label{eq:f-mu-bound}\|f_{2r,\sB}^\mu\|_\infty\leq 
\exp\Big[- \Big((\frac{\alpha}{16} - C)\one_{\{r\geq 2|\sB|\}} r \Big)
\,\vee\, \Big( \alpha \frac{|\sB|}2
\Big)\Big]\leq
\exp\left[-\alpha|\sB|/4 - \alpha r/32\right]\,.\end{equation} The term $ \alpha |\sB|/2$ in the first exponent in \cref{eq:f-mu-bound} follows from a Peierls-type argument: for every~$\sB$,
\[ \mu_{r}(\sB\in\psi) \leq \exp(-\tfrac12 \hat\alpha |\sB|)\,,\]
as one derives from the ratio of $\mu_{r}(\psi')$ for $\psi'$ that has $\sB\in\psi'$ and $\mu_{r}(\psi)$ for $\psi=\psi'\xor\sB$. This shows $\|f_{2r,\sB}^\mu\|_\infty \leq \frac{4}{|\sB|}\exp[-\frac12\alpha|\sB|]$ (bounding $|\mu_r-\mu_{2r}|$ by $\mu_r+\mu_{2r}$ and then integrating over $\hat\alpha$).

The term $(\alpha /16-C)r$ in the first exponent in \cref{eq:f-mu-bound} under the assumption that $|\sB|\leq r/2$ (as per that indicator) is more delicate, and here we will use the contracting Metropolis dynamics for~$\mu$. Denote by $(\psi_t,\psi'_t)$ two coupled instances of the dynamics, with domains $B(o,r)$ and $B(o,2r)$ respectively, from an initial configuration which agrees on $B(o,r)$, where every update of $\psi'_t$ that is confined to $B(o,r)$ uses the joint law as per \cref{prop:mu-contraction} (whereas updates of $\psi'_t$ in the annulus $B(o,2r)\setminus B(o,r)$ will be sampled via the product measure of $\psi_t$ and $\psi'_t$ on this event). Run the dynamics for time
    \[ T = \frac1{8} \hat\alpha \, r\,,\]
and write
\[\left|\mu_{2r}(\sB\in\psi) - \mu_r(\sB\in\psi)\right| \leq \Xi_1 + \Xi_2 + \Xi_3\,,
\]
where
\begin{align*} \Xi_1 &:= \left| \mu_{r}(\sB\in\psi) - \P(\sB\in \psi_{T})\right|\,,\\
\Xi_2 &:= \left| \mu_{2r}(\sB\in\psi) - \P(\sB\in \psi'_{T})\right|\,,\\
\Xi_3 &:= \left| \P(\sB\in \psi_{T}) - \P(\sB\in \psi'_{T})\right|\,.
\end{align*}
By \cref{prop:mu-contraction} for $t=T$ (bounding $\dist_\sB(\psi_T,\tilde\psi_T)$ where $\tilde\psi_0\sim\mu_r$, thus also $\tilde\psi_T\sim\mu_r$), we have
\[ \Xi_1 \leq e^{-T/2} |B(o,r)| \leq e^{-(\hat\alpha/16-o(1))r}\,,\]
where the $o(1)$-term goes to $0$ as $r\to\infty$, and similarly
$ \Xi_2 \leq e^{-T/2} |B(o,2r)| \leq e^{-(\hat\alpha/16-o(1))r}$.

For $\Xi_3$, we must bound the rate of propagation of information in the Metropolis dynamics, which, unlike single-site dynamics, has the small complication of allowing long-range interactions by virtue of moves that use arbitrarily large bubbles $\sB$. To this end, we take a union bound over any potential sequence of updates starting from a disagreement (necessarily outside of $B(o,r)$) and making its way to $\sB$:
such a sequence necessarily contains a shortest path of intersecting bubbles $\sB_1,\ldots,\sB_m$ ($m\geq 1$) such that $\sB_{i}\cap\sB_{i+1} \neq\emptyset$, the first bubble
$\sB_1$ intersects the boundary of $B(o,r)$ and the last bubble $\sB_m$ intersects $\sB$.
Let $v_i$ be a vertex of $\sB_{i}\cap\sB_{i-1}$ (say, a minimal one according to some lexicographic ordering), and let $r_i$ be the length of the shortest path between $v_i$ and $v_{i+1}$, noting that $s_i :=|\sB_i| \geq r_i$. With these notations,
\begin{align*}
\Xi_3 \leq \sum_{m\geq 1}\sum_{\substack{\{r_i\}\\ \sum r_i \geq r/2}} \sum_{\substack{\{s_i\}\\ s_i\geq r_i}} \int_{0 \leq t_1 \leq \ldots \leq t_m \leq T} \prod_i\Big( c_0^{s_i} s_{i-1} \cdot s_i e^{-\frac12 \hat\alpha s_i} e^{-(t_i - t_{i-1})  s_i e^{-\frac12 \hat\alpha  s_i}}\Big)\d t_1\ldots \d t_m\,,
\end{align*}
since $\dist(\Upsilon(\sB),\partial B(o,r))\geq r-\diam(\sB)\geq r/2$ (using $|\sB|\leq r/2$ and $\Upsilon(\sB)\ni o$), and
 the update of $\sB_i$ is via an exponential clock with rate $s_i\exp(-\frac{1}{2}\hat\alpha s_i)$, for ringing the appropriate bubble and then accepting it (where the term $\exp(-(t_i-t_{i-1})s_ie^{-\frac12 \hat\alpha s_i})$ accounts for $t_i$ being the first update of this kind over all $t>t_{i-1}$, and the term $c_0^{s_i}$ bounds the number of bubbles of size $s_i$ rooted at a marked point). 
 Bounding $t_i-t_{i-1}\geq 0$, the resulting integral is explicit and gives
\begin{align*}
\Xi_3 &\leq \sum_{m\geq 1}\sum_{\{r_i\}:\sum r_i \geq r/2} \sum_{\{s_i\}: s_i\geq r_i} \big(c_1 e^{-\frac12\hat\alpha}\big)^{\sum_{i=1}^m s_i} \,\frac{T^m}{m!} \leq e^{\frac32T} e^{-\frac14\hat\alpha r +Cr}\,,
\end{align*}
where the last inequality holds for some $C$ provided $\hat\alpha$ is large enough. Since $T = \frac{1}{8} \hat \alpha r$, this is at most $e^{-(\hat\alpha/16-C)r}$. Combining the bounds on $\Xi_1,\Xi_2,\Xi_3$ and integrating over $\hat\alpha$, we obtain \cref{eq:f-mu-bound}.

To conclude the proof of \cref{eq:orig-prop1}, let 
\[ \fg_r^\mu := \frac14\sum_{\sB:\;\Upsilon(\sB)\ni o} f_{r,\sB}^\mu\,.\]
The tail on $|\sB|$ in the bound of \cref{eq:f-mu-bound} on $\|f_{r,\sB}^\mu\|_\infty$ then implies the required bound on $\|\fg_r^\mu\|_\infty$ as the minimal $\sB$ has $6$ faces (a single cube), concluding the proof of \cref{eq:orig-prop1}.
\end{proof}

\subsection{Weak locality of the integral over \texorpdfstring{$\nu$}{\textnu}}

Having established \cref{eq:orig-prop1}, and recalling the definition of $\nu$ from \cref{eq:nu-def}, we can now substitute the expression for $\int \mu_{\eta,\hat\alpha}(\frac12|\eta\xor\psi|)\d\hat\alpha$ in terms of $\fg_r^\mu$ to find that
\begin{equation}\label{eq:nu-phi-simplified}
\nu_{\varphi,\bar\alpha}(\eta) \propto \exp\Big[-\tfrac12 \bar\alpha |\eta\xor\varphi| - \sum_{x\in \T_N}\sum_r \fg_r^\mu(\eta\restriction_{B(x,r)})
 \Big]\,.\end{equation} 
We will prove \cref{eq:orig-prop2} in a similar manner to \cref{eq:orig-prop1} above, albeit with some added difficulty due to the interaction between bubbles carried through the functions $\fg_r^\mu$ (representing the integral over $\hat\alpha\in(\alpha,\infty)$ of $\E[\frac12|\eta\xor\psi|]$ for $\psi\sim\mu_{\eta,\hat\alpha}$).

\begin{remark}\label{rem:bubble-nu-no-ce}
    We can comment here on why we choose the dynamical approach (inferring spatial mixing from temporal mixing) to prove these locality results. As will be apparent in the rest of the section, in our approach there is no significant difficulty going from $\mu$ to $\nu$: the proof below is more technical because there are more terms to handle (more cases in the dynamics, a more subtle enumeration of paths in the propagation of information), but no new ideas are necessary. 
    
    On the other hand, while the locality of $\mu$ could have been also established via cluster expansion techniques, in the case of $\nu$ it already becomes unclear how to do so (let alone in the case of~$\pi$).
    For example, in the case of $\mu$, the standard cluster expansion approach would be to have a cluster be a bubble, and two bubbles are compatible iff they are disjoint. The standard  condition to ensure convergence (e.g.,~\cite[\S5.4]{FriedliVelenik18}) is to compare the energy of a given bubble $\sB_0$ with the total weights of all bubbles incompatible with it: $\sum_{\sB : \sB\cap \sB_0 \neq \emptyset} e^{- a |\sB|} \leq a |\sB_0|$ for some $a>0$ (e.g., $a=\hat\alpha/2$ for~$\mu$).
    For~$\nu$, beyond the compatibility of bubbles (a pairwise interaction), we have a many-body interaction due to the functions $\fg_r$. The classical cluster expansion generalizes to $n$-body interactions for  fixed~$n$ (replacing the dependency graph by an $n$-uniform hypergraph), but here the type of interaction is different (it cannot  even be written as a sum over $n$-body interactions for all $n$). A natural approach would be to introduce ``ball clusters''---specifying  $\eta\restriction_{B}$ in some ball $B$---which fails as it would need $\fg_r$ to decay as $e^{-C r^2}$ instead of $e^{-Cr}$. It is plausible that, for $\nu$, one could rewrite the long-range interaction term $\int_\alpha^\infty\mu_{\eta,\hat\alpha}(\frac12|\eta\xor\psi|)\d\hat\alpha$ in a way that would make a suitable cluster expansion framework doable; however, this does not seem feasible for $\pi$, in light of the even more delicate non-explicit long-range interaction between the objects of interest there (bubble groups).
\end{remark}

\subsubsection*{Glauber dynamics on bubbles for $\nu$}

Given a fixed reference tiling $\varphi$ of $\T_N$, define the following dynamics $(\eta_t)$ on tilings of $\T_N$:
\begin{enumerate}[(i)]
    \item \label{it:glauber-clocks-nu} Attach a rate-1 Poisson clock to every pair $(\sB,f)$ where $\sB$ is a candidate for a $(\varphi,\eta)$-bubble and $f$ is a marked face.
    \item \label{it:glauber-move-nu} If $\sB$ is either a (full) $(\varphi,\eta_t)$-bubble, or can be fully added to $\eta_t$ (i.e., neither intersecting nor adjacent to another bubble), let $\{\eta,\hat\eta\}$ denote the configurations $\{\eta_t,\eta_t\xor\sB\}$ such that $\sB$ is a $(\hat\eta,\varphi)$-bubble. The dynamics moves either to $\eta$ or to $\hat\eta$ with weights $w_\eta$ and $w_{\hat\eta}$ given by %\cref{eq:nu-phi-simplified}
    \[ 
    w_\eta = \exp\Big[ - \sum_{x\in\T_N}\sum_{r} \fg_r^\mu(\eta\restriction_{B(x,r)})\Big]
    \quad,\quad
    w_{\hat\eta} = \exp\Big[ -\tfrac{1}{2}\bar \alpha |\sB| - \sum_{x\in\T_N}\sum_r \fg_r^\mu(\hat\eta\restriction_{B(x,r)})\Big]    
    \,.    
    \]
\end{enumerate}
As usual with Glauber dynamics, $\eta_t$ is irreducible and reversible w.r.t.\ $\nu_{\varphi,\bar\alpha}$.

\begin{proposition}\label{prop:nu-contraction}
The dynamics $(\eta_t)$ is contracting w.r.t.\ $\dist_\sB$; that is, if $\alpha$ is large enough (independently of $\varphi$), then, under the coupling where we synchronize the Poisson clocks from \cref{it:glauber-clocks-nu} and $\mathrm{Uniform}([0,1])$ variables used for the move in \cref{it:glauber-move-nu}, we have, for all $t>0$,  \[\E \dist_\sB(\eta_t,\eta'_t) \leq e^{-t/2}\dist_\sB(\eta_0,\eta'_0)\,.\]
\end{proposition}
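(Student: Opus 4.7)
The plan is to mimic the proof of \cref{prop:mu-contraction} while carefully tracking the long-range interactions introduced by the $\fg_r^\mu$ terms in the expression \cref{eq:nu-phi-simplified} for $\nu_{\varphi,\bar\alpha}$. By the triangle inequality, it suffices to show that $\frac{d}{dt}\E[\dist_\sB(\eta_t, \eta'_t)]\big|_{t=0} \leq -\tfrac12 \dist_\sB(\eta_0, \eta'_0)$ whenever $\eta_0$ and $\eta'_0$ differ on a single bubble $\sB_0$ of size $s_0$. Under the synchronized coupling, when a clock at $(\sB, f)$ rings the status of $\sB$ is resampled simultaneously in both chains using a common $\mathrm{Uniform}[0,1]$ variable. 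We partition the events as \emph{healing} ($\sB = \sB_0$), \emph{short-range infection} ($\sB \cap \sB_0 \neq \emptyset$, $\sB \neq \sB_0$), and---new to this setting---\emph{long-range infection} ($\sB \cap \sB_0 = \emptyset$), the last being neutral in the Metropolis dynamics of \cref{prop:mu-contraction}.

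The healing and short-range infection contributions behave as in \cref{prop:mu-contraction}. Indeed, when comparing $\log(w_\eta / w_{\hat\eta})$ for the flip of some bubble $\sB$, the $\fg_r^\mu$ contribution is a sum over balls $B(x,r)$ meeting $\sB$; by \cref{thm:mu-nu}, $\|\fg_r^\mu\|_\infty \leq C e^{-\alpha r / C}$, so this contribution is bounded by $C' |\sB| \sum_{r} r^2 e^{-\alpha r / C}$, which is negligible compared with the dominant term $-\tfrac12 \bar\alpha |\sB|$ once $\alpha$ is large. Consequently the Glauber acceptance probability is within a bounded factor of the Metropolis one $e^{-\bar\alpha |\sB|/2}$, and the computation of \cref{prop:mu-contraction} yields a healing rate of at least $(1 - e^{-c\bar\alpha s_0}) s_0$ and a short-range infection rate of at most $\epsilon(\bar\alpha) s_0$ with $\epsilon(\bar\alpha) \to 0$ as $\bar\alpha \to \infty$, via the standard perimeter-entropy bound for bubbles.

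The main obstacle is the long-range infection. Even when $\sB \cap \sB_0 = \emptyset$, the acceptance probabilities $p, p'$ for flipping $\sB$ in $\eta_t, \eta'_t$ differ because the $\fg_r^\mu$ terms in \cref{it:glauber-move-nu} see $\sB_0$. Writing $p = \sigma(X)$ and $p' = \sigma(X + \delta)$ with $\sigma$ the logistic function and $\delta := \log(w_\eta / w_{\hat\eta}) - \log(w_{\eta'} / w_{\hat\eta'})$, one has $|p - p'| \leq p(1-p)\,|\delta| \leq C e^{-\bar\alpha |\sB|/2}\, |\delta|$. Crucially, $\delta$ is a \emph{second difference} of $\fg_r^\mu$ across the pairs $(\eta,\hat\eta)$ and $(\eta',\hat\eta')$; hence it vanishes unless $B(x,r)$ meets \emph{both} $\sB$ and $\sB_0$, which forces $r \geq \tfrac12\dist(\sB,\sB_0)$ and restricts the center $x$ to $O(r^2)$ positions. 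Combined with $\|\fg_r^\mu\|_\infty \leq C e^{-\alpha r/C}$, this yields $|\delta| \leq C_1 e^{-\alpha d / C_2}$, where $d := \dist(\sB, \sB_0)$.

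It remains to sum the long-range infection rate over all $\sB$. The number of bubbles of size $s$ at graph distance $d$ from $\sB_0$ is at most $(s_0 + d)\, c_0^s$ by a perimeter-entropy bound, so the total long-range infection rate is bounded by
\[
C \sum_{d \geq 1} (s_0 + d)\, e^{-\alpha d / C_2} \sum_{s \geq 1} s\, c_0^s\, e^{-\bar\alpha s /2}\,,
\]
which, for $\alpha \wedge \bar\alpha$ large, is at most $\tfrac14 s_0$. Combining healing, short-range, and long-range contributions gives $\frac{d}{dt} \E[\dist_\sB(\eta_t, \eta'_t)]\big|_{t=0} \leq -\tfrac12 s_0$, and Gr\"onwall's inequality concludes the proof.
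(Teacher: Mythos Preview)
Your overall strategy matches the paper's: reduce to a single-bubble disagreement, split updates into healing/short-range/long-range, and exploit the second-difference structure of $\delta$ for the long-range term. The argument is essentially correct, but two steps are stated imprecisely and one differs from the paper's route.

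First, healing under this Glauber coupling is \emph{deterministic}: when the clock at $(\sB_0,f)$ rings, both chains face the identical pair $\{\eta_0,\eta_0\cup\sB_0\}$ with identical weights $w_\eta,w_{\hat\eta}$, so the synchronized uniform forces the same outcome and the distance drops to $0$ with probability $1$. The paper records the contribution as exactly $-|\sB_0|$. Your bound $-(1-e^{-c\bar\alpha s_0})s_0$, borrowed from the Metropolis argument of \cref{prop:mu-contraction}, is a valid but unnecessary weakening.

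Second, the inequality $|p-p'|\le p(1-p)\,|\delta|$ is false as written: the mean-value theorem gives $\sigma'$ at an intermediate point, and $\sigma'$ is increasing on $(-\infty,0)$. The paper sidesteps this by using only the $\tfrac14$-Lipschitz bound $|p-p'|\le\tfrac14|\delta|$ when $|\sB|\le\dist(\sB,\sB_0)$, and the Peierls bound $|p-p'|\le p+p'\le Ce^{-(\bar\alpha-C)|\sB|/2}$ when $|\sB|>\dist(\sB,\sB_0)$, then merges the two regimes into a single exponential $Ce^{-\frac12\bar\alpha|\sB|-\alpha\dist(\sB,\sB_0)/C}$. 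Your direct product bound is in fact recoverable---since both $X,X'\le -(\tfrac12\bar\alpha-C)|\sB|$ one has $\sigma'(t)\le e^t\le e^{-(\bar\alpha/2-C)|\sB|}$ throughout the interval---but you should argue it this way rather than via $p(1-p)$. A related minor point: your $O(r^2)$ count of admissible centers $x$ ignores $|\sB_0|$ and $|\sB|$; the correct count carries a factor $|\sB_0|\wedge|\sB|$ (as in the paper's \cref{eq:jump_nu}), though this polynomial is absorbed harmlessly into the exponential.
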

\begin{proof}
Throughout this proof, since the domain of $\fg_r^\mu(\cdot)$ is $B(o,r)$ and we will always apply it to terms of the form $\eta\restriction_{B(x,r)}$, with $x$ clear from the context,
we write  $\fg_r^\mu(\eta)$ to abbreviate $\fg_r^\mu(\eta\restriction_{B(x,r)})$. 

It again suffices to consider $\eta_0,\eta'_0$ that differ on a single bubble $\sB_0$ (assume by symmetry that $\eta_0'$ contains $\sB_0$ and $\eta_0$ does not) and show that $\frac{\d}{\d t}\E \dist_\sB(\eta_t,\eta'_t)\big|_{t=0} \leq -\frac12$ under the aforementioned coupling. 
Now that bubbles have interactions, the possible scenarios are as follows:
\begin{enumerate}
    \item {}[\emph{healing}] \label{it:healing-nu} At rate $|\sB_0|$ we select $(\sB_0,f)$ for some $f$. This has the effect of reducing $\dist_\sB(\eta,\eta')$ to $0$ deterministically as the coupling will select, for both tilings, the same weight (yielding either  $\hat\eta$, containing $\sB_0$, or $\eta$, without it, in both instances).
    \item {}[\emph{long-range infection}]\label{it:long-range-nu} We select $(\sB, f)$ with $\sB \cap \sB_0 = \emptyset$. This may increase $\dist_\sB(\eta,\eta')$ by $1$.
    \item {}[\emph{contact infection}]\label{it:contact-nu} We select $(\sB, f)$ with $\sB \cap \sB_0 \neq \emptyset$. In that case $\dist_\sB(\eta,\eta')$  increases by 1 when we add $\sB$ to $\eta$, and otherwise it is unchanged.
\end{enumerate}
The effect of the healing step (\cref{it:healing-nu}) is straightforward: it contributes $-|\sB_0|$ to $\frac{\d}{\d t}\E\dist_\sB(\eta_t,\eta_t')\big|_{t=0}$.
Let us move to the effect of the long-range infections (\cref{it:long-range-nu}). Recall that neither $\eta$ nor $\eta'$ contains~$\sB$ (as they only differ on the disjoint bubble $\sB_0$), and define
\[\hat\eta := \eta \xor \sB\quad,\quad \hat \eta' := \eta'\xor\sB\] to be the configurations obtained by adding $\sB$ to $\eta$ and $\eta'$, resp. (That is, here and in what follows, we will typically use ${\cdot}'$ to denote the presence of $\sB_0$ and $\hat{ \cdot}$ to denote the presence of~$\sB$.)
The probability to increase the distance by $1$ is then at most $|p-p'|$ where $p$ corresponds to the probability of the move $\eta\mapsto \hat\eta$ and $p'$ to the move $\eta'\mapsto \hat\eta'$, each given by
\begin{align*} p &= \frac{w_{\hat\eta}}{w_{\eta}+w_{\hat\eta}} =\frac{\exp[ -\frac{1}{2}\bar \alpha |\sB| - \sum_{x,r} \fg_r^\mu(\hat\eta)]}{ \exp[ - \sum_{x,r} \fg_r^\mu(\eta)]  + \exp[ -\frac{1}{2}\bar \alpha |\sB| - \sum_{x,r} \fg_r^\mu(\hat\eta)]}  \,,\\
 p' &= \frac{w_{\hat\eta'}}{w_{\eta'}+w_{\hat\eta'}} = \frac{\exp[ -\frac{1}{2}\bar \alpha |\sB| - \sum_{x,r} \fg_r^\mu(\hat\eta')]}{ \exp[ - \sum_{x,r} \fg_r^\mu(\eta')]  + \exp[ -\frac{1}{2}\bar \alpha |\sB| - \sum_{x,r} \fg_r^\mu(\hat\eta')]} \,.
\end{align*}
We can rewrite
\begin{equation}
p = \frac{\exp[ - \sum_{x,r} (\fg_r^\mu(\hat\eta) - \fg_r^\mu(\eta))] }{\exp[\frac{1}{2}\bar \alpha |\sB| ] + \exp[- \sum_{x,r} (\fg_r^\mu(\hat\eta) - \fg_r^\mu(\eta))]}\,,\label{eq:peierls_nu}
\end{equation}
and similarly for $p'$. Then, as $x \to \frac{e^{x}}{c+  e^x}$ is $\frac14$-Lipschitz for any $c>0$ (applied here for  $c=\exp[\frac12\bar\alpha|\sB|]$),
\begin{equation}\label{eq:p-p'}
|p - p'| \leq \frac{1}{4} \Big|\sum_{x, r} \big(\fg_r^\mu(\eta) + \fg_r^\mu(\hat\eta') - \fg_r^\mu(\hat\eta) - \fg_r^\mu(\eta')\big)\Big|\,.
\end{equation}
The only non-zero terms in the above sum  correspond to pairs $(x,r)$ such that $B(x,r)$ intersects both $\Upsilon (\sB_0)$ and $\Upsilon(\sB)$; so, using that
$\|\fg^\mu_r \|_\infty \leq C e^{- \alpha r/C } $,
we get
\begin{equation}\label{eq:jump_nu}
|p- p'| \leq 
\sum_{\substack{x,r\\ B(x,r)\cap\Upsilon(\sB_0)\neq\emptyset\\
B(x,r)\cap\Upsilon(\sB)\neq\emptyset}} Ce^{-\alpha r/C} \leq
C( |\sB_0|\,\wedge\,|\sB|) e^{ - \alpha \dist(\sB, \sB_0) /C} \end{equation}
for some $C > 0$. The above bound is useful for all $\sB$ such that $|\sB| \leq \dist(\sB, \sB_0)$ but not for much larger bubbles. For those we will bound $|p-p'|$ by $p+p'$ and bound each probability separately.
Going back to the expression for $p$ in \cref{eq:peierls_nu}, we see that 
\[
\sum_{x,r} |\fg_r^\mu( \hat \eta) - \fg_r^\mu(  \eta)| \leq \sum_{\substack{x,r \\ B(x,r)\cap\sB\neq\emptyset}} \big(|\fg_r^\mu(\eta)| + |\fg_r^\mu(\hat\eta)| \big)\,,
\]
because the balls which do not intersect $\sB$ do not contribute to the sum on the left. The right-hand side is bounded by $C |\sB|$ if $\alpha$ is large enough and hence
\begin{equation}\label{eq:peierls_nu2}
p \leq C e^{- \frac{1}{2} (\bar \alpha - C) |\sB|} \, ,
\end{equation}
and the same holds true for $p'$.
For $\alpha$ (and by extension, also $\bar\alpha$) large enough, we can then sum the contribution to $|p-p'|$ over all $\sB$ via \cref{eq:jump_nu} for $|\sB|\leq \dist(\sB,\sB_0)$ and otherwise via \cref{eq:peierls_nu2} (and its analog for $p'$). Overall, we obtain that long-range infections contribute at most 
\[
\sum_{\substack{\sB \\ \Upsilon(\sB)\cap\Upsilon(\sB_0) = \emptyset}} |\sB| Ce^{-\frac{1}{2}\bar \alpha |\sB| - \alpha \dist(\sB, \sB_0)/C} \leq \tfrac{1}{4} |\sB_0|\,.
\]
%concluding the treatment of long range infection (\cref{it:long-range-nu}).

It remains to treat the effect of  contact infections (\cref{it:contact-nu}). We in fact already bounded the probability to add a bubble $\sB$ in \cref{eq:peierls_nu2}, so simply summing this bound concludes the proof.
\end{proof}

\begin{proof}[Proof of \cref{thm:mu-nu}, \cref{eq:orig-prop2}]
As in the proof of \cref{eq:orig-prop1} for $\mu$, let $r=2^k$ with $k\geq 0$, and let $\Lambda_r^\varphi$ denote $\Upsilon(\varphi\restriction_{B(o,r)})$, i.e., the tiles of $\varphi$ whose projection to $\T_N$ intersects $B(o,r)$. Denote by $\nu_r$ the measure defined as $\nu_{\varphi,\bar\alpha}$ but on tilings of $\Lambda_r^\varphi$, that is
\[
 \nu_r(\psi)\propto \exp\bigg[-\frac12\bar\alpha |\varphi\xor\eta| - \int_{\alpha}^\infty \mu_{r, \eta}(| \eta \xor \psi|) \d \hat \alpha\bigg] \,,
\]
where $\mu_{r, \eta}$ is defined like $\mu_r$ but with $\varphi$ replaced by $\eta$.

(Again, in case $r \geq N/2$, we replace $B(o,r)$ in the definition of $\nu_r$ by the full torus $\T_N$, that is, we take $\nu_r = \nu_{\varphi,\bar\alpha}$; thus, every $B(o,r)$ that is strictly contained in $\T_N$ is also simply connected.)
\begin{remark}
As a follow-up to \cref{rem:mu-r-def-vs-mu}, in the above definition we can now see the difference between $\nu_r$ and $\nu$ conditioned on $\eta = \varphi$ outside of $B(o,r)$: the former uses $\mu_r$ and is therefore measurable with respect to $\varphi \restriction_{B(o,r)}$ while the latter would still involve the original $\mu$.
\end{remark}

\subsubsection*{Constructing the local function}
Let us denote by $\{\sB\in\eta\}$ for some bubble $\sB$ the event that $\sB$ appears in $\eta$ as a (complete) bubble, and let
\begin{align*}
f_{1,\sB}^\nu(\varphi\restriction_{B(x,1)}) &:= \int_\alpha^\infty \nu_1(\sB\in\eta)\d\bar\alpha\,,\\ 
f^\nu_{2r, \sB}(\varphi\restriction_{B(x,2r)}) &:= \int_{\alpha}^\infty \big[\nu_{2r}(\sB\in\eta) - \nu_r(\sB\in\eta)\big]\d\bar\alpha \qquad \mbox{for $r=2^k$, $k\geq 0$}\,.
\end{align*}
As was the case for $\mu$ in \cref{eq:mu-integral-with-f}, we have that
\begin{equation}\label{eq:nu-integral-with-f} \int_\alpha^\infty \nu_{\varphi,\bar\alpha}(\tfrac12 |\varphi\xor\eta|)\d \bar\alpha = \frac14\sum_{x\in\T_N} \sum_{\substack{\text{$(\varphi,\eta)$-bubble $\sB$} \\ \Upsilon(\sB)\ni x}} \sum_{\substack{r=2^k \\ \text{\emph{for }$k\geq 0$}}} f_{r,\sB}^\nu (\varphi\restriction_{B(x,r)}) \,,\end{equation}
and now wish to argue that \begin{equation}\label{eq:f-nu-bound}\|f_{2r,\sB}^\nu\|_\infty\leq C \exp[-\alpha |\sB|/4-\alpha r/C]\,.\end{equation} The term corresponding to $\alpha|\sB|$ in the exponent again follows from a routine Peierls argument, already given in our proof of the contraction. Indeed, $p$ from  \cref{eq:peierls_nu} is bounded from above as per \cref{eq:peierls_nu2}, so $\nu_r(\sB\in\eta)$ and $\nu_{2r}(\sB\in \eta)$ are both at most $C\exp(-\frac12(\bar\alpha -C)|\sB|)$. However, unlike the analogous proof for the measure $\mu$, this time we would not want to integrate over $\bar\alpha$ just yet.
%and integrating this over $\bar\alpha$ gives $\|f_{2r,\sB}^\nu\|_\infty \leq C\exp[-\frac12(\alpha-C)|\sB|]$.

As before, the dependence in $r$ will be derived from the contracting Glauber dynamics. Let $(\eta_t,\eta'_t)$ be two coupled instances of the dynamics, with domains $B(o,r)$ and $B(o,2r)$ respectively, from an initial configuration which agrees on $B(o,r)$, where every update of $\eta'_t$ that is confined to $B(o,r)$ uses the joint law as per \cref{prop:nu-contraction} (whereas updates of $\eta'_t$ in the annulus $B(o,2r)\setminus B(o,r)$ will be sampled via the product measure of $\eta_t$ and $\eta'_t$ on this event). Run the dynamics for time
    \[ T = \epsilon \alpha \, r\,,\]
for an $\epsilon$ to be chosen later. We emphasize that in the previous proof we considered time $T\asymp \hat\alpha r$ as opposed to $T\asymp \alpha r$; this is due to the fact that the long-range interactions (which were not present in the measure $\mu$) decay only according to an $\alpha$-term, and we cannot afford to analyze the dynamics at larger scales.
Next write, still as in the $\mu$ case,
\[\left|\nu_{2r}(\sB\in\eta) - \nu_r(\sB\in\eta)\right| \leq \Xi_1 + \Xi_2 + \Xi_3\,,
\]
where
\begin{align*} \Xi_1 &:= \left| \nu_{r}(\sB\in\eta) - \P(\sB\in \eta_{T})\right|\,,\\
\Xi_2 &:= \left| \nu_{2r}(\sB\in\eta) - \P(\sB\in \eta'_{T})\right|\,,\\
\Xi_3 &:= \left| \P(\sB\in \eta_{T}) - \P(\sB\in \eta'_{T})\right|\,.
\end{align*}
Again, \cref{prop:nu-contraction} for $t=T$ gives us
\[ \Xi_1 \leq e^{-T/2} |B(o,r)| \leq e^{-(\frac12\epsilon\alpha-o(1))r}\,, \quad \quad \Xi_2 \leq e^{-T/2} |B(o,2r)| \leq e^{-(\frac12\epsilon\alpha-o(1))r}\,,\]
where the $o(1)$-term goes to $0$ as $r\to\infty$.

For $\Xi_3$, we must bound the rate of information propagation in the Glauber dynamics. 
Compared to the $\mu$ case, this has the further complication that we need to consider sequences of bubbles that do not intersect. We can still enumerate over sequences
 $\sB_1,\ldots,\sB_m$ ($m\geq 1$) such that the distance from $\sB_i$ to $\bigcup_{\ell\leq i} \sB_\ell$ is reached at a point of $\sB_{i-1}$ for all $i$. By analogy with the previous case, let $r_i$ be the diameter of $\sB_i$ and let $j_i = \dist( \sB_{i-1}, \sB_i)$ ($j_i = 0$ is possible if $\sB_i \cap \sB_{i-1} \neq \emptyset$), with the convention that $j_1$ is the distance to $\partial B(o,r)$ instead.
 Using \cref{eq:jump_nu,eq:peierls_nu2} to bound the rate at which the dynamics can create differences, the union bound now shows that $\Xi_3$ is at most 
\begin{align*}
& \sum_{m\geq 1}\sum_{\{r_i, j_i\}:\sum r_i + j_i \geq r/2} \sum_{\{s_i\}: s_i\geq r_i} \int_{0 \leq t_1 \leq \ldots \leq t_m \leq T}\\
&  \prod_i \bigg( c_0^{s_i} C s_{i-1} (j_i +1) \cdot Cs_ie^{-\bar\alpha s_i - \alpha j_i/C}e^{-(t_i - t_{i-1}) C s_ie^{-\bar\alpha s_i- \alpha j_i/C}}\bigg)\d t_1\ldots \d t_m,
\end{align*}
with the extra factor $C (j_i+1)$ associated with the choice of a root for $\sB_i$ given $\sB_{i-1}$ and the bound on the probability to create a defect with size $s$ at distance $j$ corresponding to an exponential clock with rate $C s\exp(-(\frac12\bar\alpha-C) s - \alpha j/C)$.
Using the same bound on the integral as for $\mu$, we get
\begin{align*}
\Xi_3 &\leq e^{\frac32 T} C e^{-(\alpha/C)r/2}\,,
\end{align*}
where the last inequality holds for some $C$ provided that $\bar\alpha$ is large enough. For $\epsilon$ chosen small enough, this is less than $C e^{- \alpha r/C}$ since $T = \epsilon \alpha r$.

Combining this with the bound $C\exp[-\frac12(\bar\alpha-C)|\sB|]$ established above, we see that
\[\left|\nu_{2r}(\sB\in\eta) - \nu_r(\sB\in\eta)\right| \leq C e^{- \alpha r/C - \frac{1}{4} (\bar \alpha - C) |\sB|} \,,
\]
which we can integrate in $\bar \alpha$ (as we kept the $\bar\alpha|\sB|$ term until now)
to obtain the sought \cref{eq:f-nu-bound}.

We now define $\fg_r^\nu$ (inheriting the sought $L^\infty$ bound from $f_r^\nu$) as
\[ \fg_r^\nu := \frac14\sum_{\sB:\;\Upsilon(\sB)\ni o} f_{r,\sB}^\nu\,, \]
which, in view of \cref{eq:f-nu-bound}, establishes \cref{eq:orig-prop2}, because the smallest bubble $\sB$, a single cube, consists of $|\sB|=6$ faces.
\end{proof}

\section{Geometry of the energy minimizers}\label{sec:geometry-of-minimizers}

By now we have proved a local decomposition for two of the three measures from \cref{prop:P(phi)-via-3-measures}, but as mentioned in the introduction, the last one is much more complicated and its analysis covers \cref{sec:geometry-of-minimizers,sec:alg,sec:pi}. According to the outline in \cref{sec:sketch-step-3}, the first step is to turn the minimization of $G
_{h,\varphi}(\cdot)$ into a local observable through the definition of an appropriate notion of \emph{bubble groups}.

Recall from \cref{subsec:setup} that $\cP_{111}$ is the plane of equation $x_1+x_2+x_3 = 0$ and that $\Upsilon_{111}$ denotes the orthogonal projection onto that plane. Recall further that $\Upsilon_{111}$ establishes a bijection between $\varphi$ and $\cP_{111}$ and that any face of $\Z^3$ projects to a lozenge which we see as covering a black triangular face and a white one. We will very rarely use the projection on the $\cP_{001}$ plane before \cref{sec:concluding-thm-1} so for the ease of notation we will drop the index from $\Upsilon_{111}$.

\subsection{Ordering the energy minimizers by their heights}
A key observation in this section is the following result, providing a partial order over minimizers of the energy $G_{h,\varphi}$ from \cref{eq:G-def}. Note that, whereas $G_{h,\varphi}$ depends on $\varphi$, its set of minimizers is determined solely by $h$.

\begin{proposition}\label{prop:minimizers}
For every $h$, if $\Psi^\gr_{h}$ is the set of tilings $\psi$ that minimize the energy $G_{h,\varphi}$ from \cref{eq:G-def}, then $\Psi^\gr_{h}$ is closed under taking a maximum (viewing $\psi_1,\psi_2\in\Psi^\gr_{h}$ as height functions on $\Lambda_N$ w.r.t.\ $\cP_{001}$ in order to define their maximum $\psi_1\vee\psi_2$) as well as closed under taking a minimum.
In particular, there is a unique maximal element $\psi^\sqcup\in\Psi^\gr_{h}$ and a unique minimal element~$\psi^\sqcap\in\Psi^\gr_{h}$.
\end{proposition}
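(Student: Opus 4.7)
Since $|h\cap\varphi|$ does not depend on $\psi$, the set $\Psi^\gr_h$ consists precisely of the tilings maximizing the overlap $|h\cap\psi|$. The plan is to establish the supermodular identity
\[
|h\cap(\psi_1\vee\psi_2)|+|h\cap(\psi_1\wedge\psi_2)|=|h\cap\psi_1|+|h\cap\psi_2|,
\]
for any two tilings $\psi_1,\psi_2$ satisfying the slope-$\theta$ boundary conditions of \cref{eq:h-periodic-bc}, where $\vee,\wedge$ denote the pointwise max and min of the $\cP_{001}$ height functions. Once this identity is in hand, if both $\psi_1,\psi_2$ attain $M:=\max_\psi|h\cap\psi|$ then the right-hand side equals $2M$ while each term on the left is at most $M$, forcing both $\psi_1\vee\psi_2$ and $\psi_1\wedge\psi_2$ to also attain $M$, hence to lie in $\Psi^\gr_h$. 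As $\Psi^\gr_h$ is finite (we are on the torus $\Lambda_N$), iterating $\vee$ (resp.\ $\wedge$) over all its elements produces the desired unique maximal element $\psi^\sqcup$ (resp.\ minimal element $\psi^\sqcap$).

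\textbf{Preliminary check: $\psi_1\vee\psi_2$ and $\psi_1\wedge\psi_2$ are valid tilings.} The pointwise max or min of two monotone $\cP_{001}$ height functions is again monotone, so each corresponds to a monotone surface in $\Z^3$, i.e., a lozenge tiling. The periodic conditions of \cref{eq:h-periodic-bc} are preserved because subtracting the same integer shift $\lfloor\theta_i N\rfloor$ from both $\psi_1$ and $\psi_2$ commutes with $\max$ and $\min$; thus $\psi_1\vee\psi_2$ and $\psi_1\wedge\psi_2$ inherit the slope-$\theta$ boundary conditions.

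\textbf{Main step: the plaquette-by-plaquette identity.} The idea is to deduce the displayed identity from the strictly pointwise statement
\[
\one_{f\in\psi_1\vee\psi_2}+\one_{f\in\psi_1\wedge\psi_2}=\one_{f\in\psi_1}+\one_{f\in\psi_2}
\]
for every plaquette $f$ of $\Z^3$, and then to sum against $\one_{f\in h}$. For a horizontal plaquette at column $x$ and height $k$, $f\in\psi$ iff $\psi(x)=k$, and the identity reduces to the multiset equality $\{\psi_1(x),\psi_2(x)\}=\{(\psi_1\vee\psi_2)(x),(\psi_1\wedge\psi_2)(x)\}$. For a vertical plaquette lying above an edge $e=(x,y)$ at integer level $k$, $f\in\psi$ iff $k$ belongs to the interval $[\min(\psi(x),\psi(y)),\max(\psi(x),\psi(y))-1]$; writing out the four such intervals for $\psi_1,\psi_2,\psi_1\vee\psi_2,\psi_1\wedge\psi_2$ and carrying out a short case analysis on the relative ordering of $\psi_1(x),\psi_2(x),\psi_1(y),\psi_2(y)$ shows that these intervals pair up into the same multiset on both sides.

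\textbf{Expected difficulty.} The only genuinely nontrivial step is the vertical-face case of the plaquette identity, where one must avoid double counting over the height range covered by the vertical faces of a lozenge tile. Everything else (the existence of $\psi^\sqcup,\psi^\sqcap$ from closure under $\vee,\wedge$, preservation of boundary conditions, and the reduction from the plaquette identity to the global supermodular identity) is straightforward once the pointwise statement is proven.
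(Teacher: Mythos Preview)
Your proof is correct and takes a somewhat different route from the paper's. You establish the \emph{modular} identity $|h\cap(\psi_1\vee\psi_2)|+|h\cap(\psi_1\wedge\psi_2)|=|h\cap\psi_1|+|h\cap\psi_2|$ for \emph{all} pairs of tilings, directly from the pointwise plaquette identity $\one_{f\in\psi_1\vee\psi_2}+\one_{f\in\psi_1\wedge\psi_2}=\one_{f\in\psi_1}+\one_{f\in\psi_2}$ (which indeed reduces, via $\one_{k\geq a}+\one_{k\geq b}=\one_{k\geq a\vee b}+\one_{k\geq a\wedge b}$, to a two-line check), and only then invoke the minimizer hypothesis. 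The paper instead writes down the face-set decomposition $h\cap\psi_1=(h\cap\psi_1\cap\psi_2)\cupdot(h\cap((\psi_1\wedge\psi_2)\setminus\psi_2))\cupdot(h\cap((\psi_1\vee\psi_2)\setminus\psi_2))$ and its analogs, then uses the minimizer property of $\psi_1,\psi_2$ to derive four inequalities which, when chained, force equalities. Both arguments rest on the same underlying observation (as a multiset of plaquettes, $\{\psi_1,\psi_2\}=\{\psi_1\vee\psi_2,\psi_1\wedge\psi_2\}$), but yours is more streamlined: it proves the stronger modularity statement for arbitrary tilings and makes the generalization noted in the paper's \cref{rem:gen-Psi-h} (replacing $\one_{f\in h}$ by arbitrary face weights $a_f$) completely transparent, since one simply sums the plaquette identity against $a_f$ instead of $\one_{f\in h}$.
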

%(Use the lemma above: if two minimizers intersect, one can switch the configurations.)
\begin{proof}
Recall that $\psi\in\Psi_h^\gr$ if and only if it maximizes $|h\cap\psi|$.
    It is easy to verify that if $\psi_1, \psi_2$ are monotone surfaces then so are $\psi_1 \vee \psi_2$ and $\psi_1 \wedge \psi_2$. Take $\psi_1,\psi_2\in \Psi^\gr_{h}$, and note that
    \begin{align} \quad h\cap \psi_1 &= \left(h\cap (\psi_1 \cap \psi_2 )\right) \cupdot \left(h\cap ((\psi_1 \wedge \psi_2) \setminus \psi_2)  \right) \cupdot \left(h\cap ((\psi_1 \vee \psi_2) \setminus \psi_2) \right)\label{eq:G-h-phi-psi1}\end{align}
    (where $\cupdot$ denotes a disjoint union), and similarly for $h\cap\psi_2$. Also,
    \begin{align}
    \left(h\cap(\psi_1 \wedge \psi_2)\right) &= \left(h\cap (\psi_1 \cap \psi_2 )\right) \cupdot \left(h\cap ((\psi_1 \wedge \psi_2) \setminus \psi_2  )\right)\cupdot\left(h\cap((\psi_1 \wedge \psi_2) \setminus \psi_1 )\right)\,,\label{eq:G-h-phi-psi1-min-psi2}
\end{align}
    and similarly for $h\cap(\psi_1 \vee \psi_2)$.

Since $\psi_1 \in \Psi^\gr_{h}$, we have $|h\cap \psi_1| \geq \left|h\cap(\psi_1\wedge \psi_2)\right|$, so comparing \cref{eq:G-h-phi-psi1,eq:G-h-phi-psi1-min-psi2} yields 
\begin{align}
\left|h\cap((\psi_1 \vee \psi_2) \setminus \psi_2  )\right| 
        &\geq \left|h\cap ((\psi_1 \wedge \psi_2) \setminus \psi_1  )\right|\,.\label{eq:G-M-psi2}
        \end{align}
Similarly, $|h\cap\psi_1| \geq |h\cap(\psi_1\vee\psi_2)|$, so \cref{eq:G-h-phi-psi1} and the analog of \cref{eq:G-h-phi-psi1-min-psi2} for $h\cap(\psi_1\vee\psi_2)$ give
\begin{align}
    \left|h\cap((\psi_1 \wedge \psi_2) \setminus \psi_2  ) \right|
        &\geq \left|h\cap ((\psi_1 \vee \psi_2) \setminus \psi_1  )\right|\,. \label{eq:G-m-psi2}
\end{align}    
Reversing the roles of $\psi_1,\psi_2$ we find, in the same manner, that
\begin{align}
\left|h\cap ((\psi_1 \vee \psi_2) \setminus \psi_1  ) \right|
        &\geq \left|h\cap((\psi_1 \wedge \psi_2) \setminus \psi_2  )\right|\,,\label{eq:G-M-psi1} \\        
\left|h\cap ((\psi_1 \wedge \psi_2) \setminus \psi_1  )\right| 
        &\geq \left|h\cap ((\psi_1 \vee \psi_2) \setminus \psi_2  )\right|\,.\label{eq:G-m-psi1}
 \end{align}
Combining \cref{eq:G-M-psi2,eq:G-m-psi1} yields $\left|h\cap ((\psi_1\vee\psi_2)\setminus\psi_2)\right|=\left|h\cap ((\psi_1\wedge\psi_2)\setminus\psi_1)\right|$, which implies (again through \cref{eq:G-h-phi-psi1,eq:G-h-phi-psi1-min-psi2})
that $|h\cap\psi_1| = |h\cap(\psi_1\wedge\psi_2)|$ and so $\psi_1\wedge\psi_2\in\Psi_{h}^\gr$. Similarly, via \cref{eq:G-m-psi2,eq:G-M-psi1} we find $|h\cap\psi_1|=|h\cap(\psi_1\vee\psi_2)|$, whence $\psi_1\vee\psi_2\in\Psi_{h}^\gr$, as required.
\end{proof}
\begin{remark} \label{rem:gen-Psi-h} More generally, the proof above holds if $\Psi_h^\gr$ is the set of minimizers of $\psi\mapsto \sum_{f\in\psi}a_{f} $ for any $a_{f}\in\R $ per plaquette $f$ in $\Z^3$   (minimizing $G_{h,\varphi}$ corresponds to taking  $a_{f}=-\one_{\{f\in h\}}$).
\end{remark}
\begin{corollary}\label{cor:psi-min-max-construction}
For every $h$, the set $\Psi_{h}^\gr$ of tilings $\psi$ minimizing $G_{h,\varphi}(\psi)$ can be obtained as follows:
\begin{enumerate}[(1)] 
\item Let $\psi_0 = \psi^\sqcap \cap \psi^\sqcup$. Every $\psi\in\Psi_{h}^\gr$ will include $\psi_0$.
\item \label{it:ind-component} Viewing $\psi^\sqcap$ as a tiling, for every connected component $\cC_i$ ($i\geq 1)$ of $\Upsilon(\psi^\sqcap \setminus \psi_0)$, choose $\psi_i$ independently out of all lozenge tilings of $\cC_i\subset \T_N$ that maximize the intersection with~$\Upsilon(h)$.
\item Complete~$\psi$ by gluing $\psi_0$ with all the $\psi_i$.
\end{enumerate}
\end{corollary}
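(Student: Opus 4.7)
The plan is to combine the lattice property of $\Psi_h^\gr$ from \cref{prop:minimizers} with the additivity of $|h\cap\cdot|$ across disjoint regions. I would first establish that every $\psi\in\Psi_h^\gr$ is squeezed between the extremal elements, in the sense that $\psi^\sqcap\leq\psi\leq\psi^\sqcup$ as $\cP_{001}$ height functions. This is immediate from \cref{prop:minimizers}: $\psi\vee\psi^\sqcup\in\Psi_h^\gr$ and the uniqueness of $\psi^\sqcup$ as maximal element force $\psi\vee\psi^\sqcup=\psi^\sqcup$, hence $\psi\leq\psi^\sqcup$; the bound $\psi\geq\psi^\sqcap$ is symmetric.

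From the squeezing I would deduce claim~(1) by a short case check on plaquette types. For a horizontal plaquette at $(x,k)$ lying in $\psi^\sqcap\cap\psi^\sqcup$, one has $\psi^\sqcap(x)=\psi^\sqcup(x)=k$, so the squeezing forces $\psi(x)=k$ and the plaquette is in~$\psi$. For a vertical plaquette at height $k$ between columns $x,y$ with, say, $\psi^\sqcap(x)\geq k>\psi^\sqcap(y)$, the monotonicity $\psi^\sqcap\leq\psi^\sqcup$ rules out the reversed orientation in $\psi^\sqcup$, so $\psi^\sqcup(x)\geq k>\psi^\sqcup(y)$ as well; the squeezing then yields $\psi(x)\geq k>\psi(y)$, placing the face in~$\psi$.

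Next I would unravel the component structure. Projecting to $\T_N$, both $\Upsilon(\psi^\sqcap)$ and $\Upsilon(\psi^\sqcup)$ cover the torus and disagree exactly on $\T_N\setminus\Upsilon(\psi_0)$; let $\{\cC_i\}$ denote the connected components of this set. Its boundary $\partial\cC_i$ sits inside $\Upsilon(\psi_0)$, so $\psi^\sqcap$, $\psi^\sqcup$, and (by the preceding step) every $\psi\in\Psi_h^\gr$ agree with $\psi_0$ along $\partial\cC_i$. Consequently $\psi\restriction_{\cC_i}$ is a legal lozenge tiling of $\cC_i$ with boundary data inherited from~$\psi_0$. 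Because $\Upsilon(\psi_0)$ together with the $\cC_i$'s partitions $\T_N$, one has the additive decomposition
\[
|h\cap\psi|=|h\cap\psi_0|+\sum_i|h\cap\psi\restriction_{\cC_i}|\,,
\]
and maximizing the left-hand side is equivalent to independently maximizing each summand over tilings of $\cC_i$ with the prescribed boundary, giving the ``only if'' direction.

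For the converse, given any independent choice of tilings $\psi_i$ of $\cC_i$ maximizing $|h\cap\psi_i|$, the gluing $\tilde\psi$ with $\psi_0$ is a legitimate tiling of $\T_N$ (the $\partial\cC_i$-boundaries match by construction with those of $\psi^\sqcap\in\Psi_h^\gr$), and the same additivity gives
\[
|h\cap\tilde\psi|=|h\cap\psi_0|+\sum_i|h\cap\psi_i|\geq|h\cap\psi^\sqcap|\,,
\]
so equality holds and $\tilde\psi\in\Psi_h^\gr$. The main subtlety to watch for is the compatibility between the ``global'' squeezing and the ``per-component'' optimization: a priori some maximizer of $|h\cap\psi_i|$ on $\cC_i$ might violate $\psi^\sqcap\leq\psi_i\leq\psi^\sqcup$, but this cannot happen, because once $\tilde\psi\in\Psi_h^\gr$ is established the initial squeezing step applied to $\tilde\psi$ forces $\psi^\sqcap\leq\psi_i\leq\psi^\sqcup$ on $\cC_i$ a posteriori. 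Beyond this mild consistency point, the argument is essentially bookkeeping around \cref{prop:minimizers}.
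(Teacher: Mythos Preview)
Your proof is correct and is precisely the natural elaboration of \cref{prop:minimizers} that the paper has in mind; the paper states the corollary without proof, and your argument supplies the details (squeezing $\psi^\sqcap\leq\psi\leq\psi^\sqcup$, the plaquette-type case check for $\psi_0\subset\psi$, and the additivity of $|h\cap\cdot|$ over the components~$\cC_i$) in the expected way. One cosmetic remark: in your vertical-plaquette case, the phrase ``the monotonicity $\psi^\sqcap\leq\psi^\sqcup$ rules out the reversed orientation'' is slightly misdirected---what you actually use is the monotonicity of the surface $\psi^\sqcup$ itself (so that necessarily $\psi^\sqcup(x)\geq\psi^\sqcup(y)$ for the ordered pair of columns), together with $f\in\psi^\sqcup$; the squeezing $\psi^\sqcap\leq\psi^\sqcup$ plays no role at that step. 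The conclusion and the rest of the argument are unaffected.
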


The following local representation of the energy $G_{h,\varphi}$ will also have a useful role in the proofs.
\begin{lemma}[Zero range energy]\label{lem:zero-range}
For every $h,\varphi$ there exists an integer-valued function $g_{h,\varphi}$ on plaquettes of $\Z^3$ such that the energy function $G_{h,\varphi}$ from \cref{eq:G-def} can be given by 
\[G_{h, \varphi}( \psi) = \sum_{f\in\psi}  g_{h, \varphi}(f)\,.\]
   % The energy $H(h, \varphi)$ and $G(h, \varphi, \psi)$ are "zero range": \[G(h, \varphi, \psi) = \sum_{f \text{ face}} g_{h, \varphi} \one_{f \in \psi}\,.\]
\end{lemma}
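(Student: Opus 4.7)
The plan is to exhibit $g_{h,\varphi}$ by direct construction. By the definition in \cref{eq:G-def},
\[ G_{h,\varphi}(\psi) \;=\; |h\cap\varphi| \;-\; \sum_{f\in\psi}\one_{\{f\in h\}}\,, \]
so the $\psi$-dependent second term is already in the desired zero-range form via $f\mapsto -\one_{\{f\in h\}}$. What remains is to absorb the $\psi$-independent nonnegative integer $|h\cap\varphi|$ into a sum $\sum_{f\in\psi} g_0(f)$ whose value is the same for \emph{every} tiling $\psi$ admissible on $\Lambda_N$ with slope $\theta$.

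The key structural fact needed is that every tiling $\psi$, viewed as a monotone surface in $\Lambda_N\times\Z$, contains exactly one horizontal plaquette over each of the $N^2$ unit squares of the base---namely the face at height $\psi_{001}$. Hence, fixing once and for all an arbitrary unit square $s_0$ of $\Lambda_N$ and setting
\[ g_0(f) \;:=\; |h\cap\varphi|\cdot \one_{\{f\text{ is a horizontal plaquette with }\Upsilon_{001}(f)=s_0\}}\,, \]
we obtain $\sum_{f\in\psi} g_0(f)=|h\cap\varphi|$ for every admissible $\psi$, because exactly one term in the sum is nonzero---the one corresponding to the unique horizontal plaquette of $\psi$ whose $\cP_{001}$-projection is $s_0$.

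Combining the two contributions, the integer-valued plaquette function
\[ g_{h,\varphi}(f) \;:=\; g_0(f) \;-\; \one_{\{f\in h\}} \]
satisfies $\sum_{f\in\psi} g_{h,\varphi}(f) = |h\cap\varphi| - |h\cap\psi| = G_{h,\varphi}(\psi)$, as required. There is essentially no obstacle here: the content of the lemma is existence (not canonicity) of a zero-range representative, and the ``horizontal reservoir'' above $s_0$ provides a canonical place through which any integer constant can be passed without changing the sum over any admissible tiling. The only ingredient beyond bookkeeping is the one-horizontal-plaquette-per-base-square observation, which is immediate from the monotone-surface description; this is also implicit in \cref{rem:gen-Psi-h}, where the same reasoning is used to reduce $G_{h,\varphi}$ to a linear functional of the plaquette indicators.
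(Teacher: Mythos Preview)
Your construction is correct and proves the lemma as stated: every tiling $\psi$ contains exactly one horizontal plaquette over each base square of $\Lambda_N$, so your reservoir trick absorbs the constant $|h\cap\varphi|$ into a single term of the sum, and the rest is immediate.

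That said, the paper's construction is different and worth knowing. Rather than concentrating the constant at one location, it distributes it \emph{evenly}, one unit per face of $\psi$, via the black-triangle bijection between $\psi$ and $\varphi$: for each $f\in\psi$, take the black triangle in $\Upsilon(f)$, lift it to the unique face of $\varphi$ covering that triangle, and record whether that face lies in $h$. This gives
\[
g_{h,\varphi}(f) = \one_{\{\left(\Upsilon^{-1}( \Upsilon(f) \restriction_{\text{black}}) \cap \varphi \right)\subset h \}}  - \one_{\{f \in h\}}\,,
\]
which is $\{-1,0,1\}$-valued and depends only on $h,\varphi$ restricted to the column over $\Upsilon(f)$. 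This is what the title ``zero range energy'' is really advertising: the value $g_{h,\varphi}(f)$ is local in $f$ \emph{and} in $(h,\varphi)$. Your $g_{h,\varphi}$, by contrast, encodes the global quantity $|h\cap\varphi|$ at the reservoir face, so while it is formally a function of plaquettes, it is neither bounded nor local in $(h,\varphi)$. The paper's version is the one that feeds into the follow-up \cref{lem:zero-range-G-bar} (where the analogous $\overline g_h$ is again a difference of two indicators), into \cref{clm:area-of-bubble-grp}, and crucially into the coupling argument of \cref{prop:coupling_muh}, where the zero-range property of $\mu$'s interaction is used to swap configurations on components. Your construction would not support those applications without rework.
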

\begin{proof}
Define $g_{h,\varphi}(f)$ as follows:
\[
g_{h, \varphi}(f) := \one_{\{\left(\Upsilon^{-1}( \Upsilon(f) \restriction_{\text{black}}) \cap \varphi \right)\subset h \}}  - \one_{\{f \in h\}} \, .
\]
In other words, the first term is obtained by projecting $f$ to a lozenge $\Upsilon(f)$ of $\cP_{111}$, looking at the restriction to the black triangle in $\Upsilon(f)$, then lifting this triangle back to $\varphi$ and finally checking whether this is part of the intersection of $\varphi$ and $h$. Verifying \cref{eq:G-def} is then immediate.
\end{proof}

The following is the analog of \cref{lem:zero-range} for the normalized energy function $\overline G_{h}$ from \cref{eq:G-bar-def}.
\begin{lemma}\label{lem:zero-range-G-bar}
For every $h$ there exists an integer-valued function $\overline{g}_{h}$ on plaquettes of $\Z^3$ such that the energy function $\overline{G}_h$ from \cref{eq:G-bar-def} can be given by 
\[\overline{G}_{h}( \psi) = \sum_{f\in\psi} \overline{g}_{h}(f)\,.\]
\end{lemma}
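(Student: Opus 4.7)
The plan is to reduce this to the preceding Lemma~\ref{lem:zero-range} by a judicious choice of reference tiling, replacing the arbitrary $\varphi$ with one of the canonical minimizers produced by Proposition~\ref{prop:minimizers}. The conceptual observation is that while $\overline G_h = G_{h,\varphi} - G^\gr_{h,\varphi}$ carries the global additive constant $G^\gr_{h,\varphi}$ (which a priori obstructs a plaquette-local representation), this constant gets absorbed automatically into the zero-range Hamiltonian once the reference tiling is itself an energy minimizer.

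Concretely, first invoke Proposition~\ref{prop:minimizers} to obtain the unique maximal element $\psi^\sqcup\in\Psi^\gr_h$, which is a genuine tiling of $\Lambda_N$. Since $\psi^\sqcup$ minimizes $G_{h,\varphi}$, it realizes the maximum of $|h\cap\psi|$ over all tilings, so $|h\cap\psi^\sqcup| = M_h := \max_{\psi_0}|h\cap\psi_0|$. Using the simplification of $\overline G_h$ recorded just after \eqref{eq:Z-phi-gr-def} (giving $\overline G_h(\psi) = M_h - |h\cap\psi|$, which in particular does not depend on $\varphi$), we immediately obtain, for every tiling $\psi$,
\[
G_{h,\psi^\sqcup}(\psi) \;=\; |h\cap\psi^\sqcup| - |h\cap\psi| \;=\; M_h - |h\cap\psi| \;=\; \overline G_h(\psi)\,.
\]

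Now apply Lemma~\ref{lem:zero-range} with $\varphi$ replaced by $\psi^\sqcup$: this yields an integer-valued plaquette function $g_{h,\psi^\sqcup}$ (taking values in $\{-1,0,1\}$) such that $G_{h,\psi^\sqcup}(\psi) = \sum_{f\in\psi} g_{h,\psi^\sqcup}(f)$. Setting $\overline g_h(f) := g_{h,\psi^\sqcup}(f)$ therefore gives the desired decomposition $\overline G_h(\psi) = \sum_{f\in\psi}\overline g_h(f)$, with integer-valuedness inherited verbatim from Lemma~\ref{lem:zero-range}.

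There is essentially no technical obstacle here; the only nontrivial input is that the set of minimizers of $G_{h,\varphi}$ admits a canonical (tiling) representative, which is precisely the content of Proposition~\ref{prop:minimizers}. One could just as well replace $\psi^\sqcup$ by $\psi^\sqcap$, or by any other fixed element of $\Psi^\gr_h$; the proof is insensitive to the choice since the defining property $|h\cap\psi^\star|=M_h$ is all that is used.
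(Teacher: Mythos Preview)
Your proof is correct and is essentially the paper's argument, just packaged slightly differently: you observe $\overline G_h = G_{h,\psi^\sqcup}$ and invoke Lemma~\ref{lem:zero-range} wholesale, whereas the paper writes $\overline g_h(f) = g_{h,\varphi}(f) - g_{h,\varphi}(f^\sqcup)$ with $f^\sqcup$ the face of $\psi^\sqcup$ sharing the black triangle of $\Upsilon(f)$, which simplifies to $\one_{\{f^\sqcup\in h\}} - \one_{\{f\in h\}}$ and is exactly your $g_{h,\psi^\sqcup}(f)$. (Minor quibble: the simplification $\overline G_h(\psi) = M_h - |h\cap\psi|$ is recorded just after \eqref{eq:G-bar-def}, not after \eqref{eq:Z-phi-gr-def}.)
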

\begin{proof}
With \cref{prop:minimizers} in mind, for every face $f\in\psi$, let $t$ be the black portion of $\Upsilon(f)$ and let $f^\sqcup$ be the unique face of $\psi^\sqcup\in \Psi^\gr_{h}$ such that $t \subset \Upsilon(f^\sqcup)$. We set
\begin{equation}\label{eq:g-bar-def} 
\overline g_{h}(f) := g_{h, \varphi}(f) - g_{h, \varphi}(f^\sqcup) = \one_{\{f^\sqcup \in h\}} - \one_{\{f \in h\}}\,.\end{equation}
This provides a valid decomposition of $\overline G_h$ because, as in the proof of \cref{lem:zero-range}, every tiling has exactly one face projecting on each black triangle.
\end{proof}

\begin{remark}
As an alternative to the local representation of $G_{h,\varphi}$ from \cref{lem:zero-range}, one could have used an equivalent expression for $G_{h,\varphi}(\psi) =|h\cap \varphi|-|h\cap\psi|$ in terms of symmetric differences:
\begin{equation}\label{eq:G-def-2}
G_{h,\varphi}(\psi) = \frac12 |\varphi\xor\psi| - |(\varphi\xor\psi)\cap(\varphi \xor h)|\,.\end{equation}
To verify this equivalent formulation, observe that, for any sets of faces $h,\varphi,\psi$,
\begin{align}
|h\cap\psi| &= |\varphi\cap\psi| - |\varphi\setminus h| + |(\varphi\xor h)\cap(\varphi\xor\psi)| \nonumber\\
&= \frac12 (|\varphi| + |\psi|) - \frac12 |\varphi\xor\psi| - |\varphi\setminus h| + |(\varphi\xor h)\cap (\varphi\xor\psi)|\,.\label{eq:h-cap-psi}
\end{align}
(The first equality expressed $h\cap\psi$ as the union of $h\cap\varphi\cap\psi$ and $h\cap(\psi\setminus\varphi)$; the latter is $(\varphi\xor h)\cap(\psi\setminus\varphi)$, the former is $(\varphi\cap\psi) \setminus ((\varphi\cap\psi)\setminus h)$, i.e.,
$(\varphi\cap\psi) \setminus (\varphi\setminus h) \cup (\varphi\xor h)\cap (\varphi\setminus \psi)$. The second equality put $|\varphi\cap\psi|$ as $\frac12(|\varphi|+|\psi|-|\varphi\xor\psi|)$.)
When $|\psi|=|\varphi|$ --- the case at hand when $\varphi,\psi$ are both tilings --- the expression $\frac12(|\varphi|+|\psi|)-|\varphi\setminus h|$ in 
the right hand of \cref{eq:h-cap-psi} is nothing but $|\varphi\cap h|$, whence comparing it to \cref{eq:G-def} establishes \cref{eq:G-def-2}.
\end{remark}

\subsection{Bubble groups}\label{sec:bubble-groups}
\Cref{cor:psi-min-max-construction} allows us to elevate the notion of bubbles to bubble groups---addressing long-range interactions between bubbles---which will be used to show the locality of~$\pi$.
Recall the definition of $\pi$ from \cref{eq:pi-def}, and that the first term in the exponent, $H_h$, breaks into a sum over each of the bubbles: $H_h = \sum_\sB H(\sB)$ as per \cref{eq:bubble-H}. An analog of this for the next two terms in that exponent, $G_{h,\varphi}^\gr$ and $\log Z_\mu^\infty(h)$, readily follows from \cref{cor:psi-min-max-construction}, as $\psi^\sqcap \cap \psi^\sqcup$ belongs to every minimizer $\psi\in\Psi_h^\gr$, and elsewhere the minimization can be solved independently:

\begin{observation}\label{obs:bubble-grp-relations}
Given $h$ and $\varphi$, consider an equivalence relation on $(h,\varphi)$-bubbles where $\sB \sim\sB'$ if (perhaps not only if) $\Upsilon(\sB)$ and $\Upsilon(\sB')$ intersect a common connected component~$\cC$ of $\Upsilon(\psi^\sqcap \xor\psi^\sqcup)$. 
Then, referring to the resulting equivalence class as a \emph{bubble group} $\fB$ (consisting of its bubbles $\sB_i$ as well as the ``connector'' components $\cC_j$), by \cref{cor:psi-min-max-construction} one can write $G_{h,\varphi}^\gr=\sum_\fB G^\gr(\fB)$ for an appropriate function $G^\gr(\cdot)$ on bubble groups, and the same holds for $\log Z_\mu^\infty(h)$ and $\sV(h)$.
\end{observation}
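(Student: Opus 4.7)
My plan is to reduce all three decompositions to the structural description of the set of ground states $\Psi^\gr_h$ given by \cref{cor:psi-min-max-construction}, which says that every $\psi\in\Psi^\gr_h$ is obtained by gluing the fixed piece $\psi_0 := \psi^\sqcap \cap \psi^\sqcup$ with an \emph{independent} choice, in each connected component $\cC_i$ of $\Upsilon(\psi^\sqcap\setminus\psi_0)$, of a lozenge tiling of $\cC_i$ maximizing the intersection with $\Upsilon(h)$. A preliminary observation is that $\Upsilon(\psi^\sqcap\setminus\psi_0)$ and $\Upsilon(\psi^\sqcup\setminus\psi_0)$ cover exactly the same region of $\cP_{111}$ (namely the complement of $\Upsilon(\psi_0)$), so these two projected sets share the same connected components. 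These are the connectors $\cC_j$ of the bubble groups $\fB$ in the equivalence relation defined in the statement, and a bubble group is therefore identified with exactly one connector together with all bubbles whose projection meets it.

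For $G^\gr_{h,\varphi}$, I would evaluate the energy at $\psi^\sqcup$ and invoke \cref{lem:zero-range} to write $G^\gr_{h,\varphi} = \sum_{f\in \psi^\sqcup} g_{h,\varphi}(f)$. Splitting $\psi^\sqcup$ into $\psi_0$ and the pieces $\psi^\sqcup\restriction_{\cC_i}$, the sum over $\psi_0$ contributes only on faces $f$ whose projection meets a bubble (since off all bubbles one has $h=\varphi$ and then $g_{h,\varphi}(f)=0$ by its definition in the proof of \cref{lem:zero-range}); such a face is assigned to the unique bubble group containing that bubble. The remaining sums over $\psi^\sqcup\restriction_{\cC_i}$ are by construction local to the connector $\cC_i$, so they are assigned to the corresponding bubble group. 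Collecting these pieces defines $G^\gr(\fB)$ with the required property.

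For $\log Z^\infty_\mu(h)$, the independent-choice structure in \cref{cor:psi-min-max-construction} immediately factorizes
\[
Z^\infty_\mu(h) = \prod_i \#\{\psi_i \text{ tiling of }\cC_i : |\Upsilon(h)\cap \psi_i| \text{ is maximal}\}\,,
\]
and each factor depends only on data inside the bubble group that contains $\cC_i$, so $\log Z^\infty_\mu$ splits as a sum over bubble groups.

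The delicate point is $\sV(h)$, since it is defined via the connected components of $\psi_0\setminus h$ for a canonical minimizer $\psi_0\in\Psi^\gr_h$ chosen by the tie-breaker of \cref{prop:minimizers}. The step I expect to be the main obstacle is to show that every connected component $S_i$ of $\psi_0\setminus h$ is contained in the projection of a single bubble group. The key input is that, off the union of bubble groups, one has $h=\varphi$ and $\psi^\sqcap=\psi^\sqcup=\psi_0$ locally (otherwise the local alteration of $\psi^\sqcap,\psi^\sqcup$ produced by \cref{rem:gen-Psi-h} would contradict the extremality); hence $\psi_0$ coincides with $h$ outside bubble groups, so each $S_i$ lies inside one group. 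Once this localization is established, $\sV(h) = \sum_{\fB} \sum_{S_i \subset \fB} f(S_i)$ gives the desired decomposition. I would need to verify that the canonical tie-breaking rule in \cref{prop:minimizers} respects this local-restriction structure, which should follow because such tie-breakers are typically expressed via an extremal element like $\psi^\sqcap$ or $\psi^\sqcup$ that, by the same argument, equals $\varphi$ outside bubble groups.
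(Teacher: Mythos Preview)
Your argument for $\log Z_\mu^\infty(h)$ is correct and is essentially what the paper has in mind (the paper does not prove this observation separately; it just invokes \cref{cor:psi-min-max-construction}). The problem is with $G^\gr_{h,\varphi}$ and $\sV$, where you rely on the claim that $\psi_0$ (equivalently $\psi^\sqcup$) coincides with $\varphi$ off all bubble groups. Unpacking your assertion ``off all bubbles one has $h=\varphi$ and then $g_{h,\varphi}(f)=0$'': from the definition in the proof of \cref{lem:zero-range}, for $f\in\psi_0$ with $\Upsilon(f)$ outside every bubble one only gets $g_{h,\varphi}(f)=\one_{\{f\notin\varphi\}}$, so vanishing still requires $f\in\varphi$, i.e., $\psi_0=\varphi$ at that lozenge. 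Your later justification via ``local alteration \ldots\ would contradict the extremality'' does not close this gap: if $\psi_0\neq\varphi$ on a region adjacent to a bubble, swapping $\psi_0$ to $\varphi$ only on the off-bubble part would typically destroy monotonicity, so no contradiction with extremality arises.

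In fact this claim is \emph{false} for the minimal relation generated by the connectors $\Upsilon(\psi^\sqcap\xor\psi^\sqcup)$: the left panel of \cref{fig:bubble-grp-non-monotone} has $\psi^\sqcap=\psi^\sqcup$ (so there are no connectors, and each bubble is its own group), yet $\psi^\sqcup\neq\varphi$ on an interval strictly between the two bubbles. There the ``restrict to $\Upsilon(\fB)$'' recipe gives $G^\gr(\fB_1)=G^\gr(\fB_2)=0$ while $G^\gr_{h,\varphi}<0$, so the additive decomposition fails. This is precisely why the paper immediately coarsens the relation in \cref{def:bubble-group}: with connectors taken as components of $\Upsilon(\psi^{+,\sqcup}\xor\psi^{-,\sqcap})$ one obtains $\Upsilon\big((\varphi\xor\psi^\sqcap)\cup(\varphi\xor\psi^\sqcup)\big)\subseteq\Upsilon(\delta)$ (the parenthetical after \cref{cor:legal-equiv-rel}), which is exactly the inclusion your argument needs. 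Read the phrase ``perhaps not only if'' in the observation as a signal that the equivalence relation must be chosen coarse enough for the decomposition to hold; it is not a claim about the minimal relation. (A minor side point: a bubble group need not be ``exactly one connector together with its bubbles''---it may contain several connectors linked through a shared bubble, or none at all.)
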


In \cref{sec:2-out-of-3}, we proved the weak locality of $\mu$ and $\nu$ after evaluating the effect of deleting a single bubble $\sB$ as part of a Metropolis/Glauber dynamics. If we are to mimic this for a bubble group~$\fB$ via \cref{obs:bubble-grp-relations}, we can only hope to gain energy from its bubbles (rather than other faces of the connector components $\cC_j$); it is then natural to adopt the minimal choice of having $\sB\sim\sB'$ if \emph{and only if} they intersect a common component of $\Upsilon(\psi^\sqcap \xor \psi^\sqcup)$. Unfortunately, that equivalence relation is not monotone w.r.t.\ the deleting of bubble groups (see \cref{fig:bubble-grp-non-monotone})---so deleting~$\fB_1$ and then $\fB_2$ in the dynamics might be permitted, whereas deleting $\fB_2$ followed by $\fB_1$ might not be...

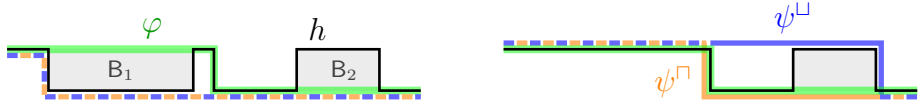
\begin{figure}
    \begin{tikzpicture}
    \begin{scope}[scale=0.55]

    \filldraw[fill=gray!15,draw=none] (1,1)--(4.5,1)--(4.5,0)--(1,0)--cycle;
    %\filldraw[fill=green!10,draw=none] (5,1)--(6,1)--(6,0)--(5,0)--cycle;
    \filldraw[fill=gray!15,draw=none] (7,1)--(9,1)--(9,0)--(7,0)--cycle;
    
    \draw [line width=3pt,green,opacity=0.5] (0,1)--(5,1)--(5,0)--(10,0);
    
    \draw [line width=1pt,black] (0,1)--(1,1)--(1,0)--(4.5,0)--(4.5,1)--(5,1)--(5,0)--(7,0)--(7,1)--(9,1)--(9,0)--(10,0);

    % \draw [line width=2pt,blue!60] (0,.9)--(.9,.9)--(.9,-0.1)--(10,-0.1);
    % \draw [line width=2pt,orange!40] (0,.8)--(.9,.8)--(.9,-0.2)--(10,-0.2);
    \draw [line width=2pt,blue!60,
    dash pattern= on 4pt off 6pt,postaction={draw,orange!60,dash pattern= on 4pt off 6pt,dash phase=5pt,line width=2pt}] (0,.85)--(.9,.85)--(.9,-0.15)--(10,-0.15);

    \node[green!60!black,font=\large] at (3.5,1.5) {$\varphi$};
    \node[font=\large] at (7.5,1.5) {$h$};

    \node[gray!50!black,font=\footnotesize] at (2.75,0.5) {$\sB_1$};
    \node[gray!50!black,font=\footnotesize] at (8.15,0.5) {$\sB_2$};
    
    \end{scope}

    \begin{scope}[scale=0.55,shift={(12,0)}]

    %\filldraw[fill=green!10,draw=none] (5,1)--(6,1)--(6,0)--(5,0)--cycle;
    \filldraw[fill=gray!15,draw=none] (7,1)--(9,1)--(9,0)--(7,0)--cycle;

    \draw [line width=2pt,blue!60,
    dash pattern= on 4pt off 6pt,postaction={draw,orange!60,dash pattern= on 4pt off 6pt,dash phase=5pt,line width=2pt}] (0,1.15)--(4.9,1.15);

    \draw [line width=2pt,blue!60,
    dash pattern= on 4pt off 6pt,postaction={draw,orange!60,dash pattern= on 4pt off 6pt,dash phase=5pt,line width=2pt}] (9.1,-0.15)--(10,-0.15);
    
    \draw [line width=2pt,blue!60] (5,1.15)--(9.13,1.15)--(9.13,-0.15);
    
    \draw [line width=2pt,orange!60] (4.85,1.15)--(4.85,-0.15)--(9.1,-0.15);
        
    \draw [line width=3pt,green,opacity=0.5] (0,1)--(5,1)--(5,0)--(10,0);
    
    \draw [line width=1pt,black] (0,1)--(5,1)--(5,0)--(7,0)--(7,1)--(9,1)--(9,0)--(10,0);

    \node[orange,font=\large] at (4.1,0.2) {$\psi^\sqcap$};
    \node[blue,font=\large] at (7,1.75) {$\psi^\sqcup$};
    
    \end{scope}
\end{tikzpicture}
\vspace{-0.15in}\caption{Non-monotonicity of $\psi^\sqcap \xor \psi^\sqcup$ under bubble deletion: deleting the $(h,\varphi)$-bubble $\sB_1$ on the left (where $\psi^\sqcup = \psi^\sqcap$) introduces new faces in $\psi^\sqcup \setminus (\psi^\sqcap \cup \varphi)$ on the right.
}
\label{fig:bubble-grp-non-monotone}
\end{figure}

We emphasize that, unlike the definition of a bubble $\sB$, which relied on one fixed equivalence relation (adjacency of plaquettes in $\Z^3$), the difficulty here is that the relation $\sim$ in the definition of a bubble group $\fB$ must be a function of $h$ (taking into account $\psi^\sqcap \cap \psi^\sqcup$); a given collection of bubbles may, or may not, be categorized as one complete bubble group, depending on their exterior. 
We wish to be in a position where, if $h$ is obtained from $\hat h$ by deleting a set of bubbles $\{\sB_i\}$, which formed one or more complete bubble groups as per the notion of bubble groups in $(\hat h,\varphi)$, then the same holds also for the notion of bubble groups in $(h,\varphi)$. To this end, we must increase the set of plaquettes used in \cref{obs:bubble-grp-relations} to relate $\sB,\sB'$ beyond the minimal requirement $\psi^\sqcap\xor\psi^\sqcup$ (and, by doing so, inevitably increase the entropy of bubble groups and their long-range interactions). One may take $(\varphi\xor\psi^\sqcap) \cup (\varphi\xor\psi^\sqcup)$ (a super-set of $\psi^\sqcap \xor\psi^\sqcup$), but this fails too (again  see \cref{fig:bubble-grp-non-monotone}).
As it turns out, a closer inspection of $(h,\varphi)$-bubbles, accounting for their ``positive'' and ``negative'' portions (``above~$\varphi$''/``below $\varphi$'') will support a notion of bubble groups with the sought properties. 

\begin{figure}
    \begin{tikzpicture}
    \begin{scope}[scale=0.55,shift={(8,0)}]

    \filldraw[fill=gray!15,draw=none] (1,1)--(4.5,1)--(4.5,0)--(1,0)--cycle;
    %\filldraw[fill=green!10,draw=none] (5,1)--(6,1)--(6,0)--(5,0)--cycle;
    \filldraw[fill=gray!15,draw=none] (7,1)--(9,1)--(9,0)--(7,0)--cycle;
    
    \draw [line width=3pt,green,opacity=0.5] (0,1)--(5,1)--(5,0)--(10,0);
    
    \draw [line width=1pt,black] (0,1)--(1,1)--(1,0)--(4.5,0)--(4.5,1)--(5,1)--(5,0)--(7,0)--(7,1)--(9,1)--(9,0)--(10,0);

    % \draw [line width=2pt,blue!60] (0,.9)--(.9,.9)--(.9,-0.1)--(10,-0.1);
    % \draw [line width=2pt,orange!40] (0,.8)--(.9,.8)--(.9,-0.2)--(10,-0.2);
    
    \node[green!60!black,font=\large] at (3.5,1.5) {$\varphi$};
    \node[font=\large] at (7.5,1.5) {$h$};

    \node[gray!50!black,font=\footnotesize] at (2.75,0.5) {$\sB_1$};
    \node[gray!50!black,font=\footnotesize] at (8.15,0.5) {$\sB_2$};
    
    \end{scope}

    \begin{scope}[scale=0.55,shift={(0,-2.75)}]
    %\filldraw[fill=green!10,draw=none] (5,1)--(6,1)--(6,0)--(5,0)--cycle;
    %\filldraw[fill=gray!15,draw=none] (7,1)--(9,1)--(9,0)--(7,0)--cycle;

    \draw [line width=2pt,blue!60,
    dash pattern= on 4pt off 6pt,postaction={draw,orange!60,dash pattern= on 4pt off 6pt,dash phase=5pt,line width=2pt}] (0,1.15)--(1.13,1.15)--(1.13,-0.15)--(10,-0.15);
        
    \draw [line width=3pt,green,opacity=0.5] (0,1)--(5,1)--(5,0)--(10,0);
    
    \draw [line width=1pt,black] (0,1)--(1,1)--(1,0)--(4.5,0)--(4.5,1)--(4.5,1)--(5,1)--(5,0)--(7,0);
    %(7,1)--(9,1)--(9,0);
    \draw [line width=1pt,black] (9,0)--(10,0);

    \node[font=\large] at (5,-0.8) {${\color{orange}\psi^{-,\sqcap}}={\color{blue}\psi^{-,\sqcup}}$};
    \node[black,font=\large] at (8.75,1) {$h\cap \mathbb{H}^-_\varphi$};
    
    \end{scope}

    \begin{scope}[scale=0.55,shift={(13,-2.75)}]

   %\filldraw[fill=green!10,draw=none] (5,1)--(6,1)--(6,0)--(5,0)--cycle;
    %\filldraw[fill=gray!15,draw=none] (7,1)--(9,1)--(9,0)--(7,0)--cycle;

    \draw [line width=2pt,blue!60,
    dash pattern= on 4pt off 6pt,postaction={draw,orange!60,dash pattern= on 4pt off 6pt,dash phase=5pt,line width=2pt}] (0,1.15)--(4.9,1.15);

    \draw [line width=2pt,blue!60,
    dash pattern= on 4pt off 6pt,postaction={draw,orange!60,dash pattern= on 4pt off 6pt,dash phase=5pt,line width=2pt}] (9.1,-0.15)--(10,-0.15);
    
    \draw [line width=2pt,blue!60] (5,1.15)--(9.13,1.15)--(9.13,-0.15);
    
    \draw [line width=2pt,orange!60] (4.85,1.15)--(4.85,-0.15)--(9.1,-0.15);
        
    \draw [line width=3pt,green,opacity=0.5] (0,1)--(5,1)--(5,0)--(10,0);
    
    \draw [line width=1pt,black] (0,1)--(1,1);
    %(1,0)--(4.5,0)--(4.5,1);
    \draw [line width=1pt,black] (4.5,1)--(5,1)--(5,0)--(7,0)--(7,1)--(9,1)--(9,0)--(10,0);

    \node[orange,font=\large] at (3.75,0.2) {$\psi^{+,\sqcap}$};
    \node[blue,font=\large] at (6.25,1.8) {$\psi^{+,\sqcup}$};
    \node[black,font=\large] at (10.75,1) {$h\cap \mathbb{H}^+_\varphi$};

    \end{scope}
\end{tikzpicture}
\vspace{-0.1in}\caption{Illustration of $\psi^{\pm,\sqcup}$ and $\psi^{\pm,\sqcap}$  in \cref{def:bubble-group} for the pair $(h, \varphi)$ from \cref{fig:bubble-grp-non-monotone}. Note the hole in $h \cap \mathbb{H}_\varphi^-$ (resp.\ $h\cap \mathbb{H}_\varphi^+$) instead of $\sB_2$ (resp.\ $\sB_1$). The corresponding bubble group then goes from the leftmost point of $\sB_1$ to the rightmost point of $\sB_2$.
}
\label{fig:bubble-grp-illustration}
\end{figure}

\begin{definition}[Bubble group]\label{def:bubble-group}
Given $h$ and $\varphi$, let $\mathbb{H}_\varphi^+$ denote the upper half-space of $\Z^3$ delimited by the faces of $\varphi$ viewed as a full-plane periodic height function (including the faces of $\varphi$ itself), and define the lower half-space $\mathbb{H}_\varphi^-$ analogously (again, including the faces of $\varphi$ itself). Set
\[ G_{h,\varphi}^+(\psi) := |h\cap \varphi| - |h\cap\psi \cap \mathbb{H}_\varphi^+|\quad,\quad 
G_{h,\varphi}^-(\psi) := |h\cap \varphi| - |h\cap\psi \cap \mathbb{H}_\varphi^-|\,.\]
Let $\psi^{\pm,\sqcup}$ and $\psi^{\pm,\sqcap}$ be the unique maximal and minimal elements of the minimizers of $G_{h,\varphi}^\pm$, as in \cref{prop:minimizers} (illustrated in \cref{fig:bubble-grp-illustration}) with the generalized form of \cref{rem:gen-Psi-h} in mind, and define
\[ \delta=\delta(h,\varphi):= \psi^{+,\sqcup}\xor \psi^{-,\sqcap}\,.\]
We say $\sB \sim \sB'$ if both $\Upsilon(\sB)$ and $\Upsilon(\sB')$ intersect a common connected component~$\cC$ of $ \Upsilon(\delta)$. A bubble group $\fB=
(\{\sB_i\},\{\cC_j\})$ is a maximal connected component of bubbles $\sB_i$ w.r.t.\ this adjacency relation, joined by every connected component $\cC_j$ of $\Upsilon(\delta)$ intersecting any of them.
\end{definition}

This definition is fairly intricate; to establish that it meets our requirements we must verify:
\begin{enumerate}[(i)]
\item{}[\emph{valid equivalence relation as \cref{obs:bubble-grp-relations} requires}] \label{it:legal-relation} ~$\Upsilon(\psi^\sqcap\xor\psi^\sqcup)\subseteq \Upsilon(\delta)$.
\item{}[\emph{entropy is controlled via the energy}] \label{it:entropy-relation} ~$\sum |\cC_j| \leq c_0 \sum |\sB_i|$ for every bubble group $\fB$.
\item{}[\emph{monotonicity}] \label{it:monotone-relation} ~If $h$ is obtained from $\hat h$ by deleting a bubble group $\fB$, then $\Upsilon(\delta) \subseteq \Upsilon(\hat\delta)$.
\end{enumerate}

We begin by showing  $\psi^{+,\sqcup}$ and $\psi^{-,\sqcap}$ always sandwich $\varphi,\psi^\sqcap,\psi^\sqcup$ between them. Note that in our notation, $\psi_1 \subset \mathbb{H}_{\psi_2}^+$ as a collection of plaquettes iff $\psi_1 \geq \psi_2$ as a height function on $\Lambda_N$ w.r.t.\ $\cP_{001}$. 

\begin{claim}\label{clm:psi+cup-above-phi}
For every $h$ and $\varphi$	one has 
$\psi^{+,\sqcup} \geq \varphi$ and, symmetrically, $\psi^{-,\sqcap} \leq \varphi$.
%$\psi^{+,\sqcup} \subset \mathbb{H}_\varphi^+$ and, symmetrically, $\psi^{-,\sqcap} \subset \mathbb{H}_\varphi^-$.
\end{claim}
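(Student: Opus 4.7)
The plan is to establish the stronger statement that for \emph{any} tiling $\psi$,
\[
G^+_{h,\varphi}(\psi \vee \varphi) \;\leq\; G^+_{h,\varphi}(\psi)\,.
\]
Once this is in hand, applying it to $\psi = \psi^{+,\sqcup}$ shows that $\psi' := \psi^{+,\sqcup} \vee \varphi$ is again a minimizer of $G^+_{h,\varphi}$; since $\psi' \geq \psi^{+,\sqcup}$ as height functions (w.r.t.\ $\cP_{001}$), the maximality in the definition of $\psi^{+,\sqcup}$ forces $\psi' = \psi^{+,\sqcup}$, i.e., $\psi^{+,\sqcup} \geq \varphi$. The companion statement $\psi^{-,\sqcap} \leq \varphi$ is entirely symmetric, obtained by proving $G^-_{h,\varphi}(\psi \wedge \varphi) \leq G^-_{h,\varphi}(\psi)$ and invoking the minimality in the definition of $\psi^{-,\sqcap}$.

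Writing $G^+_{h,\varphi}(\psi) = |h\cap\varphi| - |h\cap\psi\cap\mathbb{H}_\varphi^+|$, the inequality above reduces to
\[
\bigl|h\cap(\psi\vee\varphi)\cap\mathbb{H}_\varphi^+\bigr| \;\geq\; \bigl|h\cap\psi\cap\mathbb{H}_\varphi^+\bigr|\,,
\]
which in turn follows from the set-theoretic containment $\psi\cap\mathbb{H}_\varphi^+ \subseteq \psi\vee\varphi$ (noting that every face of $\psi\vee\varphi$ is automatically inside $\mathbb{H}_\varphi^+$, since $\psi\vee\varphi \geq \varphi$ as a height function).

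The technical core is thus to verify this plaquette-level containment. For a horizontal face $f \in \psi$ at column $(x_1,x_2)$ and height $k = \psi(x_1,x_2)$, the hypothesis $f \in \mathbb{H}_\varphi^+$ reads $k \geq \varphi(x_1,x_2)$, whence $(\psi\vee\varphi)(x_1,x_2) = k$ and $f \in \psi\vee\varphi$. For a vertical face $f$ at an edge $e = (x,y)$ at integer height $k$, one exploits that $\psi$ and $\varphi$ have the same orientation along $e$ (both monotone decreasing in the $(1,0)$- and $(0,1)$-directions, so we may assume $\psi(x) \geq \psi(y)$ and $\varphi(x) \geq \varphi(y)$): then $f \in \psi$ gives $\psi(y) \leq k < \psi(x)$, and $f \in \mathbb{H}_\varphi^+$ gives $k \geq \min(\varphi(x),\varphi(y)) = \varphi(y)$. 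Taking maxima yields
\[
\max(\psi(y),\varphi(y)) \;\leq\; k \;<\; \max(\psi(x),\varphi(x))\,,
\]
so that $f$ is a vertical face of $\psi\vee\varphi$ at $e$ as well.

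The main source of care (though not a deep obstacle) is the plaquette-level bookkeeping for vertical faces, where one must translate between the ``set of plaquettes'' and ``height function'' descriptions and use the common monotonicity of $\psi$ and $\varphi$ at each edge to exclude the possibility that the $\vee$-operation destroys a vertical face of $\psi$ sitting above $\varphi$. Horizontal faces and the overall logic via maximality are immediate once the set-containment has been formulated correctly.
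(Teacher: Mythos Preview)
Your proof is correct and uses essentially the same idea as the paper's: faces of $\psi$ lying strictly below $\varphi$ contribute nothing to $-G^+_{h,\varphi}$, so raising those portions up to $\varphi$ can only improve (or preserve) the energy, and then maximality of $\psi^{+,\sqcup}$ forces $\psi^{+,\sqcup}\geq\varphi$. The paper phrases this as a contradiction argument, taking a single connected component $\cC$ of $\psi^{+,\sqcup}$ below $\varphi$ and swapping it for the corresponding piece of $\varphi$, whereas you do it globally and directly by proving $G^+_{h,\varphi}(\psi\vee\varphi)\leq G^+_{h,\varphi}(\psi)$ via the plaquette-level containment $\psi\cap\mathbb{H}_\varphi^+\subseteq\psi\vee\varphi$; your formulation is arguably cleaner and yields the slightly stronger general inequality, but the content is the same.
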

\begin{proof}
Suppose that $\cC$ is a nonempty (maximal) connected component of faces of $\psi^{+,\sqcup} \cap (\Z^3 \setminus \mathbb{H}_{\varphi}^+)$. Since $\psi^{+,\sqcup}$ and $\varphi$ induce two lozenge tilings of the region $S=\Upsilon(\cC)$ with boundary tiles agreeing with~$\varphi$, we can swap the two tilings in $S$: let $\psi'$ be $\psi^{+,\sqcup}$ outside of $S$ and $\varphi$ inside. By definition, $G_{h,\varphi}^+$ does not reward faces in~$\cC$, so $G_{h,\varphi}^+(\psi') \leq G_{h,\varphi}^+(\psi^{+,\sqcup})$. If the set of faces $\cC'$ replacing $\cC$ in $\psi'$ contains a face of~$h$, then the inequality is strict, contradicting the fact that $\psi^{+,\sqcup}$ is a minimizer. Regardless, $ \psi' \not\leq \psi^{+,\sqcup}$,
contradicting the fact that $\psi^{+,\sqcup}$ is a maximal element of the minimizers.
\end{proof}

\begin{claim}\label{clm:psi+cup-above-psicup}
For every $h$ and $\varphi$	one has $\psi^{+,\sqcup} \geq \psi^\sqcup$ and, symmetrically, $\psi^{-,\sqcap} \leq \psi^\sqcap$.
%$\psi^{+,\sqcup} \subset \mathbb{H}_{\psi^\sqcup}^+$ and, symmetrically, $\psi^{-,\sqcap} \subset \mathbb{H}_{\psi^\sqcap}^-$.
\end{claim}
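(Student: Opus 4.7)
My plan is to exploit a modular identity for the zero-range energy $G_{h,\varphi}$ of \cref{lem:zero-range}: for arbitrary tilings $\psi_1,\psi_2$ (not necessarily minimizers), I will argue that
\[
G_{h,\varphi}(\psi_1) + G_{h,\varphi}(\psi_2) \;=\; G_{h,\varphi}(\psi_1 \vee \psi_2) + G_{h,\varphi}(\psi_1 \wedge \psi_2).
\]
This is equivalent to $|h\cap\psi_1|+|h\cap\psi_2|=|h\cap(\psi_1\vee\psi_2)|+|h\cap(\psi_1\wedge\psi_2)|$, which one reads off by summing the set-theoretic decompositions \cref{eq:G-h-phi-psi1,eq:G-h-phi-psi1-min-psi2} together with their $\psi_1\leftrightarrow\psi_2$ analogs; crucially, those decompositions only use that each tiling covers each black triangle exactly once and do \emph{not} rely on any minimizer hypothesis (equivalently, it is the additivity of the zero-range functional $\psi\mapsto|h\cap\psi|$ under the lattice operations $\vee,\wedge$).

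Given this, I would apply the identity to $\psi_1=\psi^{+,\sqcup}$ and $\psi_2=\psi^{\sqcup}$. Since $\psi^{\sqcup}$ globally minimizes $G_{h,\varphi}$ we have $G_{h,\varphi}(\psi^{\sqcup})\leq G_{h,\varphi}(\psi^{+,\sqcup}\wedge\psi^{\sqcup})$, so the identity rearranges to
\[
G_{h,\varphi}(\psi^{+,\sqcup}\vee\psi^{\sqcup}) \;\leq\; G_{h,\varphi}(\psi^{+,\sqcup}).
\]
Next, I invoke \cref{clm:psi+cup-above-phi} to get $\psi^{+,\sqcup}\geq\varphi$, so $\psi^{+,\sqcup}\vee\psi^{\sqcup}\geq\varphi$ as well. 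For any tiling that lies (weakly) above $\varphi$, every one of its plaquettes sits inside $\mathbb{H}_\varphi^+$, and hence $G^+_{h,\varphi}$ agrees with $G_{h,\varphi}$ on such tilings; substituting, the previous inequality upgrades to
\[
G_{h,\varphi}^{+}(\psi^{+,\sqcup}\vee\psi^{\sqcup}) \;\leq\; G_{h,\varphi}^{+}(\psi^{+,\sqcup}),
\]
exhibiting $\psi^{+,\sqcup}\vee\psi^{\sqcup}$ as itself a minimizer of $G^+_{h,\varphi}$. Unique maximality of $\psi^{+,\sqcup}$ among such minimizers then forces $\psi^{+,\sqcup}\vee\psi^{\sqcup}=\psi^{+,\sqcup}$, i.e.\ $\psi^{+,\sqcup}\geq\psi^{\sqcup}$. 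The companion inequality $\psi^{-,\sqcap}\leq\psi^{\sqcap}$ will follow from the same template after swapping $\vee\leftrightarrow\wedge$, ``maximal''$\leftrightarrow$``minimal,'' and $\mathbb{H}_\varphi^+\leftrightarrow\mathbb{H}_\varphi^-$.

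The only genuine subtlety I foresee is the set-theoretic bookkeeping behind the statement that any tiling $\psi\geq\varphi$ is entirely contained in $\mathbb{H}_\varphi^+$: for horizontal faces of $\psi$ this is immediate, while for vertical faces one must verify that the stack of vertical plaquettes of $\psi$ between two neighbouring columns lies pointwise above the corresponding stack for $\varphi$, which is immediate from monotonicity of both surfaces together with the definition of $\mathbb{H}_\varphi^+$ as the upper half-space delimited by $\varphi$. Once this is in hand, the proof consists entirely of the two-line manipulation above, and no further input beyond \cref{prop:minimizers,clm:psi+cup-above-phi} is required.
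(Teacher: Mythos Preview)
Your proof is correct and takes a genuinely different route from the paper's. The paper argues by contradiction: it picks a single connected component $\cC$ where $\psi^{+,\sqcup}$ dips strictly below $\psi^\sqcup$, swaps in the $\psi^\sqcup$ tiling on $\Upsilon(\cC)$ to obtain $\psi'$, checks (via \cref{clm:psi+cup-above-phi}) that $\psi'\subset\mathbb{H}_\varphi^+$, and then uses the global optimality of $\psi^\sqcup$ (locally, on $\Upsilon(\cC)$) to conclude that $\psi'$ is again a minimizer of $G^+_{h,\varphi}$, contradicting maximality of $\psi^{+,\sqcup}$. Your approach instead isolates the \emph{modular identity} $|h\cap\psi_1|+|h\cap\psi_2|=|h\cap(\psi_1\vee\psi_2)|+|h\cap(\psi_1\wedge\psi_2)|$ (valid for arbitrary tilings, as it is just the sum of \cref{eq:G-h-phi-psi1,eq:G-h-phi-psi1-min-psi2} with their $\psi_1\leftrightarrow\psi_2$ analogs) and combines it with the minimality of $\psi^\sqcup$ for $G_{h,\varphi}$ to get $G_{h,\varphi}(\psi^{+,\sqcup}\vee\psi^\sqcup)\leq G_{h,\varphi}(\psi^{+,\sqcup})$ in one line; the remaining step, that $G^+_{h,\varphi}$ and $G_{h,\varphi}$ agree on tilings $\geq\varphi$, is exactly the observation the paper records just before \cref{clm:psi+cup-above-phi} (``$\psi_1\subset\mathbb{H}_{\psi_2}^+$ as plaquettes iff $\psi_1\geq\psi_2$ as a height function''), so your ``subtlety'' is already handled. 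Your argument is cleaner and avoids the component-by-component swap; the paper's version has the minor advantage of being self-contained without explicitly stating the modular identity, but both rely on the same ingredients (\cref{prop:minimizers}/\cref{rem:gen-Psi-h} and \cref{clm:psi+cup-above-phi}).
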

\begin{proof}
As in the proof of \cref{clm:psi+cup-above-phi}, suppose that $\cC$ is a nonempty (maximal) connected component of faces of $\psi^{+,\sqcup} \cap (\Z^3 \setminus \mathbb{H}_{\psi^\sqcup}^+)$, and let $\psi'$ be the tiling obtained by swapping $\cC$ by $\cC'$ corresponding to the tiling of $S=\Upsilon(\cC)$ by $\psi^\sqcup$. Since we know by \cref{clm:psi+cup-above-phi} that $\psi^{+,\sqcup}\geq \varphi$, we also have $\cC'\subset \mathbb{H}_\varphi^+$; thus, every face of $h\cap \psi'$ is rewarded by $G_{h,\varphi}^+$, and since it is the maximum (by definition of $\psi^\sqcup$) we find that $\psi'$ is a minimizer of $G_{h,\varphi}^+$, a contradiction to $\psi^{+,\sqcup}$ being a maximal element. 
\end{proof}

Combining the preceding two claims, every face of $\psi^{+,\sqcup}\cap \psi^{-,\sqcap}$ must also belong to $\psi^\sqcap,\psi^\sqcup,\varphi$ (e.g., it belongs to $\mathbb{H}_\varphi^+\cap \mathbb{H}_\varphi^- = \varphi$, and one argues similarly for $\psi^\sqcup \geq \psi^\sqcap$), giving \cref{it:legal-relation} as follows:

\begin{corollary}\label{cor:legal-equiv-rel}
For every $h$ and $\varphi$ one has $\Upsilon(\psi^\sqcup \xor \psi^\sqcap) \subseteq \Upsilon( \psi^{+,\sqcup} \xor \psi^{-,\sqcap})$.
\end{corollary}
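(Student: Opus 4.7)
The plan is to establish the contrapositive via a short sandwich argument for monotone surfaces. Suppose $\ell$ is a lozenge with $\ell \notin \Upsilon(\psi^{+,\sqcup} \xor \psi^{-,\sqcap})$. Because every lozenge is the $\Upsilon$-projection of a unique plaquette in any monotone surface, this means $\psi^{+,\sqcup}$ and $\psi^{-,\sqcap}$ share a common plaquette $f$ with $\Upsilon(f) = \ell$. It then suffices to prove $f \in \psi^\sqcap \cap \psi^\sqcup$, for this yields $\ell \notin \Upsilon(\psi^\sqcap \xor \psi^\sqcup)$.

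First I would combine \cref{clm:psi+cup-above-phi,clm:psi+cup-above-psicup} (both the stated form and their symmetric analogues for $\psi^{-,\sqcap}$), together with the immediate relation $\psi^\sqcap \leq \psi^\sqcup$ coming from the definitions of the minimal and maximal elements of $\Psi_h^\gr$, to assemble the chain (viewing everything as height functions on $\Lambda_N$ in the $\cP_{001}$ convention):
\[
\psi^{-,\sqcap} \;\leq\; \psi^\sqcap \;\leq\; \psi^\sqcup \;\leq\; \psi^{+,\sqcup}.
\]

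The core step is a \emph{sandwich lemma} for monotone surfaces: if $\chi_1 \leq \chi' \leq \chi_2$ are three monotone surfaces (as $\cP_{001}$ height functions) and $f \in \chi_1 \cap \chi_2$, then $f \in \chi'$. The verification is a brief case split on the orientation of $f$. If $f$ is horizontal at height $k$ above a face $x$ of $\Lambda_N$, then $\chi_1(x) = k = \chi_2(x)$ forces $\chi'(x) = k$, so $f \in \chi'$. If $f$ is vertical at level $k$ separating adjacent faces $x_1, x_2$ (ordered so that any monotone surface containing $f$ satisfies $\chi(x_1) \geq k > \chi(x_2)$), then $\chi_1(x_1) \geq k$ and $\chi_2(x_2) < k$; transitivity of the inequality gives $\chi'(x_1) \geq k$ and $\chi'(x_2) < k$, and hence $f \in \chi'$.

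Applying this lemma twice --- once with $(\chi_1,\chi',\chi_2) = (\psi^{-,\sqcap}, \psi^\sqcap, \psi^{+,\sqcup})$ and once with $(\psi^{-,\sqcap}, \psi^\sqcup, \psi^{+,\sqcup})$ --- places the shared plaquette $f$ inside both $\psi^\sqcap$ and $\psi^\sqcup$, which is exactly what was needed. I do not anticipate any real obstacle; the only bookkeeping concern is keeping straight that the height-function ordering in the $\cP_{001}$ convention corresponds to inclusion of upper half-spaces in $\Z^3$, so that the sandwich lemma can translate shared plaquettes across a sandwiched surface.
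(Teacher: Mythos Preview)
Your proof is correct and follows essentially the same approach as the paper: the paper also combines \cref{clm:psi+cup-above-phi,clm:psi+cup-above-psicup} to sandwich $\psi^\sqcap,\psi^\sqcup$ (and $\varphi$) between $\psi^{-,\sqcap}$ and $\psi^{+,\sqcup}$, and then observes that any plaquette in $\psi^{+,\sqcup}\cap\psi^{-,\sqcap}$ must lie in every surface between them. Your write-up simply spells out the sandwich lemma more explicitly than the paper, which compresses it to a parenthetical remark.
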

(In fact, we established the stronger statement $\Upsilon\left((\varphi \xor \psi^\sqcap) \cup (\varphi \xor \psi^\sqcup)\right)\subseteq \Upsilon(\psi^{+,\sqcup} \xor \psi^{-,\sqcap})$.)
The following simple claim will readily establish \cref{it:entropy-relation}.

\begin{claim}\label{clm:area-of-bubble-grp} % claim in p30
For every bubble group $\fB=(\{\sB_i\},\{\cC_j\})$ we have $|\varphi\xor\psi^{+,\sqcup} | \leq 2 \sum_i |\sB_i|$, and the same bound holds  (symmetrically) for
$|\varphi\xor\psi^{-,\sqcap}|$.
\end{claim}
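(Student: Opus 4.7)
The plan is to first localize the symmetric difference to a single bubble group, then use that $\varphi$ and $\psi^{+,\sqcup}$ tile a common region there to halve the problem, and finally bound the one-sided difference using the extremal property of $\psi^{+,\sqcup}$ together with the bubble structure of $h \xor \varphi$.

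By \cref{cor:legal-equiv-rel} (and the stronger inclusion noted in its parenthetical), $\Upsilon(\varphi \xor \psi^{+,\sqcup}) \subseteq \Upsilon(\delta)$, so every plaquette of $\varphi \xor \psi^{+,\sqcup}$ projects into a connector component of some bubble group. Distinct bubble groups occupy disjoint portions of $\cP$, so it suffices to prove the bound restricted to a single bubble group $\fB$; fix one and write $|\cdot|_\fB$ for the restriction to the plaquettes projecting into $\bigcup_j \cC_j$. Outside $\fB$ the tilings $\varphi$ and $\psi^{+,\sqcup}$ coincide, so their restrictions to $\bigcup_j \cC_j$ are two lozenge tilings of the same region with matching boundary tiles; hence $|\varphi|_\fB = |\psi^{+,\sqcup}|_\fB$, and $|\psi^{+,\sqcup} \setminus \varphi|_\fB = |\varphi \setminus \psi^{+,\sqcup}|_\fB$, so that
\[
|\varphi \xor \psi^{+,\sqcup}|_\fB \;=\; 2\, |\psi^{+,\sqcup} \setminus \varphi|_\fB,
\]
reducing the task to proving $|\psi^{+,\sqcup} \setminus \varphi|_\fB \leq \sum_i |\sB_i|$.

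For this one-sided bound, I would charge each $f \in \psi^{+,\sqcup} \setminus \varphi$ inside $\fB$ to an $h$-plaquette in some $\sB_i$. If $f \in h$, then $f \in h \setminus \varphi$ lies in a bubble $\sB_i \subset \fB$ and we charge $f$ to itself. Otherwise $f \notin h$, and the maximality of $\psi^{+,\sqcup}$ among $G^+_{h,\varphi}$-minimizers rules out swapping a peelable neighborhood containing $f$ to $\varphi$'s tiling without strictly losing an $h$-match (any preserving swap would produce a strictly smaller minimizer, contradicting maximality, exactly as in the proofs of \cref{clm:psi+cup-above-phi,clm:psi+cup-above-psicup}). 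Processing the $3$D sandwich $V$ between $\varphi$ and $\psi^{+,\sqcup}$ (well-defined since $\psi^{+,\sqcup} \geq \varphi$ by \cref{clm:psi+cup-above-phi}) cube-by-cube in a legal top-down order materializes these failed swaps and charges each such $f$ to a distinct $h$-plaquette living in some $\sB_i \subset \fB$. Summing the charges gives the claimed bound. The symmetric inequality for $\psi^{-,\sqcap}$ follows by reversing the roles of $\mathbb{H}_\varphi^{\pm}$ and max/min throughout.

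The technical heart is organizing the cube-by-cube charging so that (i)~each peel step is a legal cube-flip on the current intermediate tiling while staying $\geq \varphi$, (ii)~each peel strictly loses a fresh $h$-plaquette of $\psi^{+,\sqcup}$ inside $\fB$, and (iii)~the same $h$-plaquette is never charged twice. The definition of bubble groups in \cref{def:bubble-group} via the symmetric envelope $\delta = \psi^{+,\sqcup} \xor \psi^{-,\sqcap}$ is tailored precisely so that this injection stays within $\fB$; the factor of $2$ in the claimed bound is the unavoidable cost of halving from the two-sided symmetric difference to the one-sided difference, with any geometric slack in the charging absorbed trivially.
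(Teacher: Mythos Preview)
Your charging argument contains a logical error. You invoke ``the maximality of $\psi^{+,\sqcup}$ among $G^+_{h,\varphi}$-minimizers'' to argue that any downward swap toward $\varphi$ must strictly lose an $h$-match, writing that ``any preserving swap would produce a strictly smaller minimizer, contradicting maximality.'' But a strictly \emph{smaller} minimizer does not contradict $\psi^{+,\sqcup}$ being the \emph{maximal} minimizer---smaller minimizers are perfectly allowed. The arguments in \cref{clm:psi+cup-above-phi,clm:psi+cup-above-psicup} run the other way: they swap a piece of $\psi^{+,\sqcup}$ \emph{upward} and obtain a minimizer $\psi' \not\leq \psi^{+,\sqcup}$, which does contradict maximality. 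Your cube-by-cube peeling toward $\varphi$ therefore has no teeth: minimality only tells you that a single downward cube-flip cannot strictly \emph{gain} $h^+$-matches, not that it must lose one, so neither (ii) nor (iii) in your scheme is established.

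The paper's proof bypasses all of this with a one-line energy inequality. Set $h^+ = h \cap \mathbb{H}_\varphi^+$, so that $G^+_{h,\varphi} = G_{h^+,\varphi}$. Since $\psi^{+,\sqcup}$ minimizes $G^+_{h,\varphi}$ and $G^+_{h,\varphi}(\varphi) = 0$, formula~\eqref{eq:G-def-2} gives
\[
0 \;\geq\; G_{h^+,\varphi}(\psi^{+,\sqcup}) \;=\; \tfrac12|\varphi\xor\psi^{+,\sqcup}| - \bigl|(\varphi\xor\psi^{+,\sqcup})\cap(\varphi\xor h^+)\bigr|\,,
\]
whence $|\varphi\xor\psi^{+,\sqcup}| \leq 2\,|\varphi\xor h^+| \leq 2\,|\varphi\xor h| = 2\sum_i|\sB_i|$. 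No localization, no halving, and no charging are needed.
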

\begin{proof}
This will follow from the local representation of $G_{h,\varphi}$. Take $h^+ = h\cap \mathbb{H}_{\varphi}^+$, and observe that $G_{h,\varphi}^+(\psi) = G_{h^+,\varphi}(\psi)$.
 Since $G_{h,\varphi}^+(\psi^{+,\sqcup}) \leq G_{h,\varphi}^+(\varphi) = 0$, we read from \cref{eq:G-def-2} (applied to $G_{h^+,\varphi}$) that
$ |\varphi\xor \psi^{+,\sqcup}| \leq 2 |\varphi\xor h| = 2\sum_i |\sB_i|$.
(Alternatively, one can deduce this from \cref{lem:zero-range}.)
%By \cref{def:bubble-group}, we can bound $|\Upsilon(\fB)|$ by $|\delta_{h,\varphi}|+\sum_i |\sB_i|$ with $\delta_{h,\varphi}$ as defined there, so
%    \[|\Upsilon(\fB)| \leq |\psi^\sqcup \xor \varphi | + |\psi^\sqcap \xor \varphi| + \sum_i |\sB_i|\,.\]
%As $G_{h,\varphi}( \psi^\sqcup ) =G_{h,\varphi}(\psi^\sqcap)\leq 0$, each of the first two terms on the right is at most $2|\varphi\xor h| = 2 \sum_i |\sB_i|$ using \cref{lem:zero-range} (this is also apparent from \cref{eq:G-def-2}), which concludes the proof.
\end{proof}
It remains to establish the monotonicity of the bubble groups.
\begin{claim}\label{clm:psi+-mon}
For every $h,\hat h$ and $\varphi$, if $h$ is obtained by deleting an $(\hat h,\varphi)$-bubble group $\fB$ from $\hat h$ and we denote $\psi^\cdot(\hat h)$ by $\hat\psi^\cdot$, then we have
$\hat\psi^{+,\sqcup} \geq \psi^{+,\sqcup}$ and, symmetrically, $\hat\psi^{-,\sqcap} \leq \psi^{-,\sqcap}$.
%$\hat\psi^{+,\sqcup} \subset \mathbb{H}_{\psi^{+,\sqcup}}^+$ and, symmetrically, $\hat\psi^{-,\sqcap} \subset \mathbb{H}_{\psi^{-,\sqcap}}^-$.
\end{claim}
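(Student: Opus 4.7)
My plan is to prove the first inequality $\hat\psi^{+,\sqcup} \geq \psi^{+,\sqcup}$; the dual $\hat\psi^{-,\sqcap} \leq \psi^{-,\sqcap}$ will follow by an entirely symmetric argument (exchanging $\mathbb{H}_\varphi^+ \leftrightarrow \mathbb{H}_\varphi^-$ and $\vee \leftrightarrow \wedge$ throughout). Writing $\psi_1 := \psi^{+,\sqcup}$ and $\psi_2 := \hat\psi^{+,\sqcup}$, I have $\psi_1, \psi_2 \geq \varphi$ by \cref{clm:psi+cup-above-phi}. The approach mirrors the swap strategy of \cref{clm:psi+cup-above-phi,clm:psi+cup-above-psicup}: assume for contradiction that $\psi_1 \not\leq \psi_2$, take a maximal connected component $\cC$ of the plaquettes of $\psi_1$ lying strictly above $\psi_2$ (in the $\cP_{001}$ sense), set $S := \Upsilon(\cC)$, and define $\psi'$ by replacing $\psi_1|_S$ with $\psi_2|_S$. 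Since $\psi_1$ and $\psi_2$ induce lozenge tilings of $S$ that agree along $\partial S$, $\psi'$ is a valid tiling with $\varphi \leq \psi' \lneq \psi_1$. If I can show $\psi'$ still minimizes $G^+_{h,\varphi}$, the maximality of $\psi_1 = \psi^{+,\sqcup}$ is contradicted.

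Equivalently I need $|h \cap \psi'| \geq |h \cap \psi_1|$, and the reward change decomposes column-by-column over triangles $t \in S$. I will split the sum according to $S^\star := \Upsilon(\bigcup_i \sB_i)$, the bubble projection where $h = \varphi$. For $t \in S \cap S^\star$ the inequality is immediate: the strict ordering $\psi_1(t) > \psi_2(t) \geq \varphi(t)$ forces the face of $\psi_1$ above $t$ to carry zero $h$-reward, while the lower $\psi_2$-face can only add reward. For $t \in S \setminus S^\star$, where $h = \hat h$, the comparison is delicate: $\psi_2$ maximizes the $\hat h$-reward whereas $\psi_1$ maximizes the $h$-reward, and columnwise neither is \emph{a priori} better than the other.

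To handle the $S \setminus S^\star$ contribution, I will invoke the join--meet identity underlying \cref{prop:minimizers}: for any $\eta$ and any monotone $\psi_1, \psi_2 \geq \varphi$, the same plaquette partition used in the proof of that proposition yields
\[
|\eta \cap \psi_1| + |\eta \cap \psi_2| \;=\; |\eta \cap (\psi_1 \vee \psi_2)| + |\eta \cap (\psi_1 \wedge \psi_2)|.
\]
Applied with $\eta = \hat h$ together with the maximality of $\psi_2$ as the largest $\hat h$-maximizer, this gives $|\hat h \cap (\psi_1 \wedge \psi_2)| \geq |\hat h \cap \psi_1|$; combined with the local representation of \cref{lem:zero-range} --- which ensures that $g_{h,\varphi}$ and $g_{\hat h,\varphi}$ differ only at plaquettes projecting into $S^\star$ --- this bound can be transferred to the $h$-reward change on $S \setminus S^\star$, where it pins down the remaining contributions. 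Combined with part (a), this will yield $|h \cap \psi'| \geq |h \cap \psi_1|$ as required.

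The main obstacle will be precisely this reconciliation step: I must argue that $T := \{t : \psi_1(t) > \psi_2(t)\}$ is effectively confined to $\Upsilon(\fB)$, so that the symmetric difference $\psi_1 \xor \psi_2$ lies inside the bubble group's projection, and that on $T \setminus S^\star$ any $\hat h$-reward collected by $\psi_1$ but not $\psi_2$ is perfectly compensated through the identity above. This relies crucially on the precise construction of \cref{def:bubble-group} --- in particular on the role of the connector components $\cC_j$ that bridge the bubbles --- and on the fact that $\hat\psi^{+,\sqcup}$ is forced to equal $\varphi$ outside $\Upsilon(\hat\delta)$ (which one checks from $\hat\psi^{+,\sqcup} \geq \varphi$ together with $\hat\psi^{+,\sqcup} = \hat\psi^{-,\sqcap}$ there). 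Once this geometric step is in hand, $\psi'$ is a minimizer of $G^+_{h,\varphi}$ strictly below $\psi_1$, contradicting maximality and completing the proof.
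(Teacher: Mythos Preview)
Your setup matches the paper's: both start from a swap on the region $S$ where $\psi^{+,\sqcup}(h)>\hat\psi^{+,\sqcup}$, and both aim for a contradiction. The gap is in your ``reconciliation step,'' and it is a real one.

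Your join--meet identity gives the \emph{global} inequality $|\hat h^+\cap(\psi_1\wedge\psi_2)|\geq|\hat h^+\cap\psi_1|$, but to deduce $|h^+\cap\psi'|\geq|h^+\cap\psi_1|$ you need to control the discrepancy on $S\cap S^\star$. Writing $(A)$ for the $h^+$-gain and $(C)$ for the $\hat h^+$-gain on $S\cap S^\star$, your argument yields only $h^+\text{-gain}\geq (A)-(C)$, and $(A)-(C)$ can be negative: take a column $t\in S\cap S^\star$ where the bubble in $\hat h$ reaches height $k$, with $\psi_2(t)=k$ (so $\psi_2$-face $\in\hat h^+$) and $\psi_1(t)>k$ (so $\psi_1$-face $\notin\hat h^+$); this column contributes $-1$. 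Your fallback --- confining $S$ to $\Upsilon(\fB)$ --- is circular: outside $\Upsilon(\hat\delta)\cup S^\star$ one has $\psi_2=\varphi$, so $t\in S\setminus(\Upsilon(\hat\delta)\cup S^\star)$ forces $\psi^{+,\sqcup}(h)(t)>\varphi(t)$, i.e.\ $t\in\Upsilon(\delta(h))$; asserting this lies in $\Upsilon(\hat\delta)$ is exactly \cref{cor:mon-equiv-rel}, which is a \emph{consequence} of the claim you are proving.

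The paper does not attempt to show $\psi'$ is a minimizer. Instead, after the swap (their $\psi_0$, your $\psi'$), it performs a second modification: delete from $\psi_0$ every $(\varphi,\psi_0)$-bubble whose projection meets some $\Upsilon(\sB_i)$, replacing it by $\varphi$, to obtain a tiling $\psi_1$. The bubble group structure is used here, not to confine $S$, but to argue that every face of $\psi_0\setminus\psi_1$ is connected through $\psi_0\setminus\varphi\subset\mathbb H_{\hat\psi^{+,\sqcup}}^-$ to some $\Upsilon(\sB_i)$, hence lies over the connectors $\cC_j$ and cannot be in $h^+$. This flattening therefore only gains $h^+$-reward, and the gain $|\psi_1\setminus\psi_0|\geq|\cC\cap(\fB\setminus\varphi)|$ dominates the deficit $|\cC'\cap h^+|-|\cC\cap h^+|$ established earlier via the strict inequality for $\hat h^+$. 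The contradiction is then with $\psi^{+,\sqcup}(h)$ being a \emph{minimizer}, not merely maximal among minimizers.
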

\begin{proof}
Throughout this proof, write $\psi^+ = \psi^{+,\sqcup}$ and $\hat\psi^+=\hat\psi^{+,\sqcup}$ for brevity, and let $h^+ = h \cap \mathbb{H}_\varphi^+$ and $\hat h^+ = \hat h \cap \mathbb{H}_\varphi^+$. 
With the aim of showing $\hat\psi^+ \geq \psi^+$, suppose that $\cC$ is a nonempty (maximal) connected component of $\hat\psi^+ \cap (\Z^3\setminus \mathbb{H}_{\psi^+}^+)$, and let $\cC'$ be the tiling induced by $\psi^+$ on $\Upsilon(\cC)$. Note that
%\begin{equation*}%\label{eq:C-C'-h}
$	|\cC\cap  h^+ | \leq |\cC' \cap h^+|$, 
or else we could interchange $\cC,\cC'$ in $\psi^+$ and get a contradiction to it being a minimizer of $G_{h^+,\varphi}$ (maximizing its overlap with $h^+$). 
For $\hat h^+$, the analogous inequality is strict:
\begin{equation}\label{eq:C-C'-hat-h}
	|\cC' \cap \hat h^+| < |\cC\cap \hat h^+|\,,
\end{equation}
since, by construction of $\cC$, interchanging $\cC,\cC'$ in $\hat\psi^+$ would give a minimizer $\psi'$ satisfying $\psi' \not\leq \hat\psi^+$, a contradiction to the fact that $\hat\psi^+$ is a maximal element in the corresponding set of minimizers. 

Next, consider $h$ vs.\ $\hat h$, let $\{\sB_i\}$ be the bubbles of the bubble group $\fB$ they differ by, and by a slight abuse of notation, use $\fB \cap \varphi$ to denote $(\bigcup_i \sB_i)\cap \varphi$ and $\fB \setminus \varphi$ to denote $(\bigcup_i \sB_i)\setminus \varphi$. Then
\begin{equation}\label{eq:C-h-hat-h}
	|\cC \cap h^+| = |\cC \cap \hat h^+| - |\cC\cap(\fB\setminus \varphi)| + |\cC \cap (\fB \cap \varphi)| \,,
\end{equation}
simply because the face set $\fB\setminus\varphi$ in $\hat h^+$ is replaced by $\fB\cap \varphi$ in $h$. The same identity holds for $\cC'$, but now it can be improved: since $\psi^+ > \hat\psi^+ \geq \varphi$ on the region corresponding to its face set $\cC'$, we see that $\cC' \cap (\fB\cap\varphi) = \emptyset$, whence 
\begin{equation*}%\label{eq:C'-cap-h-hat-h}
	|\cC' \cap h^+| =  |\cC' \cap \hat h^+| - |\cC'\cap(\fB\setminus \varphi)|\,.
\end{equation*}
Using \cref{eq:C-C'-hat-h} on the right-hand side of the last identity, and then plugging in \cref{eq:C-h-hat-h}, we get
\begin{align}
	|\cC \cap h^+| - |\cC' \cap h^+|
 &> -|\cC\cap(\fB\setminus \varphi)|
  + |\cC'\cap(\fB\setminus \varphi)|
   +|\cC \cap (\fB \cap \varphi)| \geq -|\cC\cap(\fB\setminus \varphi)|\,.
 \label{eq:C-h+-C'-h+}\end{align}
% \begin{equation}\label{eq:C-h+-C'-h+}
% |\cC \cap h^+| - |\cC' \cap h^+| < - |\cC' \cap h^+| +
% |\cC \cap \hat h^+| - |\cC\cap(\fB\setminus \varphi)| + |\cC \cap (\fB \cap \varphi)| - |\cC'\cap(\fB\setminus \varphi)|\,.
% \end{equation}
The above applies simultaneously for all  components $\cC$ as above, so for ease of notation, henceforth let $\cC$ be their union (no longer assumed to be connected).

Let $\psi_0$ be the tiling obtained by interchanging $\cC,\cC'$ in $\psi^+$. Let $\psi_1$ be obtained from $\psi_0$ by deleting every $(\varphi,\psi_0)$-bubble $\sB$ such that $\Upsilon(\sB)$ intersects one of the $\Upsilon(\sB_i)$'s, replacing it by $\sB\cap\varphi$.
Note that a face $f \in \psi_0 \setminus \psi_1$ cannot be in $h^+$. Indeed, by construction, $\psi_0\in \mathbb{H}_{\hat\psi^+}^-$ but it also must be connected by a path $P$ in $\psi_0 \setminus \varphi$ to at least one face $f'$ with $\Upsilon(f')$ in some $\Upsilon(\sB_i)$ (it is part of a $(\varphi,\psi_0)$-bubble that was deleted, and cannot be a part of $\varphi$). As $\psi_0 \subset \psi^+ \cup \hat\psi^+ \subset\mathbb{H}_{\varphi}^+$ by \cref{clm:psi+cup-above-phi}, it follows that $\hat\psi^+ > \varphi$ along $\Upsilon(P)$, so $\Upsilon(P)\subseteq \Upsilon(\hat\psi^{+,\sqcup} \xor\hat\psi^{-,\sqcap})$ by \cref{clm:psi+cup-above-phi,clm:psi+cup-above-psicup}. By our definition of the bubble group, this means that $\Upsilon(f) \in \delta \cup \bigcup \Upsilon(\sB_i)$ for $\delta$ as per \cref{def:bubble-group} w.r.t.~$\hat h$; hence, $\Upsilon(f)$ cannot be in the projection of any other $(\hat h, \varphi)$-bubble that was not deleted, so $f\notin h^+$. Also, any face $f \in \psi_1 \setminus \psi_0$ has to project to $\Upsilon(\fB)$ (as we just established that $\Upsilon(\psi_0\setminus \psi_1)$ is a subset of $\delta \cup \bigcup \Upsilon(\sB_i)$),
where by construction $h^+ = \varphi$ so any such $f$ is in $h^+$.
Thus,
\begin{equation}\label{eq:psi0-h+}
|(\psi_1 \setminus \psi_0) \cap h^+| - |(\psi_0 \setminus \psi_1) \cap h^+|= |\psi_1 \setminus \psi_0| = |\psi_0 \setminus \psi_1| \geq   |\cC \setminus \varphi|  \geq |\cC\cap(\fB\setminus \varphi)|.
\end{equation}
Combining \cref{eq:C-h+-C'-h+,eq:psi0-h+}, we see that $|h^+ \cap \psi_1| >  |h^+ \cap \psi^+|$ which contradicts the fact that $\psi^+$ was a minimizer.
\end{proof}
We now infer \cref{it:monotone-relation}, and its importance for the local consistency of bubble groups---if $\hat h$ has two bubble groups $\fB\neq \fB'$ and we delete $\fB$, this should not alter (neither expand nor shatter)~$\fB'$:
\begin{corollary}\label{cor:mon-equiv-rel}
In the setting of \cref{clm:psi+-mon} one has $\Upsilon(\psi^{+,\sqcup} \xor \psi^{-,\sqcap}) \subseteq \Upsilon( \hat\psi^{+,\sqcup} \xor \hat\psi^{-,\sqcap})$. As a consequence of this, if $\fB'$ is a different (full) bubble group in $\hat h$, then it remains one in $h$.
\end{corollary}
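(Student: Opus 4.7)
The plan is to split the argument into the set-theoretic inclusion and its bubble-group consequence. For the first inclusion, I would combine \cref{cor:legal-equiv-rel} with the monotonicity given by \cref{clm:psi+-mon}. The former gives $\Upsilon(\psi^\sqcap \xor \psi^\sqcup) \subseteq \Upsilon(\psi^{+,\sqcup} \xor \psi^{-,\sqcap})$, so it suffices to verify $\Upsilon(\psi^{+,\sqcup} \xor \psi^{-,\sqcap}) \subseteq \Upsilon(\hat\psi^{+,\sqcup} \xor \hat\psi^{-,\sqcap})$. Chaining \cref{clm:psi+-mon} with the sandwich from \cref{clm:psi+cup-above-phi} yields the string of monotone surfaces
\[
\hat\psi^{-,\sqcap} \leq \psi^{-,\sqcap} \leq \varphi \leq \psi^{+,\sqcup} \leq \hat\psi^{+,\sqcup},
\]
from which it is immediate that wherever the inner pair disagree as height functions, the outer pair disagrees at least as much. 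Hence the projected symmetric differences satisfy the required inclusion, and the same chain also gives the stronger statement $\Upsilon(\delta) \subseteq \Upsilon(\hat\delta)$ with $\delta := \psi^{+,\sqcup}\xor\psi^{-,\sqcap}$, which will be needed below.

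For the consequence, I would first note that since $h$ is obtained from $\hat h$ by replacing the bubbles of $\fB$ by the corresponding faces of $\varphi$, the $(h,\varphi)$-bubbles are exactly the $(\hat h,\varphi)$-bubbles lying outside $\fB$. In particular the bubbles of $\fB'$ are untouched, and the question reduces to whether the equivalence relation $\sim$ restricted to the bubbles of $\fB'$ is the same for $h$ and for $\hat h$, and whether bubbles outside $\fB'$ might join in.

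The crux is the following sub-claim: on every connected component $\cC$ of $\Upsilon(\hat\delta)$ that does not intersect $\Upsilon(\fB)$, one has $\psi^{+,\sqcup} = \hat\psi^{+,\sqcup}$ and $\psi^{-,\sqcap} = \hat\psi^{-,\sqcap}$. I would prove it by invoking the local decomposition of minimizers underlying \cref{cor:psi-min-max-construction}, extended to $G^\pm_{\cdot,\varphi}$ through \cref{prop:minimizers} and \cref{rem:gen-Psi-h}: the extremal minimizer on $\cC$ is fully determined by the boundary values on $\partial \cC$ and by the restriction of $h$ (or $\hat h$) to $\cC$. On $\partial\cC$ one has $\hat\psi^{+,\sqcup}=\hat\psi^{-,\sqcap}=\varphi$ because $\cC$ is a maximal component of $\Upsilon(\hat\delta)$, and the chain $\varphi \leq \psi^{+,\sqcup} \leq \hat\psi^{+,\sqcup}$ then forces $\psi^{+,\sqcup} = \varphi$ on $\partial\cC$ as well. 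Since $\cC$ is disjoint from $\fB$, we also have $h\restriction_{\cC} = \hat h\restriction_{\cC}$. Consequently, the two local optimization problems coincide and produce the same maximal (resp.\ minimal) minimizer, proving the sub-claim.

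Combining the sub-claim with the inclusion $\Upsilon(\delta) \subseteq \Upsilon(\hat\delta)$ established above, the connector components of $\hat\delta$ belonging to $\fB'$ coincide with the corresponding components of $\delta$; hence the bubbles of $\fB'$ are still pairwise $\sim$-equivalent in $(h,\varphi)$ through exactly the same connectors, while no new connector component of $\delta$ can appear elsewhere to pull in a bubble from outside $\fB'$. This yields that $\fB'$ persists as a full bubble group in $h$. The main obstacle is the sub-claim itself—establishing rigorously that the $G^\pm$-minimizers decouple across components of $\Upsilon(\hat\delta)$ disjoint from $\Upsilon(\fB)$—since $\psi^{+,\sqcup}$ is a globally defined object and one must justify that the distant modifications caused by removing $\fB$ cannot perturb it on such $\cC$, which ultimately relies on the lattice structure from \cref{prop:minimizers} and \cref{rem:gen-Psi-h} together with the boundary matching $\psi^{+,\sqcup}\restriction_{\partial\cC} = \varphi$.
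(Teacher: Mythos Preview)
Your proposal is correct and follows essentially the same approach as the paper: the first inclusion via the sandwich $\hat\psi^{-,\sqcap}\leq\psi^{-,\sqcap}\leq\varphi\leq\psi^{+,\sqcup}\leq\hat\psi^{+,\sqcup}$, and the persistence of $\fB'$ via the observation that the $G^\pm$-optimization problems on the region of $\fB'$ have the same boundary data (since $\hat\psi^{+,\sqcup}=\hat\psi^{-,\sqcap}$ there forces the same for $\psi^{\pm}$) and the same objective (since $h=\hat h$ away from $\Upsilon(\fB)$). The only organizational difference is that you argue component-by-component on $\Upsilon(\hat\delta)$ while the paper argues directly on $\Upsilon(\fB')$; both rely on the same local decoupling of the extremal minimizers, which the paper also states without further elaboration.
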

\begin{proof}
 The fact $\Upsilon(\psi^{+,\sqcup} \xor \psi^{-,\sqcap}) \subseteq \Upsilon( \hat\psi^{+,\sqcup} \xor \hat\psi^{-,\sqcap})$ follows from \cref{clm:psi+-mon} in the same manner that we concluded \cref{cor:legal-equiv-rel} from its preceding claims, as $\hat\psi^{+,\sqcup},\hat\psi^{-,\sqcap}$ sandwich $\psi^{+,\sqcup},\psi^{-,\sqcap}$.
 
 This monotonicity implies, by \cref{def:bubble-group}, that $\fB'$ is either a single bubble group or is shattered into a collection of bubble groups in $h$ (deleting $\fB$ cannot cause $\fB'$ to reach out to a new bubble). We now argue that only the former can happen. If we look at the boundary of $\fB'$, there we had $ \hat\psi^{+,\sqcup}= \hat\psi^{-,\sqcap}$, and therefore also $\psi^{+,\sqcup}=\psi^{-,\sqcap}$ by the last claim. Furthermore on any face $f$ with $\Upsilon(f) \in \Upsilon(\fB_i')$ we must have $h^\pm= \hat h^\pm$. Overall, the $\hat \psi^\pm$ and $\psi^\pm$ must be solutions of the same optimization problem with the same boundary condition, which concludes the proof.
\end{proof}
%we set some notation that will illustrate how $\exp[\alpha G_{h,\varphi}^\gr - \log Z_\mu^\infty(h)]$ factorizes per bubble group.
We end this section by defining analogs of $H_{h}$, $G_{h,\varphi}$, $\sV$ for a bubble group $\fB=(\{\sB_i\},\{\cC_j\})$. Let
%\begin{equation}\label{eq:bubble-grp-energy}
%\cE(\fB) = \exp\left[-\beta H_{h}(\fB) + \alpha G_{h,\varphi}^\gr(\fB) 
% \right]\,,
%\end{equation}
%where the excess faces in a bubble group is measured by
%\begin{align*}
%\label{eq:def-bubble-grp-H}
%H(\fB) &= \sum_{\sB\in\fB} H(\sB)\,,
%\end{align*}
%and, if we let
\begin{align*}
%    \label{eq:def-bubble-grp-proj}
    \Upsilon(\fB):=\Upsilon\Big(\bigcup\{\sB\in\fB\} \Big)\cup \bigcup\{\cC_j\in\fB\}
\end{align*}
and let $H(\fB), G^\gr(\fB),Z_\mu^\infty(\fB), \sV(\fB)$ be the respective values of $H_h,G_{h,\varphi}^\gr, Z_\mu^\infty(h),\sV(h)$ restricted to the region $\Upsilon(\fB)$ in the triangular lattice $\T_N$. E.g.,
$H(\fB) $$= \sum_{\sB\in\fB} H(\sB)$;
$G^\gr(\fB)$ is the minimum of $G_{h,\varphi}$ over tilings $\psi$ of this region; and $Z_\mu^\infty(\fB)$ is the number of tilings $\psi$ achieving this minimum. 
With these definitions, if $\{\fB_i(h,\varphi)\}$ are the bubble groups of $h,\varphi$, then
\begin{equation}\label{eq:bubble-group-H}
\exp\Big[ -\hat\beta H_{h} + \alpha G_{h,\varphi}^\gr - \log Z_\mu^\infty(h)-\lambda\sV(h) \Big]=  \prod_{i} \exp  \Big[-\hat\beta H (\fB_i)  + \alpha G^\gr(\fB_i) - \log Z_\mu^\infty(\fB_i) -\lambda \sV(\fB_i)\Big]\,,
\end{equation}
the factorization per bubble group as described in \cref{obs:bubble-grp-relations}.

Note that for every bubble group $\fB=(\{\sB_i\},\{\cC_j\})$ we have $|\Upsilon(\fB)| \leq (\sum_i |\sB_i|)+|\psi^{+,\sqcup}\xor\psi^{-,\sqcap}|$ as per \cref{def:bubble-group}; thus, 
 \cref{clm:area-of-bubble-grp} shows that
\begin{equation}\label{eq:area-of-bubble-grp}
|\Upsilon(\fB)| \leq 5\sum_{i} |\sB_i|\,.
\end{equation}

%\begin{remark}
%    Note that, in general, the ground state energy $G^\gr_{h,\varphi}$ and the number of minimizers $Z_\mu^\infty(h)$ are non-local quantities since they involve a global minimization (with global constraints). Considering a bubble group as a single object lets us view the optimization as ``local,'' in terms of these objects,
%    whereby these quantities factorize per bubble group as in \cref{eq:bubble-group-H}.
%\end{remark}

Recall that our potential $\sV$ from \cref{eq:gen-potential} was defined as $\sum \ff(S_i)$, where the $S_i$'s are the connected components of $\psi^\sqcup \setminus h$. When $h,\varphi$ are such that $\fB$ is the only $(h,\varphi)$-bubble group (as in the context of $\sV(\fB)$), one has $\sum |S_i|=|\Upsilon(\psi^\sqcup\setminus h)\cap\Upsilon(\fB)|$. 
Take $\psi_0\equiv \psi^\sqcup$ and $h_0\equiv h$ on $\Upsilon(\fB)$ and $\psi_0\equiv h_0 \equiv \varphi$ elsewhere, to arrive at the following expression which will be useful later on:
\begin{align} 
 \sum |S_i| &=|\Upsilon(\psi^\sqcup \setminus h) \cap \Upsilon(\fB) | =
|\psi_0| - |h_0\cap \psi_0 | = (|h_0\cap\varphi|- |h_0\cap\psi_0|) + (|\varphi| - |h_0\cap\varphi|) \nonumber\\
&= G_{h_0,\varphi}^\gr + |\varphi\setminus h_0| 
=G^\gr(\fB) + \sum_{\sB\in\fB} |\varphi\cap \sB| \,. \label{eq:V(fB)-pinning-alt}
\end{align}

\begin{remark}
    \label{rem:bubble-groups-no-ce}
Let us emphasize that the compatibility condition between different bubble groups is a fairly complicated long-range interaction, as demonstrated in the delicate proof of \cref{cor:mon-equiv-rel}. (The simpler candidate for the definition of a bubble group $\fB$, taking $\delta := \psi^\sqcup\xor\psi^\sqcap$ in \cref{def:bubble-group}, would not enjoy said monotonicity, yet still would have had this complicated long-range interaction.)

This long-range interaction is part of what makes $\pi$ much more difficult to understand than $\nu$. Whereas bubbles $\sB$ were susceptible to cluster expansion analysis if the only constraint is compatibility (as in $\mu$, unlike $\nu$ which also has long-range interactions via the $\fg_r$ terms), this is no longer the case for bubble groups $\fB$. Compatibility of bubble groups is not a pairwise relation, and even if the interactions were only zero range (compatibility), we would not be able to fit it into a cluster expansion framework. Moreover, as we will see later in \cref{sec:bound_integral}, the long term interaction term $\int_\alpha^\infty\mu_{h,\hat\alpha}(\overline{G}_h)\d\hat\alpha$ present in $\pi$ is also substantially more difficult to control than its analogue for~$\nu$.
\end{remark}

\section{Deterministic 
approximation of height functions via tilings }\label{sec:alg}

In this section, we continue to study at a deterministic level the problem of optimizing $G_{h,\varphi}$, with the aim of finding bounds on $G^\gr$ on a bubble group $\fB$. This will be achieved by providing an explicit deterministic tiling for a given $h$ that will essentially act independently within each bubble.

\subsection{Algorithm for approximating a height function inside a single bubble}
%We will use a deterministic algorithm to establish how one can approximate a given height function $h$ by a tiling.
Before providing the details of the construction, let us briefly discuss the overall strategy and some of the complications that make the full implementation of our approach fairly involved.

First, let us note that a simple way to make an algorithm generalize from the case of a single bubble $\sB$ to the general case is to only look for approximating tilings of the region $\sB \cap \varphi$ since by definition these regions are disjoint so this will be our a priori target. With this in mind, one can think of the problem as follows: given a region $S$ and a function $h$ whose boundary conditions are compatible with a tiling, find the tiling $\psi$ with the same boundary conditions maximizing $|\psi \cap h|$. A very naive approach to turn an arbitrary function into a monotone one is to consider the running minimum (or some variant thereof). This does not seem precise enough for our purpose since it is too easy for a small local defect to have a very large influence in such an approximation algorithm; it is also challenging to control the 3D geometry through the run of the algorithm. Instead, we will describe our functions as a sequence of level lines, arbitrary curves touching the boundary together with loops for \SOS and simple random walk paths for tilings (assume to help with the visualization that $\Z^2$ is drawn with a $45$ degree angle so that a tiling level line takes only \southeast and \northeast steps). Running the approximation greedily, level by level, then turns the problem into a 2D problem which is substantially more tractable. 

The simplest example is to approximate via a tiling a single self-avoiding level line which connects two boundary points of a simply connected domain. A natural approximation algorithm is the following straightforward greedy process starting from the \west endpoint:
\begin{itemize}
    \item if the SOS level line $\Gamma_h$ does a \southeast or \northeast step, then copy it to the output tiling level line $\Gamma_\psi$;
    \item if $\Gamma_h$ does a \southwest or \northwest step, wait until it re-enters a point accessible from your current position (i.e., a \east-facing quarter-plane) and let $\Gamma_\psi$ shortcut directly to that point;
    \item repeat this procedure until reaching the endpoint of $\Gamma_h$. 
\end{itemize}
A convenient feature of this algorithm is that it is quite easy to enumerate over the potential inputs $\Gamma_h$ given the output $\Gamma_\psi$ since the missing pieces are nothing but an ordered collection of excursions. Also, it turns out that this algorithm will respect the ordering of level lines, so dealing with a single level will be essentially enough to treat the full (multi level) configuration. However, as presented, it is easy to create an example (see \cref{fig:alg}) where $\Gamma_\psi$ does not end at the same point as $\Gamma_h$, which would create issues when trying to fit the approximation $\psi$ associated to a given bubble into the larger tiling $\varphi$, hence one must include an extra condition to make sure that the ``correct'' endpoint always stays accessible as we draw $\Gamma_\psi$ (see the definition of $\Gamma_h^\reg$ in the proof). 

In a simply connected domain, this procedure is relatively simple to analyze. However, the presence of holes in the domain is a major difficulty (and is truly needed for our application, which bundles together disconnected bubbles, potentially distant from one another, through the bubble group criterion).
Indeed, suppose that $\Gamma_h$ goes through two holes; then $\Gamma_\psi$ will have three separate paths to process, which need to be used in a fixed order \east to \west. However, there is actually no requirement for $\Gamma_h$ to go through the holes in the same order (see \cref{fig:alg-with-holes} for examples of ``bad'' $\Gamma_h$ configurations). In such situations, the behavior of the greedy algorithm becomes much more complicated, and a large portion of the proof is dedicated to the control of these cases.

Finally, we will need to take into account the possible presence of loops. Those that do not affect the boundary conditions can actually be safely ignored; however, it is possible for a loop to surround a hole, thus modifying the boundary conditions on that hole. Unfortunately, it turns out that there are examples where even very small loops can create boundary conditions so constrained that they can only be satisfied by a single tiling. To deal with such scenarios, one must be willing to sacrifice a part of the boundary condition constraint (which does create compatibility issues between the approximation provided for different bubbles---a point that we had mentioned above as something we would want to avoid---but in a manageable way), choosing which of the loops to preserve and which to ignore.

\begin{proposition}%[Approximating a height function via a tiling]
\label{prop:alg}
Let $\varphi$ be a tiling of the full plane. Associate to $\varphi$ a height function by pinning an arbitrary face to $0$. Let $\cS$ be a connected set of $s$ faces of $\varphi$ and consider the set $\cH$ of \SOS height functions $h$ with a single bubble with respect to $\varphi$ with floor $\cS$, i.e., which contain every tile of $\varphi \cap \cS^c$ but have no intersection with $\varphi \cap \cS$.
%Let $S$ be a connected set of $s$ faces in the triangular lattice $\T$, tiled by some tiling~$\varphi$, which also tiles its interior holes. Associate to $\varphi$ a height function\footnote{It is possible to associate a height function to $\varphi$ since it tiles the holes of $S$.} 
%(on $\Upsilon_{001}(S)$)
%by pinning an arbitrary face of $\partial S$ to height~$0$, and consider the set $\cH$ of \SOS height functions $h$ on $S$ which agree with $\varphi$ on $\partial S$ and have $h \cap \varphi = \emptyset$. %The following then holds:
%\begin{enumerate}[(a)]
%    \item \label{it:H-H'}
For some absolute constant $C>0$ and for every $\epsilon>0$ there exists a subset $\cH' \subset \cH$ such that $|\cH'| \leq \exp[C\epsilon^{1/3}  s] $ and the following holds for all $h\in\cH \setminus \cH'$: either $|h| - |\varphi|\geq  \epsilon s$, or there exists a tiling $\psi$ of a superset $\cS'$ of $\cS$ with $\psi = \varphi$ on $\partial \cS'$, $|\psi\cap h| - |\varphi \cap h| \geq \epsilon^{1/3} s$ and $|\cS' \setminus \cS| \leq \epsilon^{2/3}s$.
%\item \label{it:enum-h}
%For every $\delta>0$ there exists some constant $C = C(\delta)>0$ such that
%\[
%\sum_{h\in\cH} e^{- C (|h|- |\varphi|)} \leq  e^{\delta| \varphi|} | \{ \psi \in \cH \,:\; | \psi| = |\varphi|%\}|\,.
%\]
%\end{enumerate}
\end{proposition}
Note that in the above proposition, since $h, \varphi, \psi$ only differ on finitely many tiles, the size differences $|h| - |\varphi|$ and $|\psi\cap h| - |\varphi \cap h| $ are well defined.
\begin{remark}
    The powers of $\epsilon$ in the proposition are non-optimal, but apart from that the statement is almost optimal. It is necessary to consider a slight enlargement $\cS'$ of the domain since one can construct domains $\cS$ which admit only a single tiling $\varphi$ but where there is a positive entropy for \SOS configurations $h$ with $\varphi \cap h = \emptyset$ and $|h| - |\varphi| = o(s)$ (consider a hexagon with a slit corresponding a completely filled lowest column). It is also possible to create sets $\cS$ with families of at least $e^{c \epsilon |\log \epsilon|s}$ many bubbles with $|h| - |\varphi| \leq \epsilon s$ for which the best approximations still satisfy $|h \cap \psi| \leq C\epsilon s$.
\end{remark}

\begin{proof}
% \textcolor{blue}{Start of an alternative redaction}
% Fix an \SOS height function $h$ and suppose  $|h| \leq |\varphi| + \epsilon s$. We will prove our statement by providing an algorithm for constructing the approximation $\psi$ and $\cH'$ will be defined along the description of this algorithm.

% In this proof, we consider the projection of $\cP_{001}$ so it is useful to introduce 

% Recall that for every $k\in \Z$, the height-$k$ level lines is the collection of dual edges $xy$ such that $h(x)<k$ and $h(y)\geq k$. Use the \northeast splitting rule to turn it into a collection of self avoiding loops and lines (starting and ending at the boundary).

% \textcolor{blue}{End of alternative redaction}
%We begin with the proof of \cref{it:H-H'}.
Fix an \SOS height function $h$ and suppose  $|h|- |\varphi|  \leq \epsilon s$. Our proof will provide an algorithm for constructing the approximation $\psi$, which will describe the set $\cH'$ as it goes along.

For this proof, we write $S = \Upsilon_{001}(\cS)$, $S' = \Upsilon_{001}( \cS')$ and we consider the following coordinates and directions on $\cP_{001}$. We say that the direction $x_1$ is South-West (\southwest) while $x_2$ is South-East (\southeast) and the other directions (\northeast,\northwest) are derived from them. Recall that for every $k\in \Z$, the height-$k$ level set is the collection of dual edges $xy$ such that $h(x)<k$ and $h(y)\geq k$. One can use the \northeast splitting rule to turn it into a collection of self-avoiding loops and lines (starting and ending at the boundary, where the boundary points can be either external or internal). 

In the main body of the proof, we will first assume that there are no loops in this decomposition and in that case the superset $S'$ is just $S$; a series of claims (\cref{claim:level_line_order,claim:def_gammahr,clm:se-followed-by-ne,clm:QE-non-intersecting,clm:alg-image-monotone,clm:alg-image-sat-bc,clm:len-Gamma-reg-vs-Gamma,claim:regular_crossing,claim:minimum,claim:order_excursion,claim:type1,claim:type2,claim:exc_single_component,claim:number_excursion,claim:enumeration_excursion}) will be devoted to the analysis of this case.
The general case will be treated at the end (see the explanation before \cref{fig:entropy_loop} and the ensuing \cref{claim:area_move,claim:entropy_loop,cor:entropy_loop2}).

Note that a level line in a tiling is described as a walk on $\Z^2$ using only \northeast and \southeast moves while a level line of an \SOS function is a self-avoiding path allowed to use all 4 directions. Note however that since the boundary conditions are compatible with $\varphi$ (including for the holes), the ending point of any level line in $h$ is accessible using \northeast and \southeast moves from its starting point.
Let $D$ be the simply connected closure of $S$ (adding said holes).
%, and extend $h$ to be an \SOS configuration on $D$ via the heights of the tiling $\varphi$ on $D\setminus S$, thereby also extending the level lines so that all their boundary points become external. 
For each level line $\Gamma_h$ of $h$, parameterize its edges as $\Gamma_h(t)$.
%, and let $\{T_i\}$ be the entry times to $D\setminus S$ (i.e., every $t$ such that $\Gamma_h(t-1)\in S$ and $\Gamma_h(t)\notin S$).

\begin{figure}
\vspace{-0.1in}
    \begin{tikzpicture}
    \begin{scope}[scale=0.6]

    \coordinate (T1) at (19,0.5);

    \filldraw[fill=red!5,draw=none] (13,3.5)--(T1)--(13,-2.5)--(13,3.5);
    \draw[red!50,dashed] (13,3.5)--(T1)--(13,-2.5);
    \node[red!80!black] at (18,-0.5) {$\cQ_\west$};
    
    \draw [line width=3pt,green] (0,0)
            \foreach \y in {1,-1,-1,-1,1,1,1,1,-1,-1,-1,1,1,1,1,1,-1,-1,-1} {
        -- ++(1,\y*0.5)
        };
    \draw [thick,blue] (0,0)
            \foreach \x/\y in {1/1,1/-1,-1/-1,1/-1,1/-1,1/1,1/1,1/1,-1/1,1/1,1/-1,1/1,1/-1,1/1,1/-1,-1/-1,1/-1,1/1,1/1,1/1,-1/1,1/1,1/1,1/1,1/-1,1/-1,1/-1,-1/-1,1/-1} {
        -- ++(\x,\y*0.5)
        };

    \draw [very thick,cyan,decorate, decoration=bumps] (15,2.5)--(18,1);

    \node[circle,scale=0.4,fill=gray] (o) at (0,0) {};
    \node[circle,scale=0.4,fill=gray] at (T1) {};

    \node[circle,scale=0.4,fill=gray,label={[label distance=0pt]above:{\small$\mathsf a_1$}}] (a1) at (2,0) {};
    \node[circle,scale=0.4,fill=gray,label={[label distance=0pt]above:{\small$\mathsf b_1$}}] (b1) at (4,-1) {};
    \node[circle,scale=0.4,fill=gray,label={[label distance=0pt]below:{\small$\mathsf a_2$}}] (a2) at (6,0) {};
    \node[circle,scale=0.4,fill=gray,label={[label distance=0pt]above:{\small$\mathsf b_2$}}] (b2) at (7,0.5) {};
    \node[circle,scale=0.4,fill=gray,label={[label distance=0pt]above:{\small$\mathsf a_3$}}] (a3) at (9,0.5) {};
    \node[circle,scale=0.4,fill=gray,label={[label distance=-1pt]below:{\small$\mathsf b_3$}}] (b3) at (10,0) {};
    \node[circle,scale=0.4,fill=gray,label={[label distance=0pt]below:{\small$\mathsf a_4$}}] (a4) at (14,1) {};
    \node[circle,scale=0.4,fill=gray,label={[label distance=0pt]below:{\small$\mathsf b_4$}}] (b4) at (16,2) {};
   
    \node[green!80!black,font=\Large] at (11.25,-1.25) {$\Gamma_\psi$};
    \node[blue!80!black,font=\Large] at (10.75,1.5) {$\Gamma_h$};
    
    \end{scope}
\end{tikzpicture}
\vspace{-0.15in}\caption{Illustration of the algorithm to approximate the height function $h$ by a tiling~$\psi$. There are no holes in this example, so the quarter-plane $\cQ_{\west}$ is the right-hand side of the domain between the level lines $\Gamma_{\min}$ and $\Gamma_{\max}$ that bound $\Gamma_h$. The wavy line marks $\Gamma_h^\reg \setminus \Gamma_h$.
}
\label{fig:alg}
\vspace{-0.15in}
\end{figure}
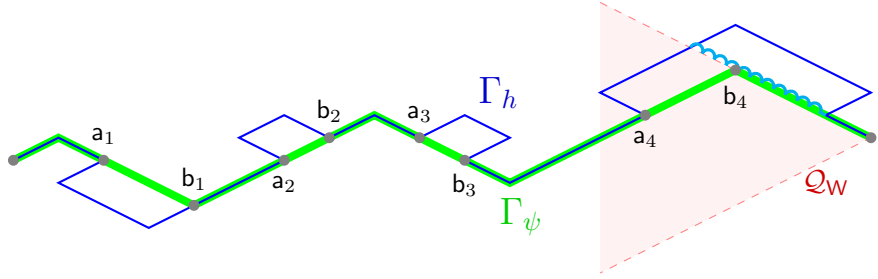

Consider the following approximation algorithm. Our goal will be to control the number of initial height functions $h$ where it behaves ``badly'' and only results in a $\psi$ with $|h \cap \psi| <\epsilon^{1/3} s $. % so we also assume for now that we are in the ``typical'' case where the total length of the level lines of $\psi$ is at least $c s$ for a $c$ of order $1$ ($c$ is the proportion of two of the tiles in the tiling picture).
%We start by describing the algorithm for a single level line $\Gamma_h$ of $h$.
 Let $\psi_{\max}$ and $\psi_{\min}$ be the maximal and minimal monotone surfaces compatible with the boundary condition of $S$ (i.e., the maximal and minimal tilings whose heights agree with $\varphi$ outside of $\cS$), which are well-defined since (even in a non-simply connected setting) monotonicity is closed under taking a maximum or minimum. 
 We perform two ``preprocessing'' steps: first, we replace $h$ by $(h \wedge \psi_{\max}) \vee \psi_{\min}$; second, we turn all level sets into level lines and loops using the \northeast splitting rule and then erase all the loops. Denote the resulting ``regularized'' \SOS function by $h^\reg$ and its height-$k$ level lines by $\Gamma^\reg_{h,k}$ ($k\in\Z$). The algorithm will process the $\Gamma^\reg_{h,k}$ one by one, sorted by heights from the highest to the lowest. Define the ``West facing'' and ``East facing'' quarter-planes \[\cQ_\west(x) = x+\{u_1 < 0,\,u_2>0 \}\quad,\quad 
\cQ_\east(x) = x+\{u_1 > 0,\,u_2 < 0\}\,,
\]
and do as follows (we describe the procedure for a single $k$, suppressing its index for brevity):
\begin{enumerate}
[label=\textbf{Step~\arabic*}:, ref=\arabic*, wide=0pt, itemsep=1ex]
\addtocounter{enumi}{-1}
\item\label[step]{it:alg-init} Set $t_0 = 0$ and $i=1$.
    \item \label[step]{it:alg-iter} Look along $\Gamma_h^\reg$ until the start of the first \southwest or \northwest step. Call that time $t'_i$ and let $\mathsf a_i = \Gamma_h^\reg(t'_i)$. We let $\Gamma_\psi[0, t'_i] = \Gamma_h^\reg[0, t'_i]$.%, unless at some point $0\leq \tau \leq t_1$ we are about to exit the quarter-plane~$\cQ_\west(\Gamma_h(T_1)) $, in which case, add this segment up to time $\tau$, then go to \cref{it:alg-end}.
    \item \label[step]{it:alg-rentry} Let $t_{i}$ be the first time $\Gamma_h^\reg$ re-enters $\cQ_\east(\mathsf a_i)$ and $\mathsf b_i = \Gamma_h^\reg(t_{i})$. % If $b_1\in \cQ_\west(\Gamma_h(T_1)) $, 
    Add to $\Gamma_\psi$ the line $[\mathsf a_i, \mathsf b_i]$. 
    %Otherwise, add $[a_1, b_1] \cap \cQ_\west(\Gamma_h(T_1)) $ and
    %go to \cref{it:alg-end}.
    \item \label[step]{it:alg-repeat} Increment $i$ by $1$ and return to \cref{it:alg-iter}, until exhausting $\Gamma_h^\reg$.
   % \item\label[step]{it:alg-end} Noting the current point must be in $\partial \cQ_\west(\Gamma_h(T_1))$, add to $\Gamma_\psi$ the straight line to $\Gamma_h(T_1)$. If~$\Gamma_h$ terminates at $T_1$, conclude. Otherwise, return to \cref{it:alg-iter} replacing $T_1$ by $T_2$ and so on.
    \end{enumerate}
(See \cref{fig:alg,fig:alg-with-holes} illustrating the algorithm.) 

We emphasize that while running \cref{it:alg-iter,it:alg-rentry,it:alg-repeat}, we ignore the underlying domain. We have three elements to prove: (1) the algorithm above produces a tiling; (2) this tiling respects the boundary condition; and (3) we can appropriately bound the number of functions $h$ where the algorithm fails to construct a tiling that has a large overlap with~$h$.
%The full algorithm is then obtained by simply applying the above procedure iteratively to each of the level lines. Let us argue now that this does define a tiling $\psi$. It is enough to check that the $\Gamma_\psi$'s defined in this manner do not cross one another.

\begin{figure}
    \begin{tikzpicture}
    \pgfmathsetmacro{\arwlen}{0.33}

    \begin{scope}[scale=0.9]

    \coordinate(o) at (0,0);
    \coordinate (T) at (9.5,1);
    \coordinate (a) at (2.2,1.8);    
    \coordinate (b) at (3,1);
    \coordinate (c) at (6.25,1.25);
    \coordinate (d) at (7,2);

    \begin{scope}[fill=gray!10]
    \fill[clip]plot [smooth cycle] coordinates {(o) (0.75,2.5) (2,2.5) (5,3)  (T) 
    (7,-1.) (5,-1.5) (3,-1) (2,-1.25)} ;
    \fill[green!15] (1,3)--(4,0.)--(7,3)--cycle;
    \fill[green!15] (3,3)--(5.5,0.5)--(8,3)--cycle;
    \end{scope}
    \draw plot [smooth cycle] coordinates {(o) (0.75,2.5) (2,2.5) (5,3)  (T) 
    (7,-1.) (5,-1.5) (3,-1) (2,-1.25)} ;
    
    \filldraw[red!10] (a)--($(a)+(\arwlen,\arwlen)$) to[bend left=25] ($(d)+(-\arwlen,\arwlen)$)--(d) %to[bend left=100] 
    --(c)--($(c)+(-\arwlen,\arwlen)$) to[bend right=45]($(b)+(\arwlen,\arwlen)$) --(b)--cycle;
        \node[circle,scale=1.75,fill=green!50!black] (h1) at (4,0.5) {};
    \draw[dashed,green!50!black] ($(h1.south)+(0,-0.15)$) --  +(-2.5,2.5);
    \draw[dashed,green!50!black] ($(h1.south)+(0,-0.15)$)--  +(2.5,2.5);
    \node[circle,scale=1.75,fill=green!50!black] (h2) at (5.5,1) {};
    \draw[dashed,green!50!black] ($(h2.south)+(0,-0.15)$) --  +(-2.3,2.3);
    \draw[dashed,green!50!black] ($(h2.south)+(0,-0.15)$)--  +(1.8,1.8);

    \draw[thick,-stealth] (o)->($(o)+(\arwlen,0)$);

    \draw[thick,-stealth] ($(T)-(\arwlen,0)$)->(T);

    \draw[thick, blue] ($(o)+(\arwlen,0)$) to[bend left=10] (a);
    
    \draw[thick, red] ($(a)+(\arwlen,\arwlen)$) to[bend left=25] ($(d)+(-\arwlen,\arwlen)$);

    \draw[thick, red] (d) to[bend left=100] (c);
    
    \draw[thick, red] ($(c)+(-\arwlen,\arwlen)$) to[bend right=45]($(b)+(\arwlen,\arwlen)$);

    \draw[thick, blue] (b) to[bend right=55] (3.5,-0.5) to [bend right=35] ($(T)+(-\arwlen,0)$);

    \draw [thick,cyan,decorate, decoration=bumps] (a)--(b);
    \draw [thick, cyan,decorate, decoration={bumps,amplitude=0.5mm,raise=1pt}] (c)--(d) to[bend left=100] (c);

        \draw[thick,-stealth] (a)->($(a)+(\arwlen,\arwlen)$);

    \draw[thick,-stealth] ($(d)+(-\arwlen,\arwlen)$)->(d);

    \draw[thick,-stealth] (c)->($(c)+(-\arwlen,\arwlen)$);

    \draw[thick,-stealth] ($(b)+(\arwlen,\arwlen)$)->(b);

    \node[circle,scale=0.4,fill=gray] at (o) {};
   
    \node[circle,scale=0.4,fill=gray] at (T)  {}; 

    \node[circle,scale=0.4,fill=gray,label={[label distance=0pt]above:{\small$\mathsf a$}}] at (a){};
    
    \node[circle,scale=0.4,fill=gray,label={[label distance=0pt]above:{\small$\mathsf b$}}] at (b) {};
    
    \node[circle,scale=0.4,fill=gray,label={[label distance=0pt]below:{\small$\mathsf c$}}] at (c) {};
    
    \node[circle,scale=0.4,fill=gray,label={[label distance=0pt]right:{\small$\mathsf d$}}] at (d) {};

    \end{scope}
\end{tikzpicture}
    \caption{Illustration of the effect of holes (in dark green) on the level lines. Since both holes are North of the level line, no tiling level line satisfying the boundary condition can enter the light green region. The preprocessing replaces the contour from $\mathsf a$ to $\mathsf b$ by a line, plus a loop around $\mathsf c,\mathsf d$ that is deleted from~$\Gamma_h^\reg$.}
    \label{fig:alg-with-holes}
    \vspace{-0.1in}
\end{figure}
%Observe that given a region of $\Z^2$, one can define a partial order ``being to the north of'' partial order on simple paths connecting the boundary which we will denote by $\succ$ in this proof.
Let $\succeq$ be the partial order on level lines defined as follows. By construction $D$ is a simply connected domain and a level line $\Gamma$ is a self-avoiding curve connecting two points of $\partial D$; therefore, $\Gamma$ divides $D$ into two connected components. Since the boundary conditions on $D$ are compatible with a tiling (namely, $\varphi$), the two boundary points $u,v$ of $\Gamma$ cannot have $u_2-u_1=v_2-v_1$ (a level line of the tiling $\varphi$ increments $x_2-x_1$ deterministically by $1$ in each step going from West to East), so we orient the path from West to East and denote the connected component to the left/right of $\Gamma$ as the North/South ones. We identify $\Gamma$ with the indicator function of the North component and we let $\succeq$ be inherited from the usual partial order on functions. Notice that, thanks to this identification, one can also define the maximum and minimum between level lines by applying the operation on the associated functions but in general doing so may create loops. Finally, if $\Gamma$, $\Gamma'$ are the level lines of two tilings with the same boundary points, then they can be viewed as functions of $x_2-x_1$, whence the relation $\Gamma \succeq \Gamma'$ is equivalent to $\Gamma\leq \Gamma'$ for the usual partial order on functions.
%Extend $\varphi$ into a tiling of the whole plane (recall that $\varphi$ also tiles the holes of $S$, and we can extend $\varphi$ so that, far from $S$, it is for instance repeatedly using hexagons with lozenge density $\frac13,\frac13,\frac13$). 
%Extend $h$ to the whole plane by setting $h = \varphi$ outside of the domain. Given two curves $\Gamma$, $\Gamma'$ connecting points where some level lines of $h$ start and ends, we can extend the curves to infinity (still keeping the same notation) and define the half planes above $\Gamma$ and $\Gamma'$ (each including the corresponding curve). We say that $\Gamma \succeq \Gamma'$ if $\Gamma$ is in the half plane above $\Gamma'$. Note that if they are in fact tilings level lines (i.e., functions of the first coordinate), this is equivalent to saying that $\Gamma(u_1) \geq \Gamma'(u_1)$ for all $u_1$ where they are both defined.

\begin{claim}\label{claim:level_line_order}
%    The set of monotone functions satisfying the boundary conditions of $S$ has a maximum $\psi_{\max}$ and minimum $\psi_{\min}$. 
In each level line $\Gamma$ of $\psi_{\max}$, no connected component of $\Gamma \cap \mathring{S}$ (where $\mathring S=S\setminus \partial S$) has a \northeast step immediately followed by a \southeast one, and similarly, no level line $\Gamma $ of $\psi_{\min}$ admits a \southeast step immediately followed by a \northeast step. Finally, if $\Gamma_{\max}$ and $\Gamma_{\min}$ are level lines bounding the same level, then the set of faces of $S$ between $\Gamma_{\max}$ and $\Gamma_{\min}$ is simply connected.
\end{claim}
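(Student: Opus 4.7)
My plan is to establish the three assertions in turn: the first two by a standard local hexagon-flip argument on lozenge tilings, and the third by a topological argument on the region between comparable level lines, combined with the fact that $\psi_{\max}$, $\psi_{\min}$, and $\varphi$ all coincide on every hole of $S$.

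For the first statement, I would fix a level $k$ and suppose toward a contradiction that some component of $\Gamma \cap \mathring S$ contains a \northeast step immediately followed by a \southeast step, shared at an interior vertex $v \in \mathring S$. In the lozenge-tiling picture of $\psi_{\max}$ this corresponds to the unique flippable hexagonal pattern around $v$: the three tiles of $\psi_{\max}$ meeting at $v$ fill a hexagon that can be rotated by $60^\circ$ to produce a different lozenge tiling $\psi'$ agreeing with $\psi_{\max}$ outside that hexagon. Since the hexagon is contained in $\mathring S$, the tiling $\psi'$ still agrees with $\varphi$ on $D \setminus S$ and on $\partial S$, so it is a monotone surface compatible with the boundary data. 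By the standard height-change rule for hexagon flips, the $\cP_{001}$-height of $\psi'$ exceeds that of $\psi_{\max}$ by $1$ at a single face of $\mathring S$ and equals it elsewhere, contradicting the maximality of $\psi_{\max}$. The $\psi_{\min}$ case follows by swapping the roles of \northeast and \southeast (and of ``raise'' with ``lower'' in the flip).

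For the third assertion, I would first observe that both $\Gamma_{\max}$ and $\Gamma_{\min}$ enter and exit the simply connected domain $D$ at the same boundary points: each step of a tiling level line deterministically increments $x_2 - x_1$ by $1$, so the points at which a level-$k$ line crosses $\partial D$ are entirely determined by the boundary heights, which are common to $\psi_{\max}$ and $\psi_{\min}$ (both agree with $\varphi$ on $\partial D$). Consequently $\Gamma_{\max}$ and $\Gamma_{\min}$ are simple curves in $D$ with the same endpoints and with $\Gamma_{\max} \succeq \Gamma_{\min}$; identifying each with the graph of a monotone function of $x_2 - x_1$ as in the preamble, the region $R$ between them is a disjoint union of topological disks in $D$, in particular simply connected as a subset of $D$.

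It remains to verify that $R$ contains no hole of $S$, which will upgrade simple connectedness in $D$ to simple connectedness in $S$. Because $\psi_{\max}$ and $\psi_{\min}$ both agree with $\varphi$ on $D \setminus S$, at every face $x$ of any hole $H$ of $S$ one has $\psi_{\max}(x) = \psi_{\min}(x)$, so it is impossible that $\psi_{\min}(x) < k \leq \psi_{\max}(x)$. On the other hand, under the level-line identifications one has $R = \{x \in D : \psi_{\min}(x) < k \leq \psi_{\max}(x)\}$, whence $R \cap (D \setminus S) = \emptyset$ and therefore $R \subset S$ is the desired simply connected region. The main delicate point of the proof is this final step: the first two assertions reduce to the familiar extremality-via-flips mechanism, while the third relies on the tiling-specific fact that both extremal level lines share the same boundary endpoints (ensuring that the ``between'' region is a disk) together with the coincidence $\psi_{\max}|_H = \psi_{\min}|_H$ on holes (ruling out trapped holes).
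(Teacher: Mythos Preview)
Your proof is correct. For the first two assertions, your hexagon-flip argument is essentially the same as the paper's: the paper phrases it as flipping the \northeast--\southeast pattern to \southeast--\northeast in the \emph{lowest} level line having that pattern above the face $f$, thereby increasing $h(f)$ by $1$ and contradicting maximality. This is precisely the $\cP_{001}$ translation of your lozenge flip (a minor imprecision: the flippable hexagon sits at the vertex of $\T$ determined by $f$ and $\psi_{\max}(f)$, not literally at the $\Z^2$ vertex $v$, but this does not affect the argument).

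For the third assertion your route is genuinely different. The paper argues via flip-connectivity: it shows that for any two comparable tilings $\psi\geq\psi'$ with the same (possibly non-simply-connected) boundary data, whenever $\psi_{111}(v)>\psi'_{111}(v)$ one can walk along $\psi$ to a vertex $v'\notin D\setminus S$ where a flip is possible, so $\psi$ and $\psi'$ are connected by flips inside $\mathring S$; simple connectedness of the region between $\Gamma_{\max}$ and $\Gamma_{\min}$ is then extracted from this. You bypass flip-connectivity entirely: since $\Gamma_{\max}$ and $\Gamma_{\min}$ are both graphs of functions of $x_2-x_1$ with common endpoints on $\partial D$, the region between them is automatically a union of disks in the simply connected $D$; and your identification $R=\{x:\psi_{\min}(x)<k\leq\psi_{\max}(x)\}$ combined with $\psi_{\max}=\psi_{\min}=\varphi$ on every hole directly rules out trapped holes. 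Your argument is more elementary and more direct for what the claim actually asserts; the paper's proof establishes the stronger flip-connectivity fact along the way, but that extra strength is not invoked elsewhere for this particular claim.
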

\begin{proof}
Fix $\Gamma$ a level line of $\psi_{\max}$, assume by contradiction that there is a \northeast step followed by a \southeast step above a face $f \in S$, and let $\Gamma'$ be the lowest level line using such steps above $f$. Flipping these two steps in $\Gamma'$ (to \southeast followed by \northeast) changes only the height at $f$---increasing it by $1$---thus still satisfies the boundary condition but contradicts the maximality of $\psi_{\max}$. Finally, the last statement follows from the fact that the set of monotone functions with given boundary condition is connected by flips as above. Indeed, fix $\psi \geq \psi'$ two tilings with the same (possibly non-simply connected) boundary condition. Viewing them both as tilings, let $v \in \T$ such that $\psi_{111}(v) > \psi'_{111}(v)$, move along the directions of the projection of the standard basis without crossing a tile of $\psi$ until we are stuck at a point $v'$. Along the path, it is easy to check that $\psi - \psi'$ is non-decreasing so $\psi_{111}(v') > \psi'_{111}(v')$ and $v'$ is not in a hole while by construction a flip is possible at the point $v'$.
\end{proof}

\begin{claim}\label{claim:def_gammahr}
    Let $h$ be an \SOS height function and $h^\reg := (h \wedge \psi_{\max}) \vee \psi_{\min}$. Let $\Gamma_h, \Gamma_h^\reg, \Gamma_{\max}, \Gamma_{\min}$ be the level lines associated to some level. Then
    $\Gamma_h^\reg = (\Gamma_h \wedge \Gamma_{\max}) \vee \Gamma_{\min}$. In particular, $\Gamma_h^\reg$ stays in the closed domain between $\Gamma_{\max}$ and $\Gamma_{\min}$ and its intersection with $\mathring{S}$ is the same as $\Gamma_h$.
\end{claim}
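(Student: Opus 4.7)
The plan is to translate the whole statement into an elementary identity on super-level sets. For a height function $f$ on $D$ and an integer $k$, set $L_k(f) := \{x\in D : f(x)\geq k\}$. Under the identification of a level line $\Gamma_f$ at level $k$ with the indicator $\one_{L_k(f)}$ of its North component---as described just before the claim---the partial order $\succeq$ on level lines is the pointwise order of these indicators, and the operations $\wedge$, $\vee$ on level lines correspond to intersection and union of the associated level sets.

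With this translation, the identity $\Gamma_h^\reg = (\Gamma_h\wedge\Gamma_{\max})\vee\Gamma_{\min}$ reduces to the elementary observation that for any two functions $f,g$ on $D$,
\[ L_k(f\wedge g)=L_k(f)\cap L_k(g),\qquad L_k(f\vee g)=L_k(f)\cup L_k(g), \]
since $(f\wedge g)(x)\geq k$ iff both $f(x)\geq k$ and $g(x)\geq k$, and dually for $\vee$. Applied twice to $h^\reg = (h\wedge\psi_{\max})\vee\psi_{\min}$, this yields
\[ L_k(h^\reg) = \bigl(L_k(h)\cap L_k(\psi_{\max})\bigr)\cup L_k(\psi_{\min}), \]
which, back in level-line language, is exactly the desired identity.

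The two ``in particular'' statements are then immediate consequences. From $L_k(\psi_{\min})\subseteq L_k(h^\reg)\subseteq L_k(\psi_{\max})$ we immediately get $\Gamma_{\min}\preceq \Gamma_h^\reg\preceq\Gamma_{\max}$, i.e., each component of $\Gamma_h^\reg$ is trapped in the closed region between $\Gamma_{\max}$ and $\Gamma_{\min}$ (a region shown to be simply connected in \cref{claim:level_line_order}). For the intersection with $\mathring S$, we use that outside $\mathring S$ the three functions $h,\psi_{\max},\psi_{\min}$ all coincide with $\varphi$, so the sandwiching operations can alter $h$ only at faces of $\mathring S$; a short per-dual-edge inspection then shows that the level-$k$ transitions of $h^\reg$ in $\mathring S$ coincide with those of $h$ there. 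The only real conceptual step is the level-set reformulation; after that the rest is a direct verification, and I do not anticipate any serious obstacle (any spurious loops that might appear or disappear through the sandwiching are in any case going to be discarded in the separate loop-erasure step of the algorithm).
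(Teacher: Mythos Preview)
Your proposal is correct and follows essentially the same route as the paper: both identify the level line with the indicator of its super-level set and then use the elementary fact that $L_k(f\wedge g)=L_k(f)\cap L_k(g)$ and $L_k(f\vee g)=L_k(f)\cup L_k(g)$, which is exactly the computation the paper carries out with $\one_{\{(h\wedge\psi_{\max})\vee\psi_{\min}>\ell\}}$. The paper's proof does not separately address the two ``in particular'' consequences either, so your brief remarks there are already more than what the paper provides.
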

\begin{proof}
    For this proof, we associate a level line with a $\{0,1\}$ value function as above. Fix a level $\ell$ and denote by $\Gamma_h$ the level line separating levels $\ell$ and $\ell+1$. Note that for any face $v$, we have $\Gamma_h(v) = \one_{\{ h(v) > \ell\}} $. With this notation, we see that
    \begin{align*}
    \one_{\{(h \wedge \psi_{\max}) \vee \psi_{\min} > \ell\}} & = \one_{\{(h \wedge \psi_{\max}) > \ell\}} \vee \one_{\{\psi_{\min} > \ell\}}\\
    & = (\one_{\{h > \ell\}} \wedge \one_{\{\psi_{\max} > \ell\}}) \vee \one_{\{\psi_{\min} > \ell\}}\, ,
    \end{align*}
    as desired.
\end{proof}

\begin{claim}\label{clm:se-followed-by-ne}
Let $\mathsf a$ be a point of $\Gamma_{\min}$ preceded by a \southeast step and followed by a \northeast step. The \southeast and \southwest oriented half-lines starting from $\mathsf a + \epsilon e_{\uparrow}$ are crossed an odd number of times (in particular at least once) by $\Gamma_h$. The symmetric statement holds for $\Gamma_{\max}$.
\end{claim}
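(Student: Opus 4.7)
The plan combines a local-flip argument (to pin $h$ above the level in the face just above $\mathsf a$) with a monotonicity argument along the half-line (to show $h$ drops below the level before exiting $D$). The incoming \southeast step and outgoing \northeast step at $\mathsf a$ meet at an upward-opening V, so the face $f := \mathsf a + e_{\uparrow}$ sits inside the V; the level-line convention places $f$ on the North side of $\Gamma_{\min}$, so $\psi_{\min}(f) \geq k$ (where $k$ is the level), while the two faces on the opposite sides of the V's arms satisfy $\psi_{\min} = k - 1$.

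First I would show that $f \notin \mathring{S}$. If $f$ were an interior face of $S$, the local flip at $\mathsf a$---replacing the \southeast-\northeast pair by \northeast-\southeast---would lower $\psi_{\min}(f)$ by exactly one and leave every other face unchanged; the result is still a valid monotone tiling compatible with the boundary data (since the two relevant neighbors already have height $\psi_{\min}(f) - 1$), contradicting the minimality of $\psi_{\min}$, in direct analogy with \cref{claim:level_line_order}. Hence $f$ lies on $\partial S$ or in a hole of $S$, and in either case the boundary data and hole-extension rule force $h(f) = \varphi(f) = \psi_{\min}(f) \geq k$; in particular $h(\mathsf a') \geq k$ for $\mathsf a' := \mathsf a + \epsilon e_{\uparrow}$.

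Next, consider the \southeast half-line from $\mathsf a'$. Since the ray keeps the coordinate $x_1$ constant while increasing $x_2$, the monotonicity of $\psi_{\min}$ (as a decreasing function of both coordinates) forces $\psi_{\min}$ to be non-increasing along the ray. Geometrically, the ray exits $f$ immediately through the edge coinciding with the outgoing \northeast arm of $\Gamma_{\min}$ at $\mathsf a$, landing in the adjacent face on the South side of $\Gamma_{\min}$ where $\psi_{\min} = k - 1$, and thereafter $\psi_{\min}$ remains strictly below $k$ on every in-$D$ face of the ray until the first exit point $\mathsf b \in \partial D$; there $h(\mathsf b) = \varphi(\mathsf b) = \psi_{\min}(\mathsf b) < k$. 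The parity of the number of crossings of $\Gamma_h$ along the segment from $\mathsf a'$ to $\mathsf b$ is thus $\one_{\{h(\mathsf a') \geq k\}} \oplus \one_{\{h(\mathsf b) \geq k\}} = 1$, giving the odd-parity claim for the first in-$D$ segment (and in particular the ``at least once'' assertion). The \southwest half-line is handled by the mirror argument swapping the roles of $x_1$ and $x_2$, and the statement for $\Gamma_{\max}$ follows by reversing the orientations and using $\psi_{\max}$ in place of $\psi_{\min}$.

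The main subtlety will be upgrading ``odd parity on the first in-$D$ segment'' to ``odd parity on the whole half-line'' in the case that $D$ is simply connected but non-convex, so that the ray can exit and re-enter $D$ through several segments. I would close this gap by reapplying the same monotonicity of $\psi_{\min}$ along the \southeast direction to each later re-entry-to-exit segment: each such segment starts at a point of $\partial D$ where $\psi_{\min} = \varphi$, monotonicity pins its parity contribution, and the sum of these later contributions comes out even, so the total parity along the whole half-line inherits the odd parity of the first segment.
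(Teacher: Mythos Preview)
Your approach is essentially the paper's: both use that the face above $\mathsf a$ cannot lie in $\mathring S$ (you reprove this via the local flip; the paper simply cites \cref{claim:level_line_order}) to force $\Gamma_h(\mathsf a+\epsilon e_\uparrow)=1$ from the shared boundary data, and then argue the first exit point on the half-line has value $0$. The paper phrases everything via the $\{0,1\}$-indicator functions of $\Gamma_h$ and $\Gamma_{\min}$ rather than heights, and---like your main argument---treats only the first in-$D$ segment; your re-entry paragraph fills a detail the paper leaves implicit (note it is cleaner to say the half-line, being monotone in $x_2-x_1$, can cross the monotone curve $\Gamma_{\min}$ at most once, so every later boundary point has $\Gamma_{\min}=0$, rather than invoking monotonicity of $\psi_{\min}$ across a gap outside $D$).
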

\begin{proof}
 By \cref{claim:level_line_order}, $\mathsf a$ must be a boundary point and, since $\mathsf a$ is on $\Gamma_{\min}$, we have $\Gamma_{\min}(\mathsf a + \epsilon e_\uparrow) = 1$ for all $\epsilon \in (0,1)$. By definition, $\Gamma_h$ and $\Gamma_{\min}$ have the same boundary condition when they are both viewed as functions from $D$ to $\{0,1\}$ so this must also be the case for $\Gamma_h$. On the other hand, since $\mathsf a$ is followed by a $\northeast$ step, the half line starting from $\mathsf a + \epsilon e_\uparrow$ oriented \southwest crosses $\Gamma_{\min}$ once so the boundary condition at the first point where it intersects $\partial D$ (call it $\mathsf b^\epsilon$) must be a $0$. Since $\Gamma_h(\mathsf a + \epsilon e_\uparrow) = 1$ but $\Gamma_h(\mathsf b^\epsilon) = 0$, $\Gamma_h$ must cross the line from $\mathsf a + \epsilon e_\uparrow$ to $\mathsf b^\epsilon$ an odd number of times.
\end{proof}

We say that $t$ is an agreement time for $\Gamma_h^\reg$ if at that time the algorithm is in \cref{it:alg-iter} or if it is at one of the endpoints $t_i', t_i$.
\begin{claim}\label{clm:QE-non-intersecting}
    If $s\leq t$ are two agreement times for $\Gamma_h^\reg$, then $\cQ_\east(\Gamma^\reg_h(s)) \subset \cQ_\east(\Gamma_h^\reg(t))$ and $\Gamma_h^\reg[0, t]$ does not intersect $\cQ_\east(\Gamma_h^\reg(t))$.
\end{claim}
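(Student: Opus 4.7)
The plan is to prove both statements simultaneously by induction on the agreement time $t$. Since $\subset$ is transitive and $\Gamma_h^\reg[0,t]=\Gamma_h^\reg[0,t^-]\cup\Gamma_h^\reg[t^-,t]$, it suffices to handle consecutive agreement times $t^-<t$. Two cases arise:
\begin{enumerate}[(A)]
\item $t^-,t$ both lie in a copying phase $[t_{j-1},t_j']$ with $t=t^-+1$, so that $\Gamma_h^\reg(t)-\Gamma_h^\reg(t^-)$ is a single \southeast or \northeast step;
\item $t^-=t_i'$ and $t=t_i$, so that $\mathsf b_i:=\Gamma_h^\reg(t_i)\in\cQ_\east(\mathsf a_i)$ by the defining first-re-entry property of $t_i$.
\end{enumerate}
In both cases, $\Gamma_h^\reg(t^-)-\Gamma_h^\reg(t)$ lies in the closure of the cone $\{u_1>0,u_2<0\}$ defining $\cQ_\east$ (on the boundary in case (A), in the interior in case (B)). Part (a) is then immediate by cone arithmetic: for any $v\in\cQ_\east(\Gamma_h^\reg(t^-))$, the identity
\[
v-\Gamma_h^\reg(t)\;=\;\bigl(v-\Gamma_h^\reg(t^-)\bigr)+\bigl(\Gamma_h^\reg(t^-)-\Gamma_h^\reg(t)\bigr)
\]
expresses $v-\Gamma_h^\reg(t)$ as the sum of a vector in the open defining cone and one in its closure, which again lies in the open cone, so $v\in\cQ_\east(\Gamma_h^\reg(t))$.

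For part (b) we split $\Gamma_h^\reg[0,t]$ into the added piece $\Gamma_h^\reg[t^-,t]$ and the prior piece $\Gamma_h^\reg[0,t^-]$. The added piece is trivial in case (A): the only new point is $\Gamma_h^\reg(t^-)$, which by the above lies on the boundary of $\cQ_\east(\Gamma_h^\reg(t))$ and hence not in the open set. In case (B) the first-re-entry property gives $\Gamma_h^\reg[t_i',t_i)\cap\cQ_\east(\mathsf a_i)=\emptyset$, which combines with the inclusion $\cQ_\east(\mathsf b_i)\subset\cQ_\east(\mathsf a_i)$ (applying part (a) in reverse to the vector $\mathsf b_i-\mathsf a_i\in\cQ_\east(\mathsf a_i)$) to yield the required avoidance of $\cQ_\east(\Gamma_h^\reg(t))=\cQ_\east(\mathsf b_i)$. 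The prior piece is controlled by the inductive hypothesis, which gives avoidance of $\cQ_\east(\Gamma_h^\reg(t^-))$; strengthening this to avoidance of the (strictly) larger $\cQ_\east(\Gamma_h^\reg(t))$ requires showing that the ``new slice''
\[
\cQ_\east(\Gamma_h^\reg(t))\setminus\cQ_\east(\Gamma_h^\reg(t^-))
\]
is not visited by the earlier trajectory. In case (A) this slice is a single half-line adjacent to the current step direction; in case (B) its non-visitation follows once more from the first-re-entry property at $t_i$ combined with the inductive hypothesis at $t_{i-1}$.

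The main obstacle will be the new-slice analysis in case (A): one must verify by a geometric case check that, using the monotone \southeast/\northeast structure of the copying phase together with the inductive hypothesis at $t^-$, the trajectory has not previously visited the half-line added to $\cQ_\east$ by the step $t^-\to t$. This step is delicate because, a priori, prior copying moves could have landed in that half-line, and ruling this out requires combining the cone geometry with the fact that $\cQ_\east$ is defined as the \emph{open} quarter-plane.
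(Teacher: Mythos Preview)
The paper's argument is direct and avoids induction entirely. It observes that every agreement time $s$ of $\Gamma_h^\reg$ corresponds, in an order-preserving way, to a time $\tilde s$ on the monotone (\northeast/\southeast-only) curve $\Gamma_\psi$: the algorithm scans $\Gamma_h^\reg$ and builds $\Gamma_\psi$ simultaneously from west to east. Part (a) is then immediate from the monotonicity of $\Gamma_\psi$. For part (b) one splits into two cases: if $s<t$ is an agreement time, the same monotonicity together with self-avoidance places $\Gamma_h^\reg(s)=\Gamma_\psi(\tilde s)$ outside the relevant quarter at $\Gamma_\psi(\tilde t)$; if $s$ lies in an excursion $(t_i',t_i)$, the first-re-entry definition of $t_i$ says the whole excursion avoids $\cQ_\east(\mathsf a_i)$, and one application of part (a) (relating the quarter at $t_i'$ to the one at $t$) finishes. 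No step-by-step induction and no slice analysis are needed.

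Your Case~(B) is internally inconsistent, and this is not just a typo but the place where the argument breaks. To prove part (a) at the step $(t^-,t)=(t_i',t_i)$ you assert that $\Gamma_h^\reg(t^-)-\Gamma_h^\reg(t)=\mathsf a_i-\mathsf b_i$ lies in the interior of the defining cone of $\cQ_\east$. A few lines later, to obtain $\cQ_\east(\mathsf b_i)\subset\cQ_\east(\mathsf a_i)$, you place the \emph{opposite} vector $\mathsf b_i-\mathsf a_i$ in the same cone. Both cannot hold unless $\mathsf a_i=\mathsf b_i$, and the inclusion $\cQ_\east(\mathsf b_i)\subset\cQ_\east(\mathsf a_i)$ also directly contradicts part (a) applied at the agreement times $t_i'<t_i$. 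The ``new-slice'' obstacle you isolate in Case~(A) is a byproduct of the same reversal: once the containment in part (a) is oriented so that the quarter at $t$ is contained in the quarter at $t^-$ (which is the direction the paper actually uses in its last sentence and the one needed for the excursion step), the inductive hypothesis at $t^-$ already gives avoidance of $\cQ_\east(\Gamma_h^\reg(t))$ by the prior piece, and there is nothing left to analyze. If instead the quarter genuinely grows from $t^-$ to $t$, your slice analysis cannot be completed---already for $\Gamma_h^\reg=\Gamma_\psi$ doing one \southeast step followed by one \northeast step, the starting point lies strictly inside the enlarged quarter at time $2$.
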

\begin{proof}
    %By definition $\Gamma_h(t_i') \in \partial \cQ_i$ and $\Gamma_h[t_i', t_{i+1}]$ only contains \northeast and \southeast steps so one must have $\Gamma_h(t_{i+1} \in \overline{\cQ_{i+1}})$ and $\cQ_{i+1} \subset \cQ_i$.
    By definition of agreement times there exists $\tilde s, \tilde t$ such that $\Gamma_h^\reg(s) = \Gamma_\psi(\tilde s)$ and $\Gamma_h^\reg(t) =\Gamma_\psi(\tilde t)$ and by construction one must have $\tilde s \leq \tilde t$. Since $\Gamma_\psi$ only moves in the \southeast and \northeast directions, we must have $\cQ_\east(\Gamma_h^\reg(s)) \subset \cQ_\east(\Gamma_h^\reg(t))$.
    Combined with the fact that $\Gamma_h^\reg$ is self-avoiding, we see that $\Gamma_h^\reg(s)$ is outside of the closure of $\cQ_\east(\Gamma_h^\reg(t))$ at any agreement time $s < t$. Any other time $s$ must be part of an excursion $(t'_i, t_i)$ but then $\Gamma_h^\reg(s) \notin \cQ_\east(\Gamma_h^\reg(t_i)) \supset \cQ_\east(\Gamma_h^\reg(t))$.  
    %For a time $t$ where $\Gamma_h (t) \notin \Gamma_\psi$, it must be part of an excursion of index $j < i$. By definition it must therefore satisfy $\Gamma_h(t) \notin \cQ_j$ which concludes by the previous claim.
\end{proof}

\begin{claim}\label{clm:alg-image-monotone}
    The image of $h$ under the algorithm is a monotone function.
\end{claim}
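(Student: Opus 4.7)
The goal is to show that the tiling $\psi$ assembled from the algorithm's output is a monotone surface, which reduces to verifying (i) every level line $\Gamma_{\psi,k}$ is a legitimate tiling level line (using only \northeast and \southeast steps), and (ii) distinct level lines are properly nested so that $\psi$ is well-defined as a monotone function.

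For (i), fix a height $k$ and inspect the two actions of the algorithm. In \cref{it:alg-iter} the algorithm copies $\Gamma_h^\reg$ up to the first \southwest or \northwest step, so every step transcribed into $\Gamma_{\psi,k}$ is \northeast or \southeast by construction. In \cref{it:alg-rentry} a segment from $\mathsf a_i$ to $\mathsf b_i\in\cQ_\east(\mathsf a_i)$ is appended; the quarter plane $\cQ_\east(\mathsf a_i)$ is precisely (with the convention of \cref{clm:QE-non-intersecting}) the set of points joinable to $\mathsf a_i$ by a concatenation of \northeast and \southeast steps, so this segment is a valid tiling path. Moreover, \cref{clm:QE-non-intersecting} guarantees that the newly inserted segment does not meet the portion of $\Gamma_{\psi,k}$ already constructed, so $\Gamma_{\psi,k}$ is a self-avoiding tiling level line.

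For (ii), I would argue by a parallel comparison between adjacent levels. Via \cref{claim:def_gammahr}, every $\Gamma_{h^\reg,k}$ lies between $\Gamma_{\psi_{\min},k}$ and $\Gamma_{\psi_{\max},k}$, which are themselves nested in $k$ since $\psi_{\min}$ and $\psi_{\max}$ are monotone; thus inside the simply-connected region between them (\cref{claim:level_line_order}) the inputs are ordered, $\Gamma_{h^\reg,k}\succeq\Gamma_{h^\reg,k+1}$. The algorithm preserves this order because each shortcut $[\mathsf a_i,\mathsf b_i]$ at level $k$ has endpoints on $\Gamma_{h^\reg,k}$ and is interpolated by \northeast/\southeast steps that remain in the closure of the North component of $\Gamma_{h^\reg,k}$; hence it cannot cross the output level line at height $k+1$, which by induction stays in the South component of $\Gamma_{h^\reg,k}$.

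The main technical obstacle I anticipate is handling the asynchrony of shortcut modes across levels: the times $t'_i,t_i$ of the two adjacent levels need not align, so one must carefully combine the geometric constraint above with a bookkeeping of when one level line is in shortcut mode while the other is still transcribing its original $\Gamma_h^\reg$. Once both (i) and (ii) are secured, $\psi$ is assembled unambiguously from non-crossing tiling level lines and is therefore a monotone surface.
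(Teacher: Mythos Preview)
Your part (i) is fine. Part (ii) has a genuine gap: the central geometric assertion—that each shortcut $[\mathsf a_i,\mathsf b_i]$ remains in the closure of the North component of $\Gamma_{h,k}^\reg$—is false in general. If the excursion begins with a \northwest step (rather than a \southwest one), the excursion loops above the straight segment $[\mathsf a_i,\mathsf b_i]$, and the region enclosed between them lies in the \emph{South} component of $\Gamma_{h,k}^\reg$; the shortcut then sits in the closure of the South component, not the North. Consequently the shortcut at level $k$ and the output at level $k+1$ can both lie in the South of $\Gamma_{h,k}^\reg$, and your separation argument collapses. The companion inductive claim that $\Gamma_{\psi,k+1}$ stays in the South of $\Gamma_{h,k}^\reg$ is also not set up: an induction on the level index would compare level $k+1$ to level $k+2$, not to level $k$, so the hypothesis you invoke is not available.

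The paper avoids this sandwiching attempt entirely. It argues by contradiction on the output: given two ordered inputs $\underline\Gamma_h^\reg \succeq \overline\Gamma_h^\reg$, take the first point $u$ where $\underline\Gamma_\psi$ and $\overline\Gamma_\psi$ coincide but then separate with the wrong orientation ($\overline\Gamma_\psi$ doing a \southeast step while $\underline\Gamma_\psi$ does a \northeast one). Using \cref{clm:QE-non-intersecting}, each input curve up to the relevant time connects $\partial D$ to a prescribed side of $\partial\cQ_\east(u)$ while staying in $D\setminus\cQ_\east(u)$. A planarity argument on the cyclic order of the four landing points along $\partial(D\setminus\cQ_\east(u))$ then forces $\underline\Gamma_h^\reg$ and $\overline\Gamma_h^\reg$ to cross, contradicting their ordering. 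This handles the asynchrony you anticipate automatically, since the argument looks only at the single point $u$ and never needs to locate any shortcut relative to either input curve.
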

\begin{proof}
    The function $h^\reg$ is always well defined so we only need to consider the algorithm starting from it.
    Fix two level lines $\underline\Gamma_h^\reg, \overline\Gamma_h^\reg$ assuming without loss of generality that $ \overline\Gamma_h^\reg \succeq \underline\Gamma_h^\reg$ and let $\underline\Gamma_\psi, \overline\Gamma_\psi$ be their images by the algorithm. We need to prove that they do not intersect.
    
 We assume by contradiction that $\underline\Gamma_\psi\nsucceq  \overline\Gamma_\psi $ and let $u$ be the first time where they agree but $\overline\Gamma_\psi$ does a \southeast step while $\underline\Gamma_\psi$ does a \northeast step. Since $\overline \Gamma_h^\reg\succeq \underline\Gamma_h^\reg $, it cannot be the case that both $\overline\Gamma_h^\reg$ and $\underline\Gamma_h^\reg$ agree with $\overline\Gamma_\psi, \underline \Gamma_\psi$ immediately after $u$.

If $u$ is in an excursion in both, writing their indexes $i,j$ respectively, by the claim $\overline\Gamma_h^\reg[0, t'_i]$ does not enter $\cQ_\east(\overline \Gamma_h^\reg(t'_i))$ and  $\underline\Gamma_h^\reg[0, t'_j]$ does not enter $\cQ_\east(\underline \Gamma_h^\reg(t'_j))$ and therefore by construction $\overline\Gamma_h^\reg[0, t_i]$ connects $\partial D$ to the \southeast oriented part of $\cQ_\east(u)$ while $\underline\Gamma_h^\reg$ connects $\partial D$ to the \northeast oriented part, both staying in $D \setminus \cQ_\east(u)$ (in fact here, we can even replace $D$ by a large enough ball around $u$, extending the level lines periodically if necessary). Combining this observation with the fact that $\overline\Gamma_h^\reg \succeq \underline\Gamma_h^\reg$, going along $\partial( D \setminus \cQ_\east(u))$ in positive order, one must encounter $\underline\Gamma_h^\reg(0), \overline\Gamma_h^\reg(0), \underline\Gamma_h^\reg(t_j), \overline\Gamma_h^\reg(t_i)$ in that order but this contradicts the assumption that they do not intersect. 
If $u$ is in an excursion only say in $\overline\Gamma_h^\reg$, we apply the same argument using $\overline\Gamma_h^\reg[0, t_i]$ and $\underline\Gamma_h^\reg$ up to the end of the \southeast step after $u$.
\end{proof}

\begin{claim}\label{clm:alg-image-sat-bc}
    The image of $h$ by the algorithm satisfies the boundary condition.
\end{claim}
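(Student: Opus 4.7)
The plan is to reduce the boundary condition on $\psi$ to showing that, for each level, the output level line $\Gamma_\psi$ produced by the algorithm shares its boundary endpoints on $\partial D$ with the corresponding $\Gamma_h^\reg$; since $h^\reg = \varphi$ on $D\setminus S$, those endpoints match the endpoints of the level line of $\varphi$. The starting endpoint is immediate: in the first pass through \cref{it:alg-iter}, $\Gamma_\psi$ is initialized by copying the initial segment of $\Gamma_h^\reg$, so both begin at the same boundary vertex.

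For the ending endpoint, I would combine three ingredients. By \cref{claim:def_gammahr}, one has $\Gamma_{\min} \preceq \Gamma_h^\reg \preceq \Gamma_{\max}$, where $\Gamma_{\min}, \Gamma_{\max}$ are the corresponding level lines of $\psi_{\min}, \psi_{\max}$ and in particular share a common starting and a common ending boundary point (these are dictated by the boundary condition of $\varphi$). Since $\Gamma_{\min}$ and $\Gamma_{\max}$ consist solely of \northeast and \southeast steps, running the algorithm on either returns the curve unchanged: the predicate in \cref{it:alg-iter} is never triggered, so \cref{it:alg-rentry} is never invoked. The monotonicity argument of \cref{clm:alg-image-monotone} uses only that its two input curves are comparable self-avoiding curves between points of $\partial D$, so it applies equally to the pairs $(\Gamma_h^\reg, \Gamma_{\min})$ and $(\Gamma_h^\reg, \Gamma_{\max})$, yielding $\Gamma_{\min} \preceq \Gamma_\psi \preceq \Gamma_{\max}$. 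Finally, $\Gamma_\psi$ is itself a \northeast/\southeast walk by construction (\cref{it:alg-iter} copies such steps, and \cref{it:alg-rentry} adjoins the monotone segment from $\mathsf a_i$ to $\mathsf b_i \in \cQ_\east(\mathsf a_i)$). A \northeast/\southeast walk sandwiched between $\Gamma_{\min}$ and $\Gamma_{\max}$ and starting at their common starting point cannot leave the region they enclose through either side, and so must exit through the common ending point; hence $\Gamma_\psi$ ends at the endpoint of $\Gamma_h^\reg$.

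One auxiliary point to verify along the way is that \cref{it:alg-rentry} never fails to locate the re-entry time $t_i$, i.e., that $\Gamma_h^\reg$ always re-enters $\cQ_\east(\mathsf a_i)$ before terminating. This again follows from the sandwich together with \cref{clm:QE-non-intersecting}: the common endpoint of $\Gamma_{\min}$ and $\Gamma_h^\reg$ is reachable from $\mathsf a_i$ by the \northeast/\southeast tail of $\Gamma_{\min}$ beyond the coordinate of $\mathsf a_i$ (using $\mathsf a_i \succeq \Gamma_{\min}$), and therefore belongs to $\cQ_\east(\mathsf a_i)$. The main obstacle I expect is making the interaction between $\Gamma_\psi$ and the two sandwiching tiling level lines consistent across iterations in the presence of many alternating \cref{it:alg-iter}/\cref{it:alg-rentry} cycles; this is precisely the nesting/no-backtracking property isolated by \cref{clm:QE-non-intersecting}, which is what allows the sandwiching and the NE/SE structure of $\Gamma_\psi$ to be preserved globally rather than merely locally.
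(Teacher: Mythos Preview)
Your approach is correct and lands on the same key step as the paper: the sandwich $\Gamma_{\min} \preceq \Gamma_\psi \preceq \Gamma_{\max}$, from which $\psi_{\min} \leq \psi \leq \psi_{\max}$ and hence the boundary condition follow. The organization differs slightly. You deduce the sandwich by invoking the algorithm's monotonicity (\cref{clm:alg-image-monotone}) on the pairs $(\Gamma_h^\reg,\Gamma_{\min})$ and $(\Gamma_h^\reg,\Gamma_{\max})$, after noting that tiling level lines are fixed points of the algorithm; this is legitimate since the proof of \cref{clm:alg-image-monotone} uses only comparability and self-avoidance of its two inputs. The paper instead gives a short direct argument: if $\Gamma_\psi$ first departs from $\Gamma_{\max}$ at a point $u$ by taking a \southeast step while $\Gamma_{\max}$ takes a \northeast step, then (since every step of $\Gamma_\psi$ is directed toward some point of $\Gamma_h^\reg$) there must be a point of $\Gamma_h^\reg$ on the \southeast half-line from $u$, contradicting the fact that $\Gamma_h^\reg$ stays between $\Gamma_{\min}$ and $\Gamma_{\max}$. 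The paper's route is shorter because $\Gamma_{\max}$ and $\Gamma_{\min}$ have no excursions, collapsing the case analysis of \cref{clm:alg-image-monotone} to a single line; yours has the virtue of reusing existing machinery. Your separate discussion of endpoints and re-entry is correct but is subsumed once the sandwich holds at the level of height functions.
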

\begin{proof}
    Fix a level line $\Gamma_h^\reg$ and let $\Gamma_\psi$, $\Gamma_{\max}$, $\Gamma_{\min}$ be the associated level line for the image by the algorithm and the maximal and minimal tilings. Let us show that $\Gamma_{\max} \succeq \Gamma_\psi \succeq \Gamma_{\min}$.

    Focusing on the first inequality, let $u$ be the first time where they agree but $\Gamma_{\psi}$ does a \southeast step while $\Gamma_{\max}$ does a \northeast step. Since $\Gamma_{\max}$ is a tiling level line, the half line $u + \{ (0, u_2), u_2 > 0\}$ is below $\Gamma_{\max}$ but since $\Gamma_\psi$ does a \southeast step from $u$ there has to be at least one point of $\Gamma^\reg_h$ on that line. This contradicts the fact that $\Gamma^\reg_h$ stays in the domain between $\Gamma_{\max}$ and $\Gamma_{\min}$.

    Since the order holds for every level, we see that $\psi_{\max} \geq \psi \geq \psi_{\min}$ and $\psi$ must satisfy the boundary condition.
\end{proof}

We now turn to the definition of $\cH'$ and the combinatorial bound on its size. The idea is of course that $\cH'$ will contain the functions where the above algorithm performs ``badly.'' Before doing the actual enumeration, we still need to collect a few geometric facts. Fix $\Gamma_h$ a level line and assume for now that it has no loop. As above, let $\Gamma_h^\reg, \Gamma_{\max}, \Gamma_{\min}, \Gamma_\psi$ be the associated level lines in $h^\reg, \psi_{\max}, \psi_{\min},$ and $\psi$. We divide the steps of $\Gamma_h$ into three types:
\begin{itemize}
    \item (Excursion from $\Gamma_h^\reg$) Any connected component of $\Gamma_h \setminus \Gamma_h^\reg$.
    \item (Agreement step) Any step in $\Gamma_h \cap \Gamma_h^\reg \cap \Gamma_\psi$.
    \item (Excursion from $\Gamma_\psi$) For each connected component of $\Gamma_h^\reg \setminus \Gamma_\psi$, we say that all its intersection with $\Gamma_h$ forms one excursion from $\Gamma_\psi$.
\end{itemize}
We also note that any step in $\Gamma_h^\reg \setminus \Gamma_h$ must be part of either $\Gamma_{\max}$ or $\Gamma_{\min}$. In the algorithm, excursions from $\Gamma_h^\reg$ are created by the preprocessing, agreement steps correspond to \cref{it:alg-iter} and excursions from $\Gamma_\psi$ to \cref{it:alg-rentry}, except that part of the excursions of $\Gamma_h^\reg$ might not be counted. Finally, referring to each \northwest or \southwest move (legal in $h$ but forbidden in $\psi$) as a ``defect,'' we let $d$ be the number of defects on $\Gamma_h$ and $T$ be the length of $\Gamma_\psi$.

\begin{claim}\label{clm:len-Gamma-reg-vs-Gamma}
    The length of $\Gamma_h^\reg$ is at most the length of $\Gamma_h$.
\end{claim}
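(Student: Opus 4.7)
My plan is to prove the statement at each level $k$ separately: fix a level $k$ and show $|\Gamma_h^{\reg}| \leq |\Gamma_h|$ at that level. The argument relies on two ingredients: the submodularity of the perimeter functional, and the fact that $\psi_{\max}$ and $\psi_{\min}$ are tilings, so the level lines $\Gamma_{\max}$ and $\Gamma_{\min}$ are monotone NE/SE paths.

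The first step is to invoke \cref{claim:def_gammahr}, which (after identifying a level line with the indicator of its North region) gives $\Gamma_h^\reg = (\Gamma_h \wedge \Gamma_{\max}) \vee \Gamma_{\min}$, where $\wedge$ and $\vee$ are intersection and union of North regions. Applying the edgewise submodularity estimate $|\partial(A \cap B)| + |\partial(A \cup B)| \leq |\partial A| + |\partial B|$ twice, and using that $\Gamma_{\max} \wedge \Gamma_{\min} = \Gamma_{\min}$ (because $\psi_{\min}\le\psi_{\max}$ implies $R_{\min}\subseteq R_{\max}$), one obtains
\[
|\Gamma_h^\reg| \;\leq\; |\Gamma_h| + \bigl(|\Gamma_{\max}| - |\Gamma_h \vee \Gamma_{\max}|\bigr) + \bigl(|\Gamma_{\min}| - |\Gamma_h \wedge \Gamma_{\min}|\bigr).
\]
It therefore suffices to prove the two inequalities $|\Gamma_h \vee \Gamma_{\max}| \geq |\Gamma_{\max}|$ and $|\Gamma_h \wedge \Gamma_{\min}| \geq |\Gamma_{\min}|$.

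For the first, note that $\Gamma_h \vee \Gamma_{\max}$ is exactly the level-$k$ line of the SOS function $h \vee \psi_{\max}$, which has the same boundary data as $\psi_{\max}$ (all four of $h, \psi_{\max}, \psi_{\min}, \varphi$ agree on $\partial D$). After applying the NE splitting rule to each, the two level sets have the same ordered set of boundary endpoints and the same pairing, since this pairing depends only on the sequence of up/down crossings of level $k$ along $\partial D$. The tiling level line $\Gamma_{\max}$ consists of monotone NE/SE paths between paired endpoints, each of length equal to the $x_1$-span of its endpoints; and in this lattice, any self-avoiding path between two points $p,q$ has length at least $\max(|p_1-q_1|,|p_2-q_2|)$, which equals $q_1 - p_1$ once the tiling compatibility condition $|p_2-q_2|\le q_1-p_1$ holds. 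Thus each path of $\Gamma_h \vee \Gamma_{\max}$ between a matched pair has length at least that of the corresponding path of $\Gamma_{\max}$, and the extra loops in $\Gamma_h \vee \Gamma_{\max}$ only add more length. The symmetric argument, applied to the level-$k$ line of $h \wedge \psi_{\min}$, yields the other inequality.

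The main technical point that must be justified carefully is that the pairing of boundary endpoints is really common to $\Gamma_{\max}$ and $\Gamma_h \vee \Gamma_{\max}$ (and similarly for the other pair). This is essentially bookkeeping about the NE splitting rule restricted to $\partial D$ but, since $D$ is simply connected in the present case and the boundary data coincide, the matching is indeed determined by the boundary condition alone. Everything else in the proof is routine: the submodularity is the standard pointwise inequality over edges, and the identity $\Gamma_{\max}\wedge\Gamma_{\min}=\Gamma_{\min}$ is immediate from the ordering of tilings.
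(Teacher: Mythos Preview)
Your argument is correct (with one caveat below), but it takes a considerably longer route than the paper's. The paper's proof is two lines: since $\Gamma_h^\reg\setminus\Gamma_h$ lies in $\Gamma_{\max}\cup\Gamma_{\min}$, every step of $\Gamma_h^\reg$ not already in $\Gamma_h$ is a \northeast or \southeast step; hence every \northwest/\southwest defect of $\Gamma_h^\reg$ is already a defect of $\Gamma_h$, so $d^\reg\le d$. As $\Gamma_h$ and $\Gamma_h^\reg$ share their endpoints, both lengths equal the common East-displacement $T$ plus twice the respective defect count, giving $|\Gamma_h^\reg|=T+2d^\reg\le T+2d=|\Gamma_h|$. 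This uses the monotonicity of $\Gamma_{\max},\Gamma_{\min}$ \emph{directly on $\Gamma_h^\reg$}, rather than indirectly through $\Gamma_h\vee\Gamma_{\max}$ and $\Gamma_h\wedge\Gamma_{\min}$ as you do via submodularity.

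Two remarks on your write-up. First, your coordinate convention differs from the paper's: you are implicitly using axes in which a single step changes both coordinates by $\pm1$, so that a \northeast/\southeast path has length equal to its $x_1$-span and any path has length at least $\max(|\Delta x_1|,|\Delta x_2|)$. In the paper's coordinates the monotone length is the $L^1$ distance $|\Delta x_1|+|\Delta x_2|$; your formulas are internally consistent but do not match the paper's. Second, your discussion of ``pairings'' is more than is needed and, as stated, not quite right: non-crossing pairings of signed boundary transition points are \emph{not} determined by the boundary data alone in general. In the actual context of the claim a single level line $\Gamma_h$ is fixed, so there is exactly one pair of endpoints and the issue evaporates; your inequalities $|\Gamma_h\vee\Gamma_{\max}|\ge|\Gamma_{\max}|$ and $|\Gamma_h\wedge\Gamma_{\min}|\ge|\Gamma_{\min}|$ then follow immediately from the fact that the interior boundary of $R_h\cup R_{\max}$ (resp.\ $R_h\cap R_{\min}$) contains a path between the two shared endpoints, and any such path has length at least the monotone length.
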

\begin{proof}
Since the starting and ending points of $\Gamma_h$, $\Gamma_h^\reg$ and $\Gamma_\psi$ are all the same by assumption, $| \Gamma_h| = |\Gamma_\psi| + 2 d$. We conclude because, by construction, $\Gamma_h^\reg \setminus \Gamma_h$ can only contain edges appearing either in $\Gamma_{\max}$ or $\Gamma_{\min}$ and in particular only \southeast and \northeast steps.
\end{proof}

\begin{claim}\label{claim:regular_crossing}
For any $t, t'$ such that $\Gamma_h^\reg(t) \in \Gamma_{\min}$ and $\Gamma_h^\reg(t') \in \Gamma_{\min}$, if the first coordinate of $\Gamma_h^\reg(t)$ is smaller than the one of $\Gamma_h^\reg(t')$, i.e., if $\Gamma_h^\reg(t)$ is West of $\Gamma_h^\reg(t')$, then $t < t'$. The analogous statement for $\Gamma_{\max}$ also holds.
\end{claim}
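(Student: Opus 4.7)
The plan is to argue by contradiction via the Jordan curve theorem, exploiting the ordering $\Gamma_h^\reg \succeq \Gamma_{\min}$ established in Claim~\ref{claim:def_gammahr}. Suppose $p := \Gamma_h^\reg(t)$ lies West of $p' := \Gamma_h^\reg(t')$ on $\Gamma_{\min}$, but $t > t'$ (the opposite of what we want). Since $\Gamma_{\min}$ is monotone from $W_0$ (its West endpoint) to $E_0$ (its East endpoint) and has $p$ West of $p'$, we have $p \in \Gamma_{\min}[W_0, p']$. The key topological input is that $\Gamma_h^\reg \succeq \Gamma_{\min}$ does not merely mean an inequality of indicator functions, but in our planar setting it forces the two simple curves to be non-crossing: any encounter is a touch from the same side.

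Form the simple closed curve $L = \Gamma_h^\reg[0,t'] \cup \Gamma_{\min}[W_0,p']$, whose two arcs share the endpoints $W_0$ and $p'$ and may additionally touch (without crossing) at isolated points; after a standard perturbation one may view $L$ as a Jordan curve bounding an open region $R \subset D$. Because $\Gamma_h^\reg$ is ``above'' $\Gamma_{\min}$ in the appropriate side convention, $R$ is locally the side of $\Gamma_{\min}[W_0,p']$ into which $\Gamma_h^\reg$ emerges upon touching. Two observations follow: (i) $p \in \partial R$, since $p \in \Gamma_{\min}[W_0,p']$; and (ii) $E_0 \notin \overline R$, since $E_0$ lies at the East extremity of $D$ while $L$ is confined to the region accessible from $W_0$ and $p'$.

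Now consider the continuation $\Gamma_h^\reg[t, T_0]$, which begins at $p \in \partial R$ and must terminate at $E_0 \notin \overline R$. By the ``same-side'' property of $\succeq$ at the touch point $p$, and because $R$ lies on that very side of $\Gamma_{\min}$ near $p$, the curve $\Gamma_h^\reg[t, T_0]$ enters $R$ immediately after $p$. To reach $E_0$ it must subsequently exit $R$, which requires crossing $\partial R = \Gamma_h^\reg[0,t'] \cup \Gamma_{\min}[W_0,p']$. Crossing $\Gamma_h^\reg[0,t']$ contradicts self-avoidance of $\Gamma_h^\reg$; crossing $\Gamma_{\min}[W_0,p']$ contradicts $\Gamma_h^\reg \succeq \Gamma_{\min}$ (which allows touching but not crossing). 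This contradiction establishes $t < t'$. The analogous statement for $\Gamma_{\max}$ follows by the symmetric argument applied to $\Gamma_h^\reg \preceq \Gamma_{\max}$, swapping the roles of the two sides.

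The main delicate point in making this rigorous is the passage from the partial-order statement $\Gamma_h^\reg \succeq \Gamma_{\min}$ to the geometric non-crossing property, together with handling the lattice-level touching configurations where the two paths may share edges or vertices over short stretches (so that $L$ is not literally simple). Both issues can be addressed via the same splitting convention used throughout the section (the \northeast splitting rule used to turn level sets into curves) together with an arbitrarily small perturbation that separates coincident edges without changing the relative side, thereby producing a genuine Jordan curve to which the argument applies verbatim.
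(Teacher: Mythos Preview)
Your argument and the paper's are the same planar-separation idea, but the paper packages it more cleanly. Rather than building a Jordan curve in $D$, the paper works inside the strip between $\Gamma_{\max}$ and $\Gamma_{\min}$, which is simply connected by the last part of \cref{claim:level_line_order} and in which $\Gamma_h^\reg$ is confined by \cref{claim:def_gammahr}. The four points $W_0,p,p',E_0$ then sit on the boundary of this strip in that cyclic order; assuming $t'<t$, the arc $\Gamma_h^\reg[0,t']$ is a chord from $W_0$ to $p'$ separating $p$ from $E_0$, while the disjoint arc $\Gamma_h^\reg(t',t_{\text{end}}]$ must visit both---a contradiction. This sidesteps the Jordan-curve construction and the touching/perturbation technicalities you flag at the end.

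There is one genuine gap in your write-up: the justification of~(ii), that $E_0\notin\overline R$, is not valid as stated. You claim ``$L$ is confined to the region accessible from $W_0$ and $p'$,'' but $\Gamma_h^\reg[0,t']$ carries no such a priori bound and may wander far East before returning to $p'$. A correct justification goes as follows: both arcs of $L$ lie weakly on the North side of $\Gamma_{\min}$ (one by $\Gamma_h^\reg\succeq\Gamma_{\min}$, the other trivially), hence so does the bounded component $R$; in particular $R\cap\Gamma_{\min}=\emptyset$ and $\overline R\cap\Gamma_{\min}\subset L$. Since $E_0\in\Gamma_{\min}$ lies neither on $\Gamma_{\min}[W_0,p']$ nor on $\Gamma_h^\reg[0,t']$ (the latter by self-avoidance, as $E_0=\Gamma_h^\reg(t_{\text{end}})$ with $t_{\text{end}}>t'$), we conclude $E_0\notin\overline R$. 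With this fix, and using that $p\notin\Gamma_h^\reg[0,t']$ so that near $p$ the only piece of $\partial R$ is $\Gamma_{\min}$, the remainder of your argument goes through.
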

\begin{proof}
    By \cref{claim:level_line_order}, $\Gamma_h^\reg$ stays in the domain between $\Gamma_{\max}$ and $\Gamma_{\min}$. Also by assumption, both $\Gamma_{\max}$ and $\Gamma_{\min}$ are tiling level lines so starting at the West-most point, following $\Gamma_{\min}$ up to its endpoint and then following $\Gamma_{\max}$ backward is a full turn around that domain. If we had $t' < t$, then along that turn we would meet the points $\Gamma_h^\reg(0), \Gamma_h^\reg(t), \Gamma_h^\reg(t'), \Gamma_h^\reg(t_{\text{end}})$ in that order which is a contradiction to the fact that $\Gamma^\reg_h[0, t']$ cannot intersect $\Gamma_h^\reg[t', t_{\text{end}}]$.
\end{proof}

We note also that excursions must be nested in the following sense.
\begin{claim}\label{claim:minimum}
 Fix two excursions above $\Gamma_h^\reg$ and call $\mathsf a, \mathsf b$ their starting points and $\mathsf c,\mathsf d$ their ending points. One of the arcs from $\mathsf a$ to $\mathsf c$ or from $\mathsf b$ to $\mathsf d$ along $\Gamma_{\min}$ contains the other one. 
 \end{claim}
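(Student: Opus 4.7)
The plan is a planar topology argument. Write $E_1$ for the first excursion, a simple subarc of $\Gamma_h$ running from $\mathsf a$ to $\mathsf c$, and $E_2$ for the second, running from $\mathsf b$ to $\mathsf d$. Since $\Gamma_h$ is self-avoiding, $E_1 \cap E_2 = \emptyset$. Because both endpoint pairs lie on $\Gamma_{\min}$ and (by \cref{claim:def_gammahr}) the excursions occur precisely where $\Gamma_h$ escapes the closed region bounded by $\Gamma_{\min}$ and $\Gamma_{\max}$, each $E_i$ meets $\Gamma_{\min}$ only at its endpoints and lies on the consistent ``outer'' side of $\Gamma_{\min}$ between them. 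Let $A_i$ denote the $\Gamma_{\min}$-subarc between the two endpoints of $E_i$, so that $J_i := E_i \cup A_i$ is a Jordan curve bounding a topological disk $R_i$ sitting on that outer side of $\Gamma_{\min}$.

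The key step is to rule out the interlaced configuration in which the four endpoints occur in cyclic order $\mathsf a, \mathsf b, \mathsf c, \mathsf d$ along $\Gamma_{\min}$. In this configuration $\mathsf b$ is an interior point of $A_1$ while $\mathsf d \notin \overline{A_1}$. Points of $E_2$ sufficiently close to $\mathsf b$ and on the outer side of $\Gamma_{\min}$ therefore lie in the open region $R_1$, while points of $E_2$ close to $\mathsf d$ lie outside $\overline{R_1}$. By continuity, $E_2$ must cross $\partial R_1 = E_1 \cup A_1$; since $E_2 \cap \Gamma_{\min} = \{\mathsf b, \mathsf d\}$ and neither point is an endpoint of $A_1$, the crossing must occur on $E_1$, contradicting $E_1 \cap E_2 = \emptyset$. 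The only remaining possibilities are the nested and disjoint configurations, both of which give the claimed containment (nested gives genuine containment; the disjoint case trivially fits the laminar structure used in the subsequent enumeration of excursions).

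The main obstacle is the degenerate cases: an endpoint of one excursion could coincide with an endpoint of the other, or sit at a vertex of $\Gamma_{\min}$ where the local step type changes. To resolve these, I plan to invoke \cref{claim:level_line_order} and \cref{clm:se-followed-by-ne}, which control the local step pattern of $\Gamma_{\min}$ (no $\northeast$ step followed immediately by a $\southeast$ step on interior stretches, and the corresponding constraint at corners). Together with self-avoidance of $\Gamma_h$, these force the first out-steps of two excursions sharing an endpoint to leave $\Gamma_{\min}$ in distinct local sectors, so that the ``infinitesimally close to $\mathsf b$'' argument above still puts the relevant points cleanly inside or outside $R_1$. No further substantive work beyond this topological dichotomy is required.
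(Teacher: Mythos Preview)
Your overall strategy---a planar crossing argument ruling out the interlaced configuration---is exactly the paper's. The gap is in which curve you use as the ``base'' for the Jordan argument.

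You take $\Gamma_{\min}$ as the base and assert that each excursion $E_i$ meets $\Gamma_{\min}$ only at its two endpoints. This does not follow from \cref{claim:def_gammahr}. That claim tells you $\Gamma_h^\reg$ stays in the closed strip between $\Gamma_{\min}$ and $\Gamma_{\max}$, and hence that excursion edges lie outside the \emph{open} strip; but an excursion is a component of $\Gamma_h \setminus \Gamma_h^\reg$, not of $\Gamma_h \setminus \Gamma_{\min}$. Nothing prevents $\Gamma_h$, while on an excursion, from returning to touch $\Gamma_{\min}$ at an interior vertex, or even from running along an edge $e$ of $\Gamma_{\min}$ with the opposite orientation (one checks directly from $\Gamma_h^\reg=(\Gamma_h\wedge\Gamma_{\max})\vee\Gamma_{\min}$ that such an $e$ has $\Gamma_h^\reg$ constant across it, hence $e\in\Gamma_h\setminus\Gamma_h^\reg$). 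In either case $J_1 = E_1 \cup A_1$ need not be simple, and your inside/outside dichotomy for $E_2$ breaks down. The local patches you propose via \cref{claim:level_line_order} and \cref{clm:se-followed-by-ne} concern step patterns of $\Gamma_{\min}$ near corners and do not address this.

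The paper avoids the issue by taking $\Gamma_h^\reg$ itself as the base: the excursions are by construction edge-disjoint from $\Gamma_h^\reg$ and therefore sit cleanly in the domain bounded by $\Gamma_h^\reg$ and the top of $\partial D$. The cost is that one needs the four endpoints in the right cyclic order along $\Gamma_h^\reg$, not merely along $\Gamma_{\min}$; this is precisely what \cref{claim:regular_crossing} supplies. If you replace $\Gamma_{\min}$ by $\Gamma_h^\reg$ throughout and invoke that claim, your argument becomes the paper's.
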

 \begin{proof}
     Suppose that along $\Gamma_{\min}$ the points are ordered as $\mathsf a, \mathsf b, \mathsf d, \mathsf c$. By \cref{claim:regular_crossing} there are paths in $\Gamma_h^\reg$ connecting the points in that order. The excursions must stay in the domain bounded by $\Gamma_h^\reg$ and the top of $\partial D$, but given their order it means that they must intersect.
 \end{proof}
We say that an excursion is maximal if it is not nested inside any other. We note that any vertical line crossed by a non-maximal excursion must be crossed at least once also by a maximal one so the total length of all non-maximal excursions is at most $2d$. Since an excursion from $\Gamma_h^\reg$ must start and end either on $\Gamma_{\min}$ or $\Gamma_{\max}$, we can say that it is forward if its starting point is West of its ending point and backward otherwise.

\begin{figure}
    \begin{tikzpicture}
    \pgfmathsetmacro{\hshf}{11}
    %%%%%%
    %%
    \begin{scope}[scale=0.75]
    \coordinate(o) at (0,0);
    \coordinate (T) at (9.5,1);
    \coordinate (a) at (2.2,1.8);    
    \coordinate (b) at (3,1);
    \coordinate (c) at (5.9,1.4);
    \coordinate (d) at (6.55,2.05);

    \coordinate (h3)   at (7.4,1.75);
    \coordinate (h31) at (7.1,1.85);
    \coordinate (h32) at (7.7,1.55);

    \coordinate (h1) at (4,0.5);
    \coordinate (h2) at (5.25,1.2);

    \begin{scope}[fill=gray!10]
    \fill[clip]plot [smooth cycle] coordinates {(o) (0.75,2.5) (2,2.5) (5,3.2)  (T) 
    (7,-1.) (5,-1.5) (3,-1) (2,-1.25)} ;
    \draw[green!80!black] ($(h1)+(-4,3.5)$)--($(h1)+(0,-0.5)$) --($(h1)+(4,3.5)$);
    \draw[green!80!black] ($(h2)+(-4,3.55)$)--($(h2)+(0,-0.45)$) --($(h2)+(4,3.55)$);
    \draw[green!80!black] ($(h31)+(-1,1)$)--(h31)--(h32)--($(h32)+(0.4,0.4)$);
    \draw[purple] ($(h31)+(-4,-4)$)--(h31)--(h32)--($(h32)+(2,-2)$);
    \fill[green!15] ($(h1)+(-4,3.5)$)--($(h1)+(0,-0.5)$) --($(h1)+(4,3.5)$)--cycle;
    \fill[green!15] ($(h2)+(-4,3.55)$)--($(h2)+(0,-0.45)$) --($(h2)+(4,3.55)$)--cycle;
    \fill[green!15] ($(h31)+(-1,1)$)--(h31)--(h32)--($(h32)+(0.4,0.4)$)--cycle;
    \fill[purple!15] ($(h31)+(-4,-4)$)--(h31)--(h32)--($(h32)+(2,-2)$)--cycle;
    \end{scope}
    \draw plot [smooth cycle] coordinates {(o) (0.75,2.5) (2,2.5) (5,3.2)  (T) 
    (7,-1.) (5,-1.5) (3,-1) (2,-1.25)} ;

    \draw [line width=2.5pt,cyan!50] (o)--(a)--(b);
    \draw[line width=2.5pt,cyan!50] plot [smooth] coordinates {(b) (2.92,0.05) (4.5,-0.5) (4.8,-0.5)};
    \draw [line width=2.5pt,cyan!50] (4.8,-0.5)--(6.52,1.22)--(c)--(d)--(h31);
    \draw[line width=2.5pt,cyan!50] plot [smooth] coordinates {(h32) (8,1.5) (8.5,1.2) (8.3,.9)};
    \draw[line width=2.5pt,cyan!50] (8.32,.92)--(8.75,0.5)--(T);
    
        \node[circle,scale=1.4,fill=green!50!black] at (h1) {};
    \node[circle,scale=1,fill=green!50!black] at ($(h2)+(0.,-0.1)$) {};
    
    \begin{scope}[gray,fill=purple!70]
     \filldraw[clip] (h3) circle (10pt);
  \draw[gray,fill=green!50!black] ($(h3)+(0.15,0.32)$) circle (15pt);
    \end{scope}
    
    \draw [line width=1.5pt,cyan!50] (h31) to[bend right=45] (h32);
    \draw[blue] (h31) to[bend right=45] (h32);
    
    \draw[blue, thick] plot [smooth] coordinates {(o) (a) (3.,2.5) (4.5,3) (5.5,2.5) (h31)};

    \draw[blue, thick] plot [smooth] coordinates {(h32) (8,1.5) (8.5,1.2) (8,0.8) (c) (4.5,1.8) (b) (3,0) (4.5,-0.5) (7,-0.4) (T)};

    \node[circle,scale=0.4,fill=gray] at (o) {};
    \node[circle,scale=0.4,fill=gray] at (T)  {}; 
    \node[circle,scale=0.4,fill=gray,label={[label distance=-2pt]left:{\small$\mathsf a$}}] at (a){};
    \node[circle,scale=0.4,fill=gray,label={[label distance=-2pt]left:{\small$\mathsf b$}}] at (b) {};
    \node[circle,scale=0.4,fill=gray,label={[label distance=-2pt]below:{\small$\mathsf c$}}] at (c) {};
    \node[circle,scale=0.4,fill=gray,label={[label distance=-2pt]above:{\small$\mathsf d$}}] at (d) {};

    \end{scope}

    %%%%%%
    %%
    \begin{scope}[scale=0.75,shift={(\hshf,0)}]
     \coordinate(o) at (0,0);
    \coordinate (T) at (9.5,1);
    
    \coordinate (h1)   at (3,1);
    \coordinate (h11) at (2.66,0.9);
    \coordinate (h12) at (3.35,1.09);

    \coordinate (h2)   at (5.9,0.8);
    \coordinate (h21) at (5.55,0.85);
    \coordinate (h22) at (6.25,0.77);

    \coordinate (a) at ($(h11)+(-1,1)$);    
    \coordinate (b) at ($(h11)+(-0.37,0.37)$);
    \coordinate (c) at ($(h22)+(0.4,0.4)$);
    \coordinate (d) at ($(h22)+(1,1)$);

    \begin{scope}[fill=gray!10]
    \fill[clip]plot [smooth cycle] coordinates {(o) (0.75,2.5) (2,2.5) (5,3.2)  (T) 
    (7,-1.) (5,-1.5) (3,-1) (2,-1.25)} ;
    \draw[green!80!black] ($(h11)+(-1.75,1.75)$)--(h11)--(h12)--($(h12)+(2.25,2.25)$);
    \draw[green!80!black] ($(h21)+(-3.5,3.5)$)--(h21)--(h22)--($(h22)+(2,2)$);
    \draw[purple] ($(h11)+(-4,-4)$)--(h11)--(h12)--($(h12)+(2.5,-2.5)$);
    \draw[purple] ($(h21)+(-4,-4)$)--(h21)--(h22)--($(h22)+(2,-2)$);
    \fill[green!15] ($(h11)+(-1.75,1.75)$)--(h11)--(h12)--($(h12)+(2.25,2.25)$)--cycle;
    \fill[green!15] ($(h21)+(-3.5,3.5)$)--(h21)--(h22)--($(h22)+(2,2)$)--cycle;
    \fill[purple!15] ($(h11)+(-4,-4)$)--(h11)--(h12)--($(h12)+(2.5,-2.5)$)--cycle;
    \fill[purple!15] ($(h21)+(-4,-4)$)--(h21)--(h22)--($(h22)+(2,-2)$)--cycle;
    \end{scope}
    \draw plot [smooth cycle] coordinates {(o) (0.75,2.5) (2,2.5) (5,3.2)  (T) 
    (7,-1.) (5,-1.5) (3,-1) (2,-1.25)} ;

    \begin{scope}[gray,fill=purple!70]
     \filldraw[clip] (h1) circle (10pt);
  \draw[gray,fill=green!50!black] ($(h1)+(-0.1,0.35)$) circle (15pt);
    \end{scope}

    \begin{scope}[gray,fill=purple!70]
     \filldraw[clip] (h2) circle (10pt);
  \draw[gray,fill=green!50!black] ($(h2)+(0.05,0.38)$) circle (15pt);
    \end{scope}

    \draw [line width=2.5pt,cyan!50] (o)--(1.6,-0.17)--(1.92,0.15);
    \draw[line width=2.5pt, cyan!50] plot [smooth] coordinates { (1.9,0.13) (1.75,0.3) (1.4,1.5)  (a)};
    \draw [line width=2.5pt,cyan!50] (a)--(b);
    \draw[line width=2.5pt, cyan!50] plot [smooth] coordinates { (b) (1.95,0.9) (2.1,0.3)};
    \draw [line width=2.5pt,cyan!50] (2.1,0.3)--(2.4,0.6);
    \draw[line width=2.5pt, cyan!50] plot [smooth] coordinates { (2.35,0.55) (2.3,0.85) (h11)};

    \draw [line width=1.5pt,cyan!50] 
     (h11) to[bend right=45] (h12);
    \draw[line width=2.5pt, cyan!50] (h12)--(4.3,2.04)--(h21);%--(4.75,1.59);
    %\draw[line width=2.5pt, cyan!50] plot [smooth] coordinates { (4.72,1.62) (4.4,1.45)  (h21)};

    \draw [line width=1.5pt,cyan!50] 
     (h21) to[bend right=45] (h22);

    \draw[line width=2.5pt, cyan!50] plot [smooth] coordinates { 
    (h22) (6.5,0.85) (6.45,0.55)};
    \draw [line width=2.5pt,cyan!50] (6.45,0.55)--(6.63,0.38) to[bend right=10](c)--(d) to[bend left=28] (6.92,0.13)--(7.25,-0.2)--(T);
        
    \draw[blue, thick] plot [smooth] coordinates {(o) ((2,-0.25) (2.5,0) (2.3,0.8) (h11)};

    \draw[blue] (h11) to[bend right=45] (h12);

    \draw[blue, thick] plot [smooth] coordinates {(h12) ((3.5,1.5) (3,1.75) (2,1) (2.1,0.2) (2,0.) (1.75,0.3) (1.5,1) (a) (4.5,2.5) (d) (6.9,0.) (6.6,0.1) (c) %(5.5,1.75) (4.4,1.5) 
    (5.5,1.75) (5.25,1.5)
    (h21)  };

    \draw[blue] (h21) to[bend right=45] (h22);

    \draw[blue, thick] plot [smooth] coordinates {(h22) (6.5,0.75) (6,-0.5)  (6.5,-0.55) (T) };
    
    \node[circle,scale=0.4,fill=gray] at (o) {};
    \node[circle,scale=0.4,fill=gray] at (T)  {}; 
    \node[circle,scale=0.4,fill=gray,label={[label distance=-2pt]above:{\small$\mathsf a$}}] at (a){};
    \node[circle,scale=0.4,fill=gray,label={[label distance=-2pt]left:{\small$\mathsf b$}}] at (b) {};
    \node[circle,scale=0.4,fill=gray,label={[label distance=-2pt]right:{\small$\mathsf c$}}] at (c) {};
    \node[circle,scale=0.4,fill=gray,label={[label distance=-2pt]above:{\small$\mathsf d$}}] at (d) {};
    \end{scope}
\end{tikzpicture}
    \caption{The two types of excursions as per \cref{claim:order_excursion} (left: type \textsc{(i)}, right: type \textsc{(ii)}). In both cases, we have written $a = \Gamma_{\min}(\tau_1), b = \Gamma_{\min}(\tau_1^+), c= \Gamma_{\min}(\tau_2^-), d = \Gamma_{\min}(\tau_2)$.}
    \label{fig:alg-bypass-holes}
\end{figure}
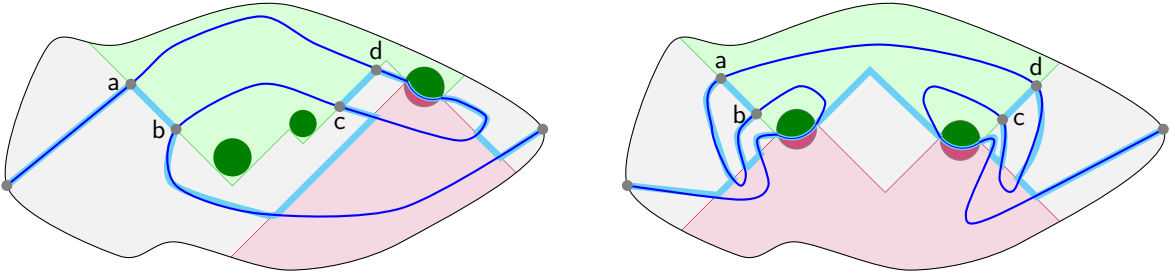

\begin{claim}\label{claim:order_excursion}
Fix a forward excursion away from $\Gamma_h^\reg$ which we call $\Gamma_h[t_1, t_2]$. We assume that it is North of $\Gamma_h^\reg$ and set $\tau_1, \tau_2$ such that $\Gamma_h(t_1) = \Gamma_{\min}(\tau_1)$ and $\Gamma_h(t_2) = \Gamma_{\min}(\tau_2)$. If $\Gamma_{\min}(\tau_1)$ and $\Gamma_{\min}(\tau_2)$ belong to different components of $\Gamma_{\min}\cap \mathring{S}$, then there exist unique $\tau_1^+$ and $\tau_2^-$ such that $\Gamma_{\min}(\tau_1, \tau_1^+)$ and $\Gamma_{\min}(\tau_2^-, \tau_2)$ do not intersect $\Gamma_h$ but $\Gamma_{\min}(\tau_1^+)$ is the endpoint of a step from $\Gamma_h$ starting above $\Gamma_{\min}$ and $\Gamma_{\min}(\tau_2^-)$ is the starting point of a step going above $\Gamma_{\min}$.

Furthermore, the four points $\Gamma_{\min}(\tau_1), \Gamma_{\min}(\tau_1^+), \Gamma_{\min}(\tau_2^-), \Gamma_{\min}(\tau_2)$ must be visited by $\Gamma_h$ in one of the following three orders: 
\begin{itemize}
    \item $\Gamma_{\min}(\tau_1) \to \Gamma_{\min}(\tau_2) \to \Gamma_{\min}(\tau_2^- )\to \Gamma_{\min}(\tau_1^+)$,
    \item $\Gamma_{\min}(\tau_2^-) \to \Gamma_{\min}(\tau_1^+) \to \Gamma_{\min}(\tau_1 )\to \Gamma_{\min}(\tau_2)$, 
    \item $\Gamma_{\min}(\tau_1^+) \to \Gamma_{\min}(\tau_1) \to \Gamma_{\min}(\tau_2 )\to \Gamma_{\min}(\tau_2^-)$.
\end{itemize}
\end{claim}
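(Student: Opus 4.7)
To establish existence and uniqueness of $\tau_1^+$, I would trace $\Gamma_{\min}$ forward starting at parametric time $\tau_1$. Because $\Gamma_{\min}(\tau_1)$ and $\Gamma_{\min}(\tau_2)$ lie in distinct components of $\Gamma_{\min} \cap \mathring S$, the arc of $\Gamma_{\min}$ between them must leave $\mathring S$ through the boundary of at least one hole. The excursion $\Gamma_h[t_1,t_2]$, which lies above $\Gamma_{\min}$ by hypothesis and which is constrained to remain inside $\mathring S$, must therefore touch $\Gamma_{\min}$ at some parameter strictly between $\tau_1$ and $\tau_2$ in order for $\Gamma_h$ to eventually reach $\Gamma_{\min}(\tau_2)$. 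I would define $\tau_1^+$ as the smallest $\tau > \tau_1$ such that $\Gamma_{\min}(\tau)$ is the endpoint of a step of $\Gamma_h$ whose initial vertex lies strictly above $\Gamma_{\min}$; uniqueness is built into the definition, and the non-intersection condition $\Gamma_{\min}(\tau_1,\tau_1^+) \cap \Gamma_h = \emptyset$ would follow because any earlier intersection either contradicts the minimality of $\tau_1^+$ or forces $\Gamma_h$ to self-intersect (since $\Gamma_h$ near such a point would be forced to approach $\Gamma_{\min}$ from above on both sides). The construction of $\tau_2^-$ is the time-reversed analogue, obtained by tracing $\Gamma_{\min}$ backward from $\tau_2$.

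For the cyclic-order statement, I would argue via planarity. By \cref{claim:regular_crossing}, the points $\Gamma_{\min}(\tau_1), \Gamma_{\min}(\tau_1^+), \Gamma_{\min}(\tau_2^-), \Gamma_{\min}(\tau_2)$ appear along $\Gamma_{\min}$ in this order from \west to \east. The excursion $\Gamma_h[t_1,t_2]$ is a self-avoiding arc from $\Gamma_{\min}(\tau_1)$ to $\Gamma_{\min}(\tau_2)$ lying entirely above $\Gamma_{\min}$ (touching it only at isolated points, by construction). The points $\Gamma_{\min}(\tau_1^+)$ and $\Gamma_{\min}(\tau_2^-)$ may, however, be visited by $\Gamma_h$ at times outside $[t_1, t_2]$; the visit order of the four points is therefore constrained only by the fact that $\tau_1$ and $\tau_2$ are adjacent (in the cyclic order) as $t$ goes from $t_1$ to $t_2$. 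I would then do a case analysis on whether $\Gamma_h$ visits $\Gamma_{\min}(\tau_1^+)$ and $\Gamma_{\min}(\tau_2^-)$ before $t_1$ or after $t_2$, and on which side of the hole-carrying arc of $\Gamma_{\min}$ each pre- or post-excursion segment of $\Gamma_h$ is routed. The three admissible cyclic orders are precisely those for which one can draw $\Gamma_h$ as a self-avoiding path; the fourth a priori possibility, $\tau_2^- \to \tau_1 \to \tau_2 \to \tau_1^+$, is ruled out by a Jordan-type argument because it would force the segments of $\Gamma_h$ connecting consecutive touching points to cross one another (or to cross the arc of $\Gamma_{\min}$ from $\tau_1^+$ to $\tau_2^-$ from the wrong side).

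I expect the main obstacle to be making the planarity argument fully rigorous in the multiply-connected setting: the presence of holes in $D$ prevents a direct Jordan-curve argument in a single disc, and one must keep careful track of which side of $\Gamma_{\min}$ each segment of $\Gamma_h$ sits on near each of the four distinguished points. My strategy would be to carry out a local tubular-neighborhood analysis at each of the four points (determining from which of the two local half-disks $\Gamma_h$ approaches $\Gamma_{\min}$) and then to combine these local pictures with the global self-avoidance of $\Gamma_h$ and the connectivity of the components of $\Gamma_{\min}$ traversed to rule out the fourth cyclic order. A helpful reformulation may be to collapse each component of $\Gamma_{\min} \cap \mathring S$ together with the hole it borders to a single point, turning the picture into a chord diagram on a disk whose planarity constraints are manifest.
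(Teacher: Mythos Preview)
Your existence argument for $\tau_1^+$ contains the central gap. You assert that the excursion $\Gamma_h[t_1,t_2]$ must touch $\Gamma_{\min}$ at some parameter strictly between $\tau_1$ and $\tau_2$, but this is false: by definition of an excursion above $\Gamma_h^\reg$ (during which $\Gamma_h^\reg$ coincides with $\Gamma_{\min}$), the arc $\Gamma_h[t_1,t_2]$ lies strictly above $\Gamma_{\min}$ except at its two endpoints and simply passes over the holes without touching $\Gamma_{\min}$ again. The visit to $\Gamma_{\min}(\tau_1^+)$ is made by $\Gamma_h$ at a time \emph{outside} $[t_1,t_2]$; indeed in all three allowed orderings, $\tau_1^+$ is visited either before $t_1$ or after $t_2$. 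The paper's argument is different and does not refer to the excursion at all: it examines the $\{0,1\}$-valued function associated with $\Gamma_h$ along the arc of $\Gamma_{\min}$ from $\tau_1$ to the endpoint of its component of $\Gamma_{\min}\cap\mathring S$. At $\tau_1$ this function jumps (since $\Gamma_h$ passes through that point), while at the component's endpoint on $\partial S$ the boundary condition forces its value; the mismatch guarantees an intermediate crossing of $\Gamma_{\min}$ by $\Gamma_h$, and the first such crossing is $\tau_1^+$. This argument also yields that $\tau_1^+$ lies in the same component of $\Gamma_{\min}\cap\mathring S$ as $\tau_1$, a fact needed for the order analysis but not provided by your approach.

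Your enumeration in the second part is also incomplete. With $\tau_1\to\tau_2$ fixed as consecutive there are six linear orders of the four points, not four, and three of them must be excluded. The paper rules out two specific consecutive transitions, $\tau_2\to\tau_1^+$ and $\tau_2^-\to\tau_1$, each by a short Jordan-type argument (concatenating the relevant arc of $\Gamma_{\min}$ with a segment of $\Gamma_h$ to form a closed curve that would surround one of the remaining distinguished points or the incoming edge at $t_1$). These two exclusions together eliminate the three bad orders; your single exclusion of $\tau_2^-\to\tau_1\to\tau_2\to\tau_1^+$ leaves the orders $\tau_1\to\tau_2\to\tau_1^+\to\tau_2^-$ and $\tau_1^+\to\tau_2^-\to\tau_1\to\tau_2$ unaddressed.
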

\begin{proof}
    For the first part, note that at the point $\Gamma_h(t_1) = \Gamma_{\min}(\tau_1)$, the function associated with $\Gamma_h$ jumps from $0$ to $1$. On the other hand, at the endpoint of its component of $\Gamma_{\min} \cap \mathring{S}$, the boundary condition is given by $\Gamma_h =1$ by \cref{claim:minimum}, therefore $\Gamma_h$ must have at least one crossing between that boundary and $\Gamma_{\min}(\tau_1)$ and we just take the first one. The construction of $\tau_2^-$ is analogous.

    For the second part, first note that $\Gamma_{\min}(\tau_1)$ must be immediately followed by $\Gamma_{\min}(\tau_2)$ since this is exactly the excursion $\Gamma_h[t_1, t_2]$. Also, if $\Gamma_{\min}(\tau_2)$ was immediately followed by $\Gamma_{\min}(\tau_1^+)$---say, reached at a time $t_3$---then the concatenation of $\Gamma_{\min}[\tau_1^+, \tau_1]$ with $\Gamma_h[t_1, t_3]$ must surround either $\Gamma_{\min}(\tau_2^-)$ or the start of the edge into $\Gamma_h(t_1)$ so this configuration is forbidden. Similarly, $\Gamma_{\min}(\tau_2^-)$ cannot be immediately followed by $\Gamma_{\min}(\tau_1)$. A simple enumeration shows that the only remaining orders are given above.
\end{proof}
In the following, we will say that an excursion has type \textsc{(i)}  if it is in one of the first two cases and type \textsc{(ii)} if it is in the third case.

\begin{claim}\label{claim:type1}
    Suppose $\Gamma_h[t_1, t_2]$ is a type \textsc{(i)} excursion above $\Gamma_h^\reg$. If the order of the visits is $\Gamma_{\min}(\tau_1) \to \Gamma_{\min}(\tau_2) \to \Gamma_{\min}(\tau_2^- )\to \Gamma_{\min}(\tau_1^+)$ then all \northeast/\southwest-oriented lines $(0,u_2) + \R e_1$ with $(\Gamma_{\min}(\tau_1^+) )_2 \leq u_2 \leq (\Gamma_{\min}(\tau_2) )_2$ and all \northwest/\southeast-oriented lines $(u_1, 0) + \R e_2$ with $(\Gamma_{\min}(\tau_2) )_1 \leq u_1 \leq(\Gamma_{\min}(\tau_1^+) )_1 $ are crossed at least twice by $\Gamma_h$.

    For the other order, the same holds for the lines $(0,u_2) + \R e_1$ with $(\Gamma_{\min}(\tau_1) )_2 \leq u_2 \leq (\Gamma_{\min}(\tau_2^-) )_2$ and $(u_1, 0) + \R e_2$ with $(\Gamma_{\min}(\tau_2^-) )_1\leq u_1 \leq (\Gamma_{\min}(\tau_1) )_1  $.
\end{claim}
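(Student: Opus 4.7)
The proof is a short topological argument hinging on the monotonicity of the tiling level line $\Gamma_{\min}$. Since $\Gamma_{\min}$ is a level line of the tiling $\psi_{\min}$ traversed from West to East, it is composed exclusively of \northeast and \southeast steps; hence, as $\tau$ grows along $\Gamma_{\min}$, one of the coordinates $x_1, x_2$ is monotone non-decreasing while the other is monotone non-increasing. Since $\Gamma_{\min}(\tau_1)$ and $\Gamma_{\min}(\tau_2)$ belong to distinct components of $\Gamma_{\min} \cap \mathring S$, the definitions of $\tau_1^+$ and $\tau_2^-$ from the preceding claim place them in the order $\tau_1 < \tau_1^+ \leq \tau_2^- < \tau_2$ along the parameterization, so the values $(\Gamma_{\min}(\tau_1))_2, (\Gamma_{\min}(\tau_1^+))_2, (\Gamma_{\min}(\tau_2^-))_2, (\Gamma_{\min}(\tau_2))_2$ form a monotone chain, and likewise (in the reverse sense) for the first coordinates. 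In particular, the intervals of $u_1$ and $u_2$ prescribed in the claim are precisely the ones sandwiched between the reference values associated to $\tau_1^+$ and $\tau_2$.

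Fix such a $u_2$ and let $\ell = \{(t, u_2) : t \in \R\}$. For the first visit order, $\Gamma_{\min}(\tau_1) \to \Gamma_{\min}(\tau_2) \to \Gamma_{\min}(\tau_2^-) \to \Gamma_{\min}(\tau_1^+)$, the plan is to track the $x_2$-coordinate of $\Gamma_h$ at these four successive visits: it starts at $(\Gamma_{\min}(\tau_1))_2 \leq (\Gamma_{\min}(\tau_1^+))_2 \leq u_2$, climbs to $(\Gamma_{\min}(\tau_2))_2 \geq u_2$, and descends again to $(\Gamma_{\min}(\tau_1^+))_2 \leq u_2$ by the fourth visit. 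Since $\Gamma_h$ is a lattice path whose steps change $x_2$ by at most one, the intermediate value principle forces at least one crossing of $\ell$ between the first and second visits and at least one further crossing between the second and fourth, producing the desired two crossings. The identical argument, using the monotonicity of $\Gamma_{\min}$ in $x_1$, handles the \northwest/\southeast oriented lines.

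For the second visit order, $\Gamma_{\min}(\tau_2^-) \to \Gamma_{\min}(\tau_1^+) \to \Gamma_{\min}(\tau_1) \to \Gamma_{\min}(\tau_2)$, I would repeat the same bookkeeping against the shifted range $u_2 \in [(\Gamma_{\min}(\tau_1))_2, (\Gamma_{\min}(\tau_2^-))_2]$: the relevant portion of $\Gamma_h$ now begins at $(\Gamma_{\min}(\tau_2^-))_2 \geq u_2$, descends to $(\Gamma_{\min}(\tau_1))_2 \leq u_2$ at the third visit, and climbs back to $(\Gamma_{\min}(\tau_2))_2 \geq u_2$, again producing at least two crossings. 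The proof is essentially just this geometric bookkeeping; the only place care is needed is to match each visit order with the correct monotone chain of reference coordinates, so that $\Gamma_h$ is provably on alternating sides of $\ell$ at the chosen visits. Beyond this, no deeper obstacle arises: everything reduces to tiling-level-line monotonicity combined with the intermediate value principle for lattice paths.
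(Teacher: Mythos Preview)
Your proof is correct and takes essentially the same approach as the paper's: identify two disjoint arcs of $\Gamma_h$ (the excursion $\Gamma_h[t_1,t_2]$ and the subsequent path from $\Gamma_{\min}(\tau_2)$ to $\Gamma_{\min}(\tau_1^+)$), each of which must cross the given line by the intermediate value principle. The paper's proof is a one-sentence version of exactly this; you have simply spelled out the monotonicity of the coordinates along $\Gamma_{\min}$ and the ordering $\tau_1<\tau_1^+\leq\tau_2^-<\tau_2$ that makes the bookkeeping work.
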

\begin{proof}
    We just observe that the path $\Gamma_h[t_1, t_2]$ must contain at least one crossing of each of these lines but so does the path from $\Gamma_{\min}(\tau_2)$ to $\Gamma_{\min}(\tau_1^+)$.
\end{proof}
In the first case (type~\textsc{(i)}), we will write for future reference 
\[ \Gamma_{\min}(\tau_2)  - \Gamma_{\min}(\tau_1^+) =: (-\ell, \ell') \,,\] and similarly for the second case (also type~\textsc{(i)}).

    For a forward excursion of type \textsc{(ii)}, let $t_1, t_2, t_3, t_4$ be the times of the visits to $\Gamma_{\min}(\tau_1^+)$, $\Gamma_{\min}(\tau_1)$, $\Gamma_{\min}(\tau_2)$, $\Gamma_{\min}(\tau_2^-)$ respectively. Consider a \southeast-most point of $\Gamma_h[0, t_2]$ and let $\ell$ be the difference of its second coordinate with $\Gamma_{\min}(\tau_1)$ or the difference of its second coordinate with $\Gamma_{\min}(\tau_2)$, whichever is smaller. Similarly, let $\ell'$ be the difference of the first coordinate between $\Gamma_{\min}(\tau_2)$ and the \northwest-most point of $\Gamma_h[t_3, t_{\text{end}}]$. 
\begin{claim}\label{claim:type2}
    Any line $(u_1, 0) + \R e_2$ or $(0, u_2) + \R e_1$ with either $(\Gamma_{\min}(\tau_2))_1 - \ell' \leq u_1 \leq (\Gamma_{\min}(\tau_2))_1$ or $(\Gamma_{\min}(\tau_1))_2  \leq u_2 \leq (\Gamma_{\min}(\tau_1))_2 + \ell$ must be crossed twice by $\Gamma_h$ but $\Gamma_h$ does not enter the quarter-plane $\{ u_1 < (\Gamma_{\min}(\tau_2))_1 - \ell',   u_2 > (\Gamma_{\min}(\tau_1))_2 + \ell\}$.
\end{claim}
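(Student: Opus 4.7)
The plan is to exploit the rigid visit order $\Gamma_{\min}(\tau_1^+) \to \Gamma_{\min}(\tau_1) \to \Gamma_{\min}(\tau_2) \to \Gamma_{\min}(\tau_2^-)$ characterizing type~\textsc{(ii)} excursions, together with the fact that $\Gamma_h^\reg$ is self-avoiding inside the simply connected region bounded by $\Gamma_{\max}$ and $\Gamma_{\min}$ (Claim~\ref{claim:level_line_order}), and the nesting of excursions above $\Gamma_{\min}$ (Claim~\ref{claim:minimum}). The claim has two parts, the quarter-space exclusion and the two-crossings statement, and I will handle them in that order.

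\textbf{Quarter-space exclusion.} By the very definition of the southeast-most point $p$ of $\Gamma_h[0,t_2]$ and of $\ell$, no point of $\Gamma_h[0,t_2]$ has second coordinate strictly exceeding $p_2$, and $p_2 \geq (\Gamma_{\min}(\tau_1))_2 + \ell$. Symmetrically, no point of $\Gamma_h[t_3, t_{\mathrm{end}}]$ has first coordinate strictly below $(\Gamma_{\min}(\tau_2))_1 - \ell'$. It remains to control the middle segment $\Gamma_h[t_2, t_3]$: using the topological confinement from Claim~\ref{claim:minimum} together with the visit ordering, $\Gamma_h[t_2, t_3]$ is trapped in the region delimited by $\Gamma_{\min}$ and the already-drawn portions of $\Gamma_h$, both of which respect the bounds; hence the forbidden quarter space cannot be entered.

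\textbf{Two crossings.} Consider a line $\{x_2 = c\}$ with $(\Gamma_{\min}(\tau_1))_2 \leq c \leq (\Gamma_{\min}(\tau_1))_2 + \ell$. By the choice of $\ell$ as the smaller of the two relevant differences, the range is non-empty only when $c \leq p_2$ and $c \geq (\Gamma_{\min}(\tau_2))_2$. The first crossing comes from the sub-path $\Gamma_h[\tau_p, t_2]$, where $\tau_p \leq t_2$ is a time at which $\Gamma_h$ visits $p$: this segment starts at $x_2 = p_2 \geq c$ and ends at $\Gamma_{\min}(\tau_1)$ with $x_2 \leq c$. The second crossing comes from the segment $\Gamma_h[t_2, t_3]$: it starts and ends with $x_2 \leq c$ (at $\Gamma_{\min}(\tau_1)$ and $\Gamma_{\min}(\tau_2)$ respectively), but the type~\textsc{(ii)} topology, together with the fact that the first segment already occupies the $\{x_2 > c\}$ portion of the simply connected domain on the correct side of $\Gamma_{\min}$, forces $\Gamma_h[t_2, t_3]$ to enter $\{x_2 > c\}$ (otherwise the two segments, together with an arc of $\Gamma_{\min}$, would enclose one of the special points $\Gamma_{\min}(\tau_1^+)$ or $\Gamma_{\min}(\tau_2^-)$, contradicting the prescribed visit order as in the proof of Claim~\ref{claim:order_excursion}). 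This yields the second crossing. Vertical lines $\{x_1 = c\}$ are handled symmetrically, exchanging the roles of $\Gamma_h[0,t_2]$ and $\Gamma_h[t_3, t_{\mathrm{end}}]$ and using the northwest-most point in place of $p$.

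\textbf{Main obstacle.} The technically delicate step is the topological argument constraining $\Gamma_h[t_2, t_3]$: unlike the extremal sub-paths $\Gamma_h[0,t_2]$ and $\Gamma_h[t_3, t_{\mathrm{end}}]$, this middle segment is not directly bounded by the definitions of $\ell, \ell'$, and its control requires translating the visit order and the self-avoidance of $\Gamma_h$ into precise geometric confinement. This should follow the same template as the proofs of Claims~\ref{claim:regular_crossing} and \ref{claim:order_excursion}, performing a full traversal along $\partial D$, $\Gamma_{\min}$, and the already-drawn arcs of $\Gamma_h$ to rule out the forbidden topological configurations.
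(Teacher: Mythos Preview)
Your proposal reverses the difficulty of the two halves of the claim, and as a consequence the argument you give for the ``hard'' part does not match what actually needs to be done.

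\textbf{Two crossings.} Your treatment here contains a misreading. By the definition preceding the claim, $\ell$ is the \emph{smaller} of $p_2 - (\Gamma_{\min}(\tau_1))_2$ and $(\Gamma_{\min}(\tau_2))_2 - (\Gamma_{\min}(\tau_1))_2$. Hence $c \leq (\Gamma_{\min}(\tau_1))_2 + \ell \leq (\Gamma_{\min}(\tau_2))_2$, so $\Gamma_h(t_3) = \Gamma_{\min}(\tau_2)$ lies in $\{x_2 \geq c\}$, not $\{x_2 \leq c\}$ as you assert. The excursion segment $\Gamma_h[t_2,t_3]$ therefore has its endpoints on opposite sides of the line and crosses it automatically; the second crossing comes from $\Gamma_h[0,t_2]$ (which visits both $p$ and $\Gamma_{\min}(\tau_1)$). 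The paper dispatches this in one line: ``one crossing comes from $\Gamma_h[t_2,t_4]$ and the other one from $\Gamma_h[0,t_2]$ or $\Gamma_h[t_3,t_{\mathrm{end}}]$.'' Your topological detour (arguing that the excursion must dip into $\{x_2 > c\}$ to avoid enclosing $\Gamma_{\min}(\tau_1^+)$ or $\Gamma_{\min}(\tau_2^-)$) is built on the false premise that both endpoints lie in $\{x_2 \leq c\}$.

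\textbf{Quarter-space exclusion.} Here the paper's proof is the substantive one, and your sketch does not capture its mechanism. Your appeal to ``topological confinement from Claim~\ref{claim:minimum}'' for the middle segment $\Gamma_h[t_2,t_3]$ is not an argument: that claim concerns nesting of \emph{distinct} excursions, not confinement of a single one, and the flanking arcs $\Gamma_h[0,t_2]$, $\Gamma_h[t_3,t_{\mathrm{end}}]$ do not by themselves bound the excursion from entering the quarter space. The paper instead argues as follows: first, using Claim~\ref{claim:minimum} and the type~\textsc{(ii)} order, $\Gamma_{\min}$ has no \southeast-then-\northeast corner inside the quarter plane, hence $\Gamma_{\min}$ passes above it entirely; second, it runs a contradiction argument on the companion arc $\Gamma_h^\reg[s_2,s_3]$ (the piece of $\Gamma_h^\reg$ joining the excursion's endpoints), using that wherever $\Gamma_h^\reg$ departs from $\Gamma_h$ it must coincide with $\Gamma_{\min}$ or $\Gamma_{\max}$, and ruling out $\Gamma_{\max}$ via a vertical-line argument and Claim~\ref{claim:def_gammahr}. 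Your outline does not invoke $\Gamma_h^\reg$, $\Gamma_{\max}$, or the trichotomy for $\Gamma_h^\reg \setminus \Gamma_h$, and without these ingredients the exclusion of $\Gamma_h[t_2,t_3]$ from the quarter space is not established.
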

\begin{proof}
    The first part is simply that one crossing comes from $\Gamma_h[t_2, t_4]$ and the other one from $\Gamma_h[0, t_2]$ or $\Gamma_h[t_3, t_{\text{end}}]$.
    For the second part, combining \cref{claim:minimum} with the assumption on the order of the visits, we see that $\Gamma_{\min}$ cannot have any point where a \southeast step is followed by a \northeast one in this quarter-plane. This immediately shows that $\Gamma_{\min}$ goes above the quarter-plane. Also since $\Gamma_h[t_2]$ and $\Gamma_h[t_3]$ are the start and end of an excursion, there is a path $\Gamma_h^\reg[s_2, s_3]$ connecting them and not intersecting $\Gamma_h[t_2, t_3]$. Suppose by contradiction that $\Gamma_h^\reg[s_2, s_3]$ is in the quarter-plane at some time $s$, it cannot do so at a time where it agrees with $\Gamma_h$ because by construction $\Gamma_h[0, t_2]$ and $\Gamma_h[t_3, t_{\text{end}}]$ stay outside of it and $\Gamma_h[t_2, t_3]$ is an excursion. As mentioned above, $\Gamma_{\min}$ is above the quarter-plane so the only solution is that $\Gamma_h^\reg$ must agree with $\Gamma_{\max}$ at the time $s$. Consider the vertical path down from $\Gamma_h^\reg(s)$, it does not intersect $\Gamma_h$ since again it stays in the quarter-plane and $\Gamma_h[t_2, t_3]$ cannot cross $\Gamma_{\max}$ because that would require in particular to cross $\Gamma_h^\reg$. Therefore $\Gamma_h$ must be equal to $0$ on both sides of $\Gamma_h^\reg$ but this contradicts \cref{claim:def_gammahr}.
    % but also by construction $\Gamma_h$ must stay above $\Gamma_{\min}$ at these points otherwise $\Gamma_h[t_2, t_3]$ would not be an excursion \textcolor{red}{find a concrete way to write it}.
\end{proof}

\begin{claim}\label{claim:exc_single_component}
Let $\Gamma_h[t_1, t_2]$ be a forward excursion away from $\Gamma_h^\reg$. Assume it is above $\Gamma_h^\reg$ and set $\tau_1, \tau_2$ such that $\Gamma_h(t_1) = \Gamma_{\min}(\tau_1)$ and $\Gamma_h(t_2) = \Gamma_{\min}(\tau_2)$. If $\Gamma_h(t_1, t_2)$ intersects the quarter-plane $\{ u_1 \geq (\Gamma_h(t_2))_1, u_2 \geq (\Gamma_h(t_1))_2 \}$, then there exist $\tau_1^+,\tau_2^-$ as in \cref{claim:order_excursion}, the order of visits to $\Gamma_{\min}(\tau_i)$ is as per \cref{claim:order_excursion}, and either \cref{claim:type1} or \cref{claim:type2} applies depending on said order. 
\end{claim}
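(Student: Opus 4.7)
The plan is to reduce this claim to \cref{claim:order_excursion}, whose hypothesis on $\Gamma_{\min}(\tau_1),\Gamma_{\min}(\tau_2)$ lying on distinct components of $\Gamma_{\min}\cap\mathring S$ was used purely to exhibit a segment of $\partial D$ along which the boundary values of $\Gamma_h$ force additional crossings of $\Gamma_{\min}$ by $\Gamma_h$. My goal is to show that the quarter-plane hypothesis replaces that boundary argument in the remaining case where both endpoints lie on the same component.

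First, if $\Gamma_{\min}(\tau_1)$ and $\Gamma_{\min}(\tau_2)$ belong to different components of $\Gamma_{\min}\cap\mathring S$, then \cref{claim:order_excursion} applies verbatim to produce $\tau_1^+,\tau_2^-$ with the stated properties and one of the three listed visit-orders, and then \cref{claim:type1} or \cref{claim:type2} applies according to the order. Hence we may assume henceforth that $\Gamma_{\min}(\tau_1)$ and $\Gamma_{\min}(\tau_2)$ lie on the same component $\Gamma$ of $\Gamma_{\min}\cap\mathring S$.

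Next, I use that $\Gamma_{\min}$ is a tiling level line, so the arc of $\Gamma$ between the two endpoints uses only \northeast and \southeast steps and is therefore graph-monotone in the sense of \cref{claim:regular_crossing}. A direct check shows that this forces the arc to stay out of the quarter plane $Q:=\{u_1\ge (\Gamma_h(t_2))_1,\ u_2\ge (\Gamma_h(t_1))_2\}$. Since by hypothesis $\Gamma_h(t_1,t_2)$ does enter $Q$, and $\Gamma_h$ returns to $\Gamma_{\min}$ at $t_2$, the path $\Gamma_h$ must intersect $\Gamma_{\min}$ at points other than $\Gamma_{\min}(\tau_1)$ and $\Gamma_{\min}(\tau_2)$. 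Define $\tau_1^+$ to be the first $\tau>\tau_1$ (moving along $\Gamma_{\min}$ in the direction of $\tau_2$) such that $\Gamma_{\min}(\tau_1^+)$ is the endpoint of a step of $\Gamma_h$ starting above $\Gamma_{\min}$, and symmetrically $\tau_2^-$. Combining the structural lemmas on $\Gamma_{\min}$ (\cref{claim:level_line_order,clm:se-followed-by-ne}) with the self-avoidance of $\Gamma_h$, one shows that both parameters are well-defined and that $\Gamma_{\min}(\tau_1,\tau_1^+)$ and $\Gamma_{\min}(\tau_2^-,\tau_2)$ do not meet $\Gamma_h$, exactly as in \cref{claim:order_excursion}.

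It remains to enumerate the possible orders of the four visits $\Gamma_{\min}(\tau_1), \Gamma_{\min}(\tau_1^+), \Gamma_{\min}(\tau_2^-), \Gamma_{\min}(\tau_2)$ by $\Gamma_h$. Here the argument is identical to the one in \cref{claim:order_excursion}: the nestedness of excursions (\cref{claim:minimum}) together with the self-avoidance of $\Gamma_h$ rules out all but the three listed orders by a planar topology argument---in each forbidden ordering the concatenation of an arc of $\Gamma_h$ with an arc of $\Gamma_{\min}$ would have to separate another of the four points from $\partial D$, contradicting self-avoidance. Once an order is selected, either \cref{claim:type1} (type (i)) or \cref{claim:type2} (type (ii)) applies unchanged, since their proofs only used the relative position of the four points and the excursion, not the assumption on components. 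The main obstacle is the middle step: checking that the tiling-monotonicity of $\Gamma$, the position of $Q$, and the fact that $\Gamma_h$ enters $Q$ truly force crossings of $\Gamma_{\min}$ suitable to define $\tau_1^+$ and $\tau_2^-$; this requires a careful case analysis of how the arc of $\Gamma_h$ inside $Q$ can return to $\Gamma_{\min}(\tau_2)$ without violating self-avoidance or the bounds provided by $\Gamma_{\max}$ and $\Gamma_{\min}$ (via \cref{claim:def_gammahr}).
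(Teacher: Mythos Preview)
Your overall strategy matches the paper's: both reduce the claim to \cref{claim:order_excursion} by first exhibiting an intermediate crossing of $\Gamma_{\min}(\tau_1,\tau_2)$, then taking $\tau_1^+,\tau_2^-$ to be the first and last such crossing. The paper's proof is in fact a single sentence: ``By assumption there is at least one crossing of $\Gamma_{\min}(\tau_1,\tau_2)$ so we can define $\tau_1^+$ and $\tau_2^-$ as the first and last crossing, so the proof follows exactly as for excursions intersecting two components.'' In particular, the paper does not split into cases (same vs.\ different component) and does not spell out the topological reason why the quarter-plane hypothesis forces a crossing; it simply asserts it.

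There is, however, a concrete error in your geometric justification. You claim that monotonicity of $\Gamma_{\min}$ forces the arc $\Gamma_{\min}[\tau_1,\tau_2]$ to stay \emph{out} of $Q=\{u_1\ge (\Gamma_h(t_2))_1,\ u_2\ge (\Gamma_h(t_1))_2\}$. The opposite is true: along $\Gamma_{\min}$ from $\tau_1$ to $\tau_2$ one uses only \northeast steps (which decrease $u_1$) and \southeast steps (which increase $u_2$), so with $(a_1,a_2)=\Gamma_{\min}(\tau_1)$ and $(b_1,b_2)=\Gamma_{\min}(\tau_2)$ one has $a_1\ge b_1$ and $a_2\le b_2$, and every point of the arc satisfies $u_1\in[b_1,a_1]$, $u_2\in[a_2,b_2]$, hence lies in $\bar Q$. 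Consequently the deduction ``$\Gamma_h$ enters $Q$ while the arc stays out, so they must meet'' does not go through. The correct picture is the reverse: the arc sits inside $\bar Q$ with its endpoints on the two boundary rays of $Q$, and the excursion---being above $\Gamma_h^{\reg}$, i.e., on the North side of $\Gamma_{\min}$ near its endpoints---must, if it is to reach the interior of $Q$, cross the arc $\Gamma_{\min}[\tau_1,\tau_2]$. Once you fix this, the rest of your argument (defining $\tau_1^+,\tau_2^-$, the order enumeration, and the invocation of \cref{claim:type1}/\cref{claim:type2}) is fine and tracks the paper's.
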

\begin{proof}
    By assumption there is at least one crossing of $\Gamma_{\min}(\tau_1, \tau_2)$ so we can define $\tau_1^+$ and $\tau_2^-$ as the first and last crossing, so the proof follows exactly as for excursions intersecting two components.
\end{proof}

\begin{claim}\label{claim:number_excursion}
    The number of excursions from $\Gamma_h^\reg$ is at most $4d$ and the number of excursions from $\Gamma_\psi$ is at most $d$.
\end{claim}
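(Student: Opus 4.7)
The plan is to handle the two bounds separately, with the excursions from $\Gamma_\psi$ being much the easier case, since those arise deterministically from how the algorithm decides to shortcut.

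For the bound on excursions from $\Gamma_\psi$: I will observe that each excursion corresponds to exactly one execution of the branch \cref{it:alg-rentry} in the algorithm, and that this branch is triggered precisely when $\Gamma_h^\reg$ takes its first \southwest or \northwest step after an agreement interval. Hence the number of excursions from $\Gamma_\psi$ is at most the number of defect steps of $\Gamma_h^\reg$. By \cref{claim:def_gammahr}, $\Gamma_h^\reg$ agrees with $\Gamma_h$ on $\mathring S$, and on $D \setminus \mathring S$ it follows $\Gamma_{\max}$ or $\Gamma_{\min}$ (as $\Gamma_h^\reg = (\Gamma_h \wedge \Gamma_{\max}) \vee \Gamma_{\min}$), both of which are tiling level lines using only \northeast and \southeast steps. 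Therefore every defect step of $\Gamma_h^\reg$ is already a defect step of $\Gamma_h$, giving the bound by $d$.

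For the bound on excursions from $\Gamma_h^\reg$: I will argue that each excursion can be charged to at least one defect of $\Gamma_h$. By \cref{claim:def_gammahr}, an excursion is a maximal sub-path of $\Gamma_h$ that leaves the closed strip between $\Gamma_{\max}$ and $\Gamma_{\min}$---either escaping above $\Gamma_{\max}$ or dipping below $\Gamma_{\min}$. Consider the case above $\Gamma_{\max}$ (the other is symmetric). Such an excursion either starts/ends at the boundary endpoints of $\Gamma_{\max}$ (contributing at most a bounded number of excursions, which is absorbed by the generous factor $4$), or it must cross $\Gamma_{\max}$ at interior vertices. At such a crossing vertex $v$, the two half-edges of $\Gamma_{\max}$ form a \northeast$+$\southeast pair, so for $\Gamma_h$ to pass transversally through $v$ to the opposite side it must use the complementary \northwest$+$\southwest pair, each of which is a defect. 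Thus each transversal crossing costs at least one defect of $\Gamma_h$, and each excursion has at least one such crossing (entry or exit); a careful bookkeeping, accounting also for non-transversal ``touching'' configurations where $\Gamma_h$ leaves $\Gamma_{\max}$ on a shared vertex without a full crossing, still ensures at most $2d$ excursions in total across both sides, comfortably within $4d$.

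The main obstacle is the careful case analysis for the first bound: non-transversal touchings of $\Gamma_h$ with $\Gamma_{\max}$ or $\Gamma_{\min}$---where the excursion enters/exits at a vertex that the two curves share without their edges crossing---require verifying that each such configuration still consumes a defect or can be attributed to one. Fortunately, the factor of $4$ in the stated bound leaves considerable slack, so the argument does not need to be tight: a crude charging of each excursion's entry and exit to distinct defects (together with the $O(1)$ exceptional excursions pinned at endpoints of $\Gamma_{\max}$ or $\Gamma_{\min}$) suffices. The trivial case $d=0$, in which $\Gamma_h = \Gamma_h^\reg = \Gamma_\psi$ and there are no excursions at all, serves as the base step.
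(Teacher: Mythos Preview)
Your argument for excursions from $\Gamma_\psi$ is correct and matches the paper's: each such excursion begins with a \southwest\ or \northwest\ step of $\Gamma_h^\reg$, and by \cref{claim:def_gammahr} every such step is also a defect of $\Gamma_h$.

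Your argument for excursions from $\Gamma_h^\reg$, however, has a genuine gap. You assert that at an entry (or exit) vertex of an excursion, $\Gamma_h$ must use a \northwest\ or \southwest\ step to cross $\Gamma_{\max}$ (or $\Gamma_{\min}$). This is false in the multiply connected setting: $\Gamma_h$ can \emph{separate} from a monotone curve without any defect. Concretely, if $\Gamma_{\min}$ takes a \southeast\ step at a vertex $v$ while $\Gamma_h$ takes a \northeast\ step, then $\Gamma_h$ leaves the strip with no defect at all. By \cref{claim:level_line_order}, inside a single component of $\Gamma_{\min}\cap\mathring S$ there is no \southeast\ step followed by a \northeast\ step, so such a defect-free separation can only occur at a hole boundary---but holes are precisely present here, and the number of holes is not controlled by $d$. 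Thus there can be many defect-free excursions, each spanning two different components of $\Gamma_{\min}\cap\mathring S$, and your ``crude charging of each excursion's entry and exit to distinct defects'' simply has no defects to charge to.

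The paper handles exactly this case with the machinery of \cref{claim:order_excursion,claim:type1,claim:type2}: each defect-free maximal excursion is shown to force some axis-parallel line of $\Z^2$ to be crossed at least twice by $\Gamma_h$. Since $\Gamma_h$ has only $d$ backward steps, at most $d$ lines can be double-crossed, and each such line is associated to at most one maximal excursion above and one below. Combined with the (easy) bound of $d$ on excursions that do contain a defect and the separate treatment of non-maximal excursions, this yields the $4d$ bound. Your proposal bypasses this entire mechanism, and the hand-wave about ``non-transversal touchings'' is precisely where the real work lies.
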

\begin{proof}
    Each excursion of $\Gamma_h^\reg$ from $\Gamma_\psi$ by construction starts with a defect so there are at most $d$ of them. Similarly, at most $d$ excursions from $\Gamma_h^\reg$ contain a defect so we only need to bound the number of excursions with no defect. Similarly, the total length of the non-maximal excursions is at most $2d$ so we can reduce to maximal excursions. Let $\Gamma_h[t_1, t_2]$ be one of them and assume by symmetry that it is above $\Gamma_h^\reg$. Clearly, it cannot both start and end on the same component of $\Gamma_{\min} \cap \mathring{S}$ but then by \cref{claim:type1,claim:type2} we can associate it to at least one line of $\Z^2$ which is crossed twice by $\Gamma_h$ and there can be only $d$ such lines. Each line is counted at most twice, once for a maximal excursion below $\Gamma_h^\reg$ and once for a maximal excursion above $\Gamma_h^\reg$ which completes the proof.
    \end{proof}

We can finally start enumerating excursions. Note that an excursion above a single component not intersecting $\Gamma_{\min}$ can be considered as a type \textsc{(ii)} excursion with $\ell = \ell' = 0$. 
\begin{claim}\label{claim:enumeration_excursion}
    Let $\mathsf a$ and $\mathsf b$ be two points on $\Gamma_{\min}$, let $t = (\mathsf b_2 - \mathsf a_2) + (\mathsf a_1 - \mathsf b_1)$ be the number of steps of $\Gamma_{\min}$ between them.
    \begin{itemize}
        \item There are at most $\binom{t + 2 \delta}{2\delta}\binom{t + 2 \delta}{2 \delta + \ell + \ell'} 4^{4\delta + \ell + \ell'}$ excursions of type \textsc{(i)} connecting them and containing $\delta$ defects.
        \item There are at most $\binom{t + 2 \delta}{2\delta + \ell}\binom{t + 2 \delta}{2\delta + \ell'} 4^{4 \delta + \ell + \ell'}$ excursions of type \textsc{(ii)} connecting them and containing $\delta$ defects.
    \end{itemize}
\end{claim}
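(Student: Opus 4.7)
My plan is to enumerate excursions by combining the 1D projections of the walk onto the two axes of $\Z^2$ with the geometric information provided by \cref{claim:type1} (for type \textsc{(i)}) and \cref{claim:type2} (for type \textsc{(ii)}). Write $L = t + 2\delta$ for the length of the excursion $\Gamma_h[t_1,t_2]$ from $\mathsf a$ to $\mathsf b$, and decompose its step counts as $n_\northeast^h = t_A + k$, $n_\southwest^h = k$ (all in the $e_1$-axis walk), $n_\southeast^h = t_B + (\delta-k)$, $n_\northwest^h = \delta-k$ (all in the $e_2$-axis walk), where $t_A = \mathsf a_1 - \mathsf b_1$, $t_B = \mathsf b_2 - \mathsf a_2$, and $k \in \{0,\ldots,\delta\}$. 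The key observations are that (a) each defect step must be matched to a forward compensation step in order to preserve the net displacement, producing $2\delta$ distinguished positions, and (b) the geometric constraints in \cref{claim:type1} and \cref{claim:type2} force $\ell + \ell'$ specific lines to be crossed ``extra'' by the excursion, producing an additional set of $\ell + \ell'$ distinguished positions beyond the $2\delta$ defect-compensation set.

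For type \textsc{(i)}, I would encode each excursion as a pair of markings on its $L$ steps: (i) a choice of $2\delta$ positions corresponding to the defect steps together with their matched compensations, giving $\binom{L}{2\delta}$; (ii) a choice of $2\delta + \ell + \ell'$ positions corresponding to defects, compensations, and the $\ell + \ell'$ positions forced by the twice-crossed lines of \cref{claim:type1}, giving $\binom{L}{2\delta + \ell + \ell'}$. Labeling each position in both markings with one of the four directions $\{\northeast,\southeast,\southwest,\northwest\}$ yields the factor $4^{4\delta + \ell + \ell'}$, and the product bounds the number of excursions from above. The geometric input is that the excursion wraps around the region ``containing'' $\Gamma_{\min}[\tau_1^+,\tau_2^-]$, so the $\ell$ \northwest/\southeast-oriented lines and $\ell'$ \northeast/\southwest-oriented lines in the ranges specified by \cref{claim:type1} must each be traversed by the excursion, and inside the forward range these traversals are forced to come in pairs that are not yet part of the $2\delta$ defect-compensation set.

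For type \textsc{(ii)}, the constraint of \cref{claim:type2} is asymmetric: the excursion is forced to reach a \southeast-most point at vertical distance $\ell$ below $\Gamma_{\min}(\tau_1)$ and a \northwest-most point at horizontal distance $\ell'$ from $\Gamma_{\min}(\tau_2)$, giving $\ell$ forced extra crossings along one axis and $\ell'$ forced extra crossings along the other axis \emph{independently}. Rather than a single combined marking of size $2\delta + \ell + \ell'$, this gives two separate markings of sizes $2\delta + \ell$ and $2\delta + \ell'$, yielding the factor $\binom{L}{2\delta + \ell}\binom{L}{2\delta + \ell'}$. The factor $4^{4\delta + \ell + \ell'}$ for the direction labels remains unchanged since the total number of distinguished positions with an overcounted direction label is still $2\delta + (2\delta + \ell + \ell') = 4\delta + \ell + \ell'$.

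The main obstacle will be to rigorously justify that the geometric constraints of \cref{claim:type1,claim:type2} indeed force the claimed $\ell + \ell'$ extra distinguished positions on the excursion itself (not merely on $\Gamma_h$), and to verify that every valid excursion maps to at least one valid pair of markings. I expect this to follow from \cref{claim:order_excursion,claim:regular_crossing,claim:exc_single_component} by analyzing how the excursion traverses the range between $\Gamma_{\min}(\tau_1^+)$ and $\Gamma_{\min}(\tau_2^-)$ (type \textsc{(i)}) or the extremal quarter-plane (type \textsc{(ii)}); the overcounting inherent in labeling directions on two overlapping markings only strengthens the upper bound.
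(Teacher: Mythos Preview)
Your proposed encoding has a genuine gap: the pair of markings you describe does not determine the excursion. Consider the simplest non-trivial case $\ell = \ell' = 0$ and $\delta = 1$ for an excursion between points at monotone distance $t$. Your encoding picks $2\delta = 2$ positions (defect and compensation) out of $L = t+2$ for the first marking, the \emph{same} $2$ positions for the second marking (since $\ell + \ell' = 0$), and labels each with a direction. But the remaining $t$ unmarked steps must be some mixture of \northeast and \southeast steps, and you never specify which. In general an excursion with $\delta$ defects has, after stripping off the $2\delta$ defect/compensation steps, a monotone \northeast/\southeast skeleton of length $t$, which carries $\binom{t}{t_A}$ bits of information that your scheme never encodes. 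The phrase ``the overcounting inherent in labeling directions on two overlapping markings only strengthens the upper bound'' goes in the wrong direction: the issue is not overcounting but undercounting, since distinct excursions map to the same marking data.

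The paper's argument closes this gap by \emph{splitting the excursion at a stopping time}: the first entry into a suitably chosen half-plane. This gives each of the two resulting pieces a canonical default direction (\northeast before the split, \southeast after), so that only the non-default steps need to be recorded. The geometric input from \cref{claim:type2} for type \textsc{(ii)} (and the analogous observation for type \textsc{(i)}) then bounds the number of non-default steps in each piece separately---at most $2\delta + \ell$ in the first and $2\delta + \ell'$ in the second---producing the two binomial factors. The split point itself is recoverable from the path (being a stopping time), so no extra information is needed to encode it. This temporal split is precisely the idea missing from your plan; your observations (a) and (b) about distinguished positions are in themselves correct, but they do not add up to an injective encoding.
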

\begin{proof}
    Let us actually start by enumerating type \textsc{(ii)} excursions. We cut the excursion at the first time it enters the half-plane $\{ u_2 \geq \mathsf a_2 + \ell'\}$. In the first part, we count paths by choosing which step will be in a direction other than \northeast and choosing a direction for each (or to omit any defect), this leads to fewer than $\binom{t + 2 \delta}{2\delta + \ell} 4^{2\delta + \ell}$ possibilities. Similarly in the second part, we place the steps in a direction other than \southeast and choose each one. Since the endpoint of the first part must be in the half-plane $\{ u_1 \leq \mathsf b_2 + \ell'\}$, this leads to at most $\binom{t + 2 \delta}{2\delta + \ell'} 4^{2\delta + \ell'}$ possibilities.

    For a type \textsc{(i)} excursion, associated with the order $\Gamma_{\min}(\tau_1) \to \Gamma_{\min}(\tau_2) \to \Gamma_{\min}(\tau_2^-) \to \Gamma_{\min}(\tau_1^+)$, we split the excursion at the first entry in the half-plane $\{ u_2 \geq (\Gamma_{\min}(\tau_1))_2\}$ which gives $\binom{t + 2 \delta}{2\delta} 4^{2\delta}$ possibilities as before. For the rest, we enumerate steps that are not in the \southeast direction and this gives at most $\binom{t + 2 \delta}{2 \delta + \ell'} 4^{2\delta + \ell'}$ choices since the endpoint of the first part must have a smaller first coordinate than $\Gamma_{\min}(\tau_1^+)$. The argument for the other type \textsc{(i)} case is symmetric and gives the same bound with $\ell$ instead of $\ell'$.
\end{proof}

We can finally turn to the enumeration of the possible paths $\Gamma_h$. Recall that we let $d$ be the number of defects on $\Gamma_h$ and $T$ be the length of $\Gamma_\psi$ so that the length of $\Gamma_h$ is $T + 2d$. We also let $g$ be the number of steps where $\Gamma_h = \Gamma_\psi$. If $d \geq T/20$, we just count $4^{T+2d}$. Otherwise the enumeration proceeds as follows:
\begin{enumerate}
    \item \label[step]{st:Gammah-enum-1} Choose the points of $\Gamma_{\max}$ and $\Gamma_{\min}$ which are the starting and ending points of maximal excursions. There are fewer than $8d$ such points and the total combined length is at most $2T$ so there are at most $8 d \binom{2T}{8 d}$ choices.
    \item \label[step]{st:Gammah-enum-2}Choose all lines that will be crossed at least twice. There are fewer than $d$ such lines out of a total of $T$ so $d\binom{T}{d}$ choices.
    \item \label[step]{st:Gammah-enum-3} Choose all the maximal excursions. Note that the matching of the start and end points is determined since we are considering maximal excursions and therefore we just need to independently choose an excursion for each pair. There are fewer than $2^{4d}$ choices for the types and, since knowing the types and which lines are crossed twice is enough to apply \cref{claim:type1,claim:type2}, we can bound the number of choices in each one by \cref{claim:enumeration_excursion}. Overall, we obtain the product of $2^{4d}4^{2 d + d}$ (bounding both the sum of the $\delta$ by $d$ and the sum of the $\ell+\ell'$ by $d$) and a product of binomial coefficients. In turn these binomial coefficients can be seen as counting choices for fewer than $4d + d$ points out of $2T+ 4d$, so overall this step amounts to fewer than $2^{10d} \binom{2T + 4d}{5d}$ choices.
    \item  \label[step]{st:Gammah-enum-4} Choose all the starting points of the non-maximal excursions and then all the excursions. There are at most $\binom{2T}{4d}$ choices for the initial points and $5^{2d}$ excursions since their total length is at most $2d$ and at each step one needs to pick a direction or to move to the next excursion.
    \item \label[step]{st:Gammah-enum-5} At this point we know all the excursions so it is enough to construct $\Gamma_h^\reg$. Note that since we know the starting and ending points of all the excursions of $\Gamma_h$ away from $\Gamma_h^\reg$, there is no further choice for all the step where $\Gamma_h^\reg$ and $\Gamma_h$ do not agree.
    \item \label[step]{st:Gammah-enum-6} We choose all the times where an excursion away from $\Gamma_\psi$ starts giving fewer than $2d\binom{T+2d}{2d}$ possibilities.
    \item \label[step]{st:Gammah-enum-7} We choose all the excursions from $\Gamma_\psi$, similarly to \cref{claim:enumeration_excursion}, note that each of them must stay in a half-plane so that there are fewer than $\binom{t+2\delta}{2\delta}$ excursions containing $\delta$ defect and connecting points at distance $t$. Overall, bounding again a product of binomial coefficients by a single one, this gives at most $2^{2d} \binom{T+ 2d}{2d}$ choices.
    \item \label[step]{st:Gammah-enum-8} We choose all the steps where $\Gamma_h^\reg, \Gamma_h$ and $\Gamma_\psi$ agree. There are at most $g$ of them by assumption and the times at which they happen can be deduced from the previous step so this accounts for at most $2^g$ choices.
\end{enumerate}
Putting everything together and again bounding a product of binomial coefficients by a single one, we see that we have fewer than $16\cdot d^3 \cdot 2^{18 d + g} \cdot \binom{9 T + 10d}{22 d}$ choices for a single level line. Applying that bound to all level lines independently, using the elementary bound $\prod_i \binom{a_i}{b_i}\leq\binom{\sum_i a_i}{\sum_i b_i}$ to move from the individual defects $d_i$ and length $T_i$ of each level line to the total number of at most $\epsilon s$ defects, we arrive at the following.

\begin{corollary}\label{cor:bad-h-no-loops}
 For any tiling $\varphi$ and connected subset of faces $\mathcal{S} \subset \varphi$, the number of configurations $h$ containing no loops, with at most $\epsilon s$ defects and such that no tiling $\psi$ of $\mathcal{S}$ has $|\psi \cap h| - | \varphi \cap h| \geq g$ is at most $\exp( C (g + \epsilon \log(1/\epsilon) )s)$.
\end{corollary}

\smallskip

We now turn to the general case in the presence of loops. If none of the loops touch the boundary or surround a hole, then we can apply the above procedure ignoring all the loops and the resulting tiling $\psi$ still satisfies the boundary condition. Hence in that case, one just needs to enumerate over the possible sets of loops but, since the total length of loops is at most $\epsilon s$, choosing freely all the points belonging to a loop only accounts for an extra $\binom{s}{\epsilon s}$. Overall, the results with no loops extend trivially to the case where the loops do not touch the boundary.

\begin{figure}
    \centering
    \includegraphics[width=0.3\linewidth]{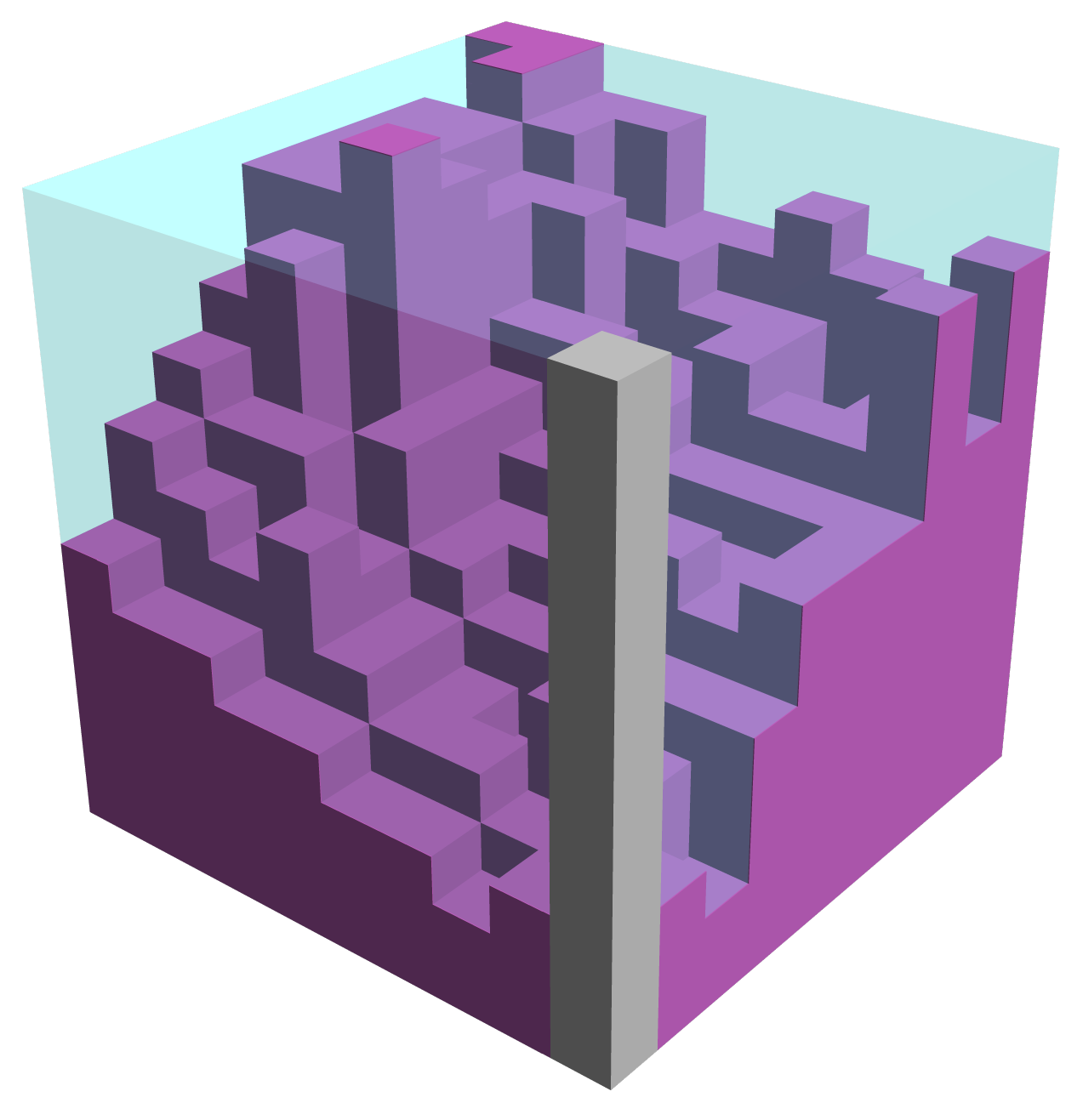}
    \caption{A configuration where loops affect the boundary condition: $\varphi$ is the completely filled cube represented with transparent light blue and $h$ is the solid configuration. Note that the presence of the gray ``tower'' in $h$ means that $\varphi$ is the only tiling satisfying the boundary condition so in order to have a nontrivial output in the algorithm we need to discard that tower. To the left of the gray tower, another one reaches the height of $\varphi$ creating a hole in the domain $S$ which would also be removed in this example but could be preserved in $h_1$ if its area was large enough compared to its height.}
    \label{fig:loop_example}
\end{figure}

When a loop in the set of level lines of $h$ touches the boundary however, there is an extra difficulty since ignoring it in the algorithm means that the resulting $\psi$ will not satisfy the correct boundary condition but including it introduces new possible topologies in \cref{claim:order_excursion}, for which one cannot bound the entropy (see the example in \cref{fig:loop_example}). To get a meaningful statement, one therefore needs to carefully select which loop to take into account and which loops to ignore, creating the superset $S'$ from the proposition. We then account for the ``damage'' in terms of entropy and ignored faces. 

More precisely, we now consider simultaneously all level lines as oriented (say the lower side of the level line is always to its left) paths and loops, each of which is marked by a level. Note that several level lines can share an edge or even be identical except for their level (see, for example, the gray tower in \cref{fig:loop_example}). We will use the nesting partial order on loops and we note that it extends canonically even to identical loops as follows: for a set of identical positively oriented loops, we say they are ordered from the lowest level to the highest and vice versa for negatively oriented loops (a given edge is never occupied by loops with opposite orientation). Let us note that, while it is useful for the ordering to think of loops as marked with a level, the levels are determined by the rest of the configuration (including the orientations) so we will not need to enumerate over them.

As mentioned above, we can safely ignore any loop that does not surround a hole or touch the boundary and enumerate over them at the end, so, without loss of generality, we assume that all loops touch a boundary. For each loop $\varrho$, we let $a(\varrho)$ be the area of $\varrho$ and $H(\varrho)$ be the total length of all loops inside $\varrho$, including $\varrho$ itself. To select which loop to keep, we proceed inductively starting from the innermost ones and erasing $\varrho$ together with all the loops inside of it if $H(\varrho) \geq \epsilon^{1/3} a(\varrho) $ (updating the values of $H$ as we erase loops). 

Recall that $S = \Upsilon_{001}(\cS)$.
We then define $\cS'$ as follows, the idea being to add to $\cS$ the interior and support of all erased loops. Any hole of $\cS$ whose projection was surrounded by an erased loop is added to $\cS'$. For any erased loop touching the boundary of $S$, note that each edge of that loop outside of $S$ must be also part of a level line of $\varphi$ and therefore has a well defined pre-image in $\varphi$ (it corresponds to a vertical tile of $\varphi$ outside of $\cS$). In fact, the pre-image of every excursion of a loop away from $S$ must be a path of vertical tiles in $\varphi$ starting and ending on $\partial \cS$ and we add to $\cS'$ all these paths together with the regions between them and $S$. Clearly, $\cS'$ is still a connected set of tiles of $\varphi$ and by construction $|\cS' \setminus \cS| \leq \epsilon^{2/3} s$. As before, we write $S' = \Upsilon_{001}( \cS')$.

We first apply the above construction ignoring all the (remaining) loops, let $h_0$ be the corresponding \SOS height function and $\psi_0$ be the tiling constructed in this way. If $h_0$ is already in the set of ``bad'' configurations $\cH'$, then we simply enumerate over the remaining loops and declare $h$ to also be in~$\cH'$. Otherwise, we proceed as follows. First, note that any loop in $h$ must be either completely above or completely below $h_0$, since the level lines of $h_0$ do not cross the loops. 
We will first treat all the loops above $h_0$, and then do the same for the ones below, but from now on we focus on the first step. Let $h_1$ be the configuration obtained by adding all these loops to $h_0$ and let $\psi_1$ be the lowest tiling satisfying the boundary condition imposed by $h_1$ and with $\psi_1 \geq \psi_0$.

We sort the innermost loops $\varrho_i$ by the value of $h_1$ inside of them, breaking ties arbitrarily. Next, let $\psi_1^{(i)}$ be defined inductively by enforcing the constraint $\psi_1^{(i)} = h_1$ on the intersection of $\partial S'$ and the interior of $\varrho_1, \ldots, \varrho_i$, i.e., the lowest tiling larger than $\psi_0$ and satisfying these constraints. The $\psi_1^{(i)}$'s form an increasing sequence of tilings and we let $A_i$ be the number of horizontal tiles which differ between $\psi_1^{(i-1)}$ and $\psi_1^{(i)}$.

\begin{claim}\label{claim:area_move}
    We have $\sum_i A_i \leq s$.
\end{claim}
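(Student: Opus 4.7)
The plan is to combine monotonicity of the sequence $(\psi_1^{(i)})$ with the minimality property---each $\psi_1^{(i)}$ is the \emph{lowest} tiling $\geq \psi_0$ satisfying the current cumulative set of constraints---and the fact that the innermost loops are sorted by the value of $h_1$ inside them.

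First, for each column $c$ in the $\cP_{001}$ projection, the values $\psi_1^{(i)}(c)$ form a non-decreasing, integer-valued sequence in $i$. A horizontal plaquette at position $(c,h)$ belongs to $\psi_1^{(i)}$ exactly when $\psi_1^{(i)}(c) = h$; by monotonicity, the set of indices $i$ for which this holds is a contiguous range. Hence any such plaquette contributes at most $2$ to $\sum_i A_i$---once upon entering, once upon leaving the sequence. This reduces the problem to bounding the total number of distinct horizontal plaquettes appearing along the sequence.

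Second, I would use the ordering of the innermost loops by $h_1$ together with minimality to argue that each column's height changes a bounded (ideally $O(1)$) number of times. Since the $\varrho_i$ are innermost, their interiors are pairwise disjoint, so each boundary column of $S'$ lies in at most one of them and its constraint is imposed at a single step. By monotonicity and minimality, once the tiling has been raised to satisfy the constraint inside $\varrho_i$, subsequent constraints (whose $h_1$-values at columns already handled are weaker due to the sorting) cannot force further movement there. Consequently the number of distinct horizontal plaquettes ever appearing is bounded by the total number of horizontal positions in the projection of $S'$ onto $\cP_{001}$, which is at most $s/2$. Combining with the factor~$2$ above yields $\sum_i A_i \leq s$.

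The main difficulty I anticipate is the global rigidity of tilings: a local constraint on $\partial S'$ can in principle force the tiling to adjust far from the loop just processed. Making the second step rigorous requires a careful localization of the update region at each step, leveraging that $\psi_1^{(i)}$ is not merely \emph{some} tiling satisfying the constraints but the pointwise minimum of all such tilings $\geq \psi_0$, together with the specific ordering of the loops. A natural implementation would be a ``greedy'' description of $\psi_1^{(i)} \setminus \psi_1^{(i-1)}$ as a specific set of 3D unit cubes supported in a prescribed neighborhood of $\varrho_i$, with the cubes across different $i$ disjoint in 3D, yielding the desired bound directly.
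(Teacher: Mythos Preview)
Your approach is along the same lines as the paper's: argue that each column $x \in \cP_{001}$ changes height at most once along the sequence $(\psi_1^{(i)})$, from which the bound follows directly. The paper's proof is terse but pinpoints the key mechanism you are missing.

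The gap is in your step ``once the tiling has been raised to satisfy the constraint inside $\varrho_i$, subsequent constraints \ldots cannot force further movement there.'' For this to hold at a column $x$ that was moved at step $i$ (possibly far from $\varrho_i$, via propagation), you need $\psi_1^{(i)}(x) \geq h(\varrho_{i'})$ for every $i' > i$. The paper's observation---phrased as ``setting $\psi_1^{(i)} = h(\varrho_i)$''---is that any column changed at step $i$ is in fact set to \emph{exactly} $h(\varrho_i)$, not some smaller intermediate value. This is a structural feature of minimal monotone surfaces in the SOS convention: since $\psi$ is coordinatewise non-increasing with no Lipschitz bound from above, the minimal $\psi \geq \psi_1^{(i-1)}$ satisfying $\psi = h(\varrho_i)$ on the new boundary piece is simply
\[
\psi_1^{(i)}(x) = \max\bigl(\psi_1^{(i-1)}(x),\ h(\varrho_i) \cdot \one\{x \text{ lies in the coordinatewise down-set of that piece}\}\bigr)\,.
\]
Your worry about ``global rigidity'' and far-away adjustments is resolved not by localization or a disjoint-cubes decomposition, but by this elementary formula: the propagation can reach arbitrarily far, but always at the exact level $h(\varrho_i)$. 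The sorting $h(\varrho_1) \geq h(\varrho_2) \geq \cdots$ then immediately gives that a column moved at step $i$ sits at height $\geq h(\varrho_{i'})$ for all $i' > i$, so it cannot move again.

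Separately, your detour through ``distinct plaquettes, each counted at most twice'' is unnecessary and contains an arithmetic slip: if each column moves once, the number of distinct horizontal plaquettes appearing is up to \emph{twice} the number of columns (one initial height, one final), not equal to it, so your route as written would only yield $\sum_i A_i \leq 2s$. Once you have ``each column moves at most once,'' the direct count---each such column contributes to a single $A_i$---is what gives $\sum_i A_i \leq s$.
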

\begin{proof}
    By construction, $\psi^{(i)}_1$ is obtained from $\psi^{(i-1)}_1$ by setting $\psi^{(i)}_1 = h(\varrho_i)$ for some $x \in \cP_{001}$ where we had $\psi^{(i)}_1 < h(\varrho_i)$. Since the $h(\varrho_i)$ are non-increasing, each $x$ can appear in at most one $A_i$. 
\end{proof}

We then enforce the constraints associated to all the other loops to get $\psi_1$. Let us turn to the enumeration of the number of tilings $\psi_0$ that could be turned into a given $\psi_1$ by taking into account the loops as above. We do the enumeration level by level. In each level, for each loop $\zeta$ we choose an innermost loop $\varrho_{i(\zeta)}$ contained in it (or $i(\zeta) = i$ if $\zeta = \varrho_i$ is already an innermost loop) and we extend the order between the $\varrho_i$ to that level. We index the loops by $\zeta_k$ in that order and we let $\Gamma_k$ be the level line after taking into account the constraints up to $\zeta_k$. In the same spirit as the $A_i$, we define lengths $L_k$ as the number of steps which moved for the first time between $\Gamma_{k-1}$ and $\Gamma_{k}$.

\begin{figure}
    \centering
  \begin{tikzpicture}
    \pgfmathsetmacro{\arwlen}{0.33}

    \coordinate (BSW1) at (-0.36,0.65);
    \coordinate (BSE1) at (1.98,0.25);
    \coordinate (RSW1) at (0.45,.51);
    \coordinate (RSE1) at (1.45,0.67);

    \coordinate (BSW2) at ($(BSW1)+(-3,3)$);
    \coordinate (BSE2) at ($(BSE1)+(3.4,3.4)$);
    \coordinate (RSW2) at ($(RSW1)+(-3.14,3.14)$);
    \coordinate (RSE2) at ($(RSE1)+(2.98,2.98)$);

    \coordinate (RW2) at (0.45,4);
    \coordinate (RE2) at (1.45,4.1);

    \draw[line width=2.5pt, purple!20] (BSW2)--(BSW1);
    \draw[line width=2.5pt, purple!20] plot [smooth] coordinates { (-0.38,0.66) (0,0.5) (0.5,0.3) (1,-0.02) (1.9,0.22) (2,0.34) };
    \draw[line width=2.5pt, purple!20] (BSE2)--(BSE1);

    \draw[blue] (BSW1)--(BSW2);
    \draw[blue] (BSE1)--(BSE2);

    \draw[blue,thick,fill=blue!10] plot [smooth cycle] coordinates {(0,0.5) (-0.4,0.75) (0,1.5) (1.2,1.2) (2,1.5) (2,0.3) (1,0) (0.5,0.3) };

    \draw[dashed,blue] (BSW2) -- +(-1,1);
    \draw[dashed,blue] (BSE2) -- +(1.,1);

    \draw[dashed, red] (RSW1)--(RSW2) -- +(-1.,1.);
    \draw[dashed, red] (RSE1)--(RSE2) -- +(1.,1.);
    \draw[dashed, red] (RSW1)-- +(0,4.14);
    \draw[dashed, red] (RSE1)-- +(0,3.98);

    \draw[green!90!black] (BSW2) to[bend right=20](RSW2);
    
    \draw[black] plot [smooth] coordinates {(RSW2) ($(RSW2)+(0.6,0.4)$) 
    ($(RSW2)+(1.3,0.4)$) 
    ($(RSW2)+(2.2,0)$) (RW2)};

    \draw[cyan] (RW2) to[bend left=30](RE2);

    \draw[black] plot [smooth] coordinates {(RE2) ($(RE2)+(0.6,-0.2)$) 
    ($(RE2)+(1.3,-0)$) 
    ($(RE2)+(2.2,0)$) (RSE2)};

    \draw[green!90!black] (RSE2) to[bend right=30](BSE2);

    \path[fill=orange, fill opacity=0.1] plot [smooth] coordinates {(RSW2) ($(RSW2)+(0.6,0.4)$) 
    ($(RSW2)+(1.3,0.4)$) 
    ($(RSW2)+(2.2,0)$) (RW2)} -- (RSW1);

    \path[fill=orange, fill opacity=0.1] plot [smooth] coordinates {(RE2) ($(RE2)+(0.6,-0.2)$) 
    ($(RE2)+(1.3,-0)$) 
    ($(RE2)+(2.2,0)$) (RSE2)} -- (RSE1);

    \draw[red,thick,fill=red!10] plot [smooth cycle] coordinates {(0.5,0.5) (0.3,.9) (0.6,0.8) (0.8,1.2) (1.5,0.8) (1.25,0.6) };

    \node[color=blue] at (1.85,0.5) {$\zeta$};
    \node[color=red] at (1,0.75) {$\varrho$};

    \node[circle,scale=0.4,fill=gray,label={[label distance=0pt]above:{\small$\mathsf a$}}] at (BSW2){};
    \node[circle,scale=0.4,fill=gray,label={[label distance=0pt]above:{\small$\mathsf b$}}] at (RSW2){};
    \node[circle,scale=0.4,fill=gray,label={[label distance=-2pt]above left:{\small$\mathsf c$}}] at (RW2){};
    \node[circle,scale=0.4,fill=gray,label={[label distance=-2pt]above right:{\small$\mathsf d$}}] at (RE2){};
    \node[circle,scale=0.4,fill=gray,label={[label distance=0pt]above:{\small$\mathsf e$}}] at (RSE2){};
    \node[circle,scale=0.4,fill=gray,label={[label distance=0pt]above:{\small$\mathsf f$}}] at (BSE2){};
    
\end{tikzpicture}
\vspace{-0.1in}
    \caption{A schematic representation of the proof of \cref{claim:entropy_loop}. The original path is represented with green/black/cyan colors depending on the division in the proof and the final path is represented in purple below both the blue loop $\zeta$ and the red loop $\varrho_{i(\zeta)}$. The area in orange is bounded by $A_i$, the area between the loop $\varrho$ and its corresponding level line (not represented in the picture) which must stay north of the green/black/cyan path.}
    \label{fig:entropy_loop}
    \vspace{-0.1in}
\end{figure}
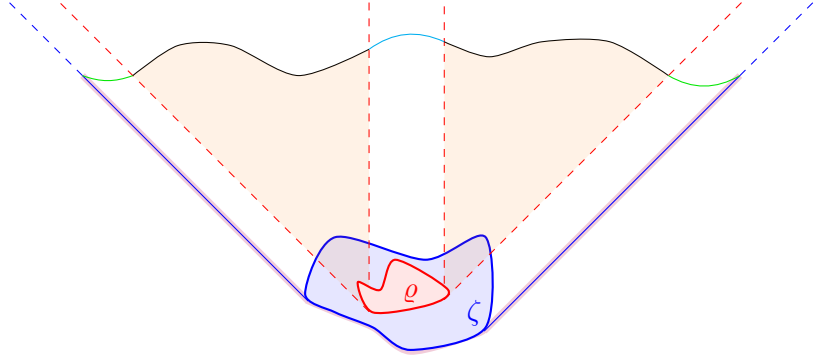

\begin{claim}\label{claim:entropy_loop}
    There exists a universal constant $C>0$ such that, for any given $A_i, L_k$ and loop $\zeta = \zeta_k$, there are at most
    \[
    A_i^3e^{2 C \sqrt{A_{i(\zeta)}}} 2^{\diam(\varrho_{i( \zeta)})} \binom{L_k}{2|\zeta|}\,.
    \]
    possible values for $\Gamma_{k-1} \xor \Gamma_k$.
\end{claim}
\begin{proof} First we note that by construction $\Gamma_{k} \setminus \Gamma_{k-1}$ must be the lowest level line satisfying the boundary condition forced by $\zeta$ so it is completely determined by $\Gamma_{k-1} \setminus \Gamma_k$. We enumerate separately the different parts as in \cref{fig:entropy_loop}. Draw a \southeast to \northwest half line starting at the \southwest-most point of the loop $\zeta$; note that $\Gamma_{k}$ cannot cross this line but we can assume $\Gamma_{k-1}$ does (otherwise there is nothing to prove on the \west side). Also draw a parallel line from the \southwest-most point of $\varrho_{i(\zeta)}$ and a straight \north one and then copy with symmetric directions on the \east side. Denote by $\mathsf{a}, \mathsf{b}, \mathsf{c}, \mathsf{d}, \mathsf{e}, \mathsf{f}$ the intersections of the path $\Gamma_{k-1}$ with these rays, marked from left to right in \cref{fig:entropy_loop}.

We enumerate from the center outward. The path from $\mathsf{c}$ to $\mathsf{d}$ between the two vertical lines has length at most $\diam(\varrho_{i(\zeta)})$. By construction, the $\psi_1^{(i)}$ are all tilings, so each step along that path has $2$ possible orientations and there are at most $A$ allowed position for $\mathsf{c}$. The path from $\mathsf{b}$ to $\mathsf{c}$ stays by definition in a ``$\frac{1}{8}$-plane'' and the area in this ``$\frac{1}{8}$-plane'' they delimit is smaller than $A$. We can identify such paths as symmetric integer partitions with area at most $2A$. It is then standard that the number of partitions of an integer $n$ can be bounded by $e^{C\sqrt{n}}$ for some $C$. Finally, the path from $\mathsf{a}$ to $\mathsf{b}$ contains at most $|\zeta|$ \northeast steps out of a total of at most $L_k$ is completely determined once all these steps are known. 
\end{proof}

Overall, bounding again a product of binomial coefficient by a single one and the sum of the $L_k$ by $s$, we find that the total number of ways to obtain a given $\psi_1$ is at most
\[
\exp( 2 C \sum_i \sqrt{A_i} n_i ) 2^{\sum_i \diam(\varrho_i) n_i } \binom{s}{2 \epsilon s}
\]
where $n_i$ is the number of loops $\zeta$ with $i(\zeta) = i$ (we absorbed the power of $A$ into $C$). Also for each $\zeta$, we must have $\diam(\varrho_{i(\zeta)}) \leq |\zeta|$ so the second factor is bounded by $2^{\epsilon s}$.

Finally, we can use the preprocessing step to bound the sizes of the loops. Fix $i$ and denote by $\zeta_1, \ldots, \zeta_{n_i}$ all the loops counted in $n_i$ from the innermost one, $\zeta_1 = \varrho_i$, to the outermost one. Since~$a(\zeta_n) \leq |\zeta_n|^2$ and $H(\zeta_n) \geq \sum_{j=1}^n |\zeta_j|$ for all $n \leq n_i$, we see that every loop must have a size at least $\epsilon^{- 1/3}$ and more generally that their length must satisfy the recursive inequality
\[
|\zeta_{n+1}|(|\zeta_{n+1}| - \epsilon^{-1/3}) \geq \epsilon^{- 1/3} \sum_{j=1}^{n} |\zeta_j| \,.
\] 
It is straightforward to verify that this implies 
\[
|\zeta_n  |\geq \tfrac{1}{3} \epsilon^{- 1/3} n
\]
for all $n$, from which we get
\[
\sum_i n_i^2 \leq 2\sum_i \binom{n_i +1}2 \leq 6 \epsilon^{ 1/3} \sum_i \sum_{j=1}^{n_i}|\zeta_j| = 6\epsilon^{1/3}\sum_{\zeta} |\zeta| \leq 6\epsilon^{4/3} s\,,
\]
where the last inequality is by our assumption $|h|-|\varphi|\leq \epsilon s$. 
Combining this with \cref{claim:area_move}, we find by Cauchy--Schwarz that
\[ \sum_i \sqrt{A_i} n_i \leq \sqrt{s \cdot 6 \epsilon^{4/3}s} \leq 3 \epsilon^{2/3} s\,,\]
which proves (running the whole argument in the new configuration a second time to account for the presence of loops below $h$):
\begin{corollary}
\label{cor:entropy_loop2}
    For a given $\psi_1$, there are at most $\exp( C \epsilon^{2/3} s)$ ways to choose $\psi_0$. 
\end{corollary}

To conclude the proof of \cref{prop:alg}, we argue that the enumeration that led to \cref{cor:bad-h-no-loops} can be modified to account for loops, in light of \cref{cor:entropy_loop2}, as follows.

First, we choose all loops intersecting the boundary or surrounding a hole together with their respective levels and whether they will be above or below $h$. For loops surrounding a hole, their level must be compatible with the heights inside the hole and each other so choosing the levels only amounts to an extra binary choice per loop. Other loops can be seen as starting at an intersection of a level line of $\varphi$ on $S$ and this determines their level. Since each such intersection corresponds to an edge in $\partial S$, there are at most $4s$ of them. Overall, this step requires at most $\exp( C \epsilon \log( 1/\epsilon) s)$ choices. With the same cost we can also add all loops that do not surround a hole or intersect the boundary without choosing their levels since they are ignored by the algorithm anyway.

At this point, we can determine which of these loops to erase in the algorithm and find $S'$ together with its boundary condition and the modified boundary condition obtained by ignoring the loops. In particular, we can find the maximal and minimal tilings satisfying the latter boundary conditions and follow \cref{st:Gammah-enum-1,st:Gammah-enum-2,st:Gammah-enum-3,st:Gammah-enum-4} of the previous enumeration to find all excursions of a $\Gamma_h$ away from the associated $\Gamma_h^\reg$. We can also choose all excursions of $\Gamma_h$ away from $\Gamma_{\psi_0}$ and the times along $\Gamma_\psi$ to root them following \cref{st:Gammah-enum-5,st:Gammah-enum-6,st:Gammah-enum-7}. 

It remains to enumerate over all points where $\Gamma_h$ and $\Gamma_{\psi_0}$ agree. We first choose all points where $\psi_0$ and $\psi_1$ disagree by \cref{cor:entropy_loop2} and then complete the at most $\epsilon^{1/3}s$ steps where $\Gamma_h$ coincides with both $\Gamma_{\psi_0}$ and $\Gamma_{\psi_1}$.
\end{proof}

\subsection{Application for bubble groups}
\Cref{prop:alg} will provide us with a crucial estimate on $\beta H_h(\fB) - \alpha G_{h,\varphi}^\gr(\fB)$ for a ``typical'' bubble group $\fB$.
Recall from \cref{eq:area-of-bubble-grp} that for every bubble group $\fB=(\{\sB_i\},\{\cC_j\})$ we have $|\Upsilon(\fB)| \leq 5\sum_{i} |\sB_i|$.

\begin{lemma}\label{lem:approx-bubble-grp}
There exists an absolute constant $C>0$ such that, for every $\varphi$, $\epsilon>0$ and $s>C$, there are at most $\exp(C \epsilon^{1/4} s)$
bubble groups $\fB$ of size $|\Upsilon(\fB)|=s$, containing a fixed point $o$,
with
\begin{equation}\label{eq:good-bubble-grps}
\beta H_{h}(\fB) - \alpha G^\gr_{h, \varphi}(\fB) \leq  \epsilon^{1/3}  (\alpha \epsilon^{1/3} \wedge \beta\epsilon ) s\,.
\end{equation}
In addition, if $\epsilon\leq \frac1{150}$ then for all such $\fB$ we have
%$ \sV_1( \fB )  \geq /16 $ for the potential $\sV_1$ from \cref{eq:pinning-V}, as well as 
\[\beta H_h(\fB)+\lambda \sV(\fB) \geq \beta/2
+ \lambda s/16\,.\]
Furthermore, under the stronger condition in \cref{eq:gen-potential} on $\sV$, for any $C^\star >0$ the lower bound in the last inequality may be changed to $\beta/2 + C^\star s$ by taking $\beta$ large enough.
% {\color{red} whereas the potential $\sV_2$ has $\lambda \sV_2(\fB) +\log Z_\mu^\alpha(\fB) \geq \lambda s/19$ provided $\lambda \leq \frac{1}{12}$.}
\end{lemma}
\begin{proof}
First observe that $H_h$ is additive on the bubbles $\sB_i$ by its definition in \cref{eq:bubble-group-H}. As for~$G_{h,\varphi}^\gr$, since the set $S'$ in \cref{prop:alg} is larger than just the bubble, we can unfortunately not directly sum the bound. Let $\psi_i$ be the tilings of the set $S'_i$ obtained by applying \cref{prop:alg} to the bubble $\sB_i$, setting by convention $\psi_i = \varphi \restriction_S$ for the cases where $h \in \cH'$. Each tile in some $\psi_i \cap h$ can have one of three possible orientations (i.e., it can belong to a translate of the $\cP_{100}, \cP_{010}$ or $\cP_{001}$ direction) so one type must account for at least $1/3$ of the total. Suppose by symmetry (in this proof, we will not consider $h$ as anything but an arbitrary set of faces) that it is the $\cP_{100}$ direction. We can view each tiling as a monotone function over a $\Z^2$ tiling of the $\cP_{100}$ plane exactly as we often do over the $\cP_{001}$ plane and, similarly to the above, we can assume that at least $1/2$ of these faces satisfy $\psi_i(u) \geq \varphi(u)$. We let
\[
\psi = \max (\{ \psi_i \} \cup \{ \varphi \})
\]
in that representation, i.e., for any face $u$ of $\cP_{100}$, $\psi(u)$ is the maximum of all $\varphi(u)$ and of all the $\psi_i(u)$ that are well defined. Clearly, $\psi$ is well defined as an integer function but we need to check its monotonicity to show that it is indeed a tiling. For this, fix some $u$ and let $u'$ be one of its neighbors with $u' \geq u$. Suppose first that $i, i'$ are such that $\psi_i(u) = \psi(u) $ and $\psi_{i'}(u') = \psi(u')$, if $\psi_{i'}$ is defined at $u$, then the proof is trivial so we can assume that it is not the case. Since $\psi_{i'}$ is not defined at $u$ but is compatible with $\varphi$ outside of its domain of definition, one must have $\psi_{i'}(u') \leq \varphi(u)$ but by assumption $\varphi(u) \leq \psi_{i}(u)$. The proof is similar if $\psi(u) = \varphi(u)$ or $\psi(u') = \varphi(u')$. We also note that, since for each $i$, $|S'_i \setminus S_i| \leq \epsilon^{2/3}|S_i|$, going from the $\psi_i$ to $\psi$ misses at most $\epsilon^{2/3}\sum |\Upsilon_{111}(\fB_i)|$ faces. Overall, we get
\[
\sum_i | \psi \cap h \cap \Upsilon_{111}^{-1}(S_i) | \geq \frac{1}{6} \sum_i | \psi_i \cap h  | - 2\epsilon^{2/3}\sum_i |S_i|.
\]
Since the $S_i' \setminus S_i$ also contain all the faces where $\varphi$ and $h$ agree but not $\psi$, we then get
\begin{equation}\label{eq:G^g-upper-bnd}
    G^\gr_{h,\varphi}(\fB) \leq G^\gr_{h, \varphi}(\psi) \leq  -\frac{1}{6}\sum_{i} |\psi_i \cap h| + 2\epsilon^{2/3}\sum_i |S_i| \,.
\end{equation}

Consider $\psi$ constructed in this manner, and call a $\sB_i$ {\tt bad} if the corresponding \SOS height function $h$ belongs to the exceptional subset $\cH'$ from \cref{prop:alg}, with the same choice of $\epsilon$ as we have here, and {\tt good} otherwise. 

We will enumerate bubble groups with $\beta H_{h}(\fB) - \alpha G^\gr_{h, \varphi}(\fB) \leq  \epsilon^{1/3}  (\alpha \epsilon^{1/3} \wedge \beta\epsilon )s $ using the above bound on $G^\gr$.
Observe that we may assume 
\begin{equation}\label{eq:good-bubble-size}
\sum_i |\sB_i|\one_{\{\sB_i\text{ is {\tt good}}\}} \leq 24 \epsilon^{1/3} s\,,
\end{equation} since every {\tt good} bubble $\sB_i$ has either $H_h(\sB_i)\geq \epsilon |\sB_i|$ or $|\psi_i \cap h| \geq \epsilon^{1/3}|\sB_i|$ (recall $h\cap \varphi=\emptyset$ on an $(h,\varphi)$-bubble), and so every such $\sB_i$ contributes at least $(\alpha \epsilon^{1/3} \wedge\beta\epsilon) |\sB_i|$ to $\beta H_{h}(\fB) - \alpha G^\gr_{h, \varphi}(\fB) $, with \cref{eq:G^g-upper-bnd} in mind for the contribution to $G_{h,\varphi}^\gr(\fB)$.

To control the contribution of {\tt bad} bubbles, first observe the following.
\begin{fact}\label{fact:bad-bubble-size}
Let $\sB_i$ be an $(h,\varphi)$-bubble. If $\sB_i$ is {\tt bad} as defined above then $|\sB_i|> 1/\epsilon$.
\end{fact}
\begin{proof}
For $\sB_i$ to belong to $\cH'$ as per \cref{prop:alg}, necessarily $|h \cap \sB_i|>|\varphi \cap \sB_i|$, as otherwise $h\cap \sB_i$ is itself a tiling whence $|h \cap \psi_i| =|\sB_i|/2  > \epsilon^{1/3} |\sB_i|$ (by taking $\psi_i=h \cap \sB_i$), so $\sB_i$ is {\tt good}. Since we must also have $H< \epsilon |\sB_i|$ from the criterion of $\cH'$, we deduce that $|\sB_i| > 1/\epsilon$.
\end{proof}

We will also need the following simple deterministic observation.
\begin{fact}\label{fact:epsilon-T}
For any $\epsilon>0$ and connected set $S$ of faces of the triangular lattice $\T$, the number of faces of $\lceil\epsilon^{-1}\rceil \T$ intersecting $S$ is at most $12 \lceil \epsilon|S|\rceil $.
\end{fact}
\begin{proof}
Let $\T'$ be the sub-lattice of $\T$ made of tiles with side-length $\lceil1/\epsilon\rceil $, and let $T$ be the number of tiles of $\T'$ that intersect $S$. The faces of the triangular lattice $\T$ can be colored using 6 colors such that two faces with the same color share neither an edge nor a vertex. Select the single-color sub-lattice containing the most faces of $\T'$ which intersect $S$ and let $T'$ be the number of tiles in this set. If $T' > 1$, since $S$ is connected, we can associate to each of these tiles a path in $S$ of length $1/
(2 \epsilon)$ connecting $S \cap t$ up to distance $1/(2 \epsilon)$ of $t$. Also of course $T'\geq T/6$. Overall, we see that if $T > 6$, then $|S| \geq T/(12\epsilon)$ and in particular $T \leq 12 \lceil \epsilon |S| \rceil$.
%Greedily select $T'\subset T$ such that every $t_1,t_2\in T'$ do not belong to the same tile of $\T'$ nor to adjacent ones. Accounting for 12 tiles adjacent (by an edge or vertex) to a given tile of $\T$, as well as itself, we see that $|T'| \geq |T|/12$. At the same time, since $S$ is connected, every $t\in T'$ must be associated with a disjoint path of at least $1/\epsilon$ faces connecting it to $S$, so $|S| \geq |T'|/ \epsilon \geq |T|/(12\epsilon) $.
\end{proof}

We can now start to enumerate our bubble groups. First, we choose a connected set of $12\epsilon s$ faces of $\lceil \epsilon^{-1}\rceil \T$ and there are $C_0^{12\epsilon s}$ ways to do it, with $C_0$ the growth constant for animals in the triangular lattice $\T$. The total area of this set in $\T$ is thus at most $12 s / \epsilon$. 
Then we choose the centers of all the {\tt bad} bubbles: there are $\kappa \leq \epsilon s$ of them because of \cref{fact:bad-bubble-size} so there are fewer than $\sum_\kappa\binom{12 s/\epsilon}{\kappa} \le \exp ( C \epsilon s \log (1/\epsilon) )$ possible choices at this step. Similarly, using \cref{eq:good-bubble-size}, we can choose the area covered by {\tt good} bubbles and an \SOS configuration for each of them with at most $\binom{12 s/\epsilon}{\epsilon^{1/3} s} C^{\epsilon^{1/3} s}$ choices in total. Finally, we choose the {\tt bad} bubbles given their centers and by \cref{prop:alg} there are only $\exp( c \epsilon^{1/3} s)$ possible choices for those in total.
Overall, the number of bubble groups we enumerated over is at most
\[
C_0^{12\epsilon s} \exp ( 2 C \epsilon^{1/3} s \log (1/\epsilon)  ) C^{\epsilon^{1/3} s} \exp( c \epsilon^{1/3} s) \leq \exp( C'\epsilon^{1/4} s)\,;
\]
 thus, at most $\exp[c\epsilon^{1/4} s]$ bubble groups $\fB$ have 
$\beta H_{h}(\fB) - \alpha G^\gr_{h, \varphi}(\fB) \leq  \epsilon^{1/3}  (\alpha \epsilon^{1/3} \wedge \beta\epsilon ) s$ (our criterion as per \cref{eq:good-bubble-grps} for an exceptional bubble group $\fB$), as required.

It now remains to address the exceptional bubble groups $\fB$ via the potential $\sV(\fB)$. 
Recall that $\sV(\fB)=\sum_{i=1}^m \ff(S_i)$ where $(S_i)_{i=1}^m$ (for some $m\geq 1$) are the connected components of $\psi^\sqcup\setminus h$ that intersect $\Upsilon(\fB)$. By \cref{eq:V(fB)-pinning-alt},
 \begin{align*}\sum_{i=1}^m |S_i|&= G^\gr_{h,\varphi}(\fB) + \sum_{\sB\in\fB}|\varphi\cap\sB| = G^\gr_{h,\varphi}(\fB) + \sum_{\sB\in\fB}(|\sB| - H_h(\sB))/2 \\ 
&\geq -\frac{\beta H_h(\fB)-\alpha G^\gr_{h,\varphi}(\fB)}{\alpha\wedge\beta} + \frac1{10}|\Upsilon(\fB)|\,,\end{align*}
where we used \cref{eq:bubble-H} in the first line and \cref{eq:area-of-bubble-grp} in the second one.  The numerator in the first term is (by definition) at most $\epsilon^{2/3}(\alpha\wedge \beta)|\Upsilon(\fB)|$ for every bubble group $\fB$ satisfying \cref{eq:good-bubble-grps}. For every $\epsilon\leq \frac1{150}$, we thus have 
\[ \sum_{i=1}^m |S_i| \geq \big(-\frac1{28}+\frac1{10}\big)|\Upsilon(\fB)| > \frac1{16}|\Upsilon(\fB)|\,.\]
Together with the fact that $\ff(S) \geq |S|\one_{\{|S|\geq \sM_0\}} \geq |S|-\sM_0$ as per \cref{eq:gen-potential-weaker}, it follows that
\[  \sV(\fB) \geq \frac1{16}|\Upsilon(\fB)|-m \sM_0 \,.\]
Since $\psi^\sqcup$ maximizes $|h\cap\psi|$ over all tilings $\psi$, for each $i$ it must be that  $h\restriction_{\Upsilon(S_i)}$ is not a tiling (or else we could replace $\psi^\sqcup$ by that tiling in $S_i$ and strictly increase its intersection with $h$). In particular, one has
$ H_h(\fB) \geq m$, as $h$ has at least one excess face per each of the components $S_i$. 
Thus, 
\[\beta H_h(\fB)+\lambda \sV(\fB) \geq (\beta -\lambda \sM_0)m + \frac{\lambda}{16} |\Upsilon(\fB)| \geq \frac{\beta}2 + \frac{\lambda}{16}|\Upsilon(\fB)|\] (with the last inequality valid for $\beta \geq 2\lambda \sM_0$), as required. 

In the case of the stronger condition on $\sV$ as per \cref{eq:gen-potential}, for every $C^\star>0$ there exists some $\sM_0$ such that $\ff(S)/|S|\geq 16 C^\star/\lambda$ for every $S$ with $|S|\geq \sM_0$ (as otherwise there would be a sequence of sets $(S_k)$ with $|S_k|\to\infty$ and $\ff(S_k)/|S_k|< 16 C^\star/\lambda$, contradicting the hypothesis \cref{eq:gen-potential} on $\ff$). Plugging the bound $\ff(S) \geq (16C^\star/\lambda)(|S| -\sM_0)$ in the argument above (instead of $\ff(S)\geq |S|-\sM_0$) now gives
\[ \beta H_h(\fB)+\lambda \sV(\fB) \geq (\beta - 16 C^\star\sM_0)m + C^\star |\Upsilon(\fB)| \geq \frac{\beta}2 +C^\star|\Upsilon(\fB)|\]
(with the last inequality valid for  $\beta\geq 32 C^\star\sM_0$), completing the proof.
\end{proof}

Towards an application for the dynamics that will appear in the proof of \cref{thm:pi}, we need an analogous statement for pairs $(S, \{\fB_i\})$ where $S$ is a connected set of faces of $\lceil \frac{1}{\gamma}\rceil \T$ that intersects each of the bubble groups in $\{\fB_i\}$.
\begin{lemma}\label{lem:approx-update}
 There exists an absolute constant $C>0$ such that the following holds for every $\varphi$, $\epsilon, \gamma>0$ and $s>C$. There are at most $\exp[C(\epsilon^{1/4} + \gamma^2) s]$
 pairs $(S, \{\fB_i\})$, where $S$ is a connected set obtained as the union of triangles of $\lceil \frac1\gamma\rceil\T$ and $\{\fB_i\}$ is a collection of disjoint bubble groups such that $\Upsilon( \fB_i ) \cap S \neq \emptyset$ for all $i$,
 with $s = |S|\;\vee\; \sum_i |\Upsilon(\fB_i)|$ and containing a fixed point $o$,
 so that
 \[
 \sum_i \left(\beta H_{h}(\fB_i) - \alpha G^\gr_{h, \varphi}(\fB_i)\right) \leq  \epsilon^{2/3} (\alpha \epsilon^{1/3} \wedge \beta\epsilon ) \sum_i |\Upsilon(\fB_i)|\,.
 \]
 In addition, if $\epsilon \leq \frac1{512}$ then for all such exceptional $(S,\{\fB_i\})$ we have
 \[
 \sum_i \Big(\beta H_h(\fB_i)+\lambda \sV(\fB_i) \Big)\geq \frac{\beta}2 + \frac{\lambda}{20} \sum_i |\Upsilon(\fB_i)| \,.
 \]
 Furthermore, under the stronger condition in \cref{eq:gen-potential} on $\sV$, for any $C^\star >0$ the lower bound in the last inequality may be changed to $\beta/2 + C^\star\sum_i |\Upsilon(\fB_i)|$ by taking $\beta$ large enough.
 \end{lemma}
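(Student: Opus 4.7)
Proof plan. My strategy is to mirror that of \cref{lem:approx-bubble-grp}, adding two new ingredients: a separate enumeration of the coarse-lattice set $S$, which will contribute the $\gamma^2 s$ term in the exponent, and a dichotomy between bubble groups that individually satisfy the per-group Peierls bound of \cref{lem:approx-bubble-grp} and those that do not (which, by the aggregated hypothesis, are forced to have small total area).

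First I would enumerate $S$ as a connected union of faces of $\lceil 1/\gamma\rceil\T$ containing the fixed point $o$; since its fine-lattice size is at most $s$, $S$ consists of at most $\lceil \gamma^2 s\rceil$ coarse tiles, giving at most $C^{\gamma^2 s}$ choices by the standard animal enumeration. Next I would call a bubble group $\fB_i$ \emph{small} if
\[
\beta H(\fB_i) - \alpha G^\gr(\fB_i) \leq \epsilon^{1/3}(\alpha\epsilon^{1/3}\wedge\beta\epsilon)\,|\Upsilon(\fB_i)|
\]
(the single-group threshold from \cref{lem:approx-bubble-grp}) and \emph{large} otherwise. Each large group contributes at least $\epsilon^{1/3}(\alpha\epsilon^{1/3}\wedge\beta\epsilon)|\Upsilon(\fB_i)|$ to the exceptional sum, so the hypothesis forces $\sum_{\mathrm{large}}|\Upsilon(\fB_i)|\leq \epsilon^{1/3}s$. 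By \cref{lem:approx-bubble-grp}, the number of small groups of size $n$ rooted at a fixed face is at most $\exp[C\epsilon^{1/4}n]$, yielding $\exp[C\epsilon^{1/4}s]$ after summing over sizes; large groups, whose total area is at most $\epsilon^{1/3}s$, contribute at most $C_0^{\epsilon^{1/3}s}$ via the trivial bound on connected animals. Positional attachment of the bubble groups to $S$ is absorbed into the coarse-lattice enumeration of $S\cup\bigcup_i \Upsilon(\fB_i)$ in $\lceil 1/\epsilon\rceil\T$, exactly as in \cref{lem:approx-bubble-grp}. The main obstacle is making this positional bookkeeping rigorous while keeping the enumerations of $S$ and of the bubble groups consistent; after routine rearrangement, the total count is at most $\exp[C(\epsilon^{1/4}+\gamma^2)s]$.

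For the furthermore clause I would apply the per-group lower bound from \cref{lem:approx-bubble-grp}: \cref{eq:V(fB)-pinning-alt} yields
\[
\sV_1(\fB_i) \geq -\frac{\beta H(\fB_i) - \alpha G^\gr(\fB_i)}{\alpha\wedge\beta} + \frac{|\Upsilon(\fB_i)|}{10}\,.
\]
Summing over $i$ and using the elementary inequality $(\alpha\epsilon^{1/3}\wedge\beta\epsilon)/(\alpha\wedge\beta)\leq \epsilon^{1/3}$ together with the sum hypothesis gives $\sum_i(\beta H - \alpha G^\gr)(\fB_i)/(\alpha\wedge\beta)\leq \epsilon\sum_i|\Upsilon(\fB_i)|$, hence $\sum_i \sV_1(\fB_i) \geq (1/10-\epsilon)\sum_i|\Upsilon(\fB_i)| \geq \sum_i|\Upsilon(\fB_i)|/20$ whenever $\epsilon\leq 1/512$. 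Extending from $\sV_1$ to a general $\sV$ as per \cref{def:pinning-V} proceeds exactly as at the end of the proof of \cref{lem:approx-bubble-grp}: decomposing $\psi^\sqcup\setminus h$ restricted to $\fB_i$ into connected components $S_{i,j}$ gives $\sV(\fB_i)\geq \sV_1(\fB_i)/\sM_0 - m_i$ and $H(\fB_i)\geq m_i$, where $m_i$ is the number of such components, so
\[
\sum_i(\beta H + \lambda\sV)(\fB_i)\geq (\beta-\lambda)\sum_i m_i + \frac{\lambda}{20\sM_0}\sum_i|\Upsilon(\fB_i)| \geq (\beta-\lambda) + \frac{\lambda}{20\sM_0}\sum_i|\Upsilon(\fB_i)|,
\]
where the final inequality uses $\sum_i m_i \geq 1$ (any exceptional $\fB_i$ contributes at least one component).
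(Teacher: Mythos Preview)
Your enumeration argument matches the paper's: both separate the coarse-lattice count of $S$ (contributing $\exp[C\gamma^2 s]$ via \cref{fact:epsilon-T}) from a dichotomy on bubble groups according to whether each one individually meets the single-group threshold of \cref{lem:approx-bubble-grp}. What you call ``small'' is what the paper calls ``bad'' (exceptional in the sense of \cref{lem:approx-bubble-grp}, hence low entropy), and your ``large'' groups are the paper's ``good'' ones, whose total area is forced to be $O(\epsilon^{1/3}s)$ by the aggregated hypothesis. The positional bookkeeping you flag as the main obstacle is indeed handled exactly as in \cref{lem:approx-bubble-grp}, with bubbles replaced by bubble groups.

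For the furthermore clause you take a genuinely different route. The paper applies a Markov-type averaging: from $\sum_i f(\fB_i)\le \epsilon^{2/3}(\alpha\epsilon^{1/3}\wedge\beta\epsilon)\sum_i|\Upsilon(\fB_i)|$ it extracts a subcollection with total area exceeding $\tfrac78\sum_i|\Upsilon(\fB_i)|$ in which \emph{each} $\fB_i$ satisfies the per-group threshold $f(\fB_i)<8\epsilon^{2/3}(\alpha\epsilon^{1/3}\wedge\beta\epsilon)|\Upsilon(\fB_i)|\le \epsilon^{1/3}(\alpha\epsilon^{1/3}\wedge\beta\epsilon)|\Upsilon(\fB_i)|$ (this last step is where $\epsilon\le 1/512$ enters), and then invokes the per-group conclusion of \cref{lem:approx-bubble-grp}. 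You instead sum the identity \cref{eq:V(fB)-pinning-alt} directly over all $\fB_i$; since that identity is linear in the bubble groups, you bypass the Markov step entirely and obtain $\sum_i\sV_1(\fB_i)\ge(1/10-\epsilon)\sum_i|\Upsilon(\fB_i)|$, needing only $\epsilon\le 1/20$. This is cleaner and more direct. One small correction: your parenthetical ``any exceptional $\fB_i$ contributes at least one component'' is not quite right---an individual $\fB_i$ with $h\restriction_{\Upsilon(\fB_i)}$ a tiling has $m_i=0$---but the needed fact $\sum_i m_i\ge 1$ does follow, since you have already shown $\sum_i\sV_1(\fB_i)>0$.
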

 \begin{proof}
 The proof is completely analogous to \cref{lem:approx-bubble-grp} so we only give a sketch. The total area of bubble groups that are {\tt good} in the sense of \cref{lem:approx-bubble-grp} is at most $\epsilon s$ while {\tt bad} bubble groups in the sense of \cref{lem:approx-bubble-grp} provide very little entropy. The proof is then identical to \cref{lem:approx-bubble-grp} simply replacing bubbles by bubble groups, with the additional step of a final enumeration over $S$. It is there (using \cref{fact:epsilon-T}) where the $\exp[\gamma^2 s]$ term appears. 

 For the second assertion, denote $\beta H_h(\fB)-\alpha G^\gr_{h,\varphi}(\fB)$ by $F(\fB)$ in this proof for brevity. Our assumption  $\sum_i F(\fB_i) \leq \epsilon^{2/3}(\alpha\epsilon^{1/3}\wedge\beta\epsilon)s$ implies that 
 \[ \sum_i |\Upsilon(\fB_i)|\one_{\{F(\fB_i)< 8\epsilon^{2/3}(\alpha\epsilon^{1/3}\wedge\beta\epsilon)|\Upsilon(\fB_i)|\}} > \tfrac78 s\,.\]
 By \cref{lem:approx-bubble-grp}, if $\epsilon\leq \frac1{150}$ (and $\beta>2\lambda\sM_0$) then $\beta H_h(\fB)+\lambda \sV(\fB) \geq \beta/2 + \lambda|\Upsilon(\fB_i)|/16$ holds for every $\fB_i$ with $F(\fB_i) \leq \epsilon^{1/3}(\alpha\epsilon^{1/3}\wedge \beta\epsilon)|\Upsilon(\fB_i)|$; in particular, this holds for every $\fB_i$ that contributes to the above sum (as $\epsilon\leq\frac1{512}$).
 So, the restriction of $\sum_i (\beta H_h(\fB_i)+\lambda\sV(\fB_i))$ to these $\fB_i$'s already suffices for the sought lower bound on $\sum_i\left(\beta H_h(\fB_i)+\lambda\sV(\fB_i)\right)$, as $\frac1{16}\cdot \frac78s>s/20$.

Finally, for the statement pertaining to the stronger condition in \cref{eq:gen-potential}, fix $C^\star>0$. Appealing to the stronger bound provided by \cref{lem:approx-bubble-grp} for this potential, the prefactor $\lambda/16$ may be replaced by $\frac87 C^\star$ when $\beta$ is large enough. This leads to a final lower bound of $\beta /2 + C^\star s$, as required.
 \end{proof}

\section{Local decomposition of \texorpdfstring{$\pi_{\varphi, \hat \beta}$}{\textpi}}\label{sec:pi}
Our goal in this section is to decompose $\int \pi_{\varphi,\hat\beta}(\cdot)\d\hat \beta$ as follows.

\begin{theorem}\label{thm:pi-str}
Consider potentials $\sV$ as in  \cref{def:pinning-V}. For any $C^\star > 0$ there exists $C>0$ such that the following holds. If $0<\lambda < \frac{1}{C}$ and $\alpha\wedge \beta> C \lambda^{-20}$, then there exist functions $\fg^\pi_r$ for $r=2^k$ with $k=0,1,\ldots$, such that for every $N$,
\begin{align}\label{eq:orig-prop3}
\sup_\varphi \bigg| \int_{\beta}^\infty \pi_{\varphi,\hat\beta}(H_h)\d \hat\beta - \sum_{x\in\T_N} \sum_{\substack{0 \leq r < N/2\\ r=2^k}} \fg^\pi_r(\varphi\restriction_{ B(x,r)})
\bigg|&\leq C e^{-\beta/2- C^\star N}
\,,
\end{align}
and for every integer $r=2^k$ ($k\geq 0$) one has $\|\fg_r^\pi\|_\infty \leq C e^{-\beta/2-C^\star r}$.
\end{theorem}

\begin{theorem}\label{thm:pi}
For potentials $\sV$ satisfying the weaker condition in  \cref{eq:gen-potential-weaker}, there exists an absolute constant $C>0$ such that, if $0<\lambda<\frac1{C}$ and $\alpha\wedge \beta\geq C\lambda^{-20}+\sM_0$ then 
the conclusion of \cref{thm:pi-str} holds true with $C^\star$ replaced by $\lambda^2/(C\sM_0)$.
\end{theorem}

Recalling the definition of $\pi_{\varphi,\hat\beta}$ from \cref{eq:pi-def}, thanks to \cref{eq:bubble-group-H} we have that
\begin{equation}\label{eq:pi-simplified}
\pi_{\varphi,\hat\beta}(h) \propto \exp\bigg[ \sum_{\substack{\fB\,:\; \fB\text{ is an}\\ \text{$(h,\varphi)$-bubble group}}} \!\!\!\!\!\Big[-\hat\beta H(\fB) + \alpha G^\gr(\fB)-\log Z_\mu^\infty(\fB)-\lambda\sV(\fB)\Big]-\int_\alpha^\infty \mu_{h,\hat\alpha}(\overline G_h)\d\hat\alpha\bigg]\,.
\end{equation}
As was the case for the measures $\mu$ and $\nu$, we analyze the measure $\pi$ by defining an appropriate Gibbs sampler, and establishing that (a) it contracts w.r.t.\ an appropriate metric on \SOS configurations, and therefore mixes rapidly; and (b) its speed of propagating information is uniformly bounded. As mentioned in  \cref{sec:sketch-step-3}, each of these tasks is significantly more challenging than the analogs for $\mu$ and $\nu$ because the effects of the term $\int \mu_{h,\hat\alpha}(\overline G_h)\d\hat\alpha$ are hard to control.

More precisely, we will see (see, e.g., the proof of \cref{prop:bound_muh2}) that the \SOS configuration at two different positions may interact strongly through this term when there exists a dense set of bubble groups connecting them. This introduces two difficulties: First, a large dense set comprised of small bubble groups $\{ \fB_i\}$  will be a bottleneck for any ``local'' dynamics since in such a case the interaction through the integral might dominate the other terms for each individual $\fB_i$, stopping us from adding or removing them one by one. That is why the dynamics in \cref{sec:glauber-pi} will be defined with possibly large single updates: it is designed to allow such a bottleneck to be removed in a single step.
Second, at the technical level, we will need to measure the local density in our bounds on the integral, hence the notion of enclosure and fenced set defined below.

\subsection{Bounds on the integral interaction}\label{sec:bound_integral}

Let $\rho$ denote the \emph{density} of a set $S \subset \T$ with respect to an \SOS configuration $h$:
\begin{equation}
    \label{eq:def-bubble-density}
\rho(S, h) = \frac{1}{|S|} \sum_{\fB \in h} |\Upsilon(\fB)| \one_{\{\Upsilon(\fB) \cap S \neq \emptyset\}} \, .
\end{equation}

\begin{definition}[Enclosure]\label{def:bubble-grp-enclose}
    Let $h$ be an \SOS configuration and let $\fB_0$ be an $(h,\varphi)$-bubble group. An \emph{enclosure} $R_0$ of $\fB_0$ with density $\rho_0$, which unless specified otherwise we take as $\rho_0=\frac1{10}$, is a connected set $R_0$ satisfying:
    \begin{enumerate}[label=(\arabic*)]
    \item \label{it-R-1} $R_0 \supseteq \Upsilon(\fB_0)$.
    \item \label{it-R-2} No $(h,\varphi)$-bubble group $\fB$ has $\Upsilon( \fB) \cap \partial R_0 \neq \emptyset$.
    \item \label{it-R-3} It has density (as per \cref{eq:def-bubble-density}) at least $\rho_0$. %The total size of every $(h,\varphi)$-bubble group $\fB$ fully contained in $R_0$ (i.e., $\Upsilon(\fB)\subseteq R_0$) satisfies 
        %$\sum_{\fB} |\Upsilon(\fB)| \geq |R_0|/10$.
        \item \label{it-R-4} There is no set $R$ with $|R| >  |R_0|$ that satisfies \cref{it-R-1,it-R-2,it-R-3}.    
    \end{enumerate}
    Similarly, an enclosure of a set of bubble groups $\{ \fB^0_i \}$ is a set satisfying the above condition and such that each of its connected components contains at least one of the $\Upsilon(\fB^0_i)$.
\end{definition}
\begin{remark}\label{rem:enclosure-exists}
 Every bubble group $\fB_0$ has an enclosure $R_0$ as above: Indeed, $\Upsilon(\fB_0)$ trivially satisfies \cref{it-R-1,it-R-2} while \cref{it-R-3} is given by \cref{eq:area-of-bubble-grp}; thus, the collection of sets $R$ satisfying \cref{it-R-1,it-R-2,it-R-3} is nonempty and any element of maximal size can be chosen as $R_0$. We emphasize that, typically, a bubble group will admit many different enclosures.
\end{remark}

\begin{definition}[Fenced set]\label{def:fenced-set}
    Let $h$ be an \SOS configuration. A set $R$ is said to be \emph{fenced} with density $\rho_0$, which unless specified otherwise we take as $\rho_0=\frac1{10}$, if no $(h,\varphi)$-bubble group $\fB$ has $\Upsilon( \fB) \cap \partial R \neq \emptyset$ and if every connected set $S\subseteq \T_N\setminus R$ but with $\partial S \cap \partial R \neq \emptyset$ has $\rho(S, h) < \rho_0$.
\end{definition}
Note that any enclosure is a fenced set but the converse is not true. Note also that being a fenced set is a decreasing condition on the set of bubble groups of $h$.

Our first step is to control the effect of adding or removing a set of bubble groups on $\int \mu_{h,\hat\alpha}(\overline G_h)\d\hat\alpha$.
\begin{proposition}\label{prop:bound_muh}
    For all $\rho_0 < \frac{1}{5}$, there exists $C$ such that the following holds. Let $\hat h$ be an \SOS configuration, let $\{\fB_i\}$ be a collection of bubble groups of $\hat h$ and let $h$ be the configuration obtained by removing the $\fB_i$ from $\hat h$, i.e., $h = \hat h \xor (\bigcup_i \bigcup_{\sB \in \fB_i} \sB)$. If $R$ is a fenced set with density $\rho_0$ with respect to $\hat h$ (as per \cref{def:fenced-set}) such that $\bigcup_i \Upsilon(\fB_i) \subset R$, then
    \begin{equation}\label{eq:integral_bound_muh}
    \Big| \int_{\alpha}^\infty \big(\mu_{\hat h,\hat\alpha}(\overline G_{\hat h}) - \mu_{h,\hat\alpha}(\overline G_h)\big)\d \hat\alpha \Big| \leq C |R|\,.        
    \end{equation}
\end{proposition}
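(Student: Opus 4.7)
My plan is to rewrite the target integral as a difference of free energies and then exploit a locality decomposition. Using the identity $-\partial_{\hat\alpha}\log Z_\mu^{\hat\alpha}(h) = \mu_{h,\hat\alpha}(\overline G_h)$ (cf.\ \cref{lem:grimmett}) together with $Z_\mu^{\hat\alpha}(h) \to Z_\mu^\infty(h) = |\Psi_h^\gr|$ as $\hat\alpha\to\infty$, I would write
\[
\int_\alpha^\infty \mu_{h,\hat\alpha}(\overline G_h)\,\d\hat\alpha = \log Z_\mu^\alpha(h) - \log Z_\mu^\infty(h) =: F(h),
\]
so that \cref{eq:integral_bound_muh} reduces to showing $|F(h) - F(\hat h)| \leq C|R|$.

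The key step is to establish a local decomposition
\[
F(h) = \sum_{x\in\T_N}\sum_{r=2^k} F_r\bigl(h\restriction_{B(x,r)}\bigr), \qquad \|F_r\|_\infty \leq C e^{-\alpha r/C},
\]
by adapting the proof of \cref{thm:mu-nu}~\cref{eq:orig-prop1} to the $h$-dependent measure $\mu_{h,\hat\alpha}$. The relevant Metropolis chain now adds and removes $(\psi^\sqcup(h),\psi)$-bubbles rather than $(\varphi,\psi)$-bubbles; the contraction in \cref{prop:mu-contraction} and the information-propagation bound both carry over since the interactions between bubbles remain short-range and, via the zero-range representation of \cref{lem:zero-range-G-bar}, each bubble $\sB$ carrying positive $\overline G_h$-energy still has a Peierls weight $\leq e^{-\hat\alpha/2}$. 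The ground-state degeneracy $Z_\mu^\infty(h) = |\Psi_h^\gr| > 1$ is precisely what the subtraction in the definition of $F(h)$ removes, so only the strictly energetic bubble fluctuations appear in the resulting expansion.

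Given this decomposition, the conclusion follows quickly. By \cref{clm:psi+-mon,cor:mon-equiv-rel}, combined with the fenced property of $R$, the bubble-group structures of $(h,\varphi)$ and $(\hat h,\varphi)$ coincide outside $R$; in particular $\psi^\sqcup(h)=\psi^\sqcup(\hat h)$ and $\psi^\sqcap(h)=\psi^\sqcap(\hat h)$ on every face with projection disjoint from $R$. Combined with $h=\hat h$ outside $R$, this yields $F_r(h\restriction_{B(x,r)}) = F_r(\hat h\restriction_{B(x,r)})$ whenever $B(x,r)\cap R = \emptyset$, hence
\[
|F(h) - F(\hat h)| \leq 2\sum_r \sum_{x:\,B(x,r)\cap R\neq\emptyset}\|F_r\|_\infty \leq C\sum_r |R|\,r^2\,e^{-\alpha r/C} \leq C'|R|
\]
for $\alpha$ sufficiently large, using the crude estimate $|\{x:B(x,r)\cap R\neq\emptyset\}|\leq|R|\cdot|B(0,r)|$.

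The main obstacle is the $h$-dependence in the reference configuration $\psi^\sqcup(h)$ underlying the bubble structure. I will need to verify carefully that (i) the Metropolis chain on $(\psi^\sqcup(h),\psi)$-bubbles remains contracting uniformly in $h$, which essentially means rerunning the computation of \cref{prop:mu-contraction} with the new reference; (ii) the degenerate zero-energy moves within $\Psi_h^\gr$ do not accumulate in the expansion, an issue handled cleanly by normalizing with $Z_\mu^\infty$ before expanding; and (iii) the density assumption $\rho_0 < \tfrac15$ from \cref{def:fenced-set} plays its role not in the locality step itself but in guaranteeing that the bubble-group structure outside $R$ is truly preserved under removal (via the fenced property) and that no dense accumulation of bubbles along $\partial R$ spoils the geometric sum, keeping the final estimate genuinely linear in $|R|$.
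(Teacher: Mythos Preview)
Your reduction to $F(h) := \log Z_\mu^\alpha(h) - \log Z_\mu^\infty(h)$ matches the paper's first move. From there, however, the paper takes a much shorter route, and your proposed route has a real gap.

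\textbf{The gap.} The contraction argument of \cref{prop:mu-contraction} does not carry over to a chain on $(\psi^\sqcup(h),\psi)$-bubbles. In the original chain every bubble $\sB$ had energy $\tfrac12|\sB|\ge 1$; the coupling heals a single-bubble disagreement $\sB_0$ with probability $1-e^{-\hat\alpha|\sB_0|/2}$ precisely because the add attempt in the copy missing $\sB_0$ \emph{fails} with that probability while the other copy erases. When $|\Psi_h^\gr|>1$ there can be $(\psi^\sqcup(h),\psi)$-bubbles $\sB_0$ with $\overline G_h$-energy zero (e.g., a single free hexagon inside one of the components $\cC_i$ of \cref{cor:psi-min-max-construction}). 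For such $\sB_0$ the add probability is $e^{-\hat\alpha\cdot 0}=1$: under your coupling the two copies swap roles at every ring of $\sB_0$, and $\dist_\sB$ never decreases along that coordinate. Normalizing the \emph{output} by $Z_\mu^\infty$ does not remove these zero-energy moves from the \emph{chain}, so neither contraction nor the information-propagation bound is available uniformly in $h$. Even when no single bubble has exactly zero energy, $\overline G_h(\sB)$ need not be comparable to $|\sB|$, so the infection sum $\sum_{\sB}|\sB|\,e^{-\hat\alpha\,\overline G_h(\sB)}$ in the analog of \cref{prop:mu-contraction} is not controlled.

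\textbf{The paper's argument} avoids dynamics entirely. Because $\overline G_h$ is zero-range (\cref{lem:zero-range-G-bar}), bubbles interact only through hard-core exclusion, yielding the sandwich
\[
\log Z^{\hat\alpha}_\mu(R)+\log Z^{\hat\alpha}_\mu(R^c)\ \le\ \log Z^{\hat\alpha}_\mu(h)\ \le\ \log Z^{\hat\alpha}_\mu(R)+\log Z^{\hat\alpha}_\mu(R^c)+\log Z^{\hat\alpha}_\partial,
\]
where the restricted partition functions count bubble collections contained in $R$, in $R^c$, and crossing $\partial R$, respectively (the lower bound restricts to $\psi=\varphi$ on $\partial R$; the upper over-counts by dropping exclusion between the three families). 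Apply \cref{lem:grimmett} to every piece. Since $\partial R$ meets no bubble group, $\log Z_\mu^\infty(h)=\log Z_\mu^\infty(R)+\log Z_\mu^\infty(R^c)$ and $\log Z_\partial^\infty=0$; the $R^c$ pieces coincide for $h$ and $\hat h$ and cancel. What remains is $O(|R|)$: the $\log Z_\mu(R)$ terms because they are bounded by the log of the number of tilings of $R$, and $\log Z^\alpha_\partial$ because any boundary-crossing bubble that protrudes into $R^c$ sits in a region of bubble-group density $<\rho_0<\tfrac15$ and hence carries $\overline G_h$-energy proportional to its size there---this is where the fenced density hypothesis actually enters.
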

\begin{proof}
    Note that for any $h$, denoting by $\{\sB_i\}$ a consistent collection of $\varphi \xor \psi$ bubbles,
    \begin{align*}
    \log Z_\mu^\alpha &\geq \log \Big(\!\sum_{\{\sB_i \} \subset R} e^{- \alpha \sum_i \overline{G}_h(\sB_i)} \Big)+ \log \Big(\!\sum_{\{\sB_j \} \subset R^c} e^{- \alpha\sum_j \overline{G}_h(\sB_j)} \Big) \\
    \log Z_\mu^\alpha &\leq 
     \log \Big(\!\sum_{\{\sB_i \} \subset R} \! e^{- \alpha \sum_i \overline{G}_h(\sB_i)} \Big) + \log\Big(\!\sum_{\{\sB_j \} : \sB_j \cap \partial R \neq \emptyset} \! e^{- \alpha \sum_j \overline{G}_h(\sB_j)} \Big) + \log\Big(\!\sum_{\{\sB_k \} \subset R^c} \! e^{- \alpha \sum_k \overline{G}_h(\sB_k)} \Big).
    \end{align*}
    Indeed, since the compatibility condition between bubbles is just that they cannot intersect, the first inequality amounts to restricting the partition function to configuration $\psi$ with $\psi = \varphi$ on $\partial R$ and the second is over-counting by ignoring the compatibility condition between the three sets of bubbles. Applying \cref{lem:grimmett} to all terms, we get
    \begin{align*}
\int_\alpha^\infty  \!\!\! \mu_{h,\hat\alpha} (\overline G_h( R) \mid \psi = \varphi \text{ on } \partial R) \d \hat{\alpha} &+ \log Z_{\mu}^\infty( R) + \int_\alpha^\infty \!\!\! \mu_{h,\hat\alpha} ( \overline{G}_h( R^c) \mid \psi = \varphi \text{ on } \partial R)\d \hat{\alpha} + \log Z_{\mu}^\infty( R^c) \\
&\leq \int_\alpha^\infty \mu_{h,\hat\alpha} (\overline{G}_h)\d \hat{\alpha} + \log Z_{\mu}^\infty \leq \\
\int_\alpha^\infty \!\!\!
\mu_{h,\hat\alpha} (\overline G_h( R) \mid \psi = \varphi \text{ on } \partial R)\d \hat{\alpha} &+ \log Z_{\mu}^\infty( R) + \int_\alpha^\infty  \!\!\! \mu_{h,\hat\alpha} ( \overline{G}_h( R^c) \mid \psi = \varphi \text{ on } \partial R)\d \hat{\alpha} + \log Z_{\mu}^\infty( R^c) \\ &\qquad + \int_\alpha^\infty  \!\!\!\mu_{h,\partial R, \hat\alpha}( \overline G_h( \partial R))\d \hat{\alpha} + \log Z_\partial^\infty\, ,
    \end{align*}
    where the measure $\mu_{h,\partial R, \hat\alpha}$ corresponds to the middle term in the upper bound, i.e., it is supported on collections of disjoint bubble groups all of which intersect $\partial R$ and has a Hamiltonian $\overline{G}_h$.
    Indeed, thanks to the fact that the energy $\overline{G}$ is zero-range, the measure obtained when applying \cref{lem:grimmett} just to the partition function of configurations inside $R$ coincides with the conditional measure $\mu$ given $\psi = \varphi \text{ on } \partial R$.

    In the expression above, note further that since $\partial R$ does not intersect any bubble group, we get $ \log Z_{\mu}^\infty = \log Z_{\mu}^\infty( R) +  \log Z_{\mu}^\infty( R^c)$ and $ \log Z_\partial^\infty = 0$. Also, the terms outside of $R$ are equal for $h$ and $\hat{h}$. Overall, we therefore have
    \begin{multline*}
     \int_{\alpha}^\infty \mu_{\hat{h},\hat\alpha} ( \overline{G}_{\hat{h}}( R) )\d \hat{\alpha} -  \int_{\alpha}^\infty \mu_{h,\hat\alpha} ( \overline{G}_{h}( R) )\d \hat{\alpha} - \int_{\alpha}^\infty \mu_{h, \partial R,\hat\alpha} (\overline{G}_{h, \hat{\alpha}}( \partial R) )\d \hat{\alpha}  \\
        \leq  \int_{\alpha}^\infty \mu_{\hat{h}, \hat \alpha} ( \overline{G}_{\hat h} )\d \hat{\alpha}- \int_{\alpha}^\infty \mu_{h, \hat \alpha}( \overline{G}_{ h} )\d \hat{\alpha} \leq    \\
        \int_{\alpha}^\infty \mu_{\hat{h}, \hat{\alpha}} ( \overline{G}_{\hat{h}}( R) )\d \hat{\alpha} -  \int_{\alpha}^\infty \mu_{h, \hat{\alpha}} ( \overline{G}_{h}( R) )\d \hat{\alpha} + \int_{\alpha}^\infty \mu_{\hat{h}, \partial R, \hat{\alpha}} (\overline{G}_{h}( \partial R) )\d \hat{\alpha}\,,
    \end{multline*}
    and applying \cref{lem:grimmett} in the opposite direction
    \begin{multline*}
       \log Z_{\mu_{\hat{h}}}^\alpha( R)-  \log Z_{\mu_{\hat{h}}}^\infty( R) -  \log Z_{\mu_{h}}^\alpha( R) +  \log Z_{\mu_{h}}^\infty( R) -  \log Z_{\mu_{h, \partial}}^\alpha(\partial R) \\
        \leq  \int_{\alpha}^\infty\mu_{\hat{h}, \hat \alpha} ( \overline{G}_{\hat h} )\d \hat{\alpha}- \int_{\alpha}^\infty \mu_{h, \hat \alpha}( \overline{G}_{ h} )\d \hat{\alpha} \leq  \\
               \log Z_{\mu_{\hat{h}}}^\alpha( R)-  \log Z_{\mu_{\hat{h}}}^\infty( R) -  \log Z_{\mu_{h}}^\alpha( R) +  \log Z_{\mu_{h}}^\infty( R) +  \log Z_{\mu_{\hat{h}, \partial}}^\alpha(\partial R)\,. 
    \end{multline*}    
    All the $\log Z (R)$ terms are bounded by $C_0 |R|$ because they count tilings of $R$.
    For the $\log Z_{\mu_{h,\partial}}^\alpha( \partial R)$ and $\log Z_{\mu_{\hat h,\partial}}^\alpha( \partial R)$ terms, first note that by definition of a fenced set there is no dense set of bubble groups whose size is of order $R$ except for $R$ itself. Hence, in $\mu_{h, \partial R}$ every collection of $(\psi, \varphi)$-bubbles with size larger than $R$ has a $\overline{G}$-cost comparable to its size, so their collective contribution to $Z( \partial R)$ is at most  $O(|\partial R|)$. Therefore, one can restrict the sum to collections with size $O(R)$, which again shows that $\log Z (\partial R)$ is at most by $C |R|$ for some $C$. 
    This yields \cref{eq:integral_bound_muh}.
\end{proof}

The above proposition will be useful to understand the dynamics close to a region of interest where we do not expect the true value of the interactions to be particularly small. 
The following bound will refine that bound in terms of a single enclosure $R_0$ and the sizes of the bubble groups as opposed to the more complicated fenced sets.
\begin{lemma}\label{lem:short_range_integral}
    There exists a constant $C$ such that the following holds. Fix $h$ an \SOS configuration, let $\fB^0$ be a bubble group that can be added to $h$ and let $h'$ be the corresponding configuration. 
Let~$R_0$ be an enclosure of $\fB^0$ in $h'$.    
    Further fix a set $\{ \fB^1_i\}$ of bubble groups which can be added in both $h$ and $h'$, denoting the resulting configurations by $\hat h, \hat h'$.
    Let $S$ be a connected set intersecting each~$\Upsilon( \fB^1_i)$. If $\dist( \bigcup_i \Upsilon(\fB^1_i),R_0) \leq 40(|R_0| + \sum |\Upsilon( \fB^1_i)|)$, then
        \[
    \Big| \int_{\alpha}^\infty \big(\mu_{\hat h',\hat\alpha}(\overline G_{\hat h'}) - \mu_{h',\hat\alpha}(\overline G_{h'})\big)\d \hat\alpha \Big| \leq C( |S|+ \sum_i |\Upsilon( \fB_i^1)| + |R_0|)\,,
    \]
    and similarly for the pair $(h, \hat h)$.
\end{lemma}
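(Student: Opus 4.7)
The plan is to reduce to \cref{prop:bound_muh} by constructing, for the pair $(h', \hat h')$, a fenced set $R$ of $\hat h'$ with density $\rho_0 = 1/10$ such that $\bigcup_i \Upsilon(\fB^1_i) \subset R$ and $|R| \leq C(|S| + \sum_i |\Upsilon(\fB^1_i)| + |R_0|)$, and then invoking the proposition. The statement for the pair $(h, \hat h)$ will follow from the same construction applied to $h$ in place of $h'$: by \cref{cor:mon-equiv-rel}, the bubble groups of $h$ are exactly those of $h'$ minus $\fB^0$, so $R_0$ still has no bubble group of $h$ crossing $\partial R_0$, and nothing else in the argument changes.

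The first step is to produce a connected candidate set. Using the distance hypothesis, I would pick a shortest path $P \subset \T_N$ from $\bigcup_i \Upsilon(\fB^1_i)$ to $R_0$, of length at most $40(|R_0| + \sum_i |\Upsilon(\fB^1_i)|)$, and define
\[
\tilde R := R_0 \cup S \cup \bigcup_i \Upsilon(\fB^1_i) \cup P,
\]
which is connected and satisfies $|\tilde R| \leq C_1 (|R_0| + |S| + \sum_i |\Upsilon(\fB^1_i)|)$. Since the bubble groups of $\hat h'$ are those of $h'$ together with the $\fB^1_i$'s (all contained in $\tilde R$), and since no bubble group of $h'$ crosses $\partial R_0$, any bubble group of $\hat h'$ that crosses $\partial \tilde R$ must in fact touch $\partial \tilde R$ only through the ``new'' portion coming from $S$, $P$, or the $\Upsilon(\fB^1_i)$.

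The second step is to saturate $\tilde R$ into a fenced set $R$ by iterating the following closure: absorb every bubble group of $\hat h'$ that crosses the current boundary, and every connected component of the complement whose boundary touches the current boundary and whose bubble-group density exceeds $\rho_0$; halt when no further addition is possible. The main obstacle is bounding $|R \setminus \tilde R|$, and I expect the argument to split into two regimes. Along the ``new'' portion of $\partial \tilde R$ (of total length $O(|\tilde R|)$), a Peierls-type geometric argument in the spirit of the proof of \cref{prop:bound_muh}, exploiting $\rho_0 < 1/5$ together with \eqref{eq:area-of-bubble-grp}, will control the cumulative size added by the saturation by $O(|\tilde R|)$. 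Along the portion of $\partial R_0$ still present on $\partial \tilde R$, the key ingredient is the maximality clause \ref{it-R-4} of \cref{def:bubble-grp-enclose}: were a connected component of the complement of density $\geq \rho_0$ attached there to survive saturation, merging it (together with the bubble groups further absorbed so that no bubble group crosses the resulting boundary) with $R_0$ would yield a strictly larger set still containing $\Upsilon(\fB^0)$, of density at least $\rho_0$, and with no bubble group of $h'$ crossing its boundary---contradicting the maximality of $R_0$. Hence the saturation contributes only a low-density layer near $\partial R_0$, of total mass $O(|R_0|)$. Altogether $|R| \leq C(|S| + \sum_i |\Upsilon(\fB^1_i)| + |R_0|)$, and \cref{prop:bound_muh} delivers the claim.
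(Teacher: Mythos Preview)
Your reduction to \cref{prop:bound_muh} and your instinct to exploit the maximality of $R_0$ are both right, but the saturation step has a genuine gap. You assert that the cumulative mass absorbed along the ``new'' portion of $\partial\tilde R$ is $O(|\tilde R|)$ via ``a Peierls-type geometric argument in the spirit of the proof of \cref{prop:bound_muh},'' but no such argument appears there (that proof bounds log-partition functions, not the size of a closure operation), and no a priori mechanism prevents a single large bubble group of $h'$ touching $S$ or $P$ from being absorbed wholesale. Your treatment of the $\partial R_0$ portion is also incomplete: you argue that a dense component attached there would, after further absorbing crossing bubble groups, yield a larger candidate enclosure---but you have not shown that this further absorption preserves density $\geq\rho_0$ (density is not monotone under union, and the bubble-group closure can dilute it), nor that the iterative interplay between the two boundary portions terminates with controlled size. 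In short, the bound $|R\setminus\tilde R|=O(|\tilde R|)$ is asserted, not proved.

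The paper sidesteps all of this by working the other way around: rather than building a fenced set containing $R_0$ and then controlling its growth, it takes $R_1$ to be an enclosure of the $\{\fB^1_i\}$ in $\hat h'$ with density $\tfrac{3}{20}$ (not $\tfrac{1}{10}$). An enclosure is automatically fenced, so \cref{prop:bound_muh} applies immediately and the task reduces to bounding $|R_1|$. This is done by contradiction: if $|R_1|\geq 200(|S|+\sum_i|\Upsilon(\fB^1_i)|+|R_0|)$, one forms $R'=(R_1\cup S)\setminus R_0$ together with a short path to $\partial R_0$ and a path on its outer boundary, and checks that $R'$ is connected, lies in $R_0^c$, touches $\partial R_0$, and has density at least $\tfrac{1}{10}$ in $h'$---the slack $\tfrac{3}{20}-\tfrac{1}{10}$ is exactly what absorbs the loss from removing the $\fB^1_i$'s and the portion inside $R_0$. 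This contradicts $R_0$ being fenced. The device you are missing is this deliberate choice of a \emph{higher} density threshold for $R_1$, which converts the problem from an uncontrolled closure into a clean one-shot density comparison.
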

\begin{proof}
    Let $R_1$ be an enclosure for the $\fB^1_i$ in configuration $\hat h'$ with density $\frac{3}{20}$. It is a fenced set in any of $h, h', \hat h ,\hat h'$ and therefore by \cref{prop:bound_muh} it is enough to bound its size.
    
    Suppose by contradiction that $|R_1| \geq 200( |S| + \sum_i |\Upsilon( \fB_i)| +|R_0|)$. Let $R'$ be obtained as $(R_1 \cup S) \setminus R_0$ to which we add a minimal path to $\partial R_0$ and a path on the outer boundary of $R_0$. Since each connected component of $R_1$ contains one of the $\Upsilon(\fB_i^1)$ and $S$ intersects all of them, $R_1 \cup S$ is connected and therefore $R'$ must also be connected since any connection in $R_1 \cup S$ going through $R_0$ can now go through the outer boundary path.
    By definition $\rho(R_1, \hat h') \geq \frac{3}{20}$ so 
    \[
    |R'| \rho(R', h) \geq |R_1| \frac{3}{20} - \sum_i |\Upsilon( \fB^1_i)| - |R_0| \, ,
    \]
    while 
    \begin{align*}
    |R'|& \leq |R_1| + |S|+ \dist\Big( \bigcup \Upsilon(\fB_i^1),R_0\Big) + 11|R_0| \\
    &\leq |R_1|+ |S| +51 |R_0| + 40 \sum | \Upsilon( \fB_i^1) | \, .
    \end{align*}
    Combining the two bound, we see that 
    \begin{align*}
        \rho(R', h) & \geq \Big(\frac{3}{20} - \frac{\sum_i |\Upsilon( \fB^1_i)| + |R_0|}{|R_1|} \Big) \Big( 1 - \frac{|S|}{|R_1|} - 51 \frac{\sum_i |\Upsilon( \fB^1_i)| + |R_0|}{|R_1|}\Big) \\
        & \geq \frac{3}{20} - 10 \frac{\sum_i |\Upsilon( \fB^1_i)| + |R_0|}{|R_1|} \geq \frac{1}{10} \, ,
    \end{align*}
where the last inequality used that $|R_1|\geq 200 (\sum_i |\Upsilon(\fB_i^1)|+|R_0|)$.
    This contradicts the fact that $R_0$ is an enclosure of $\fB_0$.
\end{proof}

We also need a statement controlling the interaction between far away bubble groups. The first step is to show a form of exponential decay in the measure $\mu_h$:
\begin{proposition}\label{prop:coupling_muh}
    For all $\rho < \frac{1}{5}$, there exists $C(\rho)$ so that the following holds for all $\hat \alpha$ large enough. Fix $h'$ and an $(h',\varphi)$-bubble group $\fB_0$. Let $h=h'\xor\bigcup_{\sB\in \fB_0}\sB$. Let $ R^\ddagger_0$ be a fenced set containing~$\fB_0$. 
    Then there is a coupling $\mu_{h, h', \hat \alpha}$ of
    $\mu_{h,\hat\alpha}, \mu_{h',\hat\alpha}$ 
    so that for 
   all $f$,
    \[
    \mu_{h, h', \hat \alpha} (f \in \Upsilon( \psi \xor \psi') ) \leq C e^{- \frac{1}{2}\hat \alpha \dist(f, R_0^\ddagger)/C }\,.
    \]
\end{proposition}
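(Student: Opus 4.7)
The plan is to construct the coupling directly from the Metropolis dynamics of \cref{sec:2-out-of-3} run in parallel for both measures. Attach a single system of rate-$1$ Poisson clocks to all pairs $(\sB, f)$ and synchronize the $\mathrm{Uniform}[0,1]$ variables used to accept flips in both chains, starting from $\psi_0 = \psi'_0 = \varphi$. The key observation is that for any proposed flip of $\sB$ the Boltzmann weight change $\overline G_h(\psi^{\mathrm{new}}) - \overline G_h(\psi)$ differs between $\mu_{h,\hat\alpha}$ and $\mu_{h',\hat\alpha}$ only when $\Upsilon(\sB) \cap \Upsilon(\fB_0) \neq \emptyset$, since $\overline G_h - \overline G_{h'}$ is supported on $\Upsilon(\fB_0) \subset R_0^\star$. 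One therefore applies the maximal coupling of acceptance variables only on such $\sB$. Every disagreement between $\psi_t$ and $\psi'_t$ thus originates from a bubble update intersecting $\Upsilon(\fB_0)$ and may thereafter propagate only through subsequent updates that intersect the current disagreement set.

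To bound $\P(f \in \Upsilon(\psi_T \xor \psi'_T))$, I would reprise the Peierls enumeration used for $\Xi_3$ in the proof of \cref{thm:mu-nu}. Any realization with $f$ in the disagreement requires a space-time chain $(\sB_1, t_1), \ldots, (\sB_m, t_m)$ with $0 \leq t_1 < \cdots < t_m \leq T$, $\Upsilon(\sB_1) \cap \Upsilon(\fB_0) \neq \emptyset$, each $\Upsilon(\sB_{i+1})$ meeting $\bigcup_{j \leq i} \Upsilon(\sB_j)$, and $\Upsilon(\sB_m) \ni f$. Each bubble $\sB_i$ of size $s_i$ contributes at most $c_0^{s_i}\, s_{i-1}\, s_i\, e^{-\hat\alpha s_i/2}$ (shape entropy $\times$ root choice given the previous bubble $\times$ proposal rate $\times$ acceptance probability). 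Using $\sum_i \diam(\sB_i) \geq \dist(f, \Upsilon(\fB_0)) \geq \dist(f, R_0^\star)$, integrating the time variables over $\{0 \leq t_1 < \cdots < t_m \leq T\}$, and summing the geometric series over $m$, $\{s_i\}$, one obtains
\[
\P(f \in \Upsilon(\psi_T \xor \psi'_T)) \leq C_0\, \exp\!\bigl[\tfrac{3T}{2} - \tfrac{1}{2}(\hat\alpha - C_0)\,\dist(f, R_0^\star)\bigr]
\]
for some absolute $C_0$, provided $\hat\alpha$ is large enough.

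To upgrade the time-$T$ coupling into a coupling of the stationary measures themselves, the contraction of Metropolis dynamics (\cref{prop:mu-contraction}) gives that at time $T^\star = O(\log N)$ both marginals are within $e^{-c\hat\alpha N}$ of $\mu_{h,\hat\alpha}$ and $\mu_{h',\hat\alpha}$ in total variation, and a standard TV correction produces an exact coupling whose face-by-face disagreement bound differs from the dynamical one by a negligible additive error. The face-by-face estimate itself is read off at a distance-dependent time $T(f) = \dist(f, R_0^\star)/12$, exactly as was done in the treatments of $\mu$ and $\nu$, so that $e^{(3/2)T(f)} = e^{\hat\alpha r/8}$ is absorbed into the spatial decay $e^{-\hat\alpha r/2}$.

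The main obstacle, mirroring the analogous arguments for $\mu$ and $\nu$, is the tension between needing $T$ large enough for the dynamics to mix and $T$ small enough for $e^{(3/2)T - \hat\alpha r/2}$ to remain decaying; this is resolved by a distance-dependent time horizon, and then by patching the face-by-face estimates into a single joint coupling via the near-stationarity at $T^\star$. The fenced property of $R_0^\star$ enters only through ensuring that the $(h,\varphi)$-bubbles in $\hat h$ do not cross $\partial R_0^\star$, so that $\dist(f, R_0^\star)$ is an honest lower bound on the path length any disagreement chain must traverse to reach $f$.
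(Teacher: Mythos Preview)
Your approach differs fundamentally from the paper's and has a genuine gap. The paper does not use dynamics at all: it samples $\psi\sim\mu_{h,\hat\alpha}$ and $\hat\psi\sim\mu_{h',\hat\alpha}$ \emph{independently}, then defines $\psi'$ by replacing, in $\hat\psi$, every connected component of $\Upsilon(\psi\xor\hat\psi)$ that does not touch $\partial R_0^\star$ by the corresponding piece of $\psi$. The zero-range decomposition of $\overline G$ (\cref{lem:zero-range-G-bar}) makes this swap measure-preserving, so $(\psi,\psi')$ is already an exact coupling of the two stationary laws, with no dynamics or mixing step required. Then $f\in\Upsilon(\psi\xor\psi')$ forces a component of $\Upsilon(\psi\xor\hat\psi)$ to connect $f$ to $\partial R_0^\star$, and a single static Peierls bound---using the density clause of the fenced set to cap how much of that path can lie inside $(h,\varphi)$-bubble groups---yields the exponential decay.

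The gap in your argument is that the Metropolis chain of \cref{sec:2-out-of-3} and its contraction (\cref{prop:mu-contraction}) are specific to $\mu_{\varphi,\hat\alpha}(\psi)\propto e^{-\frac12\hat\alpha|\varphi\xor\psi|}$, where adding any $(\varphi,\psi)$-bubble of size $s$ costs exactly $\tfrac12\hat\alpha s$. For $\mu_{h,\hat\alpha}(\psi)\propto e^{-\hat\alpha\overline G_h(\psi)}$ with a general SOS configuration $h$, the energy increment when adding $\sB$ is $\hat\alpha\bigl(|h\cap(\sB\cap\varphi)|-|h\cap(\sB\setminus\varphi)|\bigr)$, which is zero or negative whenever $\sB$ is aligned with one of $h$'s own bubble groups; such $\sB$ are accepted with probability $1$. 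Consequently the factor $e^{-\hat\alpha s_i/2}$ you attach to each link of the disagreement chain is false, the Peierls sum diverges, and \cref{prop:mu-contraction} gives no mixing statement for $\mu_{h,\hat\alpha}$ with which to convert a finite-time coupling into a stationary one. This is precisely why the density clause in \cref{def:fenced-set}---which you explicitly set aside---is essential: it bounds the fraction of any path outside $R_0^\star$ that can consist of such ``free'' bubbles, and it is where the paper's (static) Peierls argument picks up the $\hat\alpha$ in the exponent.
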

\begin{proof}
We define the coupling as follows. Sample the surfaces
\[ \psi \sim \mu_{h,\hat\alpha}\qquad\mbox{and}\qquad
\hat \psi \sim \mu_{h',\hat\alpha} 
\]
independently, and let $A = \Upsilon( \psi \xor \hat\psi)$. Define the surface $\psi'$ by \begin{enumerate}[(i)]
    \item 
$ \psi '= \hat\psi$ on $R_0^\ddagger\setminus A$ and any connected component of $A$ touching $\partial R_0^\ddagger$;
\item $ \psi' = \psi$ on any other connected component of $ A$. 
\end{enumerate}
By \cref{lem:zero-range-G-bar}, we see that $\hat\psi$ and $\psi'$ have the same law, and consequently  $\mu_{h, h', \hat \alpha} (f \in \Upsilon( \psi' \xor \psi) ) $ is bounded by the probability that there exists a connected component of $A$ connecting $f$ to $\partial R_0^\ddagger$. (Note that the zero-range interactions of $\mu$ are crucial to making this switching legal.) 
Consider the part of this component outside of $R_0^\ddagger$; it cannot be denser than $\rho < \frac{1}{5}$ because of \cref{it-R-4}. By \cref{eq:V(fB)-pinning-alt}, the probability that at least $\frac12$ of it is covered by bubbles in either $ \varphi\xor\psi$ or $ \varphi\xor \hat \psi$ is therefore bounded by 
\[ C_0e^{- \frac{1}{2}(\hat \alpha (\frac15-\rho)  - C_0)\dist(f, R_0^\ddagger) }\,,\] with the absolute constant $C_0$ accounting for the enumeration over all possible sets of bubbles (via a Peierls argument, done for $\psi$ under $\mu_{h,\hat\alpha}$ and separately for $\hat\psi$ under $\mu_{h',\hat\alpha}$).
\end{proof}

We can now control the interaction between two far away bubble groups via the following.
\begin{lemma}\label{lem:pre-bound-muh-d}
For all $\rho < \frac{1}{5}$, there exists $C(\rho)$ so that the following holds for every $ \alpha$ large enough.
 Let $h$ be an \SOS configuration and let $\fB_0, \fB_1$ be two bubble groups that do not intersect and which can both be added to $h$. 
 As before, let \[ h'=h\xor\bigcup_{\sB\in \fB_0}\sB\quad,\quad \hat h' = h \xor \bigcup_{\sB\in \fB_0\cup \fB_1}\sB \quad,\quad\hat h = h \xor \bigcup_{\sB\in\fB_1}\sB\,.\]
 Let $\hat R_0, \hat R_1$ be enclosures with density $\rho$ of $\fB_0$ and $\fB_1$  w.r.t.\ $\hat h'$ as per \cref{def:bubble-grp-enclose}. Then 
 \[
     \bigg|\int_{\alpha}^\infty \Big(\mu_{\hat h',\hat\alpha}( \overline{G}_{\hat h'}) -\mu_{h',\hat\alpha}( \overline{G}_{h'}) - \mu_{\hat h, \hat \alpha}( \overline{G}_{\hat h}) +\mu_{h,\hat\alpha}( \overline{G}_{h})\Big) \d \hat\alpha\bigg| \leq C ( |\hat R_0| \wedge |\hat R_1| )e^{- \alpha \dist( \hat R_0, \hat R_1)/C} \,.
\]
   Moreover, if $( \fB^0_i)_{1 \leq i \leq m_0}$ and $( \fB^1_i)_{1 \leq i \leq m_1}$ are two families of bubble groups 
   that can be simultaneously added (together) to~$h$, then
    \[
     \bigg|\int_{\alpha}^\infty \Big(\mu_{\hat h',\hat\alpha}( \overline{G}_{\hat h'}) -\mu_{h',\hat\alpha}( \overline{G}_{h'}) - \mu_{\hat h, \hat \alpha}( \overline{G}_{\hat h}) +\mu_{h,\hat\alpha}( \overline{G}_{h})\Big) \d \hat\alpha\bigg| \leq C \sum_{i, j} ( |\hat R^0_i|\wedge |\hat R^1_j|)   e^{- \alpha \dist(\hat R^0_i, \hat R^1_j)/C} \,,
    \]
    where the $\hat R^{0}_i$, $\hat R^1_j$ are enclosures of the $\fB^0_i$ and $\fB^1_j$ respectively.
\end{lemma}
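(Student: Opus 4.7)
The plan is to establish the mixed-difference bound by exploiting the zero-range representation of $\overline G$ from \cref{lem:zero-range-G-bar} together with the exponential decay coupling of \cref{prop:coupling_muh}, applied once per bubble group. Writing $D(\hat\alpha) := \mu_{\hat h',\hat\alpha}(\overline G_{\hat h'}) - \mu_{h',\hat\alpha}(\overline G_{h'}) - \mu_{\hat h,\hat\alpha}(\overline G_{\hat h}) + \mu_{h,\hat\alpha}(\overline G_h)$, the zero-range decomposition splits $D = \sum_f D_f$ with $D_f$ the face-$f$ contribution involving the four coefficients $\bar g_\cdot(f)$ and the four single-face marginals $\mu_\cdot(f\in\psi)$. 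The first observation is that, since $\bar g_h(f) = \one_{f^\sqcup \in h} - \one_{f \in h}$ and (by \cref{cor:mon-equiv-rel}) $\psi^\sqcup$ only changes inside the affected enclosure, the four coefficients collapse to a single value $\bar g(f)$ when $f$ lies outside $\hat R^0 \cup \hat R^1$, satisfy $\bar g_{\hat h'}(f) = \bar g_{h'}(f)$ and $\bar g_{\hat h}(f) = \bar g_h(f)$ when $f \in \hat R^0$, and symmetrically in $\hat R^1$.

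With this trichotomy in hand, I would bound each regime by invoking \cref{prop:coupling_muh} (which applies since enclosures are fenced with density $\rho < 1/5$). For $f \in \hat R^0$, factoring out the common $\bar g$ reduces $D_f$ to a combination of $\mu_{\hat h'}(f\in\psi) - \mu_{h'}(f\in\psi)$ and $\mu_{\hat h}(f\in\psi) - \mu_h(f\in\psi)$; each difference is controlled by the coupling associated to the $\fB^1$ change, giving $|D_f| \leq C e^{-(\hat\alpha-C)D/2}$ for $D := \dist(\hat R^0,\hat R^1)$, and summing over $f \in \hat R^0$ yields $C|\hat R^0|e^{-(\hat\alpha-C)D/2}$. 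The case $f \in \hat R^1$ is symmetric, producing the ``min'' in the bound once we compare the two estimates. For $f$ outside both enclosures, the two available coupling estimates---one via the $\fB^0$-coupling giving decay in $\dist(f,\hat R^0)$, the other via the $\fB^1$-coupling giving decay in $\dist(f,\hat R^1)$---combine to yield $|D_f| \leq C e^{-(\hat\alpha-C)\max(\dist(f,\hat R^0),\dist(f,\hat R^1))/2}$, which is uniformly at most $C e^{-(\hat\alpha-C)D/4}$ and additionally decays in $\dist(f,\hat R^0\cup\hat R^1)$; the summation over $f$ is therefore again dominated by $C(|\hat R^0| \wedge |\hat R^1|) e^{-(\hat\alpha-C)D/C}$. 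Integrating over $\hat\alpha \in [\alpha,\infty)$ absorbs the polynomial factor and gives the pairwise conclusion.

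For the family extension I would telescope the mixed difference: introducing intermediate configurations $h^{(k,\ell)}$ obtained by adding $\fB^0_1,\dots,\fB^0_k$ and $\fB^1_1,\dots,\fB^1_\ell$, the full mixed difference decomposes as $\sum_{k,\ell}\Delta_{k,\ell}$ where each $\Delta_{k,\ell}$ is a single-pair mixed difference to which the first part applies (using that fencedness is a decreasing property under removing bubble groups, so $\hat R^0_k$ and $\hat R^1_\ell$ remain valid enclosures for the intermediate configuration). Summing the pairwise bounds produces the $\sum_{i,j}$ expression in the statement. The main obstacle in executing the plan is the exterior sum: naively combining the two coupling estimates gives only decay in $\min(\dist(f,\hat R^0),\dist(f,\hat R^1))$ per face, and one must carefully combine them---using $\min(e^{-cd_0},e^{-cd_1}) = e^{-c\max(d_0,d_1)}$ and $d_0 + d_1 \geq D$---to extract the required uniform exponential in $D$ without losing a spurious $N^2$ factor from summing over the torus.
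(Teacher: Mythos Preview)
Your proposal is correct and follows essentially the same route as the paper: decompose $\overline G$ face by face via \cref{lem:zero-range-G-bar}, note that the coefficient $\bar g_\cdot$ is unchanged outside $\Upsilon(\fB^0)$ (resp.\ $\Upsilon(\fB^1)$), apply \cref{prop:coupling_muh} with the two possible pairings, take the minimum to obtain decay in $\max(\dist(f,\hat R^0),\dist(f,\hat R^1))$, and sum; the family case is handled by the same telescoping and monotonicity of fencedness. The only cosmetic difference is that the paper organizes the face contributions via $\Upsilon(\fB^0),\Upsilon(\fB^1)$ rather than the larger enclosures, and phrases the per-face bound directly as a $\min$ of the two pairings rather than splitting into three regions.
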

\begin{proof}
We start with the first case involving only two bubble groups. The general idea is to prove this result via an application of \cref{prop:coupling_muh}, creating a separate coupling for each face $f\in\T_N$ that would show that $f$ has an exponentially small contribution, as it cannot be close simultaneously to $\hat R_0$ and $\hat R_1$. 

Since the condition on $R_0^\ddagger$ from \cref{prop:coupling_muh} is monotone on the set of bubble groups, we can apply \cref{prop:coupling_muh} to both the pair $(h, h')$ and $(\hat h , \hat h')$ using the same set $\hat R_0$. For the same reason, if we add $\fB_1$, we can still use \cref{prop:coupling_muh} using $\hat R_1$. Overall, we see that we can define couplings such that
    \begin{align*}
    \mu_{h, h', \hat \alpha}( f \in \Upsilon (\psi \xor  \psi') )  &\leq C\, e^{- \hat \alpha \,\dist(f, \hat R_0) /C } \,,\\  
    \mu_{ \hat h, \hat h', \hat \alpha}( f \in \Upsilon (\hat \psi \xor \hat \psi') ) &\leq C \,e^{- \hat \alpha \dist(f, \hat R_0) /C } \,,\\
       \mu_{h,  \hat h, \hat \alpha}( f \in \Upsilon ( \psi \xor \hat \psi) ) &\leq C\, e^{- \hat \alpha  \dist(f, \hat R_1)/C} \,,\\
        \mu_{ h',  \hat h', \hat \alpha}( f \in \Upsilon ( \psi' \xor \hat \psi') ) &\leq C \, e^{-\hat \alpha \dist(f, \hat R_1 )/C } \,.
\end{align*}

Now, we can decompose the integral over the contribution of each face $x$ of $\T_N$. Let 
\[
\overline{G}_h^x (\psi) = \sum_{f \in \Upsilon^{-1} (x) } \overline{g}_h(f) \one_{\{f \in \psi\}} \, 
\] using the decomposition of $\overline G_h$ from \cref{lem:zero-range-G-bar} (as usual, for brevity, in what follows we omit the parameter $\psi$ from $\overline G_h^x$, as well as the corresponding $\psi',\hat\psi,\hat\psi'$ from $\overline G^x_{h'},\overline G^x_{\hat h},\overline G^x_{\hat h'}$, respectively). Then
\begin{align*}
     \bigg|\int_{\alpha}^\infty & \Big(\mu_{\hat h',\hat\alpha}( \overline{G}_{\hat h'}) -\mu_{h',\hat\alpha}( \overline{G}_{h'}) - \mu_{\hat h, \hat \alpha}( \overline{G}_{\hat h}) +\mu_{h,\hat\alpha}( \overline{G}_{h})\Big) \d \hat\alpha\bigg|  \\
     & \leq \sum_{x \in \T_N} \bigg| \int_{\alpha}^\infty \Big(\mu_{\hat h',\hat\alpha}( \overline{G}^x_{\hat h'}) -\mu_{h',\hat\alpha}( \overline{G}^x_{h'}) - \mu_{\hat h, \hat \alpha}( \overline{G}^x_{\hat h}) +\mu_{h,\hat\alpha}( \overline{G}^x_{h})\Big) \d \hat\alpha\bigg| \\
     & \leq 2 \sum_{x \in \T_N} \int_{\alpha}^\infty \min \Big( \mu_{h, h', \hat \alpha}( x \in \Upsilon (\psi' \xor \psi) ) + \mu_{\hat h, \hat h', \hat \alpha}( x \in \Upsilon (\hat \psi' \xor \hat \psi) ) + 2 \one_{\{x \in 
     \Upsilon(\fB_0)\}} , \,  \\
     & \qquad \qquad \qquad  \quad \quad\;\;\;\mu_{h, \hat h, \hat \alpha}( x \in \Upsilon (\psi \xor \hat \psi) ) + \mu_{h', \hat h', \hat \alpha}( x \in \Upsilon (\hat \psi' \xor  \psi') )+  2\one_{\{x \in \Upsilon( \fB_1)\}} \Big) \, \d \hat\alpha.
\end{align*}
Indeed, for the last line, note that we can bound the contribution of a single $x$ using two possible pairings of the four terms and we are free to use the better one depending on $x$. Furthermore, when $x \notin \Upsilon( \fB_0)$ then $\overline{G}_h^x = \overline{G}_{h'}^x $, and we bound the difference in the expectation of this variable under the two measures $\mu_{h,\hat\alpha}$ and $\mu_{h',\hat\alpha}$,
recalling that $|\overline G^x_h| \leq 1$ always holds, via  \[\int_\alpha^\infty | \mu_{h', \hat \alpha}( \overline G_{h'}^x ) - \mu_{h, \hat \alpha}( \overline G_{h}^x ) |\d\hat\alpha \leq 2 \int_\alpha^\infty \mu_{h, h', \hat \alpha}( x \in \Upsilon (\psi \xor \psi')) \d \hat \alpha\,.\] 
For $x\in\Upsilon(\fB_0)$ we used the trivial upper bound $|\overline G_h^x| + |\overline G_{h'}^x|\leq 2$.
The bounds for the other terms are similar (replacing $\fB_0$ by $\fB_1$).

Plugging in the bounds on $\mu_{h, h', \hat \alpha}$ and using the fact that the indicators $\one_{\{x\in\Upsilon(\fB_0)\}}$ and $\one_{\{x\in\Upsilon(\fB_1)\}}$ cannot hold simultaneously, we see that overall
 \begin{align*}
     \bigg|\int_{\alpha}^\infty \big(\mu_{\hat h',\hat\alpha}( \overline{G}_{\hat h'}) -\mu_{h',\hat\alpha}( \overline{G}_{h'}) - \mu_{\hat h, \hat \alpha}( \overline{G}_{\hat h}) +\mu_{h,\hat\alpha}( \overline{G}_{h})\big) \d \hat\alpha\bigg| & \leq C \sum_{f \in \T_N} e^{- \alpha (\dist(f, \hat R_0) \vee \dist(f, \hat R_1))/C} \\
     & \leq C ( |\hat R_0| \wedge |\hat R_1|) e^{-  \alpha \dist(\hat R_0, \hat R_1)/C}\,,
\end{align*}
concluding the proof of the first part. (To see the last inequality, suppose without loss of generality that $|\hat R_0| \leq |\hat R_1|$, and take $d = \dist(\hat R_0,\hat R_1)$. The set of all $f$ at distance at least $d/2$ from $\hat R_0$ contributes at most $C|\hat R_0| e^{-(\alpha/C) d/2}$ by summability of the exponent; on the other hand, every other face $f$ must have distance at least $d/2$ from $\hat R_1$ and there are at most $C |\hat R_0| d^2$ such faces.)

For the case with multiple bubble groups, the desired result follows directly from the monotonicity of fenced sets in \cref{prop:coupling_muh} and the observation that
\begin{multline*}
\mu_{\hat h',\hat\alpha}( \overline{G}_{\hat h'}) -\mu_{h',\hat\alpha}( \overline{G}_{h'}) - \mu_{\hat h, \hat \alpha}( \overline{G}_{\hat h}) +\mu_{h,\hat\alpha}( \overline{G}_{h}) \\
= \sum_{i,j} \mu_{h_{i+1, j+1},\hat\alpha}( \overline{G}_{h_{i+1, j+1}}) -\mu_{h_{i+1, j},\hat\alpha}( \overline{G}_{h_{i+1, j}}) - \mu_{h_{i, j+1},\hat\alpha}( \overline{G}_{h_{i, j+1}}) + \mu_{h_{i, j},\hat\alpha}( \overline{G}_{h_{i, j}}) \, ,
\end{multline*}
with $h_{i,j}$ the configuration obtained by adding the bubble groups $\fB^0_1, \ldots, \fB^0_i$ and $\fB^1_1, \ldots, \fB^1_j$.
\end{proof}

As for the bound involving only a single difference, it will be useful to have a version involving simpler quantities than the $\hat R_i$ above.
\begin{lemma}\label{lem:bound_muh-d}
Let $h$, $\fB_0$, $\fB_1$ be an \SOS configuration and two bubble groups that can be added to it as above. Let $h', \hat h, \hat h'$ be obtained by adding $\fB_0$, $\fB_1$ or both to $h$.
If an enclosure $R_0$ of $\fB_0$ in the configuration $h'$ satisfies 
 \[ \fd := \dist\left( \Upsilon (\fB_1),  R_0\right) > 40 (|R_0| + |\Upsilon(\fB_1)|)\,,\]
    then, for an absolute constant $C_0$, 
     \[
     \bigg|\int_{\alpha}^\infty \Big(\mu_{\hat h',\hat\alpha}( \overline{G}_{\hat h'}) -\mu_{h',\hat\alpha}( \overline{G}_{h'}) - \mu_{\hat h, \hat \alpha}( \overline{G}_{\hat h}) +\mu_{h,\hat\alpha}( \overline{G}_{h})\Big) \d \hat\alpha\bigg| \leq C_0 e^{- \alpha \fd /C_0} \,.
    \] 
 Moreover, for two families of bubble groups $( \fB^0_i)_{1 \leq i \leq m_0}$ and $( \fB^1_i)_{1 \leq i \leq m_1}$, if there exist enclosures $R_i^0$ of the $\fB_i^0$ in $h'$ such that for all $i$
 \[ \dist\Big( \bigcup_j \Upsilon (\fB^1_j), R^0_i\Big) > 40 \Big(|R^0_i| + \sum_j |\Upsilon(\fB^1_j)|\Big)\,,\]
 then
 \[
     \bigg|\int_{\alpha}^\infty \Big(\mu_{\hat h',\hat\alpha}( \overline{G}_{\hat h'}) -\mu_{h',\hat\alpha}( \overline{G}_{h'}) - \mu_{\hat h, \hat \alpha}( \overline{G}_{\hat h}) +\mu_{h,\hat\alpha}( \overline{G}_{h})\Big) \d \hat\alpha\bigg| \leq C_0 \sum_i   e^{- \alpha \dist(R^0_i, \bigcup_j \Upsilon(\fB^1_j))/C_0} \,.
 \]
\end{lemma}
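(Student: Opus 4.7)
The plan is to deduce both bounds from \cref{lem:pre-bound-muh-d} by upgrading the given enclosure $R_0$ of $\fB_0$ in $h'$ to enclosures in the configuration $\hat h'$. Fix a density $\rho \in (\rho_0, \tfrac{1}{5})$, say $\rho = \tfrac{1}{8}$; let $\hat R_0$ be an enclosure of $\fB_0$ in $\hat h'$ obtained as a maximal extension of $R_0$ satisfying the density-$\rho$ conditions, and let $\hat R_1$ be an enclosure of $\fB_1$ in $\hat h'$ at density $\rho$. Granting the geometric estimates $|\hat R_0| \leq C(|R_0| + |\Upsilon(\fB_1)|)$ and $\dist(\hat R_0, \hat R_1) \geq \fd/2$, \cref{lem:pre-bound-muh-d} gives a bound of $C_0(|\hat R_0| \wedge |\hat R_1|)\,e^{-\alpha \dist(\hat R_0, \hat R_1)/C_0} \leq C_0(|R_0| + |\Upsilon(\fB_1)|)\,e^{-\alpha\fd/(2C_0)}$; the polynomial prefactor is absorbed into the exponential using $|R_0| + |\Upsilon(\fB_1)| < \fd/40$, giving the stated $C_0\, e^{-\alpha\fd/C_0}$ after enlarging $C_0$.

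Both geometric estimates will follow from density arguments anchored by the fenced property of $R_0$ in $h'$. For the size of $\hat R_0$: set $A := \hat R_0 \setminus R_0$; it is a connected region touching $\partial R_0$, so the fenced property of $R_0$ in $h'$ gives $\rho(A, h') < \rho_0$. Since the only new bubble group in $\hat h'$ compared to $h'$ is $\fB_1$, the density of $A$ in $\hat h'$ is at most $\rho_0 + |\Upsilon(\fB_1)|/|A|$, and combining with the density-$\rho$ requirement on $R_0 \cup A$ (and the trivial bound $\rho(R_0, \hat h')\leq 1$) yields $|A|(\rho - \rho_0) \leq (1-\rho)|R_0| + |\Upsilon(\fB_1)|$, hence the linear bound. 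For the distance: by maximality, $\hat R_0 \cap \hat R_1 = \emptyset$ (else their union would be a strictly larger enclosure of $\fB_0$), and the same density argument applied to any extension of $\hat R_1$ beyond $\Upsilon(\fB_1)$ keeps it within an $O(|R_0|+|\Upsilon(\fB_1)|)$-neighborhood of $\Upsilon(\fB_1)$; thus $\dist(\hat R_0, \hat R_1) \geq \fd - C(|R_0| + |\Upsilon(\fB_1)|) \geq \fd/2$.

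The multi-pair bound follows by telescoping the quadruple difference as in the final display of the proof of \cref{lem:pre-bound-muh-d}: with $h_{i,j}$ the configuration obtained by adding $\fB^0_1,\ldots,\fB^0_i$ and $\fB^1_1,\ldots,\fB^1_j$ to $h$, the quadruple difference expands as $\sum_{i,j}$ of quadruple differences toggling $\fB^0_{i+1}$ and $\fB^1_{j+1}$. The assumed distance hypothesis $\dist(\bigcup_j\Upsilon(\fB^1_j), R^0_i)>40(|R^0_i|+\sum_j|\Upsilon(\fB^1_j)|)$ implies the single-pair hypothesis for each $(i+1,j+1)$-summand with enclosure $R^0_{i+1}$; applying the single-pair bound and collapsing the sum over $j$ (using that the $\fB^1_j$'s are collectively controlled by $\sum_j|\Upsilon(\fB^1_j)|$, so coupling decay against the whole family is captured by the joint distance $\dist(R^0_i, \bigcup_j \Upsilon(\fB^1_j))$) yields the asserted right-hand side.

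The main technical obstacle is the distance estimate $\dist(\hat R_0,\hat R_1) \geq \fd/2$. Controlling $|\hat R_0|$ is essentially immediate since $R_0$ directly shields its extension; controlling how far $\hat R_1$ can protrude toward $\hat R_0$ is trickier because an extension of $\hat R_1$ from $\Upsilon(\fB_1)$ need not immediately abut $\partial R_0$, and one must propagate the fenced condition of $R_0$ in $h'$ into a density statement in $\hat h'$ for sets in the intermediate region between $\Upsilon(\fB_1)$ and $R_0$. The distance hypothesis $\fd > 40(|R_0|+|\Upsilon(\fB_1)|)$ is precisely what makes this propagation succeed.
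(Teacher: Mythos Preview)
Your overall strategy matches the paper's: apply \cref{lem:pre-bound-muh-d} to enclosures $\hat R_0,\hat R_1$ of $\fB_0,\fB_1$ in $\hat h'$ at a density $\rho>\rho_0$, and control their size and separation. Your size bound on $\hat R_0$ is essentially right (two minor slips: $A=\hat R_0\setminus R_0$ need not be connected, and you never justified $\hat R_0\supseteq R_0$; both are harmless, since each component of $A$ touches $\partial R_0$ and the inequality $(\rho-\rho_0)|A|\le |R_0|+|\Upsilon(\fB_1)|$ goes through with $\hat R_0\cap R_0$ in place of $R_0$).

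The real gap is in the distance estimate. Your claim that ``the same density argument\ldots keeps $\hat R_1$ within an $O(|R_0|+|\Upsilon(\fB_1)|)$-neighborhood of $\Upsilon(\fB_1)$'' is not justified and is in fact false: the fenced property of $R_0$ in $h'$ only constrains connected sets that \emph{touch} $\partial R_0$, so nothing prevents $h$ from having a large dense cluster of bubble groups near $\fB_1$ (and far from $R_0$) that $\hat R_1$ swallows. Likewise, ``$\hat R_0\cap\hat R_1=\emptyset$ by maximality'' is not a valid deduction, since $\rho(\hat R_0\cup\hat R_1,\hat h')\ge\rho$ is not automatic when the two sets overlap. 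You correctly identify this distance step as the main obstacle in your last paragraph, but you do not actually carry it out.

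The paper's fix is to abandon any attempt to bound $|\hat R_1|$ and instead argue by contradiction directly on $\dist(R_0,\hat R_1)$. If $\dist(R_0,\hat R_1)\le \fd/10$, then necessarily $|\hat R_1|\ge \tfrac{9}{10}\fd\ge 36(|R_0|+|\Upsilon(\fB_1)|)$; stripping out $\fB_1$ and anything inside $R_0$ from the $\hat h'$-mass of $\hat R_1$ still leaves density at least $\tfrac{3}{20}-\tfrac{1}{36}>\tfrac{1}{10}$ in $h'$, and adjoining a path of length $\le \fd/10$ to $\partial R_0$ produces a connected set outside $R_0$, touching $\partial R_0$, with $h'$-density $\ge\tfrac{1}{10}$. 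This contradicts the fenced property of the enclosure $R_0$. Once $\dist(R_0,\hat R_1)\ge\fd/10$ is established, one deduces that no density-$3/20$ set in $\hat h'$ can meet both $\Upsilon(\fB_1)$ and $R_0$; in particular $\hat R_0$ misses $\fB_1$, hence has the same density in $h'$ as in $\hat h'$, and maximality of $R_0$ forces $|\hat R_0|\le|R_0|$ (stronger than your $C(|R_0|+|\Upsilon(\fB_1)|)$). The multi-pair case then follows by the same contradiction applied to each pair $(R_i^0,\hat R_j^1)$ in the full configuration $\hat h'$, not by reducing to the single-pair statement.
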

\begin{proof}
Again we start with the case involving only two bubble groups. 
Assume we have such an $R_0$. Let $\hat R_0, \hat R_1$ be enclosures of $\fB_0, \fB_1$ in $\hat h'$ but with density $3/20$, \cref{lem:pre-bound-muh-d} applies so what remains to do is to bound $\dist(\hat R_0, \hat R_1)$ and their sizes.

We first argue that
\[\dist(R_0, \hat R_1) \geq \fd/10\,.\]
Suppose by contradiction that $\dist(R_0, \hat R_1) \leq \fd/10$. Since $\dist( \Upsilon(\fB_1), R_0)
\geq \fd$, then
necessarily $|\hat R_1| \geq \frac{9}{10} \fd \geq 36 (| R_0| + |\Upsilon (\fB_1)|)$.
In particular, the bubble groups common in $h'$ and $\hat h'$ and outside of $R_0$ must already have a positive density:
\[
\sum_{\substack{\text{$(h',\varphi)$-bubble group $\fB$} \\ \fB\neq\fB_1} } |\Upsilon (\fB)|\one_{\{\Upsilon(\fB) \cap R_0 = \emptyset\}} \geq (\tfrac{3}{20} - \tfrac{1}{36})|\hat R_1| \geq \tfrac{1}{9} |\hat R_1|\geq \tfrac{1}{10}( |\hat R_1| + \fd/10 )\,,
\]
again using for the last step that $\fd/10 \leq \frac19|\hat R_1|$. Consider the union of $\hat R_1$ and the shortest path connecting it to $R_0$: the above equation shows that it has density at least $1/10$ even in $h'$, contradicting the definition of $R_0$ (\cref{def:bubble-grp-enclose}).

We have thus proved that no set of density (in $\hat h'$) more than $3/20$ can intersect both $\Upsilon(\fB_1)$ and~$R_0$. In particular, $\hat R_0$ cannot intersect $\Upsilon(\fB_1)$ and it must have the same density in $\hat h'$ and $\hat h$. We can therefore choose $\hat R_0$ as a subset of $R_0$. This comparison of $R_0,\hat R_0$ now implies that 
\[
\dist\big( \hat R_0, \hat R_1 \big) \geq \fd/10\,,
\]
and we conclude by simply plugging this bound in \cref{lem:pre-bound-muh-d}.

With multiple bubble groups, since the distance condition involves the sum of the sizes, the same argument as above first shows that for each $i,j$, $\dist(\hat R_j^1, R_i^0) \geq \dist(\Upsilon(\fB_j), R^0_i)/10$. Again from this we can deduce that we can choose $\hat R_i^0 \subset R_i^0$ for all $i$. Using the bound on the distance, we can absorb a factor $\sum_j |\Upsilon( \fB^1_j)|$ into the exponential and this concludes the proof.
\end{proof}

Finally, since the term $\int_\alpha^\infty \mu_{h,\hat\alpha}(\overline{G}_h)\d\hat\alpha$  is always positive and since only dense regions of 
bubble groups contribute significantly to it, intuitively one could hope that it could be monotone under the deletion of  bubble groups. This does not seem to be correct but one can still in a sense ignore its effect, at the cost of losing the $G^\gr$ and $\log( Z_\mu^\infty)$ terms.
\begin{proposition}\label{prop:bound_muh2}
    Let $\varphi$ be a tiling of $\T_N$, and suppose $h,\hat h$ are two \SOS configurations where the $(\hat h,\varphi)$-bubbles consist of all the $(h,\varphi)$-bubbles in addition to $(\hat h,\varphi)$-bubble groups $\{\fB_i\}_{i=1}^m$. Then
    \[
    \frac{\pi_{\varphi, \hat \beta}(\hat h)}{\pi_{\varphi, \hat \beta}(h)} \leq \exp\bigg(\sum_{i=1}^m  \Big[- \hat \beta H (\fB_i) - \lambda \sV( \fB_i ) \Big]\bigg)\,.
    \]
\end{proposition}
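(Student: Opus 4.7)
The plan is to simplify the formula for $\pi_{\varphi,\hat\beta}$ so that the ratio $\pi_{\varphi,\hat\beta}(\hat h)/\pi_{\varphi,\hat\beta}(h)$ collapses to a purely combinatorial inequality, which is then proved pointwise in $\psi$.

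First, I would apply \cref{lem:grimmett} to rewrite $\int_\alpha^\infty \mu_{h,\hat\alpha}(\overline G_h)\,\mathrm{d}\hat\alpha = \log Z_\mu^\alpha(h) - \log Z_\mu^\infty(h)$, and substitute this into the definition of $\pi$ in \cref{eq:pi-def}. The two $\log Z_\mu^\infty(h)$ terms cancel exactly; using $Z_\mu^\alpha(h) = \sum_\psi e^{-\alpha\overline G_h(\psi)} = e^{\alpha G_{h,\varphi}^\gr}\,\tilde Z_h$ with
\[
\tilde Z_h := \sum_\psi e^{-\alpha G_{h,\varphi}(\psi)},
\]
the $\alpha G_{h,\varphi}^\gr$ terms also cancel exactly, yielding the compact representation
\[
\pi_{\varphi,\hat\beta}(h) \propto \frac{e^{-\hat\beta H_h - \lambda \sV(h)}}{\tilde Z_h}.
\]

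Second, using the factorizations $H_{\hat h} - H_h = \sum_i H(\fB_i)$ and $\sV(\hat h) - \sV(h) = \sum_i \sV(\fB_i)$ coming from \cref{obs:bubble-grp-relations} (applicable because, by hypothesis, the bubble groups of $\hat h$ are those of $h$ together with the $\fB_i$), the ratio becomes
\[
\frac{\pi_{\varphi,\hat\beta}(\hat h)}{\pi_{\varphi,\hat\beta}(h)} = \frac{\tilde Z_h}{\tilde Z_{\hat h}}\cdot \exp\!\Big[\sum_{i=1}^{m} \big(-\hat\beta H(\fB_i) - \lambda \sV(\fB_i)\big)\Big],
\]
so the desired bound is equivalent to the clean combinatorial inequality $\tilde Z_{\hat h} \geq \tilde Z_h$.

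Third, I would establish this inequality tiling by tiling. Rewriting
\[
G_{h,\varphi}(\psi) = |(h\cap\varphi)\setminus\psi| - |(h\setminus\varphi)\cap\psi|,
\]
I would examine the effect on $h\cap\varphi$ and $h\setminus\varphi$ of passing from $\hat h$ to $h = \hat h \xor \bigcup_i\bigcup_{\sB\in\fB_i}\sB$: the blue plaquettes $\fB_i\cap\varphi$ (which by definition lie in $\varphi\setminus\hat h$) get added to $h$, yielding $\hat h\cap\varphi \subsetneq h\cap\varphi$; symmetrically, the red plaquettes $\fB_i\setminus\varphi$ (which lie in $\hat h\setminus\varphi$) get removed, yielding $h\setminus\varphi \subsetneq \hat h\setminus\varphi$. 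These two set inclusions give, for every tiling $\psi$, the inequalities $|(\hat h\cap\varphi)\setminus\psi| \leq |(h\cap\varphi)\setminus\psi|$ and $|(\hat h\setminus\varphi)\cap\psi| \geq |(h\setminus\varphi)\cap\psi|$, hence $G_{\hat h,\varphi}(\psi) \leq G_{h,\varphi}(\psi)$ pointwise. Since $\alpha > 0$, summing $e^{-\alpha G(\psi)}$ over $\psi$ yields $\tilde Z_{\hat h} \geq \tilde Z_h$, completing the proof.

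The main obstacle is conceptual rather than technical: the formula for $\pi$ involves four non-explicit, long-range terms (the $G^\gr$, $\log Z_\mu^\infty$, the integral over $\mu_{h,\hat\alpha}$, and $\sV$), three of which would normally demand delicate quantitative bounds of the type established in \cref{prop:bound_muh,lem:bound_muh-d}. The crucial observation is that, once the free energy identity is applied in the correct form, the $\log Z_\mu^\infty$ and $\alpha G^\gr$ terms cancel \emph{exactly} against one another, leaving the structured object $\tilde Z_h$ that enjoys an elementary pointwise monotonicity under the addition of bubble groups—so the bound is lossless and no control of the long-range interaction is actually required.
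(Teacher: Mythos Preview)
Your proof is correct and rests on the same core inequality as the paper's, namely the pointwise monotonicity $G_{\hat h,\varphi}(\psi)\le G_{h,\varphi}(\psi)$ combined with \cref{lem:grimmett} to undo the integral term. The only difference is organizational: you first collapse $\pi$ back to the compact form $\pi_{\varphi,\hat\beta}(h)\propto e^{-\hat\beta H_h-\lambda\sV(h)}/\tilde Z_h$ (which is just \cref{eq:towards-P(phi)-via-G} before the expansion), so the $G^\gr$ and $\log Z_\mu^\infty$ contributions cancel upfront and you never need the bubble-group factorizations of those two quantities, whereas the paper tracks them explicitly through $\overline G$ and shows the same cancellations at the end.
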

\begin{proof}
It will be useful to use the expression for $G_{h,\varphi}(\psi)$ from \cref{eq:G-def-2}.
Note that for every tiling~$\psi$,
\[ (\varphi\xor\psi) \cap (\varphi\xor \hat h) = (\varphi\xor\psi)\cap\Big((\varphi\xor h)\uplus \bigcup_{i}\fB_i\Big)\,,\]
where $\bigcup_i \fB_i$ is short for $\bigcup_i \bigcup_{\sB\in\fB_i}\sB $, the union of bubbles in the bubble group $\fB_i$, which are disjoint to $\varphi\xor h$ by definition. In particular,
\[G_{\hat h,\varphi}(\psi) = G_{h,\varphi}(\psi) - \Big|(\varphi\xor\psi)\cap \big(\bigcup_{i}\fB_i\big)\Big|\,.\]
Since \cref{cor:psi-min-max-construction} implies that the minimizers of $G_{h,\varphi}$ are constructed independently between $(h,\varphi)$-bubble groups (which would \emph{not} hold at the level of bubbles!), it follows that
\[ G_{\hat h}^\gr = G_{h}^\gr + \sum_{i} G^\gr(\fB_i)\,,\]
and combining the last two displays we see that
\begin{align}
\overline G_{\hat h,\varphi}(\psi) &=
 \overline G_{ h,\varphi}(\psi)
 - \Big|(\varphi\xor\psi)\cap \big(\bigcup_{i}\fB_i\big)\Big| - \sum_i G^\gr(\fB_i)
\nonumber \\
& \leq
\overline G_{ h,\varphi}(\psi)
 - \sum_i G^\gr(\fB_i) \label{eq:G-hat-h-ub}\,.
\end{align}
In particular,
\[ Z_{\mu}^\alpha(\hat h) =\sum_\psi e^{-\alpha \overline G_{\hat h}(\psi)} \geq
Z_{\mu}^\alpha(h) e^{\alpha \sum_i G^\gr(\fB_i)}\,,
\]
and so, using \cref{lem:grimmett} to recover the log partition functions from the integrals, we have
\begin{align*}
\int_{\alpha}^\infty \mu_{\hat h,\hat\alpha}(\overline G_{\hat h})\d\hat\alpha - \int_{\alpha}^\infty \mu_{ h,\hat\alpha}(\overline G_{ h})\d\hat\alpha &=\Big(\log Z_\mu^\alpha(\hat h) - \log Z_\mu^\infty(\hat h)\Big)-\Big( \log Z_\mu^\alpha(h) - \log Z_\mu^\infty(h)\Big) \\
 &\geq \log Z_\mu^\infty(h) -\log Z_\mu^\infty(\hat h) + \alpha\sum_i G^\gr(\fB_i) \,.
\end{align*}
Finally, the aforementioned independence of the minimizers between bubble groups implies that
\[ Z_\mu^\infty(\hat h) = Z_\mu^\infty(h) \prod_i Z_\mu^\infty(\fB_i)\]
(again via \cref{cor:psi-min-max-construction} and the fact that we are looking at bubble groups rather than bubbles), which translates the last inequality into
\begin{align*}
\int_{\alpha}^\infty \mu_{\hat h,\hat\alpha}(\overline G_{\hat h})\d\hat\alpha - \int_{\alpha}^\infty \mu_{ h,\hat\alpha}(\overline G_{ h})\d\hat\alpha \geq \sum_i\Big[ \alpha G^\gr(\fB_i)-\log Z_\mu^\infty(\fB_i) \Big] \,.
\end{align*}
Consequently, we see from \cref{eq:pi-simplified} that
\begin{align*}
    \frac{\pi_{\varphi,\hat\beta}(\hat h)}
    {\pi_{\varphi,\hat\beta}(h)} &= 
    \exp\bigg[ \sum_{i} \Big[-\hat\beta H(\fB_i) + \alpha G^\gr(\fB_i)-\log Z_\mu^\infty(\fB_i)-\lambda\sV(\fB_i)\Big]
    \\ &\qquad\qquad -\bigg(\int_\alpha^\infty \mu_{\hat h,\hat\alpha}(\overline G_{\hat h})\d\hat\alpha-\int_\alpha^\infty \mu_{h,\hat\alpha}(\overline G_h)\d\hat\alpha\bigg)\bigg] \\
    &\leq 
\exp\bigg[ \sum_{i} \Big[-\hat\beta H(\fB_i)-\lambda\sV(\fB_i)\Big]
\bigg]\,, 
\end{align*}
as required.
\end{proof}

\subsection{Glauber dynamics on bubble groups for \texorpdfstring{$h$}{h}}\label{sec:glauber-pi}
Given a fixed reference tiling $\varphi$ of $\T_N$, define the following dynamics $(h_t)$ on SOS configurations, using a scale parameter 
\begin{equation}
\gamma := \lambda / C
\end{equation}
where $C$ is a large enough absolute constant (it will suffice to take it, e.g., as $10^5 C_*$ where $C_*$ is the enumeration constant on animals in the lattice $\T$).
\begin{enumerate}
\item \label{it:glauber-clocks-pi}
%Let \[ \rho_k = e^{-(C_*-\frac{\lambda \;\vee\; 1}{100})k}\,.\]
\begin{enumerate}[label=(a), ref=(\theenumi\alph*)]\item \label{it:glauber-clocks-pi-S} Assign an independent rate-1 Poisson clock to every pair $(S, \{\fB_i \})$ , where $S\subseteq \T_N$ is a connected set of triangles in the lattice $\lceil \frac{1}{\gamma}\rceil\T$, and $\{\fB_i\}$ is a set of pairwise-disjoint bubble groups such that $\Upsilon(\fB_i)\cap S\neq\emptyset$ for every $i$, and $\sum_{i} |\Upsilon(\fB_i)| \geq \gamma|S|/200$. \item 
\label{it:glauber-clocks-pi-S-dagger} 
Also assign a rate-1 clock to pairs $(S^\dagger, \{\fB\})$ as above where $S^\dagger$ is a single triangle in $\lceil \frac{1}{\gamma}\rceil\T$ (without the aforementioned density restriction on $\Upsilon(\fB_i)$'s).
\end{enumerate}

\item \label{it:glauber-rand-pi}% When the clock rings, 
% if all the $(h_t,\varphi)$-bubble groups whose projection is in $S$ are members of $\{\fB_i\}$, then erase all of them and then put back a subset $\{\fB_{i_j}\}$ of them---creating a configuration $h$---according to the weight
If either (i) no current bubble projects onto $S$ in $h_t$ and adding every $\sB\in\bigcup\fB_i$ will result in a configuration $\hat h$ where $\{\fB_i\}$ are precisely the bubble groups with $\Upsilon(\fB_i)\cap S\neq \emptyset$, or (ii) the current $h_t$ is such an $\hat h$,
then let $\{h, \hat h\}$ denote the configurations $\{ h_t, h_t \xor (\bigcup_i \bigcup\{\sB\in\fB_i\})\}$ with $\hat h$ the configuration with the bubbles. The dynamics moves to $h$ with probability $w_h/(w_h+w_{\hat h})$ and otherwise it moves to $\hat h$, where
\begin{align*}
w_h &= \exp\bigg[ -\int_\alpha^\infty \mu_{h,\hat\alpha}(\overline G_h)\d\hat\alpha\bigg]\,,\\
w_{\hat h} &= \exp\bigg[ \sum_{i} \Big[-\hat\beta H(\fB_{i}) + \alpha G^\gr(\fB_{i})-\log Z_\mu^\infty(\fB_{i})-\lambda\sV(\fB_{i})\Big]-\int_\alpha^\infty \mu_{\hat h,\hat\alpha}(\overline G_{\hat h})\d\hat\alpha\bigg]\,.
\end{align*}
\end{enumerate}
Compared to the Glauber dynamics we considered for $\nu$, this is similar but instead of changing only a single bubble at a time we can now add or remove a dense set of bubble groups. This is still clearly reversible for $\pi_{\varphi,\hat\beta}$ as per \cref{eq:pi-simplified}---note that irreducibility (which was automatic for the bubble dynamics but now can be foiled by the density constraints on the sets $S$), is provided by the extra clocks on the singletons $S^\dagger$ (which have no density restriction). That is to say, by reversibility it suffices to show a path from every configuration to $h=\varphi$, which we can do by removing bubble groups one tile at a time (each with an $S^\dagger$-update).

Define $\dist_{\fB}(h,h')$ to be the length of the geodesic in the graph where two configurations are adjacent if they differ on a single bubble group $\fB$ of the larger configuration. 

\begin{proposition}\label{prop:pi-contraction}
The dynamics $(h_t)$ is contracting w.r.t.\ $\dist_\fB$; that is, if $\alpha\wedge  \hat\beta$ is large enough (independently of $\varphi$), then for every pair of initial states $(h_0,h'_0)$ for two instances of the chain, there exists a coupling of $(h_t,h'_t)$ such that,
\[\E [\dist_\fB(h_t,h'_t)]\leq e^{- t/4} \dist_\fB(h_0,h'_0)\,.\]
\end{proposition}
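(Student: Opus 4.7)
By the triangle inequality, it suffices to treat the case where $h_0,h_0'$ are adjacent in the graph defining $\dist_\fB$, so they differ by a single bubble group $\fB_0$; say $\fB_0\in h_0'$ and $h_0 = h_0' \xor \bigcup_{\sB\in\fB_0}\sB$. Couple $(h_t,h_t')$ by sharing every Poisson clock of \cref{it:glauber-clocks-pi} and, whenever an update fires, using a common $\mathrm{Uniform}([0,1])$ variable to choose the move direction in \cref{it:glauber-rand-pi}. Under this coupling, each Poisson event is classified as \emph{healing} (decreases $\dist_\fB$), \emph{neutral} (does not affect $\dist_\fB$), or \emph{infection} (increases $\dist_\fB$). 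The goal is to show the infinitesimal expected change in $\dist_\fB(h_t,h_t')$ at $t=0$ is bounded by $-\delta$ for some $\delta>0$.

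\textbf{Healing.} Fix an enclosure $R_0$ of $\fB_0$ in $h_0'$ (which exists by \cref{rem:enclosure-exists}), let $\{\fB_i^0\}_{i\geq 0}$ with $\fB_0^0=\fB_0$ denote all bubble groups of $h_0'$ intersecting $R_0$, and note via \cref{cor:mon-equiv-rel} that the remaining $\{\fB_i^0\}_{i\geq 1}$ are preserved intact as bubble groups of $h_0$. Consider the pair of updates $(R_0, \{\fB_0\})$ restricted to a scale-$1/\gamma$ neighborhood: in the ``isolated'' case where no other $\fB_i^0$ ($i\geq 1$) meets the chosen $S\subseteq R_0$, this update satisfies case (ii) in $h_0'$ and case (i) in $h_0$ simultaneously; the shared uniform variable then drives both chains to the same configuration with probability $w_h/(w_h+w_{\hat h})$. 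By \cref{prop:bound_muh} the integral difference in $w_{\hat h}/w_h$ is bounded by $C|R_0| \leq C\sum_i|\Upsilon(\fB_i^0)|$, while by \cref{lem:approx-bubble-grp} the deterministic part $\beta H(\fB_0)+\lambda\sV(\fB_0) - \alpha G^\gr(\fB_0) + \log Z_\mu^\infty(\fB_0)$ is at least $\beta-\lambda+\tfrac{\lambda}{16\sM_0}|\Upsilon(\fB_0)|$. For $\alpha\wedge\hat\beta\geq C\lambda^{20}$ and $\lambda$ large, this produces a healing rate bounded below by a positive constant.

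\textbf{Infection.} Any update $(S,\{\fB_i\})$ that affects $\dist_\fB$ must either (a) have $\fB_i$ interacting with $\fB_0$ at the level of the configuration—e.g.\ $\Upsilon(\fB_i)$ meets $\Upsilon(\fB_0)$, or the same $S$ fires in one chain but not the other because of $\fB_0$—or (b) fire in both chains but with distinct Bernoulli outcomes because the integral term in $w_h,w_{\hat h}$ differs between the chains. For (a), enumerate by scale $s=|S|\vee\sum_i|\Upsilon(\fB_i)|$ around any triangle meeting $\Upsilon(\fB_0)$: \cref{lem:approx-update} bounds the number of such pairs by $\exp[C(\epsilon^{1/4}+\gamma^2)s]$, the acceptance probability is at most $\exp[-\sum_i(\beta H(\fB_i)+\lambda\sV(\fB_i))+C(|R|+\text{integral})]$ where the last two terms are $O(s)$ by \cref{prop:bound_muh,lem:short_range_integral}, and for exceptional pairs \cref{lem:approx-update} yields $\sum_i(\beta H+\lambda\sV)(\fB_i)\geq \beta-\lambda+\tfrac{\lambda}{20\sM_0}s$ (while non-exceptional pairs pay at least $\epsilon^{2/3}(\alpha\wedge\beta)\epsilon s$). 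The choice $\gamma=\lambda/C$ lets us absorb both the $e^{C(\epsilon^{1/4}+\gamma^2)s}$ entropy and the $e^{Cs}$ integral constant against the $\tfrac{\lambda}{20\sM_0}s$ gain, so summing over $s$ gives a total infection rate that can be made arbitrarily small. For (b), \cref{lem:bound_muh-d} provides the quantitative decay $\sum_i\exp[-\alpha\dist(R_i^0,\bigcup_j\Upsilon(\fB_j))/C]$ on the discrepancy in acceptance probabilities; summing this against the same Peierls enumeration produces an arbitrarily small contribution for $\alpha$ large.

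\textbf{Main obstacle.} The delicate point is that when $\fB_0$ lies in a dense cluster of other bubble groups, no update $(S,\{\fB_0\})$ at scale $1/\gamma$ can isolate $\fB_0$: every scale-$1/\gamma$ neighborhood of $\fB_0$ already contains other $\fB_i^0$, so neither case (i) nor case (ii) of \cref{it:glauber-rand-pi} holds simultaneously in both chains. The single-bubble-group healing channel is then unavailable, and one must instead rely on compound updates that toggle the whole cluster $\{\fB_i^0\}_{i\geq 0}$ (firing only in $h_0'$) paired with the complementary update $\{\fB_i^0\}_{i\geq 1}$ (firing only in $h_0$), and argue via the density condition $\sum_i|\Upsilon(\fB_i)|\geq\gamma|S|/200$ on \cref{it:glauber-clocks-pi}(a) that the effective rate of joint cluster-clearing beats the combined interaction penalty. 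Rigorously justifying this, and in particular showing that the propagation of disagreements induced by the large-scale moves is dominated by the gain from \cref{lem:approx-update} applied to the full cluster, is the technical crux and will dictate the exact dependence of $\delta$ on $\lambda,\alpha\wedge\hat\beta,\sM_0$.
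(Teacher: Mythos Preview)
Your overall framework (path coupling, healing vs.\ infection) matches the paper, and you correctly identify the main obstacle. But two points are substantively different from the paper's argument, and without them the proof does not close.

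\textbf{The healing mechanism.} Your healing uses updates of the form $(S,\{\fB_0\})$ and requires that the chosen $S$ contain no other bubble group, so that case (i)/(ii) of \cref{it:glauber-rand-pi} holds simultaneously in both chains. As you note in your ``Main obstacle'', this is unavailable when $\fB_0$ sits inside a dense cluster. The paper's resolution is not to isolate $\fB_0$ but to couple \emph{different} clocks across the two chains: for a given large $S\supset\Upsilon(\fB_0)$, couple the clock $(S,\{\fB_i\})$ in the $h$-chain (where $\{\fB_i\}$ is the full list of bubble groups of $h$ meeting $S$) with the clock $(S,\{\fB_i'\})$ in the $h'$-chain (where $\{\fB_i'\}=\{\fB_i\}\cup\{\fB_0\}$). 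When the shared uniform falls below both ``remove'' thresholds, both chains clear $S$ and coincide there, so $\dist_\fB$ drops to $0$. This works regardless of how many bubble groups crowd around $\fB_0$.

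\textbf{The healing rate must be exponential in $|R_0|$, not a constant.} Your claim that the total infection rate ``can be made arbitrarily small'' is false: the paper shows that short-range infections with $|S|$ below the infection-size threshold \cref{eq:S-infection-size} contribute a rate of order $|R_0|^2\exp[\kappa(\alpha\wedge\hat\beta)^{-1}|R_0|]$, which grows with $|R_0|$ and cannot be killed by taking parameters large. The paper compensates by making the healing rate grow \emph{faster}: for every connected $S$ in $\lceil 1/\gamma\rceil\T$ that contains the $O(\gamma|R_0|)$ triangles meeting $R_0$ together with $\gamma|R_0|$ additional triangles chosen freely, the density condition is met and each such $S$ contributes rate $\geq 1/2$ to healing. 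There are $e^{C_*\gamma|R_0|-o(|R_0|)}$ such $S$, so the net drift is
\[
-\tfrac12 e^{C_*\gamma|R_0|-o(|R_0|)} + |R_0|^2 e^{\kappa(\alpha\wedge\hat\beta)^{-1}|R_0|} + c|R_0| \leq -\tfrac14 e^{C_*\gamma|R_0|}\,,
\]
which is where the choice $\gamma=\lambda/C$ and the requirement $\alpha\wedge\hat\beta\gtrsim\gamma^{-20}$ actually enter. Your constant healing rate cannot win this competition. The paper also splits off the case $|R_0|\leq 12/\gamma$ separately (there the integral never dominates, so a single $S^\dagger$-update suffices), which your sketch does not address.
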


\begin{remark}\label{rk:dependency_beta_lambda}
    The proof of the proposition is where the strictest constraint on the relation between $\lambda$ and $\beta$ appears. As written, we need $\beta \lambda^{20}$ to be large enough as this is the prefactor in \cref{eq:S-infection-size} but we made no effort to make this dependence optimal. Our techniques would not improve this  beyond a polynomial because this is the relation between energy and entropy for bad bubble groups.
\end{remark}

\begin{proof}[\emph{\textbf{Proof of \cref{prop:pi-contraction}}}]
    As in the two previous cases, it suffices to consider $h_0, h'_0$ that differ on a single bubble group $\fB_0$ and again we assume by symmetry that $h'_0$ contains $\fB_0$. 

Let $R_0$ be an enclosure of $\fB_0$ (recall \cref{rem:enclosure-exists}), and let us first consider the scenario wherein
 \[ |R_0| > 12/\gamma \,.\]
We also collect for future reference the following conditions on an update $(S, \{ \fB_i \})$:
\begin{align}
&\mbox{[\emph{Healing size}]} \qquad\qquad\;\; |S| \geq \frac{8\cdot 10^7 C_1}{(\alpha \wedge \hat \beta) \gamma} |R_0|\, , \label{eq:S-healing-size}\\
&\mbox{[\emph{Infection size}]} \qquad\qquad |S| \geq \frac{3 C_1}{(\alpha \wedge \hat \beta) \gamma^{20}} |R_0|\,,
\label{eq:S-infection-size}\\
&\mbox{[\emph{Typicality}]}\; \sum_i \Big[\hat\beta H(\fB_{i}) - \alpha G^\gr(\fB_{i})+\log Z_\mu^\infty(\fB_{i})+\lambda\sV(\fB_{i})\Big] \geq 2 C_1 \Big( |S| + |R_0| + \sum_i |\Upsilon(\fB_i) |\Big)\,,
\label{eq:healing-bubble-grp-terms} \\
&\mbox{[\emph{Distance}]} \qquad\qquad\qquad \dist\left( \cup \Upsilon (\fB_i),  R_0\right) > 40 \Big(|R_0| + \sum_i |\Upsilon(\fB_i)|\Big)\,,
\label{eq:S-distance}
\end{align}
with $C_1$ chosen to be the maximum between all the constants from \cref{lem:short_range_integral,lem:bound_muh-d} and the constant $C$ such that there are at most $e^{Cs}$ pairs $(S, \{ \fB \})$ with $|S| = s$.

We will say that a bubble group is \emph{typical} if it satisfies the typicality condition and \emph{atypical} otherwise and that it is at long-range or short-range if it satisfies or not the distance condition.

Note that the distance condition is exactly the same as in \cref{lem:short_range_integral,lem:bound_muh-d}. Let us also argue that if an update $(S, \{\fB_i\})$ satisfies the distance condition, then it can either be added in both $h_0, h_0'$ or in neither of them. If the $\fB_i$ can be added to $h_0'$, then by monotonicity (\cref{cor:mon-equiv-rel}) it can also be added to $h_0$, even without using the distance condition. Now assume that it can be added to $h_0$, by the distance condition the bubbles of the $\fB_i$ cannot intersect the ones of $\fB_0$ so we can still define configurations $\hat h_0, \hat h_0'$ by adding these bubbles to $h_0$ and $h_0'$ respectively. If there is a path in $\T$ separating $\Upsilon(\fB_0)$ from $\bigcup \Upsilon(\fB_i)$ staying outside of the projection of the bubble groups of both $\hat h_0$ and $\hat h_0'$, then by the local consistency of bubble groups (as in the proof of \cref{cor:mon-equiv-rel}), we find that the $\fB_i$ are bubble groups of $\hat h_0'$. If there is no such path, then in particular there is a set connecting $\bigcup \Upsilon(\fB_i)$ to $R_0$ with bubble group density more than $1/2$ in one of $\hat h_0$ or $\hat h_0'$ but this contradicts the distance assumption and the fact that $R_0$ is an enclosure.

Also, %for $\alpha \wedge \beta$ large enough compared to $\gamma$ 
an atypical update $(S, \{ \fB_i \})$ that satisfies the healing size condition (or the infection size of course) must satisfy
\begin{align}
    \sum \Big(\hat\beta H(\fB_{i}) - \alpha G^\gr(\fB_{i})\Big) & \leq 2 C_1 \Big( |S| + |R_0| + \sum_i |\Upsilon(\fB_i) |\Big) \nonumber \\
    & \leq 2C_1 \Big(  \frac{200}{\gamma} + \frac{200}{\gamma} \frac{\gamma (\alpha \wedge \hat \beta) }{8\cdot 10^7 C_1} + 1\Big) \sum_i | \Upsilon( \fB_i)| \, ,
\end{align}
and therefore, if $\alpha \wedge \hat \beta$ is large enough, any atypical large update falls into the condition from \cref{lem:approx-update} for, say, $\epsilon = \frac1{512}$ (the dominant term is the middle one, involving $4\cdot 10^5 C_1$, which amounts to at most $\epsilon=\frac1{570}$ in the condition from \cref{lem:approx-update}) and thus satisfies
  \begin{equation}\label{eq:S-energy-atypical}
\sum_i \Big(\hat\beta H(\fB_{i}) +\lambda\sV(\fB_{i})\Big)\geq \hat\beta/2  +\frac{\lambda}{20}\sum_i |\Upsilon(\fB_i)|\,.        \end{equation}
Finally, an atypical update satisfying the infection size condition has, again assuming $\alpha \wedge \hat \beta$ is large enough, and now also assuming $\gamma$ is small enough,
\begin{align}
    \sum \Big(\hat\beta H(\fB_{i}) - \alpha G^\gr(\fB_{i})\Big) & \leq 2 C_1 \Big( \frac{200}{\gamma} + \frac{200}{\gamma} \frac{(\alpha \wedge \hat \beta) \gamma^{20} }{3 C_1} + 1\Big)  \sum |\Upsilon( \fB_i)| \nonumber \\
    & \leq \epsilon^{5/3} ( \alpha \wedge \hat \beta) \sum |\Upsilon( \fB_i)| 
\end{align}
for $\epsilon := \gamma^9$ qualifying for an application of \cref{lem:approx-update} (with room to spare, as the dominant term in the right hand of the first line featured $\gamma^{19}$). We will apply said lemma later, with the enumeration over such cases.

    Again, we can identify four scenarios:
    \begin{enumerate}[(1)]
        \item {}[\emph{blocked move}] We select $(S,\{\fB_i\})$ where $\{\fB_i\}$ can neither be added nor removed in both configurations. In that case nothing happens and the distance stays $1$. Note that this happens with a very high rate since the number of pairs $(S, \{\fB_i \})$ grows exponentially with the size of $S$ but almost none of these will be compatible with the current configuration.
        
        \item {}[\emph{healing}] \label{it:healing} We select $S$ satisfying the healing size condition (\cref{eq:S-healing-size}) and $S \cap \Upsilon( \fB_0) \neq \emptyset$, together with bubble groups $\{\fB_i\}, \{\fB_i'\}$ which are exactly the one whose projection intersect $S$ in $h$ and $h'$ respectively. In particular, $\fB_0$ appears in the set $\{ \fB'\}$ but not in~$\{ \fB\}$. 

   \begin{itemize}            
        \item If $(S, \{ \fB\})$ is typical (i.e., satisfies \cref{eq:healing-bubble-grp-terms}) then, since the contribution of the integral to either weight is at most $C_1( |R_0| + |S| + \sum_i |\Upsilon( \fB'_i)| )$ by \cref{lem:short_range_integral} ($C_1 \geq C_0$ by construction and the distance condition is automatically verified since $S$ intersects $\fB_0$), with probability at least $1-2\exp(- C_1|S|)$ (with room to spare) we remove all bubbles from both $h$ and $h'$, decreasing the distance to $0$. With the complementary probability, we increase the distance by at most $\# \{ \fB' \} \leq |S|$.   
        
        \item If $(S, \{ \fB\})$ is atypical, then \cref{eq:S-energy-atypical} holds as discussed there so by \cref{prop:bound_muh2} we remove all bubbles in both $h$ and $h'$ except with probability $2 \exp( -\hat \beta/2- \frac{\lambda}{20} \sum_i |\Upsilon(\fB_i)|)$ and in the exception  the distance increases by at most~$\# \{ \fB' \} \leq \sum_i |\Upsilon(\fB_i)|+1$.       

\end{itemize}
        Overall, we see that, still assuming $\hat \beta$ is large enough, any set $S$ as above contributes a rate of at least $1/2$ to the decrease of the expected distance. It remains to show that there are enough such sets. Take $R_0$ as above. 
        By \cref{fact:epsilon-T}, the number of triangles of $\lceil \frac{1}{\gamma}\rceil \T$ it intersects is at most $12 \lceil \gamma |R_0|\rceil $, in particular any set $S$ containing all of these along with $\gamma |R_0|$ other triangles taken arbitrarily will satisfy $\sum_i |\Upsilon(\fB'_i)| \geq 
        |R_0|/10 \geq  
        \gamma |S|/200$ since $|S| \leq |R_0|/(14\gamma)$ in $\T$. There are at least $e^{C_* \gamma |R_0| - o(|R_0|)}$ many such choices, where $C_*$ is the enumeration constant for animals in $\T$ (choose an animal in $\lceil\frac{1}{\gamma}\rceil\T$ and adjoin it to $S$, say by gluing its leftmost (then bottom most) point to the rightmost (then top most) point of the original $S$). 
        
        \item {}[\emph{long-range infection}] \label{it:long-range-infect}  We select  $(S,\{\fB_i\}_{i\geq 1})$ or $(S^\dagger,\{\fB_i\})$ satisfying the distance condition (\cref{eq:S-distance}). As noted above, if it can be added to one configuration it can be added to the other one too.
        Write
        \[ p = \frac{w_{\hat h}}{w_{h}+w_{\hat h}} \quad,\quad 
        p' = \frac{w_{\hat h' }}{w_{h'}+w_{\hat h'}} \,.
        \]
        By the exact same argument leading to \cref{eq:p-p'} for the dynamics $(\eta_t)$ considered there, one has
        \[
        |p - p'| \leq  \bigg| \int_{\alpha}^\infty \mu_h + \mu_{\hat h'} - \mu_{h'} - \mu_{\hat h}  \bigg|\,.
        \]
        We can apply \cref{lem:bound_muh-d} to bound the integral term by $C \exp \big( - \alpha \dist( R_0, \bigcup_i \Upsilon( \fB_i) )/C \big)$. Given the distance condition, if $\alpha$ is large enough the total contribution of that case is at most a constant.
        
        \item {}[\emph{short-range infection}] 
        \label{it:short-range-infect}
        We select $(S ,\{ \fB_i \})$ or $(S^\dagger, \fB)$ which does not satisfy the distance condition (\cref{eq:S-distance}) and does not fit in the healing case (\cref{it:healing}). Consider three cases.  
        \begin{enumerate}[(a), ref=(\theenumi\alph*)]
            \item {}[\emph{small}] \label{it:short-small} $S$ does not satisfy \cref{eq:S-infection-size} (in particular this case does not exist if $R_0$ is too small). In that case, we will not try to control the probability and we bound the distance assuming it increases by $|S|$. The number of such pairs $(S, \{\fB_i\})$ is at most $e^{3 (\alpha\wedge\hat\beta)^{-1} \gamma^{- 18}|R_0|} $. Globally this case contributes a rate of increase for the distance of at most $|R_0|^2 e^{3(\alpha\wedge\hat\beta)^{-1} \gamma^{-18} |R_0|}$.
            
            \item{} [\emph{large typical}] \label{it:short-large-typ} 
            $S$ satisfies \cref{eq:S-infection-size,eq:healing-bubble-grp-terms}. Note that $S$ does not intersect $\fB_0$ otherwise it would instead appear in the ``healing'' term and therefore the same set $\{\fB_i\}$ rings for both $h$ and $h'$. Suppose first that the $\fB_i$ can be removed from both; as in the healing case, by \cref{lem:short_range_integral}, it happens with probability at least $1 - e^{- C_1 |S|}$ for any of the two configurations. Similarly, if the $\fB_i$ can be added, it happens with probability at most $e^{- C_1 |S|}$. Overall, since removing the same set synchronously in both $h$ and $h'$ does not increase the distance\footnote{It is possible to find pairs $(S, \{ \fB_i \} )$ where the $\fB_i$ can be added to $h$ and not $h'$ but with $S \cap \Upsilon(\fB_0) = \emptyset$ if one of the $\fB_i$ intersects $\fB_0$ and $S$ or if adding bubbles make some bubble group merge. Even in that case, bubble groups need to be added to one configuration to increase the distance.}, any set in this case contributes at most $|S| e^{- C_1 |S|}$ and, since this is summable over all possible updates, the total rate of increase is $c |R_0|$ for a constant $c$ that can be taken arbitrarily small if $\alpha$ and $\hat \beta$ are large enough. 
            
            \item {} [\emph{large atypical}]
            \label{it:short-large-atyp}
            $S$ satisfies \cref{eq:S-infection-size} but not \cref{eq:healing-bubble-grp-terms}. As in the healing case, when this happens the pair $(S, \{ \fB_i \})$ must be in the exceptional set of \cref{lem:approx-update} with now, as noted after \cref{eq:S-infection-size}, $\epsilon = \gamma^9$.
            By \cref{prop:bound_muh2} the probability to add or keep the $\fB_i$ is bounded by $e^{- \hat \beta/2 - \frac{\lambda}{20} \sum_i |\Upsilon(\fB_i)|}$ and it is the only case where the distance can increase. Recall that $\sum |\Upsilon(\fB_i)| \geq \gamma |S|/200$.
            The number of such pairs with $\max( |S|, \sum |\Upsilon( \fB_i)|) = s$ is bounded by $e^{C( \gamma^{9/4} + \gamma^2) s}$ by \cref{lem:approx-update}. Altogether, the contribution of this case is at most  
            \begin{equation}\label{eq:large-atypical-contribution}
            \exp\Big[ C \gamma^2s - \frac{\lambda \gamma}{4000} s - \hat \beta/2\Big]
            \end{equation}
            which is summable if $\lambda/\gamma$ is large enough so again that case contributes at most $c |R_0|$ with a $c$ which can be made arbitrarily small by taking $\hat \beta$ large enough.
        \end{enumerate}
    \end{enumerate}

Overall, we see that, starting $(h_t,h_t')$ at $(h,h')$ which differ by a single bubble group $\fB_0$ and recalling we assumed first that the associated enclosure $R_0$ has size at least $12/\gamma$, $\frac{\d}{\d t} \E [ \dist_\fB(h_t,h_t')] \restriction_{t=0} $ is at most \[ -\frac12 e^{C_* \gamma |R_0| - o(|R_0|)} + |R_0|^2 e^{\kappa(\alpha\wedge\hat\beta)^{-1} |R_0|} + c |R_0|   \leq  -\frac14 \,.\]

\begin{remark}
The potential $\sV$ was utilized in \cref{it:healing,it:short-large-atyp} (\emph{healing} and \emph{large atypical short-range infection}). In the former, it plays a weak role: we just need to kill the size of the set $|S|$ (which is the potential damage when we heal in one copy and not in the other), i.e., in that case any $\sV\gtrsim\log|S|$ would work. It is however important to have $\sV \gtrsim |S|$ in the latter (see \cref{eq:large-atypical-contribution}).
\end{remark}

If $|R_0| < 12/\gamma$, then assuming $\alpha$ and $\hat\beta$ are large enough, $C_0 |R_0| \leq \frac12 (\alpha \wedge \hat \beta)$ with $C_0$ from \cref{prop:bound_muh}. Since any bubble group $\fB$ has either $H(\fB) \geq 1$ or $h\restriction_{\Upsilon(\fB)}$ is a tiling whence $G^\gr(\fB) = -|\Upsilon(h\setminus\varphi)\cap\Upsilon(\fB)| \leq - 1$, the effect of the integral cannot dominate the other terms so the proof is quite straightforward. We keep the nomenclature of the previous case.

For the healing rate as just noted above, a pair $S^\dagger, \{ \fB_i\}$ will always satisfy \cref{eq:healing-bubble-grp-terms} and we can even add an extra $ -\frac{1}{2} (\alpha \wedge \hat\beta)$ to the right-hand side. Applying the same argument as in that case (\cref{it:healing}) the probability to decrease the distance is at least $1 - e^{- \frac{1}{2} (\alpha \wedge \hat\beta)}$ and in the complementary probability we increase the distance by at most $1/\gamma^2$. There exists at least one possible choice for $S^\dagger$ so overall the healing rate is at least $1/2$.
The long-range infection case is completely analogous. Finally, in the short-range infection, if $\alpha$ and $\hat \beta$ are large enough there are no small sets (indeed the condition on the size of small sets was designed to only include the cases where the integral might dominate the other terms). The case with large sets has to be slightly modified to include updates $(S^\dagger, \{\fB_i\})$ but since their sizes are bounded and each has a probability at most $ e^{- \frac{1}{2} (\alpha \wedge \hat\beta)}$ to increase the distance, they only contribute a small additive term. The total rate of change of the distance is hence
\[
-\frac{1}{2} + c|R_0|
\]
and, together with $|R_0| \leq 12/\gamma$, we see that if $\alpha, \hat \beta$ are large enough this is still less than $-1/4$, which concludes the proof.
\end{proof}

\subsection{Propagation of information} \label{sec:propagation-of-inf} To prove \cref{thm:pi} we must control the speed at which information propagates through the dynamics. This step will be considerably more delicate compared to the analysis of the dynamics for $\mu$ and $\nu$ studied in \cref{sec:2-out-of-3}, due to the potential emergence of large dense regions, encouraged by the long-range interaction of $\int \mu(\overline G_h)\d\hat\alpha$.

\begin{proof}[Proof of \cref{thm:pi}]
As in the proof of \cref{eq:orig-prop1,eq:orig-prop2} for $\mu$ and $\nu$, let $r=2^k$ with $k\geq 1$, and recall that $\Lambda_r^\varphi$ denotes $\Upsilon(\varphi\restriction_{B(o,r)})$, i.e., the tiles of $\varphi$ whose projection to $\T_N$ intersects $B(o,r)$. Denote by $\pi_r$ the measure defined as $\pi_{\varphi,\hat\beta}$ but on SOS configurations of $\Lambda_r^\varphi$, that is,
\[
 \pi_r(h)\propto \exp\bigg[-\hat\beta H_h + \alpha G_{h,\varphi}^\gr-\log Z_{\mu_{r,h}}^\infty-\lambda\sV(h)- \int_{\alpha}^\infty \mu_{r, h,\hat\alpha}(\overline{G}_h) \d \hat \alpha\bigg] \,,
\]
where $\mu_{r, h,\hat\alpha}$ is again defined only over tilings of $B(o,r)$.
Again, in case $r \geq N/2$, we replace $B(o,r)$ in the definition of $\pi_r$ by the full torus $\T_N$, that is, we take $\pi_r = \pi_{\varphi,\hat\beta}$. (With this definition, every $B(o,r)$ that is strictly contained in $\T_N$ is also simply connected.)

\subsubsection*{Constructing the local function}
With the above definition, denote by $\{\sB\in h\}$ for some bubble $\sB$ the event that $\sB$ appears in $h$ as a (complete) $(h,\varphi)$-bubble, and let
\[ 
f^\pi_{2r, \sB}(\varphi\restriction_{B(o,2r)}) := \int_{\beta}^\infty \big[\pi_{2r}(\sB\in h) - \pi_r(\sB\in h)\big]\d\hat\beta\,. \]
Similarly to the case of $\mu,\nu$ in \cref{eq:mu-integral-with-f,eq:nu-integral-with-f}, we have that
\begin{equation}\label{eq:pi-integral-with-f} 
\int_\beta^\infty \pi_{\varphi,\hat\beta}(H_h)\d \hat\beta = \sum_{x\in\T_N} \sum_{\substack{\text{$(h,\varphi)$-bubble $\sB$}  \\ \Upsilon(\sB)\ni x}} \frac{H(\sB)}{|\Upsilon(\sB)|} \sum_{\substack{r=2^k \\ \text{\emph{for }$k\geq 0$}}} f_{r,\sB}^\pi (\varphi\restriction_{B(x,r)}) \,,
\end{equation}
with the exceptional endpoint function defined as $f_{1,\sB}^\pi(\varphi\restriction_{B(o,1)}) := \int_\beta^\infty \pi_1(\sB\in h) \d \hat\beta$. Note that $f_{1,\sB}^\pi = 0$ by construction if $\sB$ has diameter more than $2$. For the finitely many types of bubbles small enough to fit in a ball of radius $1$, a standard Peierls argument using \cref{prop:bound_muh2} shows that $\| f_{1, \sB}^\pi \|_\infty \leq e^{-\beta/2}$ provided that $\beta$ is large enough.

We now wish to argue that 
\begin{equation}\label{eq:f-pi-bound}
\| f_{2r,\sB}^\pi\|_\infty\leq C \exp[-\lambda^2(r+|\sB|)/C]\,.
\end{equation}
The term corresponding to $|\sB|$ in the exponent again follows from a Peierls argument, yet now it must be done at the level of bubble groups, and take into account whether said bubble group $\fB\ni \sB$ is exceptional or not w.r.t.\ \cref{prop:alg}. More precisely, given $\sB$ with $H(\sB) \geq 1$ (there is nothing to prove for the other ones), let $\fB$ be the bubble group containing it and let $R_0$ be an enclosure of $\fB$ (choosing which one arbitrarily). We define a Peierls map removing all bubble groups intersecting $R_0$. The ratio of the probabilities is at most $e^{-  \hat\beta - C_1|R_0|}$ if this collection of bubble groups satisfies \cref{eq:healing-bubble-grp-terms} or $e^{-\hat\beta/2 - \frac{\lambda}{200} |R_0|}$ if they do not (using that the density of an enclosure is at least $1/10$). Furthermore, the multiplicity of the map is at most the number of bubble groups in $R_0$ so less than~$|R_0|$. Arguing as in the proof of \cref{it:short-large-typ,prop:pi-contraction}, we claim that we can enumerate over all possible enclosures $R_0$ and obtain that, for some fixed $C,c>0$,
\begin{equation}\label{eq:peierls_pi}
\pi_r ( \sB \in h ) \leq C e^{- \hat\beta/2  -c \lambda\gamma |\Upsilon(\sB)|}\,.
\end{equation}

Once again, the decay in $r$ will come from a bound on the speed of propagation of information under the dynamics $(h_t)$. 
This time we will run (two coupled instances of) the dynamics for time $
T = c \gamma \lambda r$ as we will specify later. Note that, now that the interactions have a small rate of exponential decay $\lambda$, we are even more limited in the time for which we will analyze the dynamics than in the $\nu$ case. Compared to that case, there is also a further difficulty whenever a large enclosure appears which we first explain heuristically. Indeed, suppose that at some time $t$, a bubble group $\fB$ differs between $h$ and $h'$ and is associated to a large $R_0$ with $|R_0 | \gg \alpha \wedge \hat \beta$ in one of them. As in the small short-range case of the proof of \cref{prop:pi-contraction}, there are an order $e^{c|R_0|/(\alpha \wedge \hat\beta)}$ many clocks which can each contribute to the propagation of the information from the boundary so the speed of information appears extremely large. This is however misleading because if this happens, there is an even larger rate for updates removing all bubbles from $R_0$ so overall it is still very unlikely for information to actually move using the above mechanism. 

The proof will now follow a multi-scale induction, aiming to show that a connected set of size $s$ (the scale parameter) which might get populated by a dense collection of bubble groups, is likely to have its bubbles vanish before it gets a chance to support a long-range infection step in the dynamics. 
To this end, define the local density near $x$ of sets of size of order $s$: 
\begin{equation}
    \label{eq:rho-def}
    \rho_x(s,h) := \max \left\{  \rho( S, h)\,:\; \mbox{$S\ni x$ connected of size $s\leq |S|\leq 2s$}\right\}
\end{equation}
(and $\rho_x(s,h)=-\infty$ if no such set $S$ exists), as well as the local time spent in configurations where $\rho$ exceeds a given threshold:
\begin{equation}\label{eq:L-def}
    \sL_x(s, r) = \int_0^1 \one_{\{\rho_x(s, h_t) \geq r\}} \one_{\{x \in \Upsilon( h_t \xor \varphi)\}}\d t\,.
\end{equation}
Note the technical detail that we always ask $x$ to be in a bubble.

We aim to relate $\sL_x$ to the number of times that a large (dense) set was created. To that end,
consider the dynamics $(h_t)$ and let $(S^{(k)},\{\fB_i^{(k)}\})_{k\geq 1}$ be the random set of updates where
\begin{itemize}
    \item the corresponding clock rang as per \cref{it:glauber-clocks-pi} in its definition,
along $t\in(0,1)$;
    \item \label{it:analysis-glauber-req-2} $ \sum_i \hat\beta H_h( \fB_i^{(k)}) - \alpha G^\gr_{h,\varphi}( \fB_i^{(k)} ) \geq \epsilon (\alpha\wedge  \hat\beta) \sum_i |\Upsilon(\fB_i^{(k)})|$;
\item the update resulted in the addition of $\{\fB_i^{(k)}\}$ to $h_t$ as per \cref{it:glauber-rand-pi}.
\end{itemize}

As we will later see, the idea behind the second requirement is to distinguish between updates with a typical $H_h - G^\gr_{h,\varphi}$ cost and atypical counterexamples. The counterexamples (violating said requirement) are somewhat simpler to handle, as we employ \cref{prop:bound_muh2} independently of the rest of the current configuration. For typical updates however (which do satisfy this requirement), one needs to use \cref{lem:bound_muh-d} which depends on the local density.

To simplify notations, throughout this proof we will write $|\bigcup_i \fB_i| := \sum_i |\Upsilon(\fB_i)|$. Let
\begin{equation}\label{eq:Q-def}
\sQ_x(s) := \#\bigg\{k\,:\; x \in S^{(k)}\,,\big|\bigcup_i \fB_i^{(k)} \big| \geq s  \bigg\}\,.
\end{equation}

The induction will be based on the following two statements, proved in tandem. 

\begin{lemma}\label{lem:local-time-Q}
Take $\epsilon>0$ such that $\gamma^{7}<\epsilon<\gamma^6$. If $\alpha \wedge \hat\beta$ is large enough, then for every configuration $h_0$ and every scale $s\geq 1/\gamma^{2}$, one has that
\begin{align}
\label{eq:local-time-1/10}
    \E[ \sL_x(s,\tfrac1{10}) ] \leq e^{-\frac14 \gamma s}&\qquad\mbox{for all $x$}\,;\\
\label{eq:Q-bound}
\E [ \sQ_x(  s/(\epsilon^2(\alpha\wedge\hat\beta))) ] \leq e^{- \frac1{5}\gamma s}& \qquad\mbox{for all $x$}\,.
\end{align}
\end{lemma}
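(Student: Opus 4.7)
The plan is to prove both estimates simultaneously by induction on $s$, starting from the base scale $s_0 \asymp 1/\gamma^2$ and then doubling. The two bounds feed into one another: controlling $\sL_x$ at scale $s$ requires information about how often the dynamics can create locally dense regions (i.e., bounds on $Q^\epsilon_x$), while a sharp bound on $Q^\epsilon_x$ at scale $s$ requires that the configuration not already be ``too dense'' just before a qualifying update, i.e., control of $\sL_x$ at smaller scales. The base case is direct: at scale $s_0 \asymp 1/\gamma^2$, either event requires a bubble group of area $\gtrsim 1/\gamma^2$ near $x$, and the Peierls estimate~\cref{eq:peierls_pi} combined with the rate bound from \cref{lem:approx-update,prop:bound_muh2} gives the required $e^{-\gamma s_0 /4}$ and $e^{-\gamma s_0 / 5}$.

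For the inductive step on $\E[Q^\epsilon_x(s')]$ with $s' = s / (\epsilon^2(\alpha \wedge \hat\beta))$, I would integrate the instantaneous rate of qualifying updates over $t \in (0,1)$:
\[
\E[Q^\epsilon_x(s')] \leq \int_0^1 \sum_{(S,\{\fB_i\})} \E\big[\one_{\text{compatible with } h_t} \cdot p_{\text{accept}}(h_t, \{\fB_i\})\big] \d t,
\]
where the sum is over atypical pairs with $x \in S$ and $|\bigcup_i \Upsilon(\fB_i)| \geq s'$. Such pairs fall in the exceptional class of \cref{lem:approx-update} with the corresponding $\epsilon$, giving an enumeration bound $\exp\big(C(\epsilon^{1/4}+\gamma^2) s_0\big)$ per size $s_0 = \max(|S|, |\bigcup_i \Upsilon(\fB_i)|)$, while \cref{prop:bound_muh2} forces the acceptance probability below $\exp\big(-\hat\beta - \tfrac{\lambda}{20\sM_0} \sum_i |\Upsilon(\fB_i)|\big)$. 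With $\lambda = C\gamma$ (for $C$ large), $\epsilon \in (\gamma^7, \gamma^6)$, and $\alpha \wedge \hat\beta \leq \lambda^{-20}$, the potential-driven energy gain $\tfrac{\lambda s_0}{20\sM_0}$ dominates the enumeration exponents, leaving a net rate bounded by $\exp(-\tfrac{\lambda s_0}{40\sM_0})$. Summing over $s_0 \geq s'$ and using the calibration of $\epsilon$ yields the required $e^{-\gamma s/5}$.

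For the inductive step on $\E[\sL_x(s, \tfrac{1}{10})]$, I would observe that whenever $\rho_x(s, h_t) \geq \tfrac{1}{10}$ there is a connected witness set $S_t \ni x$ with $|S_t| \in [s, 2s]$ intersected by bubble groups of total projected area $\geq s/10$. The plan is a scale-splitting argument: either (a) one of these bubble groups has projected area $\geq s'$, hence was either present at $t = 0$ or added by an update whose total size contributes to $Q^\epsilon_y(s'/2)$ for some $y \in S_t$; or (b) all intersecting bubble groups have area $< s'$, in which case the bubble density remains $\geq \tfrac{1}{10}$ on a sub-region of size between $s/2$ and $s$ around some point $y \in S_t$, which falls under the induction hypothesis for $\sL_y$ at scale $s/2$. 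For contribution (a), I bound the ``initial'' part using the trivial bound $\sL_x \leq 1$ combined with the contraction of \cref{prop:pi-contraction} to control the time a pre-existing dense region can persist, and the ``dynamics'' part by summing $\E[Q^\epsilon_y(s'/2)]$ over $y \in S_t$ (there are $O(s)$ such points, absorbed into the exponent via the slack between $\gamma s / 5$ for $Q$ and $\gamma s / 4$ for $\sL$). Contribution (b) is handled directly by the inductive hypothesis at scale $s/2$. Optimizing yields $\E[\sL_x(s, \tfrac{1}{10})] \leq e^{-\gamma s / 4}$.

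The main obstacle is the circular dependence between $\sL_x$ and $Q^\epsilon_x$, compounded by the long-range, non-explicit interaction $\int \mu_{h,\hat\alpha}(\overline G_h)\d\hat\alpha$ that couples distant regions. This is handled by ensuring that each scale is controlled from strictly smaller scales (so the induction does not close on itself at a single scale), and by absorbing the long-range effects through \cref{prop:bound_muh,prop:coupling_muh,lem:bound_muh-d}, which give exponential decay in the distance at rate $\alpha$; this decay, together with the calibration $\gamma = \lambda/C$, $\epsilon \in (\gamma^7, \gamma^6)$, and $\alpha \wedge \hat\beta \leq \lambda^{-20}$, ensures that the interaction contribution never dominates the Peierls gain in any of the enumeration steps above.
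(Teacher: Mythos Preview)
Your proposal has the induction running in the wrong direction and, more seriously, misidentifies which class of updates $Q^\epsilon_x$ counts.

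\textbf{Direction of induction and base case.} The paper's induction runs \emph{downward}: the base case is $s=N^2+1$, where \cref{eq:local-time-1/10} is vacuous since no connected set of that size exists. From there, Step~1 shows \cref{eq:local-time-1/10} at scale $s$ implies \cref{eq:Q-bound} at $s$, and Step~2 shows both bounds at $2s$ imply \cref{eq:local-time-1/10} at $s$. Your upward induction from $s_0\asymp 1/\gamma^2$ has no valid base case: the statement must hold for \emph{every} initial $h_0$, including one that is already dense at scale $s_0$ around $x$, so no Peierls-type equilibrium estimate such as \cref{eq:peierls_pi} can start the induction.

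\textbf{Mischaracterization of $Q^\epsilon$.} By definition (\cref{eq:Q-def}), updates counted in $Q^\epsilon_x$ satisfy $\sum_i\hat\beta H(\fB_i)-\alpha G^\gr(\fB_i)\geq\epsilon(\alpha\wedge\hat\beta)|\bigcup_i\fB_i|$: these are the \emph{typical} (high-energy) updates, not the exceptional ones. You invoke the reduced entropy $\exp(C(\epsilon^{1/4}+\gamma^2)s_0)$ of \cref{lem:approx-update} and the potential bound $\hat\beta H+\lambda\sV\gtrsim\lambda s_0/\sM_0$ from its second part, but both of these apply only to the \emph{exceptional} class where $\hat\beta H-\alpha G^\gr$ is small. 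For $Q^\epsilon$ updates the entropy is the full $e^{Cs_0}$, and $\sV$ may well vanish (e.g.\ when $H$ is small but $-G^\gr$ is large). The paper instead bounds acceptance by the energy term $\exp(-s/\epsilon)$ itself, with the integral controlled via \cref{lem:short_range_integral} \emph{conditionally on} there being no dense set of size $\geq s$ --- and it is precisely the induction hypothesis \cref{eq:local-time-1/10} at the same scale $s$ that bounds the time spent in the complementary event.

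\textbf{Missing healing mechanism.} Your treatment of $\sL_x(s,\tfrac1{10})$ via scale-splitting does not use the key structural feature of the dynamics: whenever $\rho_x(s,h_t)\geq\tfrac1{13}$, there are at least $e^{C_*\gamma|R_0|-o(|R_0|)}$ admissible healing updates $(S,\{\fB_i\})$ each of which clears the region (see the healing case in \cref{prop:pi-contraction}), so the hitting time $\tau_1$ of density $\leq\tfrac1{13}$ is dominated by an exponential of mean $e^{-\gamma s}$ regardless of how the dense region arose. The paper then alternates this with a lower bound on the time $\tau_2$ to regain density $\tfrac1{10}$, conditional on no $Q$-update (which is where \cref{eq:Q-bound} at scale $2s$ enters). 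Your decomposition into ``one big bubble group'' versus ``many small ones'' neither produces the $e^{-\gamma s}$ clearing rate nor reduces to the correct smaller-scale hypothesis: density $\geq\tfrac1{10}$ at scale $s$ does not imply density $\geq\tfrac1{10}$ at scale $s/2$ near any point.
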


\begin{proof}
We will prove the following implications inductively on $s$, with a base case of $s\geq N^2+1$:
\begin{enumerate}[label=\textbf{Step~\arabic*}:, ref=\arabic*, wide=0pt, itemsep=1ex]
    \item \label[step]{st:1/10-to-Q}
    \cref{eq:local-time-1/10} for a given $s$ implies \cref{eq:Q-bound} for that $s$;
    \item \label[step]{st:Q-to-1/12}
    \cref{eq:local-time-1/10,eq:Q-bound} for a given $s$ together imply \cref{eq:local-time-1/10} for $s/2$.
\end{enumerate}
(To carry this out, we will take advantage of the fact that these estimates hold for all $x$. Note that our hypotheses included both a lower and an upper bound on $\epsilon$ in terms of $\gamma$; the lower bound will be used in \cref{st:1/10-to-Q} whereas the upper bound will be used in \cref{st:Q-to-1/12}.)

\smallskip\noindent\emph{Base case.} The statement of \cref{eq:local-time-1/10} for $s\geq N^2 + 1$ is trivial (as we then have $\rho_x(s,h)=-\infty$).

\smallskip\noindent \emph{Proof of \cref{st:1/10-to-Q}.}
Fix $x$ and $s$ and assume that  \cref{eq:local-time-1/10} holds for $s$. Note that the updates that are counted in $\sQ_x(s/(\epsilon^2 (\alpha\wedge\hat\beta)))$ but not $\sQ_x(2s/(\epsilon^2 (\alpha\wedge\hat\beta)))$ must satisfy $|\bigcup \fB_i| \leq 2 s /(\epsilon^2 (\alpha\wedge\hat\beta))$ as well as $\sum_i \hat\beta H_h( \fB_i) - \alpha G^\gr_{h,\varphi}( \fB_i ) \geq  (s/\epsilon) \vee 1$. 
Thus, using \cref{lem:short_range_integral} to control the difference in the integral, when such a clock rings and attempts to add bubbles, if there is no set $R_0$ with density at least $\frac1{10}$ and size at least $s$, then the probability to accept the proposed move is at most 
\[
\exp\Big[-\frac{s}\epsilon + C\Big(1 + \frac{200}{\gamma} \frac{2}{\epsilon^2 (\alpha \wedge \hat \beta)}+ \frac{2}{\epsilon^2 (\alpha \wedge \hat \beta)}\Big)s \Big]\,.
\] In the interval $[0,1]$, the expected number of such clocks ringing is at most $e^{C s/(\epsilon^2(\alpha\wedge\hat\beta))}$ (for $C$ associated to the enumeration over pairs of $(S,\{\fB_j\})$ with a given size), so their total contribution is indeed smaller than $e^{- \gamma s/4}$ for $\alpha\wedge\hat\beta$ large enough (and $\gamma < 1/\epsilon$, automatic as $\gamma,\epsilon$ are both small).

On the other hand, by the induction hypothesis \cref{eq:local-time-1/10}, the expected number of clocks ringing during the time counted in some $\sL_y(s,\frac1{10})$ for $y\in B(x,2s/(\epsilon^2(\alpha\wedge\hat\beta)))$ is at most \[ \exp\Big[ C \frac{s}{\epsilon^2(\alpha\wedge\hat\beta)} - \frac{\gamma}4 s\Big]\,.\]
Combined, and recalling that $\epsilon > \gamma^7$, as long as $\alpha\wedge\hat\beta > C'\gamma^{-15}$ (say) we arrive at a total contribution of at most $e^{-\gamma s/5}$, concluding the proof of this step. (Using that $\gamma s$ is large enough, via $s>1/\gamma^2$.)

\smallskip\noindent \emph{Proof of \cref{st:Q-to-1/12}.}
Fix $s$ and assume \cref{eq:local-time-1/10,eq:Q-bound} for $2 s$.

Let us first assume that $\rho_x(s, h_0 ) \geq \tfrac{1}{10}$, and consider $\tau_1$ the first time where $\rho_x(s, h_t ) \leq \tfrac{1}{13}$. Arguing as in the healing part of \cref{prop:pi-contraction} (the condition $|R_0|>12/\gamma$ in the healing is satisfied here by the fact that we take $s \geq \gamma^{-2}$), we see that at any time before $\tau_1$, there is a rate at least $e^{\gamma s}$ to remove all bubbles constituting any set of density more than $1/13$ (take any such set as $R_0$). In particular, $\tau_1$ is bounded by an exponential variable of mean $e^{-\gamma s}$. We emphasize that this is uniform even if at some times $x$ is part of a dense set larger than $2s$.

On the other hand, suppose that initially $\rho_x(s, h_0 ) \leq \frac{1}{13}$ and let $\tau_2$ be the first time where $\rho_x(s, h_t ) \geq \frac1{10}$, conditional on the event that no update counted in any $\sQ_y$  with $y \in B(x, 2s)$ rings (we will treat the unconditional setting in the next paragraph). Before $\tau_2$ one must add a set of bubbles with density larger than $\frac1{10} - \frac1{13}$ and we can bound $\tau_2$ assuming that whenever a clock with size less than $4s/(\epsilon^2 (\alpha\wedge\hat\beta))$ rings, we always add the corresponding bubbles. Any clock not counted in any $\sQ_y$ with $y \in B(x, 2s)$ is either too far away to affect $\rho_x(s, h)$ or {\tt bad} by construction. As in the case of large atypical updates and using that $\epsilon < \gamma^6$, the probability to accept any such update when it rings is bounded by \cref{prop:bound_muh2} and overall they do not contribute any significant rate.
It is easy to see since the clocks are independent that $\E[ \tau_2 | \text{ no $\sQ_y$ update for any $y\in B(x, 2s)$} ]\geq e^{- C s /(\epsilon^2 (\alpha\wedge\hat\beta))}$ for some~$C$. 

Now assuming that any update in $\sQ_y$ with $y \in B(x, 2s)$ also allows us to go from density $\frac1{13}$ to $\frac1{10}$ and iterating the two stopping times above, we obtain that $\sL_x(s,\frac1{10}) \leq \sum_{j = 0}^J \tau_{2j+1}$, where $\tau_j$ is a sequence of stopping times defined inductively by $\tau_{2j+1} = \inf\{ t \geq \tau_{2j} : \rho_x( s, h_t) < \frac{1}{13} \}$, $\tau_{2j+2} = \inf\{ t \geq \tau_{2j+1} : \rho_x( s, h_t) > \frac{1}{10} \}$ and $J$ counts the number of times the density went from $\frac{1}{13}$ to $\frac{1}{10}$. Further note that in the previous bounds, each bound was true uniformly over all initial configurations $h_0$ and was actually obtained by looking at the clocks independently from the dynamics $h_t$ so the whole sum can be bounded as if all terms were independent so 
\[\E[ \sL_x(s,\tfrac1{10})] \leq e^{C s /(\epsilon^2 (\alpha\wedge\hat\beta)) } e^{- \gamma s} + C s^2e^{- \frac{2}{5} \gamma s}e^{- \gamma s} \leq e^{- \gamma s/4}\]
which concludes the proof of this step.

Having established the induction, the proof is complete.
\end{proof}

\begin{remark}
    It will be useful to regard the (very similar) special case of \cref{eq:Q-bound}:
    \begin{equation}\label{eq:Q-bound2}
    \E[ \sQ_x( s ) ] \leq \exp\Big[- \frac{\alpha \wedge \hat\beta}{5\gamma^{13}} s\Big] \quad \mbox{for all }\quad  s \geq \frac{1}{\gamma^{16} (\alpha \wedge \hat\beta)} \, , \quad\gamma^7 \leq \epsilon \leq \gamma^6\,. 
    \end{equation}
We emphasize that when $\alpha \wedge \hat\beta$ is large enough, this does hold up to $s = 1$: This is because every update still satisfies $\sum \hat\beta H( \fB_i) - \alpha G^\gr(\fB_i) \geq \alpha \wedge \hat \beta$ which gives a better bound than the needed $\frac{\alpha \wedge \hat\beta}{5\gamma^{13}}$.
\end{remark}

Let $(h_t,h'_t)$ be two coupled instances of the dynamics on respective domains $B(o,r)$ and $B(o,2r)$, from an initial configuration which agrees on $B(o,r)$, where every update of $h'_t$ that is confined to $B(o,r)$ uses the joint law as per \cref{prop:pi-contraction} (and updates of $h'_t$ in the annulus $B(o,2r)\setminus B(o,r)$ sampled via the product measure of $h_t$ and $h'_t$ on this event). Run the dynamics for time
    \[T = c \gamma \lambda r \,,\]
with $c$ to be chosen small enough later and write
\[\left|\pi_{2r}(\sB\in h) - \pi_r(\sB\in h)\right| \leq \Xi_1 + \Xi_2 + \Xi_3\,,
\]
where
\begin{align*} \Xi_1 &:= \left| \pi_{r}(\sB\in h) - \P(\sB\in h_{T})\right|\,,\\
\Xi_2 &:= \left| \pi_{2r}(\sB\in h) - \P(\sB\in h'_{T})\right|\,,\\
\Xi_3 &:= \left| \P(\sB\in h_{T}) - \P(\sB\in h'_{T})\right|\,.
\end{align*}
As in the two previous cases, using \cref{prop:pi-contraction} for $t=T$, we get
\[ \Xi_1 \leq e^{-c\gamma \lambda r } |B(o,r)| \leq e^{-(c \gamma \lambda - o(1) )r}\,, \quad \quad  \Xi_2 \leq e^{-c\gamma \lambda r} |B(o,2r)| \leq e^{-c (\gamma \lambda -o(1) ) r }\,,\]
and so we can focus on the bound on $\Xi_3$.

As in the treatment of the measure $\nu$, on the event where, at time $T$, the bubble $\sB$ only appears in one configuration, we can find a sequence of updates $(S^{(1)}, \{\fB^{(1)}_j\}), \ldots, (S^{(m)}, \{\fB^{(m)}_j\})$ ($m\geq 1$) with $\Upsilon(\sB) \cap (\bigcup_j \Upsilon(\fB_j^{(m)} )) \neq \emptyset $ and such that all updates occur successively, each in only one of $h$ or $h'$ before time $T$. We can also assume that whenever $(S^{(k)}, \{ \fB^{(k)}_j \})$ occurs, the minimal distance from  $S^{(k)} \cup \bigcup_j \Upsilon(\fB^{(k)}_j)$ to any disagreement is reached at a point of $\bigcup_i \Upsilon(\fB_i^{(k-1)})$. We let $j_k, r_k, s_k, t_k$ denote respectively this distance, the diameter of $S^{(k)} \cup \bigcup_j \Upsilon(\fB^{(k)}_j)$, $\max(|S^{(k)}| , \sum_j |\Upsilon (\fB_j^{(k)})|)$ and the time of the update. We also let $\cF_k$ denote the $\sigma$-algebra of events measurable with respect to the dynamics up to time~$t_k$ and we let $n_k := \lfloor t_k - t_{k-1} \rfloor$. As in the $\nu$ case, we bound $\Xi_3$ by a union bound over all sequences $(S^{(1)}, \{\fB^{(1)}_j\}), \ldots, (S^{(m)}, \{\fB^{(m)}_j\})$.
%, but (as was the case for the contraction) this is harder.

The first step is to enumerate over sequences of $(j_k,r_k,s_k, n_k)_{k \geq 0}$ and to condition on all the $\cF_k$ to obtain that $\Xi_3$ is at most
\begin{equation}\label{eq:prod_propagation}
 \sum_{\substack{(n_k) \\ \sum n_k < T}} \sum_{\substack{(j_k,r_k) \\ \sum j_k + r_k \geq r}}  \!\!\!\!\!\prod_k \Big[\sum_{s_k \geq r_k}\!\P\Big( \mbox{compatible $(S^{(k)}, \{ \fB_i^{(k)}\})$ in $[t_{k-1}+ n_k, t_{k-1}+ n_k+1]$} \mid \cF_{k-1}\Big)  \Big] \,.
\end{equation}
The conditional probability in the right-hand side is bounded as follows:

If $j_k \geq 160 (\alpha \wedge \hat\beta)$ and $s_k \leq j_k / (80 C)$ where $C$ is the constant such that there are at most $e^{C s}$ possible updates with size $s$, then we consider the set of times in $[t_{k-1} + n_k, t_{k-1} + n_k + 1] $ where 
\[
\text{ any } x \mbox{ such that $d_x:=\dist\left( x, \bigcup_i \Upsilon(\fB_i^{(k)})\right) \geq j_k$ satisfies} \, \max_{s \geq d_x/160} \rho_x(s, h_t) < \frac{1}{10}.
\]
By \cref{lem:local-time-Q}, the expected size of its complement is at most 
$ C s_k \sum_{n \geq j_k} n e^{- \frac{1}{4} \gamma n} \leq  e^{- \frac{1}{5} \gamma j_k}$, so the probability that it exceeds $e^{- \frac{1}{10} \gamma j_k}$ is at most $e^{-\frac{1}{10} \gamma j_k}$. Enumerating over all possible updates, the probability that one of them occurs during the complement is at most 
\[
C s_{k-1} j_k e^{- \frac{1}{10} \gamma j_k} e^{C s_k} \leq  e^{- \frac{1}{20} \gamma j_k} \leq e^{- \frac{1}{40}\gamma( j_k + r_k)} \, .
\] On the other hand, at any instant when the condition holds, we can apply \cref{lem:bound_muh-d} (substituting for $\{ \fB^0_i\}$ the bubble-groups in a possible update, for $R^0_i$ their enclosures which are controlled by assumption, and for $\{\fB^1_i\}$ the differences between $h_t$ and $h'_t$) and, reasoning as in the long-range infection part of the contraction, we see that the probability to create a defect is bounded by 
\[
C s_{k-1}  e^{- \frac{\alpha \wedge \hat\beta}{C} j_k} \leq C s_{k-1}  e^{- \frac{\alpha \wedge \hat\beta}{2C} (j_k + r_k)} \, ,
\] 
with $C>0$ given by \cref{lem:bound_muh-d}.

If $j_k \geq 160 (\alpha \wedge \hat\beta)$ and $s_k \geq j_k / (80 C)$, then we bound the probability to do any update counted in some $\sQ_x$ for $\epsilon = \gamma^6$ by $e^{-\frac{1}{5\gamma^{13}} (\alpha \wedge \hat\beta) s_k}$ using \cref{eq:Q-bound2}. The rate of any update not counted in any $\sQ$ is bounded by $e^{ - \frac{\gamma \lambda}{400} s_k}$ using \cref{prop:bound_muh2} while the number of corresponding updates is bounded by $s_{k-1}e^{ 2 C \gamma^2 s_k}$ using \cref{lem:approx-bubble-grp} (as in the short-range bad case in the contraction). Overall, these updates contribute at most 
\[
s_{k-1} (e^{- \frac{\alpha \wedge \hat\beta}{5 \gamma^{13}} \frac{j_k + r_k}{160 C}} + e^{(2C\gamma^2 - \frac{\gamma \lambda}{400})\frac{j_k+ r_k}{160 C}} ) \, .
\] 

If $j_k \leq 160  (\alpha \wedge \hat\beta)$, we again bound the contribution of any update that was counted in some $\sQ_x$ by $ C s_{k-1}  (\alpha \wedge \hat\beta)^2 e^{- \frac{\alpha \wedge \hat\beta}{5 \gamma^{13}} s_k}$ using \cref{eq:Q-bound2}.
For updates not counted in any $\sQ_x$, we note that, since any bubble group has $\hat\beta H( \fB ) - \alpha G^\gr(\fB) \geq \alpha \wedge \hat\beta$, at any time where \cref{lem:short_range_integral} ensures that the integral is bounded by $(\alpha \wedge \hat\beta)/2$, the rate at which we make a move is bounded by $e^{- (\alpha \wedge \hat\beta)/4}$. Consider the times where there exists an enclosure $R_0$ with size $|R_0| \geq (\alpha \wedge \hat\beta)/(2C)$ with $C$ given by \cref{lem:short_range_integral}. If this local time is smaller than $e^{-\frac{\gamma}{10} \frac{\alpha\wedge \hat\beta}{2C}}$ then the probability that a clock with size less than $\alpha \wedge \hat\beta$ and not counted in any $\sQ_x$ rings during that time is at most $s_{k-1}  e^{-\frac{\gamma}{10} \frac{\alpha\wedge \hat\beta}{2C}} e^{2C \gamma^2 (\alpha\wedge \hat\beta)} $. By \cref{eq:local-time-1/10}, the probability that this local time exceeds $e^{-\frac{\gamma}{10} \frac{\alpha\wedge \hat\beta}{2C}}$ is at most 
\[
e^{-\frac{\gamma}{10} \frac{\alpha\wedge \hat\beta}{2C}} \, .
\]
Finally, updates not counted in any $\sQ_x$ but with $s \geq (\alpha \wedge \hat\beta)$ can be bounded as in the case $j_k \geq 160 (\alpha \wedge \hat\beta)$.

Overall, we see that we can bound
\[
\sum_{s_k} s_k \P\Big( \mbox{compatible $(S^{(k)}, \{ \fB_i^{(k)}\})$ in $[t_{k-1}+ n_k, t_{k-1}+ n_k+1]$} \;\big|\; \cF_{k-1}  \Big)  \leq s_{k-1} e^{- \frac{\gamma \lambda}{C} (j_k + r_k) - \frac{\gamma (\alpha \wedge \hat\beta)}{C}  }
\]
for some absolute constant $C $. Note that we included a factor $s_{k}$ on the left-hand side in order to compensate for the $s_{k-1}$ appearing in right-hand side so that overall when plugging everything back in \cref{eq:prod_propagation} only the exponential factors remain.

Turning to the enumeration over all $n$, since the sum of the $j_k + r_k$ must be at least $r$, we see that
\[
\Xi_3 \leq e^{- \frac{\gamma\lambda}{C} r} \sum_{\substack{(n_k)_{k=1}^{K} \\ \sum n_k < T}} e^{-K  (\alpha \wedge \hat \beta)/C}\,.
\]
Given $K$, the number of terms in the above summation over sequences $(n_k)$ is explicit and given by $\binom{K + \lfloor T\rfloor}{K} \leq 2^{K + T}$. With the factor $e^{-K  (\alpha \wedge \hat \beta)/C}$ this gives
\[
\Xi_3 \leq  e^{- (\alpha \wedge \hat\beta)/C }e^{- \frac{\gamma\lambda}{C} r} 2^{T} \, .
\]
Recalling that we chose $T = c \gamma \lambda r$ for $c$ small enough, we see that we have arrived at the desired bound $\Xi_3 \leq  e^{- (\alpha \wedge \hat\beta)/C }e^{- \frac{\gamma\lambda}{ C} r}$, and the proof is concluded by setting
\[ \fg_r^\pi := \sum_{\sB:\;\Upsilon(\sB)\ni o} f_{r,\sB}^\pi\,.\qedhere \]
\end{proof}

\subsection{Better exponential rates for the stronger potential}
In this section we describe the modifications that boost the exponential decay of the form $e^{-c\lambda^2 r}$ on $\|\fg_r\|_\infty $ as per \cref{thm:pi}, to the form $e^{-C^\star r}$ for an arbitrarily large constant $C^\star>0$ as per \cref{thm:pi-str}.

\begin{proof}[Proof of \cref{thm:pi-str}]
Fix $C^\star>0$ arbitrarily large, our target constant for the bound on the rate of exponential decay of $\|\fg_r\|_\infty$.

The only modification we make in the dynamics introduced in \cref{sec:glauber-pi} is in the update rates: the clocks associated to every pair $(S,\{\fB_i\})$ in \cref{it:glauber-clocks-pi-S}, which formerly had rate $1$, should now have rate $\exp(C^\star |S|)$.

To analyze the effect of this modification on the contraction of the dynamics (where we still aim to establish \cref{prop:pi-contraction} as it was), as before we look at the $3$ scenarios of non-blocked moves:
\begin{enumerate}[(i)]
\item \emph{Healing} (\cref{it:healing}): Any move decreasing the distance still occurs, only now with a larger rate. We can therefore use the same lower bound as in \cref{it:healing} for the rate of decrease of the distance, namely $e^{C_* \gamma |R_0| - o(|R_0|)}$.
\item \emph{Long-range infection} (\cref{it:long-range-infect}): Again, the bound on the probability that a single update creates a defect still holds and gives (using the distance and density conditions) 
\[
C \exp\bigg( - 20 \alpha \Big( |R_0| + \sum_i | \Upsilon(\fB_i)|\Big) + \frac{\alpha}{2} \dist(S, R_0)\bigg)\,.\] For $\alpha$ large enough, this is still summable uniformly over $|R_0|$ even taking into account the higher rate of updates.

\item \emph{Short-range infection} (\cref{it:short-range-infect}):
\begin{itemize}
\item \emph{Small} (\cref{it:short-small}): Here we are simply enumerating over all updates up to a maximal size $\frac{3 C_1}{(\alpha \wedge \hat \beta) \gamma^{20}} |R_0|$. With the new rates, these contribute a rate $\exp[ (C_* + C^\star) \frac{3 C_1}{(\alpha \wedge \hat \beta) \gamma^{20}} |R_0|]$ to the increase in the distance, which is still much smaller than the healing rate, provided that $\alpha$ and $\hat \beta$ are large enough.

\item \emph{Large typical} (\cref{it:short-large-typ}): For that case, the condition \cref{eq:healing-bubble-grp-terms} was chosen so that each update contributes at most $|S|e^{-C_1 |S|}$ to the increase of the distance. If $C_1$ is chosen large enough (which we are free to do, see the beginning of the proof of \cref{prop:pi-contraction}), we can enumerate over all moves weighted by their rates and see that these updates have a small total contribution to the distance.

\item \emph{Large atypical} (\cref{it:short-large-atyp}): This is the only case where the stronger potential is needed. Compared to the original one, the improved bound in \cref{lem:approx-update} with a constant $20(C^\star + C)/\gamma$ where $C$ is the entropy of updates shows that each update contributes at most  $\exp(-\hat\beta/2 - (C^\star + C) |S|)$ which is again summable over all updates rates. In fact, this case becomes much easier than in the original analysis, because we do not need to use the fact that atypical updates have small entropy.
\end{itemize}
\end{enumerate}

We stress that we did not look to strengthen the contraction bound given in \cref{prop:pi-contraction} (and above only argued that it stays valid for the modified dynamics), and instead the purpose of the modified rule was to obtain better bounds on the propagation of information in \cref{sec:propagation-of-inf}.

To this end, one revises \cref{lem:local-time-Q} to ask that $\epsilon< 1/C^\star$ (replacing the upper and lower bounds that depended on $\gamma$) and 
the bounds on $\E[\sL_x]$ and $\E[\sQ_x]$ would feature $C^\star$ instead of $\gamma$:
\begin{align}
\label{eq:local-time-1/10-str}
    \E[ \sL_x(s,\tfrac1{10}) ] \leq e^{-\frac14 C^\star s}&\qquad\mbox{for all $x$}\,;\\
\label{eq:Q-bound-str}
\E [ \sQ^\epsilon_x(  s/(\epsilon^2(\alpha\wedge\hat\beta))) ] \leq e^{- \frac1{5}C^\star s}& \qquad\mbox{for all $x$}\,.
\end{align}
To show that this modified version of the lemma holds true, we look at the two proof steps:

\smallskip\noindent \emph{Proof of \cref{st:1/10-to-Q}.}
 The first bound is about the probability that a certain type of update is accepted so is unchanged at $\exp( - \frac{s}{\epsilon} + C( 1 + \frac{400}{\gamma \epsilon^2 (\alpha \wedge \hat \beta)} + \frac{2}{\epsilon^2 (\alpha \wedge \hat \beta)})s)$. Using the revised condition $\epsilon < 1/C^\star$, for $(\alpha \wedge\hat \beta)$ large enough the contribution of these updates to $\E[ Q_x]$ is smaller than $e^{- C^\star s}$. The second bound counts updates that occur during some $\sL_y(s,\frac1{10})$ for $y$ in a suitable ball around $x$, so it becomes $e^{(C + C^\star) \frac{s}{\epsilon^2 \beta} - C^\star s/4}$ using the stronger induction assumption.

\smallskip\noindent \emph{Proof of \cref{st:Q-to-1/12}.}
This is where the higher rates in the dynamics are needed. Indeed, arguing as in \cref{st:Q-to-1/12} at any time where a set of density at least $1/10$ and size $s$ exists, it is removed completely at rate at least $e^{C^\star s}$ so $\tau_1$ is now bounded by an exponential of mean $e^{- C^\star s}$. 
The ``recovery time'' it takes to move from density $\frac1{13}$ to $\frac1{10}$ is bounded as before: Updates below a size $s/(\epsilon^2 (\alpha \wedge \hat\beta))$ still cannot occur at a comparatively slow rate because of their size. The good updates counted in $\sQ$ are controlled using the stronger induction hypothesis. Finally, the contribution of bad updates is controlled by applying \cref{lem:approx-update} with a large enough constant (instead of only getting a $\lambda s$ term as in the original argument).

With this revised lemma in hand, one can derive the sought bound on $\|\fg_r\|_\infty$ as follows.
First, let the time horizon $T$ (which was formerly taken to be $c\gamma\lambda r$) be
\[ T = C^\star r/10\,, \]
so that the total variation distance to equilibrium measured by $\Xi_1$ and $\Xi_2$ would be smaller: replacing the previous bounds of $e^{-(c\lambda - o(1))r}$ on these quantities, 
we would get 
$ \Xi_i \leq e^{-c C^\star r}$ for $i=1,2$, and it remains to bound $\Xi_3$. To this end, using the stronger condition on the potential $\sV$, one can replace the bound in \cref{eq:peierls_pi} on the probability to find a defect by the stronger estimate
\begin{equation}\label{eq:peierls_pi-str}
\pi_r ( \sB \in h ) \leq C e^{- \hat\beta/2  -C^\star |\Upsilon(\sB)|}\,.
\end{equation}
As such, in the propagation of information, long-range, small size jumps that get controlled by the size of $\sL_x$ are now assigned a factor of $e^{- C^\star j_k}$ (replacing $e^{-c \gamma j_k}$ in the previous analysis), whereas the remaining long-range small size jumps receive a factor of  $e^{-c(\alpha\wedge\hat\beta) j_k}$. 
Finally, long-range large size jumps are bounded by the $\sQ_y$, which again yields a factor of $e^{-c C^\star j_k}$. Overall, $\Xi_3 \leq e^{-c C^\star r}$, which translates to the final bound on $\|\fg_r\|_\infty$.
\end{proof}

\subsection{Proof of \cref{thm:phi-weakly-interacting}}
Fix $C^\star>0$, let $C_0$ be the absolute constant of \cref{thm:mu-nu},
and let $C_1=C_1(C^\star)$ be the constant supplied by \cref{thm:pi-str}
for the parameter $C^\star$. In what follows, every requirement that
$\alpha\wedge\beta$ should be large (as a function of $C^\star$) is guaranteed
by the hypothesis $\alpha\wedge\beta\ge C\lambda^{-20}\ge C^{21}$ upon
enlarging the constant $C$ of the theorem as a function of~$C^\star$.

For a tiling $\varphi$ of $\T_N$ write
\[
I^\mu(\varphi):=\int_\alpha^\infty
\mu_{\varphi,\hat\alpha}(\tfrac12|\varphi\xor\psi|)\d\hat\alpha\,,\quad
I^\nu(\varphi):=\int_\alpha^\infty
\nu_{\varphi,\bar\alpha}(\tfrac12|\eta\xor\varphi|)\d\bar\alpha\,,\quad
I^\pi(\varphi):=\int_\beta^\infty
\pi_{\varphi,\hat\beta}(H_h)\d\hat\beta\,,
\]
so that \cref{prop:P(phi)-via-3-measures} (and
\cref{eq:phi-expression-gr2,eq:Z-phi-gr''} specifying the normalizer) is the exact identity 
\begin{equation}\label{eq:exact-identity}
\P_{\alpha,\beta,\lambda}(\varphi)=\frac1{\Zsosbar_{N,\beta,\lambda}}\,
e^{E(\varphi)}\,,\qquad E:=-I^\mu+I^\nu+I^\pi\,,
\end{equation}
where the normalizer $\Zsosbar_{N,\beta,\lambda}$ (which is $e^{\beta|\varphi|}\Zsos_{N,\beta,\lambda}$ for 
$\Zsos_{N,\beta,\lambda}$ from \cref{eq:tilted-sos})
does not depend on $\varphi$.

For every $r=2^k$ with $0\le r<N/2$, set
\[
\fg_r:=-\fg^\mu_r+\fg^\nu_r+\fg^\pi_r\,,
\]
where $\fg^\mu_r,\fg^\nu_r$ are given by \cref{thm:mu-nu} and $\fg^\pi_r$ is
given by \cref{thm:pi-str}, and as such,
\[ \|\fg_r\|_\infty \leq (2C_0+C_1)e^{-(\alpha\wedge\beta)/2-C^\star r}\]
provided that $\alpha \geq C_0 C^\star$.
Similarly, letting
\[
\tilde\P_{\alpha,\beta,\lambda}(\varphi) = \frac1{Z_N} e^{G(\varphi)}\quad\mbox{for}\quad G(\varphi):=\sum_{x\in\T_N}\ \sum_{\substack{0\le r<N/2\\ r=2^k}}
\fg_r(\varphi\restriction_{B(x,r)})
\]
and a normalizer $Z_N = \sum_\varphi e^{G(\varphi)}$,
we have by \cref{eq:orig-prop1,eq:orig-prop2,eq:orig-prop3} that
\begin{equation}\label{eq:eps-N}
\sup_\varphi\left|E(\varphi)-G(\varphi)\right|\leq (2C_0+C_1)e^{-(\alpha\wedge\beta)/2 -C^\star N} =: \epsilon_N\,.
\end{equation}
Consequently, comparing $Z_N$ to $\Zsosbar_{N,\beta,\lambda}=\sum_\varphi e^{E(\varphi)}$ we see that
\[
 \left|\log \Zsosbar_{N,\beta,\lambda} - \log Z_N\right| \leq \epsilon_N\,,
\]
and can conclude that
\[
\sup_\varphi \bigg|\log \frac{\P_{\alpha,\beta,\lambda}(\varphi)}{\tilde\P_{\alpha,\beta,\lambda}(\varphi)}
\bigg| \leq 2\epsilon_N\,.
\]

For potentials satisfying
\cref{eq:gen-potential-weaker} the argument is verbatim the same with
\cref{thm:pi} taking the place of \cref{thm:pi-str}: the inputs
\cref{eq:orig-prop1,eq:orig-prop2} are unchanged (the measures $\mu,\nu$
do not involve $\sV$), the exponent $C^\star$ is replaced throughout by
$\lambda^2/(C\sM_0)\le1$---so that the requirement
$\alpha/C_0\ge C^\star$ of Step~2 becomes
$\alpha/C_0\ge\lambda^2/(C\sM_0)$, which holds automatically---and the
hypothesis on the parameters becomes
$\alpha\wedge\beta\ge C\lambda^{-20}+\sM_0$.
\qed

\section{Concluding Theorem~\ref{thm:GFF-convergence} and Corollary~\ref{cor:variance} modulo Theorem~\ref{thm:gmt-refinement}}\label{sec:concluding-thm-1}

 Recalling our global strategy, at this point, we have defined an approximation of the SOS height function $h$ by a tiling $\varphi$, then established that the marginal law of $\varphi$ is that of a weakly interacting tiling in \cref{thm:phi-weakly-interacting}. This was done through the decomposition of $\varphi$ into $\mu,\nu,\pi$ (characterized in \cref{thm:mu-nu,thm:pi}), proving along the way that the conditional law of $h$ given $\varphi$ is given by small perturbations with exponentially decaying interactions. 

The next natural step would be to prove \cref{thm:gmt-refinement} to obtain the convergence of $\varphi$. That would require tools that are quite different from those used up to this point, so instead we first focus on the proof of \cref{thm:GFF-convergence} modulo \cref{thm:gmt-refinement}, which is closer in spirit to the previous sections. There are three points in this argument:
 naturally we need to prove that $h$ itself has a limit in infinite volume, then we have to check that $h - \varphi$ does not contribute to the scaling limit of $h$. However, we also need to change the underlying coordinate system. Indeed, while $h$ is defined as a function from $\Z^2$ to $\Z$, our current results on $\varphi$ use the natural setting of lozenge tilings and for example the height function associated to $\varphi$ is defined from the vertices of the triangular lattice to $\Z$. We see that we thus need to show how to transport the convergence from one convention to the other. 

\begin{proposition}
    For every $\lambda > 0$, and slope $\theta=(\theta_1,\theta_2)$ with $\theta_1,\theta_2> 0$, there exists $\beta_0$ such that for all $\beta \geq \beta_0$ the following holds. Let $(h_N, \varphi_N)$ be sampled according to \cref{eq:big-measure} on the torus $\Lambda_N$ and recall that by convention $h$ is pinned to $0$ at some point $o$. As $N$ goes to infinity, the pair $(h_N, \varphi_N)$ converges locally (seen as subset of plaquettes in $\Z^3$) to some $(h, \varphi)$.

    Furthermore, the law of $(\nabla h, h - \varphi)$ is translation invariant (under $\cP_{001}$ translation) and ergodic.
\end{proposition}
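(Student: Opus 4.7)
The plan is to first deduce convergence of $\varphi_N$ from \cref{thm:phi-weakly-interacting} together with the forthcoming \cref{thm:gmt-refinement}, then bootstrap to the joint law $(h_N,\varphi_N)$ via the conditional locality of $h$ given $\varphi$ under $\pi_{\varphi,\beta}$, and finally prove translation invariance on the torus and ergodicity through mixing. For the first step, \cref{thm:phi-weakly-interacting} places the marginal $\P_{\alpha,\beta,\lambda}(\varphi)$ within the scope of \cref{thm:gmt-refinement}, which yields a local limit $\mu_\infty$ for $\nabla\varphi_N$, and hence also for $\varphi_N$ viewed as a plaquette set once the pinning $\varphi_{111}(o)=0$ is taken into account.

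For the bootstrap, recall from \cref{rk:h_conditionnal} that the conditional law of $h_N$ given $\varphi_N$ is $\pi_{\varphi_N,\beta}$. The Peierls estimate \cref{eq:peierls_pi} combined with \cref{lem:approx-bubble-grp} shows that bubble groups appearing in $h\xor\varphi$ have probability decaying exponentially in their size, uniformly in $\varphi$. More importantly, the propagation-of-information estimates developed in \cref{sec:glauber-pi} (notably the contraction of \cref{prop:pi-contraction} together with \cref{lem:local-time-Q,lem:bound_muh-d}) yield the following conditional locality: for any local event $A$ depending on plaquettes in a finite region $F\subset\Z^3$ and any $\epsilon>0$, there exist a radius $R=R(F,\epsilon)$ and a function $f_{A,\epsilon}$ such that uniformly in $\varphi$,
\[
\bigl|\pi_{\varphi,\beta}(A)-f_{A,\epsilon}\bigl(\varphi\restriction_{B(F,R)}\bigr)\bigr|\leq\epsilon.
\]
Combined with the local convergence of $\varphi_N$, this promotes to joint local convergence of $(\nabla\varphi_N, h_N\xor\varphi_N)$, and hence (after restoring the pinning conventions by working with the $\sigma$-finite measure over vertical height shifts, as already used around \cref{fig:projection_lozenge}) to local convergence of $(h_N,\varphi_N)$ as plaquette sets in $\Z^3$.

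For translation invariance, the torus Hamiltonian in \cref{eq:big-measure} depends on $(h,\varphi)$ only through their plaquette sets and through the slope constraints \cref{eq:h-periodic-bc}, both of which are invariant under $\cP_{001}$-translations acting jointly on the two surfaces. The pinnings $h(o)=0$ and $\varphi_{111}(o)=0$ affect only absolute heights; the induced law on $(\nabla h, h-\varphi)$ on the torus is therefore translation invariant, and this invariance passes to the local limit.

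The main obstacle is ergodicity, which I plan to deduce from decay of correlations. In addition to producing the limit $\mu_\infty$, \cref{thm:gmt-refinement} also provides asymptotic formulas for all cumulants of local observables with explicit (polynomial) decay in the distance (see \cref{rem:sigma-L-analytic}), so $\mu_\infty$ is mixing under $\cP_{001}$-translations. For local events $A,B$ about $(\nabla h, h-\varphi)$, the conditional locality above lets me approximate both $\one_A$ and $\one_{\tau_x B}$ within $\epsilon$ by functions of $\nabla\varphi$ supported in bounded neighborhoods; mixing of $\mu_\infty$ at distance $|x|-2R$ then yields
\[
\bigl|\mu_\infty(A\cap\tau_x B)-\mu_\infty(A)\,\mu_\infty(B)\bigr|\longrightarrow 0\qquad\text{as }|x|\to\infty,
\]
from which ergodicity under $\cP_{001}$-translations follows by the standard criterion. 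The delicate point is to ensure that the conditional locality statement is uniform in $\varphi$: while pointwise it is a direct consequence of the Markov chain analysis of \cref{sec:pi}, the uniformity has to exploit the $L^\infty$ bound on the functions $\fg_r^\pi$ from \cref{thm:pi}, which is precisely where the presence of the potential $\sV$ (suppressing pathological frozen regions in $\varphi$) becomes essential.
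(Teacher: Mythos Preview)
Your proposal is correct and follows essentially the same approach as the paper: deduce the local limit and mixing of $\varphi$ from \cref{thm:gmt-refinement} (via \cref{thm:phi-weakly-interacting} and the cumulant decay \cref{eq:cumulants}), then bootstrap to the joint law using the conditional locality of $\pi_{\varphi,\beta}$ coming from \cref{eq:f-pi-bound}, with translation invariance inherited from the torus and ergodicity obtained by combining mixing of $\varphi$ with the uniform conditional locality. The paper phrases the bootstrap slightly differently (invoking Skorokhod to assume $\varphi_N\to\varphi$ a.s., then identifying subsequential limits via the finite-domain measures $\pi_r$), and it first establishes $\cP_{111}$-ergodicity for $\varphi$ before passing to the joint $\cP_{001}$ statement, but these are cosmetic differences rather than a different route.
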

\begin{proof}
    The existence of a local limit for $\nabla \varphi_N$ is one of the outputs of \cref{thm:gmt-refinement}. The measure $\nabla \varphi$ must be invariant under $\cP_{111}$ translations because $\nabla \varphi_N$ was for all $N$ (this is one of the points where the setting of the torus is particularly convenient).

    As an output of \cref{thm:gmt-refinement}, we also have that the cumulants of the edge occupations variables decay polynomially with the distance between the edges (see \cref{eq:cumulants}). In particular, we must have for any two local events $E_1, E_2$ and writing $\tau_u$ for the translation by $u$, $\Cov( \one_{E_1}, \one_{\tau_u E_2}) \to 0$ as $\|u\| \to \infty$. It is easy to deduce that the measure must be ergodic (still for $\cP_{111}$ translations): 
    consider a translation invariant event $E$; there must exist a local event $E_1$ with $\P(E \xor E_1) \leq \epsilon$ for all $\epsilon$. By translation invariance for any translation $\tau$, $\P(E \xor \tau E_1) \leq \epsilon$ and since $E_1$ and $\tau E_1$ are asymptotically independent, we conclude.

    We turn to the joint law of $h$ and $\varphi$. By Skorokhod's Theorem, we can assume without loss of generality that $\varphi_N \to \varphi$ almost surely.
    By \cref{eq:f-pi-bound}, we know that for any $r$ and any $N$ large enough, the conditional law of $h_N \xor \varphi_N \restriction_{B(o, r)}$ given $\varphi_N$ is close to the law of (the restriction of) $h\xor\varphi_N$ from the simply connected domain $\varphi_N \restriction_{B(o, 2r)} $, with an error term exponentially small in $r$. This automatically proves tightness of the law of $h_N\xor \varphi_N$ and shows that any subsequential limit must also be close to the law in the finite domains $\varphi \restriction_{B(o, 2r)} $. Since $r$ was arbitrary and since the laws of $h$ in finite domains are automatically continuous functions of $\varphi$, we have proved convergence for the joint measure.

    Again the translation invariance holds automatically for the joint measure because it was true already in finite volume and the decay of correlation in $h$ given $\varphi$ and the ergodicity of $\varphi$ show that the joint law is ergodic.
    \end{proof}

\begin{remark}
        %A simple Peierls argument shows that
        We know from \cref{eq:peierls_pi} that the size of bubbles in $h \xor \varphi$ has an exponential tail. It is not hard to see similarly that $h \xor \varphi$ has almost surely no infinite component. This can already be interpreted as saying that the large-scale geometry of $h$ is identical to that of $\varphi$.
\end{remark}

\begin{lemma}\label{lem:GGF_SOS_convention}
    Fix $\fa, \fb, \fc$, $\lambda > 0$ and $\beta$ large enough. Let $\varphi$ be sampled according to the measure $\mu_\infty = \mu_{\infty}^{\beta, \lambda}$ given by \cref{thm:gmt-refinement} and let $\sigma$ and $\fL$ be given by \cref{it:phi_GFF_limit} of that theorem. There exists $\bar \fL$ such that in the \SOS convention,
    \[
    \varphi_{001}( n \cdot) - \E \varphi_{001} (n \cdot) \to \sigma \sqrt{3} |\det(\bar \fL)| \GFF \circ \fL \circ \bar \fL \,.
    \]
    Furthermore $\bar \fL$ only depends on $\fa, \fb, \fc$ but not $\lambda$ or $\beta$.
\end{lemma}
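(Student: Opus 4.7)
The plan starts from the exact pointwise identity $\varphi_{001}(u)=\varphi_{111}(v_u)$, valid for every face $u$ of $\Z^2$, where $v_u:=\Upsilon_{111}\bigl((u_1,u_2,\varphi_{001}(u))\bigr)$ is the $\cP_{111}$-projection of any corner of the horizontal plaquette above $u$: both sides simply equal the third coordinate of that $\Z^3$-vertex of $\varphi$. I will linearize this identity about the deterministic mean surface---whose slope $\theta=(\theta_1,\theta_2)$ depends only on $\fa,\fb,\fc$ through the densities $p_\fa,p_\fb,p_\fc$, and not on $\lambda$ or $\beta$---and then transport the convergence from the $\cP_{111}$ to the $\cP_{001}$ convention via a change of variables.

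Write $f^{(001)}:=\varphi_{001}-\E\varphi_{001}$ and $f^{(111)}:=\varphi_{111}-\E\varphi_{111}$, let $\mu$ be the macroscopic $\cP_{111}$-gradient of $\varphi$ (so $\E\varphi_{111}(v)=\mu\cdot v+O(1)$), and set $e_z:=\Upsilon_{111}((0,0,1))$. I will first define the linear map $\bar\fL_0\colon\cP_{001}\to\cP_{111}$ by $\bar\fL_0 u:=\Upsilon_{111}((u_1,u_2,-\theta\cdot u))$, which is exactly the linearization of $u\mapsto v_u$ about the mean; the exact decomposition $v_u=\bar\fL_0 u+f^{(001)}(u)\,e_z$ then holds identically. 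Combining this with $\varphi_{001}(u)=\varphi_{111}(v_u)$ and the macroscopic consistency $\mu\cdot\bar\fL_0 u=-\theta\cdot u$ yields the key relation
\[ f^{(001)}(u) \;=\; c\,f^{(111)}(v_u), \qquad c:=\frac{1}{1-\mu\cdot e_z}, \]
where $c$ is a scalar depending only on $\theta$, hence only on $\fa,\fb,\fc$.

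The main obstacle will be to replace $f^{(111)}(v_u)$ by $f^{(111)}(\bar\fL_0 u)$ when pairing against a smooth, compactly supported, zero-mean test function $\psi$ on $\cP_{001}$: the random displacement $v_u-\bar\fL_0 u=f^{(001)}(u)\,e_z$ has size $O(\sqrt{\log n})$ at macroscopic scale $n$, much smaller than $n$, but controlling the induced error requires a quantitative regularity statement for $f^{(111)}$. I expect this to follow from the decay of cumulants of $\varphi_{111}$ furnished by \cref{rem:sigma-L-analytic}, combined with a mollification of $\psi$. Once this substitution is in place, the change of variables $y=\bar\fL_0 x$ immediately transfers the known convergence $\varphi_{111}(n\,\cdot)-\E\varphi_{111}(n\,\cdot)\to\sigma\,\GFF\circ\fL$ into $\varphi_{001}(n\,\cdot)-\E\varphi_{001}(n\,\cdot)\to c\,\sigma\,\GFF\circ\fL\circ\bar\fL_0$. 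Finally, I will set $\bar\fL:=\kappa\,\bar\fL_0$ with a universal $\kappa>0$ chosen so that $c=\tfrac{\sqrt3}{3}|\det\bar\fL|$ (a direct computation shows $c/|\det\bar\fL_0|$ is $\theta$-independent, so such a universal $\kappa$ exists); because the full-plane GFF is invariant under pre-composition with a dilation when tested against zero-mean functions, $\GFF\circ\fL\circ(\kappa\bar\fL_0)$ has the same law as $\GFF\circ\fL\circ\bar\fL_0$, so the prefactor can be rewritten in the form $\sigma\,\tfrac{\sqrt3}{3}|\det\bar\fL|$. Since $\bar\fL_0$ is a function of $\theta$ alone and $\kappa$ is universal, the resulting $\bar\fL$ depends only on $\fa,\fb,\fc$, as required.
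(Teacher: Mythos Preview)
Your pointwise identity $\varphi_{001}(u)=\varphi_{111}(v_u)$ and the linearization $v_u=\bar\fL_0 u + f^{(001)}(u)\,e_z$ are correct, and the algebraic relation $f^{(001)}(u)=c\,f^{(111)}(v_u)$ with $c=1/(1-\mu\cdot e_z)$ is a nice observation. But the step you flag as the ``main obstacle'' is a genuine gap, and the sketch you give does not close it.

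The substitution $f^{(111)}(v_u)\to f^{(111)}(\bar\fL_0 u)$ asks you to control
\[
\int \psi(x)\bigl[f^{(111)}(\bar\fL_0(nx)+D_n(x)\,e_z)-f^{(111)}(\bar\fL_0(nx))\bigr]\,dx,
\qquad D_n(x)=f^{(001)}(nx).
\]
Using only the uniform a.s.\ bound $|D_n|\le n^{1/4}$ (from the cumulant estimates), the integrand is $O(n^{1/4})$ pointwise and you get no better than $O(n^{1/4})$ for the integral---this does not vanish. Cumulant decay in the spirit of \cref{rem:sigma-L-analytic} would let you bound the variance if $D_n$ were \emph{deterministic} (a second-difference of $\log$ gives $O(n^{-3/2})$ off the diagonal), but here $D_n(x)=c\,f^{(111)}(v_{nx})$ is the very field you are perturbing: the displacement is self-referential, so you cannot condition it away, and a moment computation becomes a tangled sum of higher cumulants at \emph{random} arguments. ``Mollification of $\psi$'' does not touch this; the irregularity is in the argument of $f^{(111)}$, not in the test function.

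The paper bypasses this entirely via a \emph{volume} argument: both $\sum_{u\in nU}f^{(001)}(u)$ and $\sum_{v\in \bar\fL(nU)}f^{(111)}(v)$ compute the same signed $\Z^3$-volume between $\varphi$ and the mean plane $\cP_{\fa\fb\fc}$ (adding a single cube to $\varphi$ increases each sum by exactly $1$); the two differ only by boundary contributions of size $O(|\partial V|\,M^3)=O(n^{7/4})$ when the fluctuations are uniformly bounded by $M=n^{1/4}$, hence $o(n^2)$. This comparison is between two \emph{deterministic} lattice sums over fixed index sets---no random change of variables, no self-reference---and is what makes the transfer from $\cP_{111}$ to $\cP_{001}$ go through. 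The factor $\tfrac{\sqrt3}{3}$ then appears as the area of a dual cell of $\T$ when converting the discrete $\cP_{111}$-sum to an $L^2$ inner product, and $|\det\bar\fL|$ from the change of variables in the limit; no $\kappa$-rescaling trick is needed because $\bar\fL$ is already the natural map induced by $\cP_{\fa\fb\fc}$.
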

\begin{proof}
    The first step is to define $\bar \fL$. For concreteness, suppose $\varphi$ is pinned so that $o = (0,0,0) \in \varphi$. By translation invariance, both $\E[\varphi_{111}]$ and $\E[ \varphi_{001}]$ are linear maps so they can each be associated to a plane of $\R^3$. However, by \cref{thm:gmt-refinement}, we know that $(\varphi_{111}(n v) - \E \varphi_{111}(nv))/n$ goes to $0$ for every non-root $v \in \cP_{111}$. In particular, the planes $\E[\varphi_{111}]$ and $\E[\varphi_{001}]$ must coincide. It is not hard to check that in fact, it must be the plane of equation $\fa x_1 + \fb x_2 + \fc x_3 = 0$ and we call it $\cP_{\fa\fb\fc}$. We let $\bar \fL$ be the map from $\cP_{001}$ to $\cP_{111}$ such that $\bar \fL(u) = v$ if and only if there exists $ x\in \cP_{\fa\fb\fc}$ such that $\Upsilon_{111}(x) = v$ and $\Upsilon_{001}(x) = u$.

    Seeing $U$ as a union of faces of $\Z^2$, it is clear that $\sum_{u \in U} \left(\varphi_{001}(u) - \E[\varphi_{001}(u)]\right)$ is the signed volume of the set \[\Big\{ x= (x_1, x_2, x_3) : \E[ \varphi_{001}(x_1,x_2) ] \leq x_3 \leq \varphi_{001}(x_1, x_2) \text{ or } \varphi_{001}(x_1, x_2) \leq x_3 \leq \E[ \varphi_{001}(x_1, x_2) ] \Big\}\]  (up to an $O(|\partial U|)$ error coming from the  approximation of $\cP_{\fa\fb\fc}$ by a step function). For $V$ a set of vertices of the triangular lattice, $\sum_{v \in V} \left(\varphi_{111}(v) - \E[\varphi_{111}(v)]\right)$ actually has a similar interpretation: Indeed, note that adding a single cube to $\varphi$ (at a point where it can be done without violating the monotonicity condition) leaves $\varphi_{111}$ invariant everywhere except at the projection of the diagonal of the cube where $\varphi_{111}$ increases by $1$. Therefore $\sum_{v \in V} \left(\varphi_{111}(v) - \E[\varphi_{111}(v)]\right)$ is also the signed volume of a set bounded by $\varphi$ and $\cP_{\fa\fb\fc}$, with the only difference that now the ``sides'' must be in the direction $(111)$ instead of $(001)$ (see \cref{fig:projection}).

    \begin{figure}
        \centering
        \includegraphics[width=.4\textwidth]{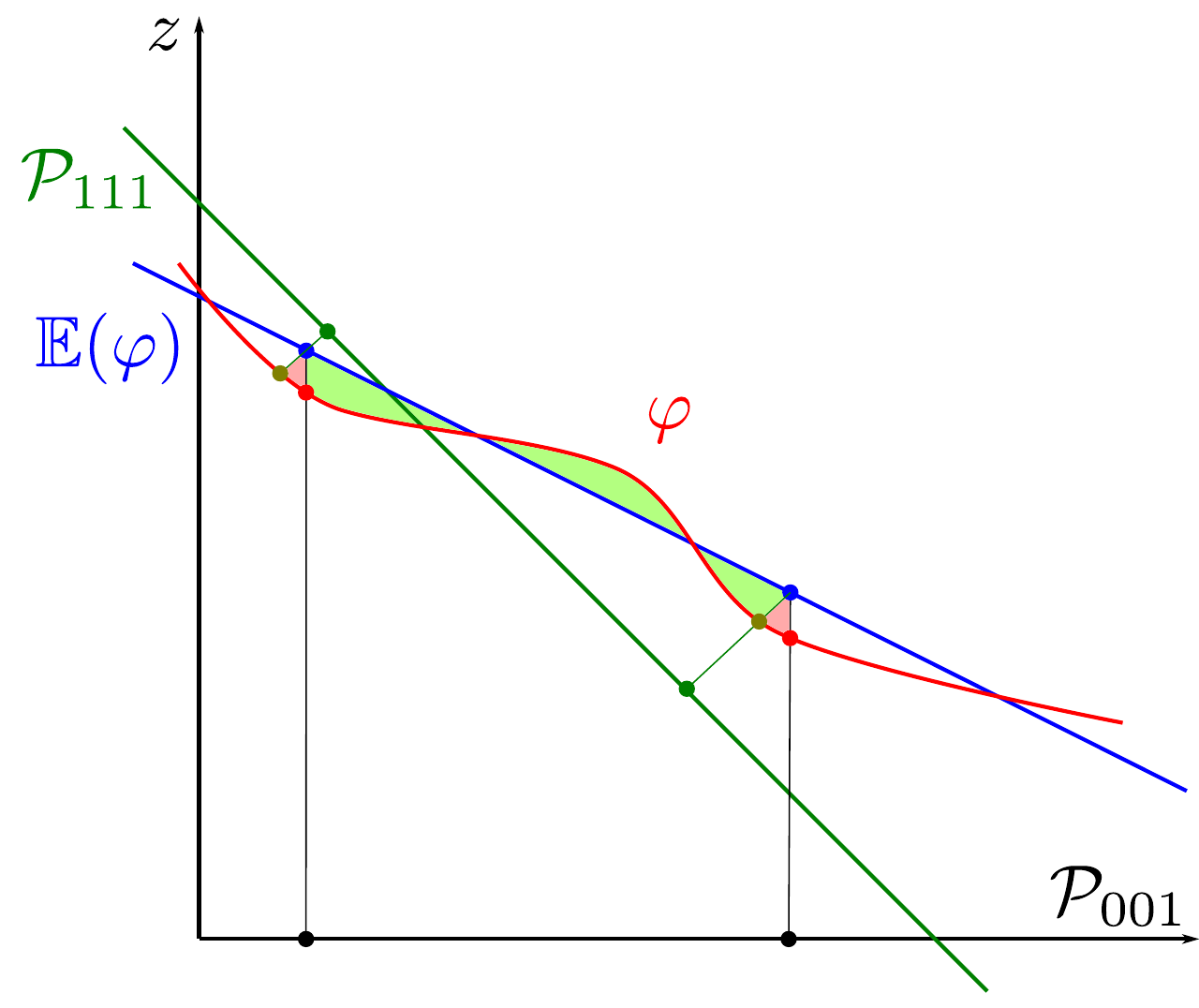}
        \caption{A schematic 2\Dim representation of the proof of \cref{claim:projection}. The set $U$ corresponds to the interval between the black dots on $\cP_{001}$, $V = \overline{\fL}(U)$ is the interval between the green dots with the construction of the map $\bar \fL$ shown using the thin lines and the blue dots. In this example, the sum over $u \in U$ counts the green area together with the right red component while the sum over $v\in V$ counts the green and left red component.}
        \label{fig:projection}
    \end{figure}

    Consider now $U$ fixed and let $V = \bar \fL(U)$, with an arbitrary convention on the boundary which will not have any impact. 
    \begin{claim}\label{claim:projection}
        There exists $C$ (depending on $\fa, \fb, \fc$) such that, if there exists $V' \supset V$ and $M$ such that $\sup_{v' \in V'} | \varphi_{111}(v') - \E[\varphi_{111}(v')]| \leq M$ and $\inf_{v \in \partial V, v' \in \partial V'} |v- v'| \geq CM $, then
    \[
    \bigg| \sum_{u \in U} \big(\varphi_{001}(u) - \E[\varphi_{001}(u)] \big)- \sum_{v \in V}\big( \varphi_{111}(v) - \E[\varphi_{111}(v)]\big) \bigg| \leq |\partial V| C M^3\,.
    \]
    \end{claim}
    \begin{proof}
  
    Fix a point $p_1 \in \varphi$ such that $v := \Upsilon_{111}(p_1) \in V$ but $\Upsilon_{001}(p_1) \notin U$. Let $p_2$ be the unique point in $\cP_{\fa\fb\fc}$ such that $\Upsilon_{111}(p_2) = v$, noting that by assumption $\| p_1 - p_2\|_1 \leq 3M$ and consequently $\| \Upsilon_{001}(p_1) - \Upsilon_{001}(p_2)\| \leq 2M$. On the other hand, by definition $\Upsilon_{001}(p_2) = \bar \fL^{-1}( v ) $ so it must be in~$U$. Overall, we see that $v$ must be at distance at most $2M$ of $\partial V$.
    
    Conversely, take $p_1 \in \varphi$ such that $u:= \Upsilon_{001}(p_1) \in U$ and $\Upsilon_{111}(p_1) \notin V$ and suppose by contradiction that $|\varphi_{001}(u) - \E[ \varphi_{001}(u) ] | \geq CM$ for a constant $C$ to be chosen large enough later.  As before, let $p_2$ be the point of $\cP_{\fa\fb\fc}$ such that $\Upsilon_{001}(p_2) = u$ and let $v_2 = \Upsilon_{111}(p_2) = \bar \fL(u)$. Let $p_2'$ be the point of $\varphi$ such that $\Upsilon_{111}(p_2') = v_2$, by assumption, since $v_2 \in V$, we must have $\|p_2' - p_2\|_1 \leq 3M$. Since $\|p_1 - p_2\| \geq CM$ and they only differ in the $z$ coordinate, there must exist a path in $\varphi$ starting at $p_2'$, (going to $p_1$) a doing at most $2M$ steps in the $x$ or $y$ directions while it does at least $CM$ steps in the $z$ direction, either all increasing or all decreasing depending on the sign of $\varphi_{001}(u) - \E[\varphi_{001}(u)]$. If $C$ is large enough, this contradicts the fact that $\varphi$ stays close to $\cP_{\fa\fb\fc}$ in a large neighborhood of $V$. We can then argue as in the previous paragraph that $v_2$ must be within distance $CM$ of $\partial V$. 

    Overall, the contributions to $
    \left| \sum_{u \in U} \big(\varphi_{001}(u) - \E[\varphi_{001}(u)] \big)- \sum_{v \in V}\big( \varphi_{111}(v) - \E[\varphi_{111}(v)]\big) \right|$
    can only come from cubes within distance $CM$ of the boundary of $V$, which concludes the proof of the claim. 
\end{proof}

    Recall from \cref{subsec:setup} that we interpret $\varphi_{001}(n \cdot) - \E[ \varphi_{001}( n\cdot)]$ as a piecewise constant function defined from $\R^2$ to $\R$ and that we write integrals as the $L^2$ scalar product. Recall also that, since the full plane Gaussian free field is only defined as a distribution up to a global shift, we only need to prove convergence of 
    \[
    \langle \varphi_{001}(n \cdot) - \E[ \varphi_{001}( n\cdot)] , f \rangle
    \]
    for test functions $f$ with $0$ mean. 
    For clarity let us first focus on the convergence of
 \[
    \langle \varphi_{001}(n \cdot) - \E[\varphi_{001}( n\cdot)] , \one_{U^+} - \one_{U^-} \rangle
    \]
    for two disjoint bounded open sets $U^{\pm}$ with smooth boundaries and the same areas. 
    As an output of \cref{thm:gmt-refinement} and in particular from \cref{eq:cumulants}, we see that all cumulants of $\varphi_{111}(v) - \varphi_{111}(o)$ of order more than $2$ are bounded uniformly in $v$ and that $\Var(\varphi_{111}(v) - \varphi_{111}(o))$ grows logarithmically in $|v-o|$. In particular, applying Markov to a high enough moment and Borel--Cantelli, we see that almost surely, for all $n$ large enough
    \[
    \sup_{v \in B(o, n)} |\varphi_{111}(v) - \E[\varphi_{111}(v)]| \leq n^{1/4}\,.
    \]
    We can then apply \cref{claim:projection} to both $\sum_{u \in  n U^+}$ and $\sum_{u \in n U^-}$. The error term coming from the claim is $O(n^{{7/4}})$ since $|\partial n U^{\pm}| = O(n)$ and the other errors coming from approximating the sets $U^{\pm}$ by a union of squares contribute $O(n^{5/4})$ so overall
    \begin{align*}
    \sum_{u} \big(\varphi_{001}(u) - \E[\varphi_{001}(u)]\big)\big(\one_{nU^+}(u) - \one_{nU^-}(u)\big) &= \sum_{v} \big(\varphi_{111}(v) - \E[\varphi_{111}(v)]\big)\big(\one_{nV^+}(v) - \one_{nV^-}(v)\big) \\&+ O(n^{7/4} )\,.
    \end{align*}
    Moving from a discrete sum to an $L^2$ inner product, \[\sum_{u\in nU} \big(\varphi_{001}(u) - \E[\varphi_{001}(u)]\big)\one_{n U}(u) = \langle \varphi_{001}(\cdot) - \E[\varphi_{001}( \cdot)] , \one_{nU}\rangle = n^2 \langle \varphi_{001}(n \cdot) - \E[\varphi_{001}( n\cdot)] , \one_{U}\rangle\]
    whereas, considering $\varphi$ as a piecewise constant function over hexagons in the dual of the lattice $\T$,
    \[\operatorname{Area}(\mbox{hexagon}) \cdot \sum_{v\in nV} \big(\varphi_{111}(v) - \E[\varphi_{111}(v)]\big)\one_{n V}(v) = 
    %\langle \varphi_{111}(\cdot) - \E[\varphi_{111}( \cdot)] , \one_{nV}\rangle = 
    n^2 \langle \varphi_{111}(n \cdot) - \E[\varphi_{111}( n\cdot)] , \one_{V}\rangle\,.\]
    The area of a hexagon in the dual lattice to $\T$ is $1/\sqrt3$, and so
    \[
    \langle \varphi_{001}(n \cdot) - \E[\varphi_{001}( n\cdot)] , \one_{U^+} - \one_{U^-} \rangle = \sqrt{3}\left\langle \varphi_{111}(n \cdot) - \E[\varphi_{111}( n\cdot)] , \one_{\bar \fL U^+} - \one_{\bar \fL U^-} \right\rangle  + O( n^{-1/4} )
    \]
    and hence, since $\varphi_{111} - \E[ \varphi_{111}]$ converges to $\sigma \GFF \circ \fL$,
    \begin{align*}
    \langle \varphi_{001}(n \cdot) - \E[\varphi_{001}( n\cdot)] , \one_{U^+} - \one_{U^-} \rangle &\longrightarrow \sqrt{3} \left\langle \GFF \circ \fL , \one_{\bar \fL U^+} - \one_{\bar \fL U^-} \right\rangle \\ &= \sqrt{3} \left|\det( \bar \fL)\right| \left\langle \GFF \circ \fL \circ \bar \fL , \one_{U^+} - \one_{U^-} \right\rangle\,. 
    \end{align*}
    For a general test function $f$, we simply apply the previous arguments with the integral formula $f = \int_0^\infty \one_{ \{ f > t \}} \d t- \int_0^\infty \one_{ \{ f < -t\}}\d t$. The superlevel sets in that formula are smooth when $f$ is by the implicit function theorem. We can exchange the integral and the limit $n \to \infty$ because the error in the above proof clearly only depends on $|\partial U^\pm|$.
\end{proof}

The final step, as mentioned above, is to prove that $h - \varphi$ does not contribute to the scaling limit of $h$. Note that for $h$, only the \SOS convention makes sense so we stick to this convention and drop the indices.
\begin{lemma}\label{lem:h_minus_cond_exp}
Uniformly over $\varphi$ and $u$, $h( u) - \E[ h(u) \mid \varphi]$ has an exponential tail, $h( \cdot) - \E[h(\cdot) \mid \varphi]$ has exponentially decaying covariance and for all $U$, 
\[
\langle h( n \cdot) - \E[h(n \cdot) \mid \varphi] , \one_{U} \rangle \xrightarrow[]{\;L^2\;} 0\,.
\]
\end{lemma}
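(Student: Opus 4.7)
\medskip

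The plan is to address the three claims in order, with the exponential decay of the conditional covariance being the substantive step; the other two will follow quickly from tools already assembled.

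First, for the exponential tail: given $\varphi$, the configuration $h$ under $\pi_{\varphi,\beta}$ differs from $\varphi$ only through its bubble groups (\cref{sec:bubble-groups}), and for any face $u$ one has $|h(u)-\varphi(u)|$ bounded by the size of the bubble through $u$. By \cref{eq:peierls_pi}, the $\pi_{\varphi,\beta}$-probability that a bubble $\sB\ni u$ is present is at most $C\exp[-\beta-\frac{\lambda}{200\sM_0}|\Upsilon(\sB)|]$, uniformly in $\varphi$. Summing over $\sB\ni u$ gives an exponential tail for $|h(u)-\varphi(u)|$ with a rate depending only on $\lambda,\sM_0$; subtracting the conditional mean does not affect the rate up to a constant.

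The main step is the exponential decay
\begin{equation}\label{eq:cond_cov_exp_decay_plan}
|\Cov(h(y),h(z)\mid\varphi)|\le C e^{-c|y-z|},
\end{equation}
uniformly in $\varphi$. The strategy mirrors the propagation-of-information analysis carried out in the proof of \cref{thm:pi}. Run two coupled instances $(h_t,\tilde h_t)$ of the Glauber dynamics of \cref{sec:glauber-pi} for $\pi_{\varphi,\beta}$, each one restricted (as in the definition of $\pi_r$ there) to the ball $B(y,r)$ and $B(z,r)$ respectively, with $r=|y-z|/4$, started from any initial condition. Running for time $T=c\gamma\lambda r/\sM_0$ with $c$ small, the contraction estimate \cref{prop:pi-contraction} shows that each marginal at time $T$ is within total variation $e^{-c\gamma\lambda r/\sM_0}$ of equilibrium. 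The information-propagation bound $\Xi_3\le e^{-(\alpha\wedge\beta)/C}e^{-\frac{\gamma\lambda}{\sM_0 C}r}$ established in the proof of \cref{thm:pi} shows that during time $T$ no disagreement can travel from $B(z,r)$ to a neighborhood of $y$. Combining the two, the joint law of $(h,\tilde h)$ restricted to $\{y\}\times\{z\}$ is within exponentially small distance of the product of the two marginals under $\pi_{\varphi,\beta}$, which yields \cref{eq:cond_cov_exp_decay_plan}. Taking expectation over $\varphi$ gives the same bound for the (unconditional) covariance $\E[(h(y)-\E[h(y)\mid\varphi])(h(z)-\E[h(z)\mid\varphi])]$.

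For the $L^2$ convergence, view $h$ as piecewise constant on $\R^2$ and write the pairing as a sum over faces,
\[
\langle h(n\cdot)-\E[h(n\cdot)\mid\varphi],\one_U\rangle \;=\; \frac{1}{n^2}\sum_{y\in nU}\bigl(h(y)-\E[h(y)\mid\varphi]\bigr)+O(n^{-1}),
\]
where the error comes from the boundary of $nU$. Taking the second moment and invoking \cref{eq:cond_cov_exp_decay_plan},
\[
\E\Bigl[\langle h(n\cdot)-\E[h(n\cdot)\mid\varphi],\one_U\rangle^2\Bigr]\le \frac{1}{n^4}\sum_{y,z\in nU} C\,e^{-c|y-z|}+O(n^{-2})\le \frac{C|U|}{n^2},
\]
which tends to $0$ as $n\to\infty$. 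The principal obstacle is the exponential decay \cref{eq:cond_cov_exp_decay_plan}; a direct expansion is precluded by the nonlocal $\int\mu_{h,\hat\alpha}(\overline G_h)\d\hat\alpha$ term in the exponent of $\pi_{\varphi,\beta}$, which is why we must route the argument through the Glauber dynamics and the bounds of \cref{sec:pi}.
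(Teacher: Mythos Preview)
Your treatment of the exponential tail and of the $L^2$ convergence is correct and matches the paper. The problem is in the covariance step.

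You run two dynamics $(h_t,\tilde h_t)$ in the \emph{disjoint} balls $B(y,r)$ and $B(z,r)$ with $r=|y-z|/4$. But then there is no information to propagate between them: each lives in its own domain and converges to its own localized measure $\pi_r$ centered at $y$ (resp.\ $z$). The $\Xi_3$ estimate in the proof of \cref{thm:pi} compares dynamics in \emph{concentric} balls $B(o,r)\subset B(o,2r)$; it says nothing about two dynamics in disjoint balls. More importantly, even granting that $h_T(y)$ is close to $\pi_r^{(y)}$ and $\tilde h_T(z)$ is close to $\pi_r^{(z)}$, and that these two samples are independent of one another, you have not touched the \emph{joint} law of $(h(y),h(z))$ under the full measure $\pi_{\varphi,\beta}$. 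The conclusion ``the joint law of $(h,\tilde h)$ is within exponentially small distance of the product of the two marginals under $\pi_{\varphi,\beta}$'' does not follow.

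The paper's argument closes this gap by working in a \emph{single} system containing both points. First use the scale comparison (the \cref{eq:f-pi-bound} estimate) to replace $\pi_{\varphi,\beta}$ by the law in $B(o,2r)$ with boundary $\varphi$, at an exponentially small cost in $r$; then, within that one system, run the dynamics for time $cr$ and use the propagation bound to show that, except on an event of exponentially small probability, no update links $B(y,r/2)$ to $B(z,r/2)$, so that $h\restriction_{B(y,r/2)}$ and $h\restriction_{B(z,r/2)}$ are independent on that event. This yields the covariance decay directly. Your sketch can be repaired by following this route (or, equivalently, by coupling two copies of the \emph{full} dynamics that differ only near $z$ and invoking propagation to show the disagreement does not reach $y$), but as written it does not establish \cref{eq:cond_cov_exp_decay_plan}.
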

\begin{proof}
    This is essentially a corollary of the analysis of $\pi_{\varphi, \hat\beta}$ in \cref{sec:pi}. Indeed, as already mentioned in \cref{rk:h_conditionnal}, the conditional law of $h$ given $\varphi$ is $\pi_{\varphi, \beta}$. The exponential tail of $h(o)- \E[h(o)\mid\varphi]$ is an immediate consequence of the exponential bound on the size of bubbles given in \cref{eq:peierls_pi} (note that it is given there for $\pi_r$ but since the bound is uniform over $r$ it applies for a measure on the torus too). In particular, $h(\cdot) - \E[h(\cdot) \mid \varphi]$ is square integrable and its variance is uniformly bounded in $\varphi$.

    For the covariance, fix $x$ and $y$ and let $r = |x-y|$. First note that, again since the size of bubbles has an exponential tail, it is enough to look only on the event where the bubbles at both $x$ and $y$ are smaller than $r/10$. We can then replace the actual law of $h$ given $\varphi$ by the law in $B(o, 2r)$ with boundary condition $\varphi$ again with an exponentially small error in $r$. Then, running the dynamics of \cref{sec:pi} for a time $c r$ with $c$ small enough, except on an event of exponentially small probability, information does not have time to propagate between $B(x, r/2)$ and $B(y, r/2)$ and so on that event $h \restriction_{B(x, r/2)}$ and $h \restriction_{B(y, r/2)}$ are independent. Overall, we see that the covariance decays exponentially with $r$ as desired.

    Once we control the pointwise variance and covariances, we deduce immediately that, uniformly over $\varphi$, $\Var( \langle h( n \cdot) - \E[h(n \cdot) \mid \varphi] , \one_{U} \rangle ) = O(n^{-2} )$  which concludes.
\end{proof}

\begin{lemma}\label{lem:cond_exp_minus_phi}
    For the law on full plane tilings $\varphi$ given by \cref{thm:gmt-refinement}, $\E[h(o) \mid \varphi] - \varphi(o)$ is bounded and has mean $0$, $\E[h(\cdot) \mid \varphi] - \varphi(\cdot)$ has a decaying covariance and for all $U$, 
    \[
    \langle \E[h(n \cdot) \mid \varphi] - \varphi(n \cdot) , \one_{U} \rangle \xrightarrow[]{\;L^2\;} 0\,.
    \]
\end{lemma}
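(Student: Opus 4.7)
Set $X(u):=\E[h(u)\mid\varphi]-\varphi(u)=\E_{\pi_{\varphi,\beta}}[h(u)-\varphi(u)]$. For the \emph{boundedness}, note that $h(u)-\varphi(u)$ vanishes unless $u$ lies inside an $(h,\varphi)$-bubble group $\fB$, in which case $|h(u)-\varphi(u)|\leq|\Upsilon(\fB)|$. The uniform Peierls estimate \cref{eq:peierls_pi} gives $\pi_{\varphi,\beta}(\sB\in h)\leq Ce^{-\beta-c|\sB|}$, whence by enumeration over bubbles rooted at $u$,
\[
|X(u)|\;\leq\;\E_{\pi_{\varphi,\beta}}\big[|h(u)-\varphi(u)|\big]\;\leq\;C\sum_{s\geq 1} s\, e^{-\beta-cs}\;\leq\; Ce^{-\beta},
\]
uniformly in $\varphi$ and $u$.

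For the \emph{mean zero} property, the joint law of $(\nabla h, h-\varphi)$ is invariant under $\cP_{001}$-translations, so $\E[h(u)-\varphi(u)]$ is a constant $c^\star$ independent of $u$. To see $c^\star=0$, consider the involution $\sigma: h\mapsto 2\varphi-h$, which maps the collection of $(h,\varphi)$-bubble groups bijectively onto that of $(\sigma(h),\varphi)$ by reflecting each group across $\varphi$. All ingredients entering \cref{eq:pi-simplified}---$H(\fB)$, $G^\gr(\fB)$, $\log Z_\mu^\infty(\fB)$, $\sV(\fB)$, and the measure $\mu_{h,\hat\alpha}$ (hence $\int\mu_{h,\hat\alpha}(\overline G_h)\d\hat\alpha$)---depend only on the unoriented geometry of the bubble groups and are therefore $\sigma$-invariant. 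Consequently $\pi_{\varphi,\hat\beta}$ is invariant under $\sigma$ on the level of gradients, giving the distributional symmetry $h-\varphi\sim-(h-\varphi)$ in the infinite-volume joint measure (any residual global vertical shift introduced by $\sigma$ is absorbed by the consistent choice of additive pinning inherited from the finite-volume construction), whence $c^\star=0$.

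For the \emph{exponential decay of covariance}, let $Z:=h-\varphi$ and $Y:=h-\E[h\mid\varphi]$ so that $Z=X+Y$. Since $\E[Y\mid\varphi]\equiv 0$, the variables $X$ and $Y$ are uncorrelated and
\[
\Cov(X(u),X(v))\;=\;\Cov(Z(u),Z(v))-\Cov(Y(u),Y(v)).
\]
The term $\Cov(Y(u),Y(v))$ decays exponentially in $|u-v|$ by \cref{lem:h_minus_cond_exp}. For $\Cov(Z(u),Z(v))$, observe that $Z(u)Z(v)\neq 0$ only if both $u$ and $v$ lie in bubble groups: either within the \emph{same} group, which then must have diameter at least $|u-v|$ and is exponentially suppressed by \cref{eq:peierls_pi}, or in two \emph{distinct} groups, in which case running the Glauber dynamics of \cref{sec:glauber-pi} for time $c|u-v|$ with $c$ small decouples $Z(u)$ and $Z(v)$ at the joint-measure level up to exponentially small error---exactly as in the conditional mixing/propagation argument already used for \cref{lem:h_minus_cond_exp}, now applied to the joint rather than conditional law. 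This yields $|\Cov(X(u),X(v))|\leq Ce^{-c|u-v|}$.

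The $L^2$ convergence then follows from a routine variance computation: using translation invariance, the summability of $w\mapsto|\Cov(X(0),X(w))|$ from the exponential decay, and $\E[X]=0$,
\[
\Var\big(\langle X(n\cdot),\one_U\rangle\big) \;=\; \frac{1}{n^4}\sum_{u,v\in nU\cap\Z^2}\Cov(X(u),X(v)) \;\leq\; \frac{|U|}{n^2}\sum_{w\in\Z^2}|\Cov(X(0),X(w))| \;\longrightarrow\; 0.
\]
The main technical obstacle is the covariance step, which requires transporting the dynamical coupling argument of \cref{thm:pi} and \cref{lem:h_minus_cond_exp} from the conditional law $\pi_{\varphi,\beta}$ to the joint $(h,\varphi)$-measure; the ingredients---mixing of $\pi$ together with the Peierls tail on bubble groups---are already in place.
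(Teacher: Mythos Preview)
Your boundedness argument and the final $L^2$ computation are fine, but there are genuine gaps in the mean-zero and covariance steps.

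\textbf{Mean zero.} The map $\sigma:h\mapsto 2\varphi-h$ is \emph{not} a symmetry of $\pi_{\varphi,\beta}$. Reflection through the non-planar surface $\varphi$ is not an isometry of $\Z^3$, and it does not preserve $H_h=|h|-|\varphi|$. At an edge $e$ with $\nabla\varphi(e)=-1$ one has $|\nabla h(e)|-|\nabla\sigma(h)(e)|=|{-1}+\nabla f(e)|-|{-1}-\nabla f(e)|\in\{-2,0,2\}$ depending on the sign of $\nabla f(e)$ (where $f=h-\varphi$), and these contributions need not cancel. Concretely, with $\varphi(x_1,x_2)=-x_1$ and $h=\varphi$ except $f(0,0)=1$, $f(1,0)=-1$, one computes $H_h=4$ but $H_{\sigma(h)}=6$. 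The paper instead uses the genuine $\Z^3$ isometry $x\mapsto -x$, which sends $(h,\varphi)$ to $(-h(-\cdot),-\varphi(-\cdot))$ and preserves the joint law because that law is defined purely at the level of plaquettes; combined with translation invariance this forces $\E[h(u)]-\E[\varphi(u)]$ to be both constant and odd, hence zero.

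\textbf{Covariance.} Your identity $\Cov(X,X)=\Cov(Z,Z)-\Cov(Y,Y)$ is valid, but the bound you offer on $\Cov(Z(u),Z(v))$ is circular. The Glauber dynamics of \cref{sec:glauber-pi} samples $h$ with $\varphi$ held fixed; running it cannot decouple anything under the joint law because $\varphi$ never moves. The law of total covariance reads $\Cov(Z,Z)=\E[\Cov_\varphi(Z,Z)]+\Cov(\E[Z\mid\varphi],\E[Z\mid\varphi])=\Cov(Y,Y)+\Cov(X,X)$, so controlling $\Cov(Z,Z)$ via the conditional dynamics only returns you to the quantity you are trying to bound. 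The paper takes a different route: it expands $X(o)$ as the telescope $\sum_r\sum_\sB (h_\sB(o)-\varphi(o))(\pi_{2r}(\sB\in h)-\pi_r(\sB\in h))$, uses the bound on \cref{eq:f-pi-bound} to show each summand is exponentially small in $r$, so that $X(o)$ is an almost-local function of $\varphi$ in the same sense as the $\fg_r$; it then appeals to the polynomial decay \cref{eq:cumulants} of cumulants of local $\varphi$-observables output by the renormalization analysis. This yields only a rate of order $e^{-c\sqrt{\log|u-v|}}$ (so the phrase ``exponentially decaying'' in the lemma should not be read literally), but that decay is already summable and suffices for the variance to vanish.
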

\begin{proof}
    $\E[h(o) \mid \varphi] - \varphi(o)$ is bounded again because of the arguments in \cref{sec:pi}. More precisely, since we proved that the size of bubbles has an exponential tail uniformly over $\varphi$, the expected size is bounded as a function of $\varphi$.
    
    To see that $\E[h(o) \mid \varphi] - \varphi(o)$ has mean $0$, first note that by translation invariance $\E[h(u)] - \E[ \varphi(u)]$ cannot depend on $u$. Also, we remark that the joint law of $h$ and $\varphi$ is defined only using $h$ and $\varphi$ as union of plaquettes of $\Z^3$. In particular, it is invariant under changing the sign of all three axes in space, which transforms $(h(\cdot), \varphi(\cdot))$ to $(-h(- \cdot), -\varphi(-\cdot))$ so $\E[h(\cdot)] - \E[ \varphi(\cdot)]$ must be odd (this transformation can also be seen as exchanging the $+$ and $-$ spins in an Ising configuration with interface $h$, or a reflection on the $111$ plane). Overall, the only solution is that $\E[h] = \E[\varphi]$.
 
    For the covariance, we first note that
    \begin{align*}
    \E[h(o) \mid \varphi] = \varphi(o) &+ \sum_{\sB:o\in\Upsilon(\sB)} (h_\sB(o) - \varphi(o)) \pi_1(\sB\in h) \\ &+ \sum_{r\geq 1} \sum_{\sB: o\in\Upsilon(\sB) } (h_\sB(o) - \varphi(o)) (\pi_{2r}(\sB\in h) - \pi_r(\sB\in h) )\,,
    \end{align*}
    where $h_\sB(o)$ is the height of $o$ in any configuration containing $\sB$ as a bubble. This is almost the same as the decomposition in \cref{sec:pi} except that there is no integral over $\hat\beta$ and we have a slightly different weight per bubble. The arguments of \cref{sec:pi} apply directly and prove that
    \[
   \Big| \sum_{\sB: o\in\Upsilon(\sB)} (h_\sB(o) - \varphi(o)) (\pi_{2r}(\sB\in h) - \pi_r(\sB\in h) )\Big| \leq e^{- c r}
    \]
    for some constant $c$, i.e., $ \E[h(o) \mid \varphi] - \varphi$ is almost a local function in the same sense as $\fg$. Combined with the polynomial decay of the cumulants of the edge occupation variables in \cref{eq:cumulants}, we see that the covariance between $\E[h(o) \mid \varphi]$ and $\E[h(u) \mid \varphi]$ must decay to $0$ as $\|u\|$ goes to infinity (at a rate at least $e^{- \epsilon \sqrt{\log\|u\|}}$). It then immediately follows that $\Var(\langle  \E[h(n \cdot) \mid \varphi]-\varphi( n \cdot) , \one_{U} \rangle) \to 0$ which concludes the proof.
\end{proof}

Concluding the proof of \cref{thm:GFF-convergence} is then simply a matter of combining the previous results.
\begin{proof}[Proof of \cref{thm:GFF-convergence}]
    We write 
    \[
    h - \E[h] = \big( \varphi - \E[\varphi] \big) + \big( h - \E[h \mid \varphi] \big) + \big(\E[h \mid \varphi] - \varphi \big)
    \]
    using the fact that $\E[h] = \E[\varphi]$ mentioned in the proof of \cref{lem:cond_exp_minus_phi}. The first parenthesis in the right-hand side converges to the Gaussian free field by \cref{lem:GGF_SOS_convention} while the other two converge to 0 by \cref{lem:h_minus_cond_exp,lem:cond_exp_minus_phi}.
\end{proof}

Let us note that in a sense \cref{thm:GFF-convergence} gives a much more precise statement about the limit than a simple control of the variance but the fact that the $\GFF$ is a distribution means it does not formally imply anything for pointwise moments. For the sake of completeness, we still derive the variance of height increments in the SOS convention.
\begin{proposition}
    As $\|u\|\to\infty$, 
    \[
    \Var( h(u) - h(o) ) = \frac{\sigma^2}{p_\fa^2}\log\|u\| + o( \log\|u\| )\,,
    \]
    where $\sigma$ is given by \cref{thm:gmt-refinement}.
\end{proposition}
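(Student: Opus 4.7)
The plan is to leverage the decomposition
\[
h - \E h = (\varphi_{001} - \E\varphi_{001}) + (h - \E[h\mid \varphi]) + (\E[h\mid\varphi] - \varphi_{001})
\]
introduced earlier in this section. By \cref{lem:h_minus_cond_exp,lem:cond_exp_minus_phi}, the second and third summands have uniformly bounded pointwise variance and exponentially decaying covariances. Expanding $\Var(h(u)-h(o))$ by bilinearity, the diagonal contributions of those two terms are $O(1)$ and their cross-covariance with $\varphi_{001}(u) - \varphi_{001}(o)$ is controlled via Cauchy--Schwarz from the bounded pointwise variances. This reduces the claim to
\[
\Var(\varphi_{001}(u) - \varphi_{001}(o)) = \frac{\sigma^2}{p_\fa^2} \log \|u\| + o(\log\|u\|).
\]

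Next I would translate this to the tiling convention via \cref{lem:GGF_SOS_convention}. Let $v = \bar \fL(u)$. Pointwise, $\varphi_{001}(u)$ and $\varphi_{111}(v)$ record the third coordinate of the surface $\varphi$ at points related through $\cP_{\fa\fb\fc}$, so they agree up to the local deviation of $\varphi$ from its mean plane. The uniform bound $\sup_{v'\in B(o,n)}|\varphi_{111}(v')-\E\varphi_{111}(v')|\leq n^{1/4}$ obtained in the proof of \cref{lem:GGF_SOS_convention} shows (once sharpened to the expected logarithmic scale) that this pointwise discrepancy is $o(\sqrt{\log \|u\|})$, contributing only $o(\log\|u\|)$ to the variance, with the cross term again handled by Cauchy--Schwarz once the leading $\log\|u\|$ behavior on the tiling side is known.

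The crux is then to extract the pointwise two-point asymptotic $\Var(\varphi_{111}(v) - \varphi_{111}(o)) \sim c_0 \log \|v\|$ from the renormalization-group analysis with an identifiable $c_0$. The weak GFF convergence of \cref{thm:GFF-convergence} is not enough since the GFF is only a distribution; however, as indicated in \cref{rem:sigma-L-analytic}, the machinery of \cref{thm:gmt-refinement} yields explicit asymptotics for all cumulants of the edge occupation variables $\one_{\{e\in\varphi\}}$. Summing the two-point covariance over edges along a path from $o$ to $v$, as in Kenyon's computation for the non-interacting dimer model, would deliver the desired pointwise variance asymptotic with $c_0$ expressed in terms of $\sigma$ and $\fL$. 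This is the technically heaviest step, as it requires unpacking the cumulant bounds produced by the multiscale renormalization into a clean pointwise variance formula.

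Finally, one identifies the constant. The factor $1/p_\fa^2$ should arise from a random-walk-style duality between the two conventions: traversing a distance $\|u\|$ in $\cP_{001}$ corresponds to passing through roughly $\|u\|/p_\fa$ vertices of $\T$ along the corresponding $\cP_{111}$ direction, since only a fraction $p_\fa$ of those vertices sit above a horizontal lozenge. The log-correlated analog of the elementary identity $\Var(T_k)=\Var(S_{k/p})/p^2$ relating first-passage times and partial sums of a biased walk turns the tiling-side coefficient $c_0$ into an SOS-side coefficient $c_0/p_\fa^2$; combining this with the explicit form of $c_0$ and the prefactor $\sigma \tfrac{\sqrt{3}}{3}|\det \bar\fL|$ from \cref{lem:GGF_SOS_convention} yields the claimed $\sigma^2/p_\fa^2$.
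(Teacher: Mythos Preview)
Your first reduction (using \cref{lem:h_minus_cond_exp,lem:cond_exp_minus_phi} to pass from $h$ to $\varphi_{001}$) is correct and matches the paper. The gap is in the second step: your claim that the pointwise discrepancy $\varphi_{001}(u) - \varphi_{111}(\bar\fL(u))$ (after centering) is $o(\sqrt{\log\|u\|})$ is false, and this is precisely where the factor $1/p_\fa$ lives.

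Write $v_0 = \bar\fL(u)$. The point of $\varphi$ that projects to $u$ under $\Upsilon_{001}$ is \emph{not} the point that projects to $v_0$ under $\Upsilon_{111}$ unless $\varphi$ sits exactly on $\cP_{\fa\fb\fc}$; one must slide along the $e_\uparrow$ direction in $\cP_{111}$ until the surface crosses the appropriate level. Concretely, $\varphi_{001}(u) - \E\varphi_{001}(u) = k$ iff the non-increasing map $k\mapsto \varphi_{111}(v_0 + k e_\uparrow) - k$ crosses $\E\varphi_{111}(v_0)$ at that $k$. Since $\E[\varphi_{111}(v + e_\uparrow) - \varphi_{111}(v)] = 1 - p_\fa$, this map has drift $-p_\fa$; starting near $\varphi_{111}(v_0) - \E\varphi_{111}(v_0)$, it hits $0$ after roughly $(\varphi_{111}(v_0) - \E\varphi_{111}(v_0))/p_\fa$ steps, and the fluctuations over those $O(\log\|u\|)$ steps are only $O(\log^{1/4}\|u\|)$ by the bounded higher cumulants from \cite{GMT17}. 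The paper makes this precise by defining a good event $G$ (controlling $\varphi_{111}$ at $v_0$ within $\log\|v_0\|$ and uniformly on $B(v_0,\log^2\|v_0\|)$ within $\log^{1/4}\|v_0\|$), showing $\P(G)\to 1$, and obtaining on $G$ the pointwise relation
\[
\varphi_{001}(u) - \E\varphi_{001}(u) = \frac{\varphi_{111}(v_0) - \E\varphi_{111}(v_0)}{p_\fa} + O(\log^{1/4}\|u\|)\,.
\]
This yields the contribution $\sigma^2 p_\fa^{-2}\log\|u\|$ on $G$; the complement $G^c$ is then shown, via a further decomposition into events $B_{k,\ell}$ and high-moment bounds, to contribute only $O(1)$.

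Your final paragraph is internally inconsistent with the one before it: if the pointwise discrepancy were really $o(\sqrt{\log\|u\|})$, the variances of $\varphi_{001}$ and $\varphi_{111}$ would have the \emph{same} leading coefficient, and no spatial rescaling $\|v\|\asymp\|u\|/p_\fa$ can fix that, since $\log(\|u\|/p_\fa)=\log\|u\|+O(1)$. The first-passage heuristic you invoke at the end is in fact the right idea, but it acts on the \emph{height} (producing the pointwise factor $1/p_\fa$ above), not on the spatial argument inside the logarithm.
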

\begin{proof}
    Note that, by \cref{lem:h_minus_cond_exp,lem:cond_exp_minus_phi}, it is enough to prove the result for $\varphi_{001}(u) - \varphi_{001}(o)$ instead. Fix $u$, which, according to the convention in \cref{subsec:setup} and \cref{fig:projection_lozenge}, we label using half-integers. Let us denote by $e_{\uparrow}$ the projection on $\cP_{111}$ of the unit vector $e_3$. The first step is to describe concretely how to read the $\varphi_{001}$ height in terms of the $\varphi_{111}$ one.

     By definition $\varphi_{001}(u) = z$ if and only if the plaquette $(u_1 \pm \tfrac12, u_2 \pm \tfrac12, z)$ is in $\varphi$, which is then equivalent to the fact that
    \[
    \varphi_{111}\big( \Upsilon_{111} (u_1 + \tfrac12, u_2 + \tfrac12, z)\big) = \varphi_{111}\big( \Upsilon_{111} (u_1 - \tfrac12, u_2 - \tfrac12, z)\big)= z\,.
    \]
    Also, since both $\E[ \varphi_{111}(\cdot) ]$ and $\E[ \varphi_{001}(\cdot) ]$ describe the same plane $\cP_{\fa\fb\fc}$ and by definition of $\overline{\fL}$ we have $\E[ \varphi_{001}(u) ] = \E[\varphi_{111}( \overline{\fL}(u)) ]$. Using the linearity of the projection we therefore obtain that
    \[
    \Upsilon_{111} (u_1 + \tfrac12, u_2 + \tfrac12, z) = \overline{\fL}(u) + \Upsilon_{111}\big( \tfrac12, \tfrac12, z - \E[ \varphi_{001}(u) ]\big) = \overline{\fL}(u) + (z - \E[ \varphi_{001}(u) ] - \tfrac12) e_\uparrow\,.
    \]
    It follows that we would have
    \begin{equation}\label{eq:phi-001-u-exceed-mean-by-k}
    \varphi_{001}(u) = k + \E[ \varphi_{001} (u) ] 
    \end{equation}
    if and only if 
    \[ \varphi_{111}( \overline{\fL}(u) + (k+ \tfrac12) e_\uparrow) = \varphi_{111}( \overline{\fL}(u) + (k- \tfrac12) e_\uparrow) = k + \E[ \varphi_{111} (\overline{\fL}(u) )] \,.
    \]
    Since $k \to\varphi_{111}( \overline{\fL}(u) + k e_\uparrow) - k$ is non-increasing in $k$ we can equivalently reformulate the above as saying that \cref{eq:phi-001-u-exceed-mean-by-k} occurs if and only if
    \[
    \varphi_{111}( \overline{\fL}(u) + (k+ \tfrac12) e_\uparrow) - (k+ \tfrac12)  < \E[ \varphi_{111} (\overline{\fL}(u) )] \text{ and } \varphi_{111}( \overline{\fL}(u) + (k- \tfrac12) e_\uparrow) - (k- \tfrac12) > \E[ \varphi_{111} (\overline{\fL}(u) )] \, .
    \]
    
    Note that for any $v \in \cP_{111}$, $\E[\varphi_{111}( v + e_{\uparrow}) - \varphi_{111}(v) ] = 1- p_\fa$ . Fix $v_0 = \overline{\fL}(u) + \tfrac12 e_\uparrow$.
    First let us consider the good event $G$ where both $|\varphi_{111}(v_0) -\E[\varphi_{111}(v_0)]| \leq \log\|v_0\|$ and 
    \[
    \sup_{v \in B( v_0, \log^2\|v_0\|) } |\varphi_{111}(v) -\E[\varphi_{111}(v)] - \varphi_{111}(v_0) + \E[\varphi_{111}(v_0)]| \leq \log^{1/4}(\|v_0\|)\,.
    \]
    The probability of $G$ goes to $1$ as $\|u\| \to \infty$ because the variance of $\varphi_{111}(v_0)$ is of order $\log\|v_0\|$ and all cumulants of $\varphi_{111}(v) - \varphi_{111}(v_0)$ of order larger than $2$ are bounded (in fact the 10th moment is enough for the above bound). Let $v_\pm$ be the points 
    \[
    v_0 + \frac{\varphi_{111}(v_0) - \E[\varphi_{111}(v_0)]}{p_\fa} e_{\uparrow} \pm \frac{2\log^{1/4}(\|v_0\|)}{p_\fa} e_{\uparrow}\,.\]
    On $G$ they are both in $B(v_0, \log^2\|v_0\|)$ and therefore
    \begin{align*}
    \varphi_{111}(v_+)  &\leq  \varphi_{111}(v_0)+ \frac{1}{p_\fa} \E[ \varphi_{111}(v_+) - \varphi_{111}(v_0)  ] + \log^{1/4}(\|v_0\|) \\
    & \leq\varphi_{111}(v_0) + \frac{1-p_\fa}{p_\fa}  \big(\varphi_{111}(v_0) - \E[\varphi_{111}(v_0)] \big) +  \frac{2- 2p_\fa}{p_\fa} \log^{1/4}(\|v_0\|) +  \log^{1/4} \|v_0\| \\
    & < \E[\varphi_{111}(v_0)] + \frac{1}{p_\fa} \big(\varphi_{111}(v_0) - \E[\varphi_{111}(v_0)] \big) + \frac{2}{p_\fa}\log^{1/4}(\|v_0\|)\,,
    \end{align*}
    and similarly
    \[
    \varphi_{111}(v_-)  - \Big( \frac{\varphi_{111}(v_0) - \E[\varphi_{111}(v_0)]}{p_\fa} - \frac{\log^{1/4}(\|v_0\|)}{p_\fa}\Big)> \E[\varphi_{111}(v_0)]\,.
    \]
    Therefore, still on the event $G$,
    \[
    \varphi_{001}(u) - \E[\varphi_{001}(u) ] = \frac{\varphi_{111}(v_0) - \E[\varphi_{111}(v_0) ]}{p_\fa}+ O( \log^{1/4}(\|u\| ))\,.
    \]
    Overall, we see that $\E\big[ \big(\varphi_{001}(u) - \E[ \varphi_{001}(u) ]\big)^2\one_G \big] = \frac{\sigma^2}{ p_\fa^2} \log\|u\| + o(\log\|u\|)$ as desired.

    We now turn to the analysis of the bad event $G^c$. As for the good event (or \cref{lem:GGF_SOS_convention}), we will use some uniform control on $\varphi_{111} - \E[ \varphi_{111}]$ to move between the two conventions, but this is a bit more delicate because the domain has to end on the deviation at the reference point $v_0$.
    For $k \geq 1, \ell \geq 0$ and $(k,\ell) \neq (1,0)$, let $B_{k, \ell}$ be the event where both $(k -1)\log \|v_0\| \leq |\varphi_{111}(v_0) -\E[\varphi_{111}(v_0)]| < k \log \|v_0\| $ and
    \[
    \sup_{ v \in B(v_0, k \ell \log^2(\|v_0\|))} |\varphi_{111}(v) -\E[\varphi_{111}(v)] - \varphi_{111}(v_0) + \E[\varphi_{111}(v_0)]| \geq k \ell \log^{1/4}(\|v_0\|)\,,
    \]
    but where the analogous statement for $\ell +1$ does not hold. 
        Applying a moment bound with powers $6$ we see that (if $\|v_0\|$ is large enough)
    \[
    \P( B_{k, 0} ) \leq \frac{1}{k^6 \log^2(\|v_0\|)}\,,
    \]
    while, with an order $16$ moment and a union bound we find that for $\ell \geq 1$
    \[
     \P( B_{k, \ell } ) \leq \frac{1}{k^{14} \ell^{14} \log^2(\|v_0\|)} \,.
    \]
    On the event $B_{k, \ell}$, reasoning as in the case of $G$ we have $\varphi_{001}(u) - \E[\varphi_{001}(u) ] \leq 2 k (\ell + 1) \log^2(\|v_0\|)$ and therefore the total contribution of all the events $B_{k \ell}$ to the variance is bounded. On the other hand, by Borel--Cantelli and the above bound on the probabilities, we see that (up to a negligible event) $G^c \subset \bigcup B_{k \ell}$, which concludes the proof.
\end{proof}

\section{Renormalization group analysis: proof of Theorem~\ref{thm:gmt-refinement}}\label{sec:gmt-refine}

In this section, we show how to improve the results of \cite{GMT17,GMT20} to be able to apply the output of \cref{thm:phi-weakly-interacting}. 
We start with a fairly lengthy setup: \cref{sec:Kasteleyn} gives a very short introduction to Kasteleyn theory in non-interacting dimers, it can be safely skipped if one already knows this theory. \cref{sec:micro1} presents our strategy to deal with the micro-canonical setting. \cref{sec:grassman_review} introduces the formalism of Grassmann integrals and lists the properties needed later, again it can be safely skipped if the reader is already familiar with the formalism. The main arguments start in \cref{sec:grassmann_formulation} where we rewrite our problem using Grassmann integrals; this is the core of the section with the improved bounds on the determinants appearing in the truncated expectations (see \cref{def:truncated-exp}).
\Cref{sec:sketch-renormalize} gives an overview of the induction of \cite{GMT20}, which we apply essentially as a black box. Finally, we provide the adaptation to the micro-canonical setting in \cref{sec:micro2}.

When discussing the GMT framework for proving \cref{thm:gmt} and the obstacles standing in the way of proving the refined \cref{thm:gmt-refinement}, the reader should have the following two examples in mind:

\begin{example}[Prototypical application of \cref{thm:gmt}]\label{ex:gmt-appl}
For admissible $\fa,\fb,\fc>0$ and a small enough fixed $\delta>0$, the distribution $\mu_N$ is on tilings $\varphi$ of the torus $\T_N$ given by
\[ \mu_N(\varphi) \propto \fa^{n_\fa(\varphi)}\fb^{n_\fb(\varphi)}\fc^{n_\fc(\varphi)} \exp\Big[\delta\sum_x \sum_{y:\,\dist(x,y)=1} \one_{\{\operatorname{type}(\varphi,x)=\operatorname{type}(\varphi,y)\}}\Big]
\]
where $n_\fs(\cdot)$ is the number of lozenges of type $\fs$, and $\operatorname{type}(\cdot,x)$ is the lozenge type at the face~$x$.
\end{example}

\begin{example}[Prototypical application of \cref{thm:gmt-refinement}]\label{ex:gmt-appl-refine}
For admissible $p_\fa,p_\fb,p_\fc>0$ and a large enough fixed $C>0$, the distribution $\mu_N$ is on tilings $\varphi$ of the torus $\T_N$ whose lozenge types are $n_\fb(\varphi)=\lfloor p_\fb N^2 \rfloor$ and $n_\fc(\varphi)=\lfloor p_\fc N^2 \rfloor$ (with $n_\fa=N^2-n_\fb-n_\fc$) that is given by
\[ \mu_N(\varphi) \propto \exp\Big[\sum_x \sum_{r}  e^{- C r} \one_{\{x \text{ connected to }\partial B(x,r)\mbox{ in $\varphi$ via lozenges of the same type}\}}\Big]\,.\]
\end{example}

\subsection{Crash course on Kasteleyn theory}\label{sec:Kasteleyn}

In the following, we will sometimes need coordinates on the hexagonal lattice. We use the following convention. First, the two bipartite classes will be called black and white and we will use different variables $b$ and $w$ for black and white vertices. We assume without loss of generality that the lattice is composed of regular hexagons with a horizontal side and that the black vertex is on the right of that edge. We fix a horizontal edge and say that \emph{both} the white and black vertices adjacent to that edge have coordinate $(0,0)$. For other points, we use non-orthogonal coordinates as indicated in \cref{fig:coordinates}. We say that horizontal (resp.\ \northwest to \southeast and \northeast to \southwest) edges have types $\fa$, $\fb$ and $\fc$ respectively.

\begin{figure}
\vspace{-0.1in}
  \begin{tikzpicture}[font=\tiny]
   \node (fig1) at (0,0) {
    	\includegraphics[width=0.6\textwidth]{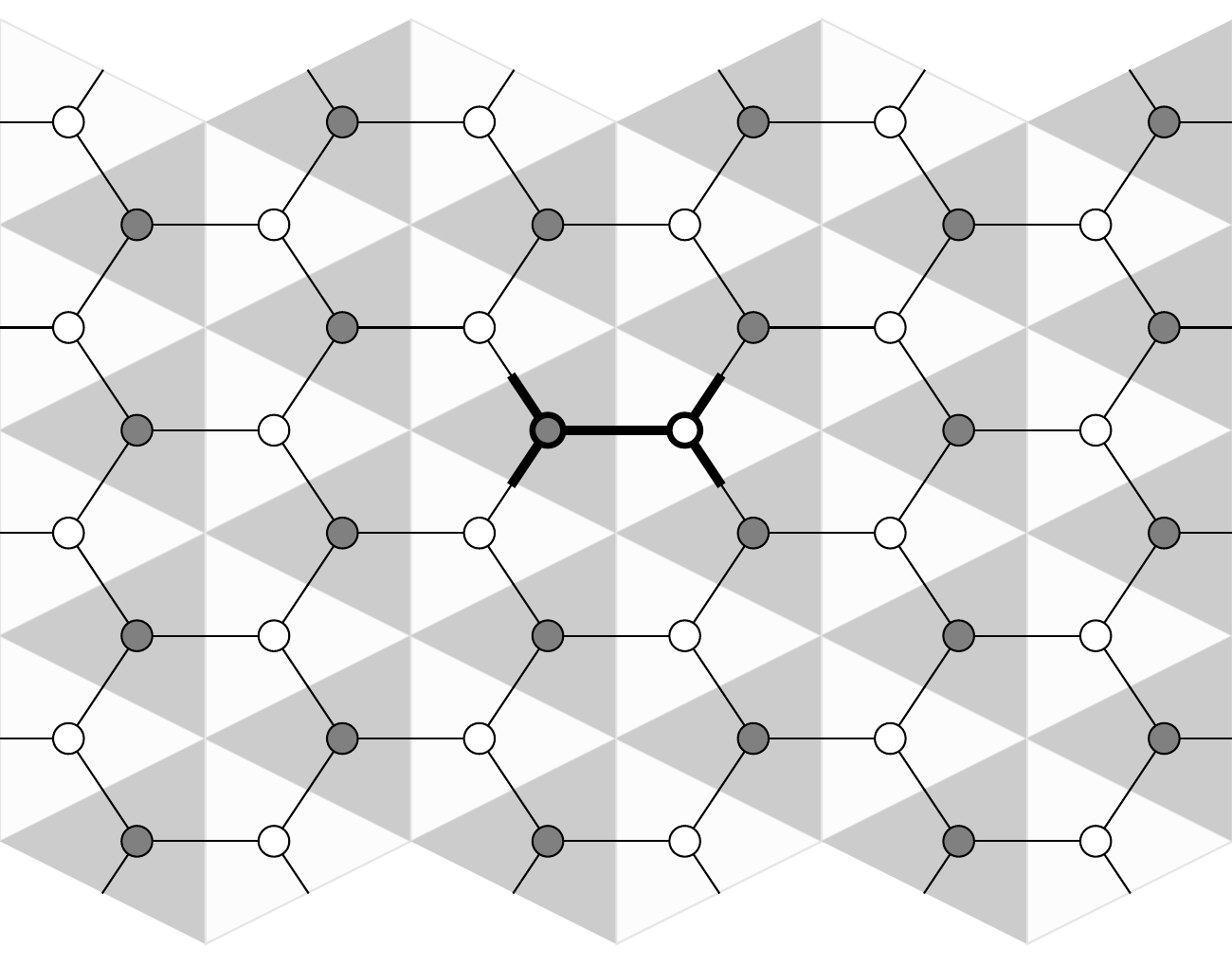}};
   \node (fig2) at (7.5,0) {  
    \includegraphics[width=.2\textwidth]{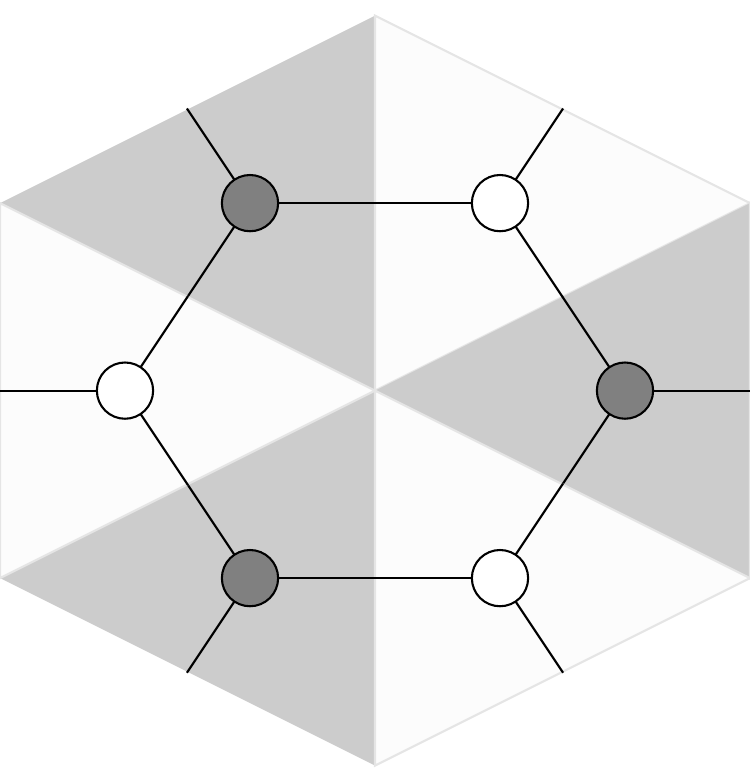}};
    \draw[->,orange,very thick,-stealth] (0.67,0.5)--(2.1,1.2);
    \draw[->,orange,very thick,-stealth] (0.67,0.4)--(2.1,-.3);
    \node at (0.05,0.65) {$(0,0)$};
    \node at (1.65,1.5) {$(0,1)$};
    \node at (1.65,-.2) {$(1,0)$};
    \node at (-1.67,1.5) {$(-1,0)$};
    \node at (-1.67,-.2) {$(0,-1)$};
    \node at (-0.02,2.25) {$(-1,1)$};
    \node at (-0.02,-1.05) {$(1,-1)$};
    \node[font=\small] at (7.5,1.1) {$\fa$};
    \node[font=\small] at (8.5,0.52) {$\fb$};
    \node[font=\small] at (6.5,0.52) {$\fc$};
    \node[font=\small] at (7.5,-1) {$\fa$};
    \node[font=\small] at (6.5,-0.5) {$\fb$};
    \node[font=\small] at (8.5,-0.5) {$\fc$};
    \end{tikzpicture}
    \vspace{-0.1in}
    \caption{Left: The coordinate system. Right: The types of edges.}
    \label{fig:coordinates}
    \vspace{-0.05in}
\end{figure}

Consider a simply connected piece of the hexagonal lattice $D$ and construct a matrix $K$ indexed by black vertices on the rows and white vertices on the columns with $K(b,w) = \one_{\{b \sim w\}}$. In the formula for $\det(K)$ in terms of permutation, we see that the only non-zero terms come from permutations which only associate adjacent white and black vertices, i.e., the non-zero permutations are exactly describing lozenge tilings and $\det(K) = \sum_{\varphi} \epsilon(\varphi)$ where $\epsilon(\varphi)$ is the signature of the permutation described by $\varphi$. Now it is also true that the set of tilings of a simply connected domain is closed under rotation of hexagons in the tiling, which in terms of permutation are simply multiplication by a cycle of $3$ elements. Since a cycle of length $3$ has signature $+1$,  $\epsilon(\varphi)$ does not actually depend on $\varphi$ and $|\det(K) | = \# \{ \text{tilings of $D$}\}$. Applying the above argument to both $D$ and $D\setminus \{b_0, w_0\}$ for some pair of adjacent vertices $(b_0w_0)$, the comatrix formula for the determinant says that
\[
|K^{-1}(w_0, b_0) |= \frac{\# \{ \text{tilings of $D\setminus \{b_0,w_0\}$}\}}{\# \{ \text{tilings of $D$}\}} = \P( (b_0w_0) \text{ occupied})
\]
with the $\P$ under a uniformly chosen tiling. In fact, the comatrix formula has a generalization to minors which gives for any finite set of edges $(b_iw_i)$
\begin{equation}\label{eq:pattern_proba}
\P( \forall i, \,  (b_iw_i) \text{ occupied}) = |\det( K^{-1}(b_i, w_j) )|\,.
\end{equation}
We thus see that a very natural (and fruitful) approach to the dimer model is to try to identify~$K^{-1}$.

From this argument, it is also easy to see that if now we put weights (possibly even complex) $\fa$, $\fb$, $\fc$ depending on the types of edges, the associated determinant becomes a partition function of tilings depending on their weights. However, again since hexagon rotations connect all tilings and preserve the number of tiles of each type, this is a trivial operation at the combinatorial level. We will not really use it here but it can however be a relevant change to do in order to get nice asymptotic formulas for $K^{-1}$.

On the torus, it is not quite true that rotations connect all tilings: they can connect all tilings with a given number of tiles of each type but this is not fixed anymore. In terms of height, a tiling can be seen as describing a discrete vector field which is still closed (i.e., its integral along a path is invariant under a deformation of the path) but in a non-simply connected domain this is not enough for it to admit a true primitive. If we still want to have a primitive, we obtain in general an ``additively multi-valued function,'' which can be seen simply as a function in the full plane whose derivatives are periodic and described by the tiling of the torus. It can be seen that the information about the number of tiles of each type can be easily read in terms of the global slope of the primitive, or equivalently, the height gaps between two copies or the integral of the derivative along two non-contractible cycles. We call that information the \emph{instanton component} of the configuration. As a consequence, introducing (positive) weights $\fa, \fb, \fc$ on the edges depending on their types has an interesting effect on the associated dimer measure: it affects the law of the instanton component while keeping the configuration uniform given it. 
Algebraically, this manifests as follows. Let $K_{\fa, \fb, \fc}$ denote the weighted matrix with weights depending on the types of edges for the $N \times N$ torus. Then in all tilings the number of tiles of each type must be a multiple of $N$ and
\begin{equation}\label{eq:detK}
   \det(K_{\fa, \fb, \fc}) = \sum_{k,\ell} \pm(k,\ell) \fa^{N^2 - N k - N \ell} \fb^{N k} \fc^{N\ell} Z(k,\ell) \,,
\end{equation}
where $\pm(k,\ell)$ is a sign which in fact only depends on the parity of $k$ and $\ell$ and $Z(k, \ell)$ is the number of tilings $\varphi$ with lozenge types $n_\fa(\varphi)=N^2-Nk-N\ell$, $n_\fb(\varphi)=N k$ and $n_\fc(\varphi)= N\ell $.

It is actually possible to identify completely the function $\pm(k,\ell)$ (since this depends on further convention on the matrix we will not do it here) and to then obtain the unsigned partition function as a combination of $4$ determinants, i.e.,
\[
\sum_{k, \ell}  \fa^{N^2 - N k - N \ell} \fb^{N k} \fc^{N\ell} Z(k,\ell) = \frac{1}{2} \Big( \sum_{\vartheta_1, \vartheta_2 \in \{-1, 1\}} \pm(\vartheta_1, \vartheta_2) \det( K_{\fa, \vartheta_1 \fb, \vartheta_2 \fc})\Big)\,.
\]
This is the classical route to study the dimer model on the torus (see for example \cite{KOS06}) and in particular this is used in \cite{GMT17,GMT20}. 

In what follows, a \emph{pattern} $P$ is an arbitrary set of lozenges in $\T$. For now, it can be disconnected. (Later, in the proof of our main estimate in \cref{prop:grassmann_formulation}, we will work with connected patterns.)
The analog of \cref{eq:pattern_proba} in this context, given a pattern $P$ containing $n_\fa(P), n_\fb(P), n_\fc(P)$ tiles of each type, becomes
\begin{align}\label{eq:pattern_torus}
    \sum_{\varphi} \pm(k,\ell)  \fa^{N^2 - Nk - N \ell} \fb^{N k} \fc^{N \ell} \one_{\{P \in \varphi\}} &= \pm(P) \fa^{n_\fa(P)} \fb^{n_\fb(P)} \fc^{n_\fc(P)} \det ( K_{\fa, \fb, \fc} \restriction_{P^c} ) \nonumber \\
    &=\pm(P)\det ( K_{\fa, \fb, \fc} )  \prod_{e\in P} K_{\fa, \fb, \fc}(e) \det ( K^{-1}_{\fa, \fb, \fc} \restriction_{P} )\,,
\end{align}
where the second line only holds if $K_{\fa, \fb, \fc}$ is invertible. Let us emphasize that the above formula holds even if the pattern $P$ wraps around the torus and indeed even if it determines completely $k$ and $\ell$. Indeed, the signs $\pm(k, \ell)$ in \cref{eq:detK} are determined by looking at the sign of the permutation associated to a non-contractible loop as a function of its homotopy class. This is not affected when removing the vertices of $P$ from the graph and therefore the same function $\pm(k, \ell)$ has to appear when expanding $\det(K\restriction_{P^c})$ and $\det(K)$. Only a global sign can appear to account for the change of convention when removing some rows and columns of the matrix $K$.

By taking the correct $\fa, \fb, \fc$ it is possible to choose which pairs $(k, \ell)$ will have the main contribution to the partition function, or, in probabilistic terms, to choose the typical instanton component and the choice is actually explicit: Fix $p_\fa, p_\fb, p_\fc$ positive and summing to $1$, let $\Delta$ be a triangle with angles $\pi p_\fa, \pi p_\fb, \pi p_\fc$ and let $\fa, \fb, \fc$ be the length of the edges opposite to the associated angle. Then (at least asymptotically) the measure with weights $\fa,\fb,\fc$ gives probabilities $p_\fa, p_\fb, p_\fc$ to edges of the corresponding types. It is further known that, for this measure, the instanton component converges to a discrete Gaussian distribution without any normalization but we will not use it here.

Because of the signs, \cref{eq:pattern_torus} does not directly give a formula for the probability of a pattern $P$. 
However, it turns out that the $4$ variants of $K$ are close enough that one can safely
ignore this issue (more-or-less, see \cref{sec:grassman_review,sec:micro2} for more on the subtleties around this point) and still write probabilities as determinants of (any version of) $K^{-1}$ with a very small error. This is particularly useful since the $K$ matrices are \emph{diagonal in Fourier space}: Indeed, fix $\fa,\fb,\fc$ (or fix $p_\fa, p_\fb, p_\fc$ and take $\fa, \fb, \fc$ according to the above construction), $k = (k_1, k_2)$ and let $\hat w_k = \sum_w e^{i \left< k, w \right>} e_w$ and $\hat b_k = \sum_b e^{-i \left< k, b \right>} e_b$ where $\left<k,w\right> = k_1 n_1 + k_2 n_2$ with $(n_1, n_2)$ the coordinates of $w$. We have
\begin{align*}
    K \hat w_k & = \sum_w e^{i \left< k, w \right>} ( \fa e_b + \fb e_{b + (1,0)}  + \fc e_{b + (0,1)}) %\\ &
                 = \sum_b e^{i \left<k,b\right>} ( \fa + \fb e^{-i k_1} + \fc e^{-i k_2} ) e_b \\
                & = ( \fa + \fb e^{-i k_1} + \fc e^{-i k_2} ) \hat b_{-k}\,,
\end{align*}
where in the computation, we used $b$ to denote the black vertex with the same coordinates as $w$. We see that it is relevant to introduce the Newton polynomial $P(z_1, z_2) = \fa + \fb z_1 + \fc z_2$ and we may note that its zeros are exactly $z_1 = e^{i\pi(p_\fa + p_\fb)}$ and $z_2 = e^{-i \pi (p_\fa + p_\fc)}$.

\subsection{Moving to a micro-canonical setting}\label{sec:micro1}

In the context of this paper, we have to consider the dimer model on a torus with \emph{fixed} instanton component instead of the more classical ``canonical'' setting where all possible configurations are possible but have varying weights. In this section we show how to adapt the Kasteleyn theory presented above to this setting.
Going back to \cref{eq:detK}, the idea is that instead of summing $4$ copies to remove the sign, we will introduce complex phases to $\fa, \fb, \fc$ and then use a Fourier inversion formula to extract $\cZ_0(k, \ell)$.

Fix $p_\fa, p_\fb, p_\fc$ such that $N p_\fa, N p_\fb, N p_\fc$ are integers and let $\fa, \fb, \fc$ be the associated weights. For $\vartheta_1, \vartheta_2 \in \R$, we write $K(\vartheta_1, \vartheta_2) = K_{\fa , \fb e^{i \vartheta_1}, \fc e^{i \vartheta_2}}$ and \begin{equation}\label{eq:bZ-def}
\cZ_0(k, \ell) = \fa^{-Nk - N\ell} \fb^{N k} \fc^{N\ell} Z(Np_\fb + k, Np_\fc + \ell)\,.\end{equation} (The index will be used later to distinguish it from the partition function of the interacting model). In these new notations, \cref{eq:detK} becomes
\[
\det K(\vartheta_1, \vartheta_2) = \fa^{ N^2 p_\fa} \fb^{N^2 p_\fb (1+  i \vartheta_1)} \fc^{N^2 p_\fc(1+i\vartheta_2)} \sum_{k, \ell} \pm(k, \ell) \cZ_0(k, \ell) e^{i (N k \vartheta_1+ N\ell \vartheta_2)}\,.
\]

It is known from the non-interacting theory (see for example \cite{Cohn2001}) that $\cZ_0$ is well concentrated at scale $N^2$ in the sense that for all $\epsilon > 0$, there exists $\eta>0$ such that if $|k| \geq N \epsilon $ or $|\ell| \geq N \epsilon$ then
\begin{equation}\label{eq:Zconcentration}
\cZ_0(k, \ell) \leq e^{- \eta N^2} \cZ_0(0,0)\,,
\end{equation}
and therefore
\[
\det K(\vartheta_1, \vartheta_2) = \fa^{ N^2 p_\fa} \fb^{N^2 p_\fb (1+  i \vartheta_1)} \fc^{N^2 p_\fc(1+i\vartheta_2)}  \cZ_0(0, 0) \sum_{- \epsilon N \leq k, \ell \leq \epsilon N}  \pm\frac{\cZ_0(k, \ell)}{\cZ_0(0,0)} e^{i (N k \vartheta_1+ N\ell \vartheta_2)} + O(e^{-\eta N^2})\,.
\]
Now the Fourier inversion is clear, we choose $\vartheta_1$ (resp.\ $\vartheta_2$) such that $e^{Ni\vartheta_1}$ (resp.\ $e^{Ni\vartheta_2}$) runs over all $N p_\fb$-th (resp., $N p_\fc$-th) roots of unity and sum all terms:
\begin{align*}
    \sum_{\vartheta_1, \vartheta_2} \det (K(\vartheta_1, \vartheta_2)) & = \fa^{ N^2 p_\fa} \fb^{N^2 p_\fb} \fc^{N^2 p_\fc} \cZ_0(0, 0)\sum_{- \epsilon N < k, \ell < \epsilon N} \sum_{\vartheta_1, \vartheta_2}  \pm \frac{\cZ_0(k, \ell)}{\cZ_0(0,0)} e^{i (N k \vartheta_1+ N\ell \vartheta_2)} + O(e^{-\eta N^2}) \\
    & = \pm N^2 p_\fb p_\fc\fa^{ N^2 p_\fa } \fb^{N^2 p_\fb} \fc^{N^2 p_\fc} \cZ_0(0, 0) + O(e^{-\eta N^2})\,,
\end{align*}
because only the $k=\ell = 0$ term is non-zero in the sum over $\vartheta$ in the right-hand side.

With the above formula in mind, we will proceed with the proof by analyzing $\det(K(\vartheta_1, \vartheta_2))$ for fixed $\vartheta_1,\vartheta_2$ and only treat the sum over them after the whole normalization procedure. This is in fact analogous to \cite{GMT17, GMT20} where also most of the analysis is focused on a fixed matrix out of the $4$ needed to cancel the signs in \cref{eq:detK}.

\subsection{Grassmann integrals}\label{sec:grassman_review}

We begin with a short review of the Grassmann integral notation, see Section 4 of \cite{GentileMP} for a more detailed and informed account. It will be used crucially to allow us to write complicated series of minors in a way that looks like perturbed Gaussian integrals (similar to say the partition function of the $\phi^4$ model) and therefore to import into our problem the intuition and language of Gaussian integration.
\begin{definition}
    Consider two separate sets of $n$ indices $\{ b_i\}$ and $\{w_j \}$. The Grassmann integral is a formal notation using $2n$ variables $\psi_{w_i}, \psi_{b_j}$ defined by the following rule
    \begin{align*}
    &\int \psi_{w_1} \ldots \psi_{w_n} \psi_{b_1} \ldots \psi_{b_n} \d \psi = 1 \,,\\
    &\psi_{w} \psi_{w'} = - \psi_{w'} \psi_w \, , \quad \psi_{b} \psi_{b'} = - \psi_{b'} \psi_b \, , \quad \psi_w \psi_b = \psi_b \psi_w\,,
    \end{align*}
    and any integral not containing all variables exactly once gives $0$. Analytic functions of the variables $\psi$ are defined by their series expansion, which always gets truncated to a finite number of terms by the anti-symmetry assumption.
\end{definition}

The link between determinants and Grassmann integrals is the following.
\begin{proposition}
    Let $K$ be a matrix whose rows are indexed by the $b_i$ and columns are indexed by the $w_j$, we have
    \[
    \int \exp( \vec{\psi}_b \cdot K \cdot \vec{\psi}_w )\d\psi = \det(K)\,.
    \]
    More generally, for every subsets $I = \{i_k\}$ and $J=\{j_l\}$ of indices of $w_i$'s and $b_j$'s, resp., one has
     \[
    \int \prod_{i \in I} \prod_{j \in J} \psi_{w_i} \psi_{b_j} \exp( \vec{\psi}_b \cdot K \cdot \vec{\psi}_w )\d\psi = \pm \det(K \restriction_{I^c, J^c})\,.
    \]
\end{proposition}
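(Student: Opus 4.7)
My plan is to prove both identities in parallel by direct expansion of the exponential. The key observation is that the exponent $\vec\psi_b\cdot K\cdot\vec\psi_w=\sum_{i,j}K_{ij}\psi_{b_i}\psi_{w_j}$ is a sum of commuting monomials (each $\psi_{b_i}\psi_{w_j}$ is even of degree $2$, and products of even-degree Grassmann elements commute), so the formal power series $\exp(\vec\psi_b\cdot K\cdot\vec\psi_w)$ makes sense and moreover truncates: any monomial repeating a $\psi_{w_j}$ or a $\psi_{b_i}$ vanishes by antisymmetry. Only the term of multi-degree $(1,\ldots,1)$ in each species survives the integration $\int\cdot\,\d\psi$, so one can restrict to the $n$-th term of the series.

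Expanding the $n$-th power yields
\[
\frac{1}{n!}\Big(\sum_{i,j}K_{ij}\psi_{b_i}\psi_{w_j}\Big)^{\!n}=\frac{1}{n!}\sum_{\sigma,\tau:\{1,\dots,n\}\to\{1,\dots,n\}}\prod_{k=1}^n K_{\sigma(k),\tau(k)}\,\psi_{b_{\sigma(k)}}\psi_{w_{\tau(k)}}.
\]
If either $\sigma$ or $\tau$ is not a bijection the Grassmann monomial repeats a variable and vanishes. For $\sigma,\tau$ permutations, commuting $\psi_b$'s past $\psi_w$'s (which is allowed since they commute) lets me separate the product into $\prod_k\psi_{b_{\sigma(k)}}\cdot\prod_k\psi_{w_{\tau(k)}}$, and then anti-commutation within each species reorders both products into canonical order $\psi_{b_1}\cdots\psi_{b_n}$ and $\psi_{w_1}\cdots\psi_{w_n}$ at the cost of signs $\operatorname{sgn}(\sigma)$ and $\operatorname{sgn}(\tau)$ respectively. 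Moving the $b$-block past the $w$-block once more to match the ordering in the integral convention contributes no sign (it is a product of $n^2$ commuting swaps), so the integral evaluates each surviving term to $\operatorname{sgn}(\sigma)\operatorname{sgn}(\tau)\prod_kK_{\sigma(k),\tau(k)}$.

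The final step is the substitution $\pi:=\sigma\tau^{-1}$: for each fixed $\tau$, as $\sigma$ ranges over $S_n$ so does $\pi$, and $\operatorname{sgn}(\sigma)\operatorname{sgn}(\tau)=\operatorname{sgn}(\pi)$, while $\prod_kK_{\sigma(k),\tau(k)}=\prod_{k'}K_{\pi(k'),k'}$ after reindexing $k'=\tau(k)$. The sum therefore factorizes as $\sum_{\tau}\sum_{\pi}\operatorname{sgn}(\pi)\prod_{k}K_{\pi(k),k}=n!\det(K)$, which cancels the $1/n!$ and yields the first identity.

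For the minor version, the inserted prefactor $\prod_{i\in I}\psi_{w_i}\prod_{j\in J}\psi_{b_j}$ already fixes $|I|$ white and $|J|$ black variables, so by the same ``each variable appears exactly once'' argument the only surviving term in the exponential expansion is $\frac{1}{(n-|I|)!}$ times the power $(n-|I|)$ of the exponent restricted in its effective content to indices outside $J$ (for $\psi_b$) and outside $I$ (for $\psi_w$)---any $\psi_{b_j}$ with $j\in J$ or $\psi_{w_i}$ with $i\in I$ would repeat and kill the monomial. (This in particular forces $|I|=|J|$, else the integral is zero, matching $\det(K\restriction_{I^c,J^c})$.) Repeating the permutation argument on the reduced index set then gives $\det(K\restriction_{I^c,J^c})$, while the global sign $\pm$ accounts for the one-time reordering of the prefactor $\prod_{i\in I}\psi_{w_i}\prod_{j\in J}\psi_{b_j}$ together with the remaining $\psi$'s into the canonical order prescribed by the $\int\cdot\,\d\psi$ convention. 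The only genuinely fiddly step is bookkeeping this overall sign; the combinatorics of the determinant itself is handled exactly as in the full-rank case.
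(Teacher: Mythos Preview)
Your proof is correct and follows essentially the same approach as the paper: expand the exponential, keep only the degree-$2n$ term, enumerate the surviving monomials by a pair of permutations, and use the anti-commutation rules to extract a signature yielding $\det(K)$; the minor statement is then handled identically on the reduced index set. The paper's version is terser (it fixes one permutation to the identity rather than introducing $\pi=\sigma\tau^{-1}$, which is the same manoeuvre), but the content is the same.
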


\begin{proof}
    This is a direct computation: Since only monomials of degree exactly $2n$ have a non-zero integral, the exponential in the first statement is not different from $\frac{1}{n !} ( \vec{\psi}_b \cdot K \cdot \vec{\psi}_w )^n$. Developing the power, we see that the only non-vanishing terms come from taking every $\psi_w$ and $\psi_b$ exactly once so we can enumerate all terms using two permutations $\sigma_w$ and $\sigma_b$. Note that pairs $\psi_w \psi_b$ commute with other pairs so we can reorder the $\psi$'s to set $\sigma_b$ to the identity, simplifying the $\frac{1}{n !}$ term. This gives
    \[
    \int \exp(  \vec{\psi}_b \cdot K \cdot \vec{\psi}_w ) =  \sum_{\sigma} \prod_i K_{i\sigma(i)} \int \psi_{b_1} \psi_{w_{\sigma(1)}} \cdots \psi_{b_n} \psi_{w_{\sigma(n)}}\d\psi\,.
    \]
    To use the definition of the integral, one must also reorder the $\psi_b$ and it can be checked that the resulting change of sign is exactly the signature $\epsilon(\sigma)$.

    The statement about the minor follows the same proof.
\end{proof}

Let us note now a few additional algebraic properties that follow from this main definition.
\begin{corollary}\label{cor:grassmann}
    Suppose $K$ is as above and is also invertible. Then
    \begin{itemize}
        \item $\int \psi_{w} \psi_b \exp( \vec{\psi}_b \cdot K \cdot \vec{\psi}_w ) \frac{\d \psi}{\det(K)} = K^{-1}_{b,w}$.
        \item $\int \prod_{i \in I} \prod_{j \in J} \psi_{w_i} \psi_{b_j} \exp(  \vec{\psi}_b \cdot K \cdot \vec{\psi}_w ) \frac{\d \psi}{\det(K)} = \det(K^{-1} \restriction_{I, J}) = \sum_\sigma \epsilon(\sigma) \prod K^{-1}_{ w_i,b_{\sigma(i)}}$. 
        \item If $K^{-1} = A^{-1} + B^{-1}$, then \[ \int f(\vec\psi) e^{\vec\psi K \vec\psi} \frac{\d\psi}{\det(K)} = \iint f(\vec\psi_1 + \vec\psi_2) e^{ \vec\psi_1 A \vec\psi_1} e^{ \vec\psi_2 B \vec\psi_2} \frac{\d \psi_1}{\det A} \frac{\d\psi_2}{\det B} \,.\]
    \end{itemize}
    The last equality of the second line is called the Wick formula for Gaussian Grassmann integral since it writes a $2|I|$-point correlation in terms of 2 points correlations. The last point is called the addition principle.
\end{corollary}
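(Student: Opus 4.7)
The three statements are all purely algebraic and each reduces quickly to the minor formula in the preceding proposition.

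For the first bullet, apply the minor formula with $I=\{w\}$, $J=\{b\}$, which gives
\[\int \psi_w\psi_b\, \exp(\vec\psi_b\cdot K\cdot\vec\psi_w)\,\d\psi = \pm\det(K\restriction_{\{w\}^c,\{b\}^c}),\]
and then use the comatrix formula $\det(K)\,K^{-1}_{b,w}=\pm\det(K\restriction_{\{w\}^c,\{b\}^c})$ (the signs are the standard cofactor signs and will match those produced by reordering the Grassmann monomial into the canonical order $\psi_{w_1}\cdots\psi_{w_n}\psi_{b_1}\cdots\psi_{b_n}$). The second bullet is handled identically: the minor formula combined with Jacobi's identity
\[\det(K)\,\det(K^{-1}\restriction_{I,J}) = \pm\det(K\restriction_{I^c,J^c})\]
yields the first equality, while the second equality in the second bullet is simply the defining expansion of the determinant of the $|I|\times|J|$ submatrix of $K^{-1}$.

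For the third bullet, the cleanest route is to check the identity on the monomial basis. Both sides of the claimed equality are multilinear in $f$, and every analytic $f$ is (after the Grassmann truncation) a finite linear combination of monomials $\prod_{i\in I}\psi_{w_i}\prod_{j\in J}\psi_{b_j}$, so it suffices to verify the identity when $f$ is such a monomial. The left-hand side, by the second bullet applied to $K$, equals $\det(K^{-1}\restriction_{I,J})$. For the right-hand side, expand $\psi_1+\psi_2$ inside the monomial $\prod_i(\psi_{1,w_i}+\psi_{2,w_i})\prod_j(\psi_{1,b_j}+\psi_{2,b_j})$; by anticommutativity and the fact that the $\psi_1$ and $\psi_2$ integrals are independent, the only surviving terms pair each factor either with $\psi_1$ or with $\psi_2$, and after re-ordering the two groups of Grassmann variables the contribution of a given splitting is a product of a minor of $A^{-1}$ and a complementary minor of $B^{-1}$. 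Summing over the splittings with the appropriate signs and applying the Cauchy--Binet formula to $(A+B)^{-1}$-style sums, one recovers $\det((A^{-1}+B^{-1})\restriction_{I,J})=\det(K^{-1}\restriction_{I,J})$, matching the left-hand side.

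The main (minor) obstacle is bookkeeping of the signs in each step: the $\pm$'s coming from reordering Grassmann monomials into the canonical order, from the cofactor expansion, and from the Jacobi identity must all be tracked consistently. Once this is done for one bullet, the same sign convention propagates to the others, so the only genuinely non-trivial algebraic content is Cauchy--Binet in the splitting identity of the third bullet.
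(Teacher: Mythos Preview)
Your approach is essentially the same as the paper's: the first two bullets via classical minor/cofactor identities (the paper just says ``classical formula about minors''), and the third bullet by reducing to monomials. One small point on the third bullet: the determinant identity you need is not Cauchy--Binet (which concerns $\det(AB)$) but rather the Laplace-type expansion of $\det(X+Y)$ as a signed sum of complementary minors of $X$ and $Y$. The paper sidesteps this entirely by observing that the Wick formula makes the integral of any monomial a function of the two-point function alone; since the two-point function on the right-hand side is $A^{-1}_{b,w}+B^{-1}_{b,w}=K^{-1}_{b,w}$ (cross terms vanish), all higher monomials automatically agree. This is slightly cleaner than tracking the splittings and signs by hand, but your plan would also go through once the correct determinant identity is substituted.
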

\begin{proof}
    The first two lines are classical formulas about minors. For the last line, we can check that the Wick expansion matches separately for all monomials.
\end{proof}

Because of the expression of the last bullet point, we introduce the notation
\begin{equation*}
\int f(\psi) \d P_g(\psi)  := \int f(\psi) \exp(  \psi g^{-1} \psi ) \frac{\d \psi}{\det g^{-1}}\,,
\end{equation*}
where $g$ is a matrix. We emphasize that the Wick formula means that we can compute with these notations without going back to the original Grassmann integrals. In fact, we can even define the value of any such integral using only the Wick formula even when $g$ is not invertible. A particularly degenerate case of this will be when some rows and columns of $g$ are identically $0$. Suppose $g = g_1 + g_2$ with $g_1( i,j) = 0$ whenever $i\notin I$ or $j \notin J$. We can still use the formula
\begin{equation*}
\int f(\psi) \d P_g(\psi) = \int \Big[  \int f(\psi + \phi) \d P_{g_2}(\phi) \Big]  \d P_{g_1}(\psi)\,,
\end{equation*}
using in both integrals the full set of Grassmann variables. However, we see that, if any variable $\psi_{w_i}$ with $i \notin I$ comes out of the integral over the $\phi$ variables, then the corresponding term will give a $0$ contribution after the second integral. In other words, when expanding (the $w$ part of) the monomials in $\psi + \phi$ in $f$, the only meaningful terms come from the cases where we never choose any $\psi_{w_i}$ for $i \notin I$. Therefore, we would have the same combinatorics by replacing $\phi + \psi$ by $\phi + \psi \restriction_I$. After that transformation, it makes sense to only consider the outer integral with respect to $g_1$ as an integral over variables indexed in $I$ instead of the full set. The inner integral can still be over the full set of variables (if $g_2$ does not itself have rows or columns of $0$) with the sum performed only over the common coordinates.

Later we will use linear change of variables extensively. In this context the multilinearity of the Wick formula provides a very nice description.
\begin{lemma}
    Let $X_1, \ldots, X_n$ (resp.\ $Y_1, \ldots, Y_n$) be linear expressions depending on some Grassmann variables $\psi^+_k$ (resp.\ $\psi_k^-$) and let $g$ be a matrix with rows indexed by the $\psi^+$ and columns indexed by the $\psi^-$. We have  
    \[
    \int  \prod_{i=1}^n X_i \prod_{j=1}^n Y_j\ \d P_{g}( \psi ) = \det\big( g(X_i, Y_j) \big)_{1 \leq i,j \leq n}\,,
    \]
    where $g(X_i, Y_j) := \int  X_i Y_j \d P_g$ can be computed simply by bilinearity. 
\end{lemma}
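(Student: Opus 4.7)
The statement in the lemma appears to have a minor typo: as it is clearly a ``coordinate-free'' restatement of the Wick formula from Corollary~\ref{cor:grassmann}, the integrand should read $\prod_{i=1}^n X_i Y_i$ (or equivalently $\prod_i X_i \prod_j Y_j$ up to reordering), which is what I will prove below.

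The plan is to reduce the lemma to the Wick formula already established in Corollary~\ref{cor:grassmann} by pure multilinearity. First, since each $X_i$ is a linear form in the variables $\{\psi^+_k\}$ and each $Y_j$ is a linear form in the variables $\{\psi^-_\ell\}$, I write
\[
X_i = \sum_k a_{i,k}\, \psi^+_k\,, \qquad Y_j = \sum_\ell b_{j,\ell}\, \psi^-_\ell\,.
\]
Substituting these expressions, I would expand the product $\prod_{i=1}^n X_i \prod_{j=1}^n Y_j$ into a sum of monomials in the Grassmann variables, each of the form $\big(\prod_i a_{i,k_i}\big)\big(\prod_j b_{j,\ell_j}\big) \prod_i \psi^+_{k_i}\prod_j \psi^-_{\ell_j}$, and take the Grassmann integral inside this finite sum.

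Next, I would invoke the second bullet of Corollary~\ref{cor:grassmann}, which says
\[
\int \Big(\prod_i \psi^+_{k_i}\Big)\Big(\prod_j \psi^-_{\ell_j}\Big)\, \d P_g(\psi) = \det\big( g_{k_i, \ell_j} \big)_{1\le i,j \le n}\,,
\]
with $g_{k,\ell} = \int \psi^+_k \psi^-_\ell\, \d P_g$ (this is exactly the Wick formula expressed with the notation $g = K^{-1}$). Expanding the determinant as a signed sum over permutations and collecting the two sums over the indices $k_i, \ell_j$ on each side of the permutation, the whole expression becomes
\[
\sum_{\sigma \in S_n} \epsilon(\sigma) \prod_{i=1}^n \Big(\sum_{k,\ell} a_{i,k}\, g_{k,\ell}\, b_{\sigma(i),\ell}\Big) \,.
\]

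Finally, by bilinearity of $g(\cdot,\cdot)$, the inner parenthesis is exactly $g(X_i, Y_{\sigma(i)})$, so the expression reads $\sum_\sigma \epsilon(\sigma) \prod_i g(X_i, Y_{\sigma(i)}) = \det\big( g(X_i, Y_j)\big)_{1\le i,j \le n}$, as desired. There is no real obstacle here: the entire content of the lemma is the multilinearity of Grassmann integration, of the pairing $g(\cdot,\cdot)$, and of the determinant, combined with the Wick identity for pure monomials which has already been proved. The lemma is thus best viewed not as a new result but as a change of notation that will make subsequent manipulations (in particular linear changes of variables in Grassmann integrals) transparent.
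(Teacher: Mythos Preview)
Your proposal is correct, including your identification of the typo in the integrand (the product should be $\prod_i X_i Y_i$, as the right-hand side involves both the $X_i$ and the $Y_j$). The paper does not actually give a proof of this lemma; it states it as an immediate consequence of the Wick formula and multilinearity, which is exactly the derivation you wrote out.
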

The core of the proof of \cref{thm:gmt-refinement} (and of the original version) is a repeated application of the last point of \cref{cor:grassmann}. For this we introduce a further notation to describe the $\log$ of a Grassmann integral.
\begin{definition}\label{def:truncated-exp}
    The truncated expectation (with respect to a propagator $g$) is
    \[
    \cE^T_g ( X_1, \ldots, X_k, n_1, \ldots, n_k ) = \frac{\partial^{\sum n_i}}{\partial_{\lambda_1}^{n_1} \ldots \partial_{\lambda_k}^{n_k}} \log \int e^{\sum \lambda_i X_i} \d P_g(\psi) |_{\vec \lambda = 0}\,,
    \]
    where the $X_i$ are expressions in terms of the Grassmann variables $\psi_k$. It can be checked that the $n_i$ have exactly the same effect as repeating variables and that $\cE^T_g ( X_1, \ldots, X_k, n_1, \ldots, n_k )$ has the same algebra as $\prod X_i^{n_i}$. Write  $\cE^T_g(X_1,\ldots,X_k)$ for $n_i$ that are all $1$ with possibly repeated variables.
\end{definition}
Also, since by definition the truncated expectation is extracting coefficients from the log of the integral, it satisfies
\[
\int e^{X} \d P_g(\psi) = \exp\Big( \sum_n \frac{1}{n!} \cE_g^T( \underbrace{X, \ldots, X}_{n\text{ times}} ) \Big)\,,
\]
and more generally if we have some extra variables $\phi$ on which we are not integrating,
\[
\int e^{X}(\psi + \phi) \d P_g(\psi) = \exp\Big( \sum_n \frac{1}{n!} \cE^T_g( \underbrace{X(\cdot + \phi),\ldots,X(\cdot+\phi)}_{n\text{ times}} ) \Big)\,.
\]
There is an expression of the truncated expectation in terms of the regular expectation (which is basically the formula giving the cumulants of a random variable in terms of its moments) so, as for the regular expectation, if the total number of $+$ and $-$ variables are not equal then
$\cE_g^T( X_1, \ldots, X_k ) = 0$. Also, since $\int 1 \d P_g(\psi) = 1$ by construction, the truncated expectation with no variable is $0$. In particular, in the above two  formulas we can start the sum at $n=1$.

To bound a truncated expectation like the one above, we will use the following theorem (Battle--Brydges--Federbush formula, see \cite[Lemma 3]{GMT17}\footnote{Compared to that statement, the constants $c_i$ are just prefactors in their variables $X$ and we have a determinant and not a Pfaffian because we use two types of variables while they only use one.}):
\begin{theorem}\label{thm:truncated_expectation_bound}
    Consider $I_1, \ldots I_s$ some sets of indices (with as many $+$ terms as $-$ terms) and write $\psi_I = \prod_{i \in I} \psi_i$. If $s > 1$ then
    \[
    \cE^T_g ( \psi_{I_1} , \ldots, \psi_{I_s} ) = \sum_{T} \pm(T)\big( \prod_{e \in T} g_e \big) \int \det( G(T, \vec t) ) \d\P_T( \vec t )\,,
    \]
    where 
    \begin{itemize}
        \item the index $T$ in the sum is a tree over $\{1, \ldots, s\}$ with each edge marked by two indices in the associated $I_j$ and $\pm(T)$ is a sign depending on $T$.
        \item The product over $e$ runs over the edges of $T$ and $g_e$ is the propagator evaluated at the endpoints of $e$.
        \item $\P_T$ is a probability measure on $[0, 1]^{s^2}$ whose support is on matrices which can be written $t_{j,j'} = u_j \cdot u_{j'}$ for vectors $u_j$ of unit $L^2$-norms.
        \item $G( T, \vec t) $ is a size $\sum |I_j | - s +1$ matrix with entries indexed by all pairs of labels not appearing in $T$ and given by $G( T , \vec t)_{(j,i), (j', i')} = \vec{t}_{j, j'} g( (j,i), (j', i')  )$.
    \end{itemize}
\end{theorem}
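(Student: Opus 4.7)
The statement is the Battle--Brydges--Federbush tree expansion formula for truncated (connected) Gaussian Grassmann expectations, and my plan is to prove it by the classical interpolation argument, following the Brydges--Kennedy cluster expansion strategy. The skeleton is: pass from $\cE^T$ to a generating function and take logs; interpolate the propagator between groups using auxiliary parameters; repeatedly apply the fundamental theorem of calculus to generate an expansion over spanning trees on $\{1,\dots,s\}$; and recognize the leftover Wick contractions as a determinant.

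Concretely, introduce formal parameters $\lambda_1,\dots,\lambda_s$ and let
\[
Z(\vec\lambda,\sigma) := \int \exp\Bigl(\sum_j \lambda_j\psi_{I_j}\Bigr)\d P_{g(\sigma)}(\psi),
\]
where $g(\sigma)$ is obtained from $g$ by multiplying $g((j,i),(j',i'))$ by $\sigma_{jj'}$ for $j\ne j'$ (with $\sigma_{jj}=1$). By the third bullet of \cref{cor:grassmann}, as long as the modified kernel is a valid covariance (i.e.\ $\sigma_{jj'}=u_j\!\cdot\!u_{j'}$ for unit vectors, the standard convexity constraint used by Brydges--Kennedy) this generating function is well defined. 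At $\sigma\equiv 1$ we recover $Z(\vec\lambda)$ and at $\sigma_{jj'}=\delta_{jj'}$ it factorizes as $\prod_j Z_j(\lambda_j)$, so $\log Z$ vanishes to order $\prod_j\lambda_j$ at the decoupled point. Thus $\cE^T(\psi_{I_1},\dots,\psi_{I_s})=\partial_{\lambda_1}\cdots\partial_{\lambda_s}\log Z(\vec\lambda,1)\big|_{\vec\lambda=0}$ can be written as the integral along a path from $\sigma\equiv\delta$ to $\sigma\equiv 1$ of its derivative in the interpolating parameters.

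The core of the argument is the Brydges--Kennedy identity: iterating the fundamental theorem of calculus once per edge of a growing forest and using Leibniz to redistribute derivatives yields
\[
\partial_{\lambda_1}\cdots\partial_{\lambda_s}\log Z\big|_{\vec\lambda=0}
\;=\;\sum_T \int_{[0,1]^{s-1}}\!\!d\mu_T(\vec\tau)\;\bigl[\tfrac{\partial}{\partial\sigma}\bigr]^T\!\!\int \prod_j \psi_{I_j}\,dP_{g(\sigma(\vec\tau))}\Big|_{\text{tree value}},
\]
where the sum is over spanning trees $T$ on $\{1,\dots,s\}$ and $\mu_T$ is the explicit Brydges--Kennedy measure supported on $\sigma$'s of the Gram form $\sigma_{jj'}=u_j\!\cdot\! u_{j'}$. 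Each derivative $\partial_{\sigma_{jj'}}$ acting inside the Gaussian Grassmann integral brings down, by bilinearity of the covariance, a factor $g((j,i),(j',i'))$ for some choice of indices $i\in I_j$, $i'\in I_{j'}$; these choices are the edge labels recorded in the theorem. The factor $\pm(T)$ collects the signs produced by reordering Grassmann monomials to match the sign conventions of the Wick formula.

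Having extracted $s-1$ edge propagators $\prod_{e\in T}g_e$, what remains is a Gaussian Grassmann integral against the rescaled covariance $t_{jj'}g((j,i),(j',i'))$, with $t_{jj'}$ the Gram matrix inherited from $\mu_T$, and with the edge-labels of $T$ already consumed. Applying the Wick formula (\cref{cor:grassmann}, second bullet) to the remaining $\sum|I_j|-2(s-1)=\sum|I_j|-s+1$ pairs of variables expresses it as $\det(G(T,\vec t))$ with $G(T,\vec t)_{(j,i),(j',i')}=t_{jj'}g((j,i),(j',i'))$. Setting $d\P_T:=d\mu_T$ (viewed as a probability on the Gram parameters) and collecting signs gives the formula. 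The main subtlety to check carefully is the sign bookkeeping: one must verify that on the tree sector the permutations arising from reordering $\psi_{I_1}\cdots\psi_{I_s}$ so as to expose the tree-edge pairs separately from the remaining indices factor cleanly into $\pm(T)$ times the signature appearing in $\det(G(T,\vec t))$. This is the step where the argument, though conceptually transparent, is easiest to mishandle, and where induction on $s$ (or equivalently on the number of calls to the fundamental theorem of calculus) is the cleanest way to organize the proof.
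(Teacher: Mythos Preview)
The paper does not actually prove this statement: it is quoted as the Battle--Brydges--Federbush formula with a citation (``see \cite[Lemma 3]{GMT17}'') and used as a black box in \cref{sec:truncated_expectation}. So there is no ``paper's own proof'' to compare against; the result is imported from the literature.

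Your outline is the standard Brydges--Kennedy interpolation proof and is the right approach. A few points worth tightening if you actually write it out: (i) in this setting the variables are split into two colors ($+$ and $-$, i.e.\ black/white), so the edge labels on $T$ carry one index of each color and the remaining Wick contractions form a determinant rather than a Pfaffian (the paper's footnote makes exactly this remark); (ii) the dimension count should read $\sum|I_j|-2(s-1)$ remaining \emph{variables}, hence a square matrix of side $\tfrac12\sum|I_j|-(s-1)$, which matches the paper's ``size $\sum|I_j|-s+1$'' once one accounts for their convention that $|I_j|$ counts pairs; (iii) the sign bookkeeping you flag is indeed the only delicate point, and the cleanest way to handle it is to carry the Grassmann monomials through the interpolation without ever reordering, so that $\pm(T)$ records only the permutation needed at the very end to put the tree-edge pairs adjacent. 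None of this affects correctness; your plan is sound.
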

Note that we use two indices for each parameter of $g$ just above because we need to identify one of the $I_j$ and then the exact $i$ in it.

Let us expand a bit the definition of the matrix $G(T, \vec t)$. Start from the matrix of size $\sum |I_j |$ with rows and columns indexed respectively by the $\psi_k^+$ and $\psi_k^-$ in the $I_j$. We emphasize that if some $\psi_k$ appears in several of the $I_j$, we do want to repeat it also in the matrix so that we can really think of it as having blocks indexed in $\{1, \ldots, s\}$. Next, apply a Gram-type construction on the blocks, multiplying every entry in the block $j,j'$ by $t_{j,j'} = u_j \cdot u_{j'}$. Note that already at this point, having repeated entries in the $I_j$ does not make the determinant vanish. Finally, we remove the rows and columns associated with the (labeled) edges of the tree $T$ to get $G(T, \vec t)$.

In turn, $\det(G(T, \vec t))$ will be bounded by the Gram--Hadamard inequality, recalled next.
\begin{theorem}[Gram--Hadamard]\label{thm:hadamard}
    Let $v_i$, $\tilde v_i$ be two families of vectors in some Euclidean space. Let $G$ be the Gram matrix $G_{i,i'} = v_i \cdot \tilde{v}_{i'}$. Then
   $ |\det (G) | \leq \prod_i \lVert v_i \rVert  \cdot \lVert \tilde v_i \rVert $.
\end{theorem}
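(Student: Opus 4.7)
The plan is to reduce the Gram--Hadamard bound to the classical Hadamard inequality for positive semidefinite matrices together with a Cauchy--Schwarz step. First I would introduce the $n\times d$ matrices $V,\tilde V$ whose $i$-th rows are $v_i,\tilde v_i$ respectively, so that $G = V\tilde V^\top$; if $n>d$ then $G$ has rank at most $d<n$ and $\det G=0$, so it suffices to treat $n\leq d$. The factorization $G=V\tilde V^\top$ is the entire point: it separates the two families of vectors and turns the question into bounding determinants of the symmetric matrices $VV^\top$ and $\tilde V\tilde V^\top$.

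Next I would apply the Cauchy--Binet formula, writing
\begin{equation*}
\det G \;=\; \det(V\tilde V^\top) \;=\; \sum_{S} \det(V_{\cdot,S})\,\det(\tilde V_{\cdot,S}),
\end{equation*}
with $S$ ranging over $n$-element subsets of the $d$ columns. Cauchy--Schwarz on this finite sum of products gives
\begin{equation*}
|\det G|^2 \;\leq\; \Bigl(\sum_S \det(V_{\cdot,S})^2\Bigr)\Bigl(\sum_S \det(\tilde V_{\cdot,S})^2\Bigr) \;=\; \det(VV^\top)\,\det(\tilde V\tilde V^\top),
\end{equation*}
where the last equality is Cauchy--Binet again.

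It remains to bound $\det(VV^\top)$ and $\det(\tilde V\tilde V^\top)$. Both are positive semidefinite with diagonal entries $\|v_i\|^2$ and $\|\tilde v_i\|^2$, so the classical Hadamard inequality for PSD matrices (provable via Gram--Schmidt, or via AM--GM on the eigenvalues of $D^{-1/2}MD^{-1/2}$ with $D=\mathrm{diag}(M_{ii})$) yields $\det(VV^\top)\leq\prod_i\|v_i\|^2$ and the analogous bound for $\tilde V$. Combining and taking a square root gives $|\det G|\leq\prod_i\|v_i\|\,\|\tilde v_i\|$. Since this is a standard fact, there is no real obstacle; the only care needed is to keep track of the rectangular shapes and the direction of the Cauchy--Schwarz step, and to note that the degenerate case $n>d$ is automatic.
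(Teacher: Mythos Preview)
Your proof is correct. The paper, however, does not supply its own proof of this statement: it merely records the Gram--Hadamard inequality as a classical named result (Theorem~\ref{thm:hadamard}) and invokes it as a tool in the renormalization analysis. So there is no paper-side argument to compare against; your Cauchy--Binet / Cauchy--Schwarz reduction to the Hadamard bound for positive semidefinite matrices is one of the standard ways to establish the inequality, and all the steps are sound (including the disposal of the $n>d$ case via rank).
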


\subsection{Grassmann formulation of the problem}\label{sec:grassmann_formulation}

The goal of this section is to provide an expression in terms of Grassmann variables of the generating function of dimers. It is useful in order to extract probabilities from the generating function to also introduce extra edge weights which we encode following the notations from \cite{GMT20} as a function $A$ so we define
\begin{equation}\label{eq:Z(A)-def} \cZ_\fg( A) = \sum_\varphi \exp \Big(\sum_{x, r} \fg_r(\varphi \restriction_{B(x, r)} ) +\sum_e A(e) \one_{\{e \in \varphi\}}\Big) \one_{\{n_{\fb}(\varphi) = N^2 p_\fb \}} \one_{\{n_{\fc}(\varphi) = N^2 p_\fc \}}\,, \end{equation}
and a signed variant for the future Fourier inversion
\begin{equation}\label{eq:Z(a,b,c,A)-def}
\cZ_\fg( \vartheta_1, \vartheta_2, A) :=  \sum_{\varphi} \pm(k, \ell) \fa^{- Nk - N \ell} \fb^{N k(1+i\vartheta_1) } \fc^{N \ell(1 + i\vartheta_2)} \exp \Big[\sum_{r, x} \fg_r( \varphi \restriction_{B(x,r)} )  + \sum_e A(e) \one_{\{e \in \varphi\}}\Big]\,.
\end{equation}
Up to replacing $\fg_r$ by $\fg_r + \| \fg_r\|_\infty$ in the above definitions we can assume without loss of generality, henceforth in this section, that $\fg_r\geq 0$ for all $r$.
Throughout this section (and also in \cref{sec:sketch-renormalize}), we will always encounter $\fb^{1+i\vartheta_1},\fc^{1+i\vartheta_2}$ rather than $\fb,\fc$, and as such it will be useful to define
\begin{equation}\label{eq:bar-b-bar-c}
\fb_1 := \fb^{1+i\vartheta_1}\quad,\quad
\fc_2 := \fc^{1+i\vartheta_2}
\end{equation}
for brevity, noting that throughout these two sections, it will be legitimate to have complex-valued weights $\fa,\fb,\fc$.
The key result is the following proposition, replacing \cite[Prop.~1]{GMT20}:
\begin{proposition}\label{prop:grassmann_formulation}
    There exist a set $\Gamma$ of connected subsets of $\T$ and functions $\sw_A : \Gamma \to \R$ such that for all $ \vartheta_1, \vartheta_2$,
    \[
    \cZ_\fg( \vartheta_1, \vartheta_2, A) = \fa^{- N^2 p_\fa} \fb_1^{-N^2 p_\fb} \fc_2^{- N^2 p_\fc}\int \exp \Big( \vec{\psi}_b \cdot K(\vartheta_1, \vartheta_2)\cdot \vec{\psi}_w + \sum_{\gamma \in \Gamma} e^{i \vartheta_1 n_\fb(\gamma)+ i \vartheta_2 n_\fc(\gamma)}\sw_A(\gamma) \psi_\gamma \Big) \d \psi
    \]
    where $\psi_\gamma := \prod_{v \in \gamma} \psi_v$ and $n_\fb(\gamma), n_\fc(\gamma)$ are the number of type $\fb$ and $\fc$ edges in a tiling of $\gamma$.

    Furthermore, for every $C^\star$ if $\beta$ and $\alpha$ are large enough and $\| A\|_\infty \leq 1$, the weights satisfy $0 \leq \sw_A(\gamma) \leq \exp( - C^\star |\gamma| )$ for all $\gamma$. 
\end{proposition}
When $K(\vartheta_1, \vartheta_2)$ is invertible, the integral from above can be rewritten as \[\det( K( \vartheta_1, \vartheta_2)) \int \exp( \sum \sw_A(\gamma) \psi_\gamma) \d P_g(\psi)\,,\] with $g$ the propagator associated to $K^{-1}$. 
We will further prove that the weights $\sw_A$ satisfy the so-called lattice Ward identities \cite[Eq.~(3.34)--(3.35)]{GMT20}. They involve another generating function that is now seen as depending on another Grassmann field $\tilde \psi$ which anti-commutes and has the same black and white type as $\psi$. Taking into account \cref{prop:grassmann_formulation} we define
\begin{align*}
\cZ_\fg( \vartheta_1, \vartheta_2, A, \tilde \psi) := \fa^{- N^2 p_\fa} \fb_1^{-N^2 p_\fb} \fc_2^{- N^2 p_\fc}\int \exp \Big( &\vec{\psi}_b \cdot K(\vartheta_1, \vartheta_2)\cdot \vec{\psi}_w \\&+\sum_{\gamma \in \Gamma}  e^{i \vartheta_1 n_\fb(\gamma)+ i \vartheta_2 n_\fc(\gamma)} \sw_A(\gamma) \psi_\gamma + \sum_v \psi_v \tilde \psi_v\Big) \d \psi\,.
\end{align*}
The lattice Ward identities are then
\begin{lemma}\label{lem:lattice_ward}
    For every black vertex $b$ and white vertices $w, w'$,
    \[
    \sum_{b' \sim w'} \partial_{A(b', w')}\partial_{\tilde \psi_b}\partial_{\tilde \psi_w} \log \cZ_\fg( \vartheta_1, \vartheta_2, A, \tilde \psi) |_{A = 0, \tilde \psi = 0} = - \delta_{w, w'} \partial_{\tilde \psi_b}\partial_{\tilde \psi_w} \log \cZ_\fg( \vartheta_1, \vartheta_2, A, \tilde \psi) |_{A = 0, \tilde \psi = 0} \,,
    \]
    and similarly for the vertices $b, b', w$,
        \[
    \sum_{w' \sim b'} \partial_{A(b', w')}\partial_{\tilde \psi_b}\partial_{\tilde \psi_w} \log \cZ_\fg( \vartheta_1, \vartheta_2, A, \tilde \psi) |_{A = 0, \tilde \psi = 0} = - \delta_{b, b'} \partial_{\tilde \psi_b}\partial_{\tilde \psi_w} \log \cZ_\fg( \vartheta_1, \vartheta_2, A, \tilde \psi) |_{A = 0, \tilde \psi = 0} \,.
    \]
\end{lemma}

To understand the proof of \cref{prop:grassmann_formulation}, it is useful to first reformulate a bit the final formula as follows:
\begin{align*}
    \int \exp \Big(\vec{\psi}_b & \cdot  K(\vartheta_1, \vartheta_2)\cdot \vec{\psi}_w + \sum_{\gamma \in \Gamma}   e^{i \vartheta_1 n_\fb(\gamma)+ i \vartheta_2 n_\fc(\gamma)} \sw_A(\gamma) \psi_\gamma \Big) \d \psi \\ 
    & =  \int \sum_{ \{ \gamma_i \}}  \prod_i  e^{i \vartheta_1 n_\fb(\gamma_i)+ i \vartheta_2 n_\fc(\gamma_i)} \sw_A(\gamma_i) \psi_{\gamma_i} e^{\vec{\psi}_b \cdot K(\vartheta_1, \vartheta_2)\cdot \vec{\psi}_w} \d \psi \\
    & = \sum_{ \{ \gamma_i \} \text{ disjoint} } \prod_i  e^{i \vartheta_1 n_\fb(\gamma_i)+ i \vartheta_2 n_\fc(\gamma_i)} \sw_A(\gamma_i) \det (K(\vartheta_1, \vartheta_2) \restriction_{\T \setminus \bigcup \gamma_i} )\,.
\end{align*}
The first line is simply an expansion of the exponential and in the second line, the condition that the $\gamma_i$ must be disjoint comes from the anti-commutativity of the variables (the ordering of the variables in each $\psi_\gamma$ can be chosen so that no extra sign appears in the second line). By Kasteleyn theory, if one can further associate to each $\gamma_i$ a pattern with support $\gamma_i$ which with a slight abuse of notation we still call $\gamma_i$, one then gets
\begin{multline}
\label{eq:midway_grassmann}
 \int \exp \Big(\vec{\psi}_b \cdot K(\vartheta_1, \vartheta_2)\cdot \vec{\psi}_w + \sum_{\gamma \in \Gamma} e^{i \vartheta_1 n_\fb(\gamma)+ i \vartheta_2 n_\fc(\gamma)}\sw_A(\gamma) \psi_\gamma \Big) \d \psi \\
 = \fa^{N^2 p_\fa} \fb_1^{N^2 p_\fb} \fc_2^{N^2 p_\fc}\sum_{\varphi} \pm(k, \ell) \fa^{ - Nk - N \ell} \fb_1^{N k } \fc_2^{N \ell}  \sum_{ \{ \gamma_i \} \text{ disjoint} } \prod_i \frac{\sw_A(\gamma_i)}{\prod_{e \in \gamma_i} K(e)}  \one_{\{ \gamma_i \in \varphi\}}\,,
\end{multline}
so our goal is now to get to an expression with the form of the right-hand side of \cref{eq:midway_grassmann}. Let us note that while a straightforward expansion of the exponential in the definition of $\cZ_\fg$ does provide such an expression, the resulting sets $\gamma$ are then unions of balls and their weights actually grow exponentially with their area because of the necessary enumeration over all possible tilings which is not compensated by the decay of the functions $\fg_r$ with the radius $r$ (see \cref{rem:boundary-trick} for more on this). Instead, we will reformulate our partition function in terms of the boundaries of connected sets of patterns, leveraging the facts that (1) our patterns are balls (and in particular, connected); and (2) our base (dimer) model has a Markov property.

\begin{proof}[Proof of \cref{prop:grassmann_formulation}]
    The first step is to write each $\fg_r( \varphi \restriction_{B(x, r)})$ as a sum of indicators and then to expand the exponential leading to
   \begin{align}
         \cZ_\fg( \vartheta_1, \vartheta_2, A) 
         &= \sum_{\varphi} \pm(k, \ell) \fa^{- Nk - N \ell} \fb_1^{N k} \fc_2^{N \ell} \exp \Big[\sum_x \sum_{P} \fg_{r(P)}(P) \one_{\{ \tau_x P \in \varphi\}}\Big] \nonumber\\
       & =\sum_{\varphi} \pm(k, \ell) \fa^{ - Nk - N \ell} \fb_1^{N k} \fc_2^{N \ell}  \prod_x \prod_{P} \big(1 + (\exp( \fg(P) ) -1) \one_{\{\tau_x P \in \varphi\}}\big) \nonumber \\
      &  = \sum_{\varphi} \pm(k, \ell) \fa^{ - Nk - N \ell} \fb_1^{N k} \fc_2^{N \ell}  \sum_{ \{ (P_j, x_j) \} } \prod_j (\exp( \fg(P_j) ) -1) \one_{\{ \forall j\, \tau_{x_j} P_j \in \varphi\}} \,. \nonumber
\end{align}
We stress that, since our functions $\fg_r$ are supported on balls $B(0,r)$, the patterns $\tau_x P$ considered above are nothing but tilings of $B(x,r)$'s (and in particular they are \emph{connected}). Recall that when we refer to a tiling of $B(x,r)$, we mean a collection of lozenges that covers its interior as well as every triangle that touches $\partial B(x,r)$.

Note that in the second line, the notation highlights the fact that we can read the value of $r$ from the pattern $P$ (therefore in the later line we drop the redundant index). We can also include the effect of the extra weights $A$ just as patterns consisting of a single edge and with a slight abuse of notation we still call their weights $\fg(P)$ (we will go back to an explicit notation for single edges when $A$ will vary later). It will also be convenient later to assume that before adding the single edges $\fg$ only gave weights to patterns of size larger than $4$, this can always be done by adding irrelevant edges as needed.

\begin{figure}
\vspace{-0.1in}
    \includegraphics[width=0.5\textwidth]{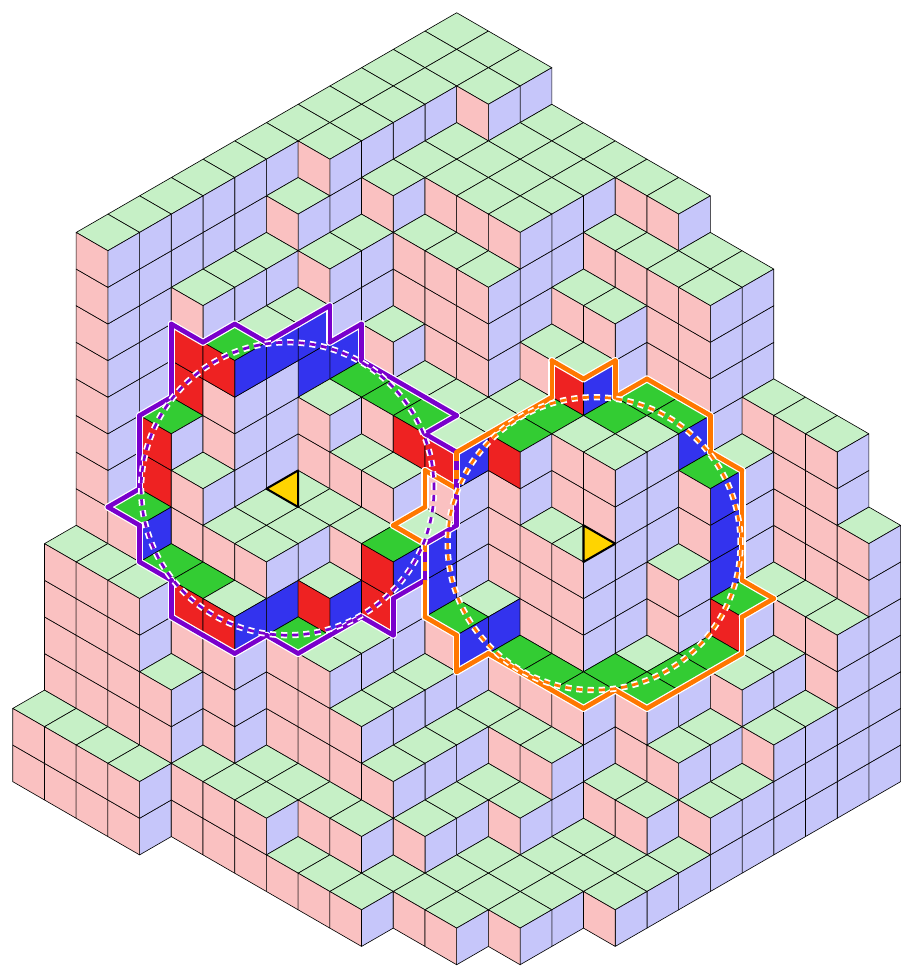}
    \vspace{-0.1in}
    \caption{An example of a boundary $\gamma$ associated with two connected patterns $P_1, P_2$. Here $P_1$ and $P_2$ are the tilings inside the purple and orange circuits and they are tilings of $B(x_i,4)$ for the two triangles $x_i$ highlighted in yellow. The set of lozenges comprising $\gamma$ is highlighted in bold colors. In this example, $P_1 \cup P_2$ is simply connected, so this is the inner boundary of $P_1 \cup P_2$; in general it would only be the \emph{outermost component} of it. Note that because of the convention on the definition of the ball $B(x, r)$ in a tiling, in this example $P_1$ and $P_2$ are connected even though the Euclidean balls around $x_1,x_2$ do not intersect.}
    \label{fig:gamma-example}
    \vspace{-0.15in}
\end{figure}

We say that two patterns $P_1, P_2$ are connected if $P_1 \cap P_2$ contains at least one full triangle. Note that in the last equation above, any set $\{ P_j\}$ containing at least two connected patterns disagreeing in their intersection gives a contribution $0$. For $\fP$ a connected collection of compatible patterns that does not wrap around the torus, let $\tilde \gamma$ be the boundary of the unbounded component of $\C \setminus \fP$ which we can see as a simple curve using edges of the triangular lattice. Let $\gamma$ then be the union of all closed lozenges of $\fP$ with a nonempty intersection with $\tilde \gamma$; see \cref{fig:gamma-example} for an illustration.

{
\newcommand{\tikzdisk}[4]{%
  \fill[#4, fill opacity=0.15] (#1,#2) circle (#3);
  \draw[#4, dashed, dash pattern=on 4pt off 3pt, line width=1pt]
        (#1,#2) circle (#3);}

\begin{figure}
\vspace{-0.1in}
\begin{tikzpicture}[scale=0.6]

  \draw[thick,fill=gray!10] (0,1) rectangle (10,9);
  \begin{scope}
    \clip (0,0) rectangle (10,10);   % the seam cuts the crossing disk

    \tikzdisk{1.30}{6.70}{1.10}{blue!70!black}
    \tikzdisk{3.00}{6.45}{1.05}{orange!90!black}
    \tikzdisk{4.70}{6.45}{1.10}{blue!70!black}
    \tikzdisk{6.40}{6.45}{1.05}{orange!90!black}
    \tikzdisk{8.00}{6.70}{1.05}{blue!70!black}
    \tikzdisk{8.80}{6.45}{1.00}{orange!90!black}

    % crossing disk centred ON the seam: two complementary copies
    % (centres differ by exactly L, same y and r) -> they stitch
    \tikzdisk{10.00}{6.55}{1.05}{brown!80!black}
    \tikzdisk{0.00}{6.55}{1.05}{brown!80!black}

    % necklace 1
    \tikzdisk{4.70}{5.03}{0.66}{blue!55!cyan}
    \tikzdisk{4.70}{3.88}{0.66}{red!70!orange}
    \tikzdisk{3.70}{3.30}{0.62}{blue!55!cyan}
    \tikzdisk{2.70}{3.88}{0.66}{red!70!orange}
    \tikzdisk{2.70}{5.03}{0.66}{blue!55!cyan}

    % necklace 2
    \tikzdisk{8.60}{5.03}{0.66}{blue!55!cyan}
    \tikzdisk{8.60}{3.88}{0.60}{red!70!orange}
    \tikzdisk{7.60}{3.30}{0.66}{blue!55!cyan}
    \tikzdisk{6.60}{3.88}{0.62}{red!70!orange}
    \tikzdisk{6.60}{5.03}{0.66}{blue!55!cyan}

    % thick purple OUTER boundary
    \draw[color={rgb:red,128;green,0;blue,128}, , line width=1.7pt, line join=round, line cap=round]
      (0.76,5.85) -- (0.67,5.86) -- (0.49,5.73) -- (0.31,5.64) -- (0.16,5.60) -- (0.00,5.59);
    \draw[color={rgb:red,128;green,0;blue,128}, , line width=1.7pt, line join=round, line cap=round]
      (0.00,7.51) -- (0.13,7.50) -- (0.29,7.47) -- (0.52,7.38) -- (0.75,7.55) -- (0.91,7.63) -- (1.12,7.69) -- (1.33,7.71) -- (1.59,7.67) -- (1.84,7.56) -- (2.02,7.41) -- (2.21,7.15) -- (2.26,7.12) -- (2.32,7.13) -- (2.49,7.26) -- (2.68,7.35) -- (2.87,7.40) -- (3.07,7.41) -- (3.26,7.37) -- (3.44,7.31) -- (3.61,7.19) -- (3.77,7.04) -- (3.83,7.02) -- (3.88,7.04) -- (4.07,7.24) -- (4.26,7.36) -- (4.49,7.44) -- (4.73,7.46) -- (4.96,7.43) -- (5.16,7.35) -- (5.36,7.22) -- (5.53,7.04) -- (5.59,7.02) -- (5.64,7.04) -- (5.78,7.19) -- (5.95,7.30) -- (6.13,7.37) -- (6.32,7.41) -- (6.51,7.40) -- (6.70,7.36) -- (6.88,7.28) -- (7.06,7.16) -- (7.12,7.15) -- (7.16,7.18) -- (7.34,7.40) -- (7.54,7.54) -- (7.79,7.64) -- (8.05,7.66) -- (8.22,7.63) -- (8.39,7.58) -- (8.54,7.49) -- (8.71,7.36) -- (9.00,7.34) -- (9.28,7.23) -- (9.34,7.25) -- (9.58,7.41) -- (9.80,7.49) -- (10.00,7.50);
    \draw[color={rgb:red,128;green,0;blue,128}, , line width=1.7pt, line join=round, line cap=round]
      (10.00,5.60) -- (9.95,5.59) -- (9.71,5.63) -- (9.42,5.76) -- (9.22,5.64) -- (9.01,5.55) -- (8.99,5.50) -- (8.99,5.46) -- (9.10,5.31) -- (9.15,5.16) -- (9.17,5.04) -- (9.16,4.91) -- (9.12,4.79) -- (9.05,4.68) -- (8.94,4.58) -- (8.80,4.49) -- (8.77,4.42) -- (8.80,4.36) -- (8.98,4.22) -- (9.08,4.06) -- (9.11,3.85) -- (9.06,3.66) -- (9.00,3.56) -- (8.92,3.48) -- (8.82,3.42) -- (8.71,3.38) -- (8.52,3.38) -- (8.40,3.41) -- (8.28,3.48) -- (8.20,3.47) -- (8.17,3.41) -- (8.17,3.26) -- (8.15,3.14) -- (8.05,2.96) -- (7.97,2.87) -- (7.87,2.80) -- (7.76,2.75) -- (7.64,2.73) -- (7.52,2.74) -- (7.39,2.77) -- (7.28,2.83) -- (7.18,2.91) -- (7.07,3.10) -- (7.03,3.23) -- (7.03,3.38) -- (6.99,3.44) -- (6.92,3.45) -- (6.80,3.39) -- (6.69,3.36) -- (6.51,3.36) -- (6.31,3.44) -- (6.15,3.59) -- (6.10,3.70) -- (6.07,3.81) -- (6.07,3.92) -- (6.09,4.03) -- (6.18,4.21) -- (6.27,4.29) -- (6.38,4.37) -- (6.41,4.43) -- (6.38,4.50) -- (6.25,4.58) -- (6.15,4.68) -- (6.08,4.79) -- (6.04,4.90) -- (6.03,5.03) -- (6.04,5.16) -- (6.09,5.28) -- (6.17,5.41) -- (6.18,5.47) -- (6.14,5.52) -- (5.97,5.59) -- (5.84,5.67) -- (5.62,5.87) -- (5.57,5.88) -- (5.53,5.86) -- (5.38,5.70) -- (5.14,5.54) -- (5.11,5.48) -- (5.11,5.43) -- (5.20,5.31) -- (5.25,5.18) -- (5.27,5.06) -- (5.26,4.94) -- (5.23,4.82) -- (5.17,4.71) -- (5.08,4.61) -- (4.96,4.52) -- (4.93,4.45) -- (4.96,4.39) -- (5.07,4.31) -- (5.15,4.22) -- (5.25,4.04) -- (5.27,3.92) -- (5.26,3.80) -- (5.24,3.69) -- (5.19,3.58) -- (5.11,3.49) -- (5.02,3.41) -- (4.91,3.35) -- (4.80,3.32) -- (4.59,3.32) -- (4.48,3.36) -- (4.35,3.43) -- (4.28,3.42) -- (4.23,3.36) -- (4.23,3.23) -- (4.20,3.11) -- (4.10,2.95) -- (3.92,2.82) -- (3.71,2.77) -- (3.59,2.78) -- (3.48,2.82) -- (3.30,2.95) -- (3.20,3.11) -- (3.17,3.23) -- (3.17,3.36) -- (3.13,3.42) -- (3.05,3.43) -- (2.92,3.36) -- (2.79,3.32) -- (2.57,3.32) -- (2.45,3.37) -- (2.34,3.44) -- (2.25,3.53) -- (2.19,3.63) -- (2.13,3.84) -- (2.16,4.06) -- (2.25,4.23) -- (2.44,4.39) -- (2.47,4.46) -- (2.44,4.51) -- (2.29,4.63) -- (2.19,4.77) -- (2.14,4.94) -- (2.14,5.11) -- (2.17,5.23) -- (2.23,5.35) -- (2.32,5.45) -- (2.44,5.54) -- (2.47,5.61) -- (2.45,5.66) -- (2.28,5.81) -- (2.15,5.99) -- (2.11,6.02) -- (2.05,6.02) -- (1.89,5.88) -- (1.74,5.79) -- (1.57,5.73) -- (1.40,5.69) -- (1.23,5.69) -- (1.06,5.72) -- (0.90,5.77) -- (0.76,5.85);

   \draw[color={rgb:red,128;green,0;blue,128}, line width=1.7pt, line join=round]
      (6.89,5.55) -- (7.04,5.39) -- (7.12,5.26) -- (7.16,5.11) -- (7.17,4.96) -- (7.13,4.82) -- (7.07,4.70) -- (6.97,4.59) -- (6.82,4.50) -- (6.79,4.42) -- (6.82,4.37) -- (6.96,4.27) -- (7.06,4.14) -- (7.12,4.00) -- (7.14,3.79) -- (7.18,3.76) -- (7.23,3.75) -- (7.43,3.85) -- (7.63,3.87) -- (7.80,3.83) -- (7.98,3.74) -- (8.05,3.75) -- (8.09,3.80) -- (8.10,3.98) -- (8.16,4.13) -- (8.24,4.24) -- (8.40,4.35) -- (8.43,4.41) -- (8.41,4.49) -- (8.24,4.59) -- (8.13,4.71) -- (8.06,4.84) -- (8.03,4.99) -- (8.04,5.14) -- (8.08,5.27) -- (8.19,5.42) -- (8.34,5.55) -- (8.36,5.60) -- (8.35,5.65) -- (8.20,5.75) -- (7.93,5.74) -- (7.69,5.79) -- (7.51,5.87) -- (7.35,5.99) -- (7.29,6.00) -- (7.24,5.98) -- (7.10,5.79) -- (6.90,5.62) -- (6.89,5.55) -- cycle;
  \end{scope}   

\filldraw[color=green!50!black, pattern=north east lines, preaction={fill=green!15}, pattern color=green!50!black] (7.6,5.7) rectangle (7.67,7.7);

\filldraw[color=green!50!black, pattern=north east lines, preaction={fill=green!15}, pattern color=green!50!black] (7.6,4) rectangle (7.67,2.6);

\end{tikzpicture}
\vspace{-0.2in}
\caption{An example of a boundary $\gamma$ of a connected collection of compatible patterns~$\fP$ that wraps around the torus. Individual patterns are represented by balls, the additional cut is made of the two green lines and the resulting boundary $\gamma$ is the union of the cut and the purple curve. Note that, for the represented choice of a cut, $\fP \setminus \gamma$ has the topology of an annulus, the inner hole is on the left and everything else is considered outside of $\fP \setminus \gamma$.}
\label{fig:gamma-branch-cut}
\vspace{-0.1in}
\end{figure}
}

For $\fP$ a connected collection of compatible patterns that wraps around the torus, we first choose cuts using edges of the triangular lattice and staying inside the support of $\fP$ such that $\fP$ minus the cuts does not wrap around the torus anymore. Note further that, since we do not ask the cuts to consist of a bounded number of components, it is easy to make sure that the total length of the cuts is at most $2N$. We then let $\tilde \gamma$ be the outer boundary of $\fP$ minus the cuts, and as before we let $\gamma$ be the set of (closed) lozenges of $\fP$ with a nonempty intersection with $\tilde \gamma$. Note that most likely some lozenges of $\gamma$ will sit across the cut while in other spaces $\gamma$ will have ``width $2$.'' For any $\gamma$ obtained from the above procedure, we let $\operatorname{Int}(\gamma)$ denote the union of the bounded components of $\C \setminus \gamma$. See \cref{fig:gamma-branch-cut} for an illustration.

With a slight abuse of terminology, we will call the sets $\gamma$ obtained from the above procedure boundaries (even though they also contain lozenges, and that the cuts somewhat modify the notion of a boundary), and we let $\Gamma$ be the set of all $\gamma$ created by the above procedure.

We will need the following three functions associated to a boundary $\gamma$:
\begin{align*}
E_i(\gamma, A) & = \E\bigg( \exp\Big(\sum_x \sum_{\substack{P \text{ s.t.}\\ \tau_x P \subset \Int(\gamma)}} \fg(P) \one_{\{ \tau_x P \in \varphi\}}\Big) \bigg) 
= \E \bigg(  \sum_{ \substack{\{ (P_j, x_j) \}\text{ s.t.}\\  \tau_{x_j} P_j\subset \Int(\gamma) } }\prod_j ( e^{\fg(P_j)} -1) \one_{\{ \tau_{x_j} P_j \in \varphi\}} \bigg)\,; \\
E_c( \gamma, A) & = \E \bigg(  \sum_{ \substack{\{ (P_j,x_j)\} \text{ s.t.} \\ \partial( \bigcup P_j) = \gamma} } \prod_j ( e^{\fg(P_j)} -1) \one_{\{ \tau_{x_j} P_j \in \varphi\}} \bigg) \,;\\
\tilde \sw_A(\gamma) &= \frac{E_c(\gamma, A)}{E_i(\gamma, A)}\,.
\end{align*}
In the definition of $E_c$, the condition $\partial( \bigcup P_i) = \gamma$ is a shorthand for the fact that $\gamma$ is associated with the outermost connected component of $\bigcup P_i$ by the construction of the previous paragraph. We will also say that the set $\{(P_j, x_j)\}$ is compatible with $\gamma$ in that case. All expectations are with respect to the uniform measure on tilings of $\operatorname{Int}(\gamma)$ (with the convention for $E_c$ that $\gamma$ is appended to $\varphi$). With these notations, we come back to the sequence of equalities above, and see that
\begin{align}
    \cZ_\fg( \vartheta_1, \vartheta_2, A) & = \sum_{\varphi} \pm(k, \ell) \fa^{  - Nk - N \ell} \fb_1^{N k } \fc_2^{N \ell} \sum_{ \{ (P_j,x_j) \} } \prod_j \big(\exp( \fg(P_j) ) -1\big) \one_{\{ \tau_{x_j} P_j \in \varphi\}} \nonumber \\
    & = \sum_{\varphi} \pm(k, \ell) \fa^{  - Nk - N \ell} \fb_1^{N k } \fc_2^{N \ell} \sum^*_{ \{ \gamma_i\} } \prod_i \sum_{  \substack{\{(P_j,x_j)\}  \text{ s.t.}\\  \partial( \bigcup \tau_{x_j} P_j) = \gamma_i }} \prod_j \big(\exp( \fg(P_j) ) -1\big) \one_{\{  \tau_{x_j} P_j \in \varphi\}} \nonumber \\
    & = \sum_{\varphi} \pm(k, \ell) \fa^{  - Nk - N \ell} \fb_1^{N k } \fc_2^{N \ell} \sum^*_{\{ \gamma_i \} } \prod_i \one_{\{\gamma_i \in \varphi\}} E_c( \gamma_i )\,, \label{eq:gamma_induction}
\end{align}
where $\sum^*_{\{\gamma_i\}}$ goes over collections of $\gamma_i$ that are disjoint, and their interiors are also disjoint. From the first and second line, we simply group patterns according to the set of outermost boundaries~$\gamma_i$. From the second to the third line, we apply the Markov property of uniform tilings to the $\gamma_i$ to replace for each $\gamma_i$ the indicator $\prod_j \one_{\{\tau_{x_j} P_j \in \varphi\}}$ by $\E \big( \prod_j \one_{\{\tau_{x_j} P_j \in \varphi_i\}}\big)$ for a random tiling $\varphi_i$ of the interior of $\gamma_i$.
(Inside each $\Int(\gamma_i)$, the measure with weights $\fa, \fb, \fc$ and the uniform measure agree by gauge invariance. Equivalently, in a domain that does not wrap around the torus, the relative number of dimers of each type is always determined by the domain. Further note that the Markov property here is applied in a nonstandard form, as we want to keep the global sum over $\varphi$ that tile the whole torus. That is why we introduce an expectation term $E_c(\gamma_i)$ per $\gamma_i$ as opposed to a partition function for $\Int(\gamma_i)$.)

We now want to iterate the previous computation. As we introduced the weights $\tilde \sw_A$ to change $E_c$  into the simpler $E_i$, we can now again group the collections of patterns in each $E_i$ to get that
\begin{align*}
    \cZ_\fg( \vartheta_1, \vartheta_2, A)   & = \sum_{\varphi} \pm(k, \ell) \fa^{  - Nk - N \ell} \fb_1^{N k } \fc_2^{N \ell} \sum^*_{\{ \gamma_i \} } \prod_i \one_{\{\gamma_i \in \varphi\}} \tilde \sw_A(\gamma_i) E_i( \gamma_i ) \\ 
     &= \sum_{\varphi} \pm(k, \ell) \fa^{  - Nk - N \ell} \fb_1^{N k } \fc_2^{N \ell} \sum^*_{\{ \gamma_i \} } \prod_i \one_{\{\gamma_i \in \varphi\}} \tilde \sw_A(\gamma_i) \\ 
&\quad\qquad\qquad\qquad\qquad\qquad\qquad \cdot \E\bigg( \sum_{\{ \gamma^{(i)}_j\} }^* \prod_j \sum_{\substack{\{ ( P_\ell, x_\ell)\} \text{ s.t.} \\ \partial (\bigcup \tau_{x_\ell} P_\ell) = \gamma^{(i)}_j }} \prod_\ell (e^{\fg( P_\ell)} -1) \one_{ \{\tau_{x_\ell} P_\ell \in \varphi^{(i)}\}}
     \bigg)\,.
\end{align*}
Using the Markov property as above, we deduce  that
\begin{align*}
    \cZ_\fg( \vartheta_1, \vartheta_2, A)  
     &= \sum_{\varphi} \pm(k, \ell) \fa^{  - Nk - N \ell} \fb_1^{N k } \fc_2^{N \ell} \sum^*_{\{ \gamma_i \} } \prod_i \one_{\{\gamma_i \in \varphi\}} \tilde \sw_A(\gamma_i) \E\Big( \sum_{\{ \gamma^{(i)}_j\} }^* \prod_j E_c( \gamma_j^{(i)} )\one_{\{\gamma^{(i)}_j \in \varphi^{(i)}\}} \Big).
    \end{align*}
Applying the Markov property again but in the ``other direction'', we can then move the indicators back to $\varphi$ yielding
\begin{align*}
    \cZ_\fg( \vartheta_1, \vartheta_2, A)  
     &= \sum_{\varphi} \pm(k, \ell) \fa^{  - Nk - N \ell} \fb_1^{N k } \fc_2^{N \ell} \sum^*_{\{ \gamma_i \} } \prod_i \one_{\{\gamma_i \in \varphi\}} \tilde \sw_A(\gamma_i) \sum_{\{ \gamma^{(i)}_j\} }^* \prod_j  \one_{\{\gamma^{(i)}_j \in \varphi\}} E_c( \gamma_j^{(i)} ).
    \end{align*}

The last equation is similar to \cref{eq:gamma_induction} but with a two-level collection of boundaries instead of non-nested ones. We can iterate the procedure ad libitum which eventually creates arbitrary collections of disjoint boundaries, i.e.,
\[
\cZ_\fg( \vartheta_1, \vartheta_2, A) = \sum_{\varphi} \pm(k, \ell) \fa^{ - Nk - N \ell} \fb_1^{N k } \fc_2^{N \ell} \sum_{ \{ \gamma_i \} \text{ disjoint} } \prod_i \tilde \sw_A(\gamma_i) \one_{\{ \gamma_i \in \varphi\}}\,.
\]
This is of the form given in \cref{eq:midway_grassmann} with
\[
\sw_A(\gamma) := \prod_{e\in \gamma} K(e) \tilde \sw_A(\gamma) = \prod_{e\in \gamma} K(e) \frac{ E_c( \gamma)}{E_i(\gamma)}
\,,\]
so the proof of the first part of the proposition is complete. 

We turn to the bound on $\sw_A(\gamma)$. We first note that when we consider a connected set of compatible patterns $\fP$, the single edge patterns counted in $A$ cannot affect the support of $\fP$. Hence, in $E_c( \gamma )$, one can always choose the set of single edge patterns independently of the larger ones since they have no effect on the compatibility condition, yielding
\begin{equation}\label{eq:Ec_expression}
E_c( \gamma ) = \E\bigg[ \exp\Big( \sum_{e} A(e) \one_{\{e \in \varphi\}}\Big) \cdot \sum_{ \{ (P_j,x_j) \} \text{ compatible, }|P_j| > 2} \prod_j\big(e^{\fg(P_j)} - 1\big)\one_{\{\tau_{x_j} P_j \in \varphi\}}\bigg] \geq 0\,.
\end{equation}
For patterns of size larger than $2$, compatibility implies in particular that $\gamma \subset \cup_j P_j$ so we upper bound $E_c( \gamma )$ by only keeping this condition (this uses the positivity of $\fg$, hence  does not work for the $A$ weights). Separating the patterns in the interior and the ones intersecting $\gamma$ then yields
\begin{align*}
0 \leq E_c( \gamma ) \leq \E\bigg[ &\exp\Big( \sum_{e} A(e) \one_{\{e \in \varphi\}} +\sum_x \sum_{ \substack{P \text{ s.t.}\\ \tau_x P \in \operatorname{Int}(\gamma), |P| > 2}} \fg(P) \one_{\{\tau_x P \in \varphi\}} \Big) \\
& \cdot\sum_{ \substack{\{(P_j, x_j)\}\text{ s.t.}\\ \bigcup \tau_{x_j} P_j \supset \gamma,\, |P_j| > 2,\, P_j \cap \gamma \neq \emptyset}} \prod_j (e^{\fg(P_j)}-1)\one_{\{\tau_{x_j} P_j \in \varphi\}} \bigg]\,.
\end{align*}
To treat the last term in the exponent above, first we control the maximal weight \[\fm_\gamma := \max_{\substack{\{(P_j, x_j)\}\text{ s.t.}\\ \bigcup \tau_{x_j} P_j \supset \gamma,\, |P_j| > 2,\, P_j \cap \gamma \neq \emptyset}} \prod_j (e^{\fg(P_j)}-1)\one_{\{\tau_{x_j} P_j \in \varphi\}}\,.\] A pattern associated to a ball of radius $r$ intersects at most $6\cdot 2 \pi r$ tiles of $\gamma$ so, for any collection with boundary $\gamma$ the radii $r_j$ must satisfy $\sum_j r_j \geq \frac{|\gamma|}{12 \pi}$. The decay of $\| \fg_r \|$ therefore immediately shows that $\fm_\gamma \leq e^{ -  \frac{C |\gamma|}{12 \pi}}$ where $C$ comes from the statement of \cref{thm:gmt-refinement} and can be taken arbitrarily large. Then we bound
\begin{align*}
\sum_{ \substack{\{(P_j, x_j)\}\text{ s.t.}\\ \bigcup \tau_{x_j} P_j \supset \gamma,\,\\ |P_j| > 2,\, P_j \cap \gamma \neq \emptyset}} \prod_j \big(e^{\fg(P_j)}-1\big)\one_{\{\tau_{x_j} P_j \in \varphi\}} & \leq \sqrt{\fm_\gamma} \sum_{ \substack{\{(P_j, x_j)\}\text{ s.t.}\\ \bigcup \tau_{x_j} P_j \supset \gamma,\,\\ |P_j| > 2,\, P_j \cap \gamma \neq \emptyset}} \bigg(  \prod_j \big(e^{\fg(P_j)}-1\big)\one_{\{\tau_{x_j} P_j \in \varphi\}} \bigg)^{1/2} \\
& \leq \sqrt{\fm_\gamma} \sum_{ \substack{\{(P_j, x_j)\}\text{ s.t.}\\ |P_j| > 2,\, P_j \cap \gamma \neq \emptyset}}   \prod_j \big(e^{\fg(P_j)}-1\big)^{1/2} \one_{\{\tau_{x_j} P_j \in \varphi\}}\\
& \leq \sqrt{\fm_\gamma} \prod_{y \in \gamma} \sum_{\substack{(P, x) \text{ s.t.}\\ |P| > 2 ,\, y \in \tau_x P}} \big(e^{\fg(P)} -1\big)^{1/2} \one_{\{\tau_{x} P \in \varphi\}}\\
& \leq e^{ (- \frac{C}{24 \pi} + 1) |\gamma| }.
\end{align*}
The first line is the elementary bound $\sum a_i \leq (\max \sqrt{a_i})\sum \sqrt{a_i}$. Then in the second line we upper bound this by omitting the condition $\bigcup_{j} \tau_{x_j} P_j \supset \gamma$. This turns the choice of the collection of patterns $(P_j, x_j)$ into independent choices for each point $y$ of $\gamma$ as written in the fourth line. Finally, since this sum is finite, for $C$ large enough in the statement of \cref{thm:gmt-refinement}, the contribution of each $y$ is less than $e$, which concludes this upper bound.
Overall, we arrive at
\[
0\leq E_c( \gamma ) \leq E_i(\gamma) \prod_{e \in \gamma} e^{A(e)} e^{(- C/24\pi + 1) |\gamma|}\,,
\]
where $C$ comes from the bound on $\fg$. As we can assume without loss of generality that $\|A\|_\infty \le 1$, we find that
\[ 0 \leq \tilde \sw_A(\gamma) \leq e^{-(C/24\pi -2)|\gamma|}\,,
\]
with a constant $C$ which can be made as large as we want as in the statement of \cref{thm:gmt-refinement}.
\end{proof}

\begin{remark}\label{rem:boundary-trick}
The more direct expansion mentioned just before the proof is as follows. One starts exactly like us by expanding $e^\fg = (e^\fg -1) + 1$ and expanding to get
\[
    \cZ_\fg( \vartheta_1, \vartheta_2, A) = \sum_{\varphi} \pm(k, \ell) \fa^{ - Nk - N \ell} \fb_1^{N k} \fc_2^{N \ell}  \sum_{ \{ (P_j, x_j) \} } \prod_j (\exp( \fg(P_j) ) -1) \one_{\{ \tau_{x_j} P_j \in \varphi\}}.
\]
Then one can directly group the patterns $(P_j, x_j)$ into connected components $\Gamma_i$ and index the sum by these connected components, giving
\[
 \cZ_\fg( \vartheta_1, \vartheta_2, A) = \sum_{\varphi} \pm(k, \ell) \fa^{ - Nk - N \ell} \fb_1^{N k} \fc_2^{N \ell}  \sum^*_{\Gamma_i} \prod_{i} \one_{\{\Gamma_i \in \varphi\}} W(\Gamma_i)
\]
with the weight $W(\Gamma_i)$ defined as $W( \Gamma_i) := \sum_{ \{ (P_j, x_j) \}\text{ s.t. } \bigcup P_j = \Gamma_i} \prod_j (e^{\fg(P_j)} - 1)$. Applying \cref{eq:midway_grassmann} then yields another Grassmann formula for $\cZ_\fg$.

The work \cite{GMT20} uses this expansion, which is the classical one. Indeed, it does not rely on any specific structure of the interaction terms or on any property of the underlying unperturbed model. By contrast, \cref{prop:grassmann_formulation} uses the Markov property of the base model to get an expression involving only boundaries. This is of particular interest in our case since our patterns are balls so their boundaries are of course much smaller than their interior. Note, however, that our approach would not really make sense in a more classical setting involving a many-body interaction. There, the most problematic terms typically involve a small number of sites far away from each other and in such a case there is nothing to gain by going from a pattern to its boundary.
\end{remark}

We turn to the proof of \cref{lem:lattice_ward}. It is fundamentally the same as \cite{GMT20} and boils down to expressing the gauge invariance but we still include a proof to show that it is not affected by the more complicated expression of the weights.
\begin{proof}[Proof of \cref{lem:lattice_ward}]
   Since the white and black vertices play symmetric roles we focus on the first identity. The first step is to make the derivatives with respect to the Grassmann variables more explicit. As any other function of the variables $\tilde \psi$, $\log\cZ_\fg(\vartheta_1, \vartheta_2, A, \tilde \psi)$ is a polynomial with degree at most one in each variable so the derivative $\partial_{\tilde \psi_b} \partial_{\tilde \psi_w}$ at $\tilde \psi = 0$ is simply extracting the prefactor associated to the monomial $\tilde \psi_{b_0} \tilde \psi_{w_0}$. Since the $\tilde \psi$ terms only come from the $e^{\sum_v \psi_v \tilde \psi_v}= \prod_{v}( 1 + \psi_v \tilde \psi_v)$ part of $\cZ_\fg(\vartheta_1,\vartheta_2, A, \tilde \psi)$, we get
    \[
    \fa^{ N^2 p_\fa} \fb_1^{N^2 p_\fb} \fc_2^{N^2 p_\fc}\partial_{\tilde \psi_{b_0}} \partial_{\tilde \psi_{w_0}} \cZ_\fg(\vartheta_1,\vartheta_2, A, \tilde \psi)\restriction_{\tilde \psi = 0} = \int \exp \Big( \vec{\psi}_b\cdot K \cdot \vec{\psi}_w + \sum_\gamma \sw_A(\gamma) \psi_\gamma \Big)\psi_{b_0} \psi_{w_0} \d \psi\,.
    \]
    Let us note that since every $\gamma$ is associated to a tiling and therefore contains as many white as black vertices, a similar expression shows that $\partial_{\tilde\psi_{v}}\cZ_\fg(\vartheta_1,\vartheta_2, A, \tilde\psi)\restriction_{\tilde \psi = 0}=0$ for any vertex $v$.

    Now we consider the value of  $\partial_{\tilde \psi_{b_0}} \partial_{\tilde \psi_{w_0}} \cZ_\fg$ when $A$ takes the special form $A(b,w) := \alpha(b) + \alpha(w)$ for some function $\alpha$.
   Since in any tiling, any triangle is always covered by exactly one lozenge, the first expression for $E_i$ shows that for such $A$
    \[
    E_i(\gamma, A) = E_i(\gamma, 0)e^{\sum_{v \in \Int(\gamma)}  \alpha(v)}\,,
    \]
    and similarly from \cref{eq:Ec_expression}, we have $E_c(\gamma, A) = E_c(\gamma, 0)e^{\sum_{v \in \Int(\gamma)\cup \gamma } \alpha(v)} $,
    so that, for contours of size at least~$4$,
    \[
    \sw_A(\gamma) = \sw_{0}(\gamma)e^{\sum_{v \in \gamma} \alpha(v)}\,.
    \]
Observe that single edge patterns do not occur for $A = 0$ since we assumed that $\fg$ only gave weights to larger patterns, but they do have to be included in $\sw_A$, and we see that they satisfy \[\sw_A(\{b,w\}) = K(b,w) (e^{\alpha(b) + \alpha(w)} -1)\,.\] Let us expand $\exp(\sum_\gamma \sw_A(\gamma) \psi_\gamma) = \prod_\gamma e^{\sw_A(\gamma) \psi_\gamma} = \prod_\gamma (1 + \sw_A(\gamma) \psi_\gamma)$ using that $e^{\psi_x} = 1+\psi_x$ for a Grassmann variable $\psi_x$.
Next, separate this into $\prod_{\gamma:|\gamma|>2} (1+\sw_A(\gamma)\psi_\gamma \prod_{\gamma':|\gamma'|=2} (1+\sw_A(\gamma')\psi_\gamma'$, and note that the $\gamma'$ are only single edge patterns, written as $\{b_j, w_j\}$. Expanding the products, we arrive at a sum over a collection $\{\gamma_i\}$ and a sum over  $\{\{b_j,w_j\}\}$. Since the $\prod_\gamma \psi_\gamma$ is only non-zero when all $\gamma$ are disjoint, we arrive at the conditions $\gamma_i \cap \gamma_{i'} = \emptyset$ and $\{ \{ b_j, w_j\} \} \subset \T \setminus \bigcup \gamma_i$ on the second sum. Finally, we move all real numbers out of the Grassmann integral.  
 Overall, we obtain
    \begin{multline*}
    \fa^{ N^2 p_\fa} \fb_1^{N^2 p_\fb} \fc_2^{N^2 p_\fc} \partial_{\tilde \psi_{b_0}} \partial_{\tilde \psi_{w_0}} \cZ_\fg(\vartheta_1,\vartheta_2, A, \tilde \psi)\restriction_{\tilde \psi = 0} \\
     = \sum_{ \substack{\{ \gamma_i\} \text{ s.t.}\\ |\gamma_i| >2 \\ \gamma_i \cap \gamma_{i'} = \emptyset}} \sum_{ \{ \{b_j, w_j\}\} \subset \T \setminus \bigcup \gamma_i} \prod_i \sw_0(\gamma_i) e^{\sum_{v \in \gamma_i} \alpha(v)} \prod_j (e^{\alpha(b_j) + \alpha(w_j)} -1)K_{\fa, \fb, \fc}(b_j,w_j)\\
    \cdot \int \psi_{b_0}\psi_{w_0} \prod_{v \in \bigcup \gamma_i \cup \bigcup \{b_j, w_j\} } \psi_v \cdot e^{\vec{\psi}_b\cdot K \cdot \vec{\psi}_w} \d\psi\,.  
    \end{multline*}
    Expanding the exponential, we see that the last Grassmann integral is a weighted sum over tilings of $\T \setminus ( \{ b_0, w_0\} \cup \bigcup \gamma_i \cup \bigcup \{ b_j, w_j\})$.  It can therefore be combined with the contribution of the single edge patterns to get a sum over tilings of $\T \setminus ( \{ b_0, w_0\} \cup \bigcup \gamma_i)$ with additional weights $e^{\alpha(b) + \alpha(w)}$ on each edge. Since every vertex is covered by exactly one tile, this factor can be combined with the sum over the union of the $\gamma_i$ as
     \[ 
     \partial_{\tilde \psi_{b_0}} \partial_{\tilde \psi_{w_0}} \cZ_\fg(\vartheta_1,\vartheta_2, A, \tilde \psi)\restriction_{\tilde \psi = 0} = \partial_{\tilde \psi_{b_0}} \partial_{\tilde \psi_{w_0}} \cZ_\fg(\vartheta_1,\vartheta_2, 0, \tilde \psi)\restriction_{\tilde \psi = 0} e^{\sum_{v \notin \{ b_0, w_0\}} \alpha(v)}\,.
    \]
    Since, via the same approach, we also have $\cZ_\fg(\vartheta_1,\vartheta_2, A)= \cZ_\fg(\vartheta_1,\vartheta_2, 0)e^{\sum_v \alpha(v)} $, we deduce that 
    \[
    \partial_{\tilde \psi_{b_0}} \partial_{\tilde \psi_{w_0}} \log( \cZ_\fg(\vartheta_1,\vartheta_2, A, e^{\alpha} \tilde \psi))\restriction_{\tilde \psi = 0} = \partial_{\tilde \psi_{b_0}} \partial_{\tilde \psi_{w_0}} \log( \cZ_\fg(\vartheta_1,\vartheta_2, 0, \tilde \psi))\restriction_{\tilde \psi = 0} \,.
    \]
    We conclude the proof by observing that the sum of derivatives over all edges containing $b_0$ is simply the derivative with respect to $\alpha(b_0)$.
\end{proof}

\subsection{A sketch of the renormalization argument}\label{sec:sketch-renormalize}

The novel Grassmann representation in \cref{prop:grassmann_formulation} provided in the previous subsections would, in principle, allow us to appeal to the main theorem of \cite{GMT20} as a black box. However, there are three slight incompatibilities and a more significant one. The minor incompatibilities are: 
\begin{enumerate}
\item The framework of \cite{GMT20} is formulated on the square lattice while we work on the triangular lattice. This has no effect on the analysis and the square lattice case is in fact more general since setting one of its weights to zero recovers the hexagonal case (see \cite{GMT20} before Eq.~(2.9)). 
\item The main theorem requires finite range interactions; however, the finite range of the interaction is no longer needed after establishing \cite[Prop.~1]{GMT20} and this proposition is the analogue of our \cref{prop:grassmann_formulation}.
\item The work \cite{GMT20} only includes explicitly the convergence of the correlation between two edges or the heights at two points; however, the convergence to the Gaussian free field requires the asymptotics of correlations between an arbitrary finite number of points. This is again not a deep issue since the previous paper \cite{GMT17} does include the full Gaussian free field convergence (see the paragraph at the end of \cite[Sec. 2]{GMT20}). 
\end{enumerate}
Finally, the most delicate issue is that we have to work in the micro-canonical setting while~\cite{GMT20} (and the other papers in the same series) are all using the  standard canonical setting. As discussed in \cref{sec:micro1}, this means that instead of writing correlations in terms of only 4 signed Grassmann integrals, we now need a full Fourier inversion. Ultimately, this does not create any significant difficulty as both the 4 signed integrals and all our Fourier coefficients are essentially constant and the argument establishing this fact in \cite{GMT20} extends with no significant modification to our setting. However, this is not immediately obvious, in particular because this point is handled (in \cite[Apps.~C and D]{GMT20}) at the end of a lengthy (37 pages) renormalization argument \cite[Sec.~6]{GMT20}.
Reproducing here all the details on the renormalization seems of little use compared to simply referring the interested reader to \cite[Sec.~6]{GMT20}, but at the same time, we do wish to justify the claim that such an involved proof extends beyond its explicit setting. To this end, we will give a somewhat informal description of the renormalization argument, with references to the associated equations in \cite[Sec.~6]{GMT20} in footnotes. This would hopefully help familiarize the reader with that argument sufficiently well to understand the quantities involved in the Fourier inversion.

\medskip

We start by the simplest case of the partition function $\cZ_\fg=\cZ_\fg(1,1,0)$. The general strategy is to write, using successive applications of the addition principle for Grassmann variables,
\begin{align*}
    \cZ_\fg &= \fa^{- N^2 p_\fa} \fb_1^{-N^2 p_\fb} \fc_2^{- N^2 p_\fc}  \int \exp \Big( \vec{\psi}_b\cdot K \cdot \vec{\psi}_w + \sum_{\gamma \in \Gamma} \sw_0(\gamma) \psi_\gamma \Big) \d \psi \\
   & = \fa^{- N^2 p_\fa} \fb_1^{-N^2 p_\fb} \fc_2^{- N^2 p_\fc} \det(K) \int \exp \Big( \sum_{\gamma \in \Gamma} \sw_0(\gamma) \psi_\gamma \Big) \d P_{K^{-1}} (\psi)\\
&= \fa^{- N^2 p_\fa} \fb_1^{-N^2 p_\fb} \fc_2^{- N^2 p_\fc} \det(K) \int \d P_{g^{(n_{\max})}}(\psi^{(n)}) \ldots \int \d P_{g^{(0)}}(\psi^{(0)})  \exp\Big( \sum_{\gamma} \sw_0(\gamma) \big(\sum \psi^{(i)}\big)_{\gamma} \Big)\,,
\end{align*}
where the $g^{(n)}$ can be chosen arbitrarily as long as $\sum g^{(n)} = K^{-1}$. The renormalization aspect comes from choosing the $g^{(n)}$ to have finite range say of order $2^{n}$ so the successive integrals can be thought of as taking into account increasingly long-range correlations. It turns out that, since $K^{-1}$ is diagonal in Fourier space, it is more convenient to define the $g^{(n)}$ in Fourier space by
\[
\sum_{j < n} g^{(j)}\Big( \frac{1}{N^2}\sum_w e^{-i \langle k, w\rangle} \psi_w , \frac{1}{N^2}\sum_b e^{i \langle k', b\rangle} \psi_b \Big) = \Big( \frac{\chi( 2^{n} (k - p^+))}{\fa + \fb e^{i k_1} + \fc e^{ i k_2}}+\frac{\chi( 2^{n} (k - p^-))}{\fa + \fb e^{i k_1} + \fc e^{ i k_2}} \Big) \one_{\{k = k'\}}\,,
\]
where $k, k' \in \frac{1}{N} \Z^2 \cap [-\pi, \pi]^2$ are Fourier modes on the torus, $p^-,p^+ \in [-\pi, \pi]^2$ are two conjugate points of Fourier space to be chosen appropriately later and $\chi$ is a smooth (formally in the Gevrey class of order $2$) approximation of the indicator of a ball\footnote{See \cite[Eq.~(6.18)]{GMT20}.}. The points $p^+$ and $p^-$ will be chosen close enough to the two zeros of $\fa + \fb e^{i k_1} + \fc e^{ i k_2}$ so that each $g^{(n)}$ is smooth in Fourier and therefore has finite range in real space (with $e^{-\sqrt{r}}$ decay at distance $r$ given the Gevrey regularity).

One then uses the truncated expectation to do the above sequence of integrals, writing every intermediate term as an exponential. The first one for example, gives, for $X_\gamma := \sw_0(\gamma)(\psi(0)+\psi^{(\geq 1)})_\gamma $,
\begin{align*}
\int \exp \Big( \sum_\gamma X_\gamma \Big)\d P_{g^{(0)}} &= \exp \sum_m \frac{1}{m!} \cE_{g^{(0)}}^T\Big(  \underbrace{\sum_\gamma X_\gamma, \ldots, \sum_\gamma X_\gamma}_{m\text{ times}} \Big) \\ &
:= \exp N^2 \sw_0^{(1)}(\emptyset) + \sum_S \sw_0^{(1)}(S) \psi^{(\geq 1)}_S\,,
\end{align*}
where in the last equality $S$ is an arbitrary set (with as many $w$ vertices as $b$ vertices, otherwise $\cE_{g^{(0)}}^T$ would vanish, but unlike $\gamma$ it need not be connected), and $\psi^{(\geq 1)}_S = \prod_{v \in S} \psi^{(\geq 1)}_v$ and $\sw_0^{(1)}(S)$ are defined by the equality; for example,
\[
\sw_0^{(1)}(\emptyset) = \frac{1}{N^2}\sum_{m} \frac{1}{m! } \sum_{\gamma_1, \ldots, \gamma_m}  \prod_j \sw_0( \gamma_j) \cE_{g^{(0)}}^T( \psi^{(0)}_{\gamma_1}, \ldots, \psi^{(0)}_{\gamma_m} )\,, 
\]
and similarly for nonempty sets. Proceeding inductively and assuming for now that all these series  converge, we obtain weights $\sw_0^{(n)}$ on sets for all $n$. Expanding the recursive definition of these weights, we obtain an expression of $\sw_0^{(n)}$ as a sum over labeled rooted trees with the following constraints. They have height $n$ with all leaves at height exactly $n$, each leaf is indexed by a set $\gamma$ and comes with a weight $\sw_0(\gamma)$ and each interior vertex, say at height $n-m$ and with degree $d$ is labeled with a set $S$ and counts the $\psi^{(\geq m)}_S$ coefficient of $\frac{1}{d!} \cE_{g^{(0)}}^T( \psi^{(\geq m-1)}_{S_1}, \ldots, \psi^{(\geq m-1)}_{S_d})$.

To understand the asymptotic behavior of this expression when the number of generations grows, we first observe that an interior vertex of degree $1$ with the same associated set $S$ as its child comes with a weight $1$ and we call such vertices trivial. Also, by \cref{prop:grassmann_formulation}, the leaf weights $\sw_0(\gamma)$ are small and decay exponentially with $|\gamma|$. Finally, it is not hard to see that, since $g^{(n)}(\psi_b, \psi_w) \simeq 2^{-n} \one_{\{ |b-w| \leq 2^{n}\}} $ with $n \geq 0$, each nontrivial vertex $n$ generation above the leaves comes with a weight $2^{-n}$. Overall, it is believable that, as the number of generations increases, the maximal distance from a nontrivial vertex to the leaves stays tight, yielding
\[
\lim_{N \to \infty} \frac{1}{N^2}\log \cZ_\fg = \lim_{n \to \infty} \sw_0^{(n)}(\emptyset) = \text{sum over trees with unrestricted height}.
\]

The above paragraph is however too optimistic: when taking into account properly the entropy terms coming from the positions of the sets in previous scales, we see that already the very simple terms are enough to make the sum over trees diverge. Indeed, consider the contribution coming from just three edges $\sum_{e_1, e_2, e_3} \cE_{g^{(n)}}^T( \psi_{e_1}, \psi_{e_2}, \psi_{e_3})$ at scale $n < 0$. With only $6$ fields, the truncated expectation is explicit and gives, when denoting $e_i = (b_iw_i)$,
\[g^{(n)}( \psi_{b_1}, \psi_{w_2}) g^{(n)}( \psi_{b_2}, \psi_{w_3})g^{(n)}( \psi_{b_3}, \psi_{w_1}) -g^{(n)}( \psi_{b_1}, \psi_{w_3}) g^{(n)}( \psi_{b_3}, \psi_{w_2})g^{(n)}( \psi_{b_2}, \psi_{w_1})\,.\]  A naive bound is of order $(2^{n})^3$ because it contains three long-range propagators\footnote{For this example, one could use the regularity of $g$ to obtain a better bound but for general terms with two fields arising from previous integration steps this becomes difficult.} and, given $e_1$, there are of order $(2^{2n})^2$ possible edges within the range $2^{n}$ of $g$. Overall,  rephrasing in terms of the trees mentioned above, trees containing only a single branching point with three children and only marked with either single edges or the empty set already give an exponentially growing contribution! This issue is well known (see \cite[Sec.~5.2.2]{GMT17} for a more detailed description), and leads to a classification of interaction terms as relevant ones potentially growing with $n$, marginal ones which stay of order $1$ and irrelevant ones which decay exponentially with $n$. 

Luckily, the number of relevant and marginal terms is bounded (there are $3$ of them in this model) so the strategy is to keep track much more closely of them and to treat their effect exactly. Formally, this means that after each integration, the weights $\sw_0^{(n)}$ are carefully separated between the irrelevant terms (which can be bounded naively) and a finite number of relevant and marginal ones whose weights are called ``running coupling constants.''\footnote{The separation is done by the localization operator defined in \cite[Eq.~(6.37)]{GMT20}, the running coupling constants are defined in Sec.~6.2.1.} The integration operation then induces a dynamical system on the running coupling constant which can be made to converge to a fixed point\footnote{See \cite[Eq.~(6.69)--(6.71)]{GMT20}.} by using the freedom in the choice of the sequence $g^{(n)}$.\footnote{See \cite[Sec.~6.4]{GMT20}. The choice of propagators appears both in the initial setup with the free parameters in Eq.~(6.3) chosen later in Eq.~(6.101), and inductively at each scale in Eq.~(6.34).} 

With these modifications, the initial heuristic becomes correct and one obtains an expression for $\lim\log \cZ_\fg$ as a series indexed by so-called ``Gallavotti--Nicol\`{o}'' trees describing trajectories along the successive integrations.\footnote{See \cite[Sec~6.3]{GMT20} for their full definition which notably also involves labels for the localization operator.} Another important consequence is that for each finite $n$, $\sw_0^{(n)}$ has a limit as the size $N$ of the torus goes to infinity which is simply obtained by replacing the torus by the full plane in the sum over positions (which has a very small effect since the propagators have a fast decay in space) and the torus propagators $g^{(n)}_N(\psi_b, \psi_w) := \frac{1}{N^2} \sum_{k} \frac{\chi^{(n)}(k) e^{ i \langle k, b-w\rangle }}{\fa + \fb e^{i k_1} + \fc e^{i k_2}} $ by full plane ones $g^{(n)}_\infty = \iint \frac{\chi^{(n)}(k) e^{ i \langle k, b-w\rangle }}{\fa + \fb e^{i k_1} + \fc e^{i k_2}} \d k_1 \d k_2 $ and let us note for later that the localization is defined only using the full plane limit of the model.

\medskip

We can now go back to the full generating function $\cZ_\fg(\vartheta_1, \vartheta_2, A, \tilde \psi)$. Derivatives with respect to $A$ easily fit in the same picture because the control needed to exchange the limit over $n$ and the derivative is identical to the one needed to ensure the summability in the first place. The only real difference comes from the fact that the set of base weights $\sw_0$ is no longer exactly translation invariant and that some leaf must be labeled with a derivative in addition to a contour $\gamma$. Let us highlight here a small difference between our setting and \cite{GMT20}, with their weight $\partial_{A(e)} \sw_A(\gamma) =0$  when $e \not\subset \gamma$ while for us this requires $e \not\subset \gamma\cup \Int(\gamma)$. This increases slightly the enumeration over contours compatible with a given derivative but it is easily absorbed by the exponential decay with $|\gamma|$ since the total number of derivatives we consider is always fixed. The expansion in terms of the extra variables $\tilde \psi$ also fits into the above framework, it only adds single site patterns and does not affect the summability properties because the expansion in the variables $\tilde \psi$ is only formal by definition and these patterns can never be repeated.

Finally, a convenient way to view $K(\vartheta_1, \vartheta_2)$ is as follows. We start from the full plane Kasteleyn operator $K_{\infty}$, mapping any function defined on white triangles of the infinite triangular lattice to a function on the black triangles. Using the obvious identification between functions on the torus and periodic functions on the full plane, we see that $K$ is the matrix of the action of $K_\infty$ over $N$-periodic functions. Similarly, it is not hard to check that $K(\vartheta_1, \vartheta_2)$ is the matrix of the action of $K_\infty$ over pseudo-periodic functions such that for all $v$ and $j$, $f(v + N e_j) = e^{i N \vartheta_j} f(v)$. With this point of view, it becomes clear that $K(\vartheta_1, \vartheta_2)$ is still diagonal in Fourier with the same eigenvalues $\mu(k) = \fa + \fb e^{ik_1} + \fc e^{i k_2}$, and that the only difference is a shift by $\vartheta_1, \vartheta_2$ of the modes. In the full plane limit of the model, the set of modes becomes continuous and this effect disappears so the full plane model becomes independent of $\vartheta$ and recall that the localization operator and the choice of the propagators $g^{(n)}$ only depend on that model so they are done consistently for all $\vartheta$.

A last detail is that, in the initial setup, there is no reason why $K$ has to be invertible and in fact $K(\vartheta_1, \vartheta_2)$ will always be non-invertible for some $\vartheta$. In order to still have a good representation with a propagator and to smooth the behavior of $\det(K(\vartheta_1, \vartheta_2))$ close to its zero, the Fourier modes closest to $p^+$ and $p^-$ (call them $\hat\Psi^\circ_\pm$ and $\hat \Psi^\bullet_\pm$ for the one associated to white and black vertices respectively) are split off from the initial Grassmann integral and the integral over these two will be done last\footnote{See \cite[Eq.~(6.8)]{GMT20} for the resulting first decomposition.}. Ultimately, this gives the following output for the full renormalization scheme\footnote{See \cite[Eq.~(6.112)]{GMT20}.}:
\begin{proposition}\label{prop:renormalization_output}
The following formula holds:
   \begin{multline*}
  \cZ( \vartheta_1, \vartheta_2, A) = \fa^{- N^2 p_\fa} \fb_1^{-N^2 p_\fb} \fc_2^{- N^2 p_\fc} \Big[ \prod_{k \in \mathcal{P}'(\vartheta)} \mu^{(0)}(k) \Big] e^{ N^2( E_{\vartheta}^{(h)} -E_{\vartheta}^{(0)}) + S_\vartheta^{(h)}(A)} \\
  \cdot \int  \exp\Big(  \frac{Z^{(h)}}{N^2} \sum_{\omega \in \{ + , -\}} \mu^{(h)}_{\vartheta, \omega} \hat \Psi^\bullet_\omega \hat \Psi^\circ_\omega + V_{\vartheta}^{(h)}( \sqrt{Z^{(h)}} \Psi, A) \Big)\d \Psi.
   \end{multline*}
   In the above,
     \begin{enumerate}[1.]
        \item $\mathcal{P}'(\vartheta)$ is the set of Fourier modes of $K(\vartheta_1, \vartheta_2)$ except the two closest to $p^+$ and $p^-$. $\mu^{(0)}$ is a smooth bounded function with simple zeros at $p^+$ and $p^-$.
        \item $h$ is the index of the last step of the induction, $h = \log_2(N) + O(1)$.
        \item The terms $E_{\vartheta}^{(h)}-E_{\vartheta}^{(0)}$, $S_\vartheta^{(h)}(A)$, $\mu^{(h)}_{\vartheta, \pm}$ and $Z^{(h)}$ are complex numbers and $V_\vartheta^{(h)}(\cdot, A)$ is a polynomial in 4 Grassmann variables. They are analytic functions of the weights of patterns with a representation as absolutely convergent series indexed by Gallavotti--Nicol\`{o} trees, where only $S_\vartheta$ and $V_\vartheta$ depend on the weight perturbation by $A$.\footnote{See \cite[Prop.~3]{GMT20} and the discussion immediately after it, as well as Eqs.~(6.69),(6.70) and Sec.~6.4.5. More precisely, Prop.~3 shows analyticity as a function of the weights and extra variables called the running coupling constant assuming the latter stay small. Eqs.~(6.69),(6.70) formulate precisely the conditions the constants will actually satisfy and Sec.~6.4.5 shows that they depend analytically on the weights and that the dependence can be inverted.}
         \item The set of labeled trees in the previous item is independent of $\vartheta$ and $A$.\footnote{This is immediate from their description in \cite[Sec.~6.3]{GMT20}.}
    \end{enumerate}
\end{proposition}

\begin{remark}
    In an earlier version of this paper, instead of changing the initial Grassmannian formulation of $\cZ$, we used the original one from \cite{GMT17,GMT20} but gave an improvement on the first step in the inductive scheme (the definition of the $\sw_0^{(1)}$ weights). This was done up to a length $O(1/\epsilon)$ and gave a bound $|\sw_0^{(1)}(S) |\leq e^{-\beta}C^{|S|} \exp(- \sqrt{\epsilon\operatorname{Animal}(S)}) $ with $\operatorname{Animal}(S)$ the size of the minimal animal connecting all points of $S$. We then remarked that, thereafter, the proof proceeds according to the original framework of~\cite{GMT20}, where they argued why the rest of the renormalization process carries through in the later scales (see the inductive bound on the weights in~\cite[Eq.(6.61)]{GMT20}).
    (For us, this would have meant that the growth $C^{|S|}$ could be compensated for by the decay of $g$ in the later scales if $\epsilon$ was chosen small enough.) This is unfortunately wrong (in our setting as well as in the setting of~\cite{GMT20}). Indeed, the $ \exp(- \sqrt{\epsilon\operatorname{Animal}(S)})$ decay is not sufficient to beat the entropy of sets covering a large domain with a density $\epsilon$, even if $C$ was replaced by a small constant. In~\cite{GMT20}, the issue traces back to the fact that their bounds are only stated using $\sqrt{\operatorname{Animal}(S)}$ (see \cite[Eq.~(6.31) and~(6.61)]{GMT20}). In their context, this can be fixed without much difficulty because their bound actually comes from the stronger $\min_{G \text{ connected}} \sum_{(v, v') \in G} \sqrt{|v - v'|} $ which is actually enough to beat the entropy. In our case, however, this required significant changes (and new ideas), including changing the Grassmannian formulation as above.
\end{remark}

\subsection{Back to the micro-canonical model}\label{sec:micro2}

We are interested in the asymptotic of the free energy 
$F = \log \big( \sum_\varphi \exp \sum_{r, x} \fg_r( \varphi \restriction_{B(x,r)} ) \big)$
and in correlations such as $\Cov( \one_{e \in \varphi}, \one_{e' \in \varphi})$ with $e, e'$ two edges. We will focus on the two above cases, higher order cumulants are necessary to obtain the full convergence but can be obtained similarly, see \cite[Sec.~7]{GMT17}. 

The first step is to check that the ratio $\Big[ \prod_{k \in \mathcal{P}'(\vartheta)} \mu^{(0)}(k) \Big]/\Big[ \prod_{k \in \mathcal{P}'(0)} \mu^{(0)}(k) \Big] $ is bounded and bounded away from $0$ uniformly over $\vartheta$ and $N$\footnote{This is \cite[Eq.~(6.121)]{GMT20}.}. This is done by considering separately the contributions of the finite number of points at distance at most $\frac{10}{N}$ from $p_+$ or $p_-$ and the rest. The former set gives a bounded contribution because it involves finitely many points and bounding the latter is an exercise on Riemann integrals, see \cite[App. D1]{GMT20}.

The term $E^{(h)}_\vartheta - E^{(0)}_\vartheta$ is the weight of the empty set in the model with $A=0$. For a fixed scale index $n$, the propagators have a weak dependence on $\vartheta$ (and on $N$), $g^{(n)}_\vartheta/g^{(n)}_{(0,0)} = 1+ O(N^{-2})$ because they are Riemann sums for a $C^2$ function, and as mentioned above, the contribution of trees decays exponentially with the number of non-trivial generations. Combining these two observations, one can show (see \cite[App.~C]{GMT20}) that $E^{(h)}_\vartheta - E^{(0)}_\vartheta = \Delta + \frac{\log(1 + s(\vartheta, N))}{N^2}$ where $\Delta$ only depends on the parameters of the model and $s$ is equicontinuous in $N$. Furthermore, $s$ can be taken arbitrarily close to $0$ choosing $\beta$ large enough and a similar conclusion also holds 
when subtracting from $\log(1+s)$ a factor $2\log Z_h$ to normalize the integral over $\Psi$, giving overall
\begin{equation*}
    \cZ_\fg(\vartheta_1, \vartheta_2, 0)
    = e^{N^2 W + d(\vartheta)} \cdot (1 + s(\vartheta, N)) \frac{1}{Z_h^2}\int  \exp\Big(  \frac{Z^{(h)}}{N^2} \sum_\omega \mu_{\vartheta, \omega} \hat \Psi^\bullet_\omega \hat \Psi^\circ_\omega + V_{\vartheta}^{(h)}( \sqrt{Z_h} \Psi) \Big)\d \Psi\,.
\end{equation*}
where $W$ is independent of both $N$ and $\vartheta$ while $d$ is independent of $N$ and bounded in $\vartheta$.

The coefficients of $V^{(h)}$ are by definition associated to the irrelevant terms in the renormalization which decay exponentially with $h$. Since $h$ is of order $\log_2 N$ it means that they must be polynomially small in $N$. In fact, the rates of decay are explicit as a function of the size (see the ``dimensional estimates''  in \cite[Eq.~(6.60)]{GMT20}) and for $V$ they are at least $N^{-3}$ so these terms must have a vanishing contribution as $N \to \infty$. However, the Grassmann integral with only a quadratic term in the exponential is explicit, so
\[
    \cZ_\fg(\vartheta_1, \vartheta_2, 0)
    = e^{N^2 W + d(\vartheta)} \cdot (1 + s(\vartheta, N))  \mu_{\vartheta, +} \mu_{\vartheta, -} (1+ O(1/N) )  \,.
\]
Furthermore, $\mu_{\vartheta, \pm}$ is given by a convergent series with bounds that are uniform in $\vartheta$ and it is analytic by \cref{prop:renormalization_output} so we can change the definition of the $W$, $d$ and $s$ and simply write
\[
\cZ_\fg(\vartheta_1, \vartheta_2, 0) = e^{N^2 W + d(\vartheta)} \cdot (1 + s(\vartheta, N)) \,.
\]
Combining all of the above, and plugging it back in our original sum, we find that
\[
        \cZ_\fg(\vartheta_1, \vartheta_2, 0)
        = e^{N^2 E^{(h)}_{(0,0)}}\frac{1}{N^2 p_\fb p_\fc}\sum_{\vartheta} \frac{e^{N^2 E^{(0)}_{\vartheta}}}{e^{N^2 E^{(0)}_{(0,0)}}} (1+ s(\vartheta, N) )  + O(e^{-\eta N^2/2}) \, .
\]

For the Fourier inversion, by the uniform bound $\sum_r \| \fg_r \|_\infty \leq e^{- C^\star}$, one has $\exp(- N^2e^{-C^\star}) \leq \cZ_\fg(k, \ell,0)/\cZ_0(k , \ell,0) \leq \exp(+ N^2e^{-C^\star})$ for all $k$ and $\ell$. The concentration of the uniform model at the scale $N^2$ therefore extends to the interacting model, i.e., by \cref{eq:Zconcentration}, for all $\eta$, there exists $\epsilon$ such that for $\beta$ large enough (meaning $C^\star$ can be taken large enough), 
\[
\cZ_\fg(\vartheta_1, \vartheta_2, 0) =  \sum_{- \epsilon N \leq k, \ell \leq \epsilon N} \cZ_\fg(k, \ell, 0)e^{i (N k \vartheta_1+ N\ell \vartheta_2)} + O( e^{- \eta N^2})
\]
and consequently
\[
\cZ_\fg(k, \ell, 0) = \frac{1}{N^2 p_\fb p_\fc}\sum_{\vartheta_1, \vartheta_2} \cZ_\fg(\vartheta_1, \vartheta_2, 0) e^{-i (N k \vartheta_1+ N\ell \vartheta_2)} + O( N^2 e^{- \eta N^2})\,.
\]
Replacing $\cZ_\fg(\vartheta_1, \vartheta_2, 0)$ by its expression from the renormalization, we see that $\frac{1}{N^2} \log \cZ_\fg (k, \ell, 0) = W + O(1/N^2)$ when $N$ goes to infinity with $k$ and $\ell$ fixed.

We now turn to the general case with $A \neq 0$, the two differences compared to the previous one being the dependence of $V^{(h)}$ on $A$ and the extra term $S^{(h)}_\vartheta(A)$. With similar arguments as in the above analysis of $E^{(h)} - E^{(0)}$, one can prove that $S^{(h)}_\vartheta(A) = S(A) + \log(1+\tilde s(\vartheta,N, A)/N^2) $ for a bounded function $\tilde s$. One can also bound $V^{(h)}_\vartheta( \sqrt{Z_h} \Psi, A)$ similarly to the $A = 0$ case. Therefore (see \cite[Sec. 6.5]{GMT20} and in particular Eq.~(6.144) for the details),
\begin{align*}
    \cZ_\fg(\vartheta_1, \vartheta_2, A) = e^{N^2 W + d(\vartheta) + S(A)} (1 + s(\vartheta, N) )\Big(1 + \frac{\tilde s( \vartheta, N, A)}{N^2} \Big) = \cZ(\vartheta_1, \vartheta_2, 0)  e^{S(A)}\Big(1+ \frac{\tilde{s}(\vartheta, N, A)}{N^2}\Big)
\end{align*}
and hence, since the factor $e^{S(A)}$ is independent of $\vartheta$,
\begin{align*}
    \cZ_\fg(k, \ell, A) &= \cZ_\fg(k, \ell, 0) \cdot e^{S(A)} \Big(1+ O\big(\frac{1}{N^2}\big) \Big) + O(N^2e^{- \eta N^2})\,.
\end{align*}
We conclude that (exactly as in the usual case with only 4 matrices), the partial derivatives of $F(A)$ are given by the coefficients of $S(A)$ up to an error vanishing with $N$ (see \cite[Eqs.~(6.129) and~(6.149)]{GMT20}).

Let us finally note that since the coefficients of $S$ are sums over trees, they come with exponential decay in the scale variable, or in other words, polynomial decay in the distance. 
 As a consequence, all cumulants of the edge occupation variables have a polynomial decay which, following \cite{GMT17,GMT20}, we can express in terms of the size of the minimal animal connecting them; namely, for all $e_1, \ldots, e_k$ we have (see  \cite[Eqs.~(7.4) and (7.6)]{GMT17})
\begin{equation}\label{eq:cumulants}
    \left| \operatorname{Cumulant}(\one_{e_1}, \ldots, \one_{e_k} )  \right| \leq C_k \operatorname{Animal}(e_1, \ldots, e_k)^{-c}
\end{equation}
for some $C, c > 0$.

\subsection*{Acknowledgements}
We are grateful to Loren Coquille for useful discussions at an early stage of this project.
The research of BL was supported by the grant  \textsc{anr-18-ce40-0033 dimers}. The research of EL was supported by the NSF grant \textsc{dms-2054833}.

\bibliographystyle{abbrv}
\bibliography{tilted_sos}

\end{document}